\newcommand{\be}{\begin{enumerate}}
\newcommand{\ee}{\end{enumerate}}
\newcommand{\bi}{\begin{itemize}}
\newcommand{\ei}{\end{itemize}}
\newcommand{\bc}{\begin{center}}
\newcommand{\ec}{\end{center}}
\newcommand{\bsp}{\begin{sloppypar}}
\newcommand{\esp}{\end{sloppypar}}
\newcommand{\sglsp}{\ }
\newcommand{\dblsp}{\ \ }
\newcommand{\sC}{\mbox{$\cal C$}}
\newcommand{\sE}{\mbox{$\cal E$}}
\newcommand{\sF}{\mbox{$\cal F$}}
\newcommand{\sK}{\mbox{$\cal K$}}
\newcommand{\sO}{\mbox{$\cal O$}}
\newcommand{\sS}{\mbox{$\cal S$}}
\newcommand{\sT}{\mbox{$\cal T$}}
\newcommand{\sV}{\mbox{$\cal V$}}
\renewcommand{\phi}{\varphi}
\newcommand{\seq}[1]{{\langle #1 \rangle}}
\newcommand{\set}[1]{{\{ #1 \}}}
\newcommand{\mlist}[1]{{[ #1 ]}}
\newcommand{\sembrack}[1]{\llbracket#1\rrbracket}
\newcommand{\synbrack}[1]{\ulcorner#1\urcorner}
\newcommand{\commabrack}[1]{\lfloor#1\rfloor}
\newcommand{\mname}[1]{\mbox{\sf #1}}
\newcommand{\mcolon}{\mathrel:}
\newcommand{\mdot}{\mathrel.}
\newcommand{\proves}[2]{#1 \vdash #2}
\newcommand{\parrow}{\rightharpoonup}
\newcommand{\tarrow}{\rightarrow}
\newcommand{\StarApp}{\star\,}
\newcommand{\iotaApp}{\iota\,}
\newcommand{\epsilonApp}{\epsilon\,}
\newcommand{\Neg}{\neg} 
\newcommand{\And}{\wedge}
\newcommand{\Or}{\vee}
\newcommand{\Implies}{\supset}
\newcommand{\Iff}{\equiv}
\newcommand{\Forall}{\forall}
\newcommand{\ForallApp}{\forall\,}
\newcommand{\Forsome}{\exists}
\newcommand{\ForsomeApp}{\exists\,}
\newcommand{\ForsomeUniqueApp}{\exists\,!\,}
\newcommand{\IsDef}{\downarrow}
\newcommand{\IsUndef}{\uparrow}
\newcommand{\Equal}{=}
\newcommand{\QuasiEqual}{\simeq}
\newcommand{\Undefined}{\bot}
\newcommand{\If}{\mname{if}}
\newcommand{\IsDefApp}{\!\IsDef}
\newcommand{\IsUndefApp}{\!\IsUndef}
\newcommand{\TRUE}{\mbox{{\sc t}}}
\newcommand{\FALSE}{\mbox{{\sc f}}}
\newcommand{\LambdaApp}{\lambda\,}
\newcommand{\LAMBDAapp}{\Lambda\,}
\newcommand{\ClassAbsApp}{\mbox{\sc C}\,}
\newcommand{\DepTypeProdApp}{\otimes\,}
\newcommand{\funapp}{\mathrel@}
\newcommand{\fol}{\mbox{\sc fol}}
\newcommand{\zf}{\mbox{\sc zf}}
\newcommand{\nbg}{\mbox{\sc nbg}}
\newcommand{\stmm}{\mbox{\sc stmm}}
\newtheorem{axschemas}{Axiom Schemas}
\newtheorem{infrule}{Rule}
\newenvironment{proof}{\par\noindent{\bf Proof\dblsp}}{$\Box$}
\newtheorem{cthm}{Theorem}[subsection]
\newtheorem{ccor}[cthm]{Corollary}
\newtheorem{clem}[cthm]{Lemma}
\newtheorem{cprop}[cthm]{Proposition}
\newtheorem{crem}[cthm]{Remark}
\newtheorem{ceg}[cthm]{Example}
\newcommand{\cexpr}[2]{\textbf{expr}_{#1}\textbf{[}#2\textbf{]}}
\newcommand{\cmexpr}[2]{\textbf{m-expr}_{#1}\textbf{[}#2\textbf{]}}
\newcommand{\cpexpr}[3]{\textbf{p-expr}_{#1}\textbf{[}#2:#3\textbf{]}}
\newcommand{\cop}[2]{\textbf{operator}_{#1}\textbf{[}#2\textbf{]}}
\newcommand{\ctype}[2]{\textbf{type}_{#1}\textbf{[}#2\textbf{]}}
\newcommand{\cterm}[2]{\textbf{term}_{#1}\textbf{[}#2\textbf{]}}
\newcommand{\cterma}[3]{\textbf{term}_{#1}\textbf{[}#2:#3\textbf{]}}
\newcommand{\cform}[2]{\textbf{formula}_{#1}\textbf{[}#2\textbf{]}}
\newcommand{\ckind}[2]{\textbf{kind}_{#1}\textbf{[}#2\textbf{]}}
\newcommand{\crel}[1]{\overline{#1}}
\newcommand{\Dv}{D_{\rm v}}
\newcommand{\Dc}{D_{\rm c}}
\newcommand{\Ds}{D_{\rm s}}
\newcommand{\Df}{D_{\rm f}}
\newcommand{\Do}{D_{\rm o}}
\newcommand{\De}{D_{\rm e}}
\newcommand{\Don}{D_{\rm on}}
\newcommand{\Dsy}{D_{\rm sy}}
\newcommand{\Dop}{D_{\rm op}}
\newcommand{\Dty}{D_{\rm ty}}
\newcommand{\Dte}{D_{\rm te}}
\newcommand{\Dfo}{D_{\rm fo}}
\newcommand{\Dea}{D_{\rm e,1}}
\newcommand{\Deb}{D_{\rm e,2}}
\newcommand{\TypeLE}{\ll}
\newcommand{\TypeEqual}{=_{\rm ty}}
\newcommand{\cnbg}{\mbox{\sc cnbg}}
\newcommand{\subvaluation}{\sqsubseteq}
\newcommand{\qmname}[1]{\synbrack{\mname{#1}}}
\newcommand{\provesast}[2]{#1 \vdash^\ast #2}
\title{\bf Chiron: A Set Theory with Types, Undefinedness, Quotation,
  and Evaluation\thanks{Published as SQRL Report No.~38, McMaster
    University, 2007 (revised 2012).  This research was supported by
    NSERC.}}
\author{William M. Farmer\thanks{Address: Department of Computing and
    Software, McMaster University, 1280 Main Street West, Hamilton,
    Ontario L8S 4K1, Canada.  E-mail:
    \texttt{wmfarmer@mcmaster.ca}.}\\ McMaster University}
\date{28 December 2012}
\begin{document}

\maketitle

\begin{abstract}\noindent
Chiron is a derivative of von-Neumann-Bernays-G\"odel ({\nbg}) set
theory that is intended to be a practical, general-purpose logic for
mechanizing mathematics.  Unlike traditional set theories such as
Zermelo-Fraenkel ({\zf}) and {\nbg}, Chiron is equipped with a type
system, lambda notation, and definite and indefinite description.  The
type system includes a universal type, dependent types, dependent
function types, subtypes, and possibly empty types.  Unlike
traditional logics such as first-order logic and simple type theory,
Chiron admits undefined terms that result, for example, from a
function applied to an argument outside its domain or from an improper
definite or indefinite description.  The most noteworthy part of
Chiron is its facility for reasoning about the syntax of expressions.
\emph{Quotation} is used to refer to a set called a construction that
represents the syntactic structure of an expression, and
\emph{evaluation} is used to refer to the value of the expression that
a construction represents.  Using quotation and evaluation, syntactic
side conditions, schemas, syntactic transformations used in deduction
and computation rules, and other such things can be directly expressed
in Chiron.  This paper presents the syntax and semantics of Chiron,
some definitions and simple examples illustrating its use, a proof
system for Chiron, and a notion of an interpretation of one theory of
Chiron in another.
\end{abstract}

\newpage

\tableofcontents
\listoftables

\newpage

\section{Introduction}

The usefulness of a logic is often measured by its expressivity: the
more that can be expressed in the logic, the more useful the logic is.
By a \emph{logic}, we mean a language (or a family of languages) that
has a formal syntax and a precise semantics with a notion of logical
consequence.  (A logic may also have, but is not required to have, a
proof system.)  By this definition, a theory in a logic---such as
Zermelo-Fraenkel ({\zf}) set theory in first-order order---is itself a
logic.  But what do we mean by \emph{expressivity}?  There are
actually two notions of expressivity.  The \emph{theoretical
expressivity} of a logic is the measure of what ideas can be expressed
in the logic without regard to how the ideas are expressed.  The
\emph{practical expressivity} of a logic is the measure of how readily
ideas can be expressed in the logic.

To illustrate the difference between these two notions, let us compare
two logics, standard first-order logic ({\fol}) and first-order logic
without function symbols (${\fol}^-$).  Since functions can be
represented using either predicate symbols or function symbols, {\fol}
and ${\fol}^-$ clearly have exactly the same theoretical expressivity.
For example, if three functions are represented as unary function
symbols $f,g,h$ in {\fol}, these functions can be represented as
binary predicate symbols $p_f,p_g,p_h$ in ${\fol}^-$.  The statement
that the third function is the composition of the first two functions
is expressed in {\fol} by the formula \[\ForallApp x \mdot h(x) =
f(g(x)), \] while it is expressed in ${\fol}^-$ by the more verbose
formula \[\ForallApp x,z \mdot p_h(x,z) \Iff \ForsomeApp y \mdot
p_g(x,y) \And p_f(y,z).\] The verbosity that comes from using
predicate symbols to represent functions progressively increases as
the complexity of statements about functions increases.  Hence,
${\fol}^-$ has a significantly lower level of practical expressivity
than {\fol} does.

Traditional general-purpose logics---such as predicate logics like
first-order logic and simple type theory and set theories like {\zf}
and von-Neumann-Bernays-G\"odel ({\nbg}) set theory---are primarily
intended to be theoretical tools.  They are designed to be used
\emph{in theory}, not \emph{in practice}.  They are thus very
expressive theoretically, but not very expressive practically.  For
example, in the languages of {\zf} and {\nbg}, there is no vocabulary
for forming a term $f(a)$ that denotes the application of a set $f$
representing a function to a set $a$ representing an argument to $f$.
Moreover, even if such an application operator were added to {\zf} or
{\nbg}, there is no special mechanism for handling ``undefined''
applications.  As a result, statements involving functions and
undefinedness are much more verbose and indirect than they need to be,
and reasoning about functions and undefinedness is usually performed
in the metalogic instead of in the logic itself.

\emph{Chiron} is a set theory that has a much higher level of
practical expressivity than traditional set theories.  It is intended
to be a general-purpose logic that, unlike traditional logics, is
designed to be used in practice.  It integrates {\nbg} set theory,
elements of type theory, a scheme for handling undefinedness, and a
facility for reasoning about the syntax of expressions.  This paper
presents the syntax and semantics of Chiron, some definitions and
simple examples illustrating its use, a proof system for Chiron, and a
notion of an interpretation of one theory of Chiron in another.  A
quicker, more informal presentation of the syntax and semantics of
Chiron is found in~\cite{Farmer07a}.

The following is the outline of the paper.  Section~\ref{sec:overview}
gives an informal overview of Chiron.  Section~\ref{sec:syntax}
presents Chiron's official syntax and an unofficial compact notation
for Chiron.  The semantics of Chiron is given in
section~\ref{sec:semantics}.  We also show in
section~\ref{sec:semantics} that there is a faithful semantic
interpretation of {\nbg} in Chiron.  A large group of useful operators
are defined in sections~\ref{sec:op-defs} and \ref{sec:sub} including
the operators needed for the substitution of a term for the
occurrences of a free variable.  Some of the practical expressivity of
Chiron is illustrated by examples in section~\ref{sec:examples}.
Section~\ref{sec:ps} presents a proof system for Chiron and proves
that it is sound and also complete in a restricted sense.
Section~\ref{sec:interp} defines the notion of a semantic
interpretation of one theory of Chiron in another.  The paper
concludes in section~\ref{sec:conclusion} with a brief summary and a
list of future tasks.  There are two appendices.  The first presents
two alternate semantics for Chiron based on value gaps, and the second
gives an expanded definition of a proper expression.

\section{Overview}\label{sec:overview}

This section gives an informal overview of Chiron.  A formal
definition of the syntax and semantics of Chiron is presented in
subsequent sections.

\subsection{NBG Set Theory}

{\nbg} set theory is closely related to the more well-known {\zf} set
theory.  The underlying logic of both {\nbg} and {\zf} is first-order
logic, and {\nbg} and {\zf} both share the same intuitive model of the
iterated hierarchy of sets.  However, in contrast to {\zf}, variables
in {\nbg} range over both sets and proper classes.  Thus, the universe
of sets $V$ and total functions from $V$ to $V$ like the cardinality
function can be represented as terms in {\nbg} even though they are
proper classes.  There is a faithful interpretation of {\zf} in
{\nbg}~\cite{Novak50,RosserWang50,Shoenfield54}.  This means that
reasoning in {\zf} can be reduced to reasoning in {\nbg} in a
meaning-preserving way and that {\nbg} is consistent iff {\zf} is
consistent. A good introduction to {\nbg} is found in \cite{Goedel40}
or \cite{Mendelson97}.

Chiron is a derivative of {\nbg}.  It is an enhanced version of
{\stmm}~\cite{Farmer01}, a version of {\nbg} with types and
undefinedness.  Chiron has a much richer syntax and more complex
semantics than {\nbg}, but the models for Chiron contain exactly the
same values (i.e., classes) as the models for {\nbg}.  Moreover, there
is a faithful semantic interpretation of {\nbg} in Chiron---which
means that there is a meaning-preserving embedding of {\nbg} in Chiron
such that Chiron is a conservative extension of the image of {\nbg}
under the embedding.  That is, Chiron adds new vocabulary and
assumptions to {\nbg} without compromising the underlying semantics of
{\nbg}, and hence, Chiron is satisfiable iff {\nbg} is satisfiable.

\subsection{Values}

A \emph{value} is a set, class, superclass, truth value, undefined
value, or operation.  A \emph{class} is an element of a model of
{\nbg} set theory.  Each class is a collection of classes.  A
\emph{set} is a class that is a member of a class.  A class is thus a
collection of sets.  A class is \emph{proper} if it not a set.
Intuitively, sets are ``small'' classes and proper classes are ``big''
classes.  A \emph{superclass} is a collection of classes that need not
be a class itself.  Summarizing, the domain $\Dv$ of sets is a proper
subdomain of the domain $\Dc$ of classes, and $\Dc$ is a proper
subdomain of the domain $\Ds$ of superclasses.  $\Dv$ is the universal
class (the class of all sets), and $\Dc$ is the universal superclass
(the superclass of all classes).

There are two truth values, $\TRUE$ representing \emph{true} and
$\FALSE$ representing \emph{false}.  The truth values are not members
of $\Ds$.  There is also an \emph{undefined value} $\Undefined$ which
serves as the value of various sorts of undefined terms such as
undefined function applications and improper definite or indefinite
descriptions.  $\Undefined$ is not a member of $D_s \cup
\set{\TRUE,\FALSE}$.

An \emph{operation} is a mapping over superclasses, the truth values,
and the undefined value.  More precisely, for $n \ge 0$, an
\emph{$n$-ary operation} is a total mapping \[\sigma: D_1 \times
\cdots \times D_n \tarrow D_{n+1}\] where $D_i$ is $\Ds$, $\Dc \cup
\set{\Undefined}$, or $\set{\TRUE,\FALSE}$ for all $i$ with $1 \le i
\le n+1$.  An operation is not a member of $D_s \cup
\set{\TRUE,\FALSE,\Undefined}$.  A \emph{function} is a class of
ordered pairs that represents a (possibly partial) mapping \[f: \Dv
\parrow \Dv.\] Operations are not classes, but many operations can be
represented by functions (which are classes).

\subsection{Expressions}

An \emph{expression} is a tree whose leaves are \emph{symbols}.  There
are four special sorts of expressions: \emph{operators}, \emph{types},
\emph{terms}, and \emph{formulas}.  An expression is \emph{proper} is
it is one of these special sorts of expressions, and an expression is
\emph{improper} if it is not proper.  Proper expressions denote
values, while improper expressions are nondenoting (i.e., they do not
denote anything).

Operators denote operations.  Many sorts of syntactic entities can be
formalized in Chiron as operators.  Examples include logical
connectives; individual constants, function symbols, and predicate
symbols from first-order logic; type constants and type constructors
including dependent type constructors; and definedness operators.
Like a function or predicate symbol in first-order logic, an operator
in Chiron is not useful unless it is applied.

Types are used to restrict the values of operators and variables and
to classify terms by their values.  They denote superclasses.  Terms
are used to describe classes.  They denote classes or the undefined
value $\Undefined$.  A term is \emph{defined} if it denotes a class
and is \emph{undefined} if it denotes $\Undefined$.  Every term is
assigned a type.  Suppose a term $a$ is assigned a type $\alpha$ and
$\alpha$ denotes a superclass $\Sigma_\alpha$.  If $a$ is defined,
i.e., $a$ denotes a class $x$, then $x$ is in $\Sigma_\alpha$.
Formulas are used to make assertions.  They denote truth values.

The proper expressions are categorized according to their first
(leftmost) symbols:

\be

  \item Operator and operator applications
  (\mname{op}, \mname{op-app}).

  \item Variables (\mname{var}).

  \item \bsp Type applications and dependent function types
  (\mname{type-app}, \mname{dep-fun-type}).\esp

  \item Function applications and abstractions (\mname{fun-app},
  \mname{fun-abs}).

  \item Conditional terms (\mname{if}).

  \item Existential quantifications (\mname{exists}). 

  \item Definite and indefinite descriptions (\mname{def-des},
  \mname{indef-des}).

  \item Quotations and evaluations (\mname{quote},
  \mname{eval}).

\ee

\subsection{Dependent Function Types}

A \emph{dependent function type} is a type of the form
\[\gamma = (\mname{dep-fun-type},(\mname{var},x,\alpha),\beta)\] 
where $\alpha$ and $\beta$ are types.  (Dependent function types are
commonly known as \emph{dependent product types}.)  The type $\gamma$
denotes a superclass of possibly partial functions.  A function
abstraction of the form
\[(\mname{fun-abs},(\mname{var},x,\alpha),b),\] where $b$ is a term of
type $\beta$, is of type $\gamma$.

The dependent function type $\gamma$ is a generalization of the more
common function type $\alpha \tarrow \beta$.  If $f$ is a term of type
$\alpha \tarrow \beta$ and $a$ is a term of type $\alpha$, then the
application $f(a)$ is of type $\beta$---which does not depend on the
value of $a$.  In Chiron, however, if $f$ is a term of type $\gamma$
and $a$ is a term of type $\alpha$, then the term
\[(\mname{fun-app},f,a),\] the application of $f$ to $a$, is of the
type \[(\mname{type-app}, \gamma,a),\] the type formed by applying
the type $\gamma$ to $a$---which generally depends on the value of $a$.

\subsection{Undefinedness}

An expression is \emph{undefined} if it has no prescribed meaning or
if it denotes a value that does not exist.  There are several sources
of undefined expressions in Chiron:

\bi

  \item Nondenoting operator, type, and function applications.

  \item Nonexistent function abstractions.

  \item Improper definite and indefinite descriptions.

  \item Out of range variables and evaluations.

\ei
Undefined expressions are handled in Chiron according to the
\emph{traditional approach to undefinedness}~\cite{Farmer04}.  The
value of an undefined term is the undefined value $\Undefined$, but
the value of an undefined type or formula is $\Dc$ (the universal
superclass) or $\FALSE$, respectively.  That is, the values for
undefined types, terms, and formulas are $\Dc$, $\Undefined$, and
$\FALSE$, respectively.  Commonly used in mathematical practice, the
traditional approach to undefinedness enables statements involving
partial functions and definite and indefinite descriptions to be
expressed very concisely~\cite{Farmer04}.

\subsection{Quotation and Evaluation}

A \emph{construction} is a set that represents the syntactic structure
of an expression.  A term of the form $(\mname{quote}, e)$, where $e$
is an expression, denotes the construction that represents $e$.  Thus
a proper expression $e$ has two different meanings:

\be

  \item The \emph{semantic meaning} of $e$ is the value denoted by $e$
  itself.

  \item The \emph{syntactic meaning} of $e$ is the construction
  denoted by $(\mname{quote}, e)$.

\ee

There are two ways to refer to a semantic meaning $v$.  The first is
to directly form a proper expression $e$ not beginning with
\mname{eval} that denotes $v$.  The second is to form a term $a$ that
denotes the construction that represents a proper expression $e$ that
denotes $v$ and then form the type $(\mname{eval}, a, \mname{type})$,
term $(\mname{eval}, a, \alpha)$, or formula $(\mname{eval}, a,
\mname{formula})$ (depending on whether $e$ is a type, a term assigned
the type $\alpha$, or a formula) which denotes $v$.

Likewise there are two ways to refer to a syntactic meaning $c$.  The
first is to directly form a term $a$ not beginning with \mname{quote}
that denotes $c$.  The second is to form an expression $e$ such that
the construction $c$ represents the syntactic structure of $e$ and
then form the expression $(\mname{quote}, e)$ which denotes $c$.

For an expression $e$, the term $(\mname{quote}, e)$ denotes the
syntactic meaning of $e$ and is thus always defined (even when $e$ is
an undefined term or an improper (nondenoting) expression).  However, a
term $(\mname{eval}, a, \alpha)$, where $\alpha$ is a type, may be
undefined.

\newpage

\section{Syntax}\label{sec:syntax}

This section presents the syntax of Chiron which is inspired by the
S-expression syntax of the Lisp family of programming languages.

\subsection{Expressions}\label{subsec:expressions}

Let $\sS$ be a fixed countably infinite set of symbols and $\sK$ be
the set of the 16 symbols given in Table~\ref{tab:keywords}.  Assume
$\sK \subseteq \sS$.  The members of $\sS$ are the \emph{symbols} of
Chiron and the members of $\sK$ are the \emph{key words} of Chiron.
The key words are used to classify expressions and identify different
categories of expressions.

\begin{table}[t]
\bc
\begin{tabular}{|l|l|l|l|}\hline
 
\mname{op} & \mname{type} & \mname{term} & \mname{formula}  \\ \hline

\mname{op-app} & \mname{var} & \mname{type-app} & \mname{dep-fun-type} \\ \hline

\mname{fun-app} & \mname{fun-abs} & \mname{if} & \mname{exists} \\ \hline

\mname{def-des} & \mname{indef-des} & \mname{quote} & \mname{eval} \\ \hline

\end{tabular}
\ec
\caption{The Key Words of Chiron.\label{tab:keywords}}
\end{table}

Let a \emph{signature form} be a sequence $s_1,\ldots,s_{n+1}$ of
symbols where $n \ge 0$ and each $s_i$ is the symbol \mname{type},
\mname{term}, or \mname{formula}.  A \emph{language} of Chiron is a
pair $L = (\sO,\theta)$ where:

\be

  \item $\sO$ is a countable set of symbols such that (1) $\sO$ and
    $\sS$ are disjoint and (2)~$\sO_0 \subseteq \sO$ where
    $\sO_0$ is the set of the 18 symbols given in
    Table~\ref{tab:op-names}.  The members of $\sO$ are called
    \emph{operator names} and the members of $\sO_0$ are the
    \emph{built-in operator names} of Chiron.

  \item $\theta$ maps each $o \in \sO$ to a signature form such that,
    for each $o \in \sO_0$, $\theta(o)$ is the signature form
    assigned to $o$ in Table~\ref{tab:op-names}.

\ee
Throughout this paper let $L = (\sO,\theta)$ be a language of Chiron.

Let $L_i = (\sO_i,\theta_i)$ be a language of Chiron for $i=1,2$.
$L_1$ is a \emph{sublanguage} of $L_2$ (and $L_2$ is an
\emph{extension} of $L_1$), written $L_1 \le L_2$, if $\sO_1 \subseteq
\sO_2$ and $\theta_1$ is a subfunction of $\theta_2$.

\begin{table}[t]
\bc
\begin{tabular}{|rll|}\hline
   & \textbf{Operator Name} & \textbf{Signature Form} \\

1. & \mname{set}             & \mname{type} \\

2. & \mname{class}           & \mname{type} \\

3. & \mname{op-names}        & \mname{term} \\

4. & \mname{lang}            & \mname{type}\\
 
5. & \mname{expr-sym}        & \mname{type} \\

6. & \mname{expr-op-name}    & \mname{term}, \mname{type} \\

7. & \mname{expr}            & \mname{term}, \mname{type} \\

8. & \mname{expr-op}         & \mname{term}, \mname{type} \\

9. & \mname{expr-type}       & \mname{term}, \mname{type} \\

10. & \mname{expr-term}      & \mname{term}, \mname{type} \\

11. & \mname{expr-term-type} & \mname{term}, \mname{term}, \mname{type} \\

12. & \mname{expr-formula}   & \mname{term}, \mname{type} \\

13. & \mname{in}             & \mname{term}, \mname{term}, \mname{formula} \\

14. & \mname{type-equal}     & \mname{type}, \mname{type}, \mname{formula} \\

15. & \mname{term-equal}     & \mname{term}, \mname{term}, \mname{type}, \mname{formula} \\

16. & \mname{formula-equal}  & \mname{formula}, \mname{formula}, \mname{formula} \\

17. & \mname{not}            & \mname{formula}, \mname{formula} \\

18. & \mname{or}             & \mname{formula}, \mname{formula}, \mname{formula} \\ \hline

\end{tabular}
\ec
\caption{The Built-In Operator Names of Chiron.\label{tab:op-names}}
\end{table}

The two formation rules below inductively define the notion of an
\emph{expression} of $L$.  $\cexpr{L}{e}$ asserts that $e$ is an
expression of $L$.

\bi

  \item[]\textbf{Expr-1 (Atomic expression)}\vspace*{-2mm} \[\frac{s
  \in \sS \cup \sO} {\cexpr{L}{s}}\]

  \item[]\textbf{Expr-2 (Compound expression)}\vspace*{-2mm}
  \[\frac{\cexpr{L}{e_1},\ldots,\cexpr{L}{e_n}}
  {\cexpr{L}{(e_1,\ldots,e_n)}}\]

  where $n \ge 0$.

\ei
Hence, an expression is an S-expression (with commas in place of
spaces) that exhibits the structure of a tree whose leaves are symbols
in $\sS \cup \sO$.  Let $\sE_L$ denote the set of expressions of $L$.

A \emph{proper expression} of $L$ is an expression of $L$ defined by
the set of 13 formation rules below.  A proper expression denotes a
class, a truth value, the undefined value, or an operation.  Each
proper expression of $L$ is assigned an expression.
$\cpexpr{L}{e}{e'}$ asserts that $e \in \sE_L$ is a proper expression
of $L$ to which the expression $e' \in \sE_L$ is assigned.  An
\emph{improper expression} of $L$ is an expression of $L$ that is not
a proper expression of $L$.  Improper expressions are nondenoting.

There are four sorts of proper expressions.  An \emph{operator} of $L$
is a proper expression of $L$ to which the expression \mname{op} is
assigned.  A \emph{type} of $L$ is a proper expression of $L$ to which
the expression \mname{type} is assigned.  A \emph{term} of $L$ is a
proper expression of $L$ to which a type of $L$ is assigned.  And
a \emph{formula} of $L$ is a proper expression of $L$ to which the
expression \mname{formula} is assigned.  When $a$ is a term of $L$,
$\alpha$ is a type of $L$, and $\cpexpr{L}{a}{\alpha}$ holds, $a$ is
said to be a \emph{term of type} $\alpha$.  As we mentioned earlier,
operators denote operations, types denote superclasses, terms denote
classes or the undefined value $\Undefined$, and formulas denote the
truth values $\TRUE$ and $\FALSE$.

\bsp $\cop{L}{O}$ means $\cpexpr{L}{O}{\mname{op}}$, $\ctype{L}{\alpha}$
means $\cpexpr{L}{\alpha}{\mname{type}}$, $\cterm{L}{a}$ means
$\cpexpr{L}{a}{\alpha}$ for some type $\alpha$ of $L$, and
$\cform{L}{A}$ means $\cpexpr{L}{A}{\mname{formula}}$.
$\cterma{L}{a}{\alpha}$ means $\cpexpr{L}{a}{\alpha}$ and
$\ctype{L}{\alpha}$, i.e., $a$ is a term of type $\alpha$.  An
expression $k$ is a \emph{kind} of $L$, written $\ckind{L}{k}$, if
$k= \mname{type}$, $\ctype{L}{k}$, or $k=\mname{formula}$.  Thus kinds
are the expressions assigned to types, terms, and formulas.  A proper
expression $e$ of $L$ is said to be an \emph{expression of kind} $k$
if $k= \mname{type}$ and $e$ is a type, $\ctype{L}{k}$ and $e$ is a
term of type $k$, or $k=\mname{formula}$ and $e$ is a formula. \esp

The following formation rules define the 13 proper expression
categories of Chiron:

\bi

  \item[]\textbf{P-Expr-1 (Operator)}\vspace*{-2mm}
  \[\frac{o \in \sO, \ckind{L}{k_1},\ldots,\ckind{L}{k_{n+1}}}
  {\cop{L}{(\mname{op}, o, k_1,\ldots,k_{n+1})}}\] where $n \ge 0$;
  $\theta(o) = s_1,\ldots,s_{n+1}$; and $k_i=s_i=\mname{type}$,
  $\ctype{L}{k_i}$ and $s_i = \mname{term}$, or $k_i=
  s_i=\mname{formula}$ for all $i$ with $1 \le i \le n+1$.

  \item[]\textbf{P-Expr-2 (Operator application)}\vspace*{-2mm}
  \[\frac{\cop{L}{(\mname{op}, o, k_1,\ldots,k_{n+1})},
  \cexpr{L}{e_1},\ldots,\cexpr{L}{e_n}}
  {\cpexpr{L}{(\mname{op-app}, (\mname{op}, o, k_1,\ldots,k_{n+1}), 
  e_1,\ldots,e_n)}{k_{n+1}}}\] 
  \bsp where $n \ge 0$ and ($k_i=\mname{type}$ and $\ctype{L}{e_i}$), 
  ($\ctype{L}{k_i}$ and $\cterm{L}{e_i}$), or
  ($k_i=\mname{formula}$ and $\cform{L}{e_i}$)
  for all $i$ with $1 \le i \le n$. \esp

  \item[]\textbf{P-Expr-3 (Variable)} \vspace*{-2mm} 
  \[\frac{x \in \sS, \ctype{L}{\alpha}}
  {\cterma{L}{(\mname{var}, x, \alpha)}{\alpha}}\]

  \item[]\textbf{P-Expr-4 (Type application)}\vspace*{-2mm}
  \[\frac{\ctype{L}{\alpha}, \cterm{L}{a}} 
  {\ctype{L}{(\mname{type-app},\alpha,a)}}\]

  \item[]\textbf{P-Expr-5 (Dependent function type)}\vspace*{-2mm}
  \[\frac{\cterm{L}{(\mname{var},x,\alpha)},\ctype{L}{\beta}}
  {\ctype{L}{(\mname{dep-fun-type},(\mname{var},x,\alpha),\beta)}}\]

  \item[]\textbf{P-Expr-6 (Function application)}\vspace*{-2mm}
  \[\frac{\cterma{L}{f}{\alpha}, \cterm{L}{a}} 
  {\cterma{L}{(\mname{fun-app},f,a)}{(\mname{type-app},\alpha,a)}}\]

  \item[]\textbf{P-Expr-7 (Function abstraction)}\vspace*{-2mm}
  \[\frac{\cterm{L}{(\mname{var},x,\alpha)},\cterma{L}{b}{\beta}}
  {\cterma{L}{(\mname{fun-abs},(\mname{var},x,\alpha),b)}
  {(\mname{dep-fun-type},(\mname{var},x,\alpha),\beta)}}\]

  \item[]\textbf{P-Expr-8 (Conditional term)}\vspace*{-2mm}
  \[\frac{\cform{L}{A},\cterma{L}{b}{\beta},\cterma{L}{c}{\gamma}}
  {\cterma{L}{(\mname{if},A,b,c)} {\delta}}\]
  where $\delta =\left\{\begin{array}{ll}
                        \beta & \mbox{if }\beta = \gamma\\
                        (\mname{op-app},
                          (\mname{op}, \mname{class},\mname{type})) &
                        \mbox{otherwise}
                        \end{array}
                 \right.$

  \item[]\textbf{P-Expr-9 (Existential quantification)}\vspace*{-2mm}
  \[\frac{\cterm{L}{(\mname{var}, x, \alpha)}, \cform{L}{B}}
  {\cform{L}{(\mname{exists},(\mname{var}, x, \alpha),B)}}\]

  \item[]\textbf{P-Expr-10 (Definite description)}\vspace*{-2mm}
  \[\frac{\cterm{L}{(\mname{var}, x, \alpha)}, \cform{L}{B}}
  {\cterma{L}{(\mname{def-des},(\mname{var}, x, \alpha),B)} {\alpha}}\]

  \item[]\textbf{P-Expr-11 (Indefinite description)}\vspace*{-2mm}
  \[\frac{\cterm{L}{(\mname{var}, x, \alpha)},  \cform{L}{B}}
  {\cterma{L}{(\mname{indef-des},(\mname{var}, x, \alpha),B)} {\alpha}}\]

  \item[]\textbf{P-Expr-12 (Quotation)}\vspace*{-2mm}
  \[\frac{\cexpr{L}{e}}
  {\cterma{L}{(\mname{quote}, e)}{\mname{E}}}\] 
  where $\mname{E} = (\mname{op-app}, (\mname{op}, \mname{expr},
  \mname{L}, \mname{type}),\ell)$ and $\mname{L}$ and $\ell$
  are defined as in Table~\ref{tab:operators}.

  \item[]\textbf{P-Expr-13 (Evaluation)}\vspace*{-2mm}
  \[\frac{\cterm{L}{a},\ckind{L}{k}}
  {\cpexpr{L}{(\mname{eval}, a, k)}{k}}\]

\ei
Note: An expanded definition of a proper expression with 25 proper
expression categories is given in appendix B.

\begin{cprop}
\bsp The formation rules assign a unique expression of $L$ to each
proper expression of $L$.\esp
\end{cprop}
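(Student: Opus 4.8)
The plan is to prove the proposition by induction on the structure of the proper expression $e$ (equivalently, by induction on the derivation of $\cpexpr{L}{e}{e'}$). Only uniqueness requires an argument: \emph{existence} of an assigned expression is definitional, since by the way a proper expression is defined, the assertion ``$e$ is a proper expression of $L$'' just means that $\cpexpr{L}{e}{e'}$ holds for some $e'$. So it suffices to show that $\cpexpr{L}{e}{e_1'}$ and $\cpexpr{L}{e}{e_2'}$ together imply $e_1' = e_2'$.

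The key observation is that the thirteen formation rules P-Expr-1 through P-Expr-13 are \emph{syntax-directed}. The subject of the conclusion of each rule is a compound expression $(e_1,\ldots,e_n)$ whose head $e_1$ is a key word, and these heads --- \mname{op} for P-Expr-1, \mname{op-app} for P-Expr-2, \mname{var} for P-Expr-3, and so on through \mname{eval} for P-Expr-13 --- are pairwise distinct (they are exactly the sixteen key words of Chiron other than \mname{type}, \mname{term}, and \mname{formula}). Hence the head symbol of $e$ already determines which single rule P-Expr-$i$ could have been used to derive $\cpexpr{L}{e}{e_1'}$, and likewise $\cpexpr{L}{e}{e_2'}$; both derivations must end with the same rule. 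Furthermore, since $e$ is a fixed S-expression, its tree structure fixes the decomposition of $e$ into the immediate subexpressions referred to in the premises of P-Expr-$i$ (for instance, in P-Expr-6 the operand must be the third component of $e$), so the rule can be instantiated in only one way.

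It then remains to check, case by case, that once the rule and this decomposition are fixed, the assigned expression is determined. For most rules this is immediate: in P-Expr-1, P-Expr-4, P-Expr-5, and P-Expr-9 the assigned expression is a fixed constant (\mname{op}, \mname{type}, \mname{type}, \mname{formula}); in P-Expr-2, P-Expr-3, P-Expr-10, P-Expr-11, and P-Expr-13 it is read off directly from $e$ (the last kind of the embedded operator, the type component of the variable, the type component of the description, or the kind component of the evaluation); and in P-Expr-12 it is the fixed expression $\mname{E}$ produced by the deterministic recipe attached to that rule. The only cases with any content are P-Expr-6, P-Expr-7, and P-Expr-8, where the assigned expression is built from the \emph{assigned type(s)} of one or two proper subexpressions of $e$ --- the function term $f$ in P-Expr-6, the body $b$ in P-Expr-7, and the two branches $b,c$ in P-Expr-8. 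Each of these is a strictly smaller proper expression, so by the induction hypothesis its assigned type is unique; hence $(\mname{type-app},\alpha,a)$ in P-Expr-6, $(\mname{dep-fun-type},(\mname{var},x,\alpha),\beta)$ in P-Expr-7, and $\delta$ in P-Expr-8 (which is fully determined once one knows whether $\beta$ and $\gamma$ are the same expression) are all uniquely determined. This closes the induction.

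I do not expect a serious obstacle: the argument is largely bookkeeping. The two points deserving a little care are (i) verifying that the thirteen head key words really are distinct and really do exhaust the possible heads of proper expressions, which is what makes the rules syntax-directed, and (ii) the three ``dependent'' cases P-Expr-6, P-Expr-7, and P-Expr-8, which are the only places where the induction hypothesis is invoked and where, in P-Expr-8, one should note that the test ``$\beta = \gamma$'' is just syntactic identity of expressions and so is unambiguous.
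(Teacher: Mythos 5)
Your proof is correct. The paper in fact states this proposition without any proof (treating it as routine), and your syntax-directed structural induction — distinct key-word heads forcing a unique applicable rule, with the induction hypothesis needed only in the function-application, function-abstraction, and conditional-term cases where the assigned type of a subterm enters — is exactly the verification the paper leaves implicit.
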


Unless stated otherwise, an \emph{operator name}, \emph{operator},
etc.\ is an \emph{operator name}, \emph{operator}, etc.\ of $L$.  We
will use $s,t,u,v,w,x,y,z,\ldots$ to denote symbols; $o,o',\ldots$ to
denote operator names; $O,O',\ldots$ to denote operators;
$\alpha, \beta, \gamma, \ldots$ to denote types; $a,b,c,\ldots$ to
denote terms; $A,B,C, \ldots$ to denote formulas; and $k,k',\ldots$ to
denote kinds.

The \emph{length} of an expression $e \in \sE_L$, written $|e|$, is
defined recursively by the following statements:

\be

  \item If $s \in \sS \cup \sO$, $|s| = 1$.

  \item If $e_1,\ldots,e_n \in \sE_L$, $|(e_1,\ldots,e_n)| = 1 + |e_1| +
  \cdots |e_n|$.

\ee
Notice that $|(\,)| = 1$ and, in general, $|e|$ equals the number of
symbols and parenthesis pairs occurring in $e$.  The \emph{complexity}
of an expression $e \in \sE_L$, written $c(e)$, is the pair $(m,n) \in
\textbf{N} \times \textbf{N}$ of natural numbers such that: \be

  \item $m$ is the number of occurrences of the symbol \mname{eval} in
    $e$ that are not within a quotation.

  \item $n$ is the length of $e$.

\ee
For $c(e_1) = (m_1,n_1)$ and $c(e_2) = (m_2,n_2)$, $c(e_1) < c(e_2)$
means either $m_1 < m_2$ or ($m_1 = m_2$ and $n_1 < n_2$).  

Let $O = (\mname{op}, o, k_1,\ldots,k_{n+1})$ be an operator.  The
operator name $o$ is called the \emph{name} of $O$, and the list
$k_1,\ldots,k_{n+1}$ of kinds is called the \emph{signature} of $O$.
$O$ is said to be an \emph{$n$-ary} operator because it is applied to
$n$ arguments in an operator application.  $O$ is a \emph{type
  operator}, \emph{term operator}, or \emph{formula operator} if
$k_{n+1}= \mname{type}$, $\ctype{L}{k_{n+1}}$, or $k_{n+1}=
\mname{formula}$, respectively.  A \emph{type constant} is a 0-ary
type operator application of the form
$(\mname{op-app},(\mname{op},o,\mname{type}))$.  A \emph{term
  constant} (or simply \emph{constant}) of type $\alpha$ is a 0-ary
term operator application of the form
$(\mname{op-app},(\mname{op},o,\alpha))$.  A \emph{formula constant}
is a 0-ary formula operator application of the form
$(\mname{op-app},(\mname{op},o,\mname{formula}))$.  Two operators are
\emph{similar} if their names are the same.

Let $o \in \sO$ and $F=\theta(o)$.  If $F$ contains the symbol
\mname{term}, there will be many operators with the name $o$ that have
different signatures, i.e., there will be many operators with the name
$o$ that are similar to each other.  However, each operator name will
usually be assigned a \emph{preferred signature}.  If $F$ does not
contain the symbol \mname{term}, $o$ is assigned $F$ as its preferred
signature.  Each built-in operator name is assigned a preferred
signature.  An operator formed from a built-in operator name and its
preferred signature is called a \emph{built-in operator} of Chiron.
The operators in Table~\ref{tab:operators} are the built-in operators
of Chiron.

\begin{table}[t]
\bc
\begin{tabular}{|rl|}\hline
   & \textbf{Operator} \\
 
1. & $(\mname{op}, \mname{set}, \mname{type})$ \\

2. & $(\mname{op}, \mname{class}, \mname{type})$ \\

3. & $(\mname{op}, \mname{op-names}, \mname{V})$ \\

4. & $(\mname{op}, \mname{lang}, \mname{type})$\\

5. & $(\mname{op}, \mname{expr-sym}, \mname{type})$ \\

6. & $(\mname{op}, \mname{expr-op-name}, \mname{L}, \mname{type})$ \\

7. & $(\mname{op}, \mname{expr}, \mname{L}, \mname{type})$ \\

8. & $(\mname{op}, \mname{expr-op}, \mname{L}, \mname{type})$ \\

9. & $(\mname{op}, \mname{expr-type}, \mname{L}, \mname{type})$ \\

10. & $(\mname{op}, \mname{expr-term}, \mname{L}, \mname{type})$ \\

11. & $(\mname{op}, \mname{expr-term-type}, \mname{L}, \mname{E}_{\rm ty}, 
      \mname{type})$ \\

12. & $(\mname{op}, \mname{expr-formula}, \mname{L}, \mname{type})$ \\

13. & $(\mname{op}, \mname{in}, \mname{V}, \mname{C}, \mname{formula})$ \\

14. & $(\mname{op}, \mname{type-equal}, \mname{type}, \mname{type},
      \mname{formula})$ \\

15. & $(\mname{op}, \mname{term-equal}, \mname{C}, \mname{C},
      \mname{type}, \mname{formula})$ \\

16. & $(\mname{op}, \mname{formula-equal}, \mname{formula}, 
  \mname{formula}, \mname{formula})$ \\

17. & $(\mname{op}, \mname{not},\mname{formula},\mname{formula})$ \\

18. & $(\mname{op}, \mname{or},\mname{formula},\mname{formula},
  \mname{formula})$ \\ 

\multicolumn{2}{|l|}{where:} \\
& $\mname{V} = (\mname{op-app}, (\mname{op}, \mname{set},\mname{type}))$ \\
& $\mname{C} = (\mname{op-app}, (\mname{op}, \mname{class},\mname{type}))$ \\
& $\ell = (\mname{op-app}, (\mname{op}, \mname{opnames},\mname{V}))$ \\
& $\mname{L} = (\mname{op-app}, (\mname{op}, \mname{lang}, \mname{type}))$ \\ 
& $\mname{E}_{\rm ty} = (\mname{op-app}, (\mname{op}, \mname{expr-type}, \mname{L}, \mname{type}),\ell)$ \\
\hline
\end{tabular}
\ec
\caption{The Built-In Operators of Chiron.\label{tab:operators}}
\end{table}

\begin{crem} \em
A language $L=(\sO,\theta)$ can be presented as a set $L'$ of
operators such that, for each symbol $o$, there is at most one
operator in $L'$ whose name is $o$.  $\sO$ is the set of operator
names $o$ such that $o \in \sO$ iff $o$ is the name of some operator
in $L'$, and $\theta$ is the function from $\sO$ to signature forms
such that, for all $(o :: k_1,\ldots,k_{n+1}) \in L'$, $\theta(o)$ is
the signature form corresponding to $k_1,\ldots,k_{n+1}$.  In
addition, for each $(o :: k_1,\ldots,k_{n+1}) \in L'$,
$k_1,\ldots,k_{n+1}$ is the preferred signature of $o$.
\end{crem}

Let $a = (\mname{var}, x, \alpha)$ be a variable.  $x$ is called the
\emph{name} of $a$, and $\alpha$ is called the \emph{type} of $a$.
Two variables are similar if their names are the same.

An expression $e$ is \emph{eval-free} if all occurrences of the symbol
$\mname{eval}$ in $e$ are within a quotation.  Notice that, if $e$ is
eval-free, then the complexity of $e$ is $c(e) = (0,|e|)$.  We will
use $\alpha^{\rm ef}, \beta^{\rm ef}, \gamma^{\rm ef}, \ldots$;
$a^{\rm ef}, b^{\rm ef}, c^{\rm ef},\ldots$; and $A^{\rm ef}, B^{\rm
ef}, C^{\rm ef}, \ldots$ to denote eval-free types, terms, and
formulas, respectively.

A \emph{subexpression} of an expression is defined inductively as
follows:
\be

  \item If $e$ is a proper expression, then $e$ is a subexpression of
  itself.

  \item If $e = (s,e_1,\ldots,e_n)$ is a proper expression such that
    $s$ is not $\mname{quote}$, then $e_i$ is a subexpression of $e$
    for each proper expression $e_i$ with $1 \le i \le n$.

  \item If $e$ is a subexpression of $e'$ and $e'$ is a subexpression
  of $e''$, then $e$ is a subexpression of $e''$.

\ee
$e$ is a \emph{proper subexpression} of $e'$ if $e$ is a subexpression
of $e'$ and $e \not= e'$.  Notice that (1) a subexpression is always a
proper expression, (2) an improper expression has no subexpressions,
(3) a quotation has no proper subexpressions, and (4) if $e_1$ is a
proper subexpression of $e_2$, then $|e_1| < |e_2|$ and $c(e_1) <
c(e_2)$, i.e., the length and the complexity of a proper subexpression
of an expression is strictly less than length and the complexity of
the expression itself.

\subsection{Compact Notation}\label{subsec:compact}

We introduce in this subsection a compact notation for proper
expressions---which we will use in the rest of the paper whenever it
is convenient.  The first group of notational definitions in
Table~\ref{tab:compacta} defines the compact notation for each of the
13 proper expression categories.

\begin{table}[t]
\bc
\begin{tabular}{|ll|}
\hline
\textbf{Compact Notation} & \textbf{Official Notation} \\
$(o::k_1,\ldots,k_{n+1})$ & $(\mname{op},o, k_1,\ldots,k_{n+1})$ \\
$(o::k_1,\ldots,k_{n+1})(e_1,\ldots,e_n)$ &
  $(\mname{op-app}, (\mname{op},o, k_1,\ldots,k_{n+1}), e_1,\ldots,e_n)$ \\
$(x \mcolon \alpha)$ & $(\mname{var}, x, \alpha)$ \\
$\alpha(a)$ & $(\mname{type-app}, \alpha, a)$ \\
$(\LAMBDAapp x \mcolon \alpha \mdot \beta)$ &
  $(\mname{dep-fun-type},(\mname{var},x,\alpha),\beta)$ \\
$f(a)$ & $(\mname{fun-app},f,a)$ \\
$(\LambdaApp x \mcolon \alpha \mdot b)$ &
  $(\mname{fun-abs},(\mname{var},x,\alpha),b)$ \\
$\mname{if}(A,b,c)$ & $(\mname{if},A,b,c)$ \\
$(\ForsomeApp x \mcolon \alpha \mdot B)$ &
  $(\mname{exists},(\mname{var},x,\alpha),B)$ \\
$(\iotaApp x \mcolon \alpha \mdot B)$ &
  $(\mname{def-des},(\mname{var},x,\alpha),B)$ \\
$(\epsilonApp x \mcolon \alpha \mdot B)$ &
  $(\mname{indef-des},(\mname{var},x,\alpha),B)$ \\
$\synbrack{e}$ & $(\mname{quote},e)$ \\
$\sembrack{a}_k$ & $(\mname{eval},a,k)$\\
$\sembrack{a}_{\rm ty}$ & $(\mname{eval},a,\mname{type})$\\
$\sembrack{a}_{\rm te}$ & 
  $(\mname{eval},a,
  (\mname{op-app}, (\mname{op}, \mname{class},\mname{type})))$\\
$\sembrack{a}_{\rm fo}$ & $(\mname{eval},a,\mname{formula})$\\
\hline
\end{tabular}
\ec
\caption{Compact Notation}\label{tab:compacta}
\end{table}

The next group of notational definitions in Table~\ref{tab:compactb}
defines additional compact notation for the built-in operators and the
universal quantifier.

\begin{table}
\bc
\begin{tabular}{|ll|}
\hline
\textbf{Compact Notation} & \textbf{Defining Expression} \\
$\mname{V}$ & $(\mname{set} :: \mname{type})(\,)$ \\
$\mname{C}$ & $(\mname{class} :: \mname{type})(\,)$ \\
$\ell$ & $(\mname{op-names} :: \mname{term})(\,)$ \\
$\mname{L}$ & $(\mname{lang} :: \mname{type})(\,)$ \\
$\mname{E}_{\rm sy}$ & $(\mname{expr-sym} :: \mname{type})(\,)$ \\
$\mname{E}_{{\rm on},a}$ & $(\mname{expr-op-name} :: \mname{L}, 
  \mname{type})(a)$ \\
$\mname{E}_{\rm on}$ & $(\mname{expr-op-name} :: \mname{L}, 
  \mname{type})(\ell)$ \\
$\mname{E}_a$ & $(\mname{expr} :: \mname{L}, \mname{type})(a)$ \\
$\mname{E}$ & $(\mname{expr} :: \mname{L}, \mname{type})(\ell)$ \\
$\mname{E}_{{\rm op},a}$ & $(\mname{expr-op} :: \mname{L}, 
  \mname{type})(a)$ \\
$\mname{E}_{\rm op}$ & $(\mname{expr-op} :: \mname{L},
  \mname{type})(\ell)$ \\
$\mname{E}_{{\rm ty},a}$ & $(\mname{expr-type} :: \mname{L},
  \mname{type})(a)$ \\
$\mname{E}_{\rm ty}$ & $(\mname{expr-type} :: \mname{L},
  \mname{type})(\ell)$ \\
$\mname{E}_{{\rm te},a}$ & $(\mname{expr-term} :: \mname{L},
  \mname{type})(a)$ \\
$\mname{E}_{\rm te}$ & $(\mname{expr-term} :: \mname{L},
  \mname{type})(\ell)$ \\
$\mname{E}_{{\rm te},a}^{b}$ & $(\mname{expr-term-type} :: \mname{L},
  \mname{E}_{\rm ty}, \mname{type})(a,b)$ \\
$\mname{E}_{\rm te}^{b}$ & $(\mname{expr-term-type} :: \mname{L}, 
  \mname{E}_{\rm ty}, \mname{type})(\ell,b)$ \\
$\mname{E}_{{\rm fo},a}$ & $(\mname{expr-formula} :: \mname{L},
  \mname{type})(a)$ \\
$\mname{E}_{\rm fo}$ & $(\mname{expr-formula} :: \mname{L},
  \mname{type})(\ell)$ \\
$(a \in b)$ & $(\mname{in} :: 
  \mname{V},\mname{C},\mname{formula})(a,b)$\\
$(\alpha \TypeEqual \beta)$ & $(\mname{type-equal} :: 
  \mname{type},\mname{type},\mname{formula})(\alpha,\beta)$\\
$(a \Equal_\alpha b)$ & $(\mname{term-equal} :: 
  \mname{C},\mname{C},\mname{type},\mname{formula})(a,b,\alpha)$\\
$(a \Equal b)$ & $(a \Equal_{\sf C} b)$\\
$(A \Iff B)$ & $(\mname{formula-equal} :: 
  \mname{formula},\mname{formula},\mname{formula})(A,B)$\\
$(\neg A)$ & $(\mname{not} :: \mname{formula},\mname{formula})(A)$\\
$(a \not\in b)$ & $(\Neg (a \in b))$ \\
$(a \not\Equal b)$ & $(\Neg (a \Equal b))$ \\
$(A \Or B)$ & $(\mname{or} :: 
  \mname{formula},\mname{formula},\mname{formula})(A,B)$\\
$(\ForallApp x\mcolon\alpha\mdot A)$ &
  $(\Neg(\ForsomeApp x\mcolon\alpha\mdot(\Neg A)))$\\
\hline
\end{tabular}
\ec
\caption{Additional Compact Notation}\label{tab:compactb}
\end{table}

We will often employ the following abbreviation rules when using the
compact notation:
\be

  \item A matching pair of parentheses in an expression may be dropped
  if there is no resulting ambiguity.

  \item A variable $(x\mcolon\alpha)$ occurring in the body $e$ of
  $(\StarApp {x\mcolon\alpha} \mdot e)$, where $\star$ is $\Lambda$,
  $\lambda$, $\Forsome$, $\Forall$, $\iota$, or $\epsilon$ may be
  written as $x$ if there is no resulting ambiguity.

  \item $(\StarApp x_1\mcolon \alpha_1 \ldots (\StarApp x_n\mcolon
  \alpha_n \mdot e) \cdots)$, where $\star$ is $\Lambda$, $\lambda$,
  $\Forsome$, or $\Forall$, may be written as \[(\StarApp
  x_1\mcolon\alpha_1, \ldots, x_n\mcolon \alpha_n \mdot e).\]
  Similarly, $(\StarApp x_1\mcolon \alpha \ldots (\StarApp x_n\mcolon
  \alpha \mdot e) \cdots)$, where $\star$ is $\Lambda$, $\lambda$,
  $\Forsome$, or $\Forall$, may be written as \[(\StarApp
  x_1,\ldots,x_n\mcolon \alpha \mdot e).\]

  \item If we assign a fixed type, say $\alpha$, to a symbol $x$ to be
    used as a variable name, then an expression of the form
    $(\StarApp{x} \mcolon \alpha \mdot e)$, where $\star$ is
    $\Lambda$, $\lambda$, $\Forsome$, $\Forall$, $\iota$, or
    $\epsilon$, may be written as $(\StarApp{x} \mdot e)$.

  \item \bsp If $k_1,\ldots,k_{n+1}$ is the preferred signature
    assigned to an operator name $o$, then an operator application of
    the form \[(o::k_1,\ldots,k_{n+1})(e_1,\ldots,e_n)\] may be
    written as $o(e_1,\ldots,e_n)$ and an operator application of the
    form $(o::k)(\,)$ may be written as $o$. \esp

  \item $\sembrack{a}_{\rm ty}$, $\sembrack{a}_{\rm te}$,
  $\sembrack{a}_\alpha$, and $\sembrack{a}_{\rm fo}$ may be shortened
  to $\sembrack{a}$ if $a$ is of type $\mname{E}_{\rm ty}$,
  $\mname{E}_{\rm te}$, $\mname{E}_{\rm te}^{\synbrack{\alpha}}$, and
  $\mname{E}_{\rm fo}$, respectively.

\ee

Using the compact notation, expressions can be written in Chiron so
that they look very much like expressions written in mathematics
textbooks and papers.

\subsection{Quasiquotation}\label{subsec:quasiquote}

Quasiquotation is a parameterized form of quotation in which the
parameters serve as holes in a quotation that are filled with the
values of expressions.  It is a very powerful syntactic device for
specifying expressions and defining macros.  Quasiquotation was
introduced by W. Quine in 1940 in the first version of his book
\emph{Mathematical Logic}~\cite{Quine03}.  It has been extensively
employed in the Lisp family of programming
languages~\cite{Bawden99}.\footnote{In Lisp, the standard symbol for
quasiquotation is the backquote (\texttt{`}) symbol, and thus in Lisp,
quasiquotation is usually called \emph{backquote}.}

We will introduce quasiquotation into Chiron as a notational
definition. Unlike quotation, quasiquotation will not be part of the
official Chiron syntax.  The meaning of a quasiquotation will be an
expression that denotes a construction.

The three formation rules below inductively define the notion of a
\emph{marked expression} of $L$.  $\cmexpr{L}{m}$ asserts that $m$ is
a marked expression of $L$.

\bi

  \item[]\textbf{M-Expr-1}\vspace*{-2mm} 
  \[\frac{s \in \sS \cup \sO} {\cmexpr{L}{s}}\]

  \item[]\textbf{M-Expr-2}\vspace*{-2mm} 
  \[\frac{\cterm{L}{a}}{\cmexpr{L}{\commabrack{a}}}\]

  \item[]\textbf{M-Expr-3}\vspace*{-2mm}
  \[\frac{\cmexpr{L}{m_1},\ldots,\cmexpr{L}{m_n}}
   {\cmexpr{L}{(m_1,\ldots,m_n)}}\]

  where $n \ge 0$.

\ei
A marked expression of the form $\commabrack{a}$ is called an
\emph{evaluated component}.

\begin{cprop} 
Every expression is a marked expression of $L$.
\end{cprop}

A \emph{quasiquotation} of $L$ is a syntactic entity of the form
\[(\mname{quasiquote},m)\] where $m$ is a marked 
expression of $L$.\footnote{The evaluated components in a
  quasiquotation are sometimes called
  \emph{antiquotations}.}$\phantom{}^,$\footnote{Quasiquotations
  correspond to backquotes in Lisp as follows.  The symbol
  \mname{quasiquote} corresponds to the backquote symbol (\texttt{`})
  and an evaluated component $\commabrack{a}$ in a quasiquotation
  corresponds to $,a$.  Thus the quasiquotation $(\mname{quasiquote},
  (a,b,\commabrack{c}))$ corresponds to the backquote \texttt{`($a$
    $b$ ,$c$)}.}  A compact notation for quasiquotation can be easily
defined as indicated by the following example: Let
$\synbrack{f(\commabrack{a})}$ be the compact notation for the
quasiquotation
\[(\mname{quasiquote},(\mname{fun-app},f,\commabrack{a})).\] Here we
are using $\synbrack{m}$ when $m$ is an expression to mean the
quotation of $m$ and when $m$ is a marked expression containing
evaluated components to mean the quasiquotation of $m$.

We next define the semantics of quasiquotation.  It assumes a
knowledge of the semantics of Chiron given in
section~\ref{sec:semantics} and the \mname{defined-in} and
\mname{ord-pair} operators defined in section~\ref{sec:op-defs}.  Let
$F$ be the function, mapping marked expressions of $L$ to terms of
$L$, recursively defined by:

\be

  \item If $m$ is a symbol $s \in \sS \cup \sO$, then $F(m) =
    (\mname{quote},s)$.

  \item If $m$ is an evaluated component $\commabrack{a}$, then $F(m)
  = a$.

  \item If $m$ is a marked expression $(m_1,\ldots,m_n)$ where $n \ge
  0$, then \[F(m) = [F(m_1),\ldots,F(m_n)].\]

\ee
Note that, as defined in subsection~\ref{subsec:setthop},
$[a_1,\ldots,a_n]$ denotes an $n$-tuple of sets.  For a quasiquotation
$q= (\mname{quasiquote},m)$, define $G(q) = F(m)$.

\begin{cprop} 
\be

  \item[]

  \item If $e$ is an expression, then $G((\mname{quasiquote},e))$ is a
    term of $L$ such that \[\models G((\mname{quasiquote},e)) =
    (\mname{quote},e).\]

  \item If $q$ is a quasiquotation containing an evaluated component
    $\commabrack{(\mname{quote},e)}$ and $q'$ is the quasiquotation
    that results from replacing $\commabrack{(\mname{quote},e)}$ in
    $q$ with $e$, then \[\models G(q) = G(q').\]

  \item If $\set{\commabrack{a_1},\ldots,\commabrack{a_n}}$ is the set
  of evaluated components occurring in a quasiquotation $q$, then
  \[\models (a_1 \IsDef \mname{E} \And \cdots \And a_n \IsDef \mname{E}) 
  \Implies G(q) \IsDef \mname{E}.\]

\ee
\end{cprop}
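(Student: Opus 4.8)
The plan is to prove all three parts by structural induction on (marked) expressions, using the recursive definition of $F$ (and $G(q) = F(m)$ for $q = (\mname{quasiquote},m)$) together with two facts about quotation supplied by the semantics of Section~\ref{sec:semantics}: (a) for every expression $e$, the term $(\mname{quote},e)$ is always defined and has type $\mname{E}$, its value being the construction that codes $e$; and (b) the construction coding a compound expression $(e_1,\ldots,e_n)$ is the $n$-tuple of the constructions coding the $e_i$, so that $\models (\mname{quote},(e_1,\ldots,e_n)) = [(\mname{quote},e_1),\ldots,(\mname{quote},e_n)]$ and the superclass denoted by $\mname{E}$ is closed under the tuple operation $[\,\cdot\,]$. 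I will also use the routine fact that term-equality is a congruence for the built-in and defined operators, in particular for $[\,\cdot\,]$: if $\models a_i = b_i$ for all $i$, then $\models [\ldots,a_i,\ldots] = [\ldots,b_i,\ldots]$.

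For part~1, induct on the structure of the expression $e$. Since $e$ is an expression it contains no evaluated components, so $F(e)$ is built using only clauses~1 and~3 of the definition of $F$ and is always a term of $L$. If $e$ is a symbol $s \in \sS \cup \sO$, then $G((\mname{quasiquote},s)) = F(s) = (\mname{quote},s)$, and $\models (\mname{quote},s) = (\mname{quote},s)$ since $(\mname{quote},s)$ is defined by~(a). If $e = (e_1,\ldots,e_n)$, then $G((\mname{quasiquote},e)) = [F(e_1),\ldots,F(e_n)]$; by the induction hypothesis each $F(e_i)$ is a term with $\models F(e_i) = (\mname{quote},e_i)$, so by congruence $\models [F(e_1),\ldots,F(e_n)] = [(\mname{quote},e_1),\ldots,(\mname{quote},e_n)]$, which by fact~(b) is provably equal to $(\mname{quote},e)$; transitivity finishes the case.

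For part~2, induct on the structure of the marked expression $m$ underlying $q$, following the occurrence of $\commabrack{(\mname{quote},e)}$ that is replaced. In the base case $m = \commabrack{(\mname{quote},e)}$, so $m' = e$; then $G(q) = F(m) = (\mname{quote},e)$ and $G(q') = F(e)$, and part~1 gives $\models F(e) = (\mname{quote},e)$, hence $\models G(q) = G(q')$. In the inductive case $m = (m_1,\ldots,m_k)$ with the occurrence inside some $m_j$, let $m'_j$ be $m_j$ after the replacement; the induction hypothesis applied to $(\mname{quasiquote},m_j)$ gives $\models F(m_j) = F(m'_j)$, and since $G(q)$ and $G(q')$ are the corresponding tuples differing only in the $j$-th slot, congruence of term-equality under $[\,\cdot\,]$ yields $\models G(q) = G(q')$. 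This congruence step is the main obstacle, because of undefinedness: the other evaluated components of $q$ may denote $\Undefined$ in some models, so one must check that $G(q)$ and $G(q')$ have exactly the same definedness behaviour—which they do, since by part~1 the replaced component $\commabrack{(\mname{quote},e)}$ and its replacement $e$ have provably equal (hence equi-defined) $F$-values and $[\,\cdot\,]$ is strict—so that the claimed term-equality genuinely holds rather than only quasi-equality $\QuasiEqual$.

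For part~3, induct again on $m$. If $m$ is a symbol $s$, then $F(m) = (\mname{quote},s)$, which by~(a) is defined with value a construction coding the expression $s$, so $\models G(q) \IsDef \mname{E}$ (the antecedent is vacuous, there being no evaluated components). If $m = \commabrack{a}$, then $F(m) = a$ and $\commabrack{a}$ is the only evaluated component, so the claim is the tautology $\models a \IsDef \mname{E} \Implies a \IsDef \mname{E}$. If $m = (m_1,\ldots,m_k)$, the evaluated components of $m$ are partitioned among the $m_j$; assuming $a_i \IsDef \mname{E}$ for every evaluated component $\commabrack{a_i}$ of $m$, the induction hypothesis gives $F(m_j) \IsDef \mname{E}$ for each $j$, i.e.\ each $F(m_j)$ denotes a construction coding some expression $e_j$ of $L$, whence $F(m) = [F(m_1),\ldots,F(m_k)]$ denotes the $k$-tuple of these constructions, which by fact~(b) codes the expression $(e_1,\ldots,e_k)$ and so lies in $\mname{E}$; discharging the assumption gives the implication. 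The only nontrivial ingredient here is again fact~(b)—closure of the superclass denoted by $\mname{E}$ under the tuple operation—which is where the definition of the $\mname{expr}$ operator in Section~\ref{sec:semantics} is used.
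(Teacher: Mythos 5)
Your treatment of parts 1 and 3 is sound: for an ordinary expression $e$ the term $G((\mname{quasiquote},e))$ is built solely from quotations and the tuple operator, all of its subterms denote constructions (hence sets), so every equational and definedness step you invoke is legitimate, and the closure of the superclass denoted by $\mname{E}$ under tuple formation is exactly the fact needed in part 3.

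The genuine gap is in your congruence step for part 2, and it is precisely the point you flag and then resolve the wrong way. In Chiron, $a = b$ abbreviates $a =_{\sf C} b$, and $I(\mname{term-equal})(x,y,\Sigma)$ is \emph{false} unless $x$ and $y$ are identical members of $\Sigma$; in particular, two undefined terms are never term-equal, they are only quasi-equal. So ``$G(q)$ and $G(q')$ have the same definedness behaviour and agree when defined'' yields $\models G(q) \QuasiEqual G(q')$, not $\models G(q) = G(q')$. If $q$ contains a second evaluated component $\commabrack{a}$ whose term can be undefined (take $\commabrack{\Undefined_{\sf C}}$, or a variable whose value is out of its type), then strictness of $\mname{ord-pair}$ makes both $G(q)$ and $G(q')$ undefined under the relevant assignments, the formula $G(q) = G(q')$ evaluates to $\FALSE$ there, and the claimed validity fails. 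Your parenthetical argument (``equi-defined and $[\,\cdot\,]$ is strict, so term-equality genuinely holds'') concludes the opposite of what the semantics gives. What your induction actually proves is the unconditional statement with $\QuasiEqual$ in place of $=$, or the statement with $=$ under the additional hypothesis that the remaining evaluated components $a_i$ satisfy $a_i \IsDef \mname{E}$ (the antecedent of part 3); as written, the step from the inductive hypothesis to $\models G(q) = G(q')$ does not go through. (A smaller point worth one sentence in a final write-up: since $\mname{ord-pair}$ and $\mname{defined-in}$ are defined, not built-in, operators, the $\models$ in all three parts must be read relative to models satisfying their defining axioms, as the paper's preamble to the proposition indicates; your proof silently assumes this.)
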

Note that, as defined in subsection~\ref{subsec:logop}, $a \IsDef
\alpha$ asserts that the value of the term $a$ is in the value of type
$\alpha$.

From now on, we will consider a quasiquotation $q$ to be an alternate
notation for the term $G(q)$.  Many expressions involving syntax can
be expressed very succinctly using quasiquotation.  See
sections~\ref{sec:sub} and \ref{sec:examples} for examples,
especially the liar paradox example in subsection~\ref{subsec:liar.b}.

\section{Semantics}\label{sec:semantics}

This section presents the official semantics of Chiron which is based
on \emph{standard models}.  Two alternate semantics based on other
kinds of models are given in appendix A.  

\subsection{The Liar Paradox}\label{subsec:liar.a}

Using quotation and evaluation, it is possible to express the
\emph{liar paradox} in Chiron.  That is, it is possible to construct 
a term \mname{LIAR} whose value equals the value of
\[\synbrack{\Neg\sembrack{\mname{LIAR}}_{\rm fo}}.\]  
(See subsection~\ref{subsec:liar.b} for details.)  \mname{LIAR} denotes a
construction representing a formula that says in effect ``I am a
formula that is false''.

If the naive semantics is employed for quotation and evaluation, a
contradiction is immediately obtained:
\begin{eqnarray*}
\sembrack{\mname{LIAR}}_{\rm fo} & = &
\sembrack{\synbrack{\Neg\sembrack{\mname{LIAR}}_{\rm fo}}}_{\rm fo}\\
& = & \Neg\sembrack{\mname{LIAR}}_{\rm fo}
\end{eqnarray*}
$\sembrack{\mname{LIAR}}_{\rm fo}$ is thus \emph{ungrounded} in the
sense that its value depends on itself.  This simple argument is
essentially the proof of A. Tarski's 1933 theorem on the
undefinability of truth~\cite[Theorem I]{Tarski33,Tarski35,Tarski35a}.
The theorem says that $\sembrack{x}_{\rm fo}$ cannot serve as a truth
predicate over all formulas.

Any reasonable semantics for Chiron needs a way to block the liar
paradox and similar ungrounded expressions.  We will briefly describe
three approaches.

The first approach is to remove evaluation (\mname{eval}) from Chiron
but keep quotation (\mname{quote}).  This would eliminate ungrounded
expressions.  However, it would also severely limit Chiron's facility
for reasoning about the syntax of expressions.  It would be possible
using quotation to construct terms that denote constructions, but
without evaluation it would not be possible to employ these terms as
the expressions represented by the constructions.  Some of the power
of evaluation could be replaced by introducing functions that map
types of constructions to types of values.  An example would be a
function that maps numerals to natural numbers.

A second approach is to define a semantics with ``value gaps'' so that
expressions like $\sembrack{\mname{LIAR}}_{\rm fo}$ are not assigned
any value at all.  In his famous paper \emph{Outline of a Theory of
Truth}~\cite{Kripke75}, S.~Kripke presents a framework for defining
semantics with \emph{truth-value gaps} using various evaluation
schemes.  Kripke's approach can be easily generalized to allow
\emph{value gaps} for types and terms as well as for formulas.  In 
appendix A we define two value-gap semantics for Chiron using Kripke's
framework with valuation schemes based on the weak Kleene
logic~\cite{Kleene52} and B.~van~Fraassen's notion of a
\emph{supervaluation}~\cite{vanFraassen66}.  Kripke-style value-gap
semantics are very interesting, if not enlightening, but they are
exceeding difficult to work with in practice.  The main problem is
that there is no mechanism to assert that an expression has no value
(see appendix A for details).

The third approach is to consider any evaluation of a term that
denotes a construction representing a non-eval-free expression to be
undefined.  For example, the value of $\sembrack{\mname{LIAR}}_{\rm
  fo}$ would be $\FALSE$, the undefined value for formulas.  The
origin of this idea is found in Tarski's famous paper on the concept
of truth~\cite[Theorem III]{Tarski33,Tarski35,Tarski35a}.  It is
important to understand that an evaluation of a term $a$ containing
the symbol \mname{eval} can be defined as long as $a$ denotes a
construction representing an expression that does not contain
\mname{eval} outside of a quotation.  Thus the value of an evaluation
like $\sembrack{\sembrack{\synbrack{\synbrack{17}}}_{\rm te}}_{\rm
  te}$ would be the value of 17 because the expression 17 presumably
does not contain \mname{eval} at all.  The official semantics for
Chiron defined in this section employs this third approach to blocking
the liar paradox.

\subsection{Prestructures}

A \emph{prestructure} of Chiron is a pair $(D,\in)$, where $D$ is a
nonempty domain and $\in$ is a membership relation on $D$, that
satisfies the axioms of {\nbg} set theory as given, for example, in
\cite{Goedel40} or \cite{Mendelson97}.

Let $P=(\Dc^{P},\in^P)$ be a prestructure of Chiron.  A \emph{class}
of $P$ is a member of $\Dc^{P}$.  A \emph{set} of $P$ is a member $x$
of $\Dc^{P}$ such that $x \in^P y$ for some member $y$ of $\Dc^{P}$.
That is, a set is a class that is itself a member of a class.  A class
is \emph{proper} if it is not a set.  A \emph{superclass} of $P$ is a
collection of classes in $\Dc^{P}$.  We consider a class, as a
collection of sets, to be a superclass.  Let $\Dv^{P}$ be the domain of
sets of $P$ and $\Ds^{P}$ be the domain of superclasses of $P$.  The
following inclusions hold: $\Dv^{P} \subset \Dc^{P} \subset \Ds^{P}$.

A \emph{function} of $P$ is a member $f$ of $\Dc^{P}$ such that:
\be

  \item Each $p \in^P f$ is an ordered pair $\seq{x,y}$ where $x,y$
  are in $\Dv^{P}$.

  \item For all $\seq{x_1,y_1},\seq{x_2,y_2} \in^P f$, if $x_1 = x_2$,
  then $y_1 = y_2$.

\ee
Notice that a function of $P$ may be partial, i.e., there may not be
an ordered pair $\seq{x,y}$ in a function for each $x$ in $\Dv^{P}$.

Let $\Df^{P} \subset \Dc^{P}$ be the domain of functions of $P$.  For
$f,x$ in $\Dc^{P}$, $f(x)$ denotes the unique $y$ in $\Dv^{P}$ such
that $f$ is in $\Df^{P}$ and $\seq{x,y} \in^P f$.  ($f(x)$ is
undefined if there is no such unique $y$ in $\Dv^{P}$.)  For $\Sigma$
in $\Ds^{P}$ and $x$ in $\Dc^{P}$, $\Sigma[x]$ denotes the unique
class in $\Dc^{P}$ composed of all $y$ in $\Dv^{P}$ such that, for some
$f$ in both $\Sigma$ and $\Df^{P}$, $f(x) = y$.  ($\Sigma[x]$ is
undefined if there is no such unique class in $\Dc^{P}$.)

Let ${\TRUE}^{P}$, ${\FALSE}^{P}$, and $\Undefined^{P}$ be distinct
values not in $\Ds^{P}$.  ${\TRUE}^{P}$ and ${\FALSE}^{P}$ represent the
truth values \emph{true} and \emph{false}, respectively.
$\Undefined^{P}$ is the \emph{undefined value} that represents values
that are undefined.

\bsp For $n \ge 0$, an \emph{$n$-ary operation} of $P$ is a total mapping
from \mbox{$D_1 \times \cdots \times D_n$} to $D_{n+1}$ where $D_i$ is
$\Ds^{P}$, $\Dc^{P} \cup \set{\Undefined^{P}}$, or
$\set{{\TRUE}^{P},{\FALSE}^{P}}$ for each $i$ with \mbox{$1 \le i \le
n+1$}.  Let $\Do^{P}$ be the domain of operations of $P$.  We assume
that $\Ds^{P} \cup \set{{\TRUE}^{P},{\FALSE}^{P},\Undefined^{P}}$ and
$\Do^{P}$ are disjoint.\esp

Choose $\Dea^{P}$ to be any countably infinite subset of $\Dv^{P}$
whose members are neither the empty set $\emptyset$ nor ordered pairs.
Then let $\Deb^{P}$ be the subset of $\Dv^{P}$ defined inductively as
follows:

\be

  \item The empty set $\emptyset$ is a member of $\Deb^{P}$.

  \item If $x \in \Dea^{P} \cup \Deb^{P}$ and $y \in \Deb^{P}$,
  then the ordered pair $\seq{x,y}$ of $x$ and $y$ is a member of
  $\Deb^{P}$.

\ee 
Finally, let $\De^{P} = \Dea^{P} \cup \Deb^{P}$.  The members of
$\De^{P}$ are called \emph{constructions}.  They are sets with the
form of trees whose leaves are members of $\Dea^{P}$.  Their purpose
is to represent the syntactic structure of expressions.  The symbols
in expressions are represented by members of $\Dea^{P}$.  A
construction is a \emph{symbol construction}, an \emph{operator name
  construction}, \emph{operator construction}, \emph{type
  construction}, \emph{term construction}, or \emph{formula
  construction} if it represents a symbol, operator name, operator,
type, term, or formula, respectively.

Choose $G^{P}$ to be any bijective mapping from $\sS \cup \sO$ onto
$\Dea^{P}$.  Let $H^{P}$ be the bijective mapping of $\sE_L$ onto
$\De^{P}$ defined recursively by:

\be

  \item If $e \in \sS \cup \sO$, then $H^{P}(e) = G^{P}(e)$.

  \item If $e = (\,) \in \sE_L$, then $H^{P}(e) = \emptyset$.

  \item If $e = (e_1,\ldots,e_n) \in \sE_L$ where $n \ge 1$, then
  \[H^{P}(e) = \seq{H^{P}(e_1),H^{P}((e_2,\ldots,e_n))}.\]

\ee
$H^{P}(e)$ is called the \emph{construction} of the expression $e$.

\subsection{Structures}

A \emph{structure} for $L$ is a tuple
\[S = 
(\Dv,\Dc,\Ds,\Df,\Do,\De,\in,\TRUE,\FALSE,\Undefined,\xi,H,I)\] where:

\be

  \item $P=(D_{c},\in)$ is a prestructure of Chiron. $\Dv = \Dv^{P}$,
    $\Ds = \Ds^{P}$, $\Df = \Df^{P}$, $\Do = \Do^{P}$, ${\TRUE} =
    {\TRUE}^{P}$, ${\FALSE} = {\FALSE}^{P}$, and $\Undefined =
    \Undefined^{P}$.

  \item $\De = \De^{P}$ where $\Dea^{P}$ is some chosen countably
    infinite subset of $\Dv^{P}$ whose members are neither the empty
    set $\emptyset$ nor ordered pairs.

  \item $\xi$ is a choice function on $\Ds$.  Hence, for all nonempty
  superclasses $\Sigma$ in $\Ds$, $\xi(\Sigma)$ is a class in
  $\Sigma$.

  \item $H= H^{P}$ where $G^{P}$ is some chosen bijective mapping from
    $\sS \cup \sO$ onto $\Dea^{P}$.

  \item For each operator name $o \in \sO$ with $\theta(o) =
    s_1,\ldots,s_{n+1}$, $I(o)$ is an $n$-ary operation in $\Do$ from
    $D_1 \times \cdots \times D_n$ into $D_{n+1}$ where $D_i = \Ds$ if
    $s_i =\mname{type}$, $D_i = \Dc \cup \set{\Undefined}$ if $s_i
    =\mname{term}$, and $D_i = \set{\TRUE,\FALSE}$ if
    $s_i=\mname{formula}$ for each $i$ with $1 \le i \le n +1$ such
    that:

  \be

    \item $I(\mname{set})(\,) = \Dv$, the universal class that
      contains all sets.

    \item $I(\mname{class})(\,) = \Dc$, the universal superclass that
      contains all classes.

    \item $I(\mname{op-names})(\,) = \Don$, the subset of $\De$ whose
      members represent the operator names in $\sO$.

    \item $I(\mname{lang})(\,)$ is the set of all subsets of $\Don$.

    \item $I(\mname{expr-sym})(\,)= \Dsy$, the subset of $\De$ whose
      members represent the symbols in $\sS$.

    \item If $x$ is a member of $\Dc \cup \set{\Undefined}$,
      \[I(\mname{expr-op-name})(x)\] is $x$ if $x \subseteq \Don$,
      $\Dc$ if $x = \Undefined$, and not $\Dc$ otherwise.

    \item If $x$ is a member of $\Dc \cup \set{\Undefined}$,
      \[I(\mname{expr})(x)\] is the subset of $\De$ whose members are
      constructed from only members of $\Dsy \cup x$ if $x \subseteq
      \Don$, $\Dc$ if $x = \Undefined$, and not $\Dc$ otherwise.

    \item Let $\Dop$ be the subset of $\De$ whose members represent
      operators of $L$.  If $x$ is a member of $\Dc \cup
      \set{\Undefined}$,
      \[I(\mname{expr-op})(x)\] is the subset of $\Dop$ whose members
      are constructed from only members of $\Dsy \cup x$ if $x
      \subseteq \Don$, $\Dc$ if $x = \Undefined$, and not $\Dc$ otherwise.

    \item Let $\Dty$ be the subset of $\De$ whose members represent
      types of $L$.  If $x$ is a member of $\Dc \cup \set{\Undefined}$,
      \[I(\mname{expr-type})(x)\] is the subset of $\Dty$ whose members
      are constructed from only members of $\Dsy \cup x$ if $x
      \subseteq \Don$, $\Dc$ if $x = \Undefined$, and not $\Dc$
      otherwise.

    \item Let $\Dte$ be the subset of $\De$ whose members represent
      terms of $L$.  If $x$ is a member of $\Dc \cup
      \set{\Undefined}$,
      \[I(\mname{expr-term})(x)\] is the subset of $\Dte$ whose members
      are constructed from only members of $\Dsy \cup x$ if $x
      \subseteq \Don$, $\Dc$ if $x = \Undefined$, and not $\Dc$ otherwise.

    \item If $x,y$ is a member of $\Dc \cup \set{\Undefined}$, then
      \[I(\mname{expr-term-type})(x,y)\] is the subset of $\Dte$ 
      whose members represent terms of the type represented by $y$ and
      are constructed from only members of $\Dsy \cup x$ if $x
      \subseteq \Don$ and $y \in \Dty$, $\Dc$ if $x = \Undefined$ or
      $y = \Undefined$, and not $\Dc$ otherwise.

    \item Let $\Dfo$ be the subset of $\De$ whose members represent
      formulas of $L$.  If $x$ is a member of $\Dc \cup
      \set{\Undefined}$,
      \[I(\mname{expr-formula})(x)\] is the subset of $\Dfo$ whose
      members represent formulas and are constructed from only members
      of $\Dsy \cup x$ if $x \subseteq \Don$, $\Dc$ if $x =
      \Undefined$, and not $\Dc$ otherwise.

    \item \bsp If $x$ and $y$ are members of $\Dc \cup
      \set{\Undefined}$, then \[I(\mname{in})(x,y)\] is $\TRUE$ if $x$
      is a member of $y$ (and hence $x$ is a member of $\Dv$) and is
      $\FALSE$ otherwise. \esp

    \item If $\Sigma,\Sigma'$ are members of $\Ds$, then
    \[I(\mname{type-equal})(\Sigma,\Sigma')\] is $\TRUE$ if $\Sigma$ and
    $\Sigma'$ are identical and is $\FALSE$ otherwise.

    \item \bsp If $x,y$ are members of $\Dc \cup \set{\Undefined}$ and
      $\Sigma$ is a member of $\Ds$,
      then \[I(\mname{term-equal})(x,y,\Sigma)\] is $\TRUE$ if $x,y$
      are identical members of $\Sigma$ and is $\FALSE$
      otherwise. \esp

    \item \bsp If $t,t'$ are members of $\set{\TRUE,\FALSE}$, then
    \[I(\mname{formula-equal})(t,t')\] is $\TRUE$ if $t$ and
    $t'$ are identical and is $\FALSE$ otherwise. \esp

    \item If $t$ is a member of $\set{\TRUE,\FALSE}$, then
    \[I(\mname{not})(t)\] is $\TRUE$ if $t$ is $\FALSE$ and is
    $\FALSE$ otherwise.

    \item If $t,t'$ are members of $\set{\TRUE,\FALSE}$, then
    \[I(\mname{or})(t,t')\] is $\TRUE$ if at least one of $t$ and
    $t'$ is $\TRUE$ and is $\FALSE$ otherwise.

  \ee

\ee 

Fix a structure 
\[S = 
(\Dv,\Dc,\Ds,\Df,\Do,\De,\in,\TRUE,\FALSE,\Undefined,\xi,H,I)\] for
$L$.  An \emph{assignment} into $S$ is a mapping that assigns a class
in $\Dc$ to each symbol in $\sS$.  Given an assignment $\phi$ into
$S$, a symbol $s \in \sS$, and a class $d \in \Dc$, let $\phi[s
  \mapsto d]$ be the assignment $\phi'$ into $S$ such that $\phi'(s) =
d$ and $\phi'(t) = \phi(t)$ for all symbols $t \in \sS$ different from
$s$.  Let $\mname{assign}(S)$ be the collection of assignments into
$S$.

\subsection{Valuations}\label{subsec:val}

A \emph{valuation} for $S$ is a possibly partial mapping $V$ from
$\sE_L \times \mname{assign}(S)$ into $\Do \cup \Ds \cup \set{\TRUE,
  \FALSE, \Undefined}$ such that, for all $e \in \sE_L$ and $\phi \in
\mname{assign}(S)$, if $V_{\phi}(e)$ is defined, then $V_{\phi}(e) \in
\Do$ if $e$ is an operator, $V_{\phi}(e) \in \Ds$ if $e$ is a type,
$V_{\phi}(e) \in \Dc \cup \set{\Undefined}$ if $e$ is a term, and
$V_{\phi}(e) \in \set{\TRUE,\FALSE}$ if $e$ is a formula.  (We write
$V_{\phi}(e)$ instead of $V(e,\phi)$.)

Let the \emph{standard valuation for $S$} be the valuation $V$ for $S$
defined recursively by the following statements:

\be

  \item Let $e$ be improper.  Then $V_{\phi}(e)$ is undefined.

  \item Let $O = (\mname{op}, o, k_1,\ldots,k_n,k_{n+1})$ be proper.
    Then $I(o)$ is an $n$-ary operation in $\Do$ from $D_1 \times
    \cdots \times D_n$ into $D_{n+1}$.  $V_{\phi}(O)$ is the $n$-ary
    operation in $\Do$ from $D_1 \times \cdots \times D_n$ into
    $D_{n+1}$ defined as follows.  Let $(d_1,\ldots,d_n) \in D_1
    \times \cdots \times D_n$ and $d = I(o)(d_1,\ldots,d_n)$.  If
    $d_i$ is in $V_{\phi}(k_i)$ or $d_i = \Undefined$ for all $i$ such
    that $1 \le i \le n$ and $\ctype{L}{k_i}$ and $d$ is in
    $V_{\phi}(k_{n+1})$ or $d = \Undefined$ when $\ctype{L}{k_{n+1}}$,
    then $V_{\phi}(O)(d_1,\ldots,d_n) = d$.  Otherwise,
    $V_{\phi}(O)(d_1,\ldots,d_n)$ is $\Dc$ if $k_{n+1} =
    \mname{type}$, $\Undefined$ if $\ctype{L}{k_{n+1}}$, and $\FALSE$
    if $k_{n+1} = \mname{formula}$.

  \item Let $e = (\mname{op-app}, O, e_1,\ldots,e_n)$ be proper.
    Then \[V_{\phi}(e) = V_{\phi}(O)(V_{\phi}(e_1),\ldots,V_{\phi}(e_n)).\]

\iffalse

  \item Let $O = (\mname{op}, o, k_1,\ldots,k_n,k_{n+1})$ be proper.
    Then $V_{\phi}(O) = I(o)$.

  \item \bsp Let $e = (\mname{op-app}, O, e_1,\ldots,e_n)$ be proper
    where $O = (\mname{op}, o, k_1,\ldots,k_{n+1})$.  Let \[d =
    V_{\phi}(O)(V_{\phi}(e_1),\ldots,V_{\phi}(e_n)).\] If
    $V_{\phi}(e_i)$ is in $V_{\phi}(k_i)$ or $V_{\phi}(e_i) =
    \Undefined$ for all $i$ such that $1 \le i \le n$ and
    $\ctype{L}{k_i}$ and $d$ is in $V_{\phi}(k_{n+1})$ or $d =
    \Undefined$ when $\ctype{L}{k_{n+1}}$, then $V_{\phi}(e) = d.$
    Otherwise $V_{\phi}(e)$ is $\Dc$ if $k_{n+1} = \mname{type}$,
    $\Undefined$ if $\ctype{L}{k_{n+1}}$, and $\FALSE$ if $k_{n+1} =
    \mname{formula}$. \esp

\fi

  \item Let $a = (\mname{var}, x, \alpha)$ be proper.  If
  $\phi(x)$ is in $V_{\phi}(\alpha)$, then $V_{\phi}(a) = \phi(x)$.
  Otherwise $V_{\phi}(a) = \Undefined$.

  \item Let $\beta = (\mname{type-app},\alpha,a)$ be proper.  If
    $V_{\phi}(a)\not= \Undefined$ and $ V_{\phi}(\alpha)[V_{\phi}(a)]$
    is defined, then $V_{\phi}(\beta) =
    V_{\phi}(\alpha)[V_{\phi}(a)]$.  Otherwise $V_{\phi}(\beta) =
    \Dc$.

  \item Let $\gamma = (\mname{dep-fun-type}, (\mname{var},x,\alpha),
    \beta)$ be proper.  Then $V_{\phi}(\gamma)$ is the superclass of
    all $g$ in $\Df$ such that, for all $d$ in $\Dv$, if $g(d)$ is
    defined, then $d$ is in $V_{\phi}(\alpha)$ and $g(d)$ is in
    $V_{\phi[x \mapsto d]}(\beta)$.

  \item \bsp Let $b = (\mname{fun-app},f,a)$ be proper.  If
  $V_{\phi}(f) \not= \Undefined$, $V_{\phi}(a) \not= \Undefined$, and
  $V_{\phi}(f)(V_{\phi}(a))$ is defined, then $V_{\phi}(b) =
  V_{\phi}(f)(V_{\phi}(a))$.  Otherwise $V_{\phi}(b) =
  \Undefined$. \esp

  \item Let $f = (\mname{fun-abs},(\mname{var},x,\alpha),b)$ be proper
    in $L$.  If \[g = \{\seq{d,d'} \; | \; d \mbox{ is a set in }
    V_{\phi}(\alpha) \mbox{ and } d' = V_{\phi[x \mapsto d]}(b) \mbox{
    is a set}\}\] is in $\Df$, then $V_{\phi}(f) = g$.  Otherwise
    $V_{\phi}(f) = \Undefined$.

  \item Let $a = (\mname{if},A,b,c)$ be proper.  If
  $V_{\phi}(A) = \TRUE$, then $V_{\phi}(a) = V_{\phi}(b)$.  Otherwise
  $V_{\phi}(a) = V_{\phi}(c)$.

  \item Let $A = (\mname{exists},(\mname{var}, x, \alpha),B)$ be
  proper.  If there is some $d$ in $V_{\phi}(\alpha)$ such that
  $V_{\phi[x \mapsto d]}(B) = \TRUE$, then $V_{\phi}(A) = \TRUE$.
  Otherwise, $V_{\phi}(A) = \FALSE$.

  \item Let $a = (\mname{def-des},(\mname{var}, x, \alpha),B)$ be
  proper.  If there is a unique $d$ in $V_{\phi}(\alpha)$ such
  that $V_{\phi[x \mapsto d]}(B) = \TRUE$, then $V_{\phi}(a) = d$.
  Otherwise, $V_{\phi}(a) = \Undefined$.

  \item Let $a = (\mname{indef-des},(\mname{var}, x, \alpha),B)$ be
  proper.  If there is some $d$ in $V_{\phi}(\alpha)$ such that
  $V_{\phi[x \mapsto d]}(B) = \TRUE$, then $V_{\phi}(a) = \xi(\Sigma)$
  where $\Sigma$ is the superclass of all $d$ in $V_{\phi}(\alpha)$
  such that $V_{\phi[x \mapsto d]}(B) = \TRUE$.  Otherwise,
  $V_{\phi}(a) = \Undefined$.

  \item Let $a = (\mname{quote},e)$ be proper.  $V_{\phi}(a) =
  H(e)$.

  \item \bsp Let $e = (\mname{eval},a,k)$ be proper.  If (1)
  $V_{\phi}(a)$ is in $\Dty$ and $k = \mname{type}$, $V_{\phi}(a)$ is
  in $\Dte$ and $\ctype{L}{k}$, or $V_{\phi}(a)$ is in $\Dfo$ and $k =
  \mname{formula}$; (2) $H^{-1}(V_{\phi}(a))$ is eval-free; and (3)
  $V_{\phi} (H^{-1}(V_{\phi}(a)))$ is in $V_{\phi}(k)$ if $\ctype{L}{k}$,
  then $V_{\phi}(e) = V_{\phi}(H^{-1}(V_{\phi}(a)))$.  Otherwise
  $V_{\phi}(e)$ is $\Dc$ if $k = \mname{type}$, $\Undefined$ if
  $\ctype{L}{k}$, and $\FALSE$ if $k = \mname{formula}$. \esp
 
\ee

\begin{clem}\label{lem:v-well-defined}
The standard valuation for $S$ is well defined.
\end{clem}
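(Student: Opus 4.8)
The plan is to read the defining clauses of the standard valuation $V$ as a schema of well-founded recursion on the complexity $c(e) \in \textbf{N} \times \textbf{N}$. The lexicographic order $<$ on $\textbf{N} \times \textbf{N}$ is well-founded --- a descending chain can strictly decrease its first coordinate only finitely often, and thereafter is an ordinary descending chain in the second coordinate --- so the relation ``$c(e') < c(e)$'' is a well-founded relation on $\sE_L$, and it is enough to verify, clause by clause, that the clause applicable to $e$ determines $V_\phi(e)$ (for every assignment $\phi$) using only values $V_{\phi'}(e')$ with $c(e') < c(e)$. Note that $c$ depends on the expression alone, so the passages to modified assignments $\phi[x \mapsto d]$ occurring in several clauses do not disturb the measure; and that $V_\phi(O)$, though it is a whole operation rather than a single point value, is produced at stage $c(O)$ from finitely many smaller values.

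For the clauses other than the evaluation clause this is immediate. Clause~1 consults nothing, and clause~13 consults only $H$; in each of clauses~2--12 every expression whose $V$-value is consulted --- the signature kinds $k_i$ that are types in the operator clause, the constituents $O,e_1,\ldots,e_n$ of an operator application, the type of a variable, and the constituents $\alpha,a,\beta,b,c,f,A,B$ of type applications, dependent function types, function applications, abstractions, conditionals, quantifications, and descriptions --- is a proper subexpression of $e$, and a proper subexpression has strictly smaller complexity by observation~(4) following the definition of subexpression in subsection~\ref{subsec:expressions}. Hence each of these clauses is a legitimate instance of the recursion scheme.

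The clause requiring care is the evaluation clause, $e = (\mname{eval},a,k)$. Since the head of $e$ is $\mname{eval}$ rather than $\mname{quote}$, $a$ is a proper subexpression of $e$, as is $k$ when $\ctype{L}{k}$, so $c(a) < c(e)$ and $c(k) < c(e)$, and $V_\phi(a)$ and $V_\phi(k)$ are legitimately consulted. The delicate reference is to $V_\phi(H^{-1}(V_\phi(a)))$, which is consulted only when conditions~(1) and~(2) hold. Condition~(1) forces $V_\phi(a) \in \Dty \cup \Dte \cup \Dfo \subseteq \De$; since $H$ is a bijection of $\sE_L$ onto $\De$, $H^{-1}(V_\phi(a))$ is a well-defined expression of $L$ --- indeed a type, term, or formula, as $\Dty$, $\Dte$, $\Dfo$ consist of constructions representing such. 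Condition~(2) says this expression is eval-free, so $c(H^{-1}(V_\phi(a))) = (0, |H^{-1}(V_\phi(a))|)$. But $c(e) = (m,n)$ with $m \ge 1$, because the leading $\mname{eval}$ of $e$ is an occurrence of $\mname{eval}$ not within a quotation. Hence $c(H^{-1}(V_\phi(a))) < c(e)$, and the evaluation clause too is a legitimate instance of the recursion scheme. This is the heart of the matter, and it is exactly why the complexity measure counts unquoted occurrences of $\mname{eval}$ ahead of length; I expect this clause to be the only real obstacle, the rest being bookkeeping.

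Given the above, well-founded recursion yields a unique partial mapping $V$ on $\sE_L \times \mname{assign}(S)$, defined precisely on the pairs with proper first component (each clause for a proper $e$ ends with a total prescription). Finally, a routine induction on $c(e)$ shows $V$ is a valuation: for proper $e$, the unique formation rule producing $e$ singles out the applicable clause, and inspection of that clause --- using the induction hypothesis for the subexpression values, and for $V_\phi(H^{-1}(V_\phi(a)))$ in the evaluation case, together with the ranges prescribed for $I$ in the definition of a structure --- shows that $V_\phi(e)$ lies in $\Do$, $\Ds$, $\Dc \cup \set{\Undefined}$, or $\set{\TRUE,\FALSE}$ according as $e$ is an operator, type, term, or formula.
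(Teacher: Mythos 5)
Your proof is correct and takes essentially the same route as the paper: induction (well-founded recursion) on the complexity measure $c(e)$, with the crucial observation that in the evaluation clause the eval-freeness of $H^{-1}(V_\phi(a))$ forces its first complexity coordinate to be $0$ while $e$'s is at least $1$, so $c(H^{-1}(V_\phi(a))) < c(e)$. Your write-up is somewhat more explicit than the paper's (spelling out well-foundedness of the lexicographic order and the range check), but it is the same argument.
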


\begin{proof}
We will show that, for all $e \in \sE_L$ and $\phi \in
\mname{assign}(S)$, $V_{\phi}(e)$ is well defined.  Our proof is by
induction on the complexity of $e$.  There are 14 cases corresponding
to the 14 clauses of the definition of a standard valuation.

\bi

  \item[] Case 1: $e$ is improper.  $V_{\phi}(e)$ is always
  undefined in this case.

  \item[] Cases 2--12: $e$ is a proper expression in $L$ that is not a
    quotation or evaluation.  For each case, $V_{\phi}(e)$ depends on
    well-defined components of $S$ and a collection of values
    $V_{\phi'}(e')$ where $e'$ ranges over a set of subexpressions of
    $e$ and $\phi'$ is $\phi$ or ranges over an infinite subset of
    $\mname{assign}(S)$.  Each such $V_{\phi'}(e')$ is well defined by
    the induction hypothesis because $e'$ is a subexpression of $e$,
    and hence, $c(e') < c(e)$.  Therefore, $V_{\phi}(e)$ is well
    defined.

  \item[] Case 13: $e = (\mname{quote},e')$ is proper.
  $V_{\phi}(e)$ = $H(e')$ is well defined since $H$ is a well-defined
  component of $S$.

  \item[] Case 14: $e = (\mname{eval},a,k)$ is proper.  $V_{\phi}(e)$
    depends on one, two, or three of the values of $V_{\phi}(a)$,
    $V_{\phi}(k)$, and $V_{\phi}(H^{-1}(V_{\phi}(a)))$.  $V_{\phi}(a)$
    and $V_{\phi}(k)$ when $\ctype{L}{k}$ holds are well defined by
    the induction hypothesis because $a$ and $k$ are subexpressions of
    $e$, and hence, $c(a) < c(e)$ and $c(k) < c(e)$.
    $V_{\phi}(H^{-1}(V_{\phi}(a)))$ when $H^{-1}(V_{\phi}(a)) \in
    \sE_L$ is well defined by the induction hypothesis because
    $H^{-1}(V_{\phi}(a))$ is eval-free and $e$ is not, and hence,
    $c(H^{-1}(V_{\phi}(a))) < c(e)$.  Therefore, $V_{\phi}(e)$ is well
    defined.

\ei
\end{proof}

\begin{cthm}
Let $V$ be the standard valuation for $S$.  Then the following
statements hold for all $e \in \sE_L$ and $\phi \in
\mname{assign}(S)$:

\be

  \item $V_{\phi}(e)$ is defined iff $e$ is proper.

  \item If $e$ is an $n$-ary operator of $L$, then
  $V_{\phi}(e)$ is an $n$-ary operation in $\Do$.

  \item If $e$ is a type of $L$, then $V_{\phi}(e)$ is in $\Ds$.

  \item If $e$ is a term of $L$ of type $\alpha$, then $V_{\phi}(e)$
  is in $V_{\phi}(\alpha) \cup \set{\Undefined}$.

  \item If $e$ is a formula of $L$, then $V_{\phi}(e)$ is in
  $\set{\TRUE,\FALSE}$.

\ee

\end{cthm}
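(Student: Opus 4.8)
The plan is to prove statements (1)--(5) simultaneously by induction on the complexity $c(e)$, ordered lexicographically on $\textbf{N}\times\textbf{N}$, following the same fourteen-case split used in the proof of Lemma~\ref{lem:v-well-defined}; that lemma guarantees $V_\phi(e)$ is well defined, so every recursive appeal below names a genuine value. The convenient point is that for a fixed $e$ at most one of (2)--(5) has nonvacuous content --- according to whether $e$ is an operator, a type, a term, or a formula --- and statement (1) drops out of each proper case once a value for $V_\phi(e)$ has been exhibited, together with clause~1 of the valuation, which makes $V_\phi(e)$ undefined exactly when $e$ is improper. The routine cases I would dispatch at once: improper $e$ is clause~1; an operator (P-Expr-1) gets $V_\phi(O)\in\Do$ an $n$-ary operation directly from the operator clause, giving (1) and (2); a type application (P-Expr-4) and a dependent function type (P-Expr-5) return $\Dc$, some $\Sigma[x]\in\Dc$, or a subcollection of $\Df\subseteq\Dc$, and since every collection of classes is a superclass, (3) holds; an existential quantification (P-Expr-9) returns $\TRUE$ or $\FALSE$; and a variable (P-Expr-3), a definite description (P-Expr-10), or an indefinite description (P-Expr-11) of type $\alpha$ returns either $\Undefined$ or a member of $V_\phi(\alpha)$ --- in the indefinite case because $\xi$ is a choice function on $\Ds$ and the witness superclass is a nonempty member of $\Ds$ --- so (4) holds.

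The substantive work is (4) for the compound terms whose assigned type is itself compound, plus operator applications. For $(\mname{op-app},O,e_1,\dots,e_n)$ (P-Expr-2) of kind $k_{n+1}$: the induction hypothesis applied to the subexpressions $e_i$ (each of strictly smaller complexity) places $(V_\phi(e_1),\dots,V_\phi(e_n))$ in the domain $D_1\times\cdots\times D_n$ on which the operator clause defines $V_\phi(O)$, and that clause forces the output of $V_\phi(O)$ into $\Ds$, into $V_\phi(k_{n+1})\cup\set{\Undefined}$, or into $\set{\TRUE,\FALSE}$ according to $k_{n+1}$ --- which is exactly (3), (4), or (5). For $(\mname{fun-app},f,a)$ (P-Expr-6), of type $(\mname{type-app},\alpha,a)$: either $V_\phi(b)=\Undefined$, or else $V_\phi(f)(V_\phi(a))$ is defined, so $V_\phi(f)\in\Df$ and, by the induction hypothesis, $V_\phi(f)\in V_\phi(\alpha)$; if $V_\phi((\mname{type-app},\alpha,a))=\Dc$ there is nothing to prove, and otherwise $V_\phi(\alpha)[V_\phi(a)]$ is a class equal to $V_\phi((\mname{type-app},\alpha,a))$ and contains $V_\phi(f)(V_\phi(a))=V_\phi(b)$ by the very definition of $\Sigma[x]$.

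For $(\mname{fun-abs},(\mname{var},x,\alpha),b)$ (P-Expr-7), of type $\gamma=(\mname{dep-fun-type},(\mname{var},x,\alpha),\beta)$: if the value is not $\Undefined$ it is the graph $g\in\Df$ described in the function-abstraction clause, and $g\in V_\phi(\gamma)$ reduces to verifying, for each $d\in\Dv$ with $g(d)$ defined, that $g(d)=V_{\phi[x\mapsto d]}(b)\in V_{\phi[x\mapsto d]}(\beta)$ --- which is the induction hypothesis applied to the subexpression $b$ at the modified assignment $\phi[x\mapsto d]$, legitimate because $c(b)<c(f)$ independently of the assignment and the hypothesis is stated for all assignments. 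For a conditional term (P-Expr-8) the value is $V_\phi(b)$ or $V_\phi(c)$, so when $\delta=\beta=\gamma$ the hypothesis gives (4) directly, and when $\delta=\mname{C}$ one needs only $V_\phi(\mname{C})=\Dc$. For a quotation $(\mname{quote},e')$ (P-Expr-12) the value is $H(e')\in\De$; unwinding its type $\mname{E}$ through the built-in operators $\mname{expr}$ and $\mname{op-names}$ identifies $V_\phi(\mname{E})$ with $I(\mname{expr})(\Don)$, the constructions built from members of $\Dsy\cup\Don$, and since the leaves of $e'\in\sE_L$ are symbols in $\sS\cup\sO$ and $H$ sends these into $\Dsy\cup\Don$, we get $H(e')\in V_\phi(\mname{E})$.

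Finally, for $(\mname{eval},a,k)$ (P-Expr-13): in the ``otherwise'' branch the value is $\Dc$, $\Undefined$, or $\FALSE$ matching $k$, which satisfies the pertinent statement; in the main branch, condition~(1) of the evaluation clause makes $H^{-1}(V_\phi(a))$ a type, term, or formula of $L$ of the kind indicated by $k$, condition~(2) makes it eval-free so that $c(H^{-1}(V_\phi(a)))<c(e)$ (exactly as in the proof of Lemma~\ref{lem:v-well-defined}), and then the induction hypothesis gives $V_\phi(H^{-1}(V_\phi(a)))\in\Ds$ in the type case and $\in\set{\TRUE,\FALSE}$ in the formula case, while condition~(3) directly supplies membership in $V_\phi(k)$ in the term case. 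I expect the main obstacle to be bookkeeping rather than any genuine difficulty: threading the four subclaims correctly through all fourteen clauses, and --- the one place requiring real computation --- unwinding the built-in operators in the quotation clause far enough to pin down $V_\phi(\mname{E})$ as $I(\mname{expr})(\Don)$.
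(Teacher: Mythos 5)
Your proposal is correct and takes essentially the same approach as the paper: a case analysis over the proper-expression categories driven by the clauses of the standard valuation, with the substantive work in exactly the same places (operator and function applications, function abstractions, conditionals, quotation, evaluation). The only differences are organizational --- you run all five statements through one simultaneous induction on complexity over the full fourteen clauses, whereas the paper treats parts 1--3 and 5 as immediate from the definitions and proves part 4 by induction on the length of the term --- and your handling of the function-application case (covering the subcase where $V_{\phi}(\alpha)[V_{\phi}(a)]$ is undefined and the type application denotes $\Dc$) is in fact slightly more careful than the paper's.
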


\begin{proof}
By Lemma~\ref{lem:v-well-defined}, the standard valuation for $S$ is
well defined.  Parts 1, 2, 3, and 5 of the theorem follow immediately
from the definitions of a structure and the standard valuation for a
structure.

Our proof of part 4 is by induction on the length of the term $e$.

\bi

  \item[] Case 1: $e = (\mname{op-app}, O, e_1,\ldots,e_n)$ where $O =
    (\mname{op}, o, k_1,\ldots,k_n,k_{n+1})$.  Then $e$ is of type
    $k_{n+1}$.  By the definition of $V$, $V_{\phi}(e)$ is in
    $V_{\phi}(k_{n+1}) \cup \set{\Undefined}$.

  \item[] Case 2: $e = (\mname{var}, x, \alpha)$.  Then $e$ is of type
    $\alpha$.  By the definition of $V$, $V_{\phi}(e)$ is in
    $V_{\phi}(\alpha) \cup \set{\Undefined}$.  

  \item[] Case 3: $e = (\mname{fun-app},f,a)$ where $f$ is of type
    $\alpha$.  Then $e$ is of type $\alpha(a)$.  By the induction
    hypothesis, $V_{\phi}(f)$ is in $V_{\phi}(\alpha) \cup
    \set{\Undefined}$.  By the definition of $V$, if $V_{\phi}(e) =
    V_{\phi}(f)(V_{\phi}(a)) \not= \Undefined$, then $V_{\phi}(f)
    \not= \Undefined$ and $V_{\phi}(a) \not= \Undefined$, and so
    $V_{\phi}(e)$ is in $V_{\phi}(\alpha)[V_{\phi}(\alpha)] =
    V_{\phi}(\alpha(a))$.  Therefore, $V_{\phi}(e)$ is in
    $V_{\phi}(\alpha(a)) \cup \set{\Undefined}$.

  \item[] Case 4: $e = (\mname{fun-abs},(\mname{var},x,\alpha),b)$
    where $b$ is of type $\beta$.  Then $e$ is of type $\gamma =
    (\LAMBDAapp x \mcolon \alpha \mdot \beta)$.  By the induction
    hypothesis, $V_{\phi'}(b)$ is in $V_{\phi'}(\beta) \cup
    \set{\Undefined}$ for all $\phi' \in \mname{assign}(S)$. Suppose
    $g = V_{\phi}(e) \not= \Undefined$.  For all sets $d$ in
    $V_{\phi}(\alpha)$, if $g(d)$ is defined, $g(d)= V_{\phi[x \mapsto
        d]}(b)$ is a set in $V_{\phi[x \mapsto d]}(\beta)$.  For all
    sets $d$ not in $V_{\phi}(\alpha)$, $g(d)$ is undefined.
    Therefore, by the definition of $V$, $V_{\phi}(e)$ is in
    $V_{\phi}(\gamma) \cup \set{\Undefined}$.

  \item[] Case 5: $e = (\mname{if},A,b,c)$ where $b$ is of type
    $\beta$ and $c$ is of type $\gamma$.  Then $e$ is of type 
    \[\delta = \left\{\begin{array}{ll}
                     \beta & \mbox{if }\beta = \gamma\\
                     \mname{C} & \mbox{otherwise}
                     \end{array}
              \right.\] 
    By the induction hypothesis, $V_{\phi}(b)$ is in $V_{\phi}(\beta)
    \cup \set{\Undefined}$ and $V_{\phi}(c)$ is in $V_{\phi}(\gamma)
    \cup \set{\Undefined}$. $V_{\phi}(\beta) \subseteq
    V_{\phi}(\mname{C})$ and $V_{\phi}(\gamma) \subseteq
    V_{\phi}(\mname{C})$. Therefore, by the definition of $V$,
    $V_{\phi}(e)$ is in $V_{\phi}(\delta) \cup \set{\Undefined}$.

  \item[] Case 6: $e = (\mname{def-des},(\mname{var}, x, \alpha),B)$.
    Then $e$ is of type $\alpha$.  By the definition of $V$,
    $V_{\phi}(e)$ is in $V_{\phi}(\alpha) \cup \set{\Undefined}$.

  \item[] Case 7: $e = (\mname{indef-des},(\mname{var}, x,
    \alpha),B)$.  Then $e$ is of type $\alpha$.  By the definition of
    $V$, $V_{\phi}(e)$ is in $V_{\phi}(\alpha) \cup \set{\Undefined}$.

  \item[] Case 8: $e = (\mname{quote},e)$.  Then $e$ is of type
    $\mname{E}$.  By the definition of $H$, $I$, and $V$,
    $V_{\phi}(e)$ is in $\De = V_{\phi}(\mname{E})$.

  \item[] Case 9: $e = (\mname{eval},a,k)$. Then $e$ is of type
    $k$. By the definition of $V$, $V_{\phi}(e)$ is in
    $V_{\phi}(k) \cup \set{\Undefined}$.

\ei
\end{proof}

\subsection{Models}

A \emph{model for $L$} is a pair $M=(S,V)$ where $S$ is a structure
for $L$ and $V$ is a valuation for $S$.  Let $M=(S,V)$ be a model for
$L$.  An expression $e$ is \emph{denoting} [\emph{nondenoting}] in $M$
with respect to an assignment $\phi \in \mname{assign}(S)$ if
$V_{\phi}(e)$ is defined [undefined].  A denoting term $a$
is \emph{defined} [\emph{undefined}] in $M$ with respect to $\phi$ if
$V_{\phi}(a)$ is in $\Dc$ [$V_{\phi}(a) = \Undefined$].  If an
expression $e$ is denoting in $M$ with respect to $\phi$, then
its \emph{value} in $M$ with respect to $\phi$ is $V_{\phi}(e)$.

A model $M=(S,V)$ for $L$ is a \emph{standard model} for $L$ if $V$ is
the standard valuation for $S$.  The official semantics of Chiron is
based on standard models.  A formula $A$ is \emph{valid in $M$},
written $M \models A$, if $V_{\phi}(A) = \TRUE$ for all $\phi \in
\mname{assign}(S)$.  $A$ is \emph{valid}, written $\models A$, if $M
\models A$ for all standard models $M$.  A \emph{standard model} of a
set $\Gamma$ of formulas is a standard model $M$ such that $M \models
A$ for all $A \in \Gamma$.  

Let $e$ be a proper expression and $x$ be a symbol.  $e$ is
\emph{semantically closed in $M$} if $V_{\phi}(e)$ does not depend on
$\phi$, i.e., $V_{\phi}(e) = V_{\phi'}(e)$ for all $\phi,\phi' \in
\mname{assign}(S)$.  $e$ is \emph{semantically closed in $M$ with
  respect to $x$} if $V_{\phi}(e)$ does not depend on $\phi(x)$, i.e.,
$V_{\phi}(e) = V_{\phi[x \mapsto d]}(e)$ for all $\phi \in
\mname{assign}(S)$ and $d \in\Dc$.

\subsection{Theories}

A \emph{theory} of Chiron is pair $T=(L,\Gamma)$ where $L$ is a
language of Chiron and $\Gamma$ is a set of formulas of $L$ called
the \emph{axioms} of $T$.  $T$ is said to be \emph{over} $L$.
A \emph{standard model} of $T$ is a standard model for $L$ that is a
standard model of $\Gamma$.  A formula $A$ is \emph{valid in $T$},
written $T \models A$, if $M \models A$ for all standard models $M$ of
$T$.  The \emph{empty theory over $L$} is the theory $(L,\emptyset)$.

Let $e$ be a proper expression of $L$ and $T$ be a theory over $L$.
$e$ is \emph{semantically closed in $T$} if $e$ is semantically closed
in every model of $T$.  $e$~is \emph{semantically closed} (without
reference to a theory) if it is semantically closed in the empty
theory over $L$.  $e$ is \emph{semantically closed in $T$ with respect
  to $x$} if $e$ is semantically closed in every model of $T$ with
respect to $x$.

Let $T_i=(L_i,\Gamma_i)$ be a theory of Chiron for $i = 1,2$.  $T_1$
is a \emph{subtheory} of $T_2$ (and $T_2$ is an \emph{extension} of
$T_1$), written $T_1 \le T_2$, if $L_1 \le L_2$ and $\Gamma_1
\subseteq \Gamma_2$.

\begin{clem} \label{lem:theory-ext}
Let $T_i=(L_i,\Gamma_i)$ be a theory of Chiron for $i = 1,2$ such that
$T_1 \le T_2$.  Assume $L_1 = L_2$. Then \[T_1 \models A \mbox{
  implies } T_2 \models A\] for all formulas $A$ of $L_1$.
\end{clem}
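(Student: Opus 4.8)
The statement is the usual monotonicity of semantic consequence under enlarging the axiom set while keeping the language fixed, so the plan is to unwind the definitions of ``standard model of a theory'' and ``valid in a theory'' and observe that every standard model of $T_2$ is already a standard model of $T_1$.

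\textbf{Key steps.}  First I would recall that, since $L_1 = L_2$, the notions of structure for $L_1$ and for $L_2$, of valuation, of standard valuation, and hence of standard model \emph{for} the language, coincide; in particular $A$ being a formula of $L_1$ is the same as $A$ being a formula of $L_2$, so the statement $T_2 \models A$ is well posed.  Second, let $M=(S,V)$ be an arbitrary standard model of $T_2$.  By definition $M$ is a standard model for $L_2 = L_1$ and $M \models B$ for all $B \in \Gamma_2$.  Since $T_1 \le T_2$ gives $\Gamma_1 \subseteq \Gamma_2$, we get $M \models B$ for all $B \in \Gamma_1$, so $M$ is a standard model of $\Gamma_1$, and being a standard model for $L_1$, it is a standard model of $T_1$.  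Third, assuming $T_1 \models A$, the definition of validity in a theory yields $M \models A$.  Since $M$ was an arbitrary standard model of $T_2$, this establishes $T_2 \models A$.

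\textbf{Main obstacle.}  There is essentially no obstacle: the only point requiring care is the hypothesis $L_1 = L_2$, which is what guarantees that a standard model of $T_2$ is literally a standard model for the \emph{same} language over which $T_1$ is defined, so that the inclusion $\Gamma_1 \subseteq \Gamma_2$ can be applied directly.  (Without $L_1 = L_2$ one would additionally need a reduct/expansion argument relating standard models for $L_1$ and $L_2$, which the hypothesis here makes unnecessary.)  The argument is therefore a two-line set-inclusion observation: the class of standard models of $T_2$ is contained in the class of standard models of $T_1$, and validity is preserved when passing to a subclass of models.
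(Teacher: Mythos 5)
Your proposal is correct and follows exactly the paper's own argument: every standard model of $T_2$ is a standard model of $T_1$ (using $L_1 = L_2$ and $\Gamma_1 \subseteq \Gamma_2$), so validity in $T_1$ transfers to $T_2$. Nothing to add.
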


\begin{proof}
Assume the hypotheses of the lemma.  Let $A$ be a formula of $L_1$
such that (a) $T_1 \models A$.  Let (b) $M = (S,V)$ be a standard
model of $T_2$.  We must show that $M \models A$.  (b) implies (c) $M$ is
a standard model of $T_1$ since $T_1 \le T_2$ and $L_1 = L_2$.  (a)
and (c) imply $M \models A$.
\end{proof}

\bigskip

The following example shows that the assumption in
Lemma~\ref{lem:theory-ext} that $L_1 = L_2$ is necessary.

\begin{ceg} \em
Let $L_i = (\sO_i,\theta_i)$ be a language of Chiron and
$T_i=(L_i,\Gamma_i)$ be a theory of Chiron for $i = 1,2$ such that
$\sO_1 = \set{o_1,\ldots,o_n}$, $L_1 < L_2$, and $T_1 \le T_2$.  If
$A$ is $\ell = \set{\synbrack{o_1},\ldots,\synbrack{o_n}}$, then $T_1
\models A$ but $T_2 \not\models A$.
\end{ceg}

\subsection{Notes Concerning the Semantics of Chiron} \label{subsec:notes}

Fix a standard model $M=(S,V)$ for $L$.

\be

  \item An improper expression never has a value, but its quotation
    (as well as the quotation of any proper expression) always has a
    value.  The latter is a set called a construction that represents
    the syntactic structure of the expression as a tree.

  \item The value of an undefined type is $\Dc$ (the universal
    superclass).  The value of an undefined term is the undefined
    value $\Undefined$.  And the value of an undefined formula is
    $\FALSE$.

  \item Proper expressions---particularly variables and
    evaluations---within a quotation are not semantically ``active''.
    They can only become active if the quotation is within an
    evaluation.

  \item Any types in an operator's signature will normally be
    semantically closed.  This is the case for all the built-in
    operators of Chiron as well as all the operators defined in
    sections~\ref{sec:op-defs} and \ref{sec:sub}.

  \item Suppose $O = (\mname{op}, o, k_1,\ldots,k_n,k_{n+1})$ is an
    operator.  The operation assigned to $O$ by a standard valuation
    is a ``restriction'' of the operation $I(o)$.  Roughly speaking,
    the domain and range of $I(o)$ is restricted according to types in
    the signature $k_1,\ldots,k_n,k_{n+1}$.

  \item \bsp Dependent function types, function abstractions,
    existential quantifications, definition descriptions, and
    indefinite descriptions are \emph{variable binding} expressions.
    According to the semantics of Chiron, a variable binding
    $(\StarApp{x} \mcolon \alpha \mdot e)$, where $\star$ is
    $\Lambda$, $\lambda$, $\Forsome$, $\iota$, or $\epsilon$, binds
    all the ``free'' variables occurring in $e$ that are similar to
    $(x \mcolon \alpha)$.  Variables are bound in the traditional,
    naive way.  It might be possible to use other more sophisticated
    variable binding mechanisms such as de Bruijn
    notation~\cite{deBruijn72} or nominal
    data types~\cite{GabbayPitts02,Pitts03}. \esp

  \item The type $\alpha$ of a variable $(\mname{var}, x, \alpha)$
  restricts the value of a variable in two ways.  First, if
  $(\mname{var}, x, \alpha)$ immediately follows $\Lambda$, $\lambda$,
  $\Forsome$, $\iota$, or $\epsilon$ in a variable binding expression,
  then the values assigned to $x$ for the body of the variable binding
  expression are restricted to the values in the superclass denoted by
  $\alpha$.  Second, $V_{\phi}((\mname{var}, x, \alpha)) = \phi(x)$
  iff $\phi(x)$ is in the superclass denoted by $\alpha$.
  ($V_{\phi}((\mname{var}, x, \alpha)) = \Undefined$ if $\phi(x)$ is
  not in the superclass denoted by $\alpha$.)

  \item In a variable $(\mname{var}, x, \alpha)$, the value of
    $\alpha$ usually does not depend on the value assigned to $x$.
    Suppose $d \in V_\phi(\alpha)$.  If $V_\phi(\alpha)$ does not
    depend on $x$, then $V_{\phi[x \mapsto d]}((\mname{var}, x,
    \alpha)) = d$.  If $V_\phi(\alpha)$ does depend on $x$, then
    $V_{\phi[x \mapsto d]}((\mname{var}, x, \alpha))$ might equal
    $\Undefined$.  Hence, in the latter case, it might be necessary to
    use $(\mname{var}, x, \mname{C})$ instead of $(\mname{var}, x,
    \alpha)$ in the body of a variable binding $(\StarApp{x} \mcolon
    \alpha \mdot e)$ where $\star$ is $\Lambda$, $\lambda$,
    $\Forsome$, $\iota$, or $\epsilon$
  
  \item Symbols and operator names are atomic in Chiron.  That is,
    they cannot be ``constructed'' from other expressions.

  \item $\ell$, the set of constructions that represent the operator
    names of $L$, represents $L$ itself.  The members of the type
    $\mname{L}$ represent sublanguages of $L$.  The construction types
    $\mname{E}_{{\rm on},a}$, $\mname{E}_a$, $\mname{E}_{{\rm op},a}$,
    $\mname{E}_{{\rm ty},a}$, $\mname{E}_{{\rm te},a}$,
    $\mname{E}^{b}_{{\rm te},a}$, $\mname{E}_{{\rm fo},a}$, where $a$
    is a sublanguage in $\mname{L}$, are parameterized versions of the
    respective types $\mname{E}_{{\rm on}}$, $\mname{E}$,
    $\mname{E}_{{\rm op}}$, $\mname{E}_{{\rm ty}}$, $\mname{E}_{{\rm
        te}}$, $\mname{E}^{b}_{{\rm te}}$, $\mname{E}_{{\rm fo}}$.
    (The construction type $\mname{E}_{\rm sy}$ does not depend on the
    set of operator names.)

  \item The notions of a free variable, substitution for a variable,
  variable capturing, etc.\ can be formalized in Chiron as defined
  operators (see subsection~\ref{subsec:sub-op}.)  As a result,
  syntactic side conditions can be expressed directly within Chiron
  formulas.  However, these notions are more complicated in Chiron
  than in traditional predicate logic due to the presence of
  evaluation.  For example, when the value of $(e \mcolon
  \mname{E}_{\rm te})$ equals the value of $\synbrack{(y \mcolon
  \mname{C})}$ with $x \not= y$, the variable $(y \mcolon \mname{C})$
  is free in \[\ForallApp x \mcolon \mname{C} \mdot x = \sembrack{(e
  \mcolon \mname{E}_{\rm te})}_{\rm te}\] because this formula is
  semantically equivalent to \[\ForallApp x \mcolon \mname{C} \mdot x
  = (y \mcolon \mname{C}).\]

  \item Since variables denote classes, they can be called \emph{class
  variables}.  There are no operation, superclass, or truth value
  variables in Chiron.  Thus direct quantification over operations,
  superclasses, and truth values is not possible.  Direct
  quantification over the undefined value $\Undefined$ is also not
  possible.  However, indirect quantification over ``definable''
  operations, ``definable'' superclasses, ``definable'' members of
  $\Dc \cup \set{\Undefined}$, or truth values can be done via
  variables of type $\mname{E}_{\rm op}$, $\mname{E}_{\rm ty}$,
  $\mname{E}_{\rm te}$, or $\mname{E}_{\rm fo}$, respectively.  As in
  standard {\nbg} set theory, only classes are first-class values in
  Chiron.

  \item Since sets and classes are superclasses, a type may denote a
  set or a proper class.  In particular, a type may denote the
  \emph{empty set}.  That is, types in Chiron are allowed to be empty.
  Empty types result, for example, from a type application
  $\alpha(a)$ where $a$ denotes a value that is not
  in the domain of any function in the superclass that $\alpha$
  denotes.

  \item $\alpha$ is a \emph{subtype} of $\beta$ in $M$ if
    $V_\phi(\alpha) \subseteq V_\phi(\beta)$ for all $\phi \in
    \mname{assign}(S)$.  For example, $\mname{E}$ is a subtype of
    $\mname{V}$ in every standard model.  \mname{C} is the \emph{universal
      type} by virtue of denoting $D_c$, the universal superclass.
    Every type is a subtype of \mname{C} in every standard model.

  \item An application of a term denoting a function to an undefined
    term is itself undefined.  That is, function application is strict
    with respect to undefinedness.  In contrast, the application of
    term operators is not necessarily strict with respect to
    undefinedness.

  \item Suppose $f(a)$ is a function application where $f$ is a term
    of type $(\LAMBDAapp x \mcolon \alpha \mdot \beta)$.  Then $a$
    could be any term of any type whatsoever.  However, if the value
    of $a$ is not a set in the value of $\alpha$, then $f(a)$ is
    undefined.  Similarly, suppose
    \[(o::k_1, \ldots,k_{i-1}, \alpha,k_{i+1}, 
    \ldots,k_{n+1})(e_1, \ldots, e_{i-1}, a, e_{i+1}, \ldots, e_n)\]
    is an operator application.  Then $a$ could be any term of any
    type whatsoever.  However, if the value of $a$ is a class not in
    the value of $\alpha$, then the value of this operator application
    is $\Dc$ if $k_{n+1} = \mname{type}$, $\Undefined$ if
    $\ctype{L}{k_{n+1}}$, and $\FALSE$ if $k_{n+1} = \mname{formula}$.

  \item Since functions are classes that represent mappings from sets
    to sets, proper classes in the denotations of the types $\alpha$
    and $\beta$ have no effect on the meaning of the dependent
    function type $(\LAMBDAapp x \mcolon \alpha \mdot \beta)$.  As a
    consequence, \[V_\phi((\LAMBDAapp x \mcolon \mname{V} \mdot
    \mname{V})) = V_\phi((\LAMBDAapp x \mcolon \mname{C} \mdot
    \mname{C})) = \Df,\] the domain of functions in $M$, for all $\phi
    \in \mname{assign}(S)$.  For the same reason, if $V_\phi(a) \not=
    \Undefined$, then $V_\phi(\alpha(a))$ is a class (i.e., not a
    proper superclass) for all $\phi \in \mname{assign}(S)$.

  \item Suppose a built-in operator \[(\mname{op}, \mname{lub},
    \mname{type},\mname{type},\mname{type})\] is added to Chiron that
    denotes an operation that, given superclasses $\Sigma_1$ and
    $\Sigma_2$, returns a superclass that is the least upper bound of
    $\Sigma_1$ and $\Sigma_2$.  Then formation rule P-Expr-8 could be
    modified so that a conditional term $(\mname{if},A,b,c)$ is
    assigned the type
    \[(\mname{op-app}, (\mname{op}, \mname{lub},
    \mname{type},\mname{type},\mname{type}), \beta,\gamma)\] where
    $\beta$ and $\gamma$ are the types of $b$ and $c$, respectively.
    See appendix B for details.

  \item A \emph{G\"odel number}~\cite{Goedel31} is a number that
  encodes an expression.  Analogously, a \emph{G\"odel set} is a set
  that encodes an expression.  Employing this terminology, the
  construction that represents an expression $e \in \sE_L$ is the
  \emph{G\"odel set} of $e$ which is denoted by $(\mname{quote},e)$.
  Hence, ``G\"odel numbering'' is built into Chiron.

  \item Quotations cannot be expressed as applications of a built-in
    operator since expressions are not values (i.e., not classes,
    superclasses, truth values, or operations).

  \item The formation rule P-Expr-12 could be sharpened so that the
    type of $(\mname{quote},e)$ is $\mname{E}_{\rm ty}$ when $e$ is a
    type, $\mname{E}_{\rm te}^{\synbrack{\alpha}}$ when $e$ is a term of type
    $\alpha$, etc.  See appendix B for details.

  \item When an evaluation $b =(\mname{eval},a, k)$ is ``semantically
    well-formed'', the value of $b$ is, roughly speaking, \emph{the
      value of the value of $a$}.

  \item \bsp An ``ungrounded expression'' is considered to be an
    undefined expression.  For example, the value of an ungrounded
    formula like $\sembrack{\mname{LIAR}}_{\rm fo}$ is $\FALSE$.\esp

  \item If we restrict evaluations to those of the form
    $(\mname{eval},a, k)$ where $k$ is \mname{type}, \mname{C}, or
    \mname{formula}, evaluations could be expressed as applications of
    the following three built-in operators with appropriate
    definitions:

  \be

    \item $(\mname{eval-type} :: \mname{E}_{\rm ty}, \mname{type})$.

    \item $(\mname{eval-term} :: \mname{E}_{\rm te}, \mname{C})$.

    \item $(\mname{eval-formula} :: \mname{E}_{\rm fo}, \mname{formula})$.

  \ee

  \item The operator $(\mname{eval-formula} :: \mname{E}_{\rm fo},
    \mname{formula})$ can be defined by the following formula
    schema: \[(\mname{eval-formula} :: \mname{E}_{\rm fo},
    \mname{formula})(a) \Iff \sembrack{a}_{\rm fo}.\] This operator is
    a truth predicate.  It satisfies four of the eight criteria ((a),
    (c), (f), (h)) for a truth predicate given by H. Leitgeb
    in~\cite{Leitgeb07}, and it meets all eight criteria for eval-free
    formulas.

  \item Operators of the form $(o :: k)$ are applied to an empty tuple
    of arguments.  The value of $(o :: k)$ is a $0$-ary operation
    $\sigma$ such that $\sigma(\,) = v$ for some value $v$.  We will
    sometimes abuse terminology and say that the value of $(o :: k)$
    is $v$ instead of $o$.

  \item A value $x \in \De$ is a construction that represents an
    expression of $L$.  We will sometimes abuse terminology and say $x
    \in \De$ is an \emph{expression} of $L$.  Similarly, we will
    sometimes say that a value $x \in \De$ that represents a symbol,
    operator name, operator, type, term, or formula of $L$ is a
    \emph{symbol}, \emph{operator name}, \emph{operator}, \emph{type},
    \emph{term}, or \emph{formula} of $L$, respectively.

  \item Let $\sE_{\rm op}$, $\sE_{\rm ty}$, $\sE_{\rm te}$, and
    $\sE_{\rm fo}$ be sets of expressions of $L$ defined inductively
    by the following statements:

  \be

    \item $\sE_{\rm op}$ is the set of built-in operators named
      \mname{set}, \mname{class}, \mname{term-equal}, \mname{in},
      \mname{not}, and \mname{or}.

    \item $\sE_{\rm ty}$ is the set $\set{\mname{V},\mname{C}}$ of
      types.

    \item If $x$ is in $\sS$ and $\alpha$ is in $\sE_{\rm ty}$, then
      $(x : \alpha)$ is in $\sE_{\rm te}$.

    \item If $x$ is in $\sS$, $\alpha$ is in $\sE_{\rm ty}$, $a$ and
      $b$ are in $\sE_{\rm te}$, and $A$ and $B$ are in $\sE_{\rm
      fo}$, then $a = b$, $a \in b$, $\Neg A$, $A \Or B$, and
      $\ForsomeApp x \mcolon \alpha \mdot A$ are in $\sE_{\rm fo}$.

  \ee

  Let {\cnbg} be the restriction of Chiron to the operators, types,
  terms, and formulas in $\sE_{\rm op}$, $\sE_{\rm ty}$, $\sE_{\rm
    te}$, and $\sE_{\rm fo}$, respectively.  {\cnbg} is a version of
  {\nbg} embedded in any language of Chiron (see
  Corollary~\ref{cor:cnbg} below).

\ee

\subsection{Relationship to NBG}\label{subsec:faithful}

We show in this section that there is a faithful semantic
interpretation of {\nbg} set theory in Chiron.  Loosely speaking, this
means Chiron is a conservative extension of {\nbg}.  That is, Chiron
adds new reasoning machinery to {\nbg} without compromising the
underlying semantics of {\nbg}.

Let $L=(\sO,\theta)$ be any language of Chiron.  {\nbg} is usually
formulated as a theory in first-order logic over a language $L_{\rm
nbg}$ containing an infinite set $\sV$ of variables, a unary predicate
symbol $V$, binary predicates symbols $=$ and $\in$, and some complete
set of logical connectives (say $\Neg,\Or,\Forsome$).  Assume $\sV
= \sS$, i.e., variables of $L_{\rm nbg}$ are the symbols of Chiron.
Let $\sF_{\rm nbg}$ denote the set of formulas of $L_{\rm
nbg}$. A \emph{model} of {\nbg} is a structure $N=(D^N,V^N,=^N,\in^N)$
for $L_{\rm nbg}$ that satisfies the axioms of {\nbg}.
An \emph{assignment} into $N$ is a mapping that assigns a member of
$D^N$ to each variable $x \in \sV$.  Let $\mname{assign}(N)$ be the
collection of assignments into $N$.  The \emph{valuation} for $N$ is a
total mapping $W^N$ from $\sF_{\rm nbg} \times
\mname{assign}(N)$ to the set $\set{\TRUE,\FALSE}$ of truth values.  A
formula $A$ of $L_{\rm nbg}$ is \emph{valid}, written $\models_{\rm
  nbg}A$, if $W^{N}_{\phi}(A) = \TRUE$ for all models $N$ of {\nbg}
and all $\phi \in \mname{assign}(N)$.\footnote{As above for a
  valuation $V$, we write $W^{N}(A,\phi)$ as $W^{N}_{\phi}(A)$.}

$\Phi$ is a \emph{translation from} {\nbg} \emph{to} $L$ if $\Phi$
maps the terms (variables) of $L_{\rm nbg}$ to the terms of $L$ and
the formulas of $L_{\rm nbg}$ to formulas of $L$.  $\Phi$ is
a \emph{(semantic) interpretation of} {\nbg} \emph{in} Chiron if
$\Phi$ is a translation from {\nbg} to $L$ and, for all formulas $A$
of $L_{\rm nbg}$, $\models_{\rm nbg} A$ implies $\models
\Phi(A)$.  That is, $\Phi$ is an interpretation of {\nbg} in Chiron if
$\Phi$ is a meaning-preserving translation from {\nbg} to $L$.  $\Phi$
is a \emph{faithful interpretation of} {\nbg} \emph{in} Chiron if
$\Phi$ is an interpretation of {\nbg} in Chiron and, for all formulas
$A$ of $L_{\rm nbg}$, $\models \Phi(A)$ implies $\models_{\rm nbg}A$.
That is, $\Phi$ is a faithful interpretation of {\nbg} in Chiron if
Chiron over $L$ is a conservative extension of the image of {\nbg}
under $\Phi$.

Let $\Phi$ be the translation from {\nbg} to $L$ recursively defined
by:

\be

  \item If $x \in \sV$, then $\Phi(x) = (x \mcolon \mname{C})$.

  \item If $V(x)$ is a formula of $L_{\rm nbg}$, then $\Phi(V(x)) =
    (\Phi(x) =_{\sf V} \Phi(x))$.

  \item If $(x = y)$ is a formula of $L_{\rm nbg}$, then $\Phi((x = y)) =
  (\Phi(x) = \Phi(y))$.

  \item If $(x \in y)$ is a formula of $L_{\rm nbg}$, then $\Phi((x \in
  y)) = (\Phi(x) \in \Phi(y))$.

  \item If $(\Neg A)$ is a formula of $L_{\rm nbg}$, then $\Phi((\Neg
  A)) = (\Neg \Phi(A))$.

  \item If $(A \Or B)$ is a formula of $L_{\rm nbg}$, then $\Phi((A
  \Or B)) = (\Phi(A) \Or \Phi(B))$.

  \item If $(\ForsomeApp x \mdot A)$ is a formula of $L_{\rm nbg}$,
  then $\Phi((\ForsomeApp x \mdot A)) = (\ForsomeApp x \mcolon
  \mname{C} \mdot \Phi(A))$.

\ee  

\begin{clem}\label{lem:faithful}
Let $N=(D^N,V^N,=^N,\in^N)$ be a model of {\nbg} and $M=(S,V)$ be a
standard model for $L$, where \[S =
(\Dv,\Dc,\Ds,\Df,\Do,\De,\in,\TRUE,\FALSE,\Undefined,\xi,H,I),\] such
that $(D^N,\in^N)$ is identical to the prestructure $(\Dc,\in)$.

\be

  \item For all $d \in D^N$, $V^N(d)$ iff $d$ is in $\Dv$.

  \item For all $d,d' \in D^N$, $d =^N d'$ iff  $d = d'$.

  \item For all $d,d' \in D^N$, $d \in^N d'$ iff $d \in d'$.

  \item $\mname{assign}(N) = \mname{assign}(M)$.

  \item For all formulas $A$ of $L_{\rm nbg}$ and $\phi \in
    \mname{assign}(N)$, $W^{N}_{\phi}(A) = V_{\phi}(\Phi(A))$.

\ee
\end{clem}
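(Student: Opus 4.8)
The plan is to dispose of parts 1--4 directly from the hypotheses and then prove part 5 by structural induction on $A$. For part 1, the prestructure $(\Dc,\in)$ satisfies the axioms of {\nbg} as given in \cite{Goedel40} or \cite{Mendelson97}, and in those axiomatizations $V(x) \Iff \ForsomeApp y \mdot (x \in y)$ is provable; since $\Dv$ was defined to be exactly the collection of members of $\Dc$ that belong to some member of $\Dc$, this gives $V^N(d)$ iff $d \in \Dv$. Part 2 holds because $N$ is a model of first-order logic \emph{with equality}, so $=^N$ is genuine identity on $D^N = \Dc$. Part 3 is immediate from the hypothesis that $(D^N,\in^N)$ is literally the prestructure $(\Dc,\in)$, so $\in^N$ is $\in$. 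Part 4 is immediate as well: since $\sV = \sS$ and $D^N = \Dc$, an assignment into $N$ and an assignment into $M$ (that is, into $S$) are by definition the same objects, namely functions from $\sS$ to $\Dc$, and the modification $\phi[x \mapsto d]$ means the same thing in both settings.

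For part 5, I would induct on how the formula $A$ of $L_{\rm nbg}$ is built, using the recursive definition of $\Phi$, the Tarskian clauses for $W^N$, and the matching clauses of the standard valuation $V$ together with the fixed interpretations of the built-in operators \mname{set}, \mname{class}, \mname{term-equal}, \mname{in}, \mname{not}, and \mname{or}. (Note that $\Phi(A)$ is always a formula of $L$, so by Lemma~\ref{lem:v-well-defined} and the standard-valuation theorem $V_\phi(\Phi(A)) \in \set{\TRUE,\FALSE}$ and the claimed equation is meaningful.) A preliminary remark used throughout: since $\phi(x) \in \Dc = V_\phi(\mname{C})$, the variable clause gives $V_\phi(\Phi(x)) = V_\phi((x\mcolon\mname{C})) = \phi(x)$. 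For the base case $A = V(x)$, $\Phi(A)$ is $(\Phi(x) \Equal_{\mname{V}} \Phi(x))$, and unwinding the \mname{term-equal} clause its value is $\TRUE$ iff $\phi(x)$ is a member of $V_\phi(\mname{V}) = \Dv$, which by part 1 is iff $V^N(\phi(x))$, i.e.\ iff $W^N_\phi(V(x)) = \TRUE$. For $A = (x = y)$, $\Phi(A)$ is $(\Phi(x) \Equal \Phi(y))$, whose value is $\TRUE$ iff $\phi(x)$ and $\phi(y)$ are identical members of $\Dc$, i.e.\ iff $\phi(x) = \phi(y)$, i.e.\ (part 2) iff $\phi(x) =^N \phi(y)$. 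For $A = (x \in y)$, $\Phi(A)$ is $(\Phi(x) \in \Phi(y))$; the operator $(\mname{in} :: \mname{V},\mname{C},\mname{formula})$ returns $\FALSE$ when $\phi(x) \notin \Dv$ and $I(\mname{in})(\phi(x),\phi(y))$ otherwise, but in both cases the value is $\TRUE$ iff $\phi(x) \in \phi(y)$, because $\phi(x) \in \phi(y)$ already forces $\phi(x)$ to be a set; by part 3 this is iff $\phi(x) \in^N \phi(y)$, i.e.\ iff $W^N_\phi(x \in y) = \TRUE$.

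For the inductive cases, the connectives $\Neg A$ and $A \Or B$ are handled by the \mname{not} and \mname{or} clauses of the standard valuation together with the induction hypothesis; these computations are routine. For $\ForsomeApp x \mdot A$, we have $\Phi(\ForsomeApp x \mdot A) = (\ForsomeApp x \mcolon \mname{C} \mdot \Phi(A))$, whose value is $\TRUE$ iff there is some $d$ in $V_\phi(\mname{C}) = \Dc$ with $V_{\phi[x \mapsto d]}(\Phi(A)) = \TRUE$; since $\phi[x \mapsto d] \in \mname{assign}(N)$ by part 4, the induction hypothesis rewrites this as: there is some $d \in D^N$ with $W^N_{\phi[x \mapsto d]}(A) = \TRUE$, i.e.\ $W^N_\phi(\ForsomeApp x \mdot A) = \TRUE$. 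I do not expect a genuine obstacle; the only spots requiring care are the ``out of range'' (restriction) behaviour of the built-in operators---most visibly the \mname{in} case, where the declared domain $\mname{V}$ of the first argument differs from the actual type $\mname{C}$ of $\Phi(x)$---and the appeal in part 1 to the fact that {\nbg} proves $V(x) \Iff \ForsomeApp y \mdot (x \in y)$, so that the primitive predicate $V^N$ is pinned down by the prestructure $(\Dc,\in)$ alone.
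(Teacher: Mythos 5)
Your proposal is correct and follows essentially the same route as the paper: parts 1--4 are dispatched directly from the definitions, and part 5 is proved by induction on the structure of the $L_{\rm nbg}$ formula, using the fact that $\Neg$, $\Or$, $\Forsome$ and the atomic predicates are interpreted identically in $N$ and in the standard model $M$. You simply spell out the details the paper leaves implicit (notably the restriction behaviour of the \mname{in} operator and the pinning down of $V^N$ by the membership relation), which is fine.
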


\begin{proof}
Clauses 1--4 are obvious.  Clause 5 is easily proved by induction on
the structure of the formulas of $L_{\rm nbg}$ since the logical
connectives $\Neg,\Or,\Forsome$ of $L_{\rm nbg}$ have the same meanings
as the negation operator, the disjunction operator, and existential
quantification, respectively, in Chiron.
\end{proof}
  
\begin{cthm}
For all formulas $A$ of $L_{\rm nbg}$, \[\models_{\mbox{\small \sc
nbg}} A \dblsp \mbox{iff} \dblsp \models \Phi(A).\] That is, $\Phi$ is
a faithful interpretation of {\nbg} in Chiron.
\end{cthm}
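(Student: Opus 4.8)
The plan is to reduce both implications to Lemma~\ref{lem:faithful}, which already carries out all of the semantic bookkeeping: whenever a model $N=(D^N,V^N,=^N,\in^N)$ of {\nbg} and a standard model $M=(S,V)$ for $L$ share an underlying prestructure --- i.e., $(D^N,\in^N)$ is literally the pair $(\Dc,\in)$ of $S$ --- then $W^N_\phi(A)=V_\phi(\Phi(A))$ for every formula $A$ of $L_{\rm nbg}$ and every $\phi$, and moreover $\mname{assign}(N)=\mname{assign}(M)$. So the whole argument is a matter of matching up models on the two sides of the biconditional.

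For the implication $\models_{\rm nbg}A \Rightarrow \models\Phi(A)$, I start from an arbitrary standard model $M=(S,V)$ for $L$ and read off its prestructure $(\Dc,\in)$. By the definition of a prestructure this pair satisfies the {\nbg} axioms, so interpreting the unary predicate $V$ of $L_{\rm nbg}$ as ``is a set of $(\Dc,\in)$'', $=$ as identity, and $\in$ as $\in$ yields a model $N$ of {\nbg} with $(D^N,\in^N)=(\Dc,\in)$; clauses 1--4 of Lemma~\ref{lem:faithful} are exactly the assertions that these readings are coherent. Since $\models_{\rm nbg}A$, we have $W^N_\phi(A)=\TRUE$ for every $\phi\in\mname{assign}(N)=\mname{assign}(M)$, and then clause 5 of Lemma~\ref{lem:faithful} gives $V_\phi(\Phi(A))=\TRUE$ for every such $\phi$, i.e., $M\models\Phi(A)$. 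As $M$ was arbitrary, $\models\Phi(A)$.

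For faithfulness, $\models\Phi(A)\Rightarrow\models_{\rm nbg}A$, I argue contrapositively: if $\not\models_{\rm nbg}A$ there is a model $N=(D^N,V^N,=^N,\in^N)$ of {\nbg} and a $\phi\in\mname{assign}(N)$ with $W^N_\phi(A)=\FALSE$, and I must build a standard model $M$ for $L$ whose prestructure is $(D^N,\in^N)$. The pair $(D^N,\in^N)$ is itself a prestructure (it satisfies the {\nbg} axioms because $N$ is a model of {\nbg}). Extend it to a structure by making the remaining choices demanded by the definition of a structure: pick $\Dea$ to be a countably infinite subset of $\Dv^N$ none of whose members is $\emptyset$ or an ordered pair, build $\Deb$ and $\De$ accordingly, pick a bijection $G\colon\sS\cup\sO\to\Dea$ and let $H$ be the induced bijection on expressions, pick a choice function $\xi$ on the superclass domain $\Ds^N$, and pick an interpretation $I$ of the operator names of $L$ that uses the mandated definitions for the built-in names and any operation of the correct arity and signature for each remaining name. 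This assembles into a structure $S$, and taking $V$ to be the standard valuation for $S$ --- which is well defined by Lemma~\ref{lem:v-well-defined} --- gives a standard model $M=(S,V)$ for $L$ with prestructure $(D^N,\in^N)$. Clause 5 of Lemma~\ref{lem:faithful} then yields $V_\phi(\Phi(A))=W^N_\phi(A)=\FALSE$, so $M\not\models\Phi(A)$ and hence $\not\models\Phi(A)$.

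The only real content beyond Lemma~\ref{lem:faithful} lies in the model-construction step of the faithfulness direction, and the point to check with some care is that every ingredient of a Chiron structure can actually be chosen over an \emph{arbitrary} model of {\nbg}: that $\Dv^N$ contains a countably infinite set of ``atoms'' --- classes that are neither $\emptyset$ nor ordered pairs, e.g. the von~Neumann naturals $\ge 1$, whose existence rests on the axiom of infinity --- and, more delicately, that a choice function $\xi$ on the superclass domain $\Ds^N$ exists, which appeals to choice in the ambient metatheory (equivalently, the global form of choice packaged with {\nbg}). Granting these, there is no obstruction: the conditions imposed on $I$ at the built-in operator names are outright definitions in terms of the already-fixed data $\De,\Dsy,\Don,\ldots$, so they neither conflict with one another nor leave $I$ overdetermined, and the two directions combine to give the stated biconditional, hence the faithfulness of $\Phi$.
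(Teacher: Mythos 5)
Your proposal is correct and follows essentially the same route as the paper: both directions reduce to clause 5 of Lemma~\ref{lem:faithful} via the two-way correspondence between {\nbg} models and standard Chiron models sharing a prestructure, which is exactly the observation the paper's proof rests on. The only difference is that you spell out the model-construction step (choice of $\Dea$, $H$, $\xi$, $I$) that the paper leaves implicit, which is a reasonable elaboration rather than a different argument.
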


\begin{proof}
For every model $N=(D^N,V^N,=^N,\in^N)$ of {\nbg}, there is a
standard model $M=(S,V)$ for $L$, where \[S =
(\Dv,\Dc,\Ds,\Df,\Do,\De,\in,\TRUE,\FALSE,\Undefined,\xi,H,I),\] such
that $(D^N,\in^N)$ is identical to the prestructure $(\Dc,\in)$.
Likewise, for every standard model $M=(S,V)$ for $L$, where \[S =
(\Dv,\Dc,\Ds,\Df,\Do,\De,\in,\TRUE,\FALSE,\Undefined,\xi,H,I),\] there
is a model $N=(D^N,V^N,=^N,\in^N)$ of {\nbg} such that the
prestructure $(\Dc,\in)$ is identical to $(D^N,\in^N)$.  The theorem
follows from this observation and clause 5 of
Lemma~\ref{lem:faithful}.
\end{proof}

\bigskip

In the previous subsection we defined a restriction of Chiron named
{\cnbg} that we claimed is a version of {\nbg} embedded in any
language of Chiron.  This claim is established by the following
corollary.

\begin{ccor}\label{cor:cnbg}
$\Phi$ is a faithful interpretation of {\nbg} in {\cnbg}.
\end{ccor}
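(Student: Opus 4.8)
The plan is to deduce the corollary directly from the preceding theorem, which already establishes $\models_{\rm nbg} A$ iff $\models \Phi(A)$ for every formula $A$ of $L_{\rm nbg}$, where $\models$ is validity in all standard models for $L$. Since {\cnbg} is by definition nothing but the syntactic restriction of Chiron over $L$ to the expressions built from $\sE_{\rm op}$, $\sE_{\rm ty}$, $\sE_{\rm te}$, $\sE_{\rm fo}$, a standard model of {\cnbg} is just a standard model for $L$, and validity $\models_{\cnbg} B$ of a {\cnbg}-formula $B$ coincides with $\models B$. Hence the only thing left to check is that $\Phi$ is genuinely a \emph{translation from {\nbg} to {\cnbg}}: that it sends each variable of $L_{\rm nbg}$ to a term of {\cnbg} and each formula of $L_{\rm nbg}$ to a formula of {\cnbg}. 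Once that is in hand, $\models_{\rm nbg} A$ iff $\models \Phi(A)$ iff $\models_{\cnbg} \Phi(A)$, which is exactly the assertion that $\Phi$ is a faithful interpretation of {\nbg} in {\cnbg}.

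First I would run an induction on the structure of the terms and formulas of $L_{\rm nbg}$, matching each of the seven recursive clauses defining $\Phi$ against the inductive definition of $\sE_{\rm op}$, $\sE_{\rm ty}$, $\sE_{\rm te}$, $\sE_{\rm fo}$. A variable $x$ is sent to $(x \mcolon \mname{C})$, and since $\mname{C} \in \sE_{\rm ty}$, clause~3 of the definition of $\sE_{\rm te}$ puts $(x \mcolon \mname{C}) \in \sE_{\rm te}$. For the atomic formulas: $V(x)$ goes to $(\Phi(x) =_{\sf V} \Phi(x))$, an application of \mname{term-equal} $\in \sE_{\rm op}$ to terms in $\sE_{\rm te}$ and the type $\mname{V} \in \sE_{\rm ty}$; $x = y$ goes to $(\Phi(x) = \Phi(y))$, likewise built from \mname{term-equal}; and $x \in y$ goes to $(\Phi(x) \in \Phi(y))$, built from \mname{in} $\in \sE_{\rm op}$ and two terms of $\sE_{\rm te}$ (this operator application is well formed by P-Expr-2, which imposes no constraint on the types of the arguments). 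Each of these lands in $\sE_{\rm fo}$ by the appropriate atomic clause of the definition of $\sE_{\rm fo}$ (the equation clause being understood so as to admit the type subscript $\mname{V}$ as well as $\mname{C}$, both members of $\sE_{\rm ty}$). For the compound cases, $\Neg A$, $A \Or B$, and $\ForsomeApp x \mdot A$ go to $\Neg \Phi(A)$, $\Phi(A) \Or \Phi(B)$, and $\ForsomeApp x \mcolon \mname{C} \mdot \Phi(A)$; by the induction hypothesis together with the clauses of $\sE_{\rm fo}$ governing \mname{not}, \mname{or} (both in $\sE_{\rm op}$), and existential quantification (with $\alpha = \mname{C} \in \sE_{\rm ty}$), these are again in $\sE_{\rm fo}$. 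Hence $\Phi$ maps $L_{\rm nbg}$ into {\cnbg}, and combining with the theorem yields the corollary.

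I expect the only real obstacle to be the bookkeeping in that induction, namely confirming that no clause of $\Phi$ ever produces an operator, type, term, or formula outside $\sE_{\rm op} \cup \sE_{\rm ty} \cup \sE_{\rm te} \cup \sE_{\rm fo}$. This should be painless, since the vocabulary of {\cnbg} was evidently chosen to match the image of $\Phi$: $\sE_{\rm op}$ contains exactly the operators \mname{set}, \mname{class}, \mname{term-equal}, \mname{in}, \mname{not}, \mname{or} that occur in the clauses of $\Phi$ (and in the types $\mname{V}$, $\mname{C}$), and the closure conditions defining $\sE_{\rm te}$ and $\sE_{\rm fo}$ mirror precisely the constructors $\Phi$ uses. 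No new model-theoretic argument is required beyond the theorem, because {\cnbg} inherits both its standard models and its validity relation from Chiron over $L$.
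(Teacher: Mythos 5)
Your proposal is correct and follows the route the paper intends: the paper states the corollary without proof, treating it as immediate from the preceding theorem, and your argument supplies exactly the two observations that make it immediate — that the image of $\Phi$ lies within the {\cnbg} fragment (including the harmless use of $=_{\sf V}$, which is still an application of \mname{term-equal} from $\sE_{\rm op}$ with the type $\mname{V} \in \sE_{\rm ty}$), and that {\cnbg}, being a purely syntactic restriction of Chiron over $L$, inherits its standard models and validity relation unchanged. Nothing further is needed.
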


\section{Operator Definitions}\label{sec:op-defs}

There are two ways of assigning a meaning to an operator.  The first
is to make the operator one of the built-in operators like
$(\mname{op}, \mname{set}, \mname{type})$ ($(\mname{set} ::
\mname{type})$ in compact notation) and then define its meaning as
part of the definition of a standard model.  The second is to
construct one or more formulas that together define the meaning of an
operator.  We will use this latter approach to define several useful
logical, set-theoretic, and syntactic operators in this section and
substitution operators in the next section.

Each operator definition consists of an operator \[O = (o ::
k_1,\ldots,k_{n+1})\] and a set of defining axioms for $O$.  The
definition defines $o$ to be an operator name, $\theta(o)$ to be the
signature form corresponding to $k_1,\ldots,k_{n+1}$, and
$k_1,\ldots,k_{n+1}$ to be the preferred signature of $o$.  Each
defining axiom is usually eval-free formula.  The defining axioms are
presented as individual formulas or as formula schemas.  The set of
defining axioms presented by a formula schema usually depends on the
language that is being considered.  An operator definition may
optionally include compact syntax for applications of the defined
operator.

\subsection{Logical Operators}\label{subsec:logop}

\be
  
  \item \textbf{Truth}\\
  Operator: 
  $(\mname{true}::\mname{formula})$\\
  Defining axioms:
  \begin{eqnarray*}
  (\mname{true}::\mname{formula})(\,) \Iff \mname{C} \TypeEqual \mname{C}.
  \end{eqnarray*}

%\newpage

  Compact notation: 
  \bi

    \item[] $\mname{T}$ means $(\mname{true}::\mname{formula})(\,)$.

  \ei
  
  \item \textbf{Falsehood}\\
  Operator: 
  $(\mname{false}::\mname{formula})$\\
  Defining axioms:
  \begin{eqnarray*}
  (\mname{false}::\mname{formula})(\,) \Iff \mname{V} \TypeEqual \mname{C}.
  \end{eqnarray*}

%\newpage

  Compact notation: 
  \bi

    \item[] $\mname{F}$ means $(\mname{false}::\mname{formula})(\,)$.

  \ei

  \item \textbf{Conjunction}\\
  Operator: 
  $(\mname{and}::\mname{formula},\mname{formula},\mname{formula})$\\
  Defining axioms:
  \begin{eqnarray*}
  (\mname{and}::\mname{formula},\mname{formula},\mname{formula})
     (A^{\rm ef},B^{\rm ef}) \Iff 
     \Neg(\Neg A^{\rm ef} \Or \Neg B^{\rm ef}).
  \end{eqnarray*}

  \iffalse
  \begin{eqnarray*}
  \lefteqn{\ForallApp e_1,e_2 \mcolon \mname{E}_{\rm fo} \mdot 
     (\mname{and}::\mname{formula},\mname{formula},\mname{formula})
     (\sembrack{e_1},\sembrack{e_2}) \Iff} \\ & &
     \Neg(\Neg \sembrack{e_1} \Or \Neg \sembrack{e_2}).
  \end{eqnarray*}
  \fi

%\newpage

  Compact notation: 
  \bi

    \item[] $(A \And B)$ means 
    $(\mname{and}::\mname{formula},\mname{formula},\mname{formula})
    (A,B)$.

  \ei

  Note: The defining axioms are presented as a formula schema.  Recall
  that metavariables like $A^{\rm ef}$ and $B^{\rm ef}$ denote
  eval-free formulas.

  \item \textbf{Implication}\\
  Operator: 
  $(\mname{implies}:: \mname{formula}, \mname{formula}, \mname{formula})$\\
  Defining axioms:
  \begin{eqnarray*}
  (\mname{implies}::\mname{formula},\mname{formula},\mname{formula})
     (A^{\rm ef},B^{\rm ef}) \Iff
     \Neg A^{\rm ef} \Or B^{\rm ef}.
  \end{eqnarray*}

  \iffalse
  \begin{eqnarray*}
  \lefteqn{\ForallApp e_1,e_2 \mcolon \mname{E}_{\rm fo} \mdot 
     (\mname{implies}::\mname{formula},\mname{formula},\mname{formula})
     (\sembrack{e_1},\sembrack{e_2}) \Iff} \\ & &
     \Neg \sembrack{e_1} \Or \sembrack{e_2}.
  \end{eqnarray*}
  \fi

%\newpage

  Compact notation: 
  \bi

    \item[] $(A \Implies B)$ means 
    $(\mname{implies}::\mname{formula},\mname{formula},\mname{formula})
    (A,B)$.

  \ei

  \item \textbf{Definedness in a Type}\\
  Operator: 
  $(\mname{defined-in}::\mname{C},\mname{type},\mname{formula})$\\
  Defining axioms:
  \begin{eqnarray*}
  (\mname{defined-in}::\mname{C},\mname{type},\mname{formula})
     (a^{\rm ef},\alpha^{\rm ef}) \Iff
     (a^{\rm ef} =_{\alpha^{\rm ef}} a^{\rm ef}).
  \end{eqnarray*}

  \iffalse
  \begin{eqnarray*}
  \lefteqn{\ForallApp e_1 \mcolon \mname{E}_{\rm te},  
     e_2 \mcolon \mname{E}_{\rm ty} \mdot 
     (\mname{defined-in}::\mname{C},\mname{type},\mname{formula})
     (\sembrack{e_1},\sembrack{e_2}) \Iff} \\ & &
     \sembrack{e_1} =_{\sembrack{e_2}} \sembrack{e_1}.
  \end{eqnarray*}
  \fi

%\newpage

  Compact notation: 
  \bi

    \item[] $(a \IsDef \alpha)$ means 
    $(\mname{defined-in}::\mname{C},\mname{type},\mname{formula})
    (a,\alpha)$.

    \item[] $(a \IsUndef \alpha)$ means $\Neg(a \IsDef \alpha)$.

    \item[] $(a \IsDefApp)$ means $(a \IsDef \mname{C})$.

    \item[] $(a \IsUndefApp)$ means $\Neg(a \IsDefApp)$.
  
  \ei

  \item \textbf{Quasi-Equality}\\
  Operator: 
  $(\mname{quasi-equal}::\mname{C},\mname{C},\mname{formula})$\\
  Defining axioms:
  \begin{eqnarray*}
  \lefteqn{(\mname{quasi-equal}::\mname{C},\mname{C},\mname{formula})
     (a^{\rm ef},b^{\rm ef}) \Iff} \\ & &
     ({a^{\rm ef} \IsDefApp} \Or {b^{\rm ef} \IsDefApp}) 
     \Implies a^{\rm ef} \Equal b^{\rm ef}.
  \end{eqnarray*}

  Note: The application of \mname{quasi-equal} to two undefined terms
  is true.  Hence \mname{quasi-equal} is not strict with respect to
  undefinedness.  Nearly all the operators defined in this section are
  strict with respect to undefinedness.

  \iffalse
  \begin{eqnarray*}
  \lefteqn{\ForallApp e_1,e_2 \mcolon \mname{E}_{\rm te} \mdot 
     (\mname{quasi-equal}::\mname{C},\mname{C},\mname{formula})
     (\sembrack{e_1},\sembrack{e_2}) \Iff} \\ & &
     ({\sembrack{e_1} \IsDefApp} \Or {\sembrack{e_2} \IsDefApp}) 
     \Implies \sembrack{e_1} \Equal \sembrack{e_2}.
  \end{eqnarray*}
  \fi

%\newpage

  Compact notation: 
  \bi

    \item[] $(a \QuasiEqual b)$ means 
    $(\mname{quasi-equal}::\mname{C},\mname{C},\mname{formula})
    (a,b)$.

    \item[] $(a \not\QuasiEqual b)$ means $\Neg (a \QuasiEqual b)$.
  
  \ei

  \item \textbf{Canonical Undefined Term}\\
  Operator: 
  $(\mname{undefined}::\mname{C})$\\
  Defining axioms:
  \begin{eqnarray*}
  (\mname{undefined}::\mname{C})(\,) \QuasiEqual
     (\iotaApp x\mcolon \mname{C} \mdot x \not= x).
  \end{eqnarray*}
  Compact notation: 
  \bi

    \item[] $\Undefined_{\sf C}$ means 
    $(\mname{undefined}::\mname{C})(\,)$.

  \ei

  \item \textbf{Canonical Empty Type}\\
  Operator: 
  $(\mname{empty-type}::\mname{type})$\\
  Defining axioms:
  \begin{eqnarray*}
  \lefteqn{\ForallApp x \mcolon \mname{C} \mdot 
     x \IsUndef (\mname{empty-type}::\mname{type})(\,).}
  \end{eqnarray*}

%\newpage

  Compact notation: 
  \bi

    \item[] $\nabla$ means
      $(\mname{empty-type}::\mname{type})(\,).$
  
  \ei

  \item \textbf{Type Order}\\
  Operator: 
  $(\mname{type-le}::\mname{type},\mname{type},\mname{formula})$\\
  Defining axioms:
  \begin{eqnarray*}
  \lefteqn{(\Neg\mname{free-in}(\synbrack{x},\synbrack{\alpha^{\rm ef}}) \And
     \Neg\mname{free-in}(\synbrack{x},\synbrack{\beta^{\rm ef}}))
     \Implies} \\ & &
     (\mname{type-le}::\mname{type},\mname{type},\mname{formula})
     (\alpha^{\rm ef}, \beta^{\rm ef}) \Iff
     \ForallApp x \mcolon \alpha^{\rm ef} \mdot 
     x \IsDef \beta^{\rm ef}.
  \end{eqnarray*}

  \iffalse
  \begin{eqnarray*}
  \lefteqn{\ForallApp e_1,e_2 \mcolon \mname{E}_{\rm ty} \mdot 
     (\mname{type-le}::\mname{type},\mname{type},\mname{formula})
     (\sembrack{e_1},\sembrack{e_2}) \Iff} \\ & &
     \ForallApp e \mcolon \mname{E}_{\rm sy} \mdot
     (\Neg\mname{free-in}(e,e_1) \And \Neg\mname{free-in}(e,e_2))
     \Implies  \\ & &
     \hspace{10ex} \sembrack{\synbrack{
     \ForallApp \commabrack{e} \mcolon 
     \commabrack{e_1} \mdot 
     (\commabrack{e} \mcolon \commabrack{e_1})
     \IsDef \commabrack{e_2}}}_{\rm fo}.
  \end{eqnarray*}
  \fi

%\newpage

  Compact notation: 
  \bi

    \item[] $(\alpha \TypeLE \beta)$ means 
    $(\mname{type-le}::\mname{type},\mname{type},\mname{formula})
    (\alpha,\beta)$.

  \ei  

  Note: The formula schema of defining axioms utilizes
  the \mname{free-in} operator defined in
  subsection~\ref{subsec:sub-op}.

  \item \textbf{Conditional Type}\\
  Operator: 
  $(\mname{if-type}::\mname{formula}, \mname{type}, \mname{type},
  \mname{type})$\\
  Defining axioms:
  \begin{eqnarray*}
     \lefteqn{A^{\rm ef} \Implies} \\ & &
     (\mname{if-type}::\mname{formula}, \mname{type},
     \mname{type}, \mname{type}) 
     (A^{\rm ef},\beta^{\rm ef},\gamma^{\rm ef}) \TypeEqual \beta^{\rm ef}.
  \end{eqnarray*}
  \vspace{-5ex}  
  \begin{eqnarray*}
     \lefteqn{\Neg A^{\rm ef} \Implies} \\ & &
     (\mname{if-type}::\mname{formula}, \mname{type},
     \mname{type}, \mname{type}) 
     (A^{\rm ef},\beta^{\rm ef},\gamma^{\rm ef}) \TypeEqual \gamma^{\rm ef}.
  \end{eqnarray*}
  
  \iffalse
  \begin{eqnarray*}
  \lefteqn{\ForallApp e_1 \mcolon \mname{E}_{\rm fo},
     e_2,e_3 \mcolon \mname{E}_{\rm ty} \mdot } \\ & &
     (\sembrack{e_1} \Implies \\ & &
     \hspace{4ex} (\mname{if-type}::\mname{formula}, \mname{type},
     \mname{type}, \mname{formula}) \\ & &
     \hspace{4ex} (\sembrack{e_1},\sembrack{e_2},\sembrack{e_3})
     \TypeEqual \sembrack{e_2}) \And \\ & &
     (\Neg\sembrack{e_1} \Implies \\ & &
     \hspace{4ex} (\mname{if-type}::\mname{formula}, \mname{type},
     \mname{type}, \mname{formula}) \\ & &
     \hspace{4ex} (\sembrack{e_1},\sembrack{e_2},\sembrack{e_3})
     \TypeEqual \sembrack{e_3})
  \end{eqnarray*}
  \fi

%\newpage

  Compact notation: 
  \bi

    \item[] $\If(A,\beta,\gamma)$ means \\ 
    \hspace*{4ex} $(\mname{if-type}::\mname{formula}, \mname{type},\mname{type},
    \mname{type})(A,\beta,\gamma)$.

  \ei

  \item \textbf{Conditional Formula}\\
  Operator: 
  $(\mname{if-formula}::\mname{formula}, \mname{formula}, \mname{formula},
  \mname{formula})$\\
  Defining axioms:
  \begin{eqnarray*}
     \lefteqn{A^{\rm ef} \Implies} \\ & &
     (\mname{if-formula}::\mname{formula}, \mname{formula},
     \mname{formula}, \mname{formula}) \\ & &
     (A^{\rm ef},B^{\rm ef},C^{\rm ef}) \TypeEqual B^{\rm ef}.
  \end{eqnarray*}
  \vspace{-5ex}  
  \begin{eqnarray*}
     \lefteqn{\Neg A^{\rm ef} \Implies} \\ & &
     (\mname{if-formula}::\mname{formula}, \mname{formula},
     \mname{formula}, \mname{formula}) \\ & &
     (A^{\rm ef},B^{\rm ef},C^{\rm ef}) \TypeEqual C^{\rm ef}.
  \end{eqnarray*}

  \iffalse
  \begin{eqnarray*}
  \lefteqn{\ForallApp e_1,e_2,e_3 \mcolon \mname{E}_{\rm fo} \mdot } \\ & &
     (\sembrack{e_1} \Implies \\ & &
     \hspace{4ex} (\mname{if-formula}::\mname{formula}, \mname{formula},
     \mname{formula}, \mname{formula})\\ & &
     \hspace{4ex} (\sembrack{e_1},\sembrack{e_2},\sembrack{e_3})
     \Iff \sembrack{e_2}) \And \\ & &
     (\Neg\sembrack{e_1} \Implies \\ & &
     \hspace{4ex} (\mname{if-formula}::\mname{formula}, \mname{formula},
     \mname{formula}, \mname{formula})\\ & &
     \hspace{4ex} (\sembrack{e_1},\sembrack{e_2},\sembrack{e_3})
     \Iff \sembrack{e_3})
  \end{eqnarray*}
  \fi

%\newpage

  Compact notation: 
  \bi

    \item[] $\If(A,B,C)$ means \\
    \hspace*{4ex} $(\mname{if-formula}::\mname{formula}, \mname{formula},
    \mname{formula}, \mname{formula})(A,B,C)$.

  \ei

  \item \textbf{Simple Function Type}\\
  Operator: 
  $(\mname{sim-fun-type}::\mname{type},\mname{type},\mname{type})$\\
  Defining axioms:
  \begin{eqnarray*}
  \lefteqn{(\mname{sim-fun-type}::\mname{type},\mname{type},\mname{type})
     (\alpha^{\rm ef},\beta^{\rm ef}) \TypeEqual} \\ & &
     \If(\mname{syn-closed}(\synbrack{\beta^{\rm ef}}),
     (\LAMBDAapp x \mcolon \alpha^{\rm ef} \mdot \beta^{\rm ef}),
     \mname{C}).
  \end{eqnarray*}

  \iffalse
  \begin{eqnarray*}
  \lefteqn{\ForallApp e_1,e_2 \mcolon \mname{E}_{\rm ty} \mdot 
     (\mname{sim-fun-type}::\mname{type},\mname{type},\mname{type})
     (\sembrack{e_1},\sembrack{e_2}) \TypeEqual} \\ & &
     \If(\mname{syn-closed}(e_2),
     (\LAMBDAapp x \mcolon \alpha \mdot \beta),
     \mname{C}).
  \end{eqnarray*}
  \fi

%\newpage

  Compact notation: 
  \bi

    \item[] $(\alpha \tarrow \beta)$ means 
    $(\mname{sim-fun-type}::\mname{type},\mname{type},\mname{type})
    (\alpha,\beta)$.

  \ei  

  Note: The formula schema of defining axioms utilizes
  the \mname{syn-closed} operator defined in
  subsection~\ref{subsec:sub-op}.

\ee

\subsection{Set-Theoretic Operators} \label{subsec:setthop}

\be

  \item \textbf{Empty set}\\
  Operator: 
  $(\mname{empty-set}::\mname{V})$\\
  Defining axioms:
  \begin{eqnarray*}
  (\mname{empty-set}::\mname{V})(\,)\QuasiEqual
     (\iotaApp u \mcolon \mname{V} \mdot 
     \ForallApp v \mcolon \mname{V} \mdot v \not\in u).
  \end{eqnarray*}

\newpage

  Compact notation: 
  \bi

    \item[] $\emptyset$ means $(\mname{empty-set}::\mname{V})(\,)$.
  
  \ei

  \item \textbf{Universal class}\\
  Operator: 
  $(\mname{universal-class}::\mname{C})$\\
  Defining axioms:
  \begin{eqnarray*}
  (\mname{univeral-class}::\mname{C})(\,)\QuasiEqual
     \iotaApp x \mcolon \mname{C} \mdot 
     \ForallApp u \mcolon \mname{V} \mdot u \in x.
  \end{eqnarray*}

%\newpage

  Compact notation: 
  \bi

    \item[] $\textrm{U}$ means $(\mname{universal-set}::\mname{C})(\,)$.
  
  \ei

  \item \textbf{Pair}\\
  Operator: 
  $(\mname{pair}::\mname{V},\mname{V},\mname{V})$\\
  Defining axioms:
  \begin{eqnarray*}
  \lefteqn{\ForallApp u,v \mcolon \mname{V} \mdot 
     (\mname{pair}::\mname{V},\mname{V},\mname{V})(u,v) \QuasiEqual} \\ & &
     \iotaApp w\mcolon\mname{V} \mdot
     \ForallApp w' \mcolon \mname{V} \mdot
      w' \in w \Iff (w' = u \Or w' = v).
  \end{eqnarray*}
  \vspace{-5ex}  
  \begin{eqnarray*}
     ({a^{\rm ef} \IsUndefApp} \Or {b^{\rm ef} \IsUndefApp}) 
     \Implies 
     (\mname{pair}::\mname{V},\mname{V},\mname{V})
     (a^{\rm ef}, b^{\rm ef}) \IsUndefApp.
  \end{eqnarray*}

  \iffalse
  \begin{eqnarray*}
  \lefteqn{\ForallApp e_1,e_2 \mcolon \mname{E}_{\rm te} \mdot 
     ({\sembrack{e_1} \IsUndefApp} \Or {\sembrack{e_2} \IsUndefApp}) 
     \Implies {}} \\ & &
     (\mname{pair}::\mname{V},\mname{V},\mname{V})
     (\sembrack{e_1},\sembrack{e_2}) \IsUndefApp.
  \end{eqnarray*}
  \fi

  Note: The formula schema of defining axioms that comes second
  asserts that \mname{pair} is strict with respect to undefinedness.
  The operators named \mname{singleton}, \mname{triple}, etc.\ are
  defined in a similar way to \mname{pair}.

  \medskip

%\newpage

  Compact notation: 
  \bi

    \item[] $\set{\,}$ means $\emptyset$.

    \item[] $\set{a}$ means 
    $(\mname{singleton}::\mname{V},\mname{V})(a)$.

    \item[] $\set{a,b}$ means 
    $(\mname{pair}::\mname{V},\mname{V},\mname{V})(a,b)$.

    \item[] $\set{a,b,c}$ means 
    $(\mname{triple}::\mname{V},\mname{V},\mname{V},\mname{V})(a,b,c)$.

    \item[] \hspace{4ex}$\vdots$
  
  \ei

  \item \textbf{Ordered Pair}\\
  Operator: 
  $(\mname{ord-pair}::\mname{V},\mname{V},\mname{V})$\\
  Defining axioms:
  \begin{eqnarray*}
  \lefteqn{\ForallApp u,v \mcolon \mname{V} \mdot
    (\mname{ord-pair}::\mname{V},\mname{V},\mname{V}) (u,v)
    \QuasiEqual} \\ & &
  \set{\set{u},\set{u,v}}.
  \end{eqnarray*}
  \vspace{-5ex}   
  \begin{eqnarray*}
     ({a^{\rm ef} \IsUndefApp} \Or {b^{\rm ef} \IsUndefApp}) 
     \Implies 
     (\mname{ord-pair}::\mname{V},\mname{V},\mname{V})
     (a^{\rm ef}, b^{\rm ef}) \IsUndefApp.
  \end{eqnarray*}

  \iffalse
  \begin{eqnarray*}
  \lefteqn{\ForallApp e_1,e_2 \mcolon \mname{E}_{\rm te} \mdot 
     ({\sembrack{e_1} \IsUndefApp} \Or 
     {\sembrack{e_2} \IsUndefApp}) \Implies {}} \\ & &
     (\mname{ord-pair}::\mname{V},\mname{V},\mname{V})
     (\sembrack{e_1},\sembrack{e_2}) \IsUndefApp.
  \end{eqnarray*}
  \fi

%\newpage

  Compact notation: 
  \bi

    \item[] $\seq{a,b}$ means 
    $(\mname{ord-pair}::\mname{V},\mname{V},\mname{V})(a,b)$.

    \item[] $\seq{a_1,\ldots,a_n}$ means $\seq{a_1,\seq{a_2,\ldots,a_n}}$
    for $n \ge 3$.

    \item[] $\mlist{\,}$ means $\emptyset$.
 
    \item[] $\mlist{a_1,\ldots,a_n}$ means
    $\seq{a_1,\mlist{a_2,\ldots,a_n}}$ for $n \ge 1$.
  
  \ei

  \item \textbf{Subclass}\\
  Operator: 
  $(\mname{subclass}::\mname{C},\mname{C},\mname{formula})$\\
  Defining axioms:
  \begin{eqnarray*}
  \lefteqn{\ForallApp x,y \mcolon \mname{C} \mdot 
     (\mname{subclass}::\mname{C},\mname{C},\mname{formula})
     (x,y) \Iff} \\ & &
     {\ForallApp u \mcolon \mname{V} \mdot 
     u \in x \Implies u \in y}.
  \end{eqnarray*}
  \vspace{-5ex} 
  \begin{eqnarray*}
     ({a^{\rm ef} \IsUndefApp} \Or 
     {b^{\rm ef} \IsUndefApp}) \Implies 
     (\mname{subclass}::\mname{C},\mname{C},\mname{formula})
     (a^{\rm ef}, b^{\rm ef}) \Iff \mname{F}.
  \end{eqnarray*} 

  \iffalse
  \begin{eqnarray*}
  \lefteqn{\ForallApp e_1,e_2 \mcolon \mname{E}_{\rm te} \mdot 
     ({\sembrack{e_1} \IsUndefApp} \Or 
     {\sembrack{e_2} \IsUndefApp}) \Implies {}} \\ & &
     (\mname{subclass}::\mname{C},\mname{C},\mname{formula})
     (\sembrack{e_1},\sembrack{e_2}) \Iff \mname{F}.
  \end{eqnarray*}
  \fi

%\newpage

  Compact notation: 
  \bi

    \item[] $a \subseteq b$ means 
    $(\mname{subclass}::\mname{C},\mname{C},\mname{formula})(a,b)$.

    \item[] $a \subset b$ means $a \subseteq b \And a \not= b$.
  
  \ei

  \item \textbf{Union}\\
  Operator: 
  $(\mname{union}::\mname{C},\mname{C},\mname{C})$\\
  Defining axioms:
  \begin{eqnarray*}
  \lefteqn{\ForallApp x,y \mcolon \mname{C} \mdot 
     (\mname{union}::\mname{C},\mname{C},\mname{C})
     (x,y) \QuasiEqual} \\ & &
     \iotaApp z \mcolon \mname{C} \mdot
     \ForallApp u \mcolon \mname{V} \mdot u \in z \Iff 
     (u \in x \Or u \in y).
  \end{eqnarray*}
  \vspace{-5ex}   
  \begin{eqnarray*}
     ({a^{\rm ef} \IsUndefApp} \Or {b^{\rm ef} \IsUndefApp}) 
     \Implies 
     (\mname{union}::\mname{C},\mname{C},\mname{C})
     (a^{\rm ef}, b^{\rm ef}) \IsUndefApp.
  \end{eqnarray*}

  \iffalse
  \begin{eqnarray*}
  \lefteqn{\ForallApp e_1,e_2 \mcolon \mname{E}_{\rm te} \mdot 
     ({\sembrack{e_1} \IsUndefApp} \Or 
     {\sembrack{e_2} \IsUndefApp}) \Implies {}} \\ & &
     (\mname{union}::\mname{C},\mname{C},\mname{C})
     (\sembrack{e_1},\sembrack{e_2}) \IsUndefApp.
  \end{eqnarray*}
  \fi

%\newpage

  Compact notation: 
  \bi

    \item[] $a \cup b$ means 
    $(\mname{union}::\mname{C},\mname{C},\mname{C})(a,b)$.
  
  \ei

  \item \textbf{Intersection}\\
  Operator: 
  $(\mname{intersection}::\mname{C},\mname{C},\mname{C})$\\
  Defining axioms:
  \begin{eqnarray*}
  \lefteqn{\ForallApp x,y \mcolon \mname{C} \mdot 
     (\mname{intersection}::\mname{C},\mname{C},\mname{C})
     (x,y) \QuasiEqual} \\ & &
     \iotaApp z \mcolon \mname{C} \mdot
     \ForallApp u \mcolon \mname{V} \mdot u \in z \Iff 
     (u \in x \And u \in y).
  \end{eqnarray*}
  \vspace{-5ex}   
  \begin{eqnarray*}
     ({a^{\rm ef} \IsUndefApp} \Or {b^{\rm ef} \IsUndefApp}) 
     \Implies 
     (\mname{intersection}::\mname{C},\mname{C},\mname{C})
     (a^{\rm ef}, b^{\rm ef}) \IsUndefApp.
  \end{eqnarray*}

  \iffalse
  \begin{eqnarray*}
  \lefteqn{\ForallApp e_1,e_2 \mcolon \mname{E}_{\rm te} \mdot 
     ({\sembrack{e_1} \IsUndefApp} \Or 
     {\sembrack{e_2} \IsUndefApp}) \Implies {}} \\ & &
     (\mname{intersection}::\mname{C},\mname{C},\mname{C})
     (\sembrack{e_1},\sembrack{e_2}) \IsUndefApp.
  \end{eqnarray*}
  \fi

%\newpage

  Compact notation: 
  \bi

    \item[] $a \cap b$ means 
    $(\mname{intersection}::\mname{C},\mname{C},\mname{C})(a,b)$.
  
  \ei

  \item \textbf{Complement}\\
  Operator: 
  $(\mname{complement}::\mname{C},\mname{C})$\\
  Defining axioms:
  \begin{eqnarray*}
  \lefteqn{\ForallApp x \mcolon \mname{C} \mdot 
     (\mname{complement}::\mname{C},\mname{C})
     (x) \QuasiEqual} \\ & &
     \iotaApp y \mcolon \mname{C} \mdot
     \ForallApp u \mcolon \mname{V} \mdot 
     u \in y \Iff u \not\in x.
  \end{eqnarray*}
  \vspace{-5ex}   
  \begin{eqnarray*}
     {a^{\rm ef} \IsUndefApp}
     \Implies 
     (\mname{complement}::\mname{C},\mname{C})
     (a^{\rm ef}) \IsUndefApp.
  \end{eqnarray*}

  \iffalse
  \begin{eqnarray*}
  \lefteqn{\ForallApp e \mcolon \mname{E}_{\rm te} \mdot 
    {\sembrack{e} \IsUndefApp} \Implies {}} \\ & &
    (\mname{complement}::\mname{C},,\mname{C})
     (\sembrack{e}) \IsUndefApp.
  \end{eqnarray*}
  \fi

%\newpage

  Compact notation: 
  \bi

    \item[] $\overline{a}$ means 
    $(\mname{complement}::\mname{C},\mname{C})(a)$.
  
  \ei

  \item \textbf{Head}\\
  Operator: 
  $(\mname{head}::\mname{V},\mname{V})$\\
  Defining axioms:
  \begin{eqnarray*}
  \lefteqn{\ForallApp w \mcolon \mname{V} \mdot 
     (\mname{head}::\mname{V},\mname{V})
     (w) \QuasiEqual} \\ & &
     \iotaApp u \mcolon\mname{V} \mdot
     \ForsomeApp v \mcolon\mname{V} \mdot
     w = \seq{u,v}.
  \end{eqnarray*}
  \vspace{-5ex} 
  \begin{eqnarray*}
    {a^{\rm ef} \IsUndefApp} \Implies 
    (\mname{head}::\mname{V},,\mname{V})(a^{\rm ef}) \IsUndefApp.
  \end{eqnarray*}

  \iffalse
  \begin{eqnarray*}
  \lefteqn{\ForallApp e \mcolon \mname{E}_{\rm te} \mdot 
    {\sembrack{e} \IsUndefApp} \Implies {}} \\ & &
    (\mname{head}::\mname{V},,\mname{V})
     (\sembrack{e}) \IsUndefApp.
  \end{eqnarray*}
  \fi

%\newpage

  Compact notation: 
  \bi

    \item[] $\mname{hd}(a)$ means 
    $(\mname{head}::\mname{V},\mname{V})(a)$.
  
  \ei  

  \item \textbf{Tail}\\
  Operator: 
  $(\mname{tail}::\mname{V},\mname{V})$\\
  Defining axioms:
  \begin{eqnarray*}
  \lefteqn{\ForallApp w \mcolon \mname{V} \mdot 
     (\mname{tail}::\mname{V},\mname{V})
     (w) \QuasiEqual} \\ & &
     \iotaApp v \mcolon\mname{V} \mdot
     \ForsomeApp u \mcolon\mname{V} \mdot
     w = \seq{u,v}.
  \end{eqnarray*}
  \vspace{-5ex}
  \begin{eqnarray*}
    {a^{\rm ef} \IsUndefApp} \Implies 
    (\mname{tail}::\mname{V},,\mname{V})(a^{\rm ef}) \IsUndefApp.
  \end{eqnarray*}

  \iffalse 
  \begin{eqnarray*}
  \lefteqn{\ForallApp e \mcolon \mname{E}_{\rm te} \mdot 
    {\sembrack{e} \IsUndefApp} \Implies {}} \\ & &
    (\mname{tail}::\mname{V},,\mname{V})
     (\sembrack{e}) \IsUndefApp.
  \end{eqnarray*}
  \fi

%\newpage

  Compact notation: 
  \bi

    \item[] $\mname{tl}(a)$ means 
    $(\mname{tail}::\mname{V},\mname{V})(a)$.
  
  \ei 

  \item \textbf{Append}\\
  Operator: 
  $(\mname{append}::\mname{V},\mname{V},\mname{V})$\\
  Defining axioms:
  \begin{eqnarray*}
  \lefteqn{\ForallApp x,y \mcolon \mname{V} \mdot 
     (\mname{append}::\mname{V},\mname{V},\mname{V})
     (x,y) \QuasiEqual} \\ & & 
     \mname{if}(x = \emptyset, 
     y,
     \seq{\mname{hd}(x),
     \mname{append}(\mname{tl}(x),y)}).     
  \end{eqnarray*}
  \vspace{-5ex}   
  \begin{eqnarray*}
     ({a^{\rm ef} \IsUndefApp} \Or {b^{\rm ef} \IsUndefApp}) 
     \Implies 
     (\mname{append}::\mname{V},\mname{V},\mname{V})
     (a^{\rm ef}, b^{\rm ef}) \IsUndefApp.
  \end{eqnarray*}

  \iffalse 
  \begin{eqnarray*}
  \lefteqn{\ForallApp e_1,e_2 \mcolon \mname{E}_{\rm te} \mdot 
     ({\sembrack{e_1} \IsUndefApp} \Or {\sembrack{e_2} \IsUndefApp}) 
     \Implies {}} \\ & &
     (\mname{append}::\mname{V},\mname{V},\mname{V})
     (\sembrack{e_1},\sembrack{e_2}) \IsUndefApp.
  \end{eqnarray*}
  \fi

%\newpage

  Compact notation: 
  \bi

    \item[] $a {\verb+^+} b$ means 
    $(\mname{append}::\mname{V},\mname{V},\mname{V})(a,b)$.
  
  \ei 

  \item \textbf{In List}\\
  Operator: 
  $(\mname{in-list}::\mname{V},\mname{V},\mname{formula})$\\
  Defining axioms:
  \begin{eqnarray*}
  \lefteqn{\ForallApp x,y \mcolon \mname{V} \mdot 
     (\mname{in-list}::\mname{V},\mname{V},\mname{formula})
     (x,y) \Iff} \\ & & 
     x = \mname{hd}(y) \Or \mname{in-list}(x,\mname{tl}(y)).
  \end{eqnarray*}
  \vspace{-5ex}   
  \begin{eqnarray*}
     ({a^{\rm ef} \IsUndefApp} \Or {b^{\rm ef} \IsUndefApp}) 
     \Implies 
     (\mname{in-list}::\mname{V},\mname{V},\mname{formula})
     (a^{\rm ef}, b^{\rm ef}) \Iff \mname{F}.
  \end{eqnarray*}

  \iffalse 
  \begin{eqnarray*}
  \lefteqn{\ForallApp e_1,e_2 \mcolon \mname{E}_{\rm te} \mdot 
     ({\sembrack{e_1} \IsUndefApp} \Or {\sembrack{e_2} \IsUndefApp}) 
     \Implies {}} \\ & &
     (\mname{append}::\mname{V},\mname{V},\mname{V})
     (\sembrack{e_1},\sembrack{e_2}) \IsUndefApp.
  \end{eqnarray*}
  \fi

%\newpage

  Compact notation: 
  \bi

    \item[] $a \in_{\rm li} b$ means 
    $(\mname{in-list}::\mname{V},\mname{V},\mname{formula})(a,b)$.
  
  \ei 

  \item \textbf{List Type}\\
  Operator: 
  $(\mname{list-type}::\mname{type},\mname{type})$\\
  Defining axioms:
  \begin{eqnarray*}
     \lefteqn{\Neg\mname{free-in}(\synbrack{x},
     \synbrack{\alpha^{\rm ef}}) \Implies {}}  \\ & &
     \ForallApp x \mcolon \mname{C} \mdot 
     x \IsDef (\mname{list-type}::\mname{type},\mname{type})
     (\alpha^{\rm ef}) \Iff \\ & &
     \hspace{4ex} x = [\,] \Or
     \ForsomeApp y \mcolon \alpha^{\rm ef}, 
     z \mcolon \mname{list-type}(\alpha^{\rm ef}) \mdot
     x = \seq{y,z}.
  \end{eqnarray*}

  \item \textbf{Type Union}\\
  Operator: 
  $(\mname{type-union}::\mname{type},\mname{type},\mname{type})$\\
  Defining axioms:
  \begin{eqnarray*}
     \lefteqn{(\Neg\mname{free-in}(\synbrack{x},
     \synbrack{\alpha^{\rm ef}}) \And 
     \Neg\mname{free-in}(\synbrack{x},\synbrack{\beta^{\rm ef}}))
     \Implies {}}  \\ & &
     \ForallApp x \mcolon \mname{C} \mdot 
     x \IsDef (\mname{type-union}::\mname{type},\mname{type},\mname{type})
     (\alpha^{\rm ef}, \beta^{\rm ef}) \Iff \\ & &
     \hspace{4ex}{x \IsDef \alpha^{\rm ef}} \Or {x \IsDef \beta^{\rm ef}}.
  \end{eqnarray*}

  Compact notation: 
  \bi

    \item[] $(\alpha \cup \beta)$ means 
    $(\mname{type-union}::\mname{type},\mname{type},\mname{type})
    (\alpha,\beta)$.

  \ei 

  \item \textbf{Type Intersection}\\
  Operator: 
  $(\mname{type-intersection}::\mname{type},\mname{type},\mname{type})$\\
  Defining axioms:
  \begin{eqnarray*}
     \lefteqn{(\Neg\mname{free-in}(\synbrack{x},
     \synbrack{\alpha^{\rm ef}}) \And 
     \Neg\mname{free-in}(\synbrack{x},\synbrack{\beta^{\rm ef}}))
     \Implies {}}  \\ & &
     \ForallApp x \mcolon \mname{C} \mdot 
     x \IsDef (\mname{type-intersection}::
     \mname{type},\mname{type},\mname{type})
     (\alpha^{\rm ef}, \beta^{\rm ef}) \Iff \\ & &
     \hspace{4ex}{x \IsDef \alpha^{\rm ef}} \And {x \IsDef \beta^{\rm ef}}.
  \end{eqnarray*}

%\newpage

  Compact notation: 
  \bi

    \item[] $(\alpha \cap \beta)$ means 
    $(\mname{type-intersection}::\mname{type},\mname{type},\mname{type})
    (\alpha,\beta)$.

  \ei 

  \item \textbf{Type Complement}\\
  Operator: 
  $(\mname{type-complement}::\mname{type},\mname{type})$\\
  Defining axioms:
  \begin{eqnarray*}
     \lefteqn{\Neg\mname{free-in}(\synbrack{x},
     \synbrack{\alpha^{\rm ef}}) \Implies {}}  \\ & &
     \ForallApp x \mcolon \mname{C} \mdot 
     x \IsDef (\mname{type-complement}::\mname{type},\mname{type})
     (\alpha^{\rm ef}) \Iff \\ & &
     \hspace{4ex}{x \IsUndef \alpha^{\rm ef}}.
  \end{eqnarray*}

\newpage

  Compact notation: 
  \bi

    \item[] $\overline{\alpha}$ means 
    $(\mname{type-complement}::\mname{type},\mname{type})(\alpha)$.

  \ei 

  \item \textbf{Type Product}\\
  Operator: 
  $(\mname{type-prod}::\mname{type},\mname{type},\mname{type})$\\
  Defining axioms:
  \begin{eqnarray*}
     \lefteqn{(\Neg\mname{free-in}(\synbrack{x},
     \synbrack{\alpha^{\rm ef}}) \And 
     \Neg\mname{free-in}(\synbrack{x},\synbrack{\beta^{\rm ef}}))
     \Implies {}}  \\ & &
     \ForallApp x \mcolon \mname{C} \mdot 
     x \IsDef (\mname{type-prod}::\mname{type},\mname{type},\mname{type})
     (\alpha^{\rm ef}, \beta^{\rm ef}) \Iff \\ & &
     \hspace{4ex}{\mname{hd}(x) \IsDef \alpha^{\rm ef}} \And
     {\mname{tl}(x) \IsDef \beta^{\rm ef}}.
  \end{eqnarray*}

  \iffalse
  \begin{eqnarray*}
  \lefteqn{\ForallApp e_1,e_2 \mcolon \mname{E}_{\rm ty}, 
     \ForallApp e \mcolon \mname{E}_{\rm sy} \mdot} \\ & &
     (\Neg\mname{free-in}(e,e_1) \And \Neg\mname{free-in}(e,e_2))
     \Implies  \\ & &
     \sembrack{\synbrack{
     \ForallApp \commabrack{e} \mcolon \mname{C} \mdot \\ & &
     \hspace{4ex}(\commabrack{e} \mcolon \mname{C})
     \IsDef (\mname{type-prod}::\mname{type},\mname{type},\mname{type})
     (\commabrack{e_1},\commabrack{e_2}) \Iff \\ & &
     \hspace{4ex}\mname{hd}(\commabrack{e} \mcolon \mname{C}) 
     \IsDef \commabrack{e_1} \And
     \mname{tl}(\commabrack{e} \mcolon \mname{C}) 
     \IsDef \commabrack{e_2}}}_{\rm fo}.
  \end{eqnarray*}
  \fi

%\newpage

  Compact notation: 
  \bi

    \item[] $(\alpha \times \beta)$ means 
    $(\mname{type-prod}::\mname{type},\mname{type},\mname{type})
    (\alpha,\beta)$.

  \ei 

  \item \textbf{Type Sum}\\
  Operator: 
  $(\mname{type-sum}::\mname{type},\mname{type},\mname{type})$\\
  Defining axioms:
  \begin{eqnarray*}
     \lefteqn{(\Neg\mname{free-in}(\synbrack{x},
     \synbrack{\alpha^{\rm ef}}) \And 
     \Neg\mname{free-in}(\synbrack{x},\synbrack{\beta^{\rm ef}}))
     \Implies {}}  \\ & &
     \ForallApp x \mcolon \mname{C} \mdot 
     x \IsDef (\mname{type-sum}::\mname{type},\mname{type},\mname{type})
     (\alpha^{\rm ef}, \beta^{\rm ef}) \Iff \\ & &
     \hspace{4ex}({\mname{hd}(x) \IsDef \alpha^{\rm ef}} \And
     {\mname{tl}(x) = \emptyset}) \Or
     ({\mname{hd}(x) \IsDef \beta^{\rm ef}} \And
     {\mname{tl}(x) = \set{\emptyset}}).
  \end{eqnarray*}

  \iffalse
  \begin{eqnarray*}
  \lefteqn{\ForallApp e_1,e_2 \mcolon \mname{E}_{\rm ty}, 
     \ForallApp e \mcolon \mname{E}_{\rm sy} \mdot} \\ & &
     (\Neg\mname{free-in}(e,e_1) \And \Neg\mname{free-in}(e,e_2))
     \Implies  \\ & &
     \sembrack{\synbrack{
     \ForallApp \commabrack{e} \mcolon \mname{C} \mdot \\ & &
     \hspace{4ex}(\commabrack{e} \mcolon \mname{C}) 
     \IsDef (\mname{type-sum}::\mname{type},\mname{type},\mname{type})
     (\commabrack{e_1},\commabrack{e_2}) \Iff \\ & &
     \hspace{4ex}(\mname{hd}(\commabrack{e} \mcolon \mname{C}) 
     \IsDef \commabrack{e_1} \And
     \mname{tl}(\commabrack{e} \mcolon \mname{C}) 
     = \emptyset) \Or \\ & & 
     \hspace{4ex}(\mname{hd}(\commabrack{e} \mcolon \mname{C}) 
     \IsDef \commabrack{e_2} \And
     \mname{tl}(\commabrack{e} \mcolon \mname{C}) 
     = \set{\emptyset})}}_{\rm fo}.
  \end{eqnarray*}
  \fi

%\newpage

  Compact notation: 
  \bi

    \item[] $(\alpha + \beta)$ means 
    $(\mname{type-sum}::\mname{type},\mname{type},\mname{type})
    (\alpha,\beta)$.

  \ei 

  \item \textbf{Binary Relation}\\
  Operator: 
  $(\mname{bin-rel}::\mname{C},\mname{formula})$\\
  Defining axioms:
  \begin{eqnarray*}
  \lefteqn{\ForallApp x \mcolon \mname{C} \mdot 
     (\mname{bin-rel}::\mname{C},\mname{formula})
     (x) \Iff} \\ & &
     {\ForallApp w \mcolon \mname{V} \mdot 
     w \in x \Implies 
     \ForsomeApp u,v \mcolon \mname{V} \mdot w = \seq{u,v}}.
  \end{eqnarray*}
  \vspace{-5ex} 
  \begin{eqnarray*}
     {a^{\rm ef} \IsUndefApp} \Implies {}
     (\mname{bin-rel}::\mname{C},\mname{formula})
     (a^{\rm ef}) \Iff \mname{F}.
  \end{eqnarray*}

  \iffalse
  \begin{eqnarray*}
  \lefteqn{\ForallApp e\mcolon \mname{E}_{\rm te} \mdot 
     {\sembrack{e} \IsUndefApp} \Implies {}} \\ & &
     (\mname{bin-rel}::\mname{C},\mname{formula})
     (\sembrack{e}) \Iff \mname{F}.
  \end{eqnarray*}
  \fi

  \item \textbf{Univocal}\\
  Operator: 
  $(\mname{univocal}::\mname{C},\mname{formula})$\\
  Defining axioms:
  \begin{eqnarray*}
  \lefteqn{\ForallApp x \mcolon \mname{C} \mdot 
     (\mname{univocal}::\mname{C},\mname{formula})(x) \Iff} \\ & &
     {\ForallApp u,v,v' \mcolon \mname{V} \mdot 
     (\seq{u,v} \in x \And \seq{u,v'} \in x) \Implies v = v'}.
  \end{eqnarray*}
  \vspace{-5ex}  
  \begin{eqnarray*}
     {a^{\rm ef} \IsUndefApp} \Implies {}
     (\mname{univocal}::\mname{C},\mname{formula})
     (a^{\rm ef}) \Iff \mname{F}.
  \end{eqnarray*}

  \iffalse
  \begin{eqnarray*}
  \lefteqn{\ForallApp e\mcolon \mname{E}_{\rm te} \mdot 
     {\sembrack{e} \IsUndefApp} \Implies {}} \\ & &
     (\mname{univocal}::\mname{C},\mname{formula})
     (\sembrack{e}) \Iff \mname{F}.
  \end{eqnarray*}
  \fi

  \item \textbf{Function}\\
  Operator: 
  $(\mname{fun}::\mname{C},\mname{formula})$\\
  Defining axioms:
  \begin{eqnarray*}
  \lefteqn{\ForallApp x \mcolon \mname{C} \mdot 
     (\mname{fun}::\mname{C},\mname{formula})(x) \Iff} \\ & &
     \mname{bin-rel}(x) \And \mname{univocal}(x).
  \end{eqnarray*}
  \vspace{-5ex}  
  \begin{eqnarray*}
     {a^{\rm ef} \IsUndefApp} \Implies {}
     (\mname{fun}::\mname{C},\mname{formula})
     (a^{\rm ef}) \Iff \mname{F}.
  \end{eqnarray*}

  \iffalse
  \begin{eqnarray*}
  \lefteqn{\ForallApp e\mcolon \mname{E}_{\rm te} \mdot 
     {\sembrack{e} \IsUndefApp} \Implies {}} \\ & &
     (\mname{fun}::\mname{C},\mname{formula})
     (\sembrack{e}) \Iff \mname{F}.
  \end{eqnarray*}
  \fi

  \item \textbf{Domain of a Class}\\
  Operator: 
  $(\mname{dom}::\mname{C},\mname{C})$\\
  Defining axioms:
  \begin{eqnarray*}
  \lefteqn{\ForallApp x \mcolon \mname{C} \mdot 
     (\mname{dom}::\mname{C},\mname{C})(x) \QuasiEqual} \\ & &
     \iotaApp y \mcolon \mname{C} \mdot
     \ForallApp u \mcolon \mname{V} \mdot 
     u \in y \Iff (\ForsomeApp v \mcolon \mname{V} \mdot 
     \seq{u,v} \in x).
  \end{eqnarray*}
  \vspace{-5ex}  
  \begin{eqnarray*}
     {a^{\rm ef} \IsUndefApp} \Implies {}
     (\mname{dom}::\mname{C},\mname{C})
     (a^{\rm ef}) \IsUndefApp.
  \end{eqnarray*}

  \iffalse
  \begin{eqnarray*}
  \lefteqn{\ForallApp e \mcolon \mname{E}_{\rm te} \mdot 
    {\sembrack{e} \IsUndefApp} \Implies {}} \\ & &
    (\mname{dom}::\mname{C},\mname{C})
     (\sembrack{e}) \IsUndefApp.
  \end{eqnarray*}
  \fi

  \item \textbf{Range of a Class}\\
  Operator: 
  $(\mname{ran}::\mname{C},\mname{C})$\\
  Defining axioms:
  \begin{eqnarray*}
  \lefteqn{\ForallApp x \mcolon \mname{C} \mdot 
     (\mname{ran}::\mname{C},\mname{C})(x) \QuasiEqual} \\ & &
     \iotaApp y \mcolon \mname{C} \mdot
     \ForallApp u \mcolon \mname{V} \mdot 
     u \in y \Iff (\ForsomeApp v \mcolon \mname{V} \mdot 
     \seq{v,u} \in x).
  \end{eqnarray*}
  \vspace{-5ex} 
  \begin{eqnarray*}
     {a^{\rm ef} \IsUndefApp} \Implies {}
     (\mname{ran}::\mname{C},\mname{C})
     (a^{\rm ef}) \IsUndefApp.
  \end{eqnarray*}

  \iffalse
  \begin{eqnarray*}
  \lefteqn{\ForallApp e \mcolon \mname{E}_{\rm te} \mdot 
    {\sembrack{e} \IsUndefApp} \Implies {}} \\ & &
    (\mname{ran}::\mname{C},\mname{C})
     (\sembrack{e}) \IsUndefApp.
  \end{eqnarray*}
  \fi

  \item \textbf{Total Function on a Class}\\
  Operator: 
  $(\mname{total}::\mname{C},\mname{C},\mname{formula})$\\
  Defining axioms:
  \begin{eqnarray*}
  \lefteqn{\ForallApp f,x \mcolon \mname{C} \mdot 
     (\mname{total}::\mname{C},\mname{C}, \mname{formula})(f,x) \Iff} \\ & &
     \mname{fun}(f) \And \mname{dom}(f) = x.
  \end{eqnarray*}
  \vspace{-5ex}  
  \begin{eqnarray*}
     ({a^{\rm ef} \IsUndefApp} \And 
     {b^{\rm ef} \IsUndefApp}) \Implies {}
     (\mname{total}::\mname{C},\mname{C},\mname{formula})
     (a^{\rm ef}, b^{\rm ef}) \Iff \mname{F}.
  \end{eqnarray*}

  \iffalse
  \begin{eqnarray*}
  \lefteqn{\ForallApp e_1,e_2\mcolon \mname{E}_{\rm te} \mdot 
     {({\sembrack{e_1} \IsUndefApp} \And {\sembrack{e_2} \IsUndefApp})}
     \Implies {}} \\ & &
     (\mname{total}::\mname{C},\mname{C},\mname{formula})
     (\sembrack{e_1},\sembrack{e_2}) \Iff \mname{F}.
  \end{eqnarray*}
  \fi

  \item \textbf{Surjective Function on a Class}\\
  Operator: 
  $(\mname{surjective}::\mname{C},\mname{C},\mname{formula})$\\
  Defining axioms:
  \begin{eqnarray*}
  \lefteqn{\ForallApp f,y \mcolon \mname{C} \mdot 
     (\mname{surjective}::\mname{C},\mname{C},
     \mname{formula})(f,y) \Iff} \\ & &
     \mname{fun}(f) \And \mname{ran}(f) = y.
  \end{eqnarray*}
  \vspace{-5ex}  
  \begin{eqnarray*}
     ({a^{\rm ef} \IsUndefApp} \And 
     {b^{\rm ef} \IsUndefApp}) \Implies {}
     (\mname{surjective}::\mname{C},\mname{C},\mname{formula})
     (a^{\rm ef}, b^{\rm ef}) \Iff \mname{F}.
  \end{eqnarray*}

  \iffalse
  \begin{eqnarray*}
  \lefteqn{\ForallApp e_1,e_2\mcolon \mname{E}_{\rm te} \mdot 
     {({\sembrack{e_1} \IsUndefApp} \And {\sembrack{e_2} \IsUndefApp})}
     \Implies {}} \\ & &
     (\mname{surjective}::\mname{C},\mname{C},\mname{formula})
     (\sembrack{e_1},\sembrack{e_2}) \Iff \mname{F}.
  \end{eqnarray*}
  \fi

  \item \textbf{Injective Function on a Class}\\
  Operator: 
  $(\mname{injective}::\mname{C},\mname{C},\mname{formula})$\\
  Defining axioms:
  \begin{eqnarray*}
  \lefteqn{\ForallApp f,x \mcolon \mname{C} \mdot 
     (\mname{injective}::\mname{C},\mname{C},
     \mname{formula})(f,x) \Iff} \\ & &
     \mname{fun}(f) \And 
     \Forall u,v \mcolon \mname{V} \mdot 
     (u \in x \And v \in x \And f(u) = f(v)) \Implies u = v.
  \end{eqnarray*}
  \vspace{-5ex}  
  \begin{eqnarray*}
     ({a^{\rm ef} \IsUndefApp} \And 
     {b^{\rm ef} \IsUndefApp}) \Implies {}
     (\mname{injective}::\mname{C},\mname{C},\mname{formula})
     (a^{\rm ef}, b^{\rm ef}) \Iff \mname{F}.
  \end{eqnarray*}
 
  \iffalse
  \begin{eqnarray*}
  \lefteqn{\ForallApp e_1,e_2\mcolon \mname{E}_{\rm te} \mdot 
     {({\sembrack{e_1} \IsUndefApp} \And {\sembrack{e_2} \IsUndefApp})}
     \Implies {}} \\ & &
     (\mname{injective}::\mname{C},\mname{C},\mname{formula})
     (\sembrack{e_1},\sembrack{e_2}) \Iff \mname{F}.
  \end{eqnarray*}
  \fi

  \item \textbf{Bijective Function from a Class to a Class}\\
  Operator: 
  $(\mname{bijective}::\mname{C},\mname{C},\mname{C},\mname{formula})$\\
  Defining axioms:
  \begin{eqnarray*}
  \lefteqn{\ForallApp f,x,y \mcolon \mname{C} \mdot 
     (\mname{bijective}::\mname{C},\mname{C},\mname{C},
     \mname{formula})(f,x,y) \Iff} \\ & &
     \mname{fun}(f) \And 
     \mname{total}(f,x) \And
     \mname{surjective}(f,y) \And
     \mname{injective}(f,x).
  \end{eqnarray*}
  \vspace{-5ex} 
  \begin{eqnarray*}
  \lefteqn{({a^{\rm ef} \IsUndefApp} \And 
     {b^{\rm ef} \IsUndefApp} \And 
     {c^{\rm ef} \IsUndefApp}) \Implies {}} \\ & &
     (\mname{bijective}::\mname{C},\mname{C},\mname{C},\mname{formula})
     (a^{\rm ef}, b^{\rm ef}, c^{\rm ef}) \Iff \mname{F}.
  \end{eqnarray*}

  \iffalse 
  \begin{eqnarray*}
  \lefteqn{\ForallApp e_1,e_2,e_3\mcolon \mname{E}_{\rm te} \mdot 
     {({\sembrack{e_1} \IsUndefApp} \And {\sembrack{e_2} \IsUndefApp}
     \And {\sembrack{e_3} \IsUndefApp})}
     \Implies {}} \\ & &
     (\mname{bijective}::\mname{C},\mname{C},\mname{C},\mname{formula})
     (\sembrack{e_1},\sembrack{e_2},\sembrack{e_3}) \Iff \mname{F}.
  \end{eqnarray*}
  \fi

  \item \textbf{Infinite Class}\\
  Operator: 
  $(\mname{infinite}::\mname{C},\mname{formula})$\\
  Defining axioms:
  \begin{eqnarray*}
  \lefteqn{\ForallApp x \mcolon \mname{C} \mdot 
     (\mname{infinite}::\mname{C},\mname{formula})(x) \Iff} \\ & &
     \ForsomeApp f,y \mcolon \mname{C} \mdot 
     y \subset x \And \mname{bijective}(f,x,y).
  \end{eqnarray*}
  \vspace{-5ex}  
  \begin{eqnarray*}
     {a^{\rm ef} \IsUndefApp} \Implies {}
     (\mname{infinite}::\mname{C},\mname{formula})
     (a^{\rm ef}) \Iff \mname{F}.
  \end{eqnarray*}

  \iffalse
  \begin{eqnarray*}
  \lefteqn{\ForallApp e\mcolon \mname{E}_{\rm te} \mdot 
     {\sembrack{e} \IsUndefApp}
     \Implies {}} \\ & &
     (\mname{infinite}::\mname{C},\mname{formula})
     (\sembrack{e}) \Iff \mname{F}.
  \end{eqnarray*}
  \fi

  \item \textbf{Countably Infinite Class}\\
  Operator: 
  $(\mname{countably-infinite}::\mname{C},\mname{formula})$\\
  Defining axioms:
  \begin{eqnarray*}
  \lefteqn{\ForallApp x \mcolon \mname{C} \mdot 
     (\mname{countably-infinite}::\mname{C},\mname{formula})(x) \Iff} \\ & &
     \mname{infinite}(x) \And {}(\ForallApp y \mcolon \mname{C} \mdot 
     \mname{infinite}(y) \Implies  \\ & &
     \hspace{4ex}
     (\Forsome y' \mcolon \mname{C} \mdot y' \subseteq y
     \Implies \mname{bijective}(f,x,y'))).
  \end{eqnarray*}
  \vspace{-5ex}  
  \begin{eqnarray*}
     {a^{\rm ef} \IsUndefApp} \Implies {}
     (\mname{countably-infinite}::\mname{C},\mname{formula})
     (a^{\rm ef}) \Iff \mname{F}.
  \end{eqnarray*}

  \iffalse
  \begin{eqnarray*}
  \lefteqn{\ForallApp e\mcolon \mname{E}_{\rm te} \mdot 
     {\sembrack{e} \IsUndefApp}
     \Implies {}} \\ & &
     (\mname{countably-infinite}::\mname{C},\mname{formula})
     (\sembrack{e}) \Iff \mname{F}.
  \end{eqnarray*}
  \fi

  \item \textbf{Sum Class}\\
  Operator: 
  $(\mname{sum}::\mname{C},\mname{C})$\\
  Defining axioms:
  \begin{eqnarray*}
  \lefteqn{\ForallApp x \mcolon \mname{C} \mdot 
     (\mname{sum}::\mname{C},\mname{C})(x) \QuasiEqual} \\ & &
     \iotaApp y \mcolon \mname{C} \mdot
     \ForallApp u \mcolon \mname{V} \mdot 
     u \in y \Iff (\ForsomeApp v \mcolon \mname{V} \mdot 
     u \in v \And v \in x).
  \end{eqnarray*}
  \vspace{-5ex} 
   \begin{eqnarray*}
     {a^{\rm ef} \IsUndefApp} \Implies {}
     (\mname{sum}::\mname{C},\mname{C})
     (a^{\rm ef}) \IsUndefApp.
  \end{eqnarray*}

  \iffalse
  \begin{eqnarray*}
  \lefteqn{\ForallApp e \mcolon \mname{E}_{\rm te} \mdot 
    {\sembrack{e} \IsUndefApp} \Implies {}} \\ & &
    (\mname{sum}::\mname{C},\mname{C})
     (\sembrack{e}) \IsUndefApp.
  \end{eqnarray*}
  \fi

  \item \textbf{Power Class}\\
  Operator: 
  $(\mname{power}::\mname{C},\mname{C})$\\
  Defining axioms:
  \begin{eqnarray*}
  \lefteqn{\ForallApp x \mcolon \mname{C} \mdot 
     (\mname{power}::\mname{C},\mname{C})(x) \QuasiEqual} \\ & &
     \iotaApp y \mcolon \mname{C} \mdot
     \ForallApp u \mcolon \mname{V} \mdot 
     u \in y \Iff u \subseteq x.
  \end{eqnarray*}
  \vspace{-5ex} 
   \begin{eqnarray*}
     {a^{\rm ef} \IsUndefApp} \Implies {}
     (\mname{power}::\mname{C},\mname{C})
     (a^{\rm ef}) \IsUndefApp.
  \end{eqnarray*}

  \iffalse
  \begin{eqnarray*}
  \lefteqn{\ForallApp e \mcolon \mname{E}_{\rm te} \mdot 
    {\sembrack{e} \IsUndefApp} \Implies {}} \\ & &
    (\mname{power}::\mname{C},\mname{C})
     (\sembrack{e}) \IsUndefApp.
  \end{eqnarray*}
  \fi

  \item \textbf{Type is Class Checker}\\
  Operator: 
  $(\mname{type-is-class}::\mname{type},\mname{formula})$\\
  Defining axioms:
  \begin{eqnarray*}
  \lefteqn{(\Neg\mname{free-in}(\synbrack{x},\synbrack{\alpha^{\rm ef}}) \And 
     \Neg\mname{free-in}(\synbrack{y},\synbrack{\alpha^{\rm ef}}))
     \Implies {}} \\ & &
     (\mname{type-is-class}::\mname{type},
     \mname{formula})(\alpha^{\rm ef}) \Iff \\ & &
     \hspace{4ex}
     \ForsomeApp x \mcolon \mname{C} \mdot
     \ForallApp y \mcolon \mname{C} \mdot 
     y \IsDef \alpha^{\rm ef} \Iff y \in x.
  \end{eqnarray*}

  \iffalse
  \begin{eqnarray*}
  \lefteqn{\ForallApp e_1 \mcolon \mname{E}_{\rm ty} \mdot
     (\mname{type-is-class}::\mname{type},\mname{formula})(\sembrack{e_1})
     \Iff} \\ & &
     \ForallApp e, e' \mcolon \mname{E}_{\rm sy} \mdot 
     (\Neg\mname{free-in}(e,e_1) \And \Neg\mname{free-in}(e',e_1))
     \Implies  \\ & &
     \hspace{4ex}\sembrack{\synbrack{
     \ForsomeApp \commabrack{e} \mcolon \mname{C} \mdot
     \ForallApp \commabrack{e'} \mcolon \mname{C} \mdot \\ & & 
     \hspace{6ex}
     (\commabrack{e'} \mcolon \mname{C})\IsDef \commabrack{e_1} \Iff
     (\commabrack{e'} \mcolon \mname{C}) 
     \in (\commabrack{e} \mcolon \mname{C})}}_{\rm fo}.
  \end{eqnarray*}
  \fi

  \item \textbf{Type is Set Checker}\\
  Operator: 
  $(\mname{type-is-set}::\mname{type},\mname{formula})$\\
  Defining axioms:
  \begin{eqnarray*}
  \lefteqn{(\Neg\mname{free-in}(\synbrack{u},\synbrack{\alpha^{\rm ef}}) \And 
     \Neg\mname{free-in}(\synbrack{y},\synbrack{\alpha^{\rm ef}}))
     \Implies {}} \\ & &
     (\mname{type-is-set}::\mname{type},
     \mname{formula})(\alpha^{\rm ef}) \Iff \\ & &
     \hspace{4ex}
     \ForsomeApp u \mcolon \mname{V} \mdot
     \ForallApp y \mcolon \mname{C} \mdot 
     y \IsDef \alpha^{\rm ef} \Iff y \in u.
  \end{eqnarray*}

  \iffalse
  \begin{eqnarray*}
  \lefteqn{\ForallApp e_1 \mcolon \mname{E}_{\rm ty} \mdot
     (\mname{type-is-set}::\mname{type},\mname{formula})(\sembrack{e_1})
     \Iff} \\ & &
     \ForallApp e, e' \mcolon \mname{E}_{\rm sy} \mdot 
     (\Neg\mname{free-in}(e,e_1) \And \Neg\mname{free-in}(e',e_1))
     \Implies  \\ & &
     \hspace{4ex}\sembrack{\synbrack{
     \ForsomeApp \commabrack{e} \mcolon \mname{V} \mdot
     \ForallApp \commabrack{e'} \mcolon \mname{C} \mdot  \\ & & 
     \hspace{6ex}
     (\commabrack{e'} \mcolon \mname{C})\IsDef \commabrack{e_1} \Iff
     (\commabrack{e'} \mcolon \mname{C}) 
     \in (\commabrack{e} \mcolon \mname{V})}}_{\rm fo}.
  \end{eqnarray*}
  \fi

  \item \textbf{Type to Term}\\
  Operator: 
  $(\mname{type-to-term}::\mname{type},\mname{C})$\\
  Defining axioms:
  \begin{eqnarray*}
  \lefteqn{(\Neg\mname{free-in}(\synbrack{x},\synbrack{\alpha^{\rm ef}}) 
     \And 
     \Neg\mname{free-in}(\synbrack{y},\synbrack{\alpha^{\rm ef}}))
     \Implies}  \\ & &
     (\mname{type-to-term}::\mname{type},\mname{C})
     (\alpha^{\rm ef}) \QuasiEqual \\ & &
     \hspace{4ex}\iotaApp x \mcolon \mname{C} \mdot
     \ForallApp y \mcolon \mname{C} \mdot
     y \IsDef \alpha^{\rm ef} \Iff y \in x.
  \end{eqnarray*}

  Compact notation: 
  \bi

    \item[] $\mname{term}(\alpha)$ means 
    $(\mname{type-to-term}::\mname{type},\mname{C})(\alpha)$.

  \ei 

  \iffalse
  \begin{eqnarray*}
  \lefteqn{\ForallApp e_1 \mcolon \mname{E}_{\rm ty}, 
     \ForallApp e, e' \mcolon \mname{E}_{\rm sy} \mdot} \\ & &
     (\Neg\mname{free-in}(e,e_1) \And \Neg\mname{free-in}(e',e_1))
     \Implies  \\ & &
     \sembrack{\synbrack{
     (\mname{type-to-term}::\mname{type},\mname{C})
     (\commabrack{e_1}) \QuasiEqual \\ & &
     \hspace{4ex}\iotaApp \commabrack{e} \mcolon \mname{C} \mdot
     \ForallApp \commabrack{e'} \mcolon \mname{C} \mdot  \\ & & 
     \hspace{6ex}
     (\commabrack{e'} \mcolon \mname{C})\IsDef \commabrack{e_1} \Iff
     (\commabrack{e'} \mcolon \mname{C})
     \in (\commabrack{e} \mcolon \mname{C})}}_{\rm fo}.
  \end{eqnarray*}
  \fi

  \item \textbf{Term to Type}\\
  Operator: 
  $(\mname{term-to-type}::\mname{C},\mname{type})$\\
  Defining axioms:
  \begin{eqnarray*}
     \ForallApp x \mcolon \mname{C} \mdot
     \ForallApp y \mcolon \mname{C} \mdot 
     y \IsDef (\mname{term-to-type}::\mname{C},\mname{type})(x) \Iff 
     y \in x.
  \end{eqnarray*}
  \vspace{-5ex}
  \begin{eqnarray*}
     {a^{\rm ef} \IsUndefApp} \Implies {}
     (\mname{term-to-type}::\mname{C},\mname{type})
     (a^{\rm ef}) \TypeEqual \mname{C}.
  \end{eqnarray*}

  Compact notation: 
  \bi

    \item[] $\mname{type}(a)$ means 
    $(\mname{term-to-term}::\mname{C},\mname{type})(a)$.

  \ei 

  \iffalse 
  \begin{eqnarray*}
  \lefteqn{\ForallApp e \mcolon \mname{E}_{\rm te} \mdot 
    {\sembrack{e} \IsUndefApp} \Implies {}} \\ & &
    (\mname{term-to-type}::\mname{C},\mname{type})
    (\sembrack{e}) \TypeEqual \mname{C}.
  \end{eqnarray*}
  \fi

  \item \textbf{Power Type}\\
  Operator: 
  $(\mname{power-type}::\mname{type},\mname{type})$\\
  Defining axioms:
  \begin{eqnarray*}
     (\mname{power-type}::\mname{type},\mname{type})(\alpha^{\rm ef})
     \TypeEqual \mname{type}(\mname{power}(\mname{term}(\alpha^{\rm ef}))).
  \end{eqnarray*}

  \iffalse
  \begin{eqnarray*}
  \lefteqn{\ForallApp e \mcolon \mname{E}_{\rm ty} \mdot
     (\mname{power-type}::\mname{type},\mname{type})(\sembrack{e})
     \TypeEqual} \\ & &
     \mname{type}(\mname{power}(\mname{term}(\sembrack{e}))).
  \end{eqnarray*}
  \fi

\ee

\subsection{Syntactic Operators} \label{subsec:synop}

\be 

  \item \textbf{Proper Expression Checker}\\
  Operator: 
  $(\mname{is-p-expr}::\mname{E},\mname{formula})$\\
  Defining axioms:
  \begin{eqnarray*}
  \lefteqn{\ForallApp e \mcolon \mname{E} \mdot 
     (\mname{is-p-expr}::\mname{E},\mname{formula})(e) \Iff} \\ & &
     e \IsDef \mname{E}_{\rm op} \Or
     e \IsDef \mname{E}_{\rm ty} \Or
     e \IsDef \mname{E}_{\rm te} \Or
     e \IsDef \mname{E}_{\rm fo}.
  \end{eqnarray*}
  \vspace{-5ex}   
  \begin{eqnarray*}
     {a^{\rm ef} \IsUndefApp} \Implies {}
     (\mname{is-p-expr}::\mname{E},\mname{formula})
     (a^{\rm ef}) \Iff \mname{F}.
  \end{eqnarray*}

  \iffalse
  \begin{eqnarray*}
  \lefteqn{\ForallApp e \mcolon \mname{E}_{\rm te} \mdot 
    {\sembrack{e} \IsUndefApp} \Implies {}} \\ & &
    (\mname{is-p-expr}::\mname{E},\mname{formula})
    (\sembrack{e}) \Iff \mname{F}.
  \end{eqnarray*}
  \fi

  Note: Checkers for the different sorts of proper expressions are
  defined in a similar way: $\mname{is-op}$, $\mname{is-type}$,
  $\mname{is-term}$, $\mname{is-term-of-type}$, and
  $\mname{is-formula}$.

  \item \textbf{First Component Selector for a Proper Expression}\\
  Operator: 
  $(\mname{1st-comp}::\mname{E},\mname{E})$\\
  Defining axioms:
  \begin{eqnarray*}
  \lefteqn{\ForallApp e \mcolon \mname{E} \mdot 
     (\mname{1st-comp}::\mname{E},\mname{E})(e) \QuasiEqual} \\ & &
     \mname{if}(\mname{is-p-expr}(e),
     \mname{hd}(\mname{tl}(e)),
     \Undefined_{\sf C}).
  \end{eqnarray*}
  \vspace{-5ex}   
  \begin{eqnarray*}
     {a^{\rm ef} \IsUndefApp} \Implies {}
     (\mname{1st-comp}::\mname{E},\mname{E})
     (a^{\rm ef}) \IsUndefApp.
  \end{eqnarray*}

  \iffalse
  \begin{eqnarray*}
  \lefteqn{\ForallApp e \mcolon \mname{E}_{\rm te} \mdot 
    {\sembrack{e} \IsUndefApp} \Implies {}} \\ & &
    (\mname{1st-comp}::\mname{E},\mname{E})
    (\sembrack{e}) \IsUndefApp.
  \end{eqnarray*}
  \fi

  \bsp Note: The second, third, fourth, \ldots\ component selectors
  for proper expressions are defined in a similar way:
  $\mname{2nd-comp}$, $\mname{3rd-comp}$,
  $\mname{4th-comp}$,~\ldots. \esp

  \item \textbf{Operator Application Checker}\\
  Operator: 
  $(\mname{is-op-app}::\mname{E},\mname{formula})$\\
  Defining axioms:
  \begin{eqnarray*}
  \lefteqn{\ForallApp e \mcolon \mname{E} \mdot 
     (\mname{is-op-app}::\mname{E},\mname{formula})(e) \Iff} \\ & &
     \mname{is-p-expr}(e) \And \mname{hd}(e) = \synbrack{\mname{op-app}}.
  \end{eqnarray*}
  \vspace{-5ex}   
  \begin{eqnarray*}
     {a^{\rm ef} \IsUndefApp} \Implies {}
     (\mname{is-op-app}::\mname{E},\mname{formula})
     (a^{\rm ef}) \Iff \mname{F}.
  \end{eqnarray*}

  \iffalse
  \begin{eqnarray*}
  \lefteqn{\ForallApp e \mcolon \mname{E}_{\rm te} \mdot 
    {\sembrack{e} \IsUndefApp} \Implies {}} \\ & &
    (\mname{is-op-app}::\mname{E},\mname{formula})
    (\sembrack{e}) \Iff \mname{F}.
  \end{eqnarray*}
  \fi

  \bsp Note: Checkers for the other remaining 11 proper expression
  categories are defined in a similar way: \mname{is-var},
  \mname{is-type-app}, \mname{is-dep-fun-type}, \mname{is-fun-app},
  \mname{is-fun-abs}, \mname{is-if}, \mname{is-exists},
  \mname{is-def-des}, \mname{is-indef-des}, \mname{is-quote},
  \mname{is-eval}. \esp

  \item \textbf{Disjunction Checker}\\
  Operator: 
  $(\mname{is-or}::\mname{E},\mname{formula})$\\
  Defining axioms:
  \begin{eqnarray*}
  \lefteqn{\ForallApp e \mcolon \mname{E} \mdot 
     (\mname{is-or}::\mname{E},\mname{formula})(e) \Iff} \\ & &
     \mname{is-op-app}(e) \And 
     \mname{1st-comp}(\mname{1st-comp}(e)) = \synbrack{\mname{or}}.
  \end{eqnarray*}
  \vspace{-5ex}   
  \begin{eqnarray*}
     {a^{\rm ef} \IsUndefApp} \Implies {}
     (\mname{is-or}::\mname{E},\mname{formula})
     (a^{\rm ef}) \Iff \mname{F}.
  \end{eqnarray*}

  \iffalse
  \begin{eqnarray*}
  \lefteqn{\ForallApp e \mcolon \mname{E}_{\rm te} \mdot 
    {\sembrack{e} \IsUndefApp} \Implies {}} \\ & &
    (\mname{is-disj}::\mname{E},\mname{formula})
    (\sembrack{e}) \Iff \mname{F}.
  \end{eqnarray*}
  \fi

  Note: Checkers for other kinds of operator applications are defined
  in a similar way: \mname{is-in}, \mname{is-type-equal},
  \mname{is-union}, etc.

  \item \textbf{First Argument Selector for an Operator Application}\\
  Operator: 
  $(\mname{1st-arg}::\mname{E},\mname{E})$\\
  Defining axioms:
  \begin{eqnarray*}
  \lefteqn{\ForallApp e \mcolon \mname{E} \mdot 
     (\mname{1st-arg}::\mname{E},\mname{E})(e) \QuasiEqual} \\ & &
     \mname{if}(\mname{is-op-app}(e),
     \mname{2nd-comp}(e), 
     \Undefined_{\sf C}).
  \end{eqnarray*}
  \vspace{-5ex}   
  \begin{eqnarray*}
     {a^{\rm ef} \IsUndefApp} \Implies {}
     (\mname{1st-arg}::\mname{E},\mname{E})
     (a^{\rm ef}) \IsUndefApp.
  \end{eqnarray*}

  \iffalse
  \begin{eqnarray*}
  \lefteqn{\ForallApp e \mcolon \mname{E}_{\rm te} \mdot 
    {\sembrack{e} \IsUndefApp} \Implies {}} \\ & &
    (\mname{1st-arg}::\mname{E},\mname{E})
    (\sembrack{e}) \IsUndefApp.
  \end{eqnarray*}
  \fi

%\newpage

  Note: The second, third, fourth, $\ldots$ argument selectors for
  operator application are defined in a similar way: $\mname{2nd-arg},
  \mname{3rd-arg}, \mname{4th-arg}, \ldots.$

  \item \textbf{Variable Binder Checker}\\
  Operator: 
  $(\mname{is-binder}::\mname{E},\mname{formula})$\\
  Defining axioms:
  \begin{eqnarray*}
  \lefteqn{\ForallApp e \mcolon \mname{E} \mdot 
     (\mname{is-binder}::\mname{E},\mname{formula})(e) \Iff} \\ & &
     \mname{is-dep-fun-type}(e) \Or
     \mname{is-fun-abs}(e) \Or
     \mname{is-exists}(e) \Or \\ & &
     \mname{is-def-des}(e) \Or
     \mname{is-indef-des}(e).
  \end{eqnarray*}
  \vspace{-5ex}   
  \begin{eqnarray*}
     {a^{\rm ef} \IsUndefApp} \Implies {}
     (\mname{is-binder}::\mname{E},\mname{formula})
     (a^{\rm ef}) \Iff \mname{F}.
  \end{eqnarray*}

  \iffalse
  \begin{eqnarray*}
  \lefteqn{\ForallApp e \mcolon \mname{E}_{\rm te} \mdot 
    {\sembrack{e} \IsUndefApp} \Implies {}} \\ & &
    (\mname{is-binder}::\mname{E},\mname{formula})
    (\sembrack{e}) \Iff \mname{F}.
  \end{eqnarray*}
  \fi

  \item \textbf{Variable Selector for Variable Binder}\\
  Operator: 
  $(\mname{binder-var}::\mname{E},\mname{E})$\\
  Defining axioms:
  \begin{eqnarray*}
  \lefteqn{\ForallApp e \mcolon \mname{E} \mdot 
     (\mname{binder-var}::\mname{E},\mname{E})(e) \QuasiEqual} \\ & &
     \mname{if}(\mname{is-binder}(e),
     \mname{1st-comp}(e),
     \Undefined_{\sf C}).
  \end{eqnarray*}
  \vspace{-5ex}   
  \begin{eqnarray*}
     {a^{\rm ef} \IsUndefApp} \Implies {}
     (\mname{binder-var}::\mname{E},\mname{E})
     (a^{\rm ef}) \IsUndefApp.
  \end{eqnarray*}

  \iffalse
  \begin{eqnarray*}
  \lefteqn{\ForallApp e \mcolon \mname{E}_{\rm te} \mdot 
    {\sembrack{e} \IsUndefApp} \Implies {}} \\ & &
    (\mname{binder-var}::\mname{E},\mname{E})
    (\sembrack{e}) \IsUndefApp.
  \end{eqnarray*}
  \fi

  \bsp Note: Selectors for a binder name and a binder body are defined
  in a similar way: \mname{binder-name} and \mname{binder-body}. \esp

  \item \textbf{Function Redex Checker}\\
  Operator: 
  $(\mname{is-fun-redex}::\mname{E},\mname{formula})$\\
  Defining axioms:
  \begin{eqnarray*}
  \lefteqn{\ForallApp e \mcolon \mname{E} \mdot 
     (\mname{is-fun-redex}::\mname{E},\mname{formula})(e) \Iff} \\ & &
     \mname{is-fun-app}(e) \And \mname{is-fun-abs}(\mname{1st-comp}(e)).
  \end{eqnarray*}
  \vspace{-5ex}  
  \begin{eqnarray*}
     {a^{\rm ef} \IsUndefApp} \Implies {}
     (\mname{is-fun-redex}::\mname{E},\mname{formula})
     (a^{\rm ef}) \Iff \mname{F}.
  \end{eqnarray*}

  \iffalse 
  \begin{eqnarray*}
  \lefteqn{\ForallApp e \mcolon \mname{E}_{\rm te} \mdot 
    {\sembrack{e} \IsUndefApp} \Implies {}} \\ & &
    (\mname{is-fun-redex}::\mname{E},\mname{formula})
    (\sembrack{e}) \Iff \mname{F}.
  \end{eqnarray*}
  \fi

%\newpage

  \bsp Note: A dependent function type redex checker named
  \mname{is-dep-fun-type-redex} is defined in a similar way.\esp

  \item \textbf{Redex Checker}\\
  Operator: 
  $(\mname{is-redex}::\mname{E},\mname{formula})$\\
  Defining axioms:
  \begin{eqnarray*}
  \lefteqn{\ForallApp e \mcolon \mname{E} \mdot 
     (\mname{is-redex}::\mname{E},\mname{formula})(e) \Iff} \\ & &
     \mname{is-dep-fun-type-redex}(e) \Or \mname{is-fun-redex}(e).
  \end{eqnarray*}
  \vspace{-5ex}   
  \begin{eqnarray*}
     {a^{\rm ef} \IsUndefApp} \Implies {}
     (\mname{is-redex}::\mname{E},\mname{formula})
     (a^{\rm ef}) \Iff \mname{F}.
  \end{eqnarray*}

  \iffalse
  \begin{eqnarray*}
  \lefteqn{\ForallApp e \mcolon \mname{E}_{\rm te} \mdot 
    {\sembrack{e} \IsUndefApp} \Implies {}} \\ & &
    (\mname{is-redex}::\mname{E},\mname{formula})
    (\sembrack{e}) \Iff \mname{F}.
  \end{eqnarray*}
  \fi

  \item \textbf{Variable Selector for a Redex}\\
  Operator: 
  $(\mname{redex-var}::\mname{E},\mname{E})$\\
  Defining axioms:
  \begin{eqnarray*}
  \lefteqn{\ForallApp e \mcolon \mname{E} \mdot 
     (\mname{redex-var}::\mname{E},\mname{E})(e) \QuasiEqual} \\ & &
     \mname{if}(\mname{is-redex}(e), 
     \mname{binder-var}(\mname{1st-comp}(e)),
     \Undefined_{\sf C}).
  \end{eqnarray*}
  \vspace{-5ex}  
  \begin{eqnarray*}
     {a^{\rm ef} \IsUndefApp} \Implies {}
     (\mname{redex-var}::\mname{E},\mname{E})
     (a^{\rm ef}) \IsUndefApp.
  \end{eqnarray*}

  \iffalse 
  \begin{eqnarray*}
  \lefteqn{\ForallApp e \mcolon \mname{E}_{\rm te} \mdot 
    {\sembrack{e} \IsUndefApp} \Implies {}} \\ & &
    (\mname{redex-var}::\mname{E},\mname{E})
    (\sembrack{e}) \IsUndefApp.
  \end{eqnarray*}
  \fi

  Note: Body and argument selectors for a redex named
  \mname{redex-body} and \mname{redex-arg} are defined in a similar
  way.

  \item \textbf{Variable Similarity}\\
  Operator: 
  $(\mname{var-sim}::\mname{E},\mname{E},\mname{formula})$\\
  Defining axioms:
  \begin{eqnarray*}
  \lefteqn{\ForallApp e_1,e_2 \mcolon \mname{E} \mdot 
     (\mname{var-sim}::\mname{E},\mname{E},\mname{formula})(e_1,e_2) 
     \Iff} \\ & &
     \mname{is-var}(e_1) \And \mname{is-var}(e_2) \And
     \mname{1st-comp}(e_1) = \mname{1st-comp}(e_2).
  \end{eqnarray*}
  \vspace{-5ex}   
  \begin{eqnarray*}
     ({a^{\rm ef} \IsUndefApp} \And
     {b^{\rm ef} \IsUndefApp}) \Implies 
     (\mname{var-sim}::\mname{E},\mname{E},\mname{formula})
     (a^{\rm ef}, b^{\rm ef}) \Iff \mname{F}.
  \end{eqnarray*}

  \iffalse
  \begin{eqnarray*}
  \lefteqn{\ForallApp e_1,e_2 \mcolon \mname{E}_{\rm te} \mdot 
    ({\sembrack{e_1} \IsUndefApp} \Or {\sembrack{e_2} \IsUndefApp}) 
    \Implies {}} \\ & &
    (\mname{var-sim}::\mname{E},\mname{E},\mname{formula})
    (\sembrack{e_1},\sembrack{e_2}) \Iff \mname{F}.
  \end{eqnarray*}
  \fi

  Compact notation: 
  \bi

    \item[] $e_1 \sim e_2$ means
    $(\mname{var-sim}::\mname{E},\mname{E},\mname{formula})(e_1,e_2)$.

    \item[] $e_1 \not\sim e_2$ means $\Neg(e_1 \sim e_2)$.
  
  \ei

\iffalse
  \item \textbf{Symbol Occurs Checker}\\
  Operator: 
  $(\mname{symbol-occurs}::\mname{E},\mname{formula})$\\
  Defining axioms:
  \begin{eqnarray*}
  \lefteqn{\ForallApp e \mcolon \mname{E} \mdot 
     (\mname{symbol-occurs}::\mname{E},\mname{formula})(e) \Iff} \\ & &
     e \IsDef \mname{E}_{\rm sy} \Or 
     \mname{symbol-occurs}(\mname{hd}(e)) \Or
     \mname{symbol-occurs}(\mname{tl}(e)).
  \end{eqnarray*}
  \vspace{-5ex} 
  \begin{eqnarray*}
  \lefteqn{\ForallApp e \mcolon \mname{E}_{\rm te} \mdot 
    {\sembrack{e} \IsUndefApp} \Implies {}} \\ & &
    (\mname{symbol-occurs}::\mname{E},\mname{formula})
    (\sembrack{e}) \Iff \mname{F}.
  \end{eqnarray*}
\fi

  \item \textbf{Eval-Free Checker}\\
  Operator: 
  $(\mname{is-eval-free}::\mname{E},\mname{formula})$\\
  Defining axioms:
  \begin{eqnarray*}
  \lefteqn{\ForallApp e \mcolon \mname{E} \mdot 
     (\mname{is-eval-free}::\mname{E},\mname{formula})(e) \Iff} \\ & &
     \mname{is-quote}(e) \Or \\ & &
     (e \IsDef \mname{E}_{\rm sy} \And 
      e \not= \synbrack{\mname{eval}}) \Or \\ & &
     e = \mlist{\,} \Or \\ & &
     (\mname{is-eval-free}(\mname{hd}(e)) \And
     \mname{is-eval-free}(\mname{tl}(e))).
  \end{eqnarray*}
  \vspace{-5ex}  
  \begin{eqnarray*}
     {a^{\rm ef} \IsUndefApp} \Implies {}
     (\mname{is-eval-free}::\mname{E},\mname{formula})
     (a^{\rm ef}) \Iff \mname{F}.
  \end{eqnarray*}

  \iffalse 
  \begin{eqnarray*}
  \lefteqn{\ForallApp e \mcolon \mname{E}_{\rm te} \mdot 
    {\sembrack{e} \IsUndefApp} \Implies {}} \\ & &
    (\mname{is-eval-free}::\mname{E},\mname{formula})
    (\sembrack{e}) \Iff \mname{F}.
  \end{eqnarray*}
  \fi

  \item \textbf{Coercion to a Type Construction}\\
  Operator: 
  $(\mname{coerce-to-type}::\mname{E},\mname{E}_{\rm ty})$\\
  Defining axioms:
  \begin{eqnarray*}
  \lefteqn{\ForallApp e_1 \mcolon \mname{E} \mdot 
     (\mname{coerce-to-type}::\mname{E},\mname{E}_{\rm ty})(e_1) 
     \Iff} \\ & &
     \iotaApp e_2 \mcolon \mname{E}_{\rm ty} \mdot e_1 \TypeEqual e_2.
  \end{eqnarray*}
  \vspace{-5ex}  
  \begin{eqnarray*}
     {a^{\rm ef} \IsUndefApp} \Implies {}
     (\mname{coerce-to-type}::\mname{E},\mname{E}_{\rm ty})
     (a^{\rm ef}) \IsUndefApp.
  \end{eqnarray*}

  Note: The operators \mname{coerce-to-term} and
  \mname{coerce-to-formula} are defined in a
  similar way.

  \item \textbf{Nominal Type of a Term}\\
  Operator: 
  $(\mname{nominal-type}::\mname{E}_{\rm te},\mname{E}_{\rm ty})$\\
  Defining axioms:
  \begin{eqnarray*}
  \lefteqn{\ForallApp e_1 \mcolon \mname{E}_{\rm te} \mdot 
     (\mname{nominal-type}::\mname{E}_{\rm te},\mname{E}_{\rm ty})(e_1) 
     \Iff} \\ & &
     \iotaApp e_2 \mcolon \mname{E}_{\rm ty} \mdot 
     e_1 \IsDef \mname{E}_{\rm te}^{e_2}.
  \end{eqnarray*}
  \vspace{-5ex}  
  \begin{eqnarray*}
     {a^{\rm ef} \IsUndefApp} \Implies {}
     (\mname{nominal-type}::\mname{E}_{\rm te},\mname{E}_{\rm ty})
     (a^{\rm ef}) \IsUndefApp.
  \end{eqnarray*}

  \item \textbf{Operator Name Strictness Checker}\\
  Operator: 
  $(\mname{is-strict-op-name}::\mname{E}_{\rm on},\mname{formula})$\\
  Defining axioms:
  \begin{eqnarray*}
  \lefteqn{\ForallApp o \mcolon \mname{E}_{\rm on} \mdot 
     (\mname{is-strict-op-name}::\mname{E}_{\rm on},
     \mname{formula})(o) \Iff} \\ & &
     \ForallApp e \mcolon \mname{E} \mdot \\ & &
     \hspace{4ex}(\mname{is-op-app}(e) \And \\ & &
     \hspace{5ex} o = \mname{1st-comp}(\mname{1st-comp}(e)) \And \\ & &
     \hspace{5ex} \synbrack{\bot_{\cal C}} \in_{\rm li} 
        \mname{tl}(\mname{tl}(e))) \\ & &
     \hspace{8ex}\Implies \\ & &
     \hspace{4ex}((\mname{is-type}(e) \Implies 
        \sembrack{e}_{\rm ty} = \mname{C}) \And {}\\ & &
     \hspace{5ex} (\mname{is-term}(e) \Implies 
        \sembrack{e}_{\rm te}\IsUndefApp) \And {}\\ & &
     \hspace{5ex} (\mname{is-formula}(e) \Implies 
        \sembrack{e}_{\rm fo} = \mname{F})).
  \end{eqnarray*}
  \vspace{-5ex}  
  \begin{eqnarray*}
     {a^{\rm ef} \IsUndefApp} \Implies {}
     (\mname{is-strict-op-name}::\mname{E}_{\rm on},\mname{formula})
     (a^{\rm ef}) \Iff \mname{F}.
  \end{eqnarray*}

  Compact notation: 

  \bi

    \item[] $\mname{strict}(o)$ means 
    $(\mname{is-strict-op-name}::\mname{E}_{\rm on},\mname{formula})(o)$.
  
  \ei

\ee

\begin{crem} \em
All the defining axioms given in this section are eval-free.
\end{crem}

\subsection{Another Notational Definition} \label{subsec:rel-eval}

Using some of the operators defined in this section, we give a
notational definition for evaluation relativized to a language:

\be

  \item[] \textbf{Relativized Evaluation}

  $(\mname{eval}, a, k, b)$ 
  means
  \[\If(a \IsDefApp (\mname{E}_{{\rm ty},b} \cup \mname{E}_{{\rm te},b} \cup
  \mname{E}_{{\rm fo},b}), \sembrack{a}_k, \sembrack{\Undefined_{\sf C}}_k).\]

  Compact notation: 
  \bi

    \item[] $\sembrack{a}_{k,b}$ means $(\mname{eval}, a, k, b)$.

  \ei

\ee
Notice that $\sembrack{a}_{k,\ell}$ and $\sembrack{a}_k$ are
logically equivalent.

\section{Substitution} \label{sec:sub}

In this section we will define the operators needed for substitution
of a term for the free occurrences of a variable and prove that they
have their intended meanings.  Substitution on eval-free expressions
is very similar to substitution on first-order formulas, but
substitution on non-eval-free expressions is tricky.  There are two
reasons for this.  

We will illustrate the first reason with an example.  Consider the
formula $a = \sembrack{(e \mcolon \mname{E})}$.  The variable $(e
\mcolon \mname{E})$ is certainly free in the formula.  However, if the
value of the variable is $\synbrack{(y \mcolon \mname{C})}$, then the
formula is equivalent to $a = (y \mcolon \mname{C})$, and so the $(y
\mcolon \mname{C})$ can also be said to be free in the formula.  This
example shows that some free occurrences of a variable in an
expression may not be syntactically visible.  This significantly
complicates substitution.

The second reason has to do with the evaluation of a substitution.  A
substitution maps a quoted expression $\synbrack{e}$ to a new quoted
expression $\synbrack{e'}$.  The latter will usually be the argument
to an evaluation $\sembrack{\synbrack{e'}}_k$ so that the value of
$e'$ can be expressed.  If $e'$ is not eval-free, then the evaluation
will be undefined and, as a result, substitution will not work as
desired on non-eval-free expressions.  To avoid this problem,
evaluations must be ``cleansed'' from the result of a substitution.

\subsection{Substitution Operators} \label{subsec:sub-op}

We define now the operators related to the substitution of a term for
the free occurrences of a variable.  Each operator will be defined
only for quotations.  As a result, the defining axioms will be an
infinite set organized into a finite number of formula schemas.

%\newpage

\be

  \item \textbf{Good Evaluation Arguments}\\
  Operator: 
  $(\mname{gea}::\mname{E},\mname{E},\mname{formula})$\\
  Defining axioms:
  \begin{eqnarray*}
  \lefteqn{\Neg(\mname{gea}::\mname{E},\mname{E},\mname{formula})
    (\synbrack{e_1},\synbrack{e_2})} \\[1ex] & & 
  \parbox[t]{60ex}{where $e_1$ is a non-eval-free expression,
  $e_1$ is a type and $e_2$ is not \mname{type}, 
  $e_1$ is a term and $e_2$ is not a type, 
  or $e_1$ is a formula and $e_2$ is not \mname{formula}.}
  \end{eqnarray*}
  \vspace{-3ex} 
  \begin{eqnarray*}
  (\mname{gea}::\mname{E},\mname{E},\mname{formula})
  (\synbrack{\alpha^{\rm ef}},\synbrack{\mname{type}}).
  \end{eqnarray*}
  \vspace{-5ex} 
  \begin{eqnarray*}
  (\mname{gea}::\mname{E},\mname{E},\mname{formula})
  (\synbrack{a^{\rm ef}},\synbrack{\alpha}).
  \end{eqnarray*}
  \vspace{-5ex} 
  \begin{eqnarray*}
  (\mname{gea}::\mname{E},\mname{E},\mname{formula})
  (\synbrack{A^{\rm ef}},\synbrack{\mname{formula}}).
  \end{eqnarray*}

  \item \textbf{Free Variable Occurrence in an Expression}\\
  Operator: 
  $(\mname{free-in}::\mname{E},\mname{E},\mname{formula})$\\
  Defining axioms:
  \begin{eqnarray*}
  \lefteqn{\Neg(\mname{free-in}::\mname{E},\mname{E},\mname{formula})(\synbrack{e_1},\synbrack{e_2})}
  \\[1ex] & &
  \mbox{where $e_1$ is not a symbol or $e_2$ is an improper expression.} 
  \end{eqnarray*}
  \vspace{-5ex}
  \begin{eqnarray*}
  \lefteqn{(\mname{free-in}::\mname{E},\mname{E},\mname{formula})
  (\synbrack{x},\synbrack{(o::k_1,\ldots,k_{n+1})}) \Iff} \\ & &
  \mname{free-in}(\synbrack{x}, \synbrack{k_1}) \Or
  \cdots \Or \mname{free-in}(\synbrack{x}, \synbrack{k_{n+1}})
  \\[1ex] & &
  \mbox{where $(o::k_1,\ldots,k_{n+1})$ is proper and $n \ge 0$.}
  \end{eqnarray*}
  \vspace{-5ex}
  \begin{eqnarray*}
  \lefteqn{(\mname{free-in}::\mname{E},\mname{E},\mname{formula})(\synbrack{x},
  \synbrack{O(e_1,\ldots,e_n)}) \Iff} \\ & &
  \mname{free-in}(\synbrack{x}, \synbrack{O}) \Or \\ &&
  \mname{free-in}(\synbrack{x}, \synbrack{e_1}) \Or \cdots \Or
  \mname{free-in}(\synbrack{x}, \synbrack{e_n})
  \\[1ex] & &
  \mbox{where $O(e_1,\ldots,e_n)$ is proper and $n \ge 0$.}
  \end{eqnarray*}
  \vspace{-5ex}
  \begin{eqnarray*}
  \lefteqn{(\mname{free-in}::\mname{E},\mname{E},\mname{formula})(\synbrack{x},\synbrack{(x \mcolon \alpha)}).}
  \end{eqnarray*}
  \vspace{-5ex}
  \begin{eqnarray*}
  \lefteqn{(\mname{free-in}::\mname{E},\mname{E},\mname{formula})(\synbrack{x}, \synbrack{(y \mcolon \alpha)}) \Iff} \\ & &
  \mname{free-in}(\synbrack{x},\synbrack{\alpha})
  \\[1ex] & &
  \mbox{where $x \not=y$.}
  \end{eqnarray*}
  \vspace{-5ex}
  \begin{eqnarray*}
  \lefteqn{(\mname{free-in}::\mname{E},\mname{E},\mname{formula})(\synbrack{x},
  \synbrack{(\StarApp {x\mcolon\alpha} \mdot e)}) \Iff} \\ & &
  \mname{free-in}(\synbrack{x}, \synbrack{\alpha})
  \\[1ex] & &
  \mbox{where $(\StarApp {x\mcolon\alpha} \mdot e)$ is proper and
  $\star$ is $\Lambda$, $\lambda$, $\Forsome$, $\iota$, or $\epsilon$.}
  \end{eqnarray*}
  \vspace{-5ex}
  \begin{eqnarray*}
  \lefteqn{(\mname{free-in}::\mname{E},\mname{E},\mname{formula})(\synbrack{x},
  \synbrack{(\StarApp {y\mcolon\alpha} \mdot e)}) \Iff} \\ & &
  \mname{free-in}(\synbrack{x},\synbrack{\alpha}) \Or 
  \mname{free-in}(\synbrack{x},\synbrack{e})
  \\[1ex] & &
  \mbox{where $x \not= y$, $(\StarApp {x\mcolon\alpha} \mdot e)$ is proper, 
  and $\star$ is $\Lambda$, $\lambda$, $\Forsome$, $\iota$, or $\epsilon$.}
  \end{eqnarray*}
  \vspace{-5ex}
  \begin{eqnarray*}
  \lefteqn{(\mname{free-in}::\mname{E},\mname{E},\mname{formula})(\synbrack{x}, \synbrack{\alpha(a)}) \Iff}\\ & &
  \mname{free-in}(\synbrack{x}, \synbrack{\alpha}) \Or
  \mname{free-in}(\synbrack{x}, \synbrack{a}).
  \end{eqnarray*}
  \vspace{-5ex}
  \begin{eqnarray*}
  \lefteqn{(\mname{free-in}::\mname{E},\mname{E},\mname{formula})(\synbrack{x}, \synbrack{f(a)}) \Iff}\\ & &
  \mname{free-in}(\synbrack{x}, \synbrack{f}) \Or
  \mname{free-in}(\synbrack{x}, \synbrack{a}).
  \end{eqnarray*}
  \vspace{-5ex}
  \begin{eqnarray*}
  \lefteqn{(\mname{free-in}::\mname{E},\mname{E},\mname{formula})(\synbrack{x},
  \synbrack{\mname{if}(A,b,c)}) \Iff} \\ & &
  \mname{free-in}(\synbrack{x}, \synbrack{A}) \Or
  \mname{free-in}(\synbrack{x}, \synbrack{b}) \Or
  \mname{free-in}(\synbrack{x}, \synbrack{c}).
  \end{eqnarray*}
  \vspace{-5ex}
  \begin{eqnarray*}
  \lefteqn{\Neg(\mname{free-in}::\mname{E},\mname{E},\mname{formula})(\synbrack{x},
  \synbrack{\synbrack{e}})} 
  \\[1ex] & &
  \mbox{where $e$ is any expression.}
  \end{eqnarray*}
  \vspace{-5ex}
  \begin{eqnarray*}
  \lefteqn{(\mname{free-in}::\mname{E},\mname{E},
  \mname{formula})(\synbrack{x},
  \synbrack{\sembrack{a}_k}) \Iff} \\ & &
  \mname{free-in}(\synbrack{x}, \synbrack{a}) \Or \\ &&
  \mname{free-in}(\synbrack{x}, \synbrack{k}) \Or \\ &&
  \mname{free-in}(\synbrack{x}, a).
  \end{eqnarray*}
  \vspace{-5ex}  
  \begin{eqnarray*}
     ({a^{\rm ef} \IsUndefApp} \Or {b^{\rm ef} \IsUndefApp}) 
     \Implies 
     \Neg (\mname{free-in}::\mname{E},\mname{E},
     \mname{formula})(a^{\rm ef}, b^{\rm ef}).
  \end{eqnarray*}

\iffalse
  \begin{eqnarray*}
  \lefteqn{(\mname{free-in}::\mname{E},\mname{E},
  \mname{formula})(\synbrack{x},
  \synbrack{\sembrack{a}_k}) \Iff} \\ & &
  \mname{free-in}(\synbrack{x}, \synbrack{a}) \Or \\ &&
  \mname{free-in}(\synbrack{x}, \synbrack{k}) \Or \\ &&
  (\mname{gea}(a,\synbrack{k}) \And
  \mname{free-in}(\synbrack{x}, a)).
  \end{eqnarray*}
\fi

  Note: For an evaluation $(\mname{eval}, a, k)$, a variable may be
  free in the term $a$, in the kind $k$ when $k$ is a type, and in the
  expression that the evaluation represents.

  \item \textbf{Syntactically Closed Proper Expression}\\
  Operator: 
  $(\mname{syn-closed}::\mname{E},\mname{formula})$\\
  Defining axioms:
  \begin{eqnarray*}
  \lefteqn{\ForallApp e \mcolon \mname{E} \mdot 
     (\mname{syn-closed}::\mname{E},\mname{formula})(e) 
     \Iff} \\ & & 
     \mname{is-p-expr}(e) \And 
     {\ForallApp e' \mcolon \mname{E}_{\rm sy}} \mdot
     \Neg\mname{free-in}(e',e).
  \end{eqnarray*}

  \item \textbf{Cleanse Eval Symbols from an Expression}\\
  Operator: 
  $(\mname{cleanse}:: \mname{E},\mname{E})$\\
  Defining axioms:
  \begin{eqnarray*}
  \lefteqn{(\mname{cleanse}::\mname{E},\mname{E})  
  (\synbrack{e}) =} \\ & &
  \synbrack{e} \\[1ex] & &
  \mbox{where $e$ is an improper expression.} 
  \end{eqnarray*}
  \vspace{-5ex}
  \begin{eqnarray*}
  \lefteqn{(\mname{cleanse}::\mname{E},\mname{E})
  (\synbrack{(o::k_1,\ldots,k_{n+1})}) =} \\ & &
  \synbrack{(o::\commabrack{\widehat{k}_1}, \ldots,
  \commabrack{\widehat{k}_{n+1}})} \\[1ex] & &
  \parbox[t]{60ex}{where $(o::k_1,\ldots,k_{n+1})$ is proper, $n \ge 0$, 
  and \\$\widehat{k}_i = 
  \mname{cleanse}(\synbrack{k_i})$ for all $i$ with $1 \le i \le n + 1$.}
  \end{eqnarray*}
  \vspace{-3ex}
  \begin{eqnarray*}
  \lefteqn{(\mname{cleanse}::\mname{E},\mname{E})
  (\synbrack{O(e_1,\ldots,e_n)}) =} \\ & &
  \synbrack{\commabrack{\widehat{O}}
  (\commabrack{\widehat{e}_1}, \ldots, \commabrack{\widehat{e}_n})}
  \\[1ex] & &
  \parbox[t]{60ex}{where $O(e_1,\ldots,e_n)$ is proper, $n \ge 0$,\\
  $\widehat{O} = \mname{cleanse}(\synbrack{O})$, and \\ 
  $\widehat{e}_i = 
  \mname{cleanse}(\synbrack{e_i})$ for all $i$ with $1 \le i \le n$.}
  \end{eqnarray*}
  \vspace{-3ex}
  \begin{eqnarray*}
  \lefteqn{(\mname{cleanse}::\mname{E},\mname{E})
  (\synbrack{(x \mcolon \alpha)}) =} \\ & &
  \synbrack{(x \mcolon \commabrack{\widehat{\alpha}})} \\[1ex] & &
  \mbox{where $\widehat{\alpha} = \mname{cleanse}(\synbrack{\alpha})$.}
  \end{eqnarray*}
  \vspace{-5ex}
  \begin{eqnarray*}
  \lefteqn{(\mname{cleanse}::\mname{E},\mname{E})
  (\synbrack{(\StarApp {x\mcolon\alpha} \mdot e)}) =} \\ & &
  \synbrack{(\StarApp {x \mcolon \commabrack{\widehat{\alpha}}} \mdot 
  \commabrack{\widehat{e}})} \\[1ex] & &
  \parbox[t]{60ex}{where $(\StarApp {x\mcolon\alpha} \mdot e)$ is proper;
  $\star$ is $\Lambda$, $\lambda$, $\Forsome$, $\iota$, or $\epsilon$; \\
  $\widehat{\alpha} = \mname{cleanse}(\synbrack{\alpha})$; and
  $\widehat{e} = \mname{cleanse}(\synbrack{e})$.}
  \end{eqnarray*}
  \vspace{-3ex}
  \begin{eqnarray*}
  \lefteqn{(\mname{cleanse}::\mname{E},\mname{E})
  (\synbrack{\alpha(b)}) =} \\ & &
  \synbrack{\commabrack{\widehat{\alpha}}(\commabrack{\widehat{b}})}
   \\[1ex] & &
  \mbox{where $\widehat{\alpha} = 
  \mname{cleanse}(\synbrack{\alpha})$ and $\widehat{b} =
  \mname{cleanse}(\synbrack{b})$.}
  \end{eqnarray*}
  \vspace{-5ex}
  \begin{eqnarray*}
  \lefteqn{(\mname{cleanse}::\mname{E},\mname{E})
  (\synbrack{f(b)}) =} \\ & &
  \synbrack{\commabrack{\widehat{f}}(\commabrack{\widehat{b}})}
   \\[1ex] & &
  \mbox{where $\widehat{f} = 
  \mname{cleanse}(\synbrack{f})$ and $\widehat{b} =
  \mname{cleanse}(\synbrack{b})$.}
  \end{eqnarray*}
  \vspace{-5ex}
  \begin{eqnarray*}
  \lefteqn{(\mname{cleanse}::\mname{E},\mname{E})
  (\synbrack{\mname{if}(A,b,c)}) =} \\ & &
  \synbrack{\mname{if}(\commabrack{\widehat{A}},
  \commabrack{\widehat{b}}, \commabrack{\widehat{c}})} \\[1ex] & &
  \mbox{where $\widehat{A} = \mname{cleanse}(\synbrack{A})$,} \\ &&
  \widehat{b} = \mname{cleanse}(\synbrack{b}), \mbox{ and} \\ &&
  \widehat{c} = \mname{cleanse}(\synbrack{c}).
  \end{eqnarray*}
  \vspace{-5ex}
  \begin{eqnarray*}
  \lefteqn{(\mname{cleanse}::\mname{E},\mname{E})
  (\synbrack{\synbrack{e}}) =}\\ & &
  \synbrack{\synbrack{e}} \\[1ex] & & 
  \mbox{where $e$ is any expression.}
  \end{eqnarray*}
  \vspace{-5ex}
  \begin{eqnarray*}
  \lefteqn{(\mname{cleanse}::\mname{E},\mname{E})
  (\synbrack{\sembrack{a}_{\rm ty}}) =} \\ & &
  \If(\mname{syn-closed}(\widehat{a}),w,\Undefined_{\sf C})\\[1ex] & &
  \mbox{where $\widehat{a} = \mname{cleanse}(\synbrack{a})$,} \\ & &
  \widehat{a}' = 
  \mname{coerce-to-type}(\sembrack{\widehat{a}}_{\rm te}), \mbox{ and} \\ &&
  w = \synbrack{\mname{if}(\mname{gea}(a,\synbrack{\mname{type}}), 
  \commabrack{\widehat{a}'}, \mname{C})}. 
  \end{eqnarray*}
  \vspace{-5ex}
  \begin{eqnarray*}
  \lefteqn{(\mname{cleanse}::\mname{E},\mname{E})
  (\synbrack{\sembrack{a}_\alpha}) =} \\ & &
  \If(\mname{syn-closed}(\widehat{a}),w,\Undefined_{\sf C})\\[1ex] & &
  \mbox{where $\widehat{a} = \mname{cleanse}(\synbrack{a})$,} \\ & &
  \widehat{a}' = 
  \mname{coerce-to-term}(\sembrack{\widehat{a}}_{\rm te}), \\ & &
  \widehat{\alpha} = \mname{cleanse}(\synbrack{\alpha}), \mbox{ and} \\ & &
  w = \synbrack{\mname{if}(\mname{gea}(a,\synbrack{\alpha}) \And
  \commabrack{\widehat{a}'} \IsDef \commabrack{\widehat{\alpha}}, 
  \commabrack{\widehat{a}'}, \Undefined_{\sf C})}.
  \end{eqnarray*}
  \vspace{-5ex}
  \begin{eqnarray*}
  \lefteqn{(\mname{cleanse}::\mname{E},\mname{E})
  (\synbrack{\sembrack{a}_{\rm fo}}) =} \\ & &
  \If(\mname{syn-closed}(\widehat{a}),w,\Undefined_{\sf C})\\[1ex] & &
  \mbox{where $\widehat{a} = \mname{cleanse}(\synbrack{a}),$} \\ & &
  \widehat{a}' = 
  \mname{coerce-to-formula}(\sembrack{\widehat{a}}_{\rm te}), \mbox{ and} \\ &&
  w = \synbrack{\mname{if}(\mname{gea}(a,\synbrack{\mname{formula}}), 
  \commabrack{\widehat{a}'}, \mname{F})}.
  \end{eqnarray*}
  \vspace{-5ex}  
  \begin{eqnarray*}
     {a^{\rm ef} \IsUndefApp} 
     \Implies 
     (\mname{cleanse}::\mname{E},\mname{E})(a^{\rm ef}) \IsUndefApp.
  \end{eqnarray*}

  \item \textbf{Substitution for a Variable in an Expression}\\
  Operator: 
  $(\mname{sub}:: \mname{E},\mname{E},\mname{E},\mname{E})$\\
  Defining axioms:
  \begin{eqnarray*}
  \lefteqn{(\mname{sub}::\mname{E},\mname{E},\mname{E},
  \mname{E})  
  (\synbrack{e_1},\synbrack{e_2},\synbrack{e_3}) =} \\ & &
  \synbrack{e_3} \\[1ex] & &
  \parbox[t]{60ex}{where $e_1$ is not a term, $e_2$ is not a symbol, or 
  $e_3$ is an improper expression.} 
  \end{eqnarray*}
  \vspace{-3ex}
  \begin{eqnarray*}
  \lefteqn{(\mname{sub}::\mname{E},\mname{E},\mname{E},\mname{E})
  (\synbrack{a}, \synbrack{x},\synbrack{(o::k_1,\ldots,k_{n+1})}) =} \\ & &
  \synbrack{(o::\commabrack{\widehat{k}_1}, \ldots,
  \commabrack{\widehat{k}_{n+1}})} \\[1ex] & &
  \parbox[t]{60ex}{where $(o::k_1,\ldots,k_{n+1})$ is proper, $n \ge 0$, 
  and $\widehat{k}_i = \mname{sub}(\synbrack{a},
  \synbrack{x}, \synbrack{k_i})$ for all $i$ with $1 \le i \le n +
  1$.}
  \end{eqnarray*}
  \vspace{-3ex}
  \begin{eqnarray*}
  \lefteqn{(\mname{sub}::\mname{E},\mname{E},\mname{E},\mname{E})
  (\synbrack{a}, \synbrack{x},\synbrack{O(e_1,\ldots,e_n)}) =} \\ & &
  \synbrack{\commabrack{\widehat{O}}
  (\commabrack{\widehat{e}_1}, \ldots, \commabrack{\widehat{e}_n})}
  \\[1ex] & &
  \parbox[t]{60ex}{where $O(e_1,\ldots,e_n)$ is proper, $n \ge 0$,\\
  $\widehat{O} = \mname{sub}(\synbrack{a}, \synbrack{x},
  \synbrack{O})$, and \\ $\widehat{e}_i = \mname{sub}(\synbrack{a},
  \synbrack{x}, \synbrack{e_i})$ for all $i$ with $1 \le i \le n$.}
  \end{eqnarray*}
  \vspace{-3ex}
  \begin{eqnarray*}
  \lefteqn{(\mname{sub}::\mname{E},\mname{E},\mname{E},\mname{E})
    (\synbrack{a}, \synbrack{x},\synbrack{(x \mcolon \alpha)}) =}
  \\ & & \synbrack{\mname{if}(a \IsDef \commabrack{\widehat{\alpha}},
  \commabrack{\widehat{a}}, \Undefined_{\sf C})} \\[1ex] & &
  \mbox{where $\widehat{\alpha} = \mname{sub}(\synbrack{a},
  \synbrack{x}, \synbrack{\alpha})$ and $\widehat{a} =
  \mname{cleanse}(\synbrack{a})$.}
  \end{eqnarray*}
  \vspace{-5ex}
  \begin{eqnarray*}
  \lefteqn{(\mname{sub}::\mname{E},\mname{E},\mname{E},\mname{E})
  (\synbrack{a}, \synbrack{x},\synbrack{(y \mcolon \alpha)}) =} \\ & &
  \synbrack{(y \mcolon \commabrack{\widehat{\alpha}})} \\[1ex] & &
  \mbox{where $x \not=y$ and
  $\widehat{\alpha} = \mname{sub}(\synbrack{a}, \synbrack{x},
  \synbrack{\alpha})$.}
  \end{eqnarray*}
  \vspace{-5ex}
  \begin{eqnarray*}
  \lefteqn{(\mname{sub}::\mname{E},\mname{E},\mname{E},\mname{E})
  (\synbrack{a}, \synbrack{x},
  \synbrack{(\StarApp {x\mcolon\alpha} \mdot e)}) =} \\ & &
  \synbrack{(\StarApp {x \mcolon \commabrack{\widehat{\alpha}}} \mdot 
  \commabrack{\widehat{e}})} \\[1ex] & &
  \parbox[t]{60ex}{where $(\StarApp
  {x\mcolon\alpha} \mdot e)$ is proper; $\star$ is $\Lambda$,
  $\lambda$, $\Forsome$, $\iota$, or $\epsilon$; \\ 
  $\widehat{\alpha} = 
  \mname{sub}(\synbrack{a}, \synbrack{x}, \synbrack{\alpha})$; and 
  $\widehat{e} = \mname{cleanse}(\synbrack{e})$.}
  \end{eqnarray*}
  \vspace{-3ex}
  \begin{eqnarray*}
  \lefteqn{(\mname{sub}::\mname{E},\mname{E},\mname{E},\mname{E})
  (\synbrack{a}, \synbrack{x},
  \synbrack{(\StarApp {y\mcolon\alpha} \mdot e)}) =} \\ & &
  \synbrack{(\StarApp {y \mcolon \commabrack{\widehat{\alpha}}} \mdot 
  \commabrack{\widehat{e}})} \\[1ex] & &
  \parbox[t]{60ex}{where $x \not= y$; $(\StarApp {y\mcolon\alpha} \mdot e)$
  is proper; $\star$ is $\Lambda$, $\lambda$, $\Forsome$, $\iota$, or
  $\epsilon$; $\widehat{\alpha} = \mname{sub}(\synbrack{a},
  \synbrack{x}, \synbrack{\alpha})$; and $\widehat{e} =
  \mname{sub}(\synbrack{a}, \synbrack{x}, \synbrack{e})$.}
  \end{eqnarray*}
  \vspace{-3ex}
  \begin{eqnarray*}
  \lefteqn{(\mname{sub}::\mname{E},\mname{E},\mname{E},\mname{E})
  (\synbrack{a}, \synbrack{x},\synbrack{\alpha(b)}) =} \\ & &
  \synbrack{\commabrack{\widehat{\alpha}}(\commabrack{\widehat{b}})}
   \\[1ex] & &
  \parbox[t]{60ex}{where $\widehat{\alpha} = \mname{sub}(\synbrack{a},
  \synbrack{x}, \synbrack{\alpha})$ and $\widehat{b} =
  \mname{sub}(\synbrack{a}, \synbrack{x}, \synbrack{b})$.}
  \end{eqnarray*}
  \vspace{-5ex}
  \begin{eqnarray*}
  \lefteqn{(\mname{sub}::\mname{E},\mname{E},\mname{E},\mname{E})
  (\synbrack{a}, \synbrack{x},\synbrack{f(b)}) =} \\ & &
  \synbrack{\commabrack{\widehat{f}}(\commabrack{\widehat{b}})}
   \\[1ex] & &
  \parbox[t]{60ex}{where $\widehat{f} = \mname{sub}(\synbrack{a},
  \synbrack{x}, \synbrack{f})$ and $\widehat{b} =
  \mname{sub}(\synbrack{a}, \synbrack{x}, \synbrack{b})$.}
  \end{eqnarray*}
  \vspace{-5ex}
  \begin{eqnarray*}
  \lefteqn{(\mname{sub}::\mname{E},\mname{E},\mname{E},\mname{E})
  (\synbrack{a}, \synbrack{x},\synbrack{\mname{if}(A,b,c)}) =} \\ & &
  \synbrack{\mname{if}(\commabrack{\widehat{A}},
  \commabrack{\widehat{b}}, \commabrack{\widehat{c}})} \\[1ex] & &
  \parbox[t]{60ex}{where $\widehat{A} = 
  \mname{sub}(\synbrack{a}, \synbrack{x},
  \synbrack{A})$, $\widehat{b} = \mname{sub}(\synbrack{a},
  \synbrack{x}, \synbrack{b})$, and $\widehat{c} =
  \mname{sub}(\synbrack{a}, \synbrack{x}, \synbrack{c})$.}
  \end{eqnarray*}
  \vspace{-3ex}
  \begin{eqnarray*}
  \lefteqn{(\mname{sub}::\mname{E},\mname{E},\mname{E},\mname{E})
  (\synbrack{a}, \synbrack{x},\synbrack{\synbrack{e}}) =} \\ & &
  \synbrack{\synbrack{e}} \\[1ex] & & 
  \mbox{where $e$ is any expression.}
  \end{eqnarray*}
  \vspace{-5ex}
  \begin{eqnarray*}
  \lefteqn{(\mname{sub}::\mname{E},\mname{E},\mname{E},\mname{E})
  (\synbrack{a}, \synbrack{x},\synbrack{\sembrack{b}_{\rm ty}}) =} \\ & &
  \If(\mname{syn-closed}(\widehat{b}),w,\Undefined_{\sf C})\\[1ex] & &
  \mbox{where $\widehat{b} = 
  \mname{sub}(\synbrack{a}, \synbrack{x}, \synbrack{b})$,} \\ & &
  \widehat{b}' = \mname{coerce-to-type}
  (\mname{sub}(\synbrack{a}, \synbrack{x}, \sembrack{\widehat{b}}_{\rm te})),
  \\ & &
  w =  \synbrack{\mname{if}(\mname{gea}(\commabrack{\widehat{b}},
  \synbrack{\mname{type}}), \commabrack{\widehat{b}'}, \mname{C})}.
  \end{eqnarray*}
  \vspace{-5ex}
  \begin{eqnarray*}
  \lefteqn{(\mname{sub}::\mname{E},\mname{E},\mname{E},\mname{E})
  (\synbrack{a}, \synbrack{x},\synbrack{\sembrack{b}_\alpha}) =} \\ & &
  \If(\mname{syn-closed}(\widehat{b}),w,\Undefined_{\sf C}) \\[1ex] & &
  \mbox{where $\widehat{b} = 
  \mname{sub}(\synbrack{a}, \synbrack{x}, \synbrack{b})$,} \\ & &
  \widehat{b}' = \mname{coerce-to-term}
  (\mname{sub}(\synbrack{a}, \synbrack{x}, \sembrack{\widehat{b}}_{\rm te})),
  \\ & &
  \widehat{\alpha} = 
  \mname{sub}(\synbrack{a}, \synbrack{x}, \synbrack{\alpha}), 
  \mbox{ and} \\ & &
  w =   \synbrack{\mname{if}(\mname{gea}(\commabrack{\widehat{b}}, 
  \synbrack{\alpha}) \And
  \commabrack{\widehat{b}'} \IsDef \commabrack{\widehat{\alpha}}, 
  \commabrack{\widehat{b}'}, \Undefined_{\sf C})}.
  \end{eqnarray*}
  \vspace{-5ex}
  \begin{eqnarray*}
  \lefteqn{(\mname{sub}::\mname{E},\mname{E},\mname{E},\mname{E})
  (\synbrack{a}, \synbrack{x},\synbrack{\sembrack{b}_{\rm fo}}) =} \\ & &
  \If(\mname{syn-closed}(\widehat{b}),w,\Undefined_{\sf C}) \\[1ex] & &
  \mbox{where $\widehat{b} = 
  \mname{sub}(\synbrack{a}, \synbrack{x}, \synbrack{b})$,} \\ & &
  \widehat{b}' = \mname{coerce-to-formula}
  (\mname{sub}(\synbrack{a}, \synbrack{x}, \sembrack{\widehat{b}}_{\rm te})),
  \\ & &
  w =   \synbrack{\mname{if}(\mname{gea}(\commabrack{\widehat{b}},
  \synbrack{\mname{formula}}), 
  \commabrack{\widehat{b}'}, \mname{F})}.
  \end{eqnarray*}
  \vspace{-5ex}  
  \begin{eqnarray*}
     ({a^{\rm ef} \IsUndefApp} \Or {b^{\rm ef} \IsUndefApp} \Or 
      {c^{\rm ef} \IsUndefApp}) 
     \Implies 
     (\mname{sub}::\mname{E},\mname{E},\mname{E},\mname{E})
     (a^{\rm ef}, b^{\rm ef}, c^{\rm ef}) \IsUndefApp.
  \end{eqnarray*}

  Note: For an evaluation $(\mname{eval}, a, k)$, a substitution for a
  variable is performed on $a$ to obtain $a'$ and on $k$ when $k$ is a
  type to obtain $k'$ and then on the expression represented by the
  new evaluation resulting from these substitutions.

  \item \textbf{Free for a Variable in an Expression}\\
  Operator: 
  $(\mname{free-for}:: \mname{E},\mname{E},\mname{E},\mname{formula})$\\
  Defining axioms:
  \begin{eqnarray*}
  \lefteqn{(\mname{free-for}::\mname{E},\mname{E},\mname{E},
  \mname{formula})  
  (\synbrack{e_1},\synbrack{e_2},\synbrack{e_3})} \\[1ex] & &
  \parbox[t]{60ex}{where $e_1$ is not a term, $e_2$ is not a symbol, or 
  $e_3$ is an improper expression.} 
  \end{eqnarray*}
  \vspace{-3ex}
  \begin{eqnarray*}
  \lefteqn{(\mname{free-for}::\mname{E},\mname{E},\mname{E},\mname{formula})
  (\synbrack{a}, \synbrack{x},\synbrack{(o::k_1,\ldots,k_{n+1})})
  \Iff} \\ & & \mname{free-for}(\synbrack{a}, \synbrack{x},
  \synbrack{k_1}) \And \cdots \And \mname{free-for}(\synbrack{a},
  \synbrack{x}, \synbrack{k_{n+1}}) \\[1ex] & & 
  \mbox{where $(o::k_1,\ldots,k_{n+1})$ is proper and $n \ge 0$.}
  \end{eqnarray*}
  \vspace{-5ex}
  \begin{eqnarray*}
  \lefteqn{(\mname{free-for}::\mname{E},\mname{E},\mname{E},\mname{formula})
  (\synbrack{a}, \synbrack{x},\synbrack{O(e_1,\ldots,e_n)}) \Iff} \\ & &
  \mname{free-for}(\synbrack{a}, \synbrack{x},\synbrack{O}) \And \\ & &
  \mname{free-for}(\synbrack{a}, \synbrack{x},
  \synbrack{e_1}) \And \cdots \And \mname{free-for}(\synbrack{a},
  \synbrack{x}, \synbrack{e_n}) \\[1ex] & &
  \mbox{where $O(e_1,\ldots,e_n)$ is proper and $n \ge 0$.}
  \end{eqnarray*}
  \vspace{-5ex}
  \begin{eqnarray*}
  \lefteqn{(\mname{free-for}::\mname{E},\mname{E},\mname{E},\mname{formula})
  (\synbrack{a}, \synbrack{x},\synbrack{(y \mcolon \alpha)}) \Iff} \\ & &
  \mname{free-for}(\synbrack{a},\synbrack{x}, \synbrack{\alpha}). 
  \end{eqnarray*}
  \vspace{-5ex}
  \begin{eqnarray*}
  \lefteqn{(\mname{free-for}::\mname{E},\mname{E},\mname{E},\mname{formula})
  (\synbrack{a}, \synbrack{x},
  \synbrack{(\StarApp {x\mcolon\alpha} \mdot e)}) \Iff} \\ & &
  \mname{free-for}(\synbrack{a},\synbrack{x},\synbrack{\alpha})
  \\[1ex] & &
  \mbox{where $(\StarApp {x\mcolon\alpha} \mdot e)$ is proper and 
  $\star$ is $\Lambda$, $\lambda$, $\Forsome$, $\iota$, or $\epsilon$.}
  \end{eqnarray*}
  \vspace{-5ex}
  \begin{eqnarray*}
  \lefteqn{(\mname{free-for}::\mname{E},\mname{E},\mname{E},\mname{formula})
  (\synbrack{a}, \synbrack{x},
  \synbrack{\StarApp {y\mcolon\alpha} \mdot e}) \Iff} \\ & &
  \mname{free-for}(\synbrack{a},\synbrack{x},\synbrack{\alpha}) \And 
  \\ & &
  %% (\Neg\mname{special-free-in}(\synbrack{a},\synbrack{x}, 
  %% \synbrack{e}) \Or \\ & &
  (\Neg\mname{free-in}(\synbrack{x}, \synbrack{e}) \Or 
  \Neg\mname{free-in}(\synbrack{y},\synbrack{a})) \And {} \\ & &
  \mname{free-for}(\synbrack{a}, \synbrack{x},
  \synbrack{e})\\[1ex] & &
  \mbox{where $x \not= y$, $(\StarApp {y\mcolon\alpha} \mdot e)$ is proper, 
  and $\star$ is $\Lambda$, $\lambda$, $\Forsome$, $\iota$, or $\epsilon$.}
  \end{eqnarray*}
  \vspace{-5ex}
  \begin{eqnarray*}
  \lefteqn{(\mname{free-for}::\mname{E},\mname{E},\mname{E},\mname{formula})
  (\synbrack{a}, \synbrack{x},\synbrack{\alpha(b)}) \Iff} \\ & &
  \mname{free-for}(\synbrack{a}, \synbrack{x}, \synbrack{\alpha}) \And 
  \mname{free-for}(\synbrack{a}, \synbrack{x}, \synbrack{b}).
  \end{eqnarray*}
  \vspace{-5ex}
  \begin{eqnarray*}
  \lefteqn{(\mname{free-for}::\mname{E},\mname{E},\mname{E},\mname{formula})
  (\synbrack{a}, \synbrack{x},\synbrack{f(b)}) \Iff} \\ & &
  \mname{free-for}(\synbrack{a}, \synbrack{x}, \synbrack{f}) \And 
  \mname{free-for}(\synbrack{a}, \synbrack{x}, \synbrack{b}).
  \end{eqnarray*}
  \vspace{-5ex}
  \begin{eqnarray*}
  \lefteqn{(\mname{free-for}::\mname{E},\mname{E},\mname{E},\mname{formula})
  (\synbrack{a}, \synbrack{x},\synbrack{\mname{if}(A,b,c)}) \Iff} \\ & &
  \mname{free-for}(\synbrack{a}, \synbrack{x}, \synbrack{A}) \And \\ & &
  \mname{free-for}(\synbrack{a}, \synbrack{x}, \synbrack{b}) \And \\ & &
  \mname{free-for}(\synbrack{a}, \synbrack{x}, \synbrack{c}).
  \end{eqnarray*}
  \vspace{-5ex}
  \begin{eqnarray*}
  \lefteqn{(\mname{free-for}::\mname{E},\mname{E},\mname{E},\mname{formula})
  (\synbrack{a}, \synbrack{x},\synbrack{\synbrack{e}})} \\[1ex] & &
  \mbox{where $e$ is any expression.}
  \end{eqnarray*}
  \vspace{-5ex}
  \begin{eqnarray*}
  \lefteqn{(\mname{free-for}::\mname{E},\mname{E},\mname{E},\mname{formula})
  (\synbrack{a}, \synbrack{x},\synbrack{\sembrack{b}_k}) \Iff} \\ & &
  \mname{free-for}(\synbrack{a}, \synbrack{x}, \synbrack{b}) \And  \\ & &
  \mname{free-for}(\synbrack{a}, \synbrack{x}, \synbrack{k}) \And  \\ & &
  \mname{free-for}(\synbrack{a}, \synbrack{x}, 
  \sembrack{\widehat{b}}_{\rm te}). \\[1ex] & &
  \mbox{where $\widehat{b} = 
  \mname{sub}(\synbrack{a}, \synbrack{x}, \synbrack{b})$}.
  \end{eqnarray*}
  \vspace{-5ex}  
  \begin{eqnarray*}
     \lefteqn{({a^{\rm ef} \IsUndefApp} \Or {b^{\rm ef} \IsUndefApp} \Or 
      {c^{\rm ef} \IsUndefApp}) 
     \Implies {}} \\ & &
     \Neg (\mname{free-for}::\mname{E},\mname{E},\mname{E},
     \mname{formula})(a^{\rm ef}, b^{\rm ef}, c^{\rm ef}).
  \end{eqnarray*}

\ee

\begin{crem} \em
All the defining axioms given in this subsection are eval-free except
for those involving the operators \mname{free-in} and \mname{free-for}
applied to quoted evaluations.
\end{crem}

\subsection{Kernel, Normal, and Eval-Free Normal Theories}

The \emph{kernel language} of Chiron is the language $L_{\rm ker} =
(\sO,\theta)$ where: \be

  \item $o \in \sO$ iff $o$ is a built-in operator name of Chiron or a
  name of an operator defined in section~\ref{sec:op-defs} or in the
  previous subsection.

  \item For all $o \in \sO$, $\theta(o)$ is the signature form
  assigned to $o$.

\ee
A language $L$ is \emph{normal} if $L_{\rm ker} \le L$.

Let $L$ be a normal language.  The \emph{kernel theory over $L$},
written $T_{\rm ker}^{L}$, is the theory $(L,\Gamma_{\rm ker}^{L})$
where $A \in \Gamma_{\rm ker}^{L}$ iff $A$ is a one of the defining
axioms (with respect to the $L$) for an operator defined in
section~\ref{sec:op-defs} or the previous section.

Let $T = (L,\Gamma)$ be a theory.  $T$ is \emph{normal} if $T_{\rm
  ker}^{L} \le T$.  The \emph{eval-free subtheory} of $T$, written
$\mname{eval-free}(T)$, is the theory $T = (L,\Gamma')$ where $\Gamma'
= \set{A \in \Gamma \;|\; A \mbox{ is eval-free}}$.  $T$ is
\emph{eval-free normal} if $T = \mname{eval-free}(T')$ for some normal
theory $T'$.

Let $a$ be a term, $x$ be a symbol, and $e$ be a proper expression of
$L$.  We say that $x$ is \emph{free in} $e$ if
$\mname{free-in}(\synbrack{x},\synbrack{e})$ is valid in $T_{\rm
  ker}^{L}$, and $a$ is \emph{free for} $x$ \emph{in} $e$ if
$\mname{free-for}(\synbrack{a},\synbrack{x},\synbrack{e})$ is valid
in $T_{\rm ker}^{L}$.  Similarly, we say $x$ is \emph{not free in} $e$
if $\Neg \mname{free-in}(\synbrack{x},\synbrack{e})$ is valid in
$T_{\rm ker}^{L}$, and $a$ is \emph{not free for} $x$ \emph{in} $e$ if
$\Neg \mname{free-for}(\synbrack{a},\synbrack{x},\synbrack{e})$ is
valid in $T_{\rm ker}^{L}$.  We also say $e$ is \emph{syntactically
  closed} if $\mname{syn-closed}(\synbrack{e})$ is valid in $T_{\rm
  ker}^{L}$.

\begin{cprop} \label{prop:quotation}
Every quotation is syntactically closed.
\end{cprop}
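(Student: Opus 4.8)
The plan is to unwind the two relevant definitions and then read the conclusion off the defining axioms of $\mname{syn-closed}$, $\mname{is-p-expr}$, and $\mname{free-in}$ that are already in hand. A quotation has the form $q = \synbrack{e}$ for some $e \in \sE_L$, and by the terminology fixed just before the proposition, $q$ is syntactically closed exactly when $\mname{syn-closed}(\synbrack{q})$, i.e., $\mname{syn-closed}(\synbrack{\synbrack{e}})$, is valid in the kernel theory $T_{\rm ker}^{L}$. So I fix an arbitrary standard model $M = (S,V)$ of $T_{\rm ker}^{L}$ and an assignment $\phi$ and show $V_{\phi}(\mname{syn-closed}(\synbrack{\synbrack{e}})) = \TRUE$. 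By formation rule \textbf{P-Expr-12}, $\synbrack{e}$ is a term of $L$, so $\synbrack{\synbrack{e}}$ is again a term, of type $\mname{E}$, with $V_{\phi}(\synbrack{\synbrack{e}}) = H(\synbrack{e}) \in \De = V_{\phi}(\mname{E})$. Hence the (universally quantified, eval-free) defining axiom for $\mname{syn-closed}$, which belongs to $\Gamma_{\rm ker}^{L}$, can be instantiated at the argument $\synbrack{\synbrack{e}}$, reducing the goal to two claims: (i)~$V_{\phi}(\mname{is-p-expr}(\synbrack{\synbrack{e}})) = \TRUE$, and (ii)~$V_{\phi}(\ForallApp e' \mcolon \mname{E}_{\rm sy} \mdot \Neg\mname{free-in}(e',\synbrack{\synbrack{e}})) = \TRUE$.

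Claim~(i) is quick. Since $\synbrack{e}$ is a term of $L$, its construction $H(\synbrack{e})$ is a term construction, so $H(\synbrack{e}) \in \Dte = V_{\phi}(\mname{E}_{\rm te})$; thus the value of $\synbrack{\synbrack{e}}$ lies in the denotation of $\mname{E}_{\rm te}$, so $\synbrack{\synbrack{e}} \IsDef \mname{E}_{\rm te}$ holds (recall that $a \IsDef \alpha$ asserts precisely that the value of $a$ lies in the value of $\alpha$). The defining axiom for $\mname{is-p-expr}$ — which equates $\mname{is-p-expr}(c)$ with $c \IsDef \mname{E}_{\rm op} \Or c \IsDef \mname{E}_{\rm ty} \Or c \IsDef \mname{E}_{\rm te} \Or c \IsDef \mname{E}_{\rm fo}$ for every construction $c$ — then gives (i).

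For claim~(ii) the quantifier ranges over $V_{\phi}(\mname{E}_{\rm sy}) = \Dsy$, and this is the one spot that needs care. By the definition of a structure, $\Dsy$ is exactly the set of constructions that represent symbols of $\sS$; since $H$ agrees on $\sS \cup \sO$ with the bijection $G$, this means $\Dsy = \set{H(x) : x \in \sS}$, so an arbitrary element $d \in \Dsy$ equals $H(x) = V_{\phi}(\synbrack{x})$ for some symbol $x \in \sS$. For this $x$ and our fixed expression $e$, the defining-axiom schema for $\mname{free-in}$ puts the instance $\Neg\mname{free-in}(\synbrack{x},\synbrack{\synbrack{e}})$ into $\Gamma_{\rm ker}^{L}$; since $M$ is a standard model of $T_{\rm ker}^{L}$, this formula holds in $M$, and because $H(x)$ and $H(\synbrack{e})$ are both defined elements of $\De$, it amounts to $I(\mname{free-in})(H(x),H(\synbrack{e})) = \FALSE$. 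Therefore $V_{\phi[e' \mapsto d]}(\Neg\mname{free-in}(e',\synbrack{\synbrack{e}})) = \TRUE$ for every $d \in \Dsy$, which is (ii). Combining (i) and (ii) and letting $M$ and $\phi$ range over all standard models of $T_{\rm ker}^{L}$ and all assignments establishes the proposition.

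The main obstacle I anticipate is purely the syntax/semantics bookkeeping, not any real difficulty. The crux is the identification $\Dsy = \set{H(x) : x \in \sS}$: it is what lets the $\mname{free-in}$ axiom schema, which is indexed by the syntactic symbols $x$, discharge every value the $\mname{E}_{\rm sy}$-quantifier in the definition of $\mname{syn-closed}$ can take. The remaining steps — checking that the $\mname{syn-closed}$ and $\mname{is-p-expr}$ defining axioms may legitimately be instantiated at the semantically closed term $\synbrack{\synbrack{e}}$, and that its value $H(\synbrack{e})$ sits in the intended construction type — are routine consequences of the definitions of a structure and of the standard valuation. Note in particular that no induction on the shape of $e$ is required: the $\mname{free-in}$ clause for quotations already handles $\synbrack{\synbrack{e}}$ for all $e$ in one stroke.
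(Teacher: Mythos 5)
Your proof is correct. The paper states this proposition without any proof, treating it as immediate from the quotation clause of the defining axioms for $\mname{free-in}$ (no variable is free in a quotation) together with the fact that a quotation is a term and the definition of $\mname{syn-closed}$; your semantic verification in a standard model of $T_{\rm ker}^{L}$ is exactly that intended argument, spelled out in detail (including the key identification of $V_\phi(\mname{E}_{\rm sy})$ with the constructions $H(x)$ for $x \in \sS$), so it matches the paper's approach.
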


\subsection{Evaluation and Quasiquotation Lemmas}

The lemmas in this subsection are facts about evaluations that are
needed for the substitution lemmas given in the next subsection.  Let
$=_k$ be $\TypeEqual$ if $k$ is a $\mname{type}$, $\QuasiEqual$ if $k$
is a type, and $\Iff$ if $k$ is a $\mname{formula}$.

\begin{clem} [Good Evaluation Arguments] \label{lem:gea}
Let $T = (L,\Gamma)$ be a normal theory, $\alpha$ be a type, $a$ and
$b$ be terms, and $A$ be a formula of $L$.
\be

  \item $T \models \mname{gea}(\synbrack{\alpha},
    \synbrack{\mname{type}}) \Implies
    \sembrack{\synbrack{\alpha}}_{\rm ty} \TypeEqual \alpha$.

  \item $T \models \Neg \mname{gea}(b,\synbrack{\mname{type}})
    \Implies \sembrack{b}_{\rm ty} \TypeEqual \mname{C}$.

  \item $T \models \mname{gea}(\synbrack{a}, \synbrack{\alpha})
    \Implies \sembrack{\synbrack{a}}_\alpha \QuasiEqual \If(a \IsDef
    \alpha, a, \Undefined_{\sf C})$.

  \item $T \models \Neg \mname{gea}(b,\synbrack{\alpha}) \Implies
    \sembrack{b}_\alpha \QuasiEqual \Undefined_{\sf C}$.

  \item $T \models \mname{gea}(\synbrack{A},
    \synbrack{\mname{formula}}) \Implies
    \sembrack{\synbrack{A}}_{\rm fo} \Iff A$.

  \item $T \models \Neg \mname{gea}(b,\synbrack{\mname{formula}})
    \Implies \sembrack{b}_{\rm fo} \Iff \mname{F}$.

\ee
\end{clem}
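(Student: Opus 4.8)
The plan is to fix an arbitrary standard model $M = (S,V)$ of the normal theory $T$ and an assignment $\phi$, and to evaluate both sides of each of the six implications directly, using clause~14 of the standard valuation (the evaluation clause), clause~13 (the quotation clause, which gives $V_{\phi}(\synbrack{e}) = H(e)$), and the defining axioms for $\mname{gea}$, which hold in $M$ since $T$ is normal. The six parts split into two symmetric triples: parts~1, 3, 5 concern $\mname{gea}(\synbrack{X},\cdot)$ for a fixed expression $X$ among $\alpha$, $a$, $A$, while parts~2, 4, 6 concern $\mname{gea}(b,\cdot)$ for an arbitrary term $b$. Throughout I will use that $\TypeEqual$, $\QuasiEqual$, $\Iff$, $\IsDef$, and $\If$ are value-transparent (an operator application's value depends only on the values of its arguments, by clause~3 of the standard valuation) and that, in a normal theory, their defining axioms --- or, for the built-ins $\mname{type-equal}$ and $\mname{formula-equal}$, the structure conditions --- pin down the corresponding operations on the relevant value tuples; in particular $\mname{C}$, $\Undefined_{\sf C}$, and $\mname{F}$ denote $\Dc$, $\Undefined$, and $\FALSE$, respectively.

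For parts~1, 3, 5 I will case-split on whether $X$ is eval-free. If $X$ is not eval-free, the first $\mname{gea}$ defining axiom gives $\Neg\mname{gea}(\synbrack{X},\cdot)$ in $M$, so the implication is vacuously true. If $X$ is eval-free, the appropriate one of the three positive $\mname{gea}$ axioms makes the antecedent true, so I must verify the consequent: I compute that $V_{\phi}(\synbrack{X}) = H(X)$ lies in $\Dty$, $\Dte$, or $\Dfo$ as $X$ is a type, term, or formula, and $H^{-1}(H(X)) = X$ is eval-free, so conditions~(1) and~(2) of clause~14 hold; condition~(3) is vacuous when the kind is $\mname{type}$ or $\mname{formula}$, and in part~3 it is exactly ``$V_{\phi}(a) \in V_{\phi}(\alpha)$''. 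Thus $V_{\phi}(\sembrack{\synbrack{X}}_k)$ equals $V_{\phi}(X)$ in parts~1 and~5, and equals $V_{\phi}(a)$ or $\Undefined$ according as $a \IsDef \alpha$ holds or not in part~3 --- which is precisely $V_{\phi}(\If(a \IsDef \alpha, a, \Undefined_{\sf C}))$. The consequents then hold because both sides have equal values.

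For parts~2, 4, 6 I will argue by contraposition on clause~14. Suppose $\Neg\mname{gea}(b,\cdot)$ holds at $\phi$ but the ``good'' branch of clause~14 applies to $\sembrack{b}_k$; then $V_{\phi}(b) \in \Dty$ (resp.\ $\Dte$, $\Dfo$) and $H^{-1}(V_{\phi}(b))$ is an eval-free expression of the matching sort, so $V_{\phi}(b) = H(X_0)$ for some eval-free $X_0$, whence transparency together with the relevant positive $\mname{gea}$ axiom forces $\mname{gea}(b,\cdot)$ true at $\phi$ --- a contradiction. Hence the default branch applies, giving $V_{\phi}(\sembrack{b}_{\rm ty}) = \Dc$, $V_{\phi}(\sembrack{b}_\alpha) = \Undefined$, or $V_{\phi}(\sembrack{b}_{\rm fo}) = \FALSE$, matching $\mname{C}$, $\Undefined_{\sf C}$, $\mname{F}$ respectively, so the consequent holds.

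The main obstacle will be the bookkeeping inside clause~14: keeping the three side conditions straight (especially that condition~(3) is vacuous unless the kind is an actual type, and that in part~3 it coincides with definedness-in-$\alpha$), and, for parts~2, 4, 6, handling that $b$ is an arbitrary term rather than a quotation, so the argument must pass through $H^{-1}(V_{\phi}(b))$ and the fact that type, term, and formula constructions are pairwise distinct. A secondary point is that the consequents $\QuasiEqual$ and $\Iff$ are satisfied by ``both sides $\Undefined$'' or ``both sides $\FALSE$'', so throughout it suffices to show the two sides have equal values rather than that the evaluation is defined.
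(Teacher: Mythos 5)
Your proposal is correct and takes essentially the same route as the paper, whose entire proof of this lemma is the single sentence that it ``follows from the definition of the standard valuation on evaluations''; your case split on eval-freeness (using the \mname{gea} defining axioms, which hold in every standard model of a normal theory) together with the quotation clause and the three-condition evaluation clause of the standard valuation is exactly the elaboration that sentence leaves implicit. The only caveat is one the paper itself glosses over: matching the side condition of the evaluation clause with $a \IsDef \alpha$ (and reading off $\QuasiEqual$, $\TypeEqual$, $\Iff$, $\If$ semantically) silently uses that the kernel defining-axiom schemas fix these operators' operations on the value tuples that actually arise, which you flag explicitly as an assumption.
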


\begin{proof} 
Follows from the definition of the standard valuation on evaluations.
\end{proof}

\begin{clem} [Evaluation of Quasiquotations] \label{lem:eval-qq}
Let $M$ be a standard model of a normal theory $T = (L,\Gamma)$ and
$a$ be a term of type \mname{E} of $L$ that denotes a type, term, or
formula of kind $k = \mname{type}$, \mname{C}, or \mname{formula},
respectively, such that $M \models \mname{gea}(a,\synbrack{k})$.

\be

  \item Let $a =
    \synbrack{(o::e_1,\ldots,e_{n+1})
      (\commabrack{c_1},\ldots,\commabrack{c_n})}$ denote an operator
    application where:
    \be

      \item $e_i$ is \mname{type}, \mname{formula}, or
        $\commabrack{b_i}$ for all $i$ with $1 \le i \le n+1$.

      \item $c_i$ denotes a type, term, or formula of kind $k_i =
        \mname{type}$, \mname{C}, or \mname{formula}, respectively,
        for all $i$ with $1 \le i \le n$.

    \ee 

    Then \[M \models \sembrack{a}_k =_k
    (o::\overline{e_1},\ldots,\overline{e_{n+1}})
    (\sembrack{c_1}_{k_i},\ldots,\sembrack{c_n}_{k_n})\]

    where \[\overline{e_i} = 
    \left\{
      \begin{array}{ll}
      \sembrack{b_i}_{\rm ty} & \mbox{if } e_i = \commabrack{b_i} \\
      e_i & \mbox{if }e_i=\mname{type}\mbox{ or } \mname{formula}.
      \end{array}
    \right.\]

  \item If $a = \synbrack{(x:\commabrack{b})}$ denotes a variable,
    then \[M \models \sembrack{a}_{\rm te} \QuasiEqual
    (x:\sembrack{b}_{\rm ty}).\]

  \item If $a = \synbrack{(\StarApp {x\mcolon \commabrack{b_1}} \mdot
    \commabrack{b_2})}$ denotes a variable binder where $\star$ is
    $\Lambda$, $\lambda$, $\Forsome$, $\iota$, or $\epsilon$ and $b_2$
    is semantically closed in $M$ with respect to $x$, then \[M
    \models \sembrack{a}_k =_k (\StarApp {x\mcolon \sembrack{b_1}_{\rm
        ty}} \mdot \sembrack{b_2}_{\rm fo}).\]

  \item If $a = \synbrack{\commabrack{b_1}(\commabrack{b_2})}$ denotes
    a type application, then \[M \models \sembrack{a}_{\rm ty}
    \TypeEqual \sembrack{b_1}_{\rm ty}(\sembrack{b_2}_{\rm te}).\]

  \item If $a = \synbrack{\commabrack{b_1}(\commabrack{b_2})}$ denotes
    a function application, then \[M \models \sembrack{a}_{\rm te}
    \QuasiEqual \sembrack{b_1}_{\rm te}(\sembrack{b_2}_{\rm te}).\]

  \item If $a = \synbrack{\If(\commabrack{b_1},\commabrack{b_2},
    \commabrack{b_3})}$ denotes a conditional term, then \[M \models
    \sembrack{a}_{\rm te} \QuasiEqual \If(\sembrack{b_1}_{\rm
      fo},\sembrack{b_2}_{\rm te}, \sembrack{b_3}_{\rm te}).\]

  \item If $a = \synbrack{\sembrack{\commabrack{b}}_{\rm ty}}$ denotes
    a type evaluation, then \[M \models \sembrack{a}_{\rm ty}
    \TypeEqual \sembrack{\sembrack{b}_{\rm te}}_{\rm ty}.\]

  \item If $a =
    \synbrack{\sembrack{\commabrack{b_1}}_{\commabrack{b_2}}}$ denotes
    a term evaluation, then \[M \models \sembrack{a}_{\rm te}
    \QuasiEqual \sembrack{\sembrack{b_1}_{\rm
        te}}_{\sembrack{b_2}_{ty}}.\]

  \item If $a = \synbrack{\sembrack{\commabrack{b}}_{\rm fo}}$ denotes
    a formula evaluation, then \[M \models \sembrack{a}_{\rm fo} \Iff
    \sembrack{\sembrack{b}_{\rm te}}_{\rm fo}.\]

\ee
\end{clem}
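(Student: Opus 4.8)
The plan is to fix a standard model $M = (S,V)$ of $T$, with $S = (\Dv,\Dc,\Ds,\Df,\Do,\De,\in,\TRUE,\FALSE,\Undefined,\xi,H,I)$, and an assignment $\phi \in \mname{assign}(S)$, and to prove each of the nine displayed equations by showing that its two sides receive the same value under $V_\phi$ (reading $=_k$ as $\TypeEqual$, $\QuasiEqual$, or $\Iff$ according to $k$, so that in the $\QuasiEqual$ case both sides are permitted to be $\Undefined$). The argument splits into nine cases matching the nine clauses, according to the syntactic shape of the marked expression $m$ with $a = \synbrack{m}$.

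The first step, common to all cases, is to unwind the semantics of quasiquotation. Since $a = \synbrack{m} = F(m)$, where $F$ sends a symbol $s$ to $\synbrack{s}$, an evaluated component $\commabrack{b}$ to $b$, and a compound $(m_1,\ldots,m_n)$ to the tuple term $[F(m_1),\ldots,F(m_n)]$, an easy induction on $m$ shows: if every evaluated component of $m$ has a value in $\De$ under $\phi$, then $V_\phi(a) = H(e')$ is the construction of the expression $e'$ obtained from $m$ by replacing each evaluated component $\commabrack{b}$ with $H^{-1}(V_\phi(b))$ and leaving symbols fixed; this works because the tuple encoding used by $F$ is literally the same pairing that defines $H$ on compound expressions. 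Now $a$ denotes a type, term, or formula of kind $k$ by hypothesis, so $V_\phi(a) \in \Dty \cup \Dte \cup \Dfo \subseteq \De$; since \mname{ord-pair} is strict and the members of $\Dea$ are not ordered pairs, this forces every evaluated component of $m$ to denote a member of $\De$, so the induction applies and $e' = H^{-1}(V_\phi(a))$ is a well-defined proper expression. The hypothesis $M \models \mname{gea}(a,\synbrack{k})$, together with the defining axioms of $\mname{gea}$ and Lemma~\ref{lem:gea}, then ensures that $e'$ is eval-free and of the kind matched by $k$, so the evaluation clause of the standard valuation (Subsection~\ref{subsec:val}) fires on its good branch and yields $V_\phi(\sembrack{a}_k) = V_\phi(e')$; quasi-equality absorbs the case where this common value is $\Undefined$.

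With this reduction, in each case I would expand $V_\phi(e')$ using the single valuation clause governing the leading keyword of $e'$: operator application in Case~1, variable in Case~2, the variable-binder clauses in Case~3, type application in Case~4, function application in Case~5, conditional term in Case~6, and evaluation in Cases~7--9. Each such clause re-expresses $V_\phi(e')$ in terms of $V_\phi$ (or, in Case~3, $V_{\phi[x\mapsto d]}$) applied to the immediate subexpressions of $e'$, which are exactly the expressions $H^{-1}(V_\phi(c_i))$ and $H^{-1}(V_\phi(b_i))$. Because every subexpression of the eval-free expression $e'$ is again eval-free, a second use of the evaluation clause identifies $V_\phi(H^{-1}(V_\phi(c_i)))$ with $V_\phi(\sembrack{c_i}_{k_i})$, and likewise for the $b_i$; substituting these back produces precisely the right-hand side of the displayed equation. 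In Case~3 the assumption that $b_2$ is semantically closed in $M$ with respect to $x$ is exactly what is needed so that evaluating the body under $\phi[x\mapsto d]$ recovers $H^{-1}(V_\phi(b_2))$ rather than $H^{-1}(V_{\phi[x\mapsto d]}(b_2))$, keeping the two sides in step; and for type applications, function applications, and term evaluations one must note in addition that $V_\phi$ of these expressions is $\Undefined$ whenever any constituent is $\Undefined$, so the $\QuasiEqual$'s on both sides agree.

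I expect the main obstacle to be the definedness bookkeeping: in every case and at each level of the nested evaluations one must check that whenever the good branch of the evaluation clause fails on one side it fails on the other, so that both default to the same value ($\Undefined$, $\Dc$, or $\FALSE$ as dictated by $k$), and that the $\mname{gea}$ condition genuinely propagates from $a$ to its evaluated components. The latter is the delicate point, but it is tamed by the observation that eval-freeness is inherited by subexpressions: once $e' = H^{-1}(V_\phi(a))$ is eval-free and of kind $k$, each $H^{-1}(V_\phi(c_i))$ is a subexpression of $e'$, hence eval-free and of a determined kind $k_i$, which is exactly what makes the inner evaluation clause fire with its good branch. Beyond this, the proof is a mechanical unwinding of the valuation clauses of Subsection~\ref{subsec:val} and the definitions of $F$ and $G$ in Subsection~\ref{subsec:quasiquote}.
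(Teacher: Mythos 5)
Your proposal is correct and follows essentially the same route as the paper's proof: unfold $V_\phi(\sembrack{a}_k)$ via the evaluation clause (licensed by the $\mname{gea}$ hypothesis and Lemma~\ref{lem:gea}), rewrite the quasiquotation through $G$, $[\cdot]$, and $H$ so that $H^{-1}$ distributes over the components, then re-fold each component $H^{-1}(V_\phi(b))$ as $\sembrack{b}_{k_i}$ using the inherited eval-freeness/kind of subexpressions, with the semantic-closedness hypothesis doing exactly the work you indicate in the binder case. The paper merely writes out parts 2 and 3 in this style and declares the remaining parts similar, so your sketch matches it in both structure and level of detail.
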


\begin{proof}
Let $M=(S,V)$ be a standard model of $T$ and $\phi \in
\mname{assign}(S)$.

\bigskip

\noindent \textbf{Parts 1, 4--9} \sglsp Similar to part 2.

\bigskip

\noindent \textbf{Part 2} \sglsp We must show that
$V_\phi(\sembrack{a}_{\rm te}) = V_\phi((x:\sembrack{b}_{\rm ty}))$.
  \begin{eqnarray*}
  & & V_\phi(\sembrack{a}_{\rm te}) \\
  & = & V_\phi(\sembrack{\synbrack{(x:\commabrack{b})}}_{\rm te}) \\
  & = & V_\phi(H^{-1}(V_\phi(\synbrack{(x:\commabrack{b})}))) \\
  & = & V_\phi(H^{-1}(V_\phi([\synbrack{\mname{var}},\synbrack{x},b]))) \\
  & = & V_\phi((H^{-1}(V_\phi(\synbrack{\mname{var}})),
               H^{-1}(V_\phi(\synbrack{x})),
               H^{-1}(V_\phi(b)))) \\
  & = & V_\phi((H^{-1}(H(\mname{var})),
               H^{-1}(H(x)),
               H^{-1}(V_\phi(b)))) \\
  & = & V_\phi((\mname{\mname{var}},x,\sembrack{b}_{\rm ty})) \\
  & = & V_\phi((x:\sembrack{b}_{\rm ty}))
  \end{eqnarray*}
The third line is by the definition of $V$ on evaluations and $M
\models \mname{gea}(a,\synbrack{\mname{C}})$; the fourth is by the
definition of a quasiquotation; the fifth is by the definitions of $H$
and $[a_1,\ldots,a_n]$; the sixth is by the definition of $V$ on
quotations; and the seventh is by the definition of $V$ on evaluations
and the fact that $M \models \mname{gea}(a,\synbrack{\mname{C}})$
implies $M \models \mname{gea}(b,\synbrack{\mname{type}})$.

\bigskip

\noindent \textbf{Part 3} \sglsp We must show that
$V_\phi(\sembrack{a}_k) = V_\phi((\StarApp {x\mcolon
  \sembrack{b_1}_{\rm ty}} \mdot \sembrack{b_2}_{\rm fo}))$ assuming
$b_2$ is semantically closed in $M$ with respect to $x$.
  \begin{eqnarray*}
  & & V_\phi(\sembrack{a}_k) \\
  & = & V_\phi(\sembrack{\synbrack{(\StarApp {x\mcolon \commabrack{b_1}} \mdot 
  \commabrack{b_2})}}_k) \\
  & = & V_\phi(H^{-1}(V_\phi(\synbrack{(\StarApp {x\mcolon \commabrack{b_1}} 
  \mdot \commabrack{b_2})}))) \\
  & = & V_\phi(H^{-1}(V_\phi(
  [\synbrack{\star},[\synbrack{\mname{var}},\synbrack{x},b_1],b_2]))) \\
  & = & V_\phi(
  (H^{-1}(V_\phi(\synbrack{\star})), \\
  & & \hspace{3.5ex} (H^{-1}(V_\phi(\synbrack{\mname{var}})),
  H^{-1}(V_\phi(\synbrack{x})),
  H^{-1}(V_\phi(b_1))),\\
  & & \hspace{3.5ex} H^{-1}(V_\phi(b_2)))) \\
  & = & V_\phi(
  (H^{-1}(H(\star)), \\
  & & \hspace{3.5ex} (H^{-1}(H(\mname{var})),
  H^{-1}(H(x)),
  H^{-1}(V_\phi(b_1))),\\
  & & \hspace{3.5ex} H^{-1}(V_\phi(b_2)))) \\
  & = & V_\phi((\star,
  (\mname{var}, x, \sembrack{b_1}_{\rm ty}),
  \sembrack{b_2}_{\rm fo})) \\
  & = & V_\phi((\StarApp {x\mcolon \sembrack{b_1}_{\rm ty}} \mdot
    \sembrack{b_2}_{\rm fo}))
  \end{eqnarray*}
The third line is by the definition of $V$ on evaluations and $M
\models \mname{gea}(a,\synbrack{k})$; the fourth is by the definition
of a quasiquotation; the fifth is by the definitions of $H$ and
$[a_1,\ldots,a_n]$; the sixth is by the definition of $V$ on
quotations; and the seventh is by the definition of $V$ on
evaluations, the fact that $M \models \mname{gea}(a,\synbrack{k})$
implies $M \models \mname{gea}(b_1,\synbrack{\mname{type}})$ and $M
\models \mname{gea}(b_2,\synbrack{\mname{formula}})$, and the fact
$b_2$ is semantically closed in $M$ with respect to $x$.
\end{proof}

\begin{clem} \label{lem:qq-sc} 
Let $M=(S,V)$ be a standard model of a normal theory $T = (L,\Gamma)$
and $q$ be a quasiquotation of $L$ whose set of evaluated components
is $\set{\commabrack{a_1},\ldots,\commabrack{a_n}}$.  If $a_i$ is
semantically closed in $M$ for all $i$ with $1 \le i \le n$, then $q$
is semantically closed in $M$.
\end{clem}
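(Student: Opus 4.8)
The plan is to prove, by induction on the structure of marked expressions, a slightly stronger statement: for every marked expression $m$ of $L$ all of whose evaluated components are semantically closed in $M$, the term $F(m)$ is semantically closed in $M$. Granting this, the lemma follows at once: if $q = (\mname{quasiquote},m)$, then the evaluated components occurring in $m$ are exactly $\commabrack{a_1},\ldots,\commabrack{a_n}$, which are semantically closed in $M$ by hypothesis, so $F(m)$ is semantically closed in $M$; and $q$ is by definition an alternate notation for the term $G(q) = F(m)$.

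Before the induction I would isolate an auxiliary observation about operator applications: if $O = (o::k_1,\ldots,k_{n+1})$ is an operator of $L$ such that every $k_i$ that is a type is semantically closed in $M$, and $e_1,\ldots,e_n$ are semantically closed in $M$ with $(\mname{op-app},O,e_1,\ldots,e_n)$ proper, then $(\mname{op-app},O,e_1,\ldots,e_n)$ is semantically closed in $M$. This is immediate from clauses 2 and 3 of the definition of the standard valuation: by clause 2 the operation $V_\phi(O)$ is determined by the fixed operation $I(o)$ together with the superclasses $V_\phi(k_i)$ for those $k_i$ in the signature that are types, hence does not depend on $\phi$; and by clause 3, $V_\phi((\mname{op-app},O,e_1,\ldots,e_n)) = V_\phi(O)(V_\phi(e_1),\ldots,V_\phi(e_n))$ depends on $\phi$ only through $V_\phi(O)$ and the $V_\phi(e_i)$.

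For the induction itself I would treat the three cases matching the formation rules M-Expr-1--3. If $m$ is a symbol $s$, then $F(m) = \synbrack{s}$ and, by clause 13 of the standard valuation, $V_\phi(\synbrack{s}) = H(s)$, which does not depend on $\phi$ because $H$ is a fixed component of $S$. If $m$ is an evaluated component $\commabrack{a}$, then $F(m) = a$, which is semantically closed in $M$ by assumption. If $m = (m_1,\ldots,m_n)$ with $n \ge 0$, then every evaluated component of each $m_i$ occurs in $m$ and so is semantically closed in $M$; hence, by the induction hypothesis, each $F(m_i)$ is semantically closed in $M$. Now $F(m) = \mlist{F(m_1),\ldots,F(m_n)}$, which unfolds into a term built from $F(m_1),\ldots,F(m_n)$ by finitely many applications of the operators $(\mname{ord-pair}::\mname{V},\mname{V},\mname{V})$ and $(\mname{empty-set}::\mname{V})$ (both of which belong to $L$ since $T$ is normal, so $L_{\rm ker} \le L$), all of whose signature kinds are the type $\mname{V} = (\mname{set}::\mname{type})(\,)$. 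Since $\mname{V}$ is semantically closed in $M$ (its value is always $\Dv$), repeated application of the auxiliary observation — a routine side induction on $n$ — shows that $F(m)$ is semantically closed in $M$, completing the induction.

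The one step deserving genuine attention is this last case: one has to unfold the list notation $\mlist{\cdot}$ into its nested \mname{ord-pair}/\mname{empty-set} form and verify that the hypotheses of the auxiliary observation hold at each stage — in particular, that the types occurring in the relevant signatures (all copies of $\mname{V}$) are themselves semantically closed. The base and evaluated-component cases, as well as the auxiliary observation, are direct unwindings of the definitions of $F$, $G$, and the standard valuation.
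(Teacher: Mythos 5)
Your proof is correct and takes essentially the same route as the paper's: a structural induction over the quasiquotation (the paper phrases it as induction on the length of $q$), with quotations semantically closed because their value is fixed by $H$, evaluated components closed by hypothesis, and compound marked expressions handled through the fact that the list term $[F(m_1),\ldots,F(m_n)]$ is built from operators (\mname{ord-pair}, \mname{empty-set}) interpreted by $\phi$-independent operations over the closed type $\mname{V}$. Your auxiliary observation about operator applications simply makes explicit what the paper compresses into the step $V_\phi([\synbrack{m_1},\ldots,\synbrack{m_k}]) = [V_\phi(\synbrack{m_1}),\ldots,V_\phi(\synbrack{m_k})]$.
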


\begin{proof}
Let $\phi, \phi' \in \mname{assign}(S)$.  We must show that $V_\phi(q)
= V_{\phi'}(q)$.  Our proof is by induction on the length of $q$.  If
$q$ is a quotation, then $q$ is obviously semantically closed, and so
we may assume $q$ is not a quotation.  Let $q =
\synbrack{(m_1,\ldots,m_k)}$ where $k \ge 1$.
\begin{eqnarray*}
  & & V_\phi(q) \\
& = & V_\phi(\synbrack{(m_1,\ldots,m_k)}) \\
& = & V_\phi([\synbrack{m_1},\ldots,\synbrack{m_k}]) \\
& = & [V_\phi(\synbrack{m_1}),\ldots,V_\phi(\synbrack{m_k})] \\
& = & [V_{\phi'}(\synbrack{m_1}),\ldots,V_{\phi'}(\synbrack{m_k})] \\
& = & V_{\phi'}([\synbrack{m_1},\ldots,\synbrack{m_k}]) \\
& = & V_{\phi'}(\synbrack{(m_1,\ldots,m_k)}) \\
& = & V_{\phi'}(q)
\end{eqnarray*}
The third line is by the definition of a quasiquotation; the fourth is
by the definition of $[a_1,\ldots,a_n]$; and the fifth is by the
following argument.  If $\synbrack{m_i}$ is a quotation, then
$\synbrack{m_i}$ is semantically closed, and so
$V_\phi(\synbrack{m_i}) = V_{\phi'}(\synbrack{m_i})$.  If $m_i$ is
an evaluated component $\commabrack{a_j}$, then
$V_\phi(\synbrack{m_i}) = V_\phi(a_j) = V_{\phi'}(a_j) =
V_{\phi'}(\synbrack{m_i})$ since $a_j$ is semantically closed in $T$
by hypothesis.  If $\synbrack{m_i}$ is not a quotation and $m_i$ is
not an evaluated component, then $V_\phi(\synbrack{m_i}) =
V_{\phi'}(\synbrack{m_i})$ by the induction hypothesis.
\end{proof}

\subsection{Substitution Lemmas}

The next several lemmas show that the operators specified in the
previous subsection have their intended meanings.  Let $k[e]$ be
$\mname{type}$ if $e$ is a type, $\mname{C}$ if $e$ is a term, and
$\mname{formula}$ if $e$ is a formula.

\begin{clem} [Eval-Free] \label{lem:eval-free}
Let $L$ be a normal language and $a$ be an eval-free term, $x$ be a
symbol, and $e$ be an eval-free proper expression of $L$.  Let $T =
\mname{eval-free}(T_{\rm ker}^{L})$.
\be

  \item Either $T \models \mname{free-in}(\synbrack{x},\synbrack{e})$
    or $T \models \Neg \mname{free-in} (\synbrack{x},\synbrack{e})$.
    (That is, either $x$ is free in $e$ or $x$ is not free in $e$.)

  \item Either $T \models \mname{free-for} (\synbrack{a},
    \synbrack{x}, \synbrack{e})$ or $T \models \Neg \mname{free-for}
    (\synbrack{a}, \synbrack{x}, \synbrack{e})$.  That is, either
    $a$ is free for $x$ in $e$ or $a$ is not free for $x$ in $e$.

  \item $T \models \mname{free-in} (\synbrack{x},\synbrack{e})$ for
    at most finitely many symbols $x$.

  \item $T \models \mname{cleanse}(\synbrack{e}) = \synbrack{e}$ and
    thus $T \models \mname{cleanse}(\synbrack{e}) \IsDefApp$.

  \item $T \models \mname{sub} (\synbrack{a},
    \synbrack{x},\synbrack{e}) = \synbrack{e'}$ for some eval-free
    proper expression $e'$ and thus $T \models \mname{sub}
    (\synbrack{a}, \synbrack{x},\synbrack{e}) \IsDefApp$.

  \item If $T \models \Neg
    \mname{free-in}(\synbrack{x},\synbrack{e})$, then $T \models
    \mname{sub} (\synbrack{a}, \synbrack{x}, \synbrack{e}) =
    \synbrack{e}$.

\ee
\end{clem}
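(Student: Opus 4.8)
The plan is to establish parts~4, 1, and~3 first, each by its own structural induction (on $|e|$), and then to derive parts~2, 5, and~6, each by a further structural induction on $|e|$ that may invoke the already-proved earlier parts---in particular parts~1 and~4 applied to the term $a$, which need not be smaller than $e$. Two observations make all of these inductions routine. First, every proper subexpression of an eval-free expression is again eval-free (any path to it runs only through non-\mname{quote} compound constructors), and by point~(4) after the definition of subexpression, $|e'| < |e|$ whenever $e'$ is a proper subexpression of $e$. Second, an eval-free $e$ is not itself an evaluation, so unfolding the defining schemas of \mname{free-in}, \mname{free-for}, \mname{cleanse}, or \mname{sub} at the outermost constructor of $e$ never invokes the evaluation-case schemas---precisely the schemas that produce undefinedness, appeal to \mname{gea} or \mname{syn-closed}, or make \mname{cleanse} and \mname{sub} non-structural. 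The only non-recursive schema reachable on a subterm is the one for a quotation, handled uniformly. Hence every schema instance used below is eval-free and so belongs to $T = \mname{eval-free}(T_{\rm ker}^{L})$, and since $L$ is normal all operators appearing in the outputs (\mname{defined-in}, \mname{undefined}, \mname{cleanse}, $\ldots$) lie in $L$. Finally, whenever a schema returns a quotation of $e$ with its immediate components replaced by terms provably equal in $T$ to the quotations of (the substitution-images of) those components, the semantic properties of quasiquotation given in subsection~\ref{subsec:quasiquote} collapse it to the plain quotation of the reassembled expression.

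Part~4: by induction, $\mname{cleanse}(\synbrack{e})$ runs the structural schema for the outermost constructor of $e$, rebuilds $e$ from \mname{cleanse}d immediate components---each equal in $T$ to its own quotation by the induction hypothesis---and returns a quotation unchanged; so $T \models \mname{cleanse}(\synbrack{e}) = \synbrack{e}$, and since a quotation is always defined, $T \models \mname{cleanse}(\synbrack{e}) \IsDefApp$ follows. Part~1: the structural \mname{free-in} schemas express $\mname{free-in}(\synbrack{x},\synbrack{e})$ as a propositional combination of $\mname{free-in}(\synbrack{x},\cdot)$ over the immediate components of $e$---each either a proper (hence eval-free) subexpression, decided by the induction hypothesis, or an improper bare symbol such as \mname{type} or \mname{formula}, decided negatively by the base schema---while the variable-name-match case, the quotation case, and the ``first argument not a symbol'' case decide outright; since standard models are two-valued, $T$ decides $\mname{free-in}(\synbrack{x},\synbrack{e})$. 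Part~3 is the same induction carried with the \emph{set} of free symbols in place of a single $x$: it is a finite union of the (finite, by the induction hypothesis) free-symbol sets of the immediate subexpressions, possibly with one bound variable name adjoined, and it is empty when $e$ is a quotation.

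Part~2: the structural \mname{free-for} schemas reduce $\mname{free-for}(\synbrack{a},\synbrack{x},\synbrack{e})$ to a conjunction of \mname{free-for} on smaller eval-free subexpressions (decided by the induction hypothesis) and side conjuncts $\Neg\mname{free-in}(\cdot,\cdot)$ on the eval-free expressions occurring as the body of $e$ and as $a$ (decided by part~1), while the variable and quotation cases decide outright. For part~5 I would prove the slightly stronger statement that $\mname{sub}(\synbrack{a},\synbrack{x},\synbrack{e})$ equals in $T$ the quotation of an eval-free proper expression \emph{of the same sort as $e$} (operator, type, term, or formula); the induction runs through the structural \mname{sub} schemas, replacing each recursively substituted component by the quotation of an eval-free proper expression of the matching sort (induction hypothesis), and each \mname{cleanse}d component---$\mname{cleanse}(\synbrack{a})$ in the variable-name-match case, $\mname{cleanse}(\synbrack{b})$ for the body in the binder-name-match case---by $\synbrack{a}$, resp.\ $\synbrack{b}$, via part~4, then recombining; one checks from the formation rules that the reassembled expression is again proper of the same sort (e.g.\ the conditional term $\mname{if}(a \IsDef \alpha', a, \Undefined_{\sf C})$ in the variable-name-match case, or $(\StarApp {y \mcolon \alpha'} \mdot b')$ in a binder case), and the quasiquotation facts collapse the result to a quotation $\synbrack{e'}$; the quotation case is the identity, and the \mname{sub} strictness axiom is vacuous since $\synbrack{a},\synbrack{x},\synbrack{e}$ are all defined---whence also $T \models \mname{sub}(\synbrack{a},\synbrack{x},\synbrack{e}) \IsDefApp$. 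Part~6 is one more structural induction using part~4: under $T \models \Neg\mname{free-in}(\synbrack{x},\synbrack{e})$, the \mname{free-in} schemas force $x$ to be non-free in every immediate subexpression into which \mname{sub} recurses (so these are left unchanged by the induction hypothesis), the \mname{cleanse}d bodies are already unchanged by part~4, and the variable-name-match case is vacuous because there $x$ \emph{is} free in $e$; reassembling yields $T \models \mname{sub}(\synbrack{a},\synbrack{x},\synbrack{e}) = \synbrack{e}$.

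The only step that is not pure bookkeeping is the sort-tracking strengthening in part~5: one must carry through the induction that \mname{sub} sends the quotation of a type to the quotation of a type, and likewise for terms, formulas, and operators, so that every reassembled quasiquotation is genuinely a quotation of a proper expression---in particular that $\mname{sub}$ applied to $\synbrack{(o::k_1,\ldots,k_{n+1})}$ stays signature-correct (the bare symbols \mname{type} and \mname{formula} being fixed by \mname{sub}, and each type argument mapping to a type). That is where I expect the main care to be needed, but it is forced by the same induction and requires no new idea.
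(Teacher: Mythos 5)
Your proposal is correct and follows essentially the same route as the paper: induction on the length of the eval-free expression through the defining formula schemas, with the earlier parts (notably part 4 for the cleansed binder bodies and part 1 for the side conditions in part 2) feeding into the later ones, exactly as in the paper's detailed treatment of part 6. The paper simply leaves parts 1--5 as "immediate by induction" (proving part 3 via the observation that $x$ is free in $e$ iff $e$ contains a subexpression $(x \mcolon \alpha)$), so your extra bookkeeping---the explicit ordering of the parts and the sort-tracking strengthening in part 5---is a fleshed-out version of the same argument rather than a different one.
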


\begin{proof}

\bigskip

\noindent \textbf{Parts 1--2, 4--5} \sglsp Follows immediately by
induction on the length of $e$.

\bigskip

\noindent \textbf{Part 3} \sglsp Follows from the fact that $x$ is
free in $e$ iff $e$ contains a subexpression of the form $(x \mcolon
\alpha)$.

\bigskip

\noindent \textbf{Part 6} \sglsp Let $S(\synbrack{e'})$ mean
$\mname{sub} (\synbrack{a}, \synbrack{x}, \synbrack{e'})$.  Assume
\[T \models \Neg \mname{free-in}(\synbrack{x},\synbrack{e}) \dblsp 
\mbox{[designated $H(\synbrack{e})$]}.\] We must show that \[T
\models S(\synbrack{e}) = \synbrack{e} \dblsp \mbox{[designated
    $C(\synbrack{e})$]}.\] Our proof is by induction on the length of
$e$.  There are 10 cases corresponding to the 10 formula schemas used
to define $S(\synbrack{e})$ when $e$ is eval-free proper expression.

\be

  \item[] \textbf{Case 1}. $e = (o::k_1,\ldots,k_{n+1})$.
    $H(\synbrack{e})$ implies $H(\synbrack{k_i})$ for all $i$ with
    $1 \le i \le n + 1$ and $\ctype{L}{k_i}$.  From these hypotheses,
    $C(\synbrack{k_i})$ follows for all $i$ with $1 \le i \le n + 1$
    and $\ctype{L}{k_i}$ from the induction hypothesis.
    $C(\synbrack{k_i})$ also holds for all $i$ with $1 \le i \le n +
    1$ and $k_i = \mname{type}$ or $\mname{formula}$.  From these
    conclusions, $C(\synbrack{e})$ follows by the definition of
    \mname{sub}.

  \item[] \textbf{Cases 2, 4, 6--9}. Similar to case 1.

  \item[] \textbf{case 3}. $e = (x \mcolon \alpha)$.  The hypothesis
    $H(\synbrack{e})$ is false in this case.

  \item[] \textbf{case 5}. $e = (\StarApp {x\mcolon\alpha} \mdot e')$
    where $\star$ is $\Lambda$, $\lambda$, $\Forsome$, $\iota$, or
    $\epsilon$.  $H(\synbrack{e})$ implies $H(\synbrack{\alpha})$.
    From this hypothesis, $C(\synbrack{\alpha})$ follows from the
    induction hypothesis.  Then $C(\synbrack{e})$ follows from
    $C(\synbrack{\alpha})$ by part~4 of this lemma and the definition
    of \mname{sub}.

\iffalse
  \item[] \textbf{case 6}. $e = (\StarApp {y\mcolon\alpha} \mdot e')$
    where $x \not= y$ and $\star$ is $\Lambda$, $\lambda$, $\Forsome$,
    $\iota$, or $\epsilon$.  $H(\synbrack{e})$ implies
    $H(\synbrack{\alpha})$ and $H(\synbrack{e'})$.  From these
    hypotheses, $C(\synbrack{\alpha})$ and $C(\synbrack{e'})$ follow
    from the induction hypothesis.  Then $C(\synbrack{e})$ follows
    from $C(\synbrack{\alpha})$, $C(\synbrack{e'})$, and
    $H(\synbrack{\synbrack{e'}})$ by the definition of \mname{sub}.
\fi

  \item[] \textbf{case 10}. $e = \synbrack{e'}$.  $C(\synbrack{e})$
    is always true in this case by the definition of \mname{sub}.

\ee
\end{proof}

\begin{crem} \em
Lemma~\ref{lem:eval-free}, with $T_{\rm ker}^{L}$ used in place of
$\mname{eval-free}(T_{\rm ker}^{L})$, does not hold for all
non-eval-free expressions.  For instance, the example in
subsection~\ref{subsec:inf-dep-1} exhibits a non-eval free expression
for which part 1 of this lemma does not hold.
\end{crem}

A \emph{universal closure} of a formula $A$ is a formula \[\ForallApp
x_1,\ldots,x_n \mcolon \mname{C} \mdot A\] where $n \ge 0$ such that
$x$ is free in $A$ iff $x \in \set{x_1,\ldots,x_n}$ and $x$ is not
free in $A$ iff $x \not\in \set{x_1,\ldots,x_n}$.

\begin{clem} \label{lem:univ-closure}
Every universal closure is syntactically closed.
\end{clem}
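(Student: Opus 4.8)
The plan is to unfold $\mname{syn-closed}$ via its defining axiom and reduce the claim, for a universal closure $B = (\ForallApp x_1,\ldots,x_n \mcolon \mname{C} \mdot A)$ over a normal language $L$, to two conjuncts: that $\synbrack{B}$ satisfies $\mname{is-p-expr}$, and that no symbol is free in $B$. The first is immediate: $B$ is a formula, hence a proper expression, so in every standard model $V_\phi(\synbrack{B}) = H(B) \in \Dfo$, which is the denotation of $\mname{E}_{\rm fo}$; thus $\synbrack{B} \IsDef \mname{E}_{\rm fo}$ holds and the defining axiom of $\mname{is-p-expr}$ gives $\mname{is-p-expr}(\synbrack{B})$. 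So the work is in the second conjunct.

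For that, I would first establish the auxiliary fact that for any formula $E$ and symbols $x,y$, $T_{\rm ker}^{L} \models \mname{free-in}(\synbrack{y},\synbrack{(\ForallApp x \mcolon \mname{C} \mdot E)}) \Iff ((y \not= x) \And \mname{free-in}(\synbrack{y},\synbrack{E}))$. Since $(\ForallApp x \mcolon \mname{C} \mdot E)$ abbreviates $\Neg(\ForsomeApp x \mcolon \mname{C} \mdot (\Neg E))$, this follows by peeling off constructors with the $\mname{free-in}$ defining schemas: the operator-application clause strips the two $\mname{not}$ wrappers, and the contribution $\mname{free-in}(\synbrack{y},\synbrack{(\mname{not}::\mname{formula},\mname{formula})})$ vanishes because the operator clause reduces it to $\mname{free-in}(\synbrack{y},\synbrack{\mname{formula}})$, which is false as $\mname{formula}$ standing alone is an improper expression; the two variable-binder clauses ($\star = \Forsome$) handle $\ForsomeApp x \mcolon \mname{C}$ according to whether $y = x$; and in every clause the subterm $\mname{free-in}(\synbrack{y},\synbrack{\mname{C}})$ is false since $\mname{C} = (\mname{class}::\mname{type})(\,)$ and again $\mname{type}$ alone is improper. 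Iterating this fact by induction on $n$ (reading $(\ForallApp x_1,\ldots,x_n \mcolon \mname{C} \mdot A)$ as $(\ForallApp x_1 \mcolon \mname{C} \mdot (\ForallApp x_2,\ldots,x_n \mcolon \mname{C} \mdot A))$) yields, for every symbol $y$: if $y \in \set{x_1,\ldots,x_n}$ then $T_{\rm ker}^{L} \models \Neg\mname{free-in}(\synbrack{y},\synbrack{B})$; and if $y \notin \set{x_1,\ldots,x_n}$ then $T_{\rm ker}^{L} \models \mname{free-in}(\synbrack{y},\synbrack{B}) \Iff \mname{free-in}(\synbrack{y},\synbrack{A})$. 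In the latter case the definition of a universal closure says exactly that $y$ is not free in $A$, i.e.\ $T_{\rm ker}^{L} \models \Neg\mname{free-in}(\synbrack{y},\synbrack{A})$, so $T_{\rm ker}^{L} \models \Neg\mname{free-in}(\synbrack{y},\synbrack{B})$ in either case.

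Finally I would lift this from individual symbols to the object-level quantifier in $\mname{syn-closed}$. Fixing a standard model $M=(S,V)$ of $T_{\rm ker}^{L}$ and $\phi \in \mname{assign}(S)$, it suffices to show that for every $d \in V_\phi(\mname{E}_{\rm sy}) = \Dsy$ one has $V_{\phi[e'\mapsto d]}(\mname{free-in}((e'\mcolon\mname{E}_{\rm sy}),\synbrack{B})) = \FALSE$. By the definition of a structure, $\Dsy$ is precisely $\set{H(s) \mid s \in \sS}$, so $d = H(s)$ for some symbol $s$, and then $V_{\phi[e'\mapsto d]}((e'\mcolon\mname{E}_{\rm sy})) = d = H(s) = V_\phi(\synbrack{s})$ while $V_{\phi[e'\mapsto d]}(\synbrack{B}) = H(B)$; since the value of the $\mname{free-in}$ operation depends only on its arguments and $M \models \Neg\mname{free-in}(\synbrack{s},\synbrack{B})$, the value is $\FALSE$. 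Combined with $\mname{is-p-expr}(\synbrack{B})$ this gives $\mname{syn-closed}(\synbrack{B})$, i.e.\ $B$ is syntactically closed. The main obstacle is this last lifting step — making the correspondence between the type $\mname{E}_{\rm sy}$, meta-level symbols, and quotations of symbols precise — but it is the same argument that underlies Proposition~\ref{prop:quotation}; everything else is routine unfolding of the $\mname{free-in}$ schemas.
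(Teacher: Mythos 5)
Your proof is correct and follows the same route as the paper, whose own proof is just the one-line remark that the claim follows from the definitions of \mname{free-in}, syntactic closedness, and universal closure; you have simply unfolded that remark in full (peeling the $\Forall$ abbreviation through the \mname{not}/\mname{exists} clauses of the \mname{free-in} schemas, invoking the universal-closure condition for symbols outside $\set{x_1,\ldots,x_n}$, and passing from meta-level symbols to the object-level quantifier over $\mname{E}_{\rm sy}$ via $\Dsy = \set{H(s) \mid s \in \sS}$). The final lifting step you flag as the main obstacle is handled exactly as you describe and is the same mechanism the paper relies on in Lemma~\ref{lem:syn-closed}, so no gap remains.
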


\begin{proof}
By the definitions of \mname{free-in}, syntactically closed, and
universal closure.
\end{proof}

\begin{crem} \em
By virtue of parts~1 and~3 of Lemma~\ref{lem:eval-free}, universal
closures always exist for eval-free formulas, and by the example in
subsection~\ref{subsec:inf-dep-1}, may not exist for non-eval-free
formulas.
\end{crem}

%\newpage

\begin{clem} [Free Variable] \label{lem:fv}
Let $M=(S,V)$ be a standard model of a normal theory $T = (L,\Gamma)$,
$x$ be a symbol, and $e$ be a proper expression of $L$.  \be

  \item \bsp If $M \models \Neg
    \mname{free-in}(\synbrack{x},\synbrack{e})$, then $V_\phi(e) =
    V_{\phi[x \mapsto d]}(e)$ for all $\phi \in \mname{assign}(S)$ and
    $d \in\Dc$. (That is, if $M \models \Neg
    \mname{free-in}(\synbrack{x},\synbrack{e})$, $e$ is semantically
    closed in $M$ with respect to $x$.)\esp

  \item Let $X = \set{x \in \sS \;|\; M \models
    \Neg\mname{free-in}(\synbrack{x},\synbrack{e})}$.  Then
    $V_\phi(e) = V_{\phi'}(e)$ for all $\phi,\phi' \in
    \mname{assign}(S)$ such that $\phi(x) = \phi'(x)$ whenever $x
    \not\in X$.

\ee
\end{clem}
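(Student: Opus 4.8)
The plan is to prove both parts by a single induction on the complexity $c(e)$ of $e$, imitating the structure of the proof of Lemma~\ref{lem:v-well-defined}. Complexity, not length, is the right measure because the evaluation clause of the standard valuation refers to the eval-free expression encoded by the value of a subterm, which can be arbitrarily long but has strictly smaller complexity. Since $T$ is normal, $M$ validates every defining axiom of \mname{free-in}; and since $\synbrack{x}$ and $\synbrack{e}$ are quotations, the formula $\mname{free-in}(\synbrack{x},\synbrack{e})$ is semantically closed in $M$, so for each symbol $x$ exactly one of $M \models \mname{free-in}(\synbrack{x},\synbrack{e})$ and $M \models \Neg\mname{free-in}(\synbrack{x},\synbrack{e})$ holds, and $X_e := \set{x \in \sS \mid M \models \Neg\mname{free-in}(\synbrack{x},\synbrack{e})}$ is well defined. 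Part~1 is the special case of Part~2 with $\phi' = \phi[x \mapsto d]$ for an $x \in X_e$ (such $\phi,\phi'$ agree off $X_e$), so it suffices to prove Part~2, carrying $X_e$ through the induction.

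For $e$ an operator $(o\mcolon\mcolon k_1,\ldots,k_{n+1})$, wait — I mean $e = (o{::}k_1,\ldots,k_{n+1})$, an operator application $O(e_1,\ldots,e_n)$, a type application $\alpha(a)$, a function application $f(a)$, or a conditional $\mname{if}(A,b,c)$, the matching \mname{free-in} axiom equates $\mname{free-in}(\synbrack{x},\synbrack{e})$ with the disjunction of $\mname{free-in}(\synbrack{x},\synbrack{c})$ over the relevant components $c$; hence $X_e = \bigcap_c X_c$, so $X_c^{\,c} \subseteq X_e^{\,c}$ and $\phi,\phi'$ agree off each $X_c$, the induction hypothesis applies (each $c$ is a proper subexpression, so $c(c) < c(e)$), and the valuation clause rebuilds $V_\phi(e)$ from the component values. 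For a variable $(y\mcolon\alpha)$ with $y \neq x$ and for a binder $(\StarApp {z\mcolon\alpha} \mdot b)$ the only bookkeeping is that reassigning the bound variable $z$ does not disturb the agreement needed to invoke the induction hypothesis on $b$: for $w \neq z$ one has $w \in X_b^{\,c} \Rightarrow w \in X_e^{\,c}$, and at $z$ the reassigned assignments coincide; semantic closedness of $V_\phi(\alpha)$ and of $b$ at each binding value then gives the claim. When $e = (x\mcolon\alpha)$ the \mname{free-in} axiom makes $\mname{free-in}(\synbrack{x},\synbrack{e})$ valid, so $x \notin X_e$ and the hypothesis already forces $\phi(x) = \phi'(x)$; when $e$ is a quotation $\synbrack{e'}$, $V_\phi(e) = H(e')$ does not depend on $\phi$ at all.

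The delicate case is $e = \sembrack{a}_k$, where the \mname{free-in} axiom adds the disjunct $\mname{free-in}(\synbrack{x}, a)$, with the literal term $a$ rather than its quotation, so that $X_e = X_a \cap X_k \cap X'_a$ with $X'_a := \set{x \mid M \models \Neg\mname{free-in}(\synbrack{x}, a)}$. From $X_e \subseteq X_a \cap X_k$ and the induction hypothesis on $a$ and $k$ (proper subexpressions of smaller complexity) I get $v := V_\phi(a) = V_{\phi'}(a)$ and $V_\phi(k) = V_{\phi'}(k)$. If $v$ is not a construction in the domain appropriate to $k$, or $H^{-1}(v)$ is not eval-free, then $V_\phi(e)$ and $V_{\phi'}(e)$ both equal the fixed default ($\Dc$, $\Undefined$, or $\FALSE$ according to $k$) and we are done. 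Otherwise $v = H(e_a)$ for a unique eval-free $e_a$; the key observation is that $v = V_\phi(\synbrack{e_a})$, so the value of $\mname{free-in}(\synbrack{x}, a)$ coincides with that of the semantically closed formula $\mname{free-in}(\synbrack{x}, \synbrack{e_a})$, and since $M$ validates the \mname{free-in} axiom at $\synbrack{e_a}$ this yields $X'_a \subseteq X_{e_a}$; combined with $X_e \subseteq X'_a$ it follows that $\phi,\phi'$ agree off $X_{e_a}$. The induction hypothesis now applies to $e_a$ — this is exactly where complexity, not length, is essential, since $e_a$ is eval-free, so $c(e_a) = (0,|e_a|) < c(e)$ — giving $V_\phi(e_a) = V_{\phi'}(e_a)$. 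The three side conditions of the evaluation clause depend only on $v$, $V_\phi(k)$ and $V_\phi(e_a)$, all now stable between $\phi$ and $\phi'$, so in the good branch $V_\phi(e) = V_\phi(e_a) = V_{\phi'}(e_a) = V_{\phi'}(e)$ and in every other branch both sides equal the default value, closing the induction.

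I expect the evaluation case to be the only genuine obstacle: one must be fluent in translating the syntactic side condition $\mname{free-in}(\synbrack{x}, a)$ into a statement about the value $v$ of $a$ and then bootstrapping on the eval-free expression $e_a$ that $v$ encodes, and one must set up the induction on $c(e)$ from the outset so that this bootstrap is legitimate. The remaining cases are a routine, case-by-matching-case traversal of the definition of the standard valuation against the \mname{free-in} axioms, and are essentially identical to the corresponding bookkeeping in the proof of Lemma~\ref{lem:v-well-defined} and Lemma~\ref{lem:eval-free}.
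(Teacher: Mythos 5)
Your proposal is correct and follows essentially the same route as the paper: induction on the complexity $c(e)$ organized case-by-case along the defining schemas of \mname{free-in}, with the evaluation case handled exactly as in the paper by passing to the eval-free expression represented by the common value of $a$ (so that its complexity drops) and lifting the pointwise truth of $\Neg\mname{free-in}(\synbrack{x},\synbrack{e_a})$ to validity in $M$. The only differences are organizational and harmless: you prove part 2 directly and obtain part 1 as the special case $\phi' = \phi[x \mapsto d]$ (the paper proves part 1 in detail and notes part 2 is similar), and you justify the lift to validity via semantic closedness of formulas whose arguments are quotations rather than by citing Lemma~\ref{lem:eval-free}.
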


\begin{proof}  

\bigskip

\noindent \textbf{Part 1} \sglsp  We will show that,
if \[M \models \Neg \mname{free-in}(\synbrack{x},\synbrack{e}) \dblsp
\mbox{[designated $H(\synbrack{e})$]},\] then 

\bi

  \item[] $V_\phi(e) = V_{\phi[x \mapsto d]}(e)$ for all
    $\phi \in \mname{assign}(S)$ and $d \in\Dc$ \dblsp 
    {[designated $C(\synbrack{e})$]}.

\ei
Our proof is by induction on the complexity of $e$.  There are 11
cases corresponding to the 11 formula schemas used to define
$\mname{free-in}(\synbrack{e_1},\synbrack{e_2}))$ when $e_1$ is a
symbol and $e_2$ is a proper expression.

\be

  \item[] \textbf{case 1}: $e = (o::k_1,\ldots,k_{n+1})$.  By the
    definition of $\mname{free-in}$, $H(\synbrack{e})$ implies
    $H(\synbrack{k_i})$ for all $i$ with $1 \le i \le n + 1$ and
    $\ctype{L}{k_i}$.  By the induction hypothesis, this implies
    $C(\synbrack{k_i})$ for all $i$ with $1 \le i \le n + 1$ and
    $\ctype{L}{k_i}$.  Therefore, $C(\synbrack{e})$ holds since
    $V_\phi(e)$ is defined in terms of $V_\phi(k_i)$ for all $i$ with
    $1 \le i \le n + 1$ and $\ctype{L}{k_i}$.

  \item[] \textbf{case 2}: $e = O(e_1,\ldots,e_n)$.  By the definition
    of $\mname{free-in}$, $H(\synbrack{e})$ implies $H(\synbrack{O})$
    and $H(\synbrack{e_i})$ for all $i$ with $1 \le i \le n$.  By the
    induction hypothesis, this implies $C(\synbrack{O})$ and
    $C(\synbrack{e_i})$ for all $i$ with $1 \le i \le n$.  Therefore,
    $C(\synbrack{e})$ holds since $V_\phi(e)$ is defined in terms of
    $V_\phi(O)$ and $V_\phi(e_i)$ for all $i$ with $1 \le i \le n$.

\iffalse

  \item[] \textbf{case 1}: $e = (o::k_1,\ldots,k_{n+1})$.
    $C(\synbrack{e})$ follows immediately since $V_\phi(e)$ does not
    depend on $\phi$.

  \item[] \textbf{case 2}: $e = O(e_1,\ldots,e_n)$ and $O =
    (o::k_1,\ldots,k_{n+1})$.  By the definition of $\mname{free-in}$,
    $H(\synbrack{e})$ implies (1) $H(\synbrack{k_i})$ for all $i$
    with $1 \le i \le n + 1$ and $\ctype{L}{k_i}$ and (2)
    $H(\synbrack{e_i})$ for all $i$ with $1 \le i \le n$.  By the
    induction hypothesis, this implies (1) $C(\synbrack{k_i})$ for
    all $i$ with $1 \le i \le n + 1$ and $\ctype{L}{k_i}$ and (2)
    $C(\synbrack{e_i})$ for all $i$ with $1 \le i \le
    n$. $C(\synbrack{O})$ holds by case 1.  Therefore,
    $C(\synbrack{e})$ holds since $V_\phi(e)$ is defined in terms of
    $V_\phi(O)$, $V_\phi(k_i)$ for all $i$ with $1 \le i \le n + 1$
    and $\ctype{L}{k_i}$, and $V_\phi(e_i)$ for all $i$ with $1 \le i \le
    n$.

\fi

  \item[] \textbf{case 3}: $e = (x \mcolon \alpha)$.  By the
    definition of $\mname{free-in}$, $H(\synbrack{e})$ is false.
    Therefore, the lemma is true for this case.

  \item[] \textbf{case 4}: $e = (y \mcolon \alpha)$ where $x \not=
    y$. By the definition of $\mname{free-in}$, $H(\synbrack{e})$
    implies $H(\synbrack{\alpha})$.  By the induction hypothesis,
    this implies $C(\synbrack{\alpha})$.  Therefore,
    $C(\synbrack{e})$ holds since $V_\phi(e)$ is defined in terms of
    $\phi(y)$ and $V_\phi(\alpha)$.

  \item[] \textbf{case 5}: $e = (\StarApp x \mcolon \alpha \mdot e')$
    and $\star$ is $\Lambda$, $\lambda$, $\Forsome$, $\iota$, or
    $\epsilon$.  By the definition of $\mname{free-in}$,
    $H(\synbrack{e})$ implies $H(\synbrack{\alpha})$.  By the
    induction hypothesis, this implies $C(\synbrack{\alpha})$.  It is
    obvious that \[V_{\phi[x \mapsto d']}(e') = V_{\phi[x \mapsto
    d][x \mapsto d']}(e')\] for all $\phi \in \mname{assign}(S)$ and
    $d,d' \in\Dc$.  Therefore, $C(\synbrack{e})$ holds since
    $V_\phi(e)$ is defined in terms of $V_\phi(\alpha)$ and
    $V_{\phi[x \mapsto d']}(e')$ where $d' \in\Dc$.

  \item[] \bsp \textbf{case 6}: $e = (\StarApp y \mcolon \alpha \mdot
    e')$, $x \not= y$, and $\star$ is $\Lambda$, $\lambda$,
    $\Forsome$, $\iota$, or $\epsilon$.  By the definition of
    $\mname{free-in}$, $H(\synbrack{e})$ implies
    $H(\synbrack{\alpha})$ and $H(\synbrack{e'})$.  By the induction
    hypothesis, this implies $C(\synbrack{\alpha})$ and
    $C(\synbrack{e'})$.  The latter implies \[V_{\phi[y \mapsto
    d']}(e') = V_{\phi[y \mapsto d'][x \mapsto d]}(e') =
    V_{\phi[x \mapsto d][y \mapsto d']}(e')\] for all
    $\phi \in \mname{assign}(S)$ and $d,d' \in\Dc$.  (This is the
    place in the proof where the full strength of the induction
    hypothesis is needed.)  Therefore, $C(\synbrack{e})$ holds since
    $V_\phi(e)$ is defined in terms of $V_\phi(\alpha)$ and
    $V_{\phi[y \mapsto d']}(e')$ where $d' \in\Dc$. \esp

  \item[] \textbf{case 7}: $e = \alpha(a)$.  Similar to case 4.

  \item[] \textbf{Case 8}: $e = f(a)$.  Similar to case 4.

  \item[] \textbf{Case 9}: $e = \mname{if}(A,b,c)$.  Similar to case 4.

  \item[] \textbf{Case 10}: $e = \synbrack{e'}$.  $C(\synbrack{e})$
    follows immediately since $V_\phi(e)$ does not depend on $\phi$.

  \item[] \bsp \textbf{Case 11}: $e = \sembrack{a}_k$.  By the
    definition of $\mname{free-in}$, $H(\synbrack{e})$ implies
    (1)~$H(\synbrack{a})$, (2) $H(\synbrack{k})$ when
    $\ctype{L}{k}$, and (3) $H(a)$.  By the induction hypothesis, (1)
    and (2) imply $C(\synbrack{a})$ and $C(\synbrack{k})$ when
    $\ctype{L}{k}$.  Suppose $V_\phi(\mname{gea}(a,\synbrack{k})) =
    \TRUE$ for some $\phi \in \mname{assign}(S)$.  Then $V_\phi(a) =
    V_\phi(\synbrack{e'})$ for some eval-free expression $e'$ and
    (3)~implies $V_\phi(\Neg\mname{free-in}(\synbrack{x},
    \synbrack{e'})) = \TRUE$.  By Lemma~\ref{lem:eval-free}, this
    implies $H(\synbrack{e'})$, and so by the induction hypothesis,
    $C(\synbrack{e'})$ holds.  Therefore, $C(\synbrack{e})$ holds
    since $V_\phi(e)$ is defined in terms of $V_\phi(a)$, $V_\phi(k)$
    when $\ctype{L}{k}$, and $V_\phi(e')$.  Now suppose that
    $V_\phi(\mname{gea}(a,\synbrack{k})) = \FALSE$ for all $\phi \in
    \mname{assign}(S)$.  Then, by Lemma~\ref{lem:gea}, $V_\phi(e)$ is
    the undefined value for kind $k$ for all $\phi \in
    \mname{assign}(S)$.  $C(\synbrack{e})$ follows immediately since
    $V_\phi(e)$ does not depend on $\phi$. \esp

\ee

\noindent \textbf{Part 2} \sglsp Similar to the proof of part 1.
\end{proof}

\begin{clem}[Syntactically Closed] \label{lem:syn-closed} \bsp
Let $M$ be a standard model of a normal theory $T = (L,\Gamma)$ and
$e$ be an expression of $L$.  If $M \models
\mname{syn-closed}(\synbrack{e})$, then $e$ is semantically closed in
$M$. \esp
\end{clem}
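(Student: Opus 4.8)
The plan is to unwind the definition of $\mname{syn-closed}$ and then appeal directly to part~2 of Lemma~\ref{lem:fv}. First I would dispose of the degenerate case: if $e$ is improper, then $V_\phi(\mname{is-p-expr}(\synbrack{e})) = \FALSE$ for every $\phi$, so $M \models \mname{syn-closed}(\synbrack{e})$ fails and the implication holds vacuously (and in any event $V_\phi(e)$ is undefined for every $\phi$, so $e$ is trivially semantically closed in $M$). So assume $e$ is a proper expression and $M \models \mname{syn-closed}(\synbrack{e})$. By the defining axiom for $\mname{syn-closed}$, this yields in particular $M \models \ForallApp e' \mcolon \mname{E}_{\rm sy} \mdot \Neg\mname{free-in}(e',\synbrack{e})$.

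The key step is to convert this bounded quantification into the assertion ``$M \models \Neg\mname{free-in}(\synbrack{x},\synbrack{e})$ for every symbol $x \in \sS$''. Writing $M = (S,V)$ with $S = (\ldots,H,I)$, the type $\mname{E}_{\rm sy}$ denotes $\Dsy$, and by the construction of $H$ we have $\Dsy = \set{H(x) \mid x \in \sS}$ with $H(x) = V_\phi(\synbrack{x})$ for every $\phi$. Since $\mname{free-in}$ is a term operator, the value $V_{\phi[e' \mapsto H(x)]}(\mname{free-in}((e' \mcolon \mname{E}_{\rm sy}),\synbrack{e}))$ depends only on $H(x)$ and $H(e)$, and hence equals $V_\phi(\mname{free-in}(\synbrack{x},\synbrack{e}))$. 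Letting $e'$ range over all of $\Dsy$ in the quantifier therefore shows that $M \models \Neg\mname{free-in}(\synbrack{x},\synbrack{e})$ for every $x \in \sS$; in the notation of Lemma~\ref{lem:fv}, the set $X = \set{x \in \sS \mid M \models \Neg\mname{free-in}(\synbrack{x},\synbrack{e})}$ is all of $\sS$.

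Finally I would invoke part~2 of Lemma~\ref{lem:fv} with $X = \sS$: it gives $V_\phi(e) = V_{\phi'}(e)$ for all $\phi,\phi' \in \mname{assign}(S)$ such that $\phi(x) = \phi'(x)$ whenever $x \notin X$, and since $X = \sS$ the side condition is vacuous, so $V_\phi(e) = V_{\phi'}(e)$ for all $\phi,\phi'$; that is, $e$ is semantically closed in $M$, which is what we wanted.

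The part I expect to require the most care is the decision, in the last step, to use part~2 of Lemma~\ref{lem:fv} rather than iterating part~1. A non-eval-free expression may have infinitely many free variables (as the example in subsection~\ref{subsec:inf-dep-1} illustrates), so one cannot in general pass from ``$e$ is semantically closed with respect to each single symbol $x$'' to ``$e$ is semantically closed'' by finitely many one-variable steps; the simultaneous-substitution form of part~2 is exactly what bridges this gap. A secondary point worth verifying is that the bound variable $e'$ in the schema ranges over precisely $\Dsy$, which is what licenses replacing it by the family of quotations $\synbrack{x}$ for $x \in \sS$.
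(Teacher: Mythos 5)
Your proposal is correct and follows essentially the same route as the paper's proof: unwind the defining axiom of $\mname{syn-closed}$ to get $M \models \Neg\mname{free-in}(\synbrack{x},\synbrack{e})$ for every $x \in \sS$ (the paper states this step tersely, while you spell out the passage from the bounded quantifier over $\mname{E}_{\rm sy}$ to the schema over all symbols), and then apply part~2 of Lemma~\ref{lem:fv}. Your closing remark about why part~2 rather than iterating part~1 is needed matches the paper's own observation following the lemma; the only nit is that $\mname{free-in}$ is a formula operator, not a term operator, which does not affect the argument.
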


\bsp
\begin{proof} 
Let $M = (S,V)$ be a standard model of $T$.  Assume $M \models
\mname{syn-closed}(\synbrack{e})$.  Then, by the definition of
\mname{syn-closed}, $e$ is a proper expression and $M \models \Neg
\mname{free-in}(\synbrack{x},\synbrack{e})$ for all $x \in \sS$.
Hence, by part 2 of Lemma~\ref{lem:fv}, $V_{\phi}(e) = V_{\phi'}(e)$
for all $\phi,\phi' \in \mname{assign}(S)$.  Therefore, $e$ is
semantically closed in $M$.
\end{proof}
\esp

\bigskip

The next lemma shows that part 1 of Lemma~\ref{lem:fv} is not
sufficient to prove the previous lemma.

\begin{clem}
Let $M=(S,V)$ be a standard model of a normal theory $T = (L,\Gamma)$
and $e$ be a proper expression of $L$.  Suppose $V_\phi(e) = V_{\phi[x
    \mapsto d]}(e)$ for all $\phi \in \mname{assign}(S)$, $x \in \sS$,
and $d \in\Dc$.  Then it is not necessary that $e$ be syntactically
closed.
\end{clem}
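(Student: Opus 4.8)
The plan is to refute the converse of part~1 of Lemma~\ref{lem:fv} by exhibiting a single counterexample: a proper expression $e$ of a normal language whose value under the standard valuation does not depend on the assignment at all --- so that $e$ is vacuously semantically closed in every $M$ --- yet in which some symbol occurs as a syntactically free variable, so that $\mname{syn-closed}(\synbrack{e})$ is not valid in $T_{\rm ker}^{L}$. The natural candidate is the variable $e = (x \mcolon \nabla)$, where $\nabla = (\mname{empty-type}::\mname{type})(\,)$ is the canonical empty type of subsection~\ref{subsec:setthop}; this is a term of $L$ by rule P-Expr-3, since $\nabla$ is a type of every normal language.

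First I would show that $e = (x\mcolon\nabla)$ is semantically undefined, hence semantically closed, in every standard model $M = (S,V)$ of a normal theory $T$. Since $T$ is normal, $M$ validates the defining axiom $\ForallApp x\mcolon\mname{C}\mdot x\IsUndef\nabla$ of \mname{empty-type}. Unwinding this through the semantic clauses for \mname{defined-in}, \mname{term-equal}, and the variable rule shows that no class lies in $V_\phi(\nabla)$; since $\nabla$ is a $0$-ary operator application, $V_\phi(\nabla)$ is in fact the empty superclass, independently of $\phi$. By the variable clause of the standard valuation, $\phi(x) \notin V_\phi(\nabla)$ forces $V_\phi((x\mcolon\nabla)) = \Undefined$ for all $\phi \in \mname{assign}(S)$. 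Thus $V_\phi(e)$ is independent of $\phi$, and in particular $V_\phi(e) = V_{\phi[y \mapsto d]}(e)$ for all $\phi \in \mname{assign}(S)$, $y \in \sS$, and $d \in \Dc$ --- so the hypothesis of the lemma is satisfied (indeed $e$ is semantically closed in $M$).

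Finally I would show that $e = (x\mcolon\nabla)$ is not syntactically closed. By the defining axiom of \mname{free-in} for a variable bearing its own name, $\mname{free-in}(\synbrack{x},\synbrack{(x\mcolon\nabla)})$ is valid in $T_{\rm ker}^{L}$; hence $\ForallApp e'\mcolon\mname{E}_{\rm sy}\mdot\Neg\mname{free-in}(e',\synbrack{(x\mcolon\nabla)})$ fails (at $e' = \synbrack{x}$), so $\mname{syn-closed}(\synbrack{(x\mcolon\nabla)})$ is not valid in $T_{\rm ker}^{L}$. Hence $e$ witnesses the claim. No real obstacle is expected; the only point needing care is the routine unwinding of the \mname{empty-type} axiom through the relevant semantic clauses to confirm that $\nabla$ genuinely denotes the empty superclass, so that its inhabitant variable is always undefined. (Any term whose denotation is visibly independent of $\phi$ while syntactically containing a free variable would serve equally well, for instance $\mname{if}(\mname{C}\TypeEqual\mname{C},\emptyset,(x\mcolon\mname{C}))$.)
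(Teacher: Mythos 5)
Your proof is correct, but it establishes the lemma by a different counterexample than the paper does. The paper's entire proof is a pointer to the example of subsection~\ref{subsec:inf-dep-2}: a non-eval-free formula $A$ (built with class abstraction and evaluation) whose truth value depends on the assignment globally --- on whether infinitely many symbols are assigned $\emptyset$ --- yet is unchanged by re-assigning any single symbol, and for which neither $\mname{free-in}(\synbrack{x},\synbrack{A})$ nor its negation is valid, so $A$ is not syntactically closed. Your example, $(x \mcolon \nabla)$ (or the parenthetical $\mname{if}(\mname{C}\TypeEqual\mname{C},\emptyset,(x\mcolon\mname{C}))$), is an eval-free term whose value is outright constant (always $\Undefined$, by the kernel axioms for \mname{empty-type} and \mname{defined-in} together with the variable and \mname{term-equal} clauses of the standard valuation), while $x$ is \emph{provably} free in it by the defining axiom of \mname{free-in} for variables; this satisfies the hypothesis and refutes syntactic closure just as well, and is more elementary. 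What the two approaches buy is slightly different: your example shows the gap between semantic invariance and syntactic closure already arises for eval-free expressions via ``dead'' or vacuous occurrences of a variable, whereas the paper's example additionally exhibits the phenomenon that motivates the surrounding discussion --- a non-eval-free expression whose dependence on assignments cannot be localized to any finite set of variables and whose freeness is not even determinate in $T_{\rm ker}^{L}$. One small phrasing caveat: \mname{defined-in} is a defined operator, not a built-in, so its ``semantic clause'' is really its defining axiom, which is available in every standard model of a normal theory because $\Gamma_{\rm ker}^{L}\subseteq\Gamma$; with that reading your unwinding that $V_\phi(\nabla)$ is the empty superclass is sound.
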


\begin{proof}
See the example in subsection~\ref{subsec:inf-dep-2}.
\end{proof}

\begin{clem} [Cleanse] \label{lem:cleanse} 
Let $M=(S,V)$ be a standard model for a normal theory $T=(L,\Gamma)$
and $e$ be an expression of $L$.

\be

  \item If $e$ is an operator, type, term, or formula and
    $V_\phi(\mname{cleanse}(\synbrack{e})) \not= \Undefined$,
    then \[H^{-1}(V_\phi(\mname{cleanse}(\synbrack{e})))\] is an
    operator similar to $e$, type, term, or formula, respectively,
    that is eval-free for all $\phi \in \mname{assign}(S)$.

  \item $\mname{cleanse}(\synbrack{e})$ is semantically closed in $M$.

  \item If $e$ is an operator, type, term, or formula that contains an
    evaluation $\sembrack{e'}_k$ not in a quotation and $M \models
    \mname{free-in}(\synbrack{x}, \synbrack{e'})$ for some $x \in
    \sS$, then \[M \models \mname{cleanse}(\synbrack{e})
    \IsUndefApp.\]

  \item If $e$ is a type, term, or formula and $M \models
    \mname{cleanse}(\synbrack{e}) \IsDefApp$, then \[M \models
    \sembrack{\mname{cleanse}(\synbrack{e})}_{k[e]} = e.\]

\ee
\end{clem}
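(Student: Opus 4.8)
The plan is to establish parts 1--4 simultaneously by induction on the length $|e|$ of $e$ (equivalently, on the complexity $c(e)$), with a case analysis matching the defining schemas of $\mname{cleanse}$: the seven ``structural'' cases (operator, operator application, variable, variable binder, type application, function application, conditional term), the quotation case, and the three evaluation cases; the improper case is irrelevant to parts 1, 4 and trivial for parts 2, 3. Alongside parts 1--4 I would carry one auxiliary invariant, which is what part 3 really needs: if $M \models \mname{free-in}(\synbrack{x},\synbrack{e})$ and $\mname{cleanse}(\synbrack{e})$ is defined, then $M \models \mname{free-in}(\synbrack{x},\mname{cleanse}(\synbrack{e}))$ --- i.e., cleansing does not destroy the syntactic freeness of a variable unless it destroys definedness. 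The external tools are: the proposition of subsection~\ref{subsec:quasiquote} that $G(\mname{quasiquote},m)$ denotes the quotation of the expression obtained by filling $m$'s evaluated components and is defined iff all of those components are defined; Lemma~\ref{lem:qq-sc} (a quasiquotation all of whose evaluated components are semantically closed in $M$ is semantically closed); Lemma~\ref{lem:eval-qq} (evaluation distributes over quasiquotations); Lemma~\ref{lem:gea} (the value of an evaluation guarded by $\mname{gea}$); Lemma~\ref{lem:fv} (syntactic non-freeness implies semantic closure); Lemma~\ref{lem:eval-free} ($\mname{cleanse}$ is the identity on eval-free expressions and $\mname{free-in}$ is decided there); and Proposition~\ref{prop:quotation} (quotations are syntactically closed).

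In each structural case $\mname{cleanse}(\synbrack{e})$ is a quasiquotation of a fixed template whose evaluated components are the cleansed immediate subexpressions $\widehat{e_i}=\mname{cleanse}(\synbrack{e_i})$. For part 1, each $\widehat{e_i}$, when defined, represents an eval-free expression of the appropriate sort by the induction hypothesis, so the reassembled expression is eval-free and of the same sort as $e$ (an operator with the same name in the operator case, hence ``similar''); if some $e_i$ is a type, term, or formula and $\widehat{e_i}$ is undefined, undefinedness propagates by the quasiquotation proposition. Part 2 is Lemma~\ref{lem:qq-sc} applied to the $\widehat{e_i}$, which are semantically closed by the induction hypothesis. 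Part 3: if the offending $\sembrack{e'}_k$ lies inside some $e_i$, the induction hypothesis (part 3 for $e_i$) makes $\widehat{e_i}$ undefined, whence $\mname{cleanse}(\synbrack{e})$ is undefined. Part 4: push the outer $\sembrack{\cdot}_{k[e]}$ through the template with Lemma~\ref{lem:eval-qq}, reducing $\sembrack{\mname{cleanse}(\synbrack{e})}_{k[e]}$ to the template applied to the values $\sembrack{\widehat{e_i}}_{k[e_i]}$, which equal $V_\phi(e_i)$ by the induction hypothesis; reassembly yields $V_\phi(e)$. The quotation case $e=\synbrack{e'}$ is immediate: $\mname{cleanse}(\synbrack{e})=\synbrack{\synbrack{e'}}$ is a quotation, hence eval-free, syntactically closed (Proposition~\ref{prop:quotation}), and contains no evaluation outside a quotation (part 3 vacuous), while $\sembrack{\synbrack{\synbrack{e'}}}_{\rm te}=H(e')=V_\phi(\synbrack{e'})=V_\phi(e)$ by the semantics of evaluation on the eval-free term $\synbrack{e'}$.

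The evaluation cases are the crux. For $e=\sembrack{a}_{\rm ty}$ (and symmetrically for $\sembrack{a}_\alpha$ and $\sembrack{a}_{\rm fo}$) we have $\mname{cleanse}(\synbrack{e})=\If(\mname{syn-closed}(\widehat{a}),w,\Undefined_{\sf C})$ with $\widehat{a}=\mname{cleanse}(\synbrack{a})$ and $w$ the quasiquotation of a conditional $\mname{if}(\mname{gea}(\cdots),(\cdots),c_0)$ built from $\widehat{a}$, a coercion of $\sembrack{\widehat{a}}$, and a closed constant $c_0$. Split on whether $\widehat{a}$ is defined and syntactically closed. If it is not, $\mname{cleanse}(\synbrack{e})=\Undefined_{\sf C}$: parts 1 and 4 are vacuous, part 2 is trivial, and for part 3 one checks, using the auxiliary invariant and the induction hypothesis, that an offending evaluation in $e$ --- in particular $x$ free in $a$ when $e$ is itself that evaluation, via $\mname{cleanse}$ being the identity on eval-free $a$ (Lemma~\ref{lem:eval-free}) and the invariant otherwise --- forces $\widehat{a}$ to be undefined or not syntactically closed, and that this undefinedness propagates through every surrounding construct (quasiquotations by the quasiquotation proposition, the $\mname{syn-closed}$-guarded conditionals by strictness). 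If $\widehat{a}$ is defined and syntactically closed, then $\mname{cleanse}(\synbrack{e})=w$; here the induction hypothesis parts 1 and 4 applied to $a$ (a proper subexpression, so $c(a)<c(e)$) identify the value of $\sembrack{\widehat{a}}$ with $V_\phi(a)$, and Lemma~\ref{lem:gea} shows that $\sembrack{w}_{k[e]}$ reproduces $V_\phi(\sembrack{a}_{\rm ty})$: when $\mname{gea}$ holds the conditional selects the branch representing $a$'s value, and when it fails both that branch and $c_0$ evaluate to the undefined value for kind $k[e]$, which is exactly what an ungrounded $\sembrack{a}_{\rm ty}$ denotes. Part 2 again comes from Lemma~\ref{lem:qq-sc}; part 1's eval-freeness and closedness are read off the shape of $w$, the decisive point being that the coercion operators $\mname{coerce-to-type}$, $\mname{coerce-to-term}$, $\mname{coerce-to-formula}$ fail --- forcing $w$, hence $\mname{cleanse}(\synbrack{e})$, undefined --- precisely in the situations in which $a$'s represented value is not of the appropriate sort, so that whenever $\mname{cleanse}(\synbrack{e})$ is defined the material drawn from $\sembrack{\widehat{a}}$ is itself (the construction of) an eval-free, syntactically closed expression.

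I expect the main obstacle to be exactly this reconciliation in the evaluation cases: the syntactic obligations of parts 1--3 (eval-freeness, syntactic closedness, preservation of freeness of $x$) against the fact that cleansing an evaluation $\sembrack{a}_k$ produces a quasiquotation that re-mentions material derived from $a$ and from $\sembrack{\widehat{a}}$. Verifying that the $\mname{syn-closed}$ guard together with the coercion guards render $\mname{cleanse}(\synbrack{e})$ undefined in all the ``bad'' situations, and leave only eval-free, syntactically closed material otherwise, is where the detailed bookkeeping lives, and it is where the interplay with Lemma~\ref{lem:gea}, Lemma~\ref{lem:eval-free}, and the induction hypotheses on $a$ is indispensable; the structural cases and the quotation case, by contrast, should be routine applications of Lemmas~\ref{lem:qq-sc} and~\ref{lem:eval-qq} and the quasiquotation proposition.
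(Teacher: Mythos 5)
Your proposal is correct and follows essentially the same route as the paper: induction on the complexity of $e$ with a case split over the defining schemas of \mname{cleanse}, using Lemma~\ref{lem:qq-sc} for part 2, Lemmas~\ref{lem:eval-qq} and \ref{lem:gea} together with part 1 for part 4, and Proposition~\ref{prop:quotation} and Lemma~\ref{lem:eval-free} where the paper does. The only organizational difference is that you run all four parts (plus an explicit freeness-preservation invariant backing part 3) as one simultaneous induction, whereas the paper proves parts 2 and 4 by separate inductions and treats parts 1 and 3 as immediate; your invariant is a reasonable way of spelling out what the paper leaves implicit in part 3.
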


\begin{proof} 
Let $A(\synbrack{e})$ mean $\mname{cleanse}(\synbrack{e})$.

\bigskip

\noindent \textbf{Part 1} \sglsp Follows straightforwardly by
induction on the complexity of $e$.

\bigskip

\noindent \textbf{Part 2} \sglsp Our proof is by induction on the
complexity of $e$.  We must show that $A(\synbrack{e})$ is
semantically closed in $M$.  There are 12 cases corresponding to the
12 formula schemas used to defined $\mname{cleanse}(\synbrack{e})$
when $e$ is an expression.

\be

  \item[] \textbf{Case 1}: $e$ is improper.  By the definition of
    \mname{cleanse}, $A(\synbrack{e}) = \synbrack{e}$.  The last
    expression is a quotation and is thus semantically closed in $M$
    by Proposition~\ref{prop:quotation} and
    Lemma~\ref{lem:syn-closed}.  Therefore, $A(\synbrack{e})$ is
    semantically closed in $M$.

  \item[] \textbf{Cases 2--8}: $A(\synbrack{e})$ is
    semantically closed in $M$ by the induction hypothesis, the
    definition of \mname{cleanse}, and Lemma~\ref{lem:qq-sc}.

\iffalse
  \item[] \textbf{Case 5}: $e = (\StarApp {x\mcolon\alpha} \mdot e')$
    where $\star$ is $\Lambda$, $\lambda$, $\Forsome$, $\iota$, or
    $\epsilon$.  By the
    definition of \mname{cleanse},
    \begin{eqnarray*}
    & & \mname{cleanse}((\StarApp {x\mcolon\alpha} \mdot e')) \\
    & = & \synbrack{(\StarApp 
      {x \mcolon \commabrack{\mname{cleanse}(\synbrack{\alpha})}} \mdot 
      \commabrack{\If(\Neg\mname{free-in}(\synbrack{x},\synbrack{\widehat{e'}}),
      \widehat{e'},
      \Undefined_{\sf C})})}
    \end{eqnarray*}
    in $M$ where $\widehat{e'} = \mname{cleanse}(\synbrack{e'})$.
    $\mname{cleanse}(\synbrack{\alpha})$ and
    \[\If(\Neg\mname{free-in}(\synbrack{x},\synbrack{\widehat{e'}}),
    \widehat{e'}, \Undefined_{\sf C})\] are semantically closed in $M$
    by the induction hypothesis.  Therefore,
    $\mname{cleanse}(\synbrack{e})$ is semantically closed in $M$ by
    Lemma~\ref{lem:qq-sc}.
\fi

  \item[] \textbf{Case 9}: $e = \synbrack{e'}$.  By the definition of
    \mname{cleanse}, \[A(\synbrack{e}) = A(\synbrack{\synbrack{e'}}) =
    \synbrack{\synbrack{e'}}\] in $M$.  The last expression is a
    quotation and is thus semantically closed in $M$ by
    Proposition~\ref{prop:quotation} and Lemma~\ref{lem:syn-closed}.
    Therefore, $A(\synbrack{e})$ is semantically closed in $M$.

  \item[] \textbf{Case 10}: $e = \sembrack{a}_{\rm ty}$.  By the
    definition of \mname{cleanse}, \[A(\synbrack{e}) =
    \If(\mname{syn-closed}(\widehat{a}),w,\Undefined_{\sf C})\] \bsp
    in $M$ where $\widehat{a} = A(\synbrack{a})$, $\widehat{a}' =
    \mname{coerce-to-type}(\sembrack{\widehat{a}}_{\rm te})$, and $w =
    \synbrack{\mname{if}(\mname{gea}(a,\synbrack{\mname{type}}),
      \commabrack{\widehat{a}'}, \mname{C})}$. If $M \models
    A(\synbrack{e})\IsUndefApp$, then $A(\synbrack{e})$ is
    semantically closed in $M$.  Therefore, it suffices to show that
    $w$ is semantically closed in $M$ under the assumption $M \models
    \mname{syn-closed}(\widehat{a})$.  Then $\widehat{a}'$ is
    semantically closed in $M$ by Lemma~\ref{lem:syn-closed}. This
    implies $w$ is semantically closed in $M$ by
    Lemma~\ref{lem:qq-sc}.\esp

  \item[] \textbf{Case 11}: $e = \sembrack{a}_\alpha$. Similar to
    case~10.

  \item[] \textbf{Case 12}: $e = \sembrack{a}_{\rm fo}$.  Similar to
    case~10.

\ee

\noindent \textbf{Part 3} \sglsp Follows immediately from the
definition of \mname{cleanse}.

\bigskip

\noindent \textbf{Part 4} \sglsp Let $e$ be a type, term, or formula.
Assume \[M \models A(\synbrack{e})\IsDefApp \dblsp \mbox{[designated
    $H(\synbrack{e})$]}.\] We must show that \[M \models
\sembrack{A(\synbrack{e})}_{k[e]} = e \dblsp \mbox{[designated
    $C(\synbrack{e})$]}.\] Our proof is by induction on the complexity
of $e$.  There are 10 cases corresponding to the 10 formula schemas
used to defined $\mname{cleanse}(\synbrack{e})$ when $e$ is a type,
term, or formula.

\be

  \item[] \textbf{Case 1}: $e = O(e_1,\ldots,e_n)$ and $O =
    (o::k_1,\ldots,k_{n+1})$.  By the definition of \mname{cleanse},
    $H(\synbrack{e})$ implies (1)~$H(\synbrack{k_i})$ for all $i$ with
    $1 \le i \le n + 1$ and $\ctype{L}{k_i}$ and (2)
    $H(\synbrack{e_i})$ for all $i$ with $1 \le i \le n$.  By the
    induction hypothesis, this implies (1)~$C(\synbrack{k_i})$ for all
    $i$ with $1 \le i \le n + 1$ and $\ctype{L}{k_i}$ and (2)
    $C(\synbrack{e_i})$ for all $i$ with $1 \le i \le n$.  Let $\phi
    \in \mname{assign}(S)$.  Then
    \begin{eqnarray*}
    & & V_\phi(\sembrack{A(\synbrack{e})}_{k[e]}) \\
    & = & V_\phi(\sembrack{A(\synbrack{O(e_1,\ldots,e_n)})}_{k[e]}) \\
    & = & V_\phi(\sembrack{\synbrack{\commabrack{A(\synbrack{O})}
    (\commabrack{A(\synbrack{e_1})}, \ldots, 
    \commabrack{A(\synbrack{e_n})})}}_{k[e]}) \\
    & = & V_\phi(\overline{A(\synbrack{O})}
    (\sembrack{A(\synbrack{e_1})}_{k[e_1]}, \ldots, 
    \sembrack{A(\synbrack{e_n})}_{k[e_n]})) \\
    & = & V_{\phi}(O(e_1,\ldots,e_n)) \\
    & = & V_{\phi}(e)    
    \end{eqnarray*}
    where \[\overline{A(\synbrack{O})} =
    (o::\overline{A(\synbrack{k_1})},\ldots,
    \overline{A(\synbrack{k_{n+1}})})\] and
    \[\overline{A(\synbrack{k_i})} = 
    \left\{\begin{array}{ll}
             \sembrack{A(\synbrack{k_i})}_{\rm ty} & \mbox{if }\ctype{L}{k_i} \\
             k_i & \mbox{if }k_i=\mname{type}\mbox{ or } \mname{formula}.
            \end{array}
    \right.\] \bsp The third line is by the definition of
    \mname{cleanse}; the fourth is by Lemma~\ref{lem:eval-qq} and part
    1 of this lemma; and the fifth holds since (1) $O$ and
    $\overline{A(\synbrack{O})}$ are similar by part 1 of this lemma,
    (2) $C(\synbrack{k_i})$ holds for all $i$ with $1 \le i \le n + 1$
    and $\ctype{L}{k_i}$, and (3) $C(\synbrack{e_i})$ holds for all $i$
    with $1 \le i \le n$.  Therefore, $C(\synbrack{e})$ holds. \esp

  \item[] \textbf{Cases 2, 4--6}: Similar to case 1.

  \item[] \textbf{Case 3}: $e = (\StarApp {x\mcolon\alpha} \mdot e')$
    where $\star$ is $\Lambda$, $\lambda$, $\Forsome$, $\iota$, or
    $\epsilon$.  By the definition of \mname{cleanse},
    $H(\synbrack{e})$ implies $H(\synbrack{\alpha})$ and
    $H(\synbrack{e'})$.  By the induction hypothesis, this implies
    $C(\synbrack{\alpha})$ and $C(\synbrack{e'})$.  Then
    \begin{eqnarray*}
    & & V_\phi(\sembrack{A(\synbrack{e})}_{k[e]}) \\
    & = & V_\phi(\sembrack{A(\synbrack{(\StarApp 
      {x\mcolon\alpha} \mdot e')})}_{k[e]}) \\
    & = & V_\phi(\sembrack{\synbrack{(\StarApp 
      {x \mcolon \commabrack{A(\synbrack{\alpha})}} \mdot 
      \commabrack{A(\synbrack{e'})})}}_{k[e]})\\
    & = & V_\phi((\StarApp 
      {x \mcolon \sembrack{A(\synbrack{\alpha})}_{\rm ty}} \mdot 
      \sembrack{A(\synbrack{e'})}_{k[e']})) \\
    & = & V_\phi((\StarApp {x \mcolon \alpha} \mdot e')) \\
    & = & V_\phi(e)
    \end{eqnarray*}
    \bsp The third line is by the definition of \mname{cleanse}; the
    fourth is by Lemma~\ref{lem:eval-qq} and parts 1 and 2 of this
    lemma; and the fifth is by $C(\synbrack{\alpha})$ and
    $C(\synbrack{e'})$.  Therefore, $C(\synbrack{e})$ holds.\esp

  \item[] \textbf{Case 7}: $e = \synbrack{e'}$.  Let $\phi \in
    \mname{assign}(S)$.  Then
    \begin{eqnarray*}
    & & V_\phi(\sembrack{A(\synbrack{e})}_{k[e]}) \\
    & = & V_\phi(\sembrack{A(\synbrack{\synbrack{e'}})}_{\rm te}) \\
    & = & V_\phi(\sembrack{\synbrack{\synbrack{e'}}}_{\rm te}) \\
    & = & V_\phi(\synbrack{e'}) \\
    & = & V_\phi(e).
    \end{eqnarray*}
    The third line is by the definition of \mname{cleanse} and the
    fourth is by Lemma~\ref{lem:gea}.  Therefore, $C(\synbrack{e})$
    holds.

  \item[] \bsp \textbf{Case 8}: $e = \sembrack{a}_{\rm ty}$.  Let
    $\phi \in \mname{assign}(S)$.  Suppose $V_\phi(\mname{gea}(a,
    \synbrack{\mname{type}})) = \TRUE$.  By the definition of
    \mname{cleanse}, $H(\synbrack{e})$ implies $H(\synbrack{a})$.  By
    the induction hypothesis, this implies $C(\synbrack{a})$.  Let $u
    = \mname{coerce-to-type}(\sembrack{A(\synbrack{a})}_{\rm te})$.
    Then
    \begin{eqnarray*}
    & & V_\phi(\sembrack{A(\synbrack{e})}_{k[e]}) \\
    & = & V_\phi(\sembrack{A(\synbrack{\sembrack{a}_{\rm ty}})}_{\rm ty}) \\
    & = & V_\phi(\sembrack{\If(\mname{syn-closed}(A(\synbrack{a})), \\ & &
      \hspace{2ex}\synbrack{\If(\mname{gea}(a,\synbrack{\mname{type}}),
      \commabrack{u}, 
      \mname{C})}, \\ & &
      \hspace{2ex}\Undefined_{\sf C})}_{\rm ty}) \\
    & = & V_\phi(\sembrack{
      \synbrack{\If(\mname{gea}(a,\synbrack{\mname{type}}),
      \commabrack{u}, 
      \mname{C})}}_{\rm ty}) \\
    & = & V_\phi(\mname{if}(\mname{gea}(a,\synbrack{\mname{type}}),
      \sembrack{u}_{\rm ty},
      \mname{C})) \\
    & = & V_\phi(\sembrack{\mname{coerce-to-type}
      (\sembrack{A(\synbrack{a})}_{\rm te})}_{\rm ty}) \\
    & = & V_\phi(\sembrack{\mname{coerce-to-type}(a)}_{\rm ty}) \\
    & = & V_\phi(\sembrack{a}_{\rm ty}) \\
    & = & V_\phi(e).
    \end{eqnarray*}
    The third line is by the definition of \mname{cleanse}; the fourth
    is by $H(\synbrack{e})$; the fifth is by Lemma~\ref{lem:eval-qq}
    and part 1 of this lemma; the sixth by the hypothesis that
    $V_\phi(\mname{gea}(a, \synbrack{\mname{type}})) = \TRUE$ and the
    definition of $u$; the seventh is by $C(\synbrack{a})$; and the
    eighth is by $V_\phi(\mname{gea}(a, \synbrack{\mname{type}})) =
    \TRUE$.  Therefore, $C(\synbrack{e})$ holds.

    Now suppose $V_\phi(\mname{gea}(a, \synbrack{\mname{type}})) =
    \FALSE$. By a similar derivation to the one above,
    $C(\synbrack{e})$ holds. \esp

  \item[] \bsp \textbf{Case 9}: $e = \sembrack{a}_\alpha$.  Let $\phi
    \in \mname{assign}(S)$.  Suppose $V_\phi(\mname{gea}(a,
    \synbrack{\alpha}) = \TRUE$ and $V_\phi(\sembrack{a}_{\rm te}
    \IsDef \alpha) = \TRUE$.  By the definition of \mname{cleanse},
    $H(\synbrack{e})$ implies $H(\synbrack{\alpha})$ and
    $H(\synbrack{a})$.  By the induction hypothesis, this implies
    $C(\synbrack{\alpha})$ and $C(\synbrack{a})$.  Let $u =
    \mname{coerce-to-term}(\sembrack{A(\synbrack{a})}_{\rm te})$.
    Then
    \begin{eqnarray*}
    & & V_\phi(\sembrack{A(\synbrack{e})}_{k[e]}) \\
    & = & V_\phi(\sembrack{A(\synbrack{\sembrack{a}_\alpha})}_{\rm te}) \\
    & = & V_\phi(\sembrack{\If(\mname{syn-closed}(A(\synbrack{a})), \\ & &
      \hspace{2ex}\synbrack{\If(\mname{gea}(a,\synbrack{\alpha})
      \And \commabrack{u} \IsDef \commabrack{A(\synbrack{\alpha})},
      \commabrack{u}, 
      \Undefined_{\sf C})}, \\ & &
      \hspace{2ex}\Undefined_{\sf C})}_{\rm te}) \\
    & = & V_\phi(\sembrack{\synbrack{\If(\mname{gea}(a,\synbrack{\alpha})
      \And \commabrack{u} \IsDef \commabrack{A(\synbrack{\alpha})},
      \commabrack{u}, 
      \Undefined_{\sf C})}}_{\rm te}) \\
    & = & V_\phi(\If(\mname{gea}(a,\synbrack{\alpha})
      \And \sembrack{u}_{\rm te} \IsDef 
      \sembrack{A(\synbrack{\alpha})}_{\rm ty},
      \sembrack{u}_{\rm te}, 
      \Undefined_{\sf C})) \\
    & = & V_\phi(
      \If(\sembrack{\mname{coerce-to-term}
      (\sembrack{A(\synbrack{a})}_{\rm te})}_{\rm te} \IsDef 
      \sembrack{A(\synbrack{\alpha})}_{\rm ty}, \\ & &
      \hspace{2ex}\sembrack{\mname{coerce-to-term}
      (\sembrack{A(\synbrack{a})}_{\rm te})}_{\rm te}, \\ & &
      \hspace{2ex}\Undefined_{\sf C})) \\
    & = & V_\phi(
      \If(\sembrack{\mname{coerce-to-term}(a)}_{\rm te} \IsDef \alpha, \\ & &
      \hspace{2ex}\sembrack{\mname{coerce-to-term}(a)}_{\rm te}, \\ & &
      \hspace{2ex}\Undefined_{\sf C})) \\
    & = & V_\phi(
      \If(\sembrack{a}_{\rm te} \IsDef \alpha, 
      \sembrack{a}_{\rm te}, 
      \Undefined_{\sf C})) \\
    & = & V_\phi(\sembrack{a}_\alpha) \\
    & = & V_\phi(e).
    \end{eqnarray*}
    The third line is by the definition of \mname{cleanse}; the fourth
    is by $H(\synbrack{e})$; the fifth is by Lemma~\ref{lem:eval-qq}
    and part 1 of this lemma; the sixth by the hypothesis that
    $V_\phi(\mname{gea}(a, \synbrack{\mname{type}})) = \TRUE$ and the
    definition of $u$; the seventh is by $C(\synbrack{a})$ and
    $C(\synbrack{\alpha})$; the eighth is by $V_\phi(\mname{gea}(a,
    \synbrack{\mname{type}})) = \TRUE$; and the ninth is by
    $V_\phi(\sembrack{a}_{\rm te} \IsDef \alpha) = \TRUE$.  Therefore,
    $C(\synbrack{e})$ holds.

    Now suppose $V_\phi(\mname{gea}(a, \synbrack{\mname{type}}))=
    \FALSE$ or $V_\phi(\sembrack{a}_{\rm te} \IsDef \alpha) = \FALSE$.
    By a similar derivation to the one above, $C(\synbrack{e})$
    holds. \esp

  \item[] \textbf{Case 10}: $e = \sembrack{a}_{\rm fo}$.
    Similar to case~8.

\ee
\end{proof}

%\newpage

\begin{clem} [Substitution A] \label{lem:sub-a} \bsp
Let $M=(S,V)$ be a standard model for a normal theory $T=(L,\Gamma)$
and $a$ be a term, $x$ be a symbol, and $e$ be an expression of $L$.

\be

  \item If $e$ is an operator, type, term, or formula and
    $V_\phi(\mname{sub}(\synbrack{a}, \synbrack{x}, \synbrack{e}))
    \not= \Undefined$, then \[H^{-1}(V_\phi(\mname{sub}(\synbrack{a},
    \synbrack{x}, \synbrack{e})))\] is an operator similar to $e$,
    type, term, or formula, respectively, that is eval-free for all
    $\phi \in \mname{assign}(S)$.

  \item $\mname{sub} (\synbrack{a}, \synbrack{x}, \synbrack{e})$ is
    semantically closed in $M$.

  \item If $e$ is an operator, type, term, or formula that contains an
    evaluation $\sembrack{e'}_k$ not in a quotation and $M \models
    \mname{free-in}(\synbrack{y}, \mname{sub} (\synbrack{a},
    \synbrack{x},\synbrack{e'}))$ for some $y \in \sS$, then \[M
    \models \mname{sub} (\synbrack{a}, \synbrack{x}, \synbrack{e})
    \IsUndefApp.\]

  \item If $e$ is a type, term, or formula, $M \models \mname{sub}
    (\synbrack{a}, \synbrack{x}, \synbrack{e}) \IsDefApp$, and $M
    \models\Neg \mname{free-in}(\synbrack{x},\synbrack{e})$,
    then \[M \models \sembrack{\mname{sub} (\synbrack{a},
      \synbrack{x}, \synbrack{e})}_{k[e]} = e.\]

\ee \esp
\end{clem}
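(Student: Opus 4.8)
The plan is to prove the four parts simultaneously by induction on the complexity $c(e)$, following closely the proof of Lemma~\ref{lem:cleanse}, since $\mname{sub}(\synbrack{a},\synbrack{x},\synbrack{e})$ is defined by the same family of formula schemas as $\mname{cleanse}(\synbrack{e})$ together with the extra handling of the variable-occurrence cases $\synbrack{(x\mcolon\alpha)}$ and $\synbrack{(y\mcolon\alpha)}$. I would establish parts 1 and 2 first, since eval-freeness and semantic closedness of $\mname{sub}$ results are what make the evaluations appearing in parts 3 and 4 well behaved, then part 3, then part 4. In each induction the case split is over the schemas defining $\mname{sub}(\synbrack{a},\synbrack{x},\synbrack{e})$, for $e$ an arbitrary expression in parts 1--3 and for $e$ a type, term, or formula in part 4.

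For part 1, in each compound case $\mname{sub}(\synbrack{a},\synbrack{x},\synbrack{e})$ is (the value of) a quasiquotation whose evaluated components are $\mname{sub}$ or $\mname{cleanse}$ applied to proper subexpressions of $e$; by the induction hypothesis and Lemma~\ref{lem:cleanse}(1) these denote eval-free expressions of the appropriate sorts (similar operators where relevant), so the assembled expression is eval-free and of the sort dictated by the schema. In the case $\synbrack{(x\mcolon\alpha)}$ the result is an eval-free conditional term, and in the three quoted-evaluation cases it is either $\Undefined_{\sf C}$ or an eval-free conditional term built from $\widehat{b}$ and $\widehat{b}'$, both eval-free by part 1 and Lemma~\ref{lem:cleanse}(1) applied to $\mname{coerce-to-type}$, $\mname{coerce-to-term}$, $\mname{coerce-to-formula}$. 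For part 2, each $\mname{sub}$ result is either a quotation --- semantically closed by Proposition~\ref{prop:quotation} and Lemma~\ref{lem:syn-closed} --- or a quasiquotation whose evaluated components are semantically closed by the induction hypothesis and Lemma~\ref{lem:cleanse}(2), whence Lemma~\ref{lem:qq-sc} gives semantic closedness; in the evaluation cases the wrapper $\If(\mname{syn-closed}(\widehat{b}),w,\Undefined_{\sf C})$ is harmless since on its false branch the value is $\Undefined_{\sf C}$ and on its true branch $w$ is a quasiquotation whose evaluated components are semantically closed.

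Part 3 reduces to the three quoted-evaluation schemas together with strictness of list building. When $e$ is itself $\sembrack{e'}_k$, the schema gives $\mname{sub}(\synbrack{a},\synbrack{x},\synbrack{e}) = \If(\mname{syn-closed}(\widehat{e'}),w,\Undefined_{\sf C})$ with $\widehat{e'} = \mname{sub}(\synbrack{a},\synbrack{x},\synbrack{e'})$, and the hypothesis $M \models \mname{free-in}(\synbrack{y},\widehat{e'})$ makes $\mname{syn-closed}(\widehat{e'})$ false, so the value is $\Undefined_{\sf C}$. When $\sembrack{e'}_k$ occurs as a proper subexpression of $e$ not inside a quotation, the induction hypothesis makes $\mname{sub}$ of the containing subexpression undefined, and since the quasiquotation-building function $G$ assembles results using $[\,\cdot\,]$, which is strict with respect to undefinedness, undefinedness propagates up to $\mname{sub}(\synbrack{a},\synbrack{x},\synbrack{e})$.

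Part 4 is the computational core and the main obstacle. Assuming $M \models \mname{sub}(\synbrack{a},\synbrack{x},\synbrack{e})\IsDefApp$ and $M \models \Neg\mname{free-in}(\synbrack{x},\synbrack{e})$, I would induct on $c(e)$ and compute $V_\phi(\sembrack{\mname{sub}(\synbrack{a},\synbrack{x},\synbrack{e})}_{k[e]})$. In the operator-application, type- and function-application, conditional, and $y\not=x$ binder cases, the defining schemas for $\mname{free-in}$ propagate $\Neg\mname{free-in}(\synbrack{x},-)$ to the relevant subexpressions, the induction hypothesis rewrites their $\sembrack{\mname{sub}(\cdots)}$ back to themselves, and Lemma~\ref{lem:eval-qq} together with part 1 collapses the evaluation of the assembled quasiquotation --- exactly as in the proof of Lemma~\ref{lem:cleanse}(4). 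The binder case with bound variable $x$ is special: there $\mname{sub}$ uses $\mname{cleanse}(\synbrack{e'})$ on the body rather than $\mname{sub}$, so I would invoke Lemma~\ref{lem:cleanse}(4) for the body (its definedness hypothesis following from definedness of $\mname{sub}(\synbrack{a},\synbrack{x},\synbrack{e})$ by strictness) and the induction hypothesis for the type. The three quoted-evaluation cases $e = \sembrack{b}_k$ are the hardest: definedness of $\mname{sub}(\synbrack{a},\synbrack{x},\synbrack{e})$ forces $\mname{syn-closed}(\widehat{b})$ true, $\Neg\mname{free-in}(\synbrack{x},\synbrack{e})$ yields both $\Neg\mname{free-in}(\synbrack{x},\synbrack{b})$ and ``$x$ not free in the evaluated expression'', and one then unwinds the value using Lemma~\ref{lem:eval-qq}, Lemma~\ref{lem:gea} (to dispose of the $\mname{gea}$ guard), part 1 (eval-freeness of $\widehat{b}'$), and the induction hypothesis together with Lemma~\ref{lem:cleanse} and Lemma~\ref{lem:eval-free}. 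The one point requiring genuine care is that the recursive $\mname{sub}$ call on $\sembrack{\widehat{b}}_{\rm te}$ must be at strictly smaller complexity than $e$; this holds because $\widehat{b}$ is eval-free by part 1, so $\sembrack{\widehat{b}}_{\rm te}$ has complexity below that of $e = \sembrack{b}_k$, and the induction hypothesis then legitimately applies. I expect essentially all of the work --- and the only real risk of a gap --- to lie in the bookkeeping for these evaluation cases and the verification that complexity strictly decreases there; the remaining cases should be routine adaptations of the corresponding cases in the proof of Lemma~\ref{lem:cleanse}.
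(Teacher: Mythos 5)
Your proposal follows essentially the same route as the paper's proof: induction on the complexity of $e$ over the formula schemas defining \mname{sub}, with parts 1 and 2 handled exactly as in Lemma~\ref{lem:cleanse} (via its parts 1 and 2, Proposition~\ref{prop:quotation}, Lemmas~\ref{lem:syn-closed} and \ref{lem:qq-sc}), part 3 read off from the definition of \mname{sub}, and part 4 unwound through Lemma~\ref{lem:eval-qq}, Lemma~\ref{lem:gea}, part 4 of Lemma~\ref{lem:cleanse} for the binder case binding $x$, and the induction hypothesis in the quoted-evaluation cases. The complexity-decrease point you flag for the recursive call involving $\sembrack{\widehat{b}}_{\rm te}$ is handled just as you suggest (eval-freeness supplied by part 1), which is also what the paper's proof implicitly relies on.
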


\begin{proof} 
Let $S(\synbrack{e})$ mean \[\mname{sub}
(\synbrack{a}, \synbrack{x}, \synbrack{e})\] and
$E(\synbrack{e})$ mean \[H^{-1}(V_\phi(S(\synbrack{e}))).\]

\bigskip

\noindent \textbf{Part 1a} \sglsp Fix $\phi \in \mname{assign}(S)$.
Assume $e$ is an operator, type, term, or formula and
$V_\phi(S(\synbrack{e})) \not= \Undefined$.  We must show
$E(\synbrack{e})$ is an operator similar to $e$, type, term, or
formula, respectively.  Our proof is by induction on the complexity of
$e$.  There are 13 cases corresponding to the 13 formula schemas used
to define $\mname{sub}(\synbrack{e_1}, \synbrack{e_2},
\synbrack{e_3})$ when $e_1$ is a term, $e_2$ is a symbol, and $e_3$
is a proper expression.

\be

  \item[] \textbf{Case 1}: $e = (o::k_1,\ldots,k_{n+1})$.  By the
    first formula schema of the definition of $\mname{sub}$,
    $V_\phi(S(\synbrack{\mname{type}})) =
    V_\phi(\synbrack{\mname{type}})$ and
    $V_\phi(S(\synbrack{\mname{formula}})) =
    V_\phi(\synbrack{\mname{formula}})$, and so
    $E(\synbrack{\mname{type}}) = \mname{type}$ and
    $E(\synbrack{\mname{formula}}) = \mname{formula}$.  By the
    second formula schema of the definition of $\mname{sub}$,
    \begin{eqnarray*}
    & & E(\synbrack{e}) \\
    & = & H^{-1}(V_\phi(S(\synbrack{e}))) \\
    & = & H^{-1}(V_\phi(\synbrack{(o::\commabrack{S(\synbrack{k_1})},
      \ldots, \commabrack{S(\synbrack{k_{n+1}})})})) \\
    & = & H^{-1}(V_\phi(\synbrack{(o::E(\synbrack{k_1}),
      \ldots,E(\synbrack{k_{n+1}}))})) \\
    & = & (o::E(\synbrack{k_1}),\ldots,E(\synbrack{k_{n+1}})).
    \end{eqnarray*}
    By the induction hypothesis, for all $i$ with $1 \le i \le n + 1$
    and $\ctype{L}{k_i}$, $E(\synbrack{k_i})$ is a type. Therefore, if
    $e$ is an operator, then $E(\synbrack{e})$ is an operator similar
    to $e$.

  \item[] \textbf{Case 2}: $e = O(e_1,\ldots,e_n)$ and $O =
    (o::k_1,\ldots,k_{n+1})$.  By the third formula schema of the
    definition of $\mname{sub}$,
    \begin{eqnarray*}
    & & E(\synbrack{e}) \\
    & = & H^{-1}(V_\phi(S(\synbrack{e})))\\
    & = & H^{-1}(V_\phi(\synbrack{\commabrack{S(\synbrack{O})}
    (\commabrack{S(\synbrack{e_1})}, \ldots, 
    \commabrack{S(\synbrack{e_n})})})) \\
    & = & H^{-1}(V_\phi(\synbrack{E(\synbrack{O})(E(\synbrack{e_1}), 
       \ldots, E(\synbrack{e_n}))})) \\
    & = & E(\synbrack{O})(E(\synbrack{e_1}), \ldots, E(\synbrack{e_n})).
    \end{eqnarray*}
    By the induction hypothesis, (1) $E(\synbrack{O})$ is an operator
    similar to $O$ and (2) for all $i$ with $1 \le i \le n$, if $e_i$
    is a type, term, or formula, then $E(\synbrack{e_i})$ is a type,
    term, or formula, respectively.  Therefore, if $e$ is a type,
    term, or formula, then $E(\synbrack{e})$ is a type, term, or a
    formula, respectively.

  \item[] \bsp \textbf{Cases 3--13}. Similar to case 2.  cases 3 and 5
  use part 1 of Lemma~\ref{lem:cleanse}.\esp

\ee

\noindent \textbf{Part 1b} \sglsp Fix $\phi \in \mname{assign}(S)$.
Assume $e$ is an operator, type, term, or formula and
$V_\phi(\mname{sub}(\synbrack{a}, \synbrack{x}, \synbrack{e}))
\not= \Undefined$.  We must show $E(\synbrack{e})$ is eval-free.  Our
proof is by induction on the complexity of $e$.  There are 13 cases as
for part 1a.

\be

  \item[] \textbf{Cases 1--2, 4, 6--9}. Each \mname{eval} symbol
    occurring in $e$ that is not in a quotation is removed by the
    definition of \mname{sub}.  Hence any \mname{eval} symbol
    occurring in $E(\synbrack{e})$ that is not in a quotation must
    have been introduced explicitly in $S(\synbrack{e})$ or
    implicitly in $S(\synbrack{e})$ via a subcomponent
    $S(\synbrack{e'})$ of $S(\synbrack{e})$ where $e'$ is a
    component of $e$.  In these cases, no \mname{eval} symbols are
    explicitly introduced in $S(\synbrack{e})$ by the definition
    of \mname{sub}, and no \mname{eval} symbols are implicitly
    introduced in $S(\synbrack{e})$ by the induction hypothesis.
    Therefore, $E(\synbrack{e})$ is eval-free.

  \item[] \textbf{Case 3}. $e = (x \mcolon \alpha)$.
    $E(\synbrack{e})$ is either
    $H^{-1}(V_\phi(\mname{cleanse}(\synbrack{a})))$ or
    $\Undefined_{\sf C}$.  Therefore, $E(\synbrack{e})$ is eval-free
    since $H^{-1}(V_\phi(\mname{cleanse}(\synbrack{a})))$ is
    eval-free by part 1 of Lemma~\ref{lem:cleanse} and
    $\Undefined_{\sf C}$ is obviously eval-free.

  \item[] \textbf{Case 5}. $e = (\StarApp {x\mcolon\alpha} \mdot e')$
    where $\star$ is $\Lambda$, $\lambda$, $\Forsome$, $\iota$, or
    $\epsilon$.  $E(\synbrack{\alpha})$ is eval-free by the induction
    hypothesis, and $H^{-1}(V_\phi(\mname{cleanse}(\synbrack{e'})))$
    is eval-free by part 1 of Lemma~\ref{lem:cleanse}.  Therefore,
    $E(\synbrack{e})$ is eval-free by the definition of \mname{sub}.

  \item[] \textbf{Case 10}. $e = \synbrack{e'}$.  $E(\synbrack{e}) =
    e$.  Therefore, $E(\synbrack{e})$ is eval-free since $e$ is a
    quotation and hence eval-free.

  \item[] \textbf{Cases 11--13}. $e = \sembrack{b}_k$.  By the
    induction hypothesis, $E(S(\synbrack{b}))$ is eval-free,
    $E(S(\sembrack{S(\synbrack{b})}_{\rm te}))$ is eval-free, and
    $E(S(\synbrack{k}))$ is eval-free if $\ctype{L}{k}$.  Therefore,
    $E(\synbrack{e})$ is eval-free by the definition of \mname{sub}.

\iffalse

  \item[] \textbf{Cases 11--13}. $e = \sembrack{b}_k$ and either (1)
    $H^{-1}(V_\phi(\sembrack{S(\synbrack{b})}_{\rm te}))$ is
    eval-free and $E(\synbrack{e})$ is
    $E(S(\sembrack{S(\synbrack{b})}_{\rm te}))$ or (2)
    $E(\synbrack{e})$ is $\mname{C}$, $\Undefined_{\sf C}$, or
    $\mname{F}$. If (1), $E(S(\sembrack{S(\synbrack{b})}_{\rm te}))$
    is eval-free by the induction hypothesis, and if (2), $\mname{C}$,
    $\Undefined_{\sf C}$, and $\mname{F}$ are obviously eval-free.
    Therefore, $E(\synbrack{e})$ is eval-free.

\fi

\ee

\noindent \textbf{Part 2} \sglsp Similar to the proof of part 2 of
Lemma~\ref{lem:cleanse}.

\bigskip

\noindent \textbf{Part 3} \sglsp Follows immediately from the
definition of \mname{sub}.

\bigskip

\noindent \textbf{Part 4} \sglsp Let $e$ is a type, term, or formula.
Assume \[M \models S(\synbrack{e}) \IsDefApp \dblsp \mbox{[designated
    $H_1(\synbrack{e})$]}\] and
\[M \models \Neg \mname{free-in}(\synbrack{x},\synbrack{e}) \dblsp
\mbox{[designated $H_2(\synbrack{e})$]}.\] We must show that \[M
\models \sembrack{S(\synbrack{e})}_{k[e]} = e \dblsp \mbox{[designated
    $C(\synbrack{e})$]}.\] Our proof is by induction on the complexity
of $e$.  There are 12 cases corresponding to the 12 formula schemas
used to define $\mname{sub}(\synbrack{e_1}, \synbrack{e_2},
\synbrack{e_3})$ when $e_1$ is a term, $e_2$ is a symbol, and $e_3$ is
a type, term, or formula.

\be

  \item[] \textbf{Case 1}: $e = O(e_1,\ldots,e_n)$ and $O =
    (o::k_1,\ldots,k_{n+1})$.  By the definitions of $\mname{sub}$ and
    $\mname{free-in}$, $H_1(\synbrack{e})$ and $H_2(\synbrack{e})$
    imply (1) $H_1(\synbrack{k_i})$ and $H_2(\synbrack{k_i})$ for
    all $i$ with $1 \le i \le n + 1$ and $\ctype{L}{k_i}$ and
    (2)~$H_1(\synbrack{e_i})$ and $H_2(\synbrack{e_i})$ for all $i$
    with $1 \le i \le n$.  By the induction hypothesis, this implies
    (1)~$C(\synbrack{k_i})$ for all $i$ with $1 \le i \le n + 1$ and
    $\ctype{L}{k_i}$ and (2) $C(\synbrack{e_i})$ for all $i$ with $1
    \le i \le n$.  Let $\phi \in \mname{assign}(S)$.  Then
    \begin{eqnarray*}
    & & V_\phi(\sembrack{S(\synbrack{e})}_{k[e]}) \\
    & = & V_\phi(\sembrack{S(\synbrack{O(e_1,\ldots,e_n)})}_{k[e]}) \\
    & = & V_\phi(\sembrack{\synbrack{\commabrack{S(\synbrack{O})}
    (\commabrack{S(\synbrack{e_1})}, \ldots, 
    \commabrack{S(\synbrack{e_n})})}}_{k[e]}) \\
    & = & V_\phi(\overline{S(\synbrack{O})}
    (\sembrack{S(\synbrack{e_1})}_{k[e_1]}, \ldots, 
    \sembrack{S(\synbrack{e_n})}_{k[e_n]})) \\
    & = & V_{\phi}(O(e_1,\ldots,e_n)) \\
    & = & V_{\phi}(e)    
    \end{eqnarray*}
    where \[\overline{S(\synbrack{O})} =
    (o::\overline{S(\synbrack{k_1})},\ldots,
    \overline{S(\synbrack{k_{n+1}})})\] and
    \[\overline{S(\synbrack{k_i})} = 
    \left\{\begin{array}{ll}
             \sembrack{S(\synbrack{k_i})}_{\rm ty} & \mbox{if }\ctype{L}{k_i} \\
             k_i & \mbox{if }k_i=\mname{type}\mbox{ or } \mname{formula}.
            \end{array}
    \right.\] The third line is by the definition of \mname{sub}; the
    fourth is by Lemma~\ref{lem:eval-qq} and part 1 of this lemma; and
    the fifth holds since (1) $O$ and $\overline{S(\synbrack{O})}$ are
    similar by part 1 of this lemma, (2) $C(\synbrack{k_i})$ holds for
    all $i$ with $1 \le i \le n + 1$ and $\ctype{L}{k_i}$, and (3)
    $C(\synbrack{e_i})$ holds for all $i$ with $1 \le i \le n$.
    Therefore, $C(\synbrack{e})$ holds.

  \item[] \textbf{Case 2}: $e = (x \mcolon \alpha)$.  The hypothesis
    $H_2(\synbrack{e})$ is false in this case.

  \item[] \textbf{Case 3}: $e = (y \mcolon \alpha)$ where $x \not= y$.
    By the definitions of $\mname{sub}$ and $\mname{free-in}$,
    $H_1(\synbrack{e})$ and $H_2(\synbrack{e})$ imply
    $H_1(\synbrack{\alpha})$ and $H_2(\synbrack{\alpha})$.  By the
    induction hypothesis, this implies $C(\synbrack{\alpha})$.  Let
    $\phi \in \mname{assign}(S)$.  Then
    \begin{eqnarray*}
    & & V_\phi(\sembrack{S(\synbrack{e})}_{k[e]}) \\
    & = & V_\phi(\sembrack{S(\synbrack{(y \mcolon \alpha)})}_{\rm te}) \\
    & = & V_\phi(\sembrack{\synbrack{(y 
    \mcolon \commabrack{S(\alpha)})}}_{\rm te}) \\
    & = & V_\phi((y \mcolon 
    \sembrack{S(\synbrack{\alpha})}_{\rm ty})) \\
    & = & V_{\phi}((y \mcolon \alpha)) \\
    & = & V_{\phi}(e).
    \end{eqnarray*}
    The third line is by the definition of \mname{sub}; the fourth is
    by Lemma~\ref{lem:eval-qq} and part 1 of this lemma; and the fifth
    is by $C(\synbrack{\alpha})$.  Therefore, $C(\synbrack{e})$ holds.

  \item[] \textbf{Case 4}: $e = (\StarApp {x\mcolon\alpha} \mdot e')$
    where $\star$ is $\Lambda$, $\lambda$, $\Forsome$, $\iota$, or
    $\epsilon$.  By the definitions of $\mname{sub}$ and
    $\mname{free-in}$, $H_1(\synbrack{e})$ and $H_2(\synbrack{e})$
    imply $H_1(\synbrack{\alpha})$, $H_2(\synbrack{\alpha})$, and $M
    \models \mname{cleanse}(\synbrack{e'}) \IsDefApp$.  By the
    induction hypothesis, $H_1(\synbrack{\alpha})$ and
    $H_2(\synbrack{\alpha})$ implies $C(\synbrack{\alpha})$.  Let
    $\phi \in \mname{assign}(S)$.  Then
    \begin{eqnarray*}
    & & V_\phi(\sembrack{S(\synbrack{e})}_{k[e]}) \\
    & = & V_\phi(\sembrack{S(\synbrack{(\StarApp 
      {x\mcolon\alpha} \mdot e')})}_{k[e]}) \\
    & = & V_\phi(\sembrack{\synbrack{(\StarApp 
      {x \mcolon \commabrack{S(\synbrack{\alpha})}} \mdot 
      \commabrack{\mname{cleanse}(\synbrack{e'})})}}_{k[e]})\\
    & = & V_\phi((\StarApp 
      {x \mcolon \sembrack{S(\synbrack{\alpha})}_{\rm ty}} \mdot 
      \sembrack{\mname{cleanse}(\synbrack{e'})}_{k[e']})) \\
    & = & V_\phi((\StarApp {x \mcolon \alpha} \mdot e')) \\
    & = & V_\phi(e)
    \end{eqnarray*} 
    The third line is by the definition of \mname{sub}; the fourth is
    by Lemma~\ref{lem:eval-qq}, parts 1 and 2 of this lemma, and parts
    1 and 2 of Lemma~\ref{lem:cleanse}; and the fifth is by
    $C(\synbrack{\alpha})$, $M \models \mname{cleanse}(\synbrack{e'})
    \IsDefApp$, and part 4 of Lemma~\ref{lem:cleanse}.

  \item[] \textbf{Case 5--8}: Similar to case 3.

  \item[] \textbf{Case 9}.  $e = \synbrack{e'}$.  $C(\synbrack{e})$
    is always true by the definition of \mname{sub}.

  \item[] \bsp \textbf{Case 10}: $e = \sembrack{b}_{\rm ty}$.  Let
    $\phi \in \mname{assign}(S)$.  Suppose $V_\phi(\mname{gea}(b,
    \synbrack{\mname{type}})) = \TRUE$.  Then there is an eval-free
    expression $e'$ such that $V_\phi(b) = V_\phi(\synbrack{e'})$.  By
    the definitions of $\mname{sub}$ and $\mname{free-in}$,
    $H_1(\synbrack{e})$ and $H_2(\synbrack{e})$ imply (1)
    $H_1(\synbrack{b})$ and $H_2(\synbrack{b})$ and
    (2)~$H_1(\synbrack{e'})$ and $H_2(\synbrack{e'})$.  By the
    induction hypothesis, this implies (1)~$C(\synbrack{b})$ and (2)
    $C(\synbrack{e'})$.  Let $u = \mname{coerce-to-type}
    (S(\sembrack{S(\synbrack{b})}_{\rm te}))$. Then
    \begin{eqnarray*}
    & & V_\phi(\sembrack{S(\synbrack{e})}_{k[e]}) \\
    & = & V_\phi(\sembrack{S(\synbrack{\sembrack{b}_{\rm ty}})}_{\rm ty}) \\
    & = & V_\phi(\sembrack{\If(\mname{syn-closed}
      (S(\synbrack{b})), \\ & &
      \hspace{2ex}\synbrack{\If(\mname{gea}
      (\commabrack{S(\synbrack{b})},\synbrack{\mname{type}}),
      \commabrack{u}, 
      \mname{C})}, \\ & &
      \hspace{2ex}\Undefined_{\sf C})}_{\rm ty}) \\
    & = & V_\phi(\sembrack{
      \synbrack{\If(\mname{gea}
      (\commabrack{S(\synbrack{b})},\synbrack{\mname{type}}),
      \commabrack{u}, 
      \mname{C})}}_{\rm ty}) \\
    & = & V_\phi(\mname{if}(\mname{gea}
      (\sembrack{S(\synbrack{b})}_{\rm te},\synbrack{\mname{type}}),
      \sembrack{u}_{\rm ty},
      \mname{C})) \\
    & = & V_\phi(\mname{if}(\mname{gea}(b,\synbrack{\mname{type}}),
      \sembrack{u}_{\rm ty},
      \mname{C})) \\
    & = & V_\phi(\sembrack{\mname{coerce-to-type}
      (S(\sembrack{S(\synbrack{b})}_{\rm te}))}_{\rm ty}) \\
    & = & V_\phi(\sembrack{\mname{coerce-to-type}(S(b))}_{\rm ty}) \\
    & = & V_\phi(\sembrack{S(b)}_{\rm ty}) \\
    & = & V_\phi(\sembrack{S(\synbrack{e'})}_{\rm ty}) \\
    & = & V_\phi(e') \\
    & = & V_{\phi}(\sembrack{\synbrack{e'}}_{\rm ty}) \\
    & = & V_{\phi}(\sembrack{b}_{\rm ty}) \\
    & = & V_\phi(e).
    \end{eqnarray*}
    The third line is by the definition of \mname{sub}; the fourth is
    by $H_1(\synbrack{e})$; the fifth is by Lemma~\ref{lem:eval-qq}
    and part 1 of this lemma; the sixth is by $C(\synbrack{b})$; the
    seventh is by the hypothesis that $V_\phi(\mname{gea}(b,
    \synbrack{\mname{type}})) = \TRUE$ and the definition of $u$; the
    eight is by $C(\synbrack{b})$; the ninth is by
    $V_\phi(\mname{gea}(b, \synbrack{\mname{type}})) = \TRUE$ and part
    1 of this lemma; the tenth is by substitution of $\synbrack{e'}$
    for $b$; the eleventh is by $C(\synbrack{e'})$; the twelveth is by
    the semantics of evaluation; and the thirteenth is by substitution
    of $b$ for $\synbrack{e'}$.  Therefore, $C(\synbrack{e})$ holds.

    Now suppose $V_\phi(\mname{gea}(b, \synbrack{\mname{type}})) =
    \FALSE$.  By a similar derivation to the one above,
    $C(\synbrack{e})$ holds. \esp

  \item[] \bsp \textbf{Case 11}: $e = \sembrack{b}_\alpha$.  Let $\phi
    \in \mname{assign}(S)$.  Suppose $V_\phi(\mname{gea}(b,
    \synbrack{\mname{type}})= \TRUE$ and $V_\phi(\sembrack{b}_{\rm te}
    \IsDef \alpha) = \TRUE$.  Then there is an eval-free expression
    $e'$ such that $V_\phi(b) = V_\phi(\synbrack{e'})$.  By the
    definitions of $\mname{sub}$ and $\mname{free-in}$,
    $H_1(\synbrack{e})$ and $H_2(\synbrack{e})$ imply (1)
    $H_1(\synbrack{b})$ and $H_2(\synbrack{b})$, (2)
    $H_1(\synbrack{\alpha})$ and $H_2(\synbrack{\alpha})$, and
    (3)~$H_1(\synbrack{e'})$ and $H_2(\synbrack{e'})$.  By the
    induction hypothesis, this implies (1) $C(\synbrack{b})$, (2)
    $C(\synbrack{\alpha})$, and (3)~$C(\synbrack{e'})$.  Let $u =
    \mname{coerce-to-term} (S(\sembrack{S(\synbrack{b})}_{\rm
      te}))$. Then
    \begin{eqnarray*}
    & & V_\phi(\sembrack{S(\synbrack{e})}_{k[e]}) \\
    & = & V_\phi(\sembrack{S(\synbrack{\sembrack{b}_\alpha})}_{\rm te}) \\
    & = & V_\phi(\sembrack{\If(\mname{syn-closed}
      (S(\synbrack{b})), \\ & &
      \hspace{2ex}\synbrack{\If(\mname{gea}
      (\commabrack{S(\synbrack{b})},\synbrack{\alpha}) \And 
      \commabrack{u} \IsDef \commabrack{S(\synbrack{\alpha})},
      \commabrack{u}, 
      \Undefined_{\sf C})}, \\ & &
      \hspace{2ex}\Undefined_{\sf C})}_{\rm te}) \\
    & = & V_\phi(\sembrack{
      \synbrack{\If(\mname{gea}
      (\commabrack{S(\synbrack{b})},\synbrack{\alpha}) \And 
      \commabrack{u} \IsDef \commabrack{S(\synbrack{\alpha})},
      \commabrack{u}, 
      \Undefined_{\sf C})}}_{\rm te}) \\
    & = & V_\phi(\mname{if}(\mname{gea}
      (\sembrack{S(\synbrack{b})}_{\rm te},\synbrack{\alpha}) \And 
      \sembrack{u}_{\rm te} \IsDef \sembrack{S(\synbrack{\alpha})}_{\rm ty},
      \sembrack{u}_{\rm te},
      \Undefined_{\sf C})) \\
    & = & V_\phi(\mname{if}(\mname{gea}(b,\synbrack{\alpha}) \And 
      \sembrack{u}_{\rm te} \IsDef \alpha,
      \sembrack{u}_{\rm te},
      \Undefined_{\sf C})) \\
    & = & V_\phi(\mname{if}(\sembrack{\mname{coerce-to-term}
      (S(\sembrack{S(\synbrack{b})}_{\rm te}))}_{\rm te} \IsDef \alpha, \\ & &
      \hspace{2ex}\sembrack{\mname{coerce-to-term}
      (S(\sembrack{S(\synbrack{b})}_{\rm te}))}_{\rm te}, \\ & &
      \hspace{2ex}\Undefined_{\sf C})) \\
    & = & V_\phi(\mname{if}(\sembrack{\mname{coerce-to-term}
      (S(b))}_{\rm te} \IsDef \alpha, \\ & &
      \hspace{2ex}\sembrack{\mname{coerce-to-term}
      (S(b))}_{\rm te}, \\ & &
      \hspace{2ex}\Undefined_{\sf C})) \\
    & = & V_\phi(\mname{if}(\sembrack{S(b)}_{\rm te} \IsDef \alpha, 
      \sembrack{S(b)}_{\rm te}, 
      \Undefined_{\sf C})) \\
    & = & V_\phi(\mname{if}(\sembrack{S(\synbrack{e'})}_{\rm te} \IsDef \alpha, 
      \sembrack{S(\synbrack{e'})}_{\rm te}, 
      \Undefined_{\sf C})) \\
    & = & V_\phi(\mname{if}(e' \IsDef \alpha, 
      e', 
      \Undefined_{\sf C})) \\
    & = & V_\phi(\mname{if}(\sembrack{\synbrack{e'}}_{\rm te} \IsDef \alpha, 
      \sembrack{\synbrack{e'}}_{\rm te}, 
      \Undefined_{\sf C})) \\
    & = & V_\phi(\mname{if}(\sembrack{b}_{\rm te} \IsDef \alpha, 
      \sembrack{b}_{\rm te}, 
      \Undefined_{\sf C})) \\
    & = & V_{\phi}(\sembrack{b}_\alpha) \\
    & = & V_\phi(e).
    \end{eqnarray*}
    The third line is by the definition of \mname{sub}; the fourth is
    by $H_1(\synbrack{e})$; the fifth is by Lemma~\ref{lem:eval-qq}
    and part 1 of this lemma; the sixth is by $C(\synbrack{b})$ and
    $C(\synbrack{\alpha})$; the seventh is by the hypothesis that
    $V_\phi(\mname{gea}(b, \synbrack{\mname{type}})) = \TRUE$ and the
    definition of $u$; the eight is by $C(\synbrack{b})$; the ninth is
    by $V_\phi(\mname{gea}(b, \synbrack{\mname{type}})) = \TRUE$ and
    part 1 of this lemma; the tenth is by substitution of
    $\synbrack{e'}$ for $b$; the eleventh is by $C(\synbrack{e'})$;
    the twelveth is by the semantics of evaluation; the thirteenth is
    by substitution of $b$ for $\synbrack{e'}$; and the fourteenth in
    by $V_\phi(\sembrack{b}_{\rm te} \IsDef \alpha) = \TRUE$.
    Therefore, $C(\synbrack{e})$ holds.

    Now suppose $V_\phi(\mname{gea}(b, \synbrack{\mname{type}})) =
    \FALSE$ or $V_\phi(\sembrack{b}_{\rm te} \IsDef \alpha) = \FALSE$
    By a similar derivation to the one above, $C(\synbrack{e})$
    holds. \esp

  \item[] \textbf{Case 12}: $e = \sembrack{b}_{\rm fo}$.  Similar to
    case 10.

\ee
\end{proof}

\begin{clem} [Substitution B] \label{lem:sub-b}
Let $M=(S,V)$ be a standard model for a normal theory $T=(L,\Gamma)$
and $a$ be a term, $x$ be a symbol, and $e$ be a type, term, or
formula of $L$.  If $M \models \mname{sub} (\synbrack{a},
\synbrack{x}, \synbrack{e}) \IsDefApp$ and $M \models
\mname{free-for}(\synbrack{a}, \synbrack{x}, \synbrack{e})$, then
\[V_\phi(\sembrack{\mname{sub}(\synbrack{a},
\synbrack{x},\synbrack{e})}_{k[e]}) = V_{\phi[x \mapsto
    V_\phi(a)]}(e)\] for all $\phi \in \mname{assign}(S)$ such that
$V_\phi(a) \not= \Undefined$.
\end{clem}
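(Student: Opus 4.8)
The plan is to prove the statement by induction on the complexity $c(e)$, with twelve cases corresponding to the twelve formula schemas defining $\mname{sub}(\synbrack{e_1},\synbrack{e_2},\synbrack{e_3})$ when $e_1$ is a term, $e_2$ a symbol, and $e_3$ a type, term, or formula --- the same case split used for part~4 of Lemma~\ref{lem:sub-a}. Write $S(\synbrack{e})$ for $\mname{sub}(\synbrack{a},\synbrack{x},\synbrack{e})$, fix $\phi\in\mname{assign}(S)$ with $V_\phi(a)\not=\Undefined$, and put $\phi' = \phi[x\mapsto V_\phi(a)]$; the goal in each case is $V_\phi(\sembrack{S(\synbrack{e})}_{k[e]}) = V_{\phi'}(e)$. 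Two standing facts are used throughout: definedness of $S(\synbrack{e})$ propagates downward to the subcomponents of $e$ on which the recursion operates (the quasiquotations produced by $\mname{sub}$ are built from strict list operators, and the evaluation schemas guard on $\mname{syn-closed}$), so the induction hypothesis applies to those subcomponents; and by parts~1--2 of Lemma~\ref{lem:sub-a} the value of $S(\synbrack{e})$ is the construction of an eval-free expression of the appropriate kind and $S(\synbrack{e})$ is semantically closed in $M$, whence $\mname{gea}(S(\synbrack{e}),\synbrack{k[e]})$ holds in $M$ and Lemma~\ref{lem:eval-qq} can be used to push $\sembrack{\cdot}$ through those quasiquotations, with any residual $\mname{gea}$ side conditions discharged via Lemma~\ref{lem:gea}.

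The atomic and rigid cases go smoothly. For $e = (x\mcolon\alpha)$, $S(\synbrack{e})$ is $\synbrack{\mname{if}(a\IsDef\commabrack{\widehat{\alpha}},\commabrack{\widehat{a}},\Undefined_{\sf C})}$ with $\widehat{\alpha} = S(\synbrack{\alpha})$ and $\widehat{a} = \mname{cleanse}(\synbrack{a})$; evaluating, the induction hypothesis on $\alpha$ gives $\sembrack{\widehat{\alpha}}_{\rm ty}$ the value $V_{\phi'}(\alpha)$ and part~4 of Lemma~\ref{lem:cleanse} gives $\sembrack{\widehat{a}}_{\rm te}$ the value $V_\phi(a)$, so the whole term evaluates to $V_\phi(a)$ or $\Undefined$ according as $V_\phi(a)\in V_{\phi'}(\alpha)$, which is exactly $V_{\phi'}((x\mcolon\alpha))$. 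For $e = (y\mcolon\alpha)$ with $x\not=y$, for operator applications, type and function applications, conditional terms, and quotations $\synbrack{e'}$, one unfolds $S$, applies Lemma~\ref{lem:eval-qq} to turn $\sembrack{S(\synbrack{e})}_{k[e]}$ into the corresponding constructor applied to the $\sembrack{S(\synbrack{\cdot})}_{k[\cdot]}$ of the components, and closes by the induction hypothesis; quotations are immediate, since $V_{\phi'}(\synbrack{e'}) = H(e') = V_\phi(\synbrack{e'})$.

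The variable-binding cases carry the capture-avoidance content. When the bound variable is $x$, i.e. $e = (\StarApp{x\mcolon\alpha}\mdot e')$, $\mname{sub}$ substitutes into $\alpha$ but replaces $e'$ by $\mname{cleanse}(\synbrack{e'})$; I apply part~3 of Lemma~\ref{lem:eval-qq} --- whose hypothesis that the body construction is semantically closed with respect to $x$ is supplied by part~2 of Lemma~\ref{lem:cleanse} --- together with part~4 of Lemma~\ref{lem:cleanse} to recover the values of $e'$, and since $\phi'[x\mapsto d] = \phi[x\mapsto d]$ the re-binding of $x$ removes all dependence on the altered slot, yielding $V_{\phi'}((\StarApp{x\mcolon\alpha}\mdot e'))$. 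When the bound variable is $y\not=x$, the hypothesis $\mname{free-for}(\synbrack{a},\synbrack{x},\synbrack{e})$ unfolds through its defining axiom into $\mname{free-for}(\synbrack{a},\synbrack{x},\synbrack{\alpha})$, the disjunction $\Neg\mname{free-in}(\synbrack{x},\synbrack{e'})\Or\Neg\mname{free-in}(\synbrack{y},\synbrack{a})$, and $\mname{free-for}(\synbrack{a},\synbrack{x},\synbrack{e'})$; here the body construction $S(\synbrack{e'})$ is semantically closed by part~2 of Lemma~\ref{lem:sub-a}, so part~3 of Lemma~\ref{lem:eval-qq} applies. Using the induction hypothesis on $\alpha$ and on $e'$ and then splitting on the disjunct, one checks that for every $d$ the body $\sembrack{S(\synbrack{e'})}_{k[e']}$ evaluated at $\phi[y\mapsto d]$ equals $V_{\phi'[y\mapsto d]}(e')$: in the first disjunct the substitution into $e'$ is vacuous by part~4 of Lemma~\ref{lem:sub-a} and $V_{\phi[y\mapsto d]}(e') = V_{\phi'[y\mapsto d]}(e')$ by part~1 of Lemma~\ref{lem:fv} (using $x\not=y$); in the second disjunct $V_{\phi[y\mapsto d]}(a) = V_\phi(a)$ by part~1 of Lemma~\ref{lem:fv} and the assignment updates at $x$ and $y$ commute. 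Combining the body values with the $\alpha$-slot $V_{\phi'}(\alpha)$ gives $V_{\phi'}((\StarApp{y\mcolon\alpha}\mdot e'))$. This bookkeeping is one of the two places the argument is genuinely delicate.

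The evaluation cases, $e\in\{\sembrack{b}_{\rm ty},\sembrack{b}_\alpha,\sembrack{b}_{\rm fo}\}$, are the crux. Take $e = \sembrack{b}_{\rm ty}$; then $S(\synbrack{e}) = \If(\mname{syn-closed}(\widehat{b}),w,\Undefined_{\sf C})$ with $\widehat{b} = S(\synbrack{b})$, $\widehat{b}' = \mname{coerce-to-type}(S(\sembrack{\widehat{b}}_{\rm te}))$, and $w = \synbrack{\mname{if}(\mname{gea}(\commabrack{\widehat{b}},\synbrack{\mname{type}}),\commabrack{\widehat{b}'},\mname{C})}$. Since $S(\synbrack{e})\IsDefApp$ forces $\mname{syn-closed}(\widehat{b})$ (otherwise $S(\synbrack{e})$ would be $\Undefined_{\sf C}$), $S(\synbrack{e})$ has the value of $w$; evaluating $\sembrack{w}_{\rm ty}$ via Lemma~\ref{lem:eval-qq} --- the $\mname{gea}$ conditions on its arguments holding by part~1 of Lemma~\ref{lem:sub-a} --- reduces the left side to $\mname{if}(\mname{gea}(\widehat{b},\synbrack{\mname{type}}),\sembrack{\widehat{b}'}_{\rm ty},\mname{C})$, and I then split on whether $V_\phi(\mname{gea}(b,\synbrack{\mname{type}}))$ is $\TRUE$ or $\FALSE$, exactly as in Case~10 of the proof of part~4 of Lemma~\ref{lem:sub-a}. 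If it is $\FALSE$, Lemma~\ref{lem:gea} collapses both sides to $\Dc$. If it is $\TRUE$, choose an eval-free $e'$ with $V_\phi(b) = V_\phi(\synbrack{e'})$; the induction hypothesis applied to $b$ (smaller complexity, its hypotheses met since $\widehat{b}\IsDefApp$ and $\mname{free-for}(\synbrack{a},\synbrack{x},\synbrack{b})$ is one of the unfolded conjuncts) gives $\sembrack{\widehat{b}}_{\rm te}$ the value $V_{\phi'}(b)$, and then $S$ applied to $\sembrack{\widehat{b}}_{\rm te}$, together with the remaining conjunct $\mname{free-for}(\synbrack{a},\synbrack{x},\sembrack{\widehat{b}}_{\rm te})$ and the induction hypothesis applied to the represented expression, produces the substituted-and-re-evaluated value $V_{\phi'}(\sembrack{b}_{\rm ty})$ (with $\mname{coerce-to-type}$ acting as the identity here because the value is already a type construction). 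The cases $\sembrack{b}_\alpha$ and $\sembrack{b}_{\rm fo}$ run the same way with $\mname{coerce-to-term}$ / $\mname{coerce-to-formula}$ and, for $\sembrack{b}_\alpha$, the extra guard $\sembrack{b}_{\rm te}\IsDef\alpha$ handled as in Case~11 there. I expect these evaluation cases --- tracking the two-layered substitution (``first on $b$, then on the expression $b$ denotes'') and verifying that the $\mname{gea}$ and definedness side conditions transfer between $\phi$ and $\phi'$ --- to be the main obstacle; the remaining cases are bookkeeping parallel to Lemmas~\ref{lem:cleanse} and~\ref{lem:sub-a}.
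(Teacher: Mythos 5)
Your proposal follows essentially the same route as the paper's proof: the same induction on the complexity of $e$ with the twelve \mname{sub} schemas as cases, the same appeals to Lemmas~\ref{lem:gea}, \ref{lem:eval-qq}, \ref{lem:cleanse}, \ref{lem:sub-a}, and~\ref{lem:fv}, the same two-subcase treatment of the binder case with bound variable $y \not= x$ driven by the \mname{free-for} disjunct, and the same two-layered handling of the evaluation cases. The only point to tighten is that in the evaluation cases the case split (and the choice of the eval-free representative) must be on $\mname{gea}$ applied to the construction $V_{\phi[x \mapsto V_\phi(a)]}(b)$ --- equivalently to $\sembrack{\mname{sub}(\synbrack{a},\synbrack{x},\synbrack{b})}_{\rm te}$, which is how the paper proceeds via $C(\synbrack{b})$ --- and not on $V_\phi(\mname{gea}(b,\synbrack{\mname{type}}))$, since $b$ may depend on $x$ and the two conditions can differ (the appeal to Case~10 of part~4 of Lemma~\ref{lem:sub-a} hides this because there $\Neg\mname{free-in}(\synbrack{x},\synbrack{e})$ makes $\phi$ and $\phi[x \mapsto V_\phi(a)]$ interchangeable).
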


\begin{proof}  Let $S(\synbrack{e})$ mean
$\mname{sub}(\synbrack{a}, \synbrack{x}, \synbrack{e})$.  Assume
\[M \models S(\synbrack{e}) \IsDefApp \dblsp
  \mbox{[designated $H_1(\synbrack{e})$]},\] and \[M \models
  \mname{free-for}(\synbrack{a}, \synbrack{x}, \synbrack{e}) \dblsp
  \mbox{[designated $H_2(\synbrack{e})$]}.\] We must show that \bi

  \item[] \bsp $V_\phi(\sembrack{S(\synbrack{e})}_{k[e]}) = V_{\phi[x
      \mapsto V_\phi(a)]}(e)$ for all $\phi \in \mname{assign}(S)$
    such that $V_\phi(a) \not= \Undefined$ \dblsp {[designated
        $C(\synbrack{e})$]}. \esp

\ei
Our proof is by induction on the complexity of $e$.  There are 12 cases
corresponding to the 12 formula schemas used to define
$\mname{sub}(\synbrack{e_1},\synbrack{e_2},\synbrack{e_3}))$ when
$e_1$ is a term, $e_2$ is a symbol, $e_3$ is a type, term, or formula.

\be

  \item[] \textbf{Case 1}: $e = O(e_1,\ldots,e_n)$ and $O =
    (o::k_1,\ldots,k_{n+1})$.  By the definitions of $\mname{sub}$ and
    $\mname{free-for}$, $H_1(\synbrack{e})$ and $H_2(\synbrack{e})$
    imply (1) $H_1(\synbrack{k_i})$ and $H_2(\synbrack{k_i})$ for
    all $i$ with $1 \le i \le n + 1$ and $\ctype{L}{k_i}$ and (2)
    $H_1(\synbrack{e_i})$ and $H_2(\synbrack{e_i})$ for all $i$ with
    $1 \le i \le n$.  By the induction hypothesis, this implies (1)
    $C(\synbrack{k_i})$ for all $i$ with $1 \le i \le n + 1$ and
    $\ctype{L}{k_i}$ and (2) $C(\synbrack{e_i})$ for all $i$ with $1
    \le i \le n$.  Let $\phi \in \mname{assign}(S)$ such that
    $V_\phi(a) \not= \Undefined$.  Then
    \begin{eqnarray*}
    & & V_\phi(\sembrack{S(\synbrack{e})}_{k[e]}) \\
    & = & V_\phi(\sembrack{S(\synbrack{O(e_1,\ldots,e_n)})}_{k[e]}) \\
    & = & V_\phi(\sembrack{\synbrack{\commabrack{S(\synbrack{O})}
    (\commabrack{S(\synbrack{e_1})}, \ldots, 
    \commabrack{S(\synbrack{e_n})})}}_{k[e]}) \\
    & = & V_\phi(\overline{S(\synbrack{O})}
    (\sembrack{S(\synbrack{e_1})}_{k[e_1]}, \ldots, 
    \sembrack{S(\synbrack{e_n})}_{k[e_n]})) \\
    & = & V_{\phi[x \mapsto V_\phi(a)]}(O(e_1,\ldots,e_n)) \\
    & = & V_{\phi[x \mapsto V_\phi(a)]}(e)    
    \end{eqnarray*}
    where \[\overline{S(\synbrack{O})} =
    (o::\overline{S(\synbrack{k_1})},\ldots,
    \overline{S(\synbrack{k_{n+1}})})\] and
    \[\overline{S(\synbrack{k_i})} = 
    \left\{\begin{array}{ll}
             \sembrack{S(\synbrack{k_i})}_{\rm ty} & \mbox{if }\ctype{L}{k_i} \\
             k_i & \mbox{if }k_i=\mname{type}\mbox{ or }\mname{formula}.
            \end{array}
    \right.\] The third line is by the definition of \mname{sub}; the
    fourth is by Lemma~\ref{lem:eval-qq} and part 1 of
    Lemma~\ref{lem:sub-a}; and the fifth holds since (1) $O$ and
    $\overline{S(\synbrack{O})}$ are similar by part 1 of
    Lemma~\ref{lem:sub-a}, (2) $C(\synbrack{k_i})$ holds for all $i$
    with $1 \le i \le n + 1$ and $\ctype{L}{k_i}$, and (3)
    $C(\synbrack{e_i})$ holds for all $i$ with $1 \le i \le n$.
    Therefore, $C(\synbrack{e})$ holds.

  \item[] \textbf{Case 2}: $e = (x \mcolon \alpha)$.  By the
    definitions of $\mname{free-for}$ and $\mname{free-for}$,
    $H_1(\synbrack{e})$ and $H_2(\synbrack{e})$ imply
    $H_1(\synbrack{\alpha})$, $H_2(\synbrack{\alpha})$, and $M \models
    \mname{cleanse}(\synbrack{e'}) \IsDefApp$.  By the induction
    hypothesis, this implies $C(\synbrack{\alpha})$.  Let $\phi \in
    \mname{assign}(S)$ such that $V_\phi(a) \not= \Undefined$.  Then
    \begin{eqnarray*}
    & & V_\phi(\sembrack{S(\synbrack{e})}_{k[e]}) \\
    & = & V_\phi(\sembrack{S(\synbrack{(x \mcolon \alpha)})}_{\rm te}) \\
    & = & V_\phi(\sembrack{\If(a \IsDef \sembrack{S(\alpha)}_{\rm ty}, 
    \mname{cleanse}(\synbrack{a}),
    \synbrack{\Undefined_{\sf C}})}_{\rm te}) \\
    & = & V_\phi(\If(a \IsDef \sembrack{S(\alpha)}_{\rm ty},
    \sembrack{\mname{cleanse}(\synbrack{a})}_{\rm te}, 
    \Undefined_{\sf C})) \\
    & = & V_\phi(\If(a \IsDef \sembrack{S(\alpha)}_{\rm ty},
    a,
    \Undefined_{\sf C})) \\
    & = & V_{\phi[x \mapsto V_\phi(a)]}
    (\If((x \mcolon \mname{C}) \IsDef \alpha, 
    (x \mcolon \mname{C}),
    \Undefined_{\sf C})) \\
    & = & V_{\phi[x \mapsto V_\phi(a)]}((x \mcolon \alpha))  \\  
    & = & V_{\phi[x \mapsto V_\phi(a)]}(e).
    \end{eqnarray*}
    \bsp The third line is by the definition of \mname{sub}; the
    fourth is by Lemma~\ref{lem:eval-qq} and part 1 of
    Lemma~\ref{lem:cleanse}; the fifth is by $M \models
    \mname{cleanse}(\synbrack{e'}) \IsDefApp$ and part 4 of
    Lemma~\ref{lem:cleanse}; the sixth is by $C(\synbrack{\alpha})$;
    and the seventh is by the definition of the standard valuation on
    variables.  Therefore, $C(\synbrack{e})$ holds. \esp

  \item[] \textbf{Case 3}: $e = (y \mcolon \alpha)$ where $x \not= y$.
    By the definitions of $\mname{sub}$ and $\mname{free-for}$,
    $H_1(\synbrack{e})$ and $H_2(\synbrack{e})$ imply
    $H_1(\synbrack{\alpha})$ and $H_2(\synbrack{\alpha})$.  By the
    induction hypothesis, this implies $C(\synbrack{\alpha})$.  Let
    $\phi \in \mname{assign}(S)$ such that $V_\phi(a) \not=
    \Undefined$.  Then
    \begin{eqnarray*}
    & & V_\phi(\sembrack{S(\synbrack{e})}_{k[e]}) \\
    & = & V_\phi(\sembrack{S(\synbrack{(y \mcolon \alpha)})}_{\rm te}) \\
    & = & V_\phi(\sembrack{\synbrack{(y 
    \mcolon \commabrack{S(\alpha)})}}_{\rm te}) \\
    & = & V_\phi((y \mcolon 
    \sembrack{S(\synbrack{\alpha})}_{\rm ty})) \\
    & = & V_{\phi[x \mapsto V_\phi(a)]}((y \mcolon \alpha)) \\
    & = & V_{\phi[x \mapsto V_\phi(a)]}(e).
    \end{eqnarray*}
    The third line is by the definition of \mname{sub}; the fourth is
    by Lemma~\ref{lem:eval-qq} and part 1 of Lemma~\ref{lem:sub-a};
    and the fifth is by $C(\synbrack{\alpha})$.  Therefore,
    $C(\synbrack{e})$ holds.

  \item[] \textbf{Case 4}: $e = (\StarApp x \mcolon \alpha \mdot e')$
    and $\star$ is $\Lambda$, $\lambda$, $\Forsome$, $\iota$, or
    $\epsilon$.  By the definitions of $\mname{sub}$ and
    $\mname{free-for}$, $H_1(\synbrack{e})$ and $H_2(\synbrack{e})$
    imply $H_1(\synbrack{\alpha})$, $H_2(\synbrack{\alpha})$, and $M
    \models \mname{cleanse}(\synbrack{e'}) \IsDefApp$.  By the
    induction hypothesis, this implies $C(\synbrack{\alpha})$.  Let
    $\phi \in \mname{assign}(S)$ such that $V_\phi(a) \not=
    \Undefined$.  Then
    \begin{eqnarray*}
    & & V_\phi(\sembrack{S(\synbrack{e})}_{k[e]}) \\
    & = & V_\phi(\sembrack{S(\synbrack{(\StarApp x \mcolon 
    \alpha \mdot e')})}_{k[e]}) \\
    & = & V_\phi(\sembrack{\synbrack{(\StarApp x \mcolon 
    \commabrack{S(\synbrack{\alpha})} \mdot 
    \commabrack{\mname{cleanse}(\synbrack{e'})})}}_{k[e]}) \\
    & = & V_\phi((\StarApp x \mcolon 
    \sembrack{S(\synbrack{\alpha})}_{\rm ty} \mdot 
    \sembrack{\mname{cleanse}(\synbrack{e'})}_{k[e']})) \\
    & = & V_\phi((\StarApp x \mcolon 
    \sembrack{S(\synbrack{\alpha})}_{\rm ty} \mdot e')) \\
    & = & V_{\phi[x \mapsto V_\phi(a)]}
    ((\StarApp x \mcolon \alpha \mdot e')) \\
    & = & V_{\phi[x \mapsto V_\phi(a)]}(e).
    \end{eqnarray*}
    The third line is by the definition of \mname{sub}; the fourth is
    by Lemma~\ref{lem:eval-qq}, part 1 of Lemma~\ref{lem:sub-a}, and
    parts 1 and 2 of Lemma~\ref{lem:cleanse}; the fifth is by $M
    \models \mname{cleanse}(\synbrack{e'}) \IsDefApp$ and part 4 of
    Lemma~\ref{lem:cleanse}; and the sixth is by
    $C(\synbrack{\alpha})$ and the fact that\[V_{\phi[x \mapsto
        d]}(e') = V_{\phi[x \mapsto V_\phi(a)][x \mapsto d]}(e')\] for
    all $d\in\Dc$.  Therefore, $C(\synbrack{e})$ holds.

  \item[] \bsp \textbf{Case 5}: $e = (\StarApp y \mcolon \alpha \mdot
    e')$, $x \not= y$, and $\star$ is $\Lambda$, $\lambda$,
    $\Forsome$, $\iota$, or $\epsilon$.  By the definitions of
    $\mname{sub}$ and $\mname{free-for}$, $H_1(\synbrack{e})$ and
    $H_2(\synbrack{e})$ imply (1) $H_1(\synbrack{\alpha})$ and
    $H_2(\synbrack{\alpha})$, (2) either $(\ast)$ $M \models
    \Neg\mname{free-in}(\synbrack{x}, \synbrack{e'})$ or $(\ast\ast)$
    $M \models \Neg\mname{free-in}(\synbrack{y},\synbrack{a})$, and
    (3) $H_1(\synbrack{e'})$ and $H_2(\synbrack{e'})$.  By the
    induction hypothesis, this implies $C(\synbrack{\alpha})$ and
    $C(\synbrack{e'})$.  Let $\phi \in \mname{assign}(S)$ such that
    $V_\phi(a) \not= \Undefined$.  Then
    \begin{eqnarray*} & &
      V_\phi(\sembrack{S(\synbrack{e})}_{k[e]}) \\ & = &
      V_\phi(\sembrack{S(\synbrack{(\StarApp y \mcolon \alpha \mdot
          e')})}_{k[e]}) \\ & = &
      V_\phi(\sembrack{\synbrack{(\StarApp y \mcolon
          \commabrack{S(\synbrack{\alpha})} \mdot
          \commabrack{S(\synbrack{e'})}}}_{k[e]}))\\ & = &
      V_\phi((\StarApp y \mcolon \sembrack{S(\synbrack{\alpha})}_{\rm
        ty} \mdot \sembrack{S(\synbrack{e'})}_{k[e']})) \\ & = &
      V_{\phi[x \mapsto V_\phi(a)]} ((\StarApp y \mcolon \alpha \mdot
      e')) \\ & = & V_{\phi[x \mapsto V_\phi(a)]}(e).  
    \end{eqnarray*}
    The third line is by the definition of \mname{sub}; the fourth is
    by Lemma~\ref{lem:eval-qq} and parts 1 and 2 of
    Lemma~\ref{lem:sub-a}; and the fifth is by $C(\synbrack{\alpha})$
    and separate arguments for the two cases $(\ast)$ and
    $(\ast\ast)$.  In case $(\ast)$, \begin{eqnarray*} & & V_{\phi[y
          \mapsto d]}(\sembrack{S(\synbrack{e'})}_{k[e]}) \\ & = &
      V_{\phi[y \mapsto d]}(e') \\ & = & V_{\phi[y \mapsto d][x
          \mapsto V_\phi(a)]}(e') \\ & = & V_{\phi[x \mapsto
          V_\phi(a)][y \mapsto d]}(e') \end{eqnarray*} for all $d \in
    \Dc$. The second line is by $(\ast)$, $H_1(\synbrack{e'})$, and
    part 4 of Lemma~\ref{lem:sub-a}, and the third is by $(\ast)$ and
    part 1 of Lemma~\ref{lem:fv}.  In case
    $(\ast\ast)$, \begin{eqnarray*} & & V_{\phi[y \mapsto
          d]}(\sembrack{S(\synbrack{e'})}_{k[e]}) \\ & = & V_{\phi[y
          \mapsto d][x \mapsto V_{\phi[y \mapsto d]}(a)]}(e') \\ & = &
      V_{\phi[y \mapsto d][x \mapsto V_\phi(a)]}(e') \\ & = &
      V_{\phi[x \mapsto V_\phi(a)][y \mapsto d]}(e') \end{eqnarray*}
    for all $d \in \Dc$.  The second line is by $C(\synbrack{e'})$,
    and the third is by $(\ast\ast)$ and part 1 of Lemma~\ref{lem:fv}.
    Therefore, $C(\synbrack{e})$ holds. \esp

  \item[] \textbf{Case 6}: $e = \alpha(a)$.  Similar to case 3.

  \item[] \textbf{Case 7}: $e = f(a)$.  Similar to case 3.

  \item[] \textbf{Case 8}: $e = \mname{if}(A,b,c)$.  Similar to case 3.

  \item[] \textbf{Case 9}: $e = \synbrack{e'}$.  Let $\phi \in
    \mname{assign}(S)$ such that $V_\phi(a) \not= \Undefined$.  Then
    \begin{eqnarray*}
    & & V_\phi(\sembrack{S(\synbrack{e})}_{k[e]}) \\
    & = & V_\phi(\sembrack{S(\synbrack{\synbrack{e'}})}_{\rm te}) \\
    & = & V_\phi(\sembrack{\synbrack{\synbrack{e'}}}_{\rm te}) \\
    & = & V_\phi(\synbrack{e'}) \\
    & = & V_{\phi[x \mapsto V_\phi(a)]}(\synbrack{e'}) \\
    & = & V_{\phi[x \mapsto V_\phi(a)]}(e)
    \end{eqnarray*}
    The third line is by the definition of \mname{sub}; the fourth is
    by Lemma~\ref{lem:gea}; and the fifth is by the fact that
    $V_\phi(e)$ does not depend on $\phi$.  Therefore,
    $C(\synbrack{e})$ holds.

  \item[] \bsp \textbf{Case 10}: $e = \sembrack{b}_{\rm ty}$.  Let
    $\phi \in \mname{assign}(S)$ such that $V_\phi(a) \not=
    \Undefined$.  Suppose
    $V_\phi(\mname{gea}(\synbrack{H^{-1}(V_{\phi[x \mapsto
          V_\phi(a)]}(b))}, \synbrack{\mname{type}})) = \TRUE$.  By
    the definitions of $\mname{sub}$ and $\mname{free-for}$,
    $H_1(\synbrack{e})$ and $H_2(\synbrack{e})$ imply (1)
    $H_1(\synbrack{b})$ and $H_2(\synbrack{b})$ and (2)
    $H_1(\sembrack{S(\synbrack{b})}_{\rm te})$ and
    $H_2(\sembrack{S(\synbrack{b})}_{\rm te})$.  By the induction
    hypothesis, this implies (1)~$C(\synbrack{b})$ and (2)
    $C(\sembrack{S(\synbrack{b})}_{\rm te})$.  Let $u =
    \mname{coerce-to-type} (S(\sembrack{S(\synbrack{b})}_{\rm
      te}))$. Then
    \begin{eqnarray*}
    & & V_\phi(\sembrack{S(\synbrack{e})}_{k[e]}) \\
    & = & V_\phi(\sembrack{S(\synbrack{\sembrack{b}_{\rm ty}})}_{\rm ty}) \\
    & = & V_\phi(\sembrack{\If(\mname{syn-closed}
      (S(\synbrack{b})), \\ & &
      \hspace{2ex}\synbrack{\If(\mname{gea}
      (\commabrack{S(\synbrack{b})},\synbrack{\mname{type}}),
      \commabrack{u}, 
      \mname{C})}, \\ & &
      \hspace{2ex}\Undefined_{\sf C})}_{\rm ty}) \\
    & = & V_\phi(\sembrack{
      \synbrack{\If(\mname{gea}
      (\commabrack{S(\synbrack{b})},\synbrack{\mname{type}}),
      \commabrack{u}, 
      \mname{C})}}_{\rm ty}) \\
    & = & V_\phi(\mname{if}(\mname{gea}
      (\sembrack{S(\synbrack{b})}_{\rm te},\synbrack{\mname{type}}),
      \sembrack{u}_{\rm ty},
      \mname{C})) \\
    & = & V_\phi(\mname{if}(\mname{gea}(\synbrack{H^{-1}(V_{\phi[x \mapsto
        V_\phi(a)]}(b))},\synbrack{\mname{type}}),
      \sembrack{u}_{\rm ty},
      \mname{C})) \\
    & = & V_\phi(\sembrack{\mname{coerce-to-type}
      (S(\sembrack{S(\synbrack{b})}_{\rm te}))}_{\rm ty}) \\
    & = & V_\phi(\sembrack{\mname{coerce-to-type}
      (S(\synbrack{H^{-1}(V_{\phi[x \mapsto V_\phi(a)]}(b))}))}_{\rm ty}) \\
    & = & V_\phi(\sembrack{S(\synbrack{H^{-1}(V_{\phi[x \mapsto
        V_\phi(a)]}(b))})}_{\rm ty}) \\
    & = & V_{\phi[x \mapsto V_\phi(a)]}(H^{-1}(V_{\phi[x \mapsto V_\phi(a)]}(b))) \\
    & = & V_{\phi[x \mapsto V_\phi(a)]}(\sembrack{
      \synbrack{H^{-1}(V_{\phi[x \mapsto V_\phi(a)]}(b))}}_{\rm ty}) \\
    & = & V_{\phi[x \mapsto V_\phi(a)]}(\sembrack{b}_{\rm ty}) \\    
    & = & V_{\phi[x \mapsto V_\phi(a)]}(e).
    \end{eqnarray*}
    The third line is by the definition of \mname{sub}; the fourth is
    by $H_1(\synbrack{e})$; the fifth is by Lemma~\ref{lem:eval-qq}
    and part 1 of this lemma; the sixth is by $C(\synbrack{b})$; the
    seventh is by the hypothesis that
    $V_\phi(\mname{gea}(\synbrack{H^{-1}(V_{\phi[x \mapsto
          V_\phi(a)]}(b))}, \synbrack{\mname{type}})) = \TRUE$ and the
    definition of $u$; the eight is by $C(\synbrack{b})$; the ninth is
    by $V_\phi(\mname{gea}(\synbrack{H^{-1}(V_{\phi[x \mapsto
          V_\phi(a)]}(b))}, \synbrack{\mname{type}})) = \TRUE$ and
    part 1 of this lemma; the tenth is by
    $C(\sembrack{S(\synbrack{b})}_{\rm te})$ since
    \[V_\phi(\sembrack{S(\synbrack{b})}_{\rm te}) = 
    V_{\phi[x \mapsto V_\phi(a)]}(b) = 
    V_\phi(\synbrack{H^{-1}(V_{\phi[x \mapsto V_\phi(a)]}(b))})\] by
    $C(\synbrack{b})$; the eleventh is by the semantics of evaluation;
    and the twelveth is by \[V_{\phi[x \mapsto
        V_\phi(a)]}(\sembrack{\synbrack{H^{-1}(V_{\phi[x \mapsto
            V_\phi(a)]}(b))}}_{\rm ty}) = V_{\phi[x \mapsto
        V_\phi(a)]}(b).\] Therefore, $C(\synbrack{e})$ holds.

    Now suppose $V_\phi(\mname{gea}(\synbrack{H^{-1}(V_{\phi[x \mapsto
          V_\phi(a)]}(b))}, \synbrack{\mname{type}})) = \FALSE$.  By a
    similar derivation to the one above, $C(\synbrack{e})$ holds. \esp

  \item[] \bsp \textbf{Case 11}: $e = \sembrack{b}_\alpha$.  Let $\phi
    \in \mname{assign}(S)$ such that $V_\phi(a) \not= \Undefined$.
    Suppose $V_\phi(\mname{gea}(\synbrack{H^{-1}(V_{\phi[x \mapsto
          V_\phi(a)]}(b))}, \synbrack{\mname{type}})) = \TRUE$ and
    $V_\phi(\sembrack{b}_{\rm te} \IsDef \alpha) = \TRUE$.  By the
    definitions of $\mname{sub}$ and $\mname{free-for}$,
    $H_1(\synbrack{e})$ and $H_2(\synbrack{e})$ imply (1)
    $H_1(\synbrack{b})$ and $H_2(\synbrack{b})$, (2)
    $H_1(\synbrack{\alpha})$ and $H_2(\synbrack{\alpha})$, and
    (3)~$H_1(\synbrack{e'})$ and $H_2(\synbrack{e'})$.  By the
    induction hypothesis, this implies (1) $C(\synbrack{b})$, (2)
    $C(\synbrack{\alpha})$, and (3)~$C(\sembrack{S(\synbrack{b})}_{\rm
      te})$.  Let $u = \mname{coerce-to-term}
    (S(\sembrack{S(\synbrack{b})}_{\rm te}))$. Then
    \begin{eqnarray*}
    & & V_\phi(\sembrack{S(\synbrack{e})}_{k[e]}) \\
    & = & V_\phi(\sembrack{S(\synbrack{\sembrack{b}_\alpha})}_{\rm te}) \\
    & = & V_\phi(\sembrack{\If(\mname{syn-closed}
      (S(\synbrack{b})), \\ & &
      \hspace{2ex}\synbrack{\If(\mname{gea}
      (\commabrack{S(\synbrack{b})},\synbrack{\alpha}) \And 
      \commabrack{u} \IsDef \commabrack{S(\synbrack{\alpha})},
      \commabrack{u}, 
      \Undefined_{\sf C})}, \\ & &
      \hspace{2ex}\Undefined_{\sf C})}_{\rm te}) \\
    & = & V_\phi(\sembrack{
      \synbrack{\If(\mname{gea}
      (\commabrack{S(\synbrack{b})},\synbrack{\alpha}) \And 
      \commabrack{u} \IsDef \commabrack{S(\synbrack{\alpha})},
      \commabrack{u}, 
      \Undefined_{\sf C})}}_{\rm te}) \\
    & = & V_\phi(\mname{if}(\mname{gea}
      (\sembrack{S(\synbrack{b})}_{\rm te},\synbrack{\alpha}) \And 
      \sembrack{u}_{\rm te} \IsDef \sembrack{S(\synbrack{\alpha})}_{\rm ty},
      \sembrack{u}_{\rm te},
      \Undefined_{\sf C})) \\
    & = & V_\phi(\mname{if}(\mname{gea}(\synbrack{H^{-1}(V_{\phi[x \mapsto
          V_\phi(a)]}(b))},\synbrack{\alpha}) \And {} \\ & &
      \hspace{2ex}\sembrack{u}_{\rm te} \IsDef 
      \sembrack{S(\synbrack{\alpha})}_{\rm ty}, \\ & &
      \hspace{2ex}\sembrack{u}_{\rm te}, \\ & &
      \hspace{2ex}\Undefined_{\sf C})) \\
    & = & V_\phi(\mname{if}(\sembrack{\mname{coerce-to-term}
      (S(\sembrack{S(\synbrack{b})}_{\rm te}))}_{\rm te} \IsDef 
      \sembrack{S(\synbrack{\alpha})}_{\rm ty}, \\ & &
      \hspace{2ex}\sembrack{\mname{coerce-to-term}
      (S(\sembrack{S(\synbrack{b})}_{\rm te}))}_{\rm te}, \\ & &
      \hspace{2ex}\Undefined_{\sf C})) \\
    & = & V_\phi(\mname{if}(\sembrack{\mname{coerce-to-term}
      (S(\synbrack{H^{-1}(V_{\phi[x \mapsto V_\phi(a)]}(b))}))}_{\rm te} 
      \IsDef {} \\ & &
      \hspace{2ex}\sembrack{S(\synbrack{\alpha})}_{\rm ty}, \\ & &
      \hspace{2ex}\sembrack{\mname{coerce-to-term}
      (S(\synbrack{H^{-1}(V_{\phi[x \mapsto V_\phi(a)]}(b))}))}_{\rm te}, \\ & &
      \hspace{2ex}\Undefined_{\sf C})) \\
    & = & V_\phi(\mname{if}(\sembrack{
      S(\synbrack{H^{-1}(V_{\phi[x \mapsto V_\phi(a)]}(b))})}_{\rm te} \IsDef 
      \sembrack{S(\synbrack{\alpha})}_{\rm ty}, \\ & &
      \hspace{2ex}\sembrack{
      S(\synbrack{H^{-1}(V_{\phi[x \mapsto V_\phi(a)]}(b))})}_{\rm te}, \\ & &
      \hspace{2ex}\Undefined_{\sf C})) \\
    & = & V_{\phi[x \mapsto V_\phi(a)]}(\mname{if}
      (H^{-1}(V_{\phi[x \mapsto V_\phi(a)]}(b))
      \IsDef \alpha, \\ & &
      \hspace{2ex}H^{-1}(V_{\phi[x \mapsto V_\phi(a)]}(b)), \\ & &
      \hspace{2ex}\Undefined_{\sf C})) \\ & = & V_{\phi[x \mapsto
          V_\phi(a)]}(\mname{if} (\sembrack{\synbrack{H^{-1}(V_{\phi[x
              \mapsto V_\phi(a)]}(b))}}_{\rm te} \IsDef \alpha, \\ & &
      \hspace{2ex}\sembrack{\synbrack{H^{-1}
      (V_{\phi[x \mapsto V_\phi(a)]}(b))}}_{\rm te}, \\ & &
      \hspace{2ex}\Undefined_{\sf C})) \\
    & = & V_{\phi[x \mapsto V_\phi(a)]}(\mname{if}(\sembrack{b}_{\rm te} 
      \IsDef \alpha, 
      \sembrack{b}_{\rm te}, 
      \Undefined_{\sf C})) \\
    & = & V_{\phi[x \mapsto V_\phi(a)]}(\sembrack{b}_\alpha) \\
    & = & V_{\phi[x \mapsto V_\phi(a)]}(e).
    \end{eqnarray*}
    The third line is by the definition of \mname{sub}; the fourth is
    by $H_1(\synbrack{e})$; the fifth is by Lemma~\ref{lem:eval-qq}
    and part 1 of this lemma; the sixth is by $C(\synbrack{b})$; the
    seventh is by the hypothesis that
    $V_\phi(\mname{gea}(\synbrack{H^{-1}(V_{\phi[x \mapsto
          V_\phi(a)]}(b))}, \synbrack{\mname{type}})) = \TRUE$ and the
    definition of $u$; the eight is by $C(\synbrack{b})$; the ninth is
    by $V_\phi(\mname{gea}(\synbrack{H^{-1}(V_{\phi[x \mapsto
          V_\phi(a)]}(b))}, \synbrack{\mname{type}})) = \TRUE$ and
    part 1 of this lemma; the tenth is by $C(\synbrack{\alpha})$ and
    $C(\sembrack{S(\synbrack{b})}_{\rm te})$ since
    \[V_\phi(\sembrack{S(\synbrack{b})}_{\rm te}) = 
    V_{\phi[x \mapsto V_\phi(a)]}(b) = 
    V_\phi(\synbrack{H^{-1}(V_{\phi[x \mapsto V_\phi(a)]}(b))})\] by
    $C(\synbrack{b})$; the eleventh is by the semantics of evaluation;
    and the twelveth is by \[V_{\phi[x \mapsto
        V_\phi(a)]}(\sembrack{\synbrack{H^{-1}(V_{\phi[x \mapsto
            V_\phi(a)]}(b))}}_{\rm te}) = V_{\phi[x \mapsto
        V_\phi(a)]}(b).\] Therefore, $C(\synbrack{e})$ holds.

    Now suppose $V_\phi(\mname{gea}(\synbrack{H^{-1}(V_{\phi[x \mapsto
          V_\phi(a)]}(b))}, \synbrack{\mname{type}})) = \FALSE$ or
    $V_\phi(\sembrack{b}_{\rm te} \IsDef \alpha) = \FALSE$.  By a
    similar derivation to the one above, $C(\synbrack{e})$ holds. \esp

  \item[] \textbf{Case 12}: $e = \sembrack{b}_{\rm fo}$.  Similar to
    case 10.

\ee
\end{proof}

\subsection{More Notational Definitions}

Using some of the operators defined in this section and the previous
section, we give some notational definitions for additional variable
binders.  Let $L$ be a normal language.

\be

  \item \textbf{Class Abstraction}

  $(\ClassAbsApp u \mcolon \alpha \mdot A)$ 
  means
  \[(\iotaApp x \mcolon \mname{power-type}(\alpha) \mdot 
  (\ForallApp u \mcolon \alpha \mdot u \in x \Iff A))\]
  where $x$ is some member of $\sS$ such that \[T_{\rm ker}^{L} \models 
  \Neg\mname{free-in}(\synbrack{x},\synbrack{\alpha})\] and 
  \[T_{\rm ker}^{L} \models 
  \Neg\mname{free-in}(\synbrack{x},\synbrack{A}).\]

Compact notation: 
  \bi

    \item[] $\set{a_1,\ldots,a_n}$ means 
    $(\ClassAbsApp u \mcolon \mname{V} \mdot u = a_1 \Or \cdots \Or u = a_n)$
    for $n \ge 0$.
  \ei 

  \item \textbf{Dependent Type Product}

  $(\DepTypeProdApp x \mcolon \alpha \mdot \beta)$ 
  means
  \[\mname{type}((\ClassAbsApp p \mcolon \mname{V} \times \mname{V} \mdot
  \ForsomeApp x \mcolon \alpha \mdot  \ForsomeApp y \mcolon \beta \mdot
  p = \seq{x,y}))\] 
  where $p$ is some member of $\sS$ such that \[T_{\rm ker}^{L} \models
  \Neg\mname{free-in}(\synbrack{p},\synbrack{\alpha})\] and
  \[T_{\rm ker}^{L} \models
  \Neg\mname{free-in}(\synbrack{p},\synbrack{\beta}).\]

  \item \textbf{Unique Existential Quantification}

  $(\ForsomeUniqueApp x \mcolon \alpha \mdot A)$ 
  means 
  \[(\ForsomeApp x \mcolon \alpha \mdot (A \And 
  (\ForallApp y \mcolon \alpha \mdot 
  \sembrack{\mname{sub}(\synbrack{(y \mcolon \alpha)},
  \synbrack{x},\synbrack{A})}_{\rm fo}
  \Implies y = x)))\]
  where $y$ is some member of $\sS$ such that \[T_{\rm ker}^{L} \models 
  \Neg\mname{free-in}(\synbrack{y},\synbrack{A}),\] 
  \[T_{\rm ker}^{L} \models
  \mname{sub}(\synbrack{(y \mcolon \alpha)},
  \synbrack{x},\synbrack{A})\IsDefApp,\] and \[T_{\rm ker}^{L}
  \models \mname{free-for}(\synbrack{(y \mcolon \alpha)},
  \synbrack{x},\synbrack{A}).\]

\ee

\section{Examples}\label{sec:examples}

\subsection{Law of Excluded Middle}\label{subsec:lem}

In many traditional logics, e.g., first-order logic, the law of
excluded middle (LEM) is expressed as a formula schema \[A \Or \Neg
A\] where $A$ can be any formula.  In Chiron LEM can be expressed as a
single formula: \[\ForallApp e \mcolon \mname{E}_{\rm fo} \mdot
\sembrack{e} \Or \Neg\sembrack{e}.\] Using quasiquotation, LEM can
alternately be expressed as the following formula:\[\ForallApp e
\mcolon \mname{E}_{\rm fo} \mdot \mname{is-eval-free}(e) \Implies
\sembrack{\synbrack{\commabrack{e} \Or \Neg\commabrack{e}}}_{\rm
  fo}.\]

\subsection{Infinite Dependency 1}\label{subsec:inf-dep-1}

The value of a proper expression may depend on the values that are
assigned to infinitely many symbols.  For example, let $A$ is the
formula \[\Forall z : \mname{E}_{\rm sy} \mdot
\sembrack{\synbrack{(\commabrack{(z : \mname{E}_{\rm sy})} :
    \mname{E}_{\rm sy})}}_{{\sf E}_{\rm sy}} = \synbrack{z}.\] Let
$M=(S,V)$ be a standard model of $T$ and $\phi \in \mname{assign}(S)$.
Then $V_\phi(A) = \TRUE$ iff $\phi(x) = H(z)$ for all $x \in \sS$ with
$x \not= z$.  Moreover, neither $T_{\rm ker}^{L} \models
\mname{free-in} (\synbrack{x},\synbrack{A})$ nor $T_{\rm ker}^{L}
\models \Neg \mname{free-in} (\synbrack{x},\synbrack{A})$ when $x
\not= z$.  Hence $A$ is not syntactically closed.

A more inclusive definition of \mname{free-in} could be defined by
adding an argument to \mname{free-in} that represents the local
context~\cite{Monk88} of its second argument.

\subsection{Infinite Dependency 2}\label{subsec:inf-dep-2}

Here is another example of a proper expression that depends on the
values that are assigned to infinitely many symbols.  Let $A$ is
the formula
\[\mname{infinite}(\ClassAbsApp e \mcolon \mname{E}_{\rm sy} \mdot
\sembrack{\synbrack{(\commabrack{e} \mcolon \mname{C})}}_{\rm te} =
\emptyset).\] Let $M=(S,V)$ be a standard model of $T$ and $\phi \in
\mname{assign}(S)$.  Then $V_\phi(A) = \TRUE$ iff $\phi(x)$ is the
empty set for infinitely many $x \in \sS$.  This implies $V_\phi(A) =
V_{\phi[x \mapsto d]}(A)$ for all $\phi \in \mname{assign}(S)$, $x \in
\sS$, and $d \in \Dc$.  Neither $T_{\rm ker}^{L} \models
\mname{free-in} (\synbrack{x},\synbrack{A})$ nor $T_{\rm ker}^{L}
\models \Neg \mname{free-in}(\synbrack{x},\synbrack{A})$ when $x \in
\sS$.  Hence $A$ is not syntactically closed.

\subsection{Conjunction}\label{subsec:conj}

\bsp In subsection~\ref{subsec:logop} we defined the operator
\mname{and} for conjunction using an infinite set of syntactically
closed eval-free formulas.  We can alternately define conjunction as a
single formula $A$: 
\begin{eqnarray*} 
\lefteqn{\ForallApp p \mcolon 
\mname{E}_{\rm fo} \times \mname{E}_{\rm fo} \mdot
(\mname{and}::\mname{formula},\mname{formula},\mname{formula})
(\sembrack{\mname{hd}(p)}_{\rm fo},
\sembrack{\mname{tl}(p)}_{\rm fo}) \Iff} \\ & & 
\Neg(\Neg \sembrack{\mname{hd}(p)}_{\rm fo} \Or 
\Neg \sembrack{\mname{tl}(p)}_{\rm fo}).  
\end{eqnarray*}
$A$ is obviously not eval-free, and neither $T_{\rm ker}^{L} \models
\mname{free-in} (\synbrack{x},\synbrack{A})$ nor $T_{\rm ker}^{L}
\models \Neg \mname{free-in} (\synbrack{x},\synbrack{A})$ when $x
\not= p$.  Hence $A$ is not syntactically closed.\esp

\bsp The following is an alternative form for this formula with two
universally quantified variables instead of one:
\begin{eqnarray*} 
\lefteqn{\ForallApp e_1,e_2 \mcolon \mname{E}_{\rm fo} \mdot
(\mname{and}::\mname{formula},\mname{formula},\mname{formula})
(\sembrack{e_1},\sembrack{e_2}) \Iff} \\ &
& \Neg(\Neg \sembrack{e_1} \Or \Neg \sembrack{e_2}).  
\end{eqnarray*}
This latter form is more natural, but it cannot be instantiated using
Lemma~\ref{lem:sub-b} like the former form. \esp

\subsection{Modus Ponens}

Like the law of excluded middle, other laws of logic are usually
expressed as formula schemas.  In Chiron these laws can be expressed
as single formulas.  For example, the following Chiron formula
expresses the law of modus ponens:
\begin{eqnarray*}
\lefteqn{\ForallApp p \mcolon 
   \mname{E}_{\rm fo} \times \mname{E}_{\rm fo} \mdot} \\ & &
   (\sembrack{\mname{hd}(p)}_{\rm fo} \And 
   \sembrack{\mname{tl}(p)}_{\rm fo} \And
   \mname{is-impl}(\mname{tl}(p)) \And 
   \mname{hd}(p) = \mname{1st-arg}(\mname{tl}(p)))
   \Implies \\ & &
   \sembrack{\mname{2nd-arg}(\mname{tl}(p))}_{\rm fo}.
\end{eqnarray*}

Deduction and computation rules are naturally represented as
\emph{transformers}~\cite{FarmerMohrenschildt03}, algorithms that map
expressions to expressions.  Transformers can be directly formalized
in Chiron.  For example, the following function abstraction, which
maps a pair of formulas to a formula, formalizes the modus ponens rule
of inference:
\begin{eqnarray*}
\lefteqn{\LambdaApp p \mcolon 
   \mname{E}_{\rm fo} \times \mname{E}_{\rm fo} \mdot} \\ & &
   \mname{if}(\mname{is-impl}(\mname{tl}(p)) \And 
   \mname{hd}(p) = \mname{1st-arg}(\mname{tl}(p)),
   \mname{2nd-arg}(\mname{tl}(p)),
   \Undefined_{\sf C}).
\end{eqnarray*}
Let $(\mname{modus-ponens} :: (\mname{E}_{\rm fo} \times
\mname{E}_{\rm fo}) \tarrow \mname{E}_{\rm fo})(\,)$ be a constant
that is defined to be this function abstraction, and let
$\mname{mp}(a)$ mean \[(\mname{modus-ponens} :: ( \mname{E}_{\rm fo}
\times \mname{E}_{\rm fo}) \tarrow \mname{E}_{\rm fo})(\,)(a).\] Then
the following formula is an alternate expression of the law of modus
ponens:
\begin{eqnarray*}
\ForallApp p \mcolon \mname{E}_{\rm fo} \times \mname{E}_{\rm fo} \mdot
   (\sembrack{\mname{hd}(p)}_{\rm fo} \And 
   \sembrack{\mname{tl}(p)}_{\rm fo} \And 
   \mname{mp}(p)\IsDefApp) \Implies \sembrack{\mname{mp}(p)}_{\rm fo}.
\end{eqnarray*}

\subsection{Beta Reduction}

There are two laws of beta reduction in Chiron, one for the
application of a dependent function type and one for the application
of a function abstraction.  Without quotation and evaluation, the
latter beta reduction law would be informally expressed as the formula
schema
\[(\LambdaApp x \mcolon \alpha \mdot b)(a) \QuasiEqual b[x \mapsto a]\] 
provided:
\be

  \item $a \IsDef (\alpha \cap \mname{V})$.

  \item $b[x \mapsto a] \IsDef \mname{V}$.

  \item $a$ is free for $(x \mcolon \alpha)$ in $b$.  

\ee
Notice that this schema includes four schema variables
($x,\alpha,b,a$), a substitution instruction, two semantic side
conditions (conditions (1) and (2)), and a syntactic side condition
(condition (3)).  Moreover, if either of the two semantic side
conditions is false, $(\LambdaApp x \mcolon \alpha \mdot b)(a)$ is
undefined.

Using constructions, quotation, and evaluation, both laws of beta
reduction can be formalized in Chiron as rules of inference in which
the substitution instructions and syntactic side conditions are
expressed in Chiron.  For example, the law of beta reduction for
function abstractions would be:

\bigskip
\newpage

\noindent
\emph{From}
\be

  \item $\mname{is-fun-redex}(a)$,

  \item $\mname{sub}(\mname{redex-arg}(a),
    \mname{1st-comp}(\mname{redex-var}(a)),\mname{redex-body}(a))
    \IsDefApp$,

  \item $\mname{free-for}(\mname{redex-arg}(a),
     \mname{1st-comp}(\mname{redex-var}(a)),\mname{redex-body}(a)))$

\ee
\emph{infer}
\begin{eqnarray*}
  \lefteqn{\If(\sembrack{\mname{redex-arg}(a)}_{\rm te} \IsDef
  (\sembrack{\mname{2nd-comp}(\mname{redex-var}(a))}_{\rm ty} 
  \cap \mname{V}) \And {}} \\ & &
  \sembrack{\mname{sub}(\mname{redex-arg}(a),
  \mname{1st-comp}(\mname{redex-var}(a)),\mname{redex-body}(a))}_{\rm te}
  \IsDef \mname{V}, \\ & &
  \hspace*{-2ex} \sembrack{a}_{\rm te} \QuasiEqual
  \sembrack{\mname{sub}(\mname{redex-arg}(a),
  \mname{1st-comp}(\mname{redex-var}(a)),\mname{redex-body}(a))}_{\rm te},
  \\ & &
  \hspace*{-2ex} \sembrack{a}_{\rm te} \IsUndefApp)
\end{eqnarray*}

\noindent
The beta reduction law for function abstractions is applied to an
application $b$ of a function abstraction by letting $a$ be the
quotation $\synbrack{b}$.

\iffalse
\begin{eqnarray*}
\lefteqn{\ForallApp e \mcolon \mname{E}_{\rm te} \mdot } \\ & &
   (\mname{is-redex}(e) 
   \And \\ & & \hspace{.9ex}
   \mname{free-for}(\mname{redex-arg}(e),
     \mname{1st-comp}(\mname{redex-var}(e)),\mname{redex-body}(e))) 
   \\ & & 
   \hspace{2ex} \Implies \\ & &
   \sembrack{e} \QuasiEqual
   \sembrack{\mname{sub}(\mname{redex-arg}(e),
   \mname{1st-comp}(\mname{redex-var}(e)),\mname{redex-body}(e))}_{\rm te}.
\end{eqnarray*}
\fi

\subsection{Liar Paradox}\label{subsec:liar.b}

We will formalize in Chiron the liar paradox mentioned in
subsection~\ref{subsec:liar.a}.  Assume that $\mname{nat}$ is a type
and $0,1,2,\ldots$ denote terms such that $\mname{nat}$ denotes an
infinite set $\set{0,1,2,\ldots}$.  Assume also that $\mname{num}$ is
a type that denotes the set
$\set{\synbrack{0},\synbrack{1},\synbrack{2},\ldots}$.  And finally
assume that \mname{enum} is a term that denotes a function which is a
bijection from
$\set{\synbrack{0},\synbrack{1},\synbrack{2},\ldots}$ to the set
$F$ of all terms of type $(\LAMBDAapp e \mcolon \mname{E} \mdot
\mname{E})$---which is the same as $(\mname{E} \tarrow \mname{E})$---that 
are definable by a syntactically closed function abstraction of the
form $(\LambdaApp e \mcolon \mname{E} \mdot b)$.

The following function abstraction denotes some $f \in F$:
\begin{eqnarray*}
\lefteqn{\LambdaApp e \mcolon \mname{E} \mdot} \\  & &
  [ \qmname{op-app}, \\ & & \hspace{2ex}
  \mlist{\qmname{op},\qmname{not},\qmname{formula},
    \qmname{formula}}, \\ & & \hspace{2ex}
  [ \qmname{eval}, \\ & & \hspace{4ex}
  \mlist{\qmname{fun-app}, \mlist{\qmname{fun-app},\qmname{enum},e}, e},
  \\ & & \hspace{4ex}
  \qmname{formula} ] ].
\end{eqnarray*}
Using quasiquotation, $f$ can be expressed much more succinctly as
\[\LambdaApp e
\mcolon \mname{E} \mdot \synbrack{ \Neg
  \sembrack{\mname{enum}(\commabrack{e})(\commabrack{e})}_{\rm fo}}.\]
For some $i$ of type \mname{nat}, $\mname{enum}(\synbrack{i}) = f$.
Then
\begin{eqnarray*}
\mname{enum}(\synbrack{i})(\synbrack{i}) & = & f(\synbrack{i}) \\ &
  = &
  \synbrack{\Neg\sembrack{\mname{enum}(\synbrack{i})(\synbrack{i})}_{\rm
  fo}}.
\end{eqnarray*}
Therefore, if \mname{LIAR} is the term
$\mname{enum}(\synbrack{i})(\synbrack{i})$, then
\[\mname{LIAR} = \synbrack{\Neg\sembrack{\mname{LIAR}}_{\rm fo}}.\]

\section{A Proof System} \label{sec:ps}

We present in this section a proof system for Chiron called
$\textbf{C}_L$ (parameterized by a normal language $L$).  We will
state and prove a soundness theorem and a completeness theorem for
$\textbf{C}_L$.  $\textbf{C}_L$ is intended to be neither a practical
nor implemented proof system for Chiron.  Its purpose is to serve as
(1)~a system test of Chiron's definition and (2) a reference system
for other, possibly implemented, proof systems for Chiron.

\subsection{Definitions}

Fix a normal language $L$ of Chiron for the rest of this section.
$\textbf{C}_L$ is a Hilbert-style proof system for Chiron consisting
of 10 rules of inference and an infinite set of axioms.  The
\emph{rules of inference} are given below in
subsection~\ref{subsec:rules}.  The \emph{axioms} are the instances of
the 68 axiom schemas given below in subsection~\ref{subsec:schemas}.
The set of axioms depends on the language $L$.

Let $T=(L,\Gamma)$ be a theory of Chiron and $A$ be a formula.  A
\emph{proof} of $A$ from $T$ in $\textbf{C}_L$ is a finite sequence of
formulas of $L$ such that $A$ is the last member of the sequence and
every member of the sequence is an axiom of $\textbf{C}_L$, a member
of $\Gamma$, or is inferred from previous members of the sequence by
one of the rules of inference of $\textbf{C}_L$.  Let $\proves{T}{A}$
mean there is a proof of $A$ from $T$ in $\textbf{C}_L$.  $T$ is
\emph{consistent} if there is some formula $A$ of $L$ such that
$\proves{T}{A}$ does not hold.

Let $\sT$ be a set of theories over $L$ and $\sF$ be a set of formulas
of $L$.  $\textbf{C}_L$ is \emph{sound with respect to $\sT$ and
  $\sF$} if, for every $T \in \sT$ and formula $A \in
\sF$, \[\proves{T}{A} \dblsp\mbox{implies} \dblsp T \models A.\]
$\textbf{C}_L$ is \emph{complete with respect to $\sT$ and $\sF$} if,
for every $T \in \sT$ and formula $A \in \sF$,
\[T \models A \dblsp \mbox{implies} \dblsp \proves{T}{A}.\]

After presenting the rules of inference and axioms of $\textbf{C}_L$,
we will prove that $\textbf{C}_L$ is sound with respect to the set of
all normal theories over $L$ and the set of all formulas of $L$ and
complete with respect to the set of all eval-free normal theories
over $L$ and the set of all eval-free formulas of $L$.

\subsection{Rules of Inference} \label{subsec:rules}

$\textbf{C}_L$ has the 10 rules of inference stated below.  The first
two rules of inference are often employed in Hilbert-style proof
systems for first-order logic.  The last eight would normally be
expressed as axiom schemas in a Hilbert-style proof systems for a
traditional logic.  They are expressed as rules of inference in
$\textbf{C}_L$ because the syntactic side conditions, which are
expressed directly in Chiron, must be hypotheses in order for the
rules to a preserve validity in a standard model of a normal theory.

%\newpage

\begin{infrule} [Modus Ponens] 
\bi \item[] \ei
\noindent From 
\be

  \item $A$,

  \item $A \Implies B$

\ee
 infer \[B.\]
\end{infrule}

%\newpage

\begin{infrule} [Universal Generalization]
\bi \item[] \ei
\noindent From \[A\] infer \[\ForallApp x \mcolon \alpha \mdot A.\]
\end{infrule}

\begin{infrule} [Universal Quantifier Shifting]
\bi \item[] \ei
\noindent From 
\[\Neg\mname{free-in}(\synbrack{x},\synbrack{A})\]
infer 
\[(\ForallApp x \mcolon \alpha \mdot (A \Or B))
\Implies (A \Or (\ForallApp x \mcolon \alpha \mdot B)).\]
\end{infrule}

\begin{infrule} [Universal Instantiation]
\bi \item[] \ei
\noindent From 
\be

  \item $\mname{sub}(\synbrack{a}, \synbrack{x}, \synbrack{A})
    \IsDefApp$,

  \item $\mname{free-for}(\synbrack{a}, \synbrack{x},
    \synbrack{A})$

\ee
infer 
\[((\ForallApp x \mcolon \alpha \mdot A) \And {a \IsDef \alpha})
\Implies \sembrack{\mname{sub}(\synbrack{a},
\synbrack{x}, \synbrack{A})}_{\rm fo}.\]
\end{infrule}

%\newpage

\begin{infrule} [Definite Description]
\bi \item[] \ei
\noindent From 
\be

  \item $\mname{sub}(\synbrack{(\iotaApp x \mcolon \alpha \mdot A)},
    \synbrack{x}, \synbrack{A}) \IsDefApp$,

  \item $\mname{free-for}(\synbrack{(\iotaApp x \mcolon \alpha \mdot
    A)}, \synbrack{x}, \synbrack{A})$

\ee
infer 
\[(\ForsomeUniqueApp x \mcolon \alpha \mdot A) \Implies
\sembrack{\mname{sub}(\synbrack{(\iotaApp x \mcolon \alpha
\mdot A)}, \synbrack{x}, \synbrack{A})}_{\rm fo}.\]
\end{infrule}

\begin{infrule} [Indefinite Description]
\bi \item[] \ei
\noindent From
\be

  \item $\mname{sub}(\synbrack{(\epsilonApp x \mcolon \alpha \mdot
    A)}, \synbrack{x}, \synbrack{A}) \IsDefApp$,

  \item $\mname{free-for}(\synbrack{(\epsilonApp x \mcolon \alpha
    \mdot A)}, \synbrack{x}, \synbrack{A})$

\ee
infer 
\[(\ForsomeApp x \mcolon \alpha \mdot A) \Implies
\sembrack{\mname{sub}(\synbrack{(\epsilonApp x \mcolon \alpha
\mdot A)}, \synbrack{x}, \synbrack{A})}_{\rm fo}.\]
\end{infrule}

The function machinery of Chiron---type application, dependent
function types, function application, and function abstraction---is
specified by the following four rules of inference.

\begin{infrule} [Type Application] 
\bi \item[] \ei
\noindent From 
\be

  \item $\Neg \mname{free-in}(\synbrack{y},\synbrack{\alpha})$,

  \item $\Neg \mname{free-in}(\synbrack{y},\synbrack{a})$,

  \item $\Neg \mname{free-in}(\synbrack{f},\synbrack{\alpha})$,
 
  \item $\Neg \mname{free-in}(\synbrack{f},\synbrack{a})$

\ee
 infer 
\be

  \item[] $\If({a \IsDefApp},\ 
  \ForallApp
  y \mcolon \mname{C} \mdot y \IsDef \alpha(a) \Iff (\ForsomeApp
  f \mcolon \alpha \mdot \mname{fun}(f) \And \seq{a,y} \in f),\ 
  \alpha(a) \TypeEqual \mname{C})$.

\iffalse
  \item[] $(({a \IsDefApp} \Implies (\ForallApp
  y \mcolon \mname{C} \mdot y \IsDef \alpha(a) \Iff (\ForsomeApp
  f \mcolon \alpha \mdot \mname{fun}(f) \And \seq{a,y} \in f))) \And \\ 
  \hspace*{.9ex}({a \IsUndefApp} \Implies \alpha(a) \TypeEqual \mname{C}))$.
\fi

\ee
\end{infrule}

\newpage

\begin{infrule} [Dependent Function Types] 
\bi \item[] \ei
\noindent From 
\be

  \item $\Neg \mname{free-in}(\synbrack{f},\synbrack{\alpha})$,

  \item $\Neg \mname{free-in}(\synbrack{f},\synbrack{\beta})$,

  \item $\Neg \mname{free-in}(\synbrack{x},\synbrack{\alpha})$,

  \item $\Neg \mname{free-in}(\synbrack{x},\synbrack{\beta})$

\ee
 infer 
\be

  \item[] $\ForallApp f\mcolon \mname{C} \mdot
  f \IsDef (\LAMBDAapp x \mcolon \alpha \mdot \beta) \Iff \\
  \hspace*{4ex}(\mname{fun}(f) \And 
  (\ForallApp x \mcolon V \mdot {f(x)\IsDefApp}
  \Implies (x \IsDef \alpha \And f(x) \IsDef \beta))).$

\ee
\end{infrule}

\begin{infrule} [Function Application] 
\bi \item[] \ei
\noindent From 
\be

  \item $\Neg \mname{free-in}(\synbrack{y},\synbrack{\alpha})$,

  \item $\Neg \mname{free-in}(\synbrack{y},\synbrack{a})$,

  \item $\Neg \mname{free-in}(\synbrack{y},\synbrack{f})$

\ee
 infer 
\be

  \item[] $f(a) \QuasiEqual (\iotaApp y \mcolon \alpha(a) \mdot
  \mname{fun}(f) \And \seq{a,y} \in f)$.

\ee
where $f$ is of type $\alpha$.
\end{infrule}

\begin{infrule} [Function Abstraction] 
\bi \item[] \ei
\noindent From 
\be

  \item $\Neg \mname{free-in}(\synbrack{f},\synbrack{\alpha})$,

  \item $\Neg \mname{free-in}(\synbrack{f},\synbrack{\beta})$,

  \item $\Neg \mname{free-in}(\synbrack{f},\synbrack{b})$,

  \item $\Neg \mname{free-in}(\synbrack{x},\synbrack{\alpha})$,

  \item $\Neg \mname{free-in}(\synbrack{x},\synbrack{b})$,

  \item $\Neg \mname{free-in}(\synbrack{y},\synbrack{\beta})$,

  \item $\Neg \mname{free-in}(\synbrack{y},\synbrack{b})$

\ee
infer
\be

  \item[] $(\LambdaApp x \mcolon \alpha \mdot b) \QuasiEqual\\
  \hspace*{4ex}(\iotaApp f \mcolon (\LAMBDAapp x \mcolon \alpha \mdot \beta) 
  \mdot \\
  \hspace*{8ex}(\ForallApp x \mcolon \alpha,\, y \mcolon \beta \mdot 
  {f(x) = y} \Iff (x  \IsDef \mname{V} \And b \IsDef \mname{V} \And y = b)))$.

\ee
where $b$ is of type $\beta$.
\end{infrule}

\subsection{Axiom Schemas} \label{subsec:schemas}

The axioms of $\textbf{C}_L$ are presented by 68 axiom schemas,
organized into 13 groups.  Each axiom schema is specified by a formula
schema.  An instance of an axiom schema of $\textbf{C}_L$ is any
formula of $L$ that is obtained by replacing the schema variables in
the schema with appropriate expressions.

The first group of axiom schemas define conjunction and implication in
terms of negation and disjunction in the usual way and give a complete
set of axioms for propositional logic in terms of disjunction and
implication.

\begin{axschemas} [Propositional Logic] \label{axschemas:prop}
\be

  \item[]

  \item $(A \And B) \Iff \Neg(\Neg A \Or \Neg B)$.

  \item $(A \Implies B) \Iff (\Neg A \Or B)$.

  \item $(A \Or A) \Implies A.$

  \item $A \Implies (B \Or A).$

  \item $(A \Implies B) \Implies ((C \Or A) \Implies (B \Or C))).$

\ee
\end{axschemas}

\bsp The next group of axiom schemas specify that the three equalities
\mname{type-equal}, \mname{quasi-equal}, and \mname{formula-equal} are
equivalence relations as well as congruences with respect to
formulas. \esp

\begin{axschemas} [Equality]  \label{axschemas:equality}
\be

  \item[]

  \item $\alpha \TypeEqual \alpha.$

  \item $\alpha \TypeEqual \beta \subseteq C \Iff D$ \\[1.5ex] where
    $D$ is the result of replacing one occurrence of $\alpha$ in $C$
    by an occurrence of $\beta$, provided that the occurrence of
    $\alpha$ in $C$ is not within a quotation.

  \item $a \QuasiEqual a.$

  \item $a \QuasiEqual b \subseteq C \Iff D$ \\[1.5ex] where $D$ is
    the result of replacing one occurrence of $a$ in $C$ by an
    occurrence of $b$, provided that the occurrence of $a$ in $C$ is
    not within a quotation and is not a variable $(x\mcolon\alpha)$
    immediately preceded by $\Lambda$, $\lambda$, $\Forsome$, $\iota$,
    or $\epsilon$.

  \item $A \Iff A.$

  \item $A \Iff B \subseteq C \Iff D$ \\[1.5ex] where $D$ is the
    result of replacing one occurrence of $A$ in $C$ by an occurrence
    of $B$, provided that the occurrence of $A$ in $C$ is not within a
    quotation.

\ee
\end{axschemas}

The following axiom schemas specify the general definedness laws for
operators.  For a kind $k$, let 
\[\overline{k} =
\left\{\begin{array}{ll}
         k & \mbox{if } k = \mname{type} \\
         \mname{C} & \mbox{if } \ctype{L}{k} \\
         k & \mbox{if } k = \mname{formula}. 
       \end{array}
\right.\]

%\newpage

\begin{axschemas} [General Operator Properties] \label{axschemas:op}

\be

  \item[]

  \item $e_{i_1} \IsDef k_{i_1} \And \cdots \And e_{i_m} \IsDef
    k_{i_m} \Implies \\
  \hspace*{4ex}
  (o::k_1,\ldots,k_n,\mname{type})(e_1,\ldots,e_n) \TypeEqual \\
  \hspace*{4ex} (o::\overline{k_1},\ldots,\overline{k_n},
  \mname{type})(e_1,\ldots,e_n)$ \\[1.5ex] 
  where $n \ge 1$ and $k_{i_1},\ldots,k_{i_m}$ is the
  subsequence of types in the sequence $k_1,\ldots,k_n$ of kinds.

  \item $e_{i_1} \IsDef k_{i_1} \And \cdots \And e_{i_m} \IsDef
    k_{i_m} \Implies \\
  \hspace*{4ex}
  (o::k_1,\ldots,k_n,\beta)(e_1,\ldots,e_n) \QuasiEqual \\
  \hspace*{4ex} (o::\overline{k_1},\ldots,\overline{k_n},
  \beta)(e_1,\ldots,e_n)$ \\[1.5ex]
  where $n \ge 1$ and $k_{i_1},\ldots,k_{i_m}$ is the 
  subsequence of types in the sequence $k_1,\ldots,k_n$ of kinds.

  \item $e_{i_1} \IsDef k_{i_1} \And \cdots \And e_{i_m} \IsDef
    k_{i_m} \Implies \\
  \hspace*{4ex}
  (o::k_1,\ldots,k_n,\mname{formula})(e_1,\ldots,e_n) \Iff \\
  \hspace*{4ex} (o::\overline{k_1},\ldots,\overline{k_n},
  \mname{formula})(e_1,\ldots,e_n)$ \\[1.5ex] 
  where $n \ge 1$ and $k_{i_1},\ldots,k_{i_m}$ is the
  subsequence of types in the sequence $k_1,\ldots,k_n$ of kinds.

  \item ${(o::k_1,\ldots,k_n,\beta)(e_1,\ldots,e_n) \IsDefApp} \Implies {}\\
  \hspace*{4ex} 
  (o::k_1,\ldots,k_n,\beta)(e_1,\ldots,e_n) = \\
  \hspace*{4ex} (o::k_1,\ldots,k_n,\mname{C})(e_1,\ldots,e_n)$
  \\[1.5ex] where $n \ge 0$.

  \item $({a \IsDefApp} \And a \IsUndef \alpha) \Implies \\
  \hspace*{4ex} 
  (o::k_1,\ldots,k_{i-1},\alpha,k_{i+1},\ldots,k_n,\mname{type})\\
  \hspace*{4ex}
  (e_1,\ldots,e_{i-1},a,e_{i+1},\ldots,e_n) \TypeEqual \mname{C}$
  \\[1.5ex] where $n \ge 1$.

  \item $({a \IsDefApp} \And a \IsUndef \alpha) \Implies \\
  \hspace*{4ex}
  (o::k_1,\ldots,k_{i-1},\alpha,k_{i+1},\ldots,k_n,\beta)\\
  \hspace*{4ex}
  (e_1,\ldots,e_{i-1},a,e_{i+1},\ldots,e_n) \IsUndefApp$
  \\[1.5ex] where $n \ge 1$.

  \item $({a \IsDefApp} \And a \IsUndef \alpha) \Implies \\
  \hspace*{4ex} 
  (o::k_1,\ldots,k_{i-1},\alpha,k_{i+1},\ldots,k_n,\mname{formula})\\
  \hspace*{4ex}
  (e_1,\ldots,e_{i-1},a,e_{i+1},\ldots,e_n) \Iff \mname{F}$
  \\[1.5ex] where $n \ge 1$.

  \item ${(o::k_1,\ldots,k_n,\beta)(e_1,\ldots,e_n) \IsDefApp} \Implies {}\\
  \hspace*{4ex} 
  (o::k_1,\ldots,k_n,\beta)(e_1,\ldots,e_n) \IsDef \beta$
  \\[1.5ex] where $n \ge 0$.

\ee
\end{axschemas}

The following axiom schemas specify specific definedness laws for the
built-in operators.

\begin{axschemas} [Built-In Operator Definedness] \label{axschemas:bi-op-def}
\be

  \item[]

  \item ${\ell \IsDefApp}.$

  \item ${\mname{E}_{{\rm on},a} \not\TypeEqual \mname{C}} \Implies {a
    \IsDefApp}$.

  \item ${\mname{E}_a \not\TypeEqual \mname{C}} \Implies {a
    \IsDefApp}$.

  \item ${\mname{E}_{{\rm op},a} \not\TypeEqual \mname{C}} \Implies {a
    \IsDefApp}$.

  \item ${\mname{E}_{{\rm ty},a} \not\TypeEqual \mname{C}} \Implies {a
    \IsDefApp}$.

  \item ${\mname{E}_{{\rm te},a} \not\TypeEqual \mname{C}} \Implies {a
    \IsDefApp}$.

  \item ${\mname{E}^{b}_{{\rm te},a} \not\TypeEqual \mname{C}}
    \Implies ({a \IsDefApp} \And {b \IsDefApp})$.

  \item ${\mname{E}_{{\rm fo},a} \not\TypeEqual \mname{C}} \Implies {a
    \IsDefApp}$.

  \item $a \in b \Implies ({a \IsDefApp} \And {b \IsDefApp}).$

  \item $a =_\alpha b \Implies ({a \IsDef \alpha} \And {b \IsDef
  \alpha}).$

\ee
\end{axschemas}

The general definedness law for variables is specified by the
following axiom schema.

\begin{axschemas} [Variable Definedness]
\be

  \item[]

  \item ${(x \mcolon \alpha)\IsDefApp} \Implies (x \mcolon \alpha)
  \IsDef \alpha.$

\ee
\end{axschemas}

The following axiom schemas specify the extensionality of types and
universal quantification over the canonical empty type.

%\newpage

\begin{axschemas} [Types] \label{axschemas:types}
\be

  \item[]

  \item $\alpha \TypeEqual \beta \Iff (\Forall x \mcolon \mname{C}
    \mdot x \IsDef \alpha \Iff x \IsDef \beta).$

  \item $\ForallApp x \mcolon \nabla \mdot A.$

\ee
\end{axschemas}

\bsp The following 18 axiom schemas correspond to the 18 axiom schemas
that constitute the axiomatization of {\nbg} set theory given by
K. G\"{o}del in~\cite{Goedel40}. \esp

\begin{axschemas} [NBG Set Theory] \label{axschemas:set-theory}
\be

  \item[]

  \item $(x \mcolon \mname{C})\IsDefApp.$

  %%\item $\ForallApp x \mcolon \alpha \mdot x \IsDef \mname{C}.$

  \item $\ForallApp x \mcolon \mname{C} \mdot x \IsDef \mname{V} \Iff 
  (\ForsomeApp y \mcolon \mname{C} \mdot x \in y).$

  \item $\ForallApp x,y \mcolon \mname{C} \mdot (\ForallApp u \mcolon
  \mname{V} \mdot u \in x \Iff u \in y) \Implies x = y.$

  \item $\ForallApp u,v \mcolon \mname{V} \mdot \set{u,v}\IsDefApp.$

  \item $\ForsomeApp x \mcolon \mname{C} \mdot \ForallApp
  u,v \mcolon \mname{V} \mdot \seq{u,v} \in x \Iff u \in v.$

  \item $\ForallApp x,y \mcolon \mname{C} \mdot (x \cap y) \IsDefApp.$

  \item $\ForallApp x \mcolon \mname{C} \mdot \overline{x}\IsDefApp.$

  \item $\ForallApp x \mcolon \mname{C} \mdot \mname{dom}(x) \IsDefApp.$

  \item $\ForallApp x\mcolon \mname{C} \mdot \ForsomeApp y \mcolon
    \mname{C} \mdot \ForallApp u,v \mcolon \mname{V} \mdot \seq{u,v}
    \in y \Iff u \in x.$

  \item $\ForallApp x \mcolon \mname{C} \mdot \ForsomeApp y \mcolon
    \mname{C} \mdot \ForallApp u,v \mcolon \mname{V} \mdot \seq{u,v}
    \in y \Iff \seq{v,u} \in x.$

  \item $\ForallApp x \mcolon \mname{C} \mdot \ForsomeApp y \mcolon
    \mname{C} \mdot \ForallApp u,v,w \mcolon \mname{V} \mdot
    \seq{u,v,w} \in y \Iff \seq{v,w,u} \in x.$

  \item $\ForallApp x \mcolon \mname{C} \mdot \ForsomeApp y \mcolon
    \mname{C} \mdot \ForallApp u,v,w \mcolon \mname{V} \mdot
    \seq{u,v,w} \in y \Iff \seq{u,w,v} \in x.$

  \item $\ForsomeApp u \mcolon \mname{V} \mdot u \not= \emptyset \And
    (\ForallApp v \mcolon \mname{V} \mdot v \in u \Implies (\ForsomeApp w
    \mcolon \mname{V} \mdot w \in u \And v \subset w)).$

  \item $\ForallApp u \mcolon \mname{V} \mdot \mname{sum}(u)\IsDef
    \mname{V}.$

  \item $\ForallApp u \mcolon \mname{V} \mdot \mname{power}(u) \IsDef
    \mname{V}.$

  \item $\ForallApp x \mcolon \mname{C} \mdot \mname{univocal}(x) \Implies\\ 
    \hspace*{4ex} 
    (\ForallApp u \mcolon \mname{V} \mdot \ForsomeApp v
    \mcolon \mname{V} \mdot \ForallApp t \mcolon \mname{V} \mdot t \in
    v \Iff (\ForsomeApp s \mcolon \mname{V} \mdot s \in u \And
    \seq{s,t} \in x)).$

  \item $\ForallApp x \mcolon \mname{C} \mdot x \not= \emptyset
    \Implies (\ForsomeApp u \mcolon \mname{V} \mdot {u \in x} \And
    {\ForallApp v \mcolon \mname{V} \mdot \Neg(v \in u \And v \in
    x)}).$

  \item $\ForsomeApp f \mcolon \mname{C} \mdot \mname{fun}(f) \And
    (\ForallApp u \mcolon \mname{V} \mdot {u \not= \emptyset}
    \Implies {f(u) \in u}).$

\ee
\end{axschemas}

Conditional terms are specified by two axiom schemas.

\begin{axschemas} [Conditional Terms] \label{axschemas:if}
\be

  \item[]

  \item $A \Implies \mname{if}(A,b,c) \QuasiEqual b.$

  \item $\Neg A \Implies \mname{if}(A,b,c) \QuasiEqual c.$

\ee
\end{axschemas}

Definite description is specified by the Definite Description rule of
inference given above and the following two axiom schemas, one for
proper definite descriptions and one for improper.

\begin{axschemas} [Definite Description] \label{axschemas:def-desc}
\be

  \item[]

  \item $(\ForsomeUniqueApp x \mcolon \alpha \mdot A) \Implies
  (\iotaApp x \mcolon \alpha \mdot A) \IsDef \alpha)$.

  \item $\Neg(\ForsomeUniqueApp x \mcolon \alpha \mdot A) \Implies
  (\iotaApp x \mcolon \alpha \mdot A)\IsUndefApp.$

\ee
\end{axschemas}

Indefinite description is specified by the Indefinite Description rule
of inference given above and the following three axiom schemas, one for
proper indefinite descriptions, one for improper indefinite
descriptions, and one for formulas that are satisfied by the same
classes.

\newpage

\begin{axschemas} [Indefinite Description] \label{axschemas:indef-desc}
\be

  \item[]

  \item $(\ForsomeApp x \mcolon \alpha \mdot A) \Implies 
  (\epsilonApp x \mcolon \alpha \mdot A) \IsDef \alpha)$.

  \item $\Neg(\ForsomeApp x \mcolon \alpha \mdot A) \Implies
  (\epsilonApp x \mcolon \alpha \mdot A)\IsUndefApp.$

  \item $(\ForallApp x \mcolon \mname{C} \mdot A \Iff B) \Implies
  (\epsilonApp x \mcolon \mname{C} \mdot A) = 
  (\epsilonApp x \mcolon \mname{C} \mdot B).$

\ee
\end{axschemas}

Quotation is specified by the following three axiom schemas.  Note
that the quotation of a symbol and the quotation of an operator name
are not fully specified.

\begin{axschemas}[Quotation] \label{axschemas:quote}
\be

  \item[]

  \item $\synbrack{s} \IsDef \mname{E}_{\rm sy}$ \\[1.5ex] 
  where $s \in \sS$.

  \item $\synbrack{o} \IsDef \mname{E}_{\rm on}$ \\[1.5ex]
  where $o \in \sO$.

  \item $\synbrack{s_1} \not= \synbrack{s_2}$ \\[1.5ex]
  where $s_1,s_2 \in \sS \cup \sO$ with $s_1 \not= s_2$.

  \item $\synbrack{(e_1,\ldots,e_n)} =_{\sf E}
  [\synbrack{e_1},\ldots,\synbrack{e_n}]$ \\[1.5ex] where
  $e_1,\ldots,e_n \in \sE_L$ and $n \ge 0$.

\ee
\end{axschemas}

Evaluation is specified by the following six axiom schemas.  This set
of axiom schemas expresses the same information as
parts 1--6 of Lemma~\ref{lem:gea}.

\begin{axschemas} [Evaluation] \label{axschemas:eval}
\be

  \item[]

  \item $\mname{is-eval-free}(\synbrack{\alpha}) \Implies
    \sembrack{\synbrack{\alpha}}_{\rm ty} \TypeEqual \alpha$.

  \item $\Neg\mname{gea} (b,\synbrack{\mname{type}})
    \Implies \sembrack{b}_{\rm ty} \TypeEqual \mname{C}.$

  \item $\mname{is-eval-free}(\synbrack{a}) \Implies
    \sembrack{\synbrack{a}}_\alpha = \mname{if}(a \IsDef \alpha, a,
    \Undefined_{\sf C})$.

  \item $\Neg\mname{gea}(b,\synbrack{\alpha}) \Implies
    \sembrack{b}_\alpha \QuasiEqual \Undefined_{\sf C}.$

  \item $\mname{is-eval-free}(\synbrack{A}) \Implies
    \sembrack{\synbrack{A}}_{\rm fo} \Iff A$.

  \item $\Neg\mname{gea}
  (b,\synbrack{\mname{formula}}) \Implies
  \sembrack{b}_{\rm fo} \Iff \mname{F}.$

\ee
\end{axschemas}

The following eight axiom schemas specify the eight kinds of
construction types.  The type operators named \mname{expr-sym} and
\mname{expr-op-name} are specified by their properties. \mname{expr}
is specified from \mname{expr-sym} and \mname{expr-op-name} by
induction.  And those named \mname{expr-op}, \mname{expr-type},
\mname{expr-term-type}, \mname{expr-term}, and \mname{expr-formula}
are defined by mutual recursion.

\begin{axschemas}[Construction Types] \label{axschemas:con-types}
\be

  \item[]

  \item $\ell = \mname{term}(\mname{E}_{\rm on})$.

  \item $\mname{E}_{\rm sy} \cup \mname{E}_{\rm on} \TypeLE
    \mname{V}$.

  \item $\mname{E}_{\rm sy} \cap \mname{E}_{\rm on} \TypeEqual
    \nabla$.

  \item $\mname{countably-infinite}(\mname{term}(\mname{E}_{\rm sy}))$.

  \item $\ForsomeApp u \mcolon \mname{V} \mdot \mname{countably-infinite}(u)
    \And \mname{term}(\mname{E}_{\rm on}) \subseteq u$.

  \item $\ForallApp x \mcolon \mname{E}_{\rm sy} \cup \mname{E}_{\rm
    on} \mdot x \not= \emptyset \And \Neg \, \mname{is-ord-pair}(x).$

  \item $\ForallApp u \mcolon \mname{L}, u =
    \mname{term}(\mname{E}_{{\rm on},u}).$

  \item $\ForallApp u \mcolon \mname{L}, v \mcolon \mname{V} \mdot \\
  \hspace*{4ex} ((\ForallApp u_0 \mcolon \mname{E}_{\rm sy} 
  \cup \mname{E}_{{\rm on},u} \mdot u_0 \in v)  \And \\
  \hspace*{5ex} \emptyset \in v \And \\
  \hspace*{5ex} (\ForallApp u_1,u_2 \mcolon \mname{V} \mdot 
  (u_1 \in v \And u_2 \in v \And u_2 \IsUndef {\mname{E}_{\rm sy}}
  \cup {\mname{E}_{{\rm on},u}}) 
  \Implies \seq{u_1,u_2} \in v)) \\
  \hspace*{7ex}\Implies \mname{term}(\mname{E}_u) \subseteq v.$

  \item $\ForallApp u \mcolon \mname{L}, x \mcolon \mname{C} \mdot 
  x \IsDef \mname{E}_{{\rm op},u} \Iff \\
  \hspace*{4ex} 
  (\ForsomeApp e \mcolon \mname{E}_{{\rm on},u} \mdot
  x = \synbrack{(\commabrack{e} :: \mname{type})} \Or \\
  \hspace*{14.5ex} 
  (\ForsomeApp e' \mcolon \mname{E}_{{\rm ty},u} \mdot
  x = \synbrack{(\commabrack{e} :: \commabrack{e'})}) \Or \\
  \hspace*{14.5ex} 
  x = \synbrack{(\commabrack{e} :: \mname{formula})}) \Or \\
  \hspace*{4ex} 
  (\ForsomeApp e \mcolon \mname{E}_{{\rm op},u} \mdot
  x = e{\verb+^+}[\synbrack{\mname{type}}] \Or \\
  \hspace*{14.5ex} 
  (\ForsomeApp e' \mcolon \mname{E}_{{\rm ty},u} \mdot
  x = e{\verb+^+}[e']) \\
  \hspace*{14.5ex}
  x = e{\verb+^+}[\synbrack{\mname{formula}}])$.

  \item $\ForallApp u \mcolon \mname{L}, x \mcolon \mname{C} \mdot 
  x \IsDef \mname{E}_{{\rm ty},u} \Iff \\
  \hspace*{4ex}
  (\ForsomeApp e \mcolon \mname{E}_{{\rm on},u} \mdot
  x =  [\synbrack{\mname{op-app}},[\synbrack{\mname{op}},e,
  \synbrack{\mname{type}}]]) \Or \\
  \hspace*{4ex}
  (\ForsomeApp e_1 \mcolon \mname{E}_{{\rm on},u}, e_2,e_3 \mcolon 
  \mname{E}_u \mdot \\
  \hspace*{8ex}
  [\synbrack{\mname{op-app}},
  [\synbrack{\mname{op}},e_1]\verb+^+e_2\verb+^+[\synbrack{\mname{type}}]]
  \verb+^+e_3 \in \mname{E}_{{\rm ty},u} \And \\
  \hspace*{8ex}
  ((\ForsomeApp e \mcolon \mname{E}_{{\rm ty},u} \mdot \\
  \hspace*{12ex} x = [\synbrack{\mname{op-app}},
  [\synbrack{\mname{op}},e_1]\verb+^+e_2\verb+^+
  [\synbrack{\mname{type}},\synbrack{\mname{type}}]]  
  \verb+^+e_3\verb+^+[e]) \Or \\
  \hspace*{9ex}
  (\ForsomeApp e \mcolon \mname{E}_{{\rm ty},u} , 
  e' \mcolon \mname{E}_{{\rm te},u} \mdot \\
  \hspace*{12ex} x = [\synbrack{\mname{op-app}},
  [\synbrack{\mname{op}},e_1]\verb+^+e_2\verb+^+
  [e,\synbrack{\mname{type}}]]
  \verb+^+e_3\verb+^+[e']) \\
  \hspace*{9ex}
  (\ForsomeApp e \mcolon \mname{E}_{{\rm fo},u} \mdot \\
  \hspace*{12ex} x = [\synbrack{\mname{op-app}},
  [\synbrack{\mname{op}},e_1]\verb+^+e_2\verb+^+
  [\synbrack{\mname{formula}},\synbrack{\mname{type}}]]
  \verb+^+e_3\verb+^+[e]) \\
  \hspace*{8ex})) \Or \\
  \hspace*{4ex} 
  (\ForsomeApp e_1 \mcolon \mname{E}_{{\rm ty},u}, 
  e_2 \mcolon \mname{E}_{{\rm te},u} \mdot
  x = \synbrack{\commabrack{e_1}(\commabrack{e_2})}) \Or \\
  \hspace*{4ex}
  (\ForsomeApp e_1 \mcolon \mname{E}_{{\rm on},u},
  e_2,e_3 \mcolon \mname{E}_{{\rm ty},u} \mdot
  x = \synbrack{(\LAMBDAapp \commabrack{e_1} : \commabrack{e_2} \mdot
  \commabrack{e_3})}) \Or \\
  \hspace*{4ex}
  (\ForsomeApp e \mcolon \mname{E}_{{\rm te},u} \mdot
  x = \synbrack{\sembrack{\commabrack{e}}_{\rm ty}})$.

  \item $\ForallApp u \mcolon \mname{L}, \hat{e} \mcolon \mname{E}_{{\rm ty},u}, 
  x \mcolon \mname{C} \mdot 
  x \IsDef \mname{E}_{{\rm te},u}^{\hat{e}} \Iff \\
  \hspace*{4ex}
  (\ForsomeApp e \mcolon \mname{E}_{{\rm on},u} \mdot
  x =  [\synbrack{\mname{op-app}},[\synbrack{\mname{op}},e,\hat{e}]]) \Or \\
  \hspace*{4ex}
  (\ForsomeApp e_1 \mcolon \mname{E}_{{\rm on},u}, e_2,e_3 \mcolon 
  \mname{E}_u \mdot \\
  \hspace*{8ex}
  [\synbrack{\mname{op-app}},
  [\synbrack{\mname{op}},e_1]\verb+^+e_2\verb+^+[\hat{e}]]
  \verb+^+e_3 \in \mname{E}_{{\rm te},u}^{\hat{e}} \And \\
  \hspace*{8ex}
  ((\ForsomeApp e \mcolon \mname{E}_{{\rm ty},u} \mdot \\
  \hspace*{12ex} x = [\synbrack{\mname{op-app}},
  [\synbrack{\mname{op}},e_1]\verb+^+e_2\verb+^+
  [\synbrack{\mname{type}},\hat{e}]]  
  \verb+^+e_3\verb+^+[e]) \Or \\
  \hspace*{9ex}
  (\ForsomeApp e \mcolon \mname{E}_{{\rm ty},u} , 
  e' \mcolon \mname{E}_{{\rm te},u} \mdot \\
  \hspace*{12ex} x = [\synbrack{\mname{op-app}},
  [\synbrack{\mname{op}},e_1]\verb+^+e_2\verb+^+
  [e,\hat{e}]]
  \verb+^+e_3\verb+^+[e']) \\
  \hspace*{9ex}
  (\ForsomeApp e \mcolon \mname{E}_{{\rm fo},u} \mdot \\
  \hspace*{12ex} x = [\synbrack{\mname{op-app}},
  [\synbrack{\mname{op}},e_1]\verb+^+e_2\verb+^+
  [\synbrack{\mname{formula}},\hat{e}]]\verb+^+e_3\verb+^+[e]))) \Or \\
  \hspace*{4ex}
  (\ForsomeApp e \mcolon \mname{E}_{{\rm sy},u} \mdot
  x = \synbrack{(\commabrack{e} : \commabrack{\hat{e}})}) \Or \\
  \hspace*{4ex}
  (\ForsomeApp e_1 \mcolon \mname{E}_{{\rm ty},u},
  e_2 \mcolon \mname{E}_{{\rm te},u}^{e_1},
  e_3 \mcolon \mname{E}_{{\rm te},u} \mdot \\
  \hspace*{8ex}
  x = \synbrack{\commabrack{e_2}(\commabrack{e_3})} \And 
  \hat{e} = \synbrack{\commabrack{e_1}(\commabrack{e_3})}) \Or \\
  \hspace*{4ex}
  (\ForsomeApp e_1 \mcolon \mname{E}_{{\rm sy},u},
  e_2,e_3 \mcolon \mname{E}_{{\rm ty},u},
  e_4 \mcolon \mname{E}_{{\rm te},u}^{e_3} \mdot \\
  \hspace*{8ex}
  x = \synbrack{\LambdaApp \commabrack{e_1} : \commabrack{e_2} \mdot
  \commabrack{e_4}} \And
  \hat{e} = \synbrack{\LAMBDAapp \commabrack{e_1} : \commabrack{e_2} \mdot
  \commabrack{e_3}}) \Or \\
  \hspace*{4ex}
  (\ForsomeApp e_1 \mcolon \mname{E}_{{\rm fo},u},
  e_2,e_3 \mcolon \mname{E}_{{\rm ty},u},
  e_4 \mcolon \mname{E}_{{\rm te},u}^{e_2}, e_5 \mcolon 
  \mname{E}_{{\rm te},u}^{e_3} \mdot \\
  \hspace*{8ex}
  x = \synbrack{\mname{if},\commabrack{e_1},
  \commabrack{e_4},\commabrack{e_5}} \And 
  \hat{e} = \mname{if}(e_2 = e_3, e_2, \synbrack{\mname{C}})) \Or \\
  \hspace*{4ex}
  (\ForsomeApp e_1 \mcolon \mname{E}_{{\rm sy},u},
  e_2 \mcolon \mname{E}_{{\rm fo},u} \mdot
  x = \synbrack{(\iotaApp \commabrack{e_1} : \commabrack{\hat{e}} \mdot
  \commabrack{e_2})}) \Or \\\hspace*{4ex}
  (\ForsomeApp e_1 \mcolon \mname{E}_{{\rm sy},u},
  e_2 \mcolon \mname{E}_{{\rm fo},u} \mdot
  x = \synbrack{(\epsilonApp \commabrack{e_1} : \commabrack{\hat{e}} \mdot
  \commabrack{e_2})}) \Or \\
  \hspace*{4ex}
  (\ForsomeApp e \mcolon \mname{E}_u \mdot
  x = \synbrack{\commabrack{e}} \And \hat{e} = \synbrack{\mname{E}_u}) \Or \\
  \hspace*{4ex}
  (\ForsomeApp e \mcolon \mname{E}_{{\rm te},u} \mdot 
  x = \synbrack{\sembrack{\commabrack{e}}_{\commabrack{\hat{e}}}})$.

  \item $\ForallApp u \mcolon \mname{L}, x \mcolon \mname{C} \mdot 
  x \IsDef \mname{E}_{{\rm te},u} \Iff 
  (\ForsomeApp \hat{e} \mcolon \mname{E}_{{\rm ty},u} \mdot 
  x \IsDef \mname{E}_{{\rm te},u}^{\hat{e}})$.

  \item $\ForallApp u \mcolon \mname{L}, x \mcolon \mname{C} \mdot 
  x \IsDef \mname{E}_{{\rm fo},u} \Iff \\
  \hspace*{4ex}
  (\ForsomeApp e \mcolon \mname{E}_{{\rm sy},u} \mdot
  x =  [\synbrack{\mname{op-app}},[\synbrack{\mname{op}},e,
  \synbrack{\mname{formula}}]]) \Or \\
  \hspace*{4ex}
  (\ForsomeApp e_1 \mcolon \mname{E}_{{\rm sy},u}, e_2,e_3 \mcolon 
  \mname{E}_u \mdot \\
  \hspace*{8ex}
  [\synbrack{\mname{op-app}},
  [\synbrack{\mname{op}},e_1]\verb+^+e_2\verb+^+[\synbrack{\mname{formula}}]]
  \verb+^+e_3 \in \mname{E}_{{\rm fo},u} \And \\
  \hspace*{8ex}
  ((\ForsomeApp e \mcolon \mname{E}_{{\rm ty},u} \mdot \\
  \hspace*{12ex} x = [\synbrack{\mname{op-app}},
  [\synbrack{\mname{op}},e_1]\verb+^+e_2\verb+^+
  [\synbrack{\mname{type}},\synbrack{\mname{formula}}]]  
  \verb+^+e_3\verb+^+[e]) \Or \\
  \hspace*{9ex}
  (\ForsomeApp e \mcolon \mname{E}_{{\rm ty},u} , 
  e' \mcolon \mname{E}_{{\rm te},u} \mdot \\
  \hspace*{12ex} x = [\synbrack{\mname{op-app}},
  [\synbrack{\mname{op}},e_1]\verb+^+e_2\verb+^+
  [e,\synbrack{\mname{formula}}]]
  \verb+^+e_3\verb+^+[e']) \\
  \hspace*{9ex}
  (\ForsomeApp e \mcolon \mname{E}_{{\rm fo},u} \mdot \\
  \hspace*{12ex} x = [\synbrack{\mname{op-app}},
  [\synbrack{\mname{op}},e_1]\verb+^+e_2\verb+^+
  [\synbrack{\mname{formula}},\synbrack{\mname{formula}}]] \\
  \hspace*{16ex} \verb+^+e_3\verb+^+[e]))) \Or \\
  \hspace*{4ex}
  (\ForsomeApp e_1 \mcolon \mname{E}_{{\rm sy},u},
  e_2 \mcolon \mname{E}_{{\rm ty},u},
  e_3 \mcolon \mname{E}_{{\rm fo},u} \mdot
  x = \synbrack{(\ForsomeApp \commabrack{e_1} : \commabrack{e_2} \mdot
  \commabrack{e_3})}) \Or \\
  \hspace*{4ex}
  (\ForsomeApp e \mcolon \mname{E}_{{\rm te},u} \mdot
  x = \synbrack{\sembrack{\commabrack{e}}_{\rm fo}})$.

\ee
\end{axschemas}

\subsection{Soundness}

Fix a normal theory $T=(L,\Gamma)$ for the rest of this subsection.
We will prove that $\textbf{C}_L$ is sound (with respect to the set of
all normal theories over $L$ and the set of all formulas of $L$) by
showing that its rules of inference preserve validity in every
standard model of $T$ and its axioms are valid in $T$.

\begin{cprop} [Propositional Connectives] \bsp \label{prop:pc}
The built-in operators and defined operators that represent
propositional connectives in Chiron---the formula operators named
\mname{formula-equal}, \mname{not}, \mname{or}, \mname{true},
\mname{false}, \mname{and}, and \mname{implies}---have their usual
meanings in every standard model of $T$. \esp
\end{cprop}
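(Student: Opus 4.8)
The plan is to split the seven operators into the three built-in ones and the four defined ones. For \mname{formula-equal}, \mname{not}, and \mname{or} there is nothing to do beyond reading the definitions: the clauses of the definition of the interpretation function $I$ in a structure that specify $I(\mname{formula-equal})$, $I(\mname{not})$, and $I(\mname{or})$ state outright that these are, respectively, truth-value equality, classical negation, and inclusive disjunction on $\set{\TRUE,\FALSE}$, and by the clauses of the standard valuation for operators and operator applications, $V_\phi$ applied to an application of one of these operators is the corresponding truth-functional combination of the values of its arguments (no type restriction is triggered, since every argument kind is $\mname{formula}$). Because formulas always denote a member of $\set{\TRUE,\FALSE}$ there are no undefinedness subtleties, and the resulting values do not depend on $\phi$. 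Only the fact that $M$ is a standard model for $L$ is used here.

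For the four defined operators the common tool is that $T$ is normal, so $T_{\rm ker}^{L} \le T$ with the same underlying language, whence by Lemma~\ref{lem:theory-ext} every defining axiom (an element of $\Gamma_{\rm ker}^{L}$) is valid in $T$ and therefore in $M$. First I would dispatch \mname{true} and \mname{false}. The defining axiom for \mname{true} yields $M \models \mname{T} \Iff \mname{C} \TypeEqual \mname{C}$, and $\mname{C} \TypeEqual \mname{C}$ has value $\TRUE$ in every structure by the clause defining $I(\mname{type-equal})$; hence $V_\phi(\mname{T}) = \TRUE$ for all $\phi$. The defining axiom for \mname{false} yields $M \models \mname{F} \Iff \mname{V} \TypeEqual \mname{C}$; since the prestructure of $M$ satisfies the {\nbg} axioms we have $\Dv \subset \Dc$, so $V_\phi(\mname{V}) = \Dv$ and $V_\phi(\mname{C}) = \Dc$ are distinct superclasses and $V_\phi(\mname{V} \TypeEqual \mname{C}) = \FALSE$, giving $V_\phi(\mname{F}) = \FALSE$ for all $\phi$. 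In particular $\mname{T}$ and $\mname{F}$ are semantically closed.

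Finally I would pin down \mname{and} and \mname{implies}. As with the built-in connectives, the all-$\mname{formula}$ signature gives $V_\phi((\mname{and})(A,B)) = I(\mname{and})(V_\phi(A),V_\phi(B))$ and similarly for \mname{implies}, so it suffices to compute $I(\mname{and})$ and $I(\mname{implies})$ on the four pairs in $\set{\TRUE,\FALSE}^2$. The defining axioms are schemas over eval-free formulas, and $\mname{T}$ and $\mname{F}$ are eval-free (they contain no \mname{eval} at all); so for each target pair $(t,t')$ I instantiate the schema with whichever of $\mname{T},\mname{F}$ have values $t$ and $t'$. The resulting valid instance equates $I(\mname{and})(t,t')$ with the value of $\Neg(\Neg A^{\rm ef} \Or \Neg B^{\rm ef})$, which by the already-established meanings of \mname{not} and \mname{or} and the values of $\mname{T},\mname{F}$ is exactly the conjunction of $t$ and $t'$; ranging over the four pairs identifies $I(\mname{and})$, restricted to $\set{\TRUE,\FALSE}^2$, with conjunction. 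The argument for \mname{implies} is verbatim with $\Neg A^{\rm ef} \Or B^{\rm ef}$ in place of $\Neg(\Neg A^{\rm ef} \Or \Neg B^{\rm ef})$.

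The proof is mostly bookkeeping; the one point to be careful about — and the place where a reader might worry — is the passage from the infinitely many instances of the \mname{and} and \mname{implies} schemas to the four needed ones. This is legitimate precisely because $\mname{T}$ and $\mname{F}$ are eval-free formulas whose values are the two truth values, and because a $2$-ary operation on $\set{\TRUE,\FALSE}$ is completely determined by its values on constant arguments. Everything else reduces to the definitions of a structure and the standard valuation, the normality of $T$, Lemma~\ref{lem:theory-ext}, and the fact that {\nbg} proves $\Dv$ to be a proper class.
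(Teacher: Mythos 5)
Your proof is correct and takes essentially the same route as the paper's own (very short) proof, which simply appeals to the clauses of the definition of a structure for \mname{formula-equal}, \mname{not}, and \mname{or}, and to the defining axioms for \mname{true}, \mname{false}, \mname{and}, and \mname{implies}. The additional care you take---instantiating the \mname{and} and \mname{implies} schemas at $\mname{T}$ and $\mname{F}$ to determine the operations on all of $\set{\TRUE,\FALSE}^2$, since the defining axioms only range over eval-free formulas---is a detail the paper's proof glosses over, and you handle it correctly (your appeal to Lemma~\ref{lem:theory-ext} is harmless but unnecessary, since normality already puts the kernel defining axioms in $\Gamma$).
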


\begin{proof}
Let $M$ be a standard model of $T$.  The built-in operator names
\mname{formula-equal}, \mname{not}, and \mname{or} are assigned their
usual meanings in $M$ by the definition of a standard model.  The
defined operators \mname{true}, \mname{false}, \mname{and}, and
\mname{implies} are given their usual meanings in $M$ by their
definitions.
\end{proof}

\begin{cprop} [Quantification over Empty Types] \bsp \label{prop:qet}
Let $M = (S,V)$ be a standard model of $T$ and $\phi \in
\mname{assign}(S)$.  If $V_\phi(\alpha)$ is empty, then: \esp

\be

  \item $V_\phi(\ForsomeApp x \mcolon \alpha \mdot A) = \FALSE.$

  \item $V_\phi(\ForallApp x \mcolon \alpha \mdot A) = \TRUE.$

\ee
\end{cprop}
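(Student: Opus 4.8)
The plan is to unwind the definitions of the standard valuation on existential quantification and on universal quantification (the latter being defined notationally as $\ForallApp x\mcolon\alpha\mdot A = \Neg(\ForsomeApp x\mcolon\alpha\mdot\Neg A)$), and observe that both clauses quantify over elements $d$ in $V_\phi(\alpha)$. When $V_\phi(\alpha)$ is empty, there is no such $d$, so the relevant conditions are vacuously unsatisfiable. First I would handle part~1 directly: by the clause of the standard valuation for $(\mname{exists},(\mname{var},x,\alpha),B)$, $V_\phi(\ForsomeApp x\mcolon\alpha\mdot A) = \TRUE$ iff there is some $d$ in $V_\phi(\alpha)$ with $V_{\phi[x\mapsto d]}(B) = \TRUE$; since $V_\phi(\alpha) = \emptyset$, no such $d$ exists, so by the ``otherwise'' case $V_\phi(\ForsomeApp x\mcolon\alpha\mdot A) = \FALSE$.

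Then I would derive part~2 from part~1. Using the notational definition $(\ForallApp x\mcolon\alpha\mdot A) = (\Neg(\ForsomeApp x\mcolon\alpha\mdot(\Neg A)))$ together with the clause of the standard valuation for the \mname{not} operator application, $V_\phi(\ForallApp x\mcolon\alpha\mdot A) = I(\mname{not})(V_\phi(\ForsomeApp x\mcolon\alpha\mdot\Neg A))$. Applying part~1 with $A$ replaced by $\Neg A$ gives $V_\phi(\ForsomeApp x\mcolon\alpha\mdot\Neg A) = \FALSE$, and then by the definition of $I(\mname{not})$ we get $V_\phi(\ForallApp x\mcolon\alpha\mdot A) = \TRUE$. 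A small bookkeeping point to address: the clause for \mname{not} is an operator-application clause, so strictly one passes through the standard-valuation clause for $(\mname{op-app},(\mname{op},\mname{not},\mname{formula},\mname{formula}),\ldots)$ and the clause computing $V_\phi$ of an operator $(\mname{op},\mname{not},\mname{formula},\mname{formula})$; since $\FALSE$ and $\TRUE$ are both in $\set{\TRUE,\FALSE} = V_\phi(\mname{formula})$, the restriction in that clause is inert and the operation is just $I(\mname{not})$.

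This lemma is almost entirely a matter of carefully citing the right clauses, so there is no serious obstacle. The only thing that requires a moment's care is making sure the vacuous-quantification reasoning is spelled out correctly for both the ``there exists'' and (via negation) the ``for all'' direction, and noting that $\alpha$ here may denote the empty \emph{set} rather than a proper superclass (which is explicitly allowed in Chiron) — but emptiness is all that is used, and the argument does not care whether the empty superclass is a set or not. I expect the proof to be three or four sentences.
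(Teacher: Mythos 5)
Your proposal is correct and follows essentially the same route as the paper: part 1 is immediate from the "otherwise" case of the standard valuation clause for existential quantification, and part 2 comes from the notational definition of $\Forall$ together with the semantics of negation (which the paper packages as its Propositional Connectives proposition, and which you unwind directly through the operator-application clauses). Your extra bookkeeping about the \mname{not} operator restriction being inert is just a more explicit version of what that proposition already provides.
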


\begin{proof} $V_\phi(\ForsomeApp x \mcolon \alpha \mdot A) = \FALSE$ 
by the definition of $V$ on existential quantifications.  This implies
$V_\phi(\ForallApp x \mcolon \alpha \mdot A) = \TRUE$ by the
notational definition of $\Forall$ and Proposition~\ref{prop:pc}.
\end{proof}

\bigskip

\begin{clem} [Modus Ponens] \label{lem:rule-mp}
The rule Modus Ponens preserves validity in every standard model of
$T$.
\end{clem}

\begin{proof}
Let $M = (S,V)$ be a standard model of $T$. Suppose $M \models A$ and
$M \models A \Implies B$. It follows immediately by
Proposition~\ref{prop:pc} that $M \models B$.  Therefore, Modus Ponens
preserves validity in every standard model of $T$.
\end{proof}

\begin{clem} [Universal Generalization] \label{lem:rule-ug}
\bsp The rule Universal Generalization preserves validity in every standard 
model of $T$. \esp
\end{clem}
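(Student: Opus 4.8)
The plan is to unwind the notational definition of the universal quantifier and appeal directly to the clause of the standard valuation governing existential quantifications, together with the fact that validity in a model means truth under \emph{every} assignment.

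First I would fix a standard model $M = (S,V)$ of $T$, with $S = (\Dv,\Dc,\Ds,\Df,\Do,\De,\in,\TRUE,\FALSE,\Undefined,\xi,H,I)$, and assume $M \models A$; by the definition of validity in a model this means $V_\psi(A) = \TRUE$ for all $\psi \in \mname{assign}(S)$.  We must show $M \models \ForallApp x \mcolon \alpha \mdot A$, i.e., $V_\phi(\ForallApp x \mcolon \alpha \mdot A) = \TRUE$ for every $\phi \in \mname{assign}(S)$, so fix such a $\phi$.  Recall that $\ForallApp x \mcolon \alpha \mdot A$ abbreviates $\Neg(\ForsomeApp x\mcolon\alpha\mdot \Neg A)$.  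By Proposition~\ref{prop:pc} the negation operator has its usual meaning in $M$, so it suffices to show $V_\phi(\ForsomeApp x\mcolon\alpha\mdot \Neg A) = \FALSE$.

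By the clause of the standard valuation for existential quantifications, $V_\phi(\ForsomeApp x\mcolon\alpha\mdot \Neg A) = \FALSE$ exactly when there is no $d \in V_\phi(\alpha)$ with $V_{\phi[x \mapsto d]}(\Neg A) = \TRUE$.  Now $V_\phi(\alpha)$ is a superclass, hence a collection of classes, so every $d \in V_\phi(\alpha)$ lies in $\Dc$ and $\phi[x \mapsto d]$ is a legitimate assignment into $S$.  Since $M \models A$, we have $V_{\phi[x \mapsto d]}(A) = \TRUE$ for every such $d$, and hence (again by the meaning of $\Neg$) $V_{\phi[x \mapsto d]}(\Neg A) = \FALSE$.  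Thus no witnessing $d$ exists, $V_\phi(\ForsomeApp x\mcolon\alpha\mdot \Neg A) = \FALSE$, and therefore $V_\phi(\ForallApp x \mcolon \alpha \mdot A) = \TRUE$.  As $\phi$ was arbitrary, $M \models \ForallApp x \mcolon \alpha \mdot A$, and since $M$ was an arbitrary standard model of $T$, the rule preserves validity in every standard model of $T$.

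There is essentially no genuine obstacle here: the rule is sound more or less by construction, because in Chiron (as in Hilbert-style proof systems for first-order logic) validity in a model quantifies universally over assignments, so the hypothesis $M \models A$ already delivers truth under all the shifted assignments $\phi[x \mapsto d]$.  The only step deserving a word of care is the observation that each $d$ ranging over $V_\phi(\alpha)$ is an element of $\Dc$, which is what guarantees that $\phi[x \mapsto d]$ is an assignment at all; this is immediate from the fact that types denote superclasses and superclasses are collections of classes.
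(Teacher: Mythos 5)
Your proof is correct and follows essentially the same route as the paper's: validity quantifies over all assignments, each shifted assignment $\phi[x \mapsto d]$ is again an assignment since members of the superclass $V_\phi(\alpha)$ are classes, and the conclusion drops out of the notational definition of $\Forall$ together with the standard-valuation clause for existential quantifications and the usual meaning of negation (Proposition~\ref{prop:pc}). The only cosmetic difference is that the paper separates out the case where $V_\phi(\alpha)$ is empty via Proposition~\ref{prop:qet}, whereas you absorb it uniformly into the observation that no witness for $\ForsomeApp x\mcolon\alpha\mdot \Neg A$ exists.
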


\begin{proof}
Let $M = (S,V)$ be a standard model of $T$.  Suppose $M \models A$.
Then (i) $V_\phi(A)= \TRUE$ for all $\phi \in \mname{assign}(S)$.  We
need to show that $M \models \ForallApp x
\mcolon \alpha \mdot A$.  That is, we need to show
(ii) $V_\phi(\ForallApp x \mcolon \alpha \mdot A) = \TRUE$ for all
$\phi \in \mname{assign}(S)$.  Let $\phi \in \mname{assign}(S)$.  If
$V_\phi(\alpha)$ is empty, then $V_\phi(\ForallApp x \mcolon \alpha
\mdot A) = \TRUE$ by Proposition~\ref{prop:qet}.  So we may assume
that $V_\phi(\alpha)$ is nonempty.  Let $d$ be in $V_\phi(\alpha)$.
Then $\phi[x \mapsto d](x)$ is in $V_\phi(\alpha)$, and so by (i),
$V_{\phi[x \mapsto d]}(A) = \TRUE$.  This implies $V_\phi(\ForallApp x
\mcolon \alpha \mdot A) = \TRUE$ by the notational definition of
$\Forall$ and the definition of $V$ on existential quantifications.
Therefore, (ii) holds and thus Universal Generalization preserves
validity in every standard model of $T$.
\end{proof}

\begin{clem} [Universal Quantifier Shifting] \label{lem:rule-uqs}
\bsp The rule Universal Quantifier Shifting preserves validity in every
standard model of $T$. \esp
\end{clem}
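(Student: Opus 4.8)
The plan is to fix a standard model $M = (S,V)$ of $T$ and proceed exactly as in the proofs of Lemmas~\ref{lem:rule-mp} and~\ref{lem:rule-ug}: assume the hypothesis of the rule is valid in $M$ and deduce that the conclusion is. So assume $M \models \Neg\mname{free-in}(\synbrack{x},\synbrack{A})$. The essential first step is to convert this syntactic side condition into a semantic fact: by part~1 of Lemma~\ref{lem:fv}, $A$ is semantically closed in $M$ with respect to $x$, i.e.\ $V_\phi(A) = V_{\phi[x \mapsto d]}(A)$ for all $\phi \in \mname{assign}(S)$ and $d \in \Dc$. This is precisely the property that the rule's premise exists to guarantee.

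Next I would fix $\phi \in \mname{assign}(S)$, assume $V_\phi(\ForallApp x \mcolon \alpha \mdot (A \Or B)) = \TRUE$, and show $V_\phi(A \Or (\ForallApp x \mcolon \alpha \mdot B)) = \TRUE$. Unfolding the notational definition of $\Forall$ in terms of $\Forsome$ and $\Neg$, and using the clauses of the standard valuation for existential quantifications and for \mname{not} together with Proposition~\ref{prop:pc} for \mname{or}, the assumption says: for every $d \in V_\phi(\alpha)$, $V_{\phi[x \mapsto d]}(A \Or B) = \TRUE$. Now split on $V_\phi(A)$. If $V_\phi(A) = \TRUE$, then $V_\phi(A \Or (\ForallApp x \mcolon \alpha \mdot B)) = \TRUE$ directly by Proposition~\ref{prop:pc}. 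If $V_\phi(A) = \FALSE$, then semantic closedness with respect to $x$ gives $V_{\phi[x \mapsto d]}(A) = \FALSE$ for every $d$; hence for each $d \in V_\phi(\alpha)$ the equality $V_{\phi[x \mapsto d]}(A \Or B) = \TRUE$ forces $V_{\phi[x \mapsto d]}(B) = \TRUE$. Therefore $V_\phi(\ForallApp x \mcolon \alpha \mdot B) = \TRUE$ (this subsumes the case $V_\phi(\alpha)$ empty, in agreement with Proposition~\ref{prop:qet}), and so $V_\phi(A \Or (\ForallApp x \mcolon \alpha \mdot B)) = \TRUE$ again by Proposition~\ref{prop:pc}. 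In either case the conclusion holds at $\phi$; since $\phi$ was arbitrary, the conclusion is valid in $M$.

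The only real subtlety is the first move: recognizing that the purely syntactic premise $\Neg\mname{free-in}(\synbrack{x},\synbrack{A})$ must be routed through Lemma~\ref{lem:fv} to obtain semantic closedness of $A$ in $x$. Without that premise the rule fails --- for instance when $A$ genuinely depends on $x$ --- which is exactly why it is formulated as a rule of inference with a syntactic hypothesis rather than as an axiom schema. Everything after that first step is a short, routine case analysis on the truth value of $A$ using the definition of the standard valuation and Proposition~\ref{prop:pc}; no induction is required.
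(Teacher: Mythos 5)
Your proposal is correct and follows essentially the same route as the paper's proof: convert the syntactic premise into semantic closedness of $A$ with respect to $x$ via part~1 of Lemma~\ref{lem:fv}, then do the case split on $V_\phi(A)$, using Proposition~\ref{prop:pc} for the disjunctions and Proposition~\ref{prop:qet} (implicitly or explicitly) for the empty-type case. No gaps.
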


\begin{proof}
Let $M = (S,V)$ be a standard model of $T$.  Suppose (i)
$M \models \Neg\mname{free-in}(\synbrack{x},\synbrack{A})$ and (ii)
$V_\phi(\ForallApp x \mcolon \alpha \mdot (A \Or B)) = \TRUE$ for
$\phi \in \mname{assign}(S)$.  We must show $V_\phi(A \Or (\ForallApp
x \mcolon \alpha \mdot B)) = \TRUE$.  Our argument is by cases:

\bi

  \item[] \textbf{Case 1}: $V_\phi(A) = \TRUE$.  Hence, by
    Proposition~\ref{prop:pc}, $V_\phi(A \Or (\ForallApp x \mcolon
    \alpha \mdot B)) = \TRUE$.

  \item[] \textbf{Case 2}: $V_\phi(A) = \FALSE$.  It suffices to show
    (iii) $V_\phi(\ForallApp x \mcolon \alpha \mdot B) = \TRUE$ since
    this implies $V_\phi(A \Or (\ForallApp x \mcolon \alpha \mdot B))
    = \TRUE$ by Proposition~\ref{prop:pc}.  If $V_\phi(\alpha)$ is
    empty, then (iii) holds by Proposition~\ref{prop:qet}.  So let $d$
    be in $V_\phi(\alpha)$.  By the notational definition of
    $\Forall$, the definition of $V$ on existential quantifications,
    and (ii), $V_{\phi[x \mapsto d]}(A \Or B) = \TRUE$.  This implies
    $V_{\phi[x \mapsto d]}(A) = \TRUE$ or $V_{\phi[x \mapsto d]}(B)
    = \TRUE$ by Proposition~\ref{prop:pc}.  By the hypothesis, (i), and
    Lemma~\ref{lem:fv}, $V_{\phi[x \mapsto d]}(A) = \FALSE$.  This
    implies $V_{\phi[x \mapsto d]}(B) = \TRUE$, and hence (iii) holds
    by the notational definition of $\Forall$ and the definition of
    $V$ on existential quantifications.

\ei
\end{proof}

\begin{clem} [Universal Instantiation] \label{lem:rule-ui}
The rule Universal Instantiation preserves validity in every standard
model of $T$.
\end{clem}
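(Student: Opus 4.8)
The plan is to show that the rule preserves validity in a fixed standard model $M = (S,V)$ of the normal theory $T$: that is, if $M$ validates both premises, it validates the conclusion. So I would begin by assuming $M \models \mname{sub}(\synbrack{a}, \synbrack{x}, \synbrack{A}) \IsDefApp$ and $M \models \mname{free-for}(\synbrack{a}, \synbrack{x}, \synbrack{A})$, fix $\phi \in \mname{assign}(S)$, assume $V_\phi((\ForallApp x \mcolon \alpha \mdot A) \And a \IsDef \alpha) = \TRUE$, and derive $V_\phi(\sembrack{\mname{sub}(\synbrack{a}, \synbrack{x}, \synbrack{A})}_{\rm fo}) = \TRUE$; since $\phi$ is arbitrary this gives $M \models ((\ForallApp x \mcolon \alpha \mdot A) \And a \IsDef \alpha) \Implies \sembrack{\mname{sub}(\synbrack{a}, \synbrack{x}, \synbrack{A})}_{\rm fo}$.

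From the assumption and Proposition~\ref{prop:pc}, which assigns conjunction its usual meaning, I would extract $V_\phi(\ForallApp x \mcolon \alpha \mdot A) = \TRUE$ and $V_\phi(a \IsDef \alpha) = \TRUE$. By the definition of the \mname{defined-in} operator (from subsection~\ref{subsec:logop}), the latter says $V_\phi(a) \in V_\phi(\alpha)$; in particular $V_\phi(a) \not= \Undefined$. Unwinding $\Forall$ through its notational definition $\Neg(\ForsomeApp x \mcolon \alpha \mdot \Neg A)$, Proposition~\ref{prop:pc}, and the clause of the standard valuation for existential quantifications, $V_\phi(\ForallApp x \mcolon \alpha \mdot A) = \TRUE$ means $V_{\phi[x \mapsto d]}(A) = \TRUE$ for every $d \in V_\phi(\alpha)$. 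Taking $d = V_\phi(a)$ yields $V_{\phi[x \mapsto V_\phi(a)]}(A) = \TRUE$.

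The remaining step is to identify $V_\phi(\sembrack{\mname{sub}(\synbrack{a}, \synbrack{x}, \synbrack{A})}_{\rm fo})$ with $V_{\phi[x \mapsto V_\phi(a)]}(A)$, which is exactly Lemma~\ref{lem:sub-b} (Substitution B) applied with $e = A$. Since $A$ is a formula we have $k[A] = \mname{formula}$, so $\sembrack{\cdot}_{k[A]}$ is $\sembrack{\cdot}_{\rm fo}$; and the two hypotheses that Lemma~\ref{lem:sub-b} requires, namely $M \models \mname{sub}(\synbrack{a}, \synbrack{x}, \synbrack{A}) \IsDefApp$ and $M \models \mname{free-for}(\synbrack{a}, \synbrack{x}, \synbrack{A})$, are precisely the two premises of the rule, which $M$ validates by assumption. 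Because $V_\phi(a) \not= \Undefined$, Lemma~\ref{lem:sub-b} gives $V_\phi(\sembrack{\mname{sub}(\synbrack{a}, \synbrack{x}, \synbrack{A})}_{\rm fo}) = V_{\phi[x \mapsto V_\phi(a)]}(A) = \TRUE$, as needed, so the rule preserves validity in every standard model of $T$.

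I do not expect a substantive obstacle here: the entire content of the rule has been packaged into Lemma~\ref{lem:sub-b}, and the syntactic side conditions, which had to appear as premises of the rule precisely so that they could serve as hypotheses, match the hypotheses that lemma demands. The only points needing care are routine bookkeeping—checking that $a$ is a term and $A$ a formula so that the lemma applies with $k[A] = \mname{formula}$, and correctly unfolding the derived connectives $\And$ and $\Forall$ via Proposition~\ref{prop:pc} and the notational definition of $\Forall$ rather than treating them as primitives of the valuation.
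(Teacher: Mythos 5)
Your proof is correct and follows essentially the same route as the paper's: unpack the conjunction via Proposition~\ref{prop:pc}, use the definition of \mname{defined-in} and the notational definition of $\Forall$ together with the valuation clause for existential quantifications to get $V_{\phi[x \mapsto V_\phi(a)]}(A) = \TRUE$, and then invoke Lemma~\ref{lem:sub-b} with the two rule premises as its hypotheses. No gaps to report.
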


\begin{proof}
Let $M = (S,V)$ be a standard model of $T$.  Suppose (i) $M \models
\mname{sub}(\synbrack{a}, \synbrack{x}, \synbrack{A}) \IsDefApp$,
(ii) $M \models \mname{free-for}(\synbrack{a}, \synbrack{x},
\synbrack{A})$, and (iii) $V_\phi((\ForallApp x \mcolon \alpha \mdot
A) \And {a \IsDef \alpha}) =\TRUE$ for $\phi \in \mname{assign}(S)$.
We must show
\[V_\phi(\sembrack{\mname{sub}(\synbrack{a}, \synbrack{x},
  \synbrack{A})}_{\rm fo}) = \TRUE.\] By Proposition~\ref{prop:pc},
(iii) implies (iv) $V_\phi(\ForallApp x \mcolon \alpha \mdot A) = \TRUE$
and (v) $V_\phi(a \IsDef \alpha) = \TRUE$.  (v) implies (vi)
$V_\phi(a) \not= \Undefined$ and (vii) $V_\phi(a)$ is in
$V_\phi(\alpha)$. (iv) and (vii) imply (viii) $V_{\phi[x \mapsto
    V_\phi(a)]}(A) = \TRUE$ by the notational definition of $\Forall$
and the definition of $V$ on existential quantifications.  (i),
(ii), (vi), and (viii)
imply \[V_\phi(\sembrack{\mname{sub}(\synbrack{a}, \synbrack{x},
  \synbrack{A})}_{\rm fo}) = V_{\phi[x \mapsto V_\phi(a)]}(A) =
\TRUE\] by Lemma~\ref{lem:sub-b}.
\end{proof}

\begin{clem} [Definite Description] \label{lem:rule-dd}
\bsp The rule Definite Description preserves validity in every standard
model of $T$.\esp
\end{clem}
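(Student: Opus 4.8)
The plan is to fix an arbitrary standard model $M=(S,V)$ of the (normal) theory $T$, assume the two hypotheses of the rule hold in $M$, and show the conclusion is valid in $M$, i.e. that $V_\phi$ of the conclusion equals $\TRUE$ for every $\phi \in \mname{assign}(S)$. Fix such a $\phi$. If $V_\phi(\ForsomeUniqueApp x \mcolon \alpha \mdot A) = \FALSE$ the implication is $\TRUE$ by Proposition~\ref{prop:pc}, so I would assume $V_\phi(\ForsomeUniqueApp x \mcolon \alpha \mdot A) = \TRUE$. Writing $a$ for the term $(\iotaApp x \mcolon \alpha \mdot A)$, the goal becomes $V_\phi(\sembrack{\mname{sub}(\synbrack{a},\synbrack{x},\synbrack{A})}_{\rm fo}) = \TRUE$.

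First I would unpack the notational definition of $\ForsomeUniqueApp$. Because $T$ is normal, $T_{\rm ker}^{L} \le T$, so the three side conditions attached to that definition ($y$ not free in $A$; $\mname{sub}(\synbrack{(y\mcolon\alpha)},\synbrack{x},\synbrack{A})\IsDefApp$; and $\mname{free-for}(\synbrack{(y\mcolon\alpha)},\synbrack{x},\synbrack{A})$) are valid in $M$. Using Proposition~\ref{prop:pc}, Proposition~\ref{prop:qet}, the definition of $V$ on existential quantifications and on variables, and Lemma~\ref{lem:sub-b} applied to the evaluation occurring inside that definition, I would derive that $V_\phi(\ForsomeUniqueApp x \mcolon \alpha \mdot A) = \TRUE$ implies there is a \emph{unique} $d \in V_\phi(\alpha)$ with $V_{\phi[x\mapsto d]}(A) = \TRUE$. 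Here Lemma~\ref{lem:sub-b} rewrites $\sembrack{\mname{sub}(\synbrack{(y\mcolon\alpha)},\synbrack{x},\synbrack{A})}_{\rm fo}$ as the value of $A$ under $x$ reassigned to the value of $(y\mcolon\alpha)$, and the side condition ``$y$ not free in $A$'' lets me discard the $y$-binding afterwards (via Lemma~\ref{lem:fv}).

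Next I would apply the definition of $V$ on definite descriptions: the existence of such a unique witness $d$ gives $V_\phi(a) = d$, in particular $V_\phi(a) \neq \Undefined$. Then I would invoke Lemma~\ref{lem:sub-b} a second time, now with the term $a$ and the formula $A$. Hypotheses 1 and 2 of the rule are precisely $M \models \mname{sub}(\synbrack{a},\synbrack{x},\synbrack{A})\IsDefApp$ and $M \models \mname{free-for}(\synbrack{a},\synbrack{x},\synbrack{A})$, and $V_\phi(a)\neq\Undefined$, so the lemma yields $V_\phi(\sembrack{\mname{sub}(\synbrack{a},\synbrack{x},\synbrack{A})}_{\rm fo}) = V_{\phi[x\mapsto V_\phi(a)]}(A) = V_{\phi[x\mapsto d]}(A) = \TRUE$, which is the goal. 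Since $\phi$ was arbitrary the conclusion is valid in $M$, and since $M$ was an arbitrary standard model of $T$ the rule preserves validity.

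The hard part will be the first step: converting ``$V_\phi(\ForsomeUniqueApp x \mcolon \alpha \mdot A) = \TRUE$'' into genuine set-theoretic unique existence of a witness in $V_\phi(\alpha)$. The uniqueness clause in the definition of $\ForsomeUniqueApp$ quantifies its bound variable over $V_{\phi[x\mapsto d]}(\alpha)$, whereas the semantics of $\iota$ (and of the outer existential) quantify over $V_\phi(\alpha)$; reconciling these requires that $\alpha$ be semantically closed with respect to $x$ in $M$, which is the normal situation but should be noted explicitly, along with the degenerate case $V_\phi(\alpha)=\emptyset$, which is handled by Proposition~\ref{prop:qet}. Everything else—the two invocations of Lemma~\ref{lem:sub-b}, the appeal to Lemma~\ref{lem:fv} to drop the inner binder, and the equality chasing—is routine.
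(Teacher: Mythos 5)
Your proof follows essentially the same route as the paper's: extract a unique witness $d \in V_\phi(\alpha)$ from the truth of the unique existential quantification, use the semantics of definite descriptions to get $V_\phi(\iotaApp x \mcolon \alpha \mdot A) = d \not= \Undefined$, and then apply Lemma~\ref{lem:sub-b} with the rule's two hypotheses to conclude $V_\phi(\sembrack{\mname{sub}(\synbrack{(\iotaApp x \mcolon \alpha \mdot A)}, \synbrack{x}, \synbrack{A})}_{\rm fo}) = V_{\phi[x \mapsto d]}(A) = \TRUE$. The only difference is that the paper simply asserts the passage from the truth of $\ForsomeUniqueApp x \mcolon \alpha \mdot A$ to the unique witness via the notational definition, whereas you unpack that step (and flag the dependence-of-$\alpha$-on-$x$ subtlety) explicitly.
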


\begin{proof}
Let $M = (S,V)$ be a standard model of $T$.  Suppose (i) $M \models
\mname{sub}(\synbrack{(\iotaApp x \mcolon \alpha \mdot A)},
\synbrack{x}, \synbrack{A}) \IsDefApp$, (ii) $M \models
\mname{free-for}(\synbrack{(\iotaApp x \mcolon \alpha \mdot A)},
\synbrack{x}, \synbrack{A})$ and (iii) $V_\phi(\ForsomeUniqueApp x
\mcolon \alpha \mdot A) = \TRUE$ for $\phi \in \mname{assign}(S)$.  We
need to show \[V_\phi(\sembrack{\mname{sub}(\synbrack{(\iotaApp x
    \mcolon \alpha \mdot A)}, \synbrack{x}, \synbrack{A})}_{\rm fo})
= \TRUE.\]  By the notational definition of unique existential
quantification and the definition of $V$ on existential
quantifications, (iii) implies (iv) there is a unique $d$ in
$V_\phi(\alpha)$ such that $V_{\phi[x \mapsto d]}(A) = \TRUE$, and by
the definition of $V$ on definite descriptions, (iv) implies (v)
there is a $d$ in $V_\phi(\alpha)$ such that $V_\phi(\iotaApp x
\mcolon \alpha \mdot A) = d$.  (v) implies (vi) $V_\phi(\iotaApp x
\mcolon \alpha \mdot A) \not= \Undefined$.  (i), (ii), (iv),
(v), and (vi) imply
\begin{eqnarray*}
V_\phi(\sembrack{\mname{sub}(\synbrack{(\iotaApp x \mcolon \alpha
    \mdot A)}, \synbrack{x}, \synbrack{A})}_{\rm fo}) 
& = & V_{\phi [x \mapsto V_{\phi}(\iotaApp x \mcolon \alpha \mdot A)]}(A) \\
& = & V_{\phi [x \mapsto d]}(A) \\
& = & \TRUE
\end{eqnarray*}
by Lemma~\ref{lem:sub-b}.
\end{proof}

\begin{clem} [Indefinite Description] \label{lem:rule-id}
\bsp The rule Indefinite Description preserves validity in every standard
model of $T$. \esp
\end{clem}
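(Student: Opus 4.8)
The plan is to mirror the proof of Lemma~\ref{lem:rule-dd} almost verbatim, replacing the appeal to the semantics of definite descriptions with the semantics of indefinite descriptions. Fix a standard model $M = (S,V)$ of $T$ and a formula $A$, a type $\alpha$, and a symbol $x$ as in the rule. Assume the two premises hold in $M$, i.e. (i) $M \models \mname{sub}(\synbrack{(\epsilonApp x \mcolon \alpha \mdot A)}, \synbrack{x}, \synbrack{A}) \IsDefApp$ and (ii) $M \models \mname{free-for}(\synbrack{(\epsilonApp x \mcolon \alpha \mdot A)}, \synbrack{x}, \synbrack{A})$. To show the conclusion is valid in $M$, let $\phi \in \mname{assign}(S)$; the case $V_\phi(\ForsomeApp x \mcolon \alpha \mdot A) = \FALSE$ is trivial by Proposition~\ref{prop:pc}, so assume (iii) $V_\phi(\ForsomeApp x \mcolon \alpha \mdot A) = \TRUE$, and the goal becomes $V_\phi(\sembrack{\mname{sub}(\synbrack{(\epsilonApp x \mcolon \alpha \mdot A)}, \synbrack{x}, \synbrack{A})}_{\rm fo}) = \TRUE$.

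First I would unwind (iii) using the definition of the standard valuation on existential quantifications to get some $d \in V_\phi(\alpha)$ with $V_{\phi[x \mapsto d]}(A) = \TRUE$. Hence the superclass $\Sigma$ of all $d \in V_\phi(\alpha)$ with $V_{\phi[x \mapsto d]}(A) = \TRUE$ is nonempty, so by the definition of the standard valuation on indefinite descriptions $V_\phi(\epsilonApp x \mcolon \alpha \mdot A) = \xi(\Sigma)$; and since $\xi$ is a choice function on $\Ds$, $\xi(\Sigma) \in \Sigma$. This yields both (iv) $V_\phi(\epsilonApp x \mcolon \alpha \mdot A) \not= \Undefined$ and, by the definition of $\Sigma$, (v) $V_{\phi[x \mapsto V_\phi(\epsilonApp x \mcolon \alpha \mdot A)]}(A) = \TRUE$.

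Finally I would apply Lemma~\ref{lem:sub-b} with the term $a$ taken to be $(\epsilonApp x \mcolon \alpha \mdot A)$ and the expression $e$ taken to be $A$ (so $k[e] = \mname{formula}$): premises (i) and (ii) are exactly the two hypotheses of that lemma, and the side condition $V_\phi(a) \not= \Undefined$ is (iv), so $V_\phi(\sembrack{\mname{sub}(\synbrack{(\epsilonApp x \mcolon \alpha \mdot A)}, \synbrack{x}, \synbrack{A})}_{\rm fo}) = V_{\phi[x \mapsto V_\phi(\epsilonApp x \mcolon \alpha \mdot A)]}(A)$, which equals $\TRUE$ by (v). Since $\phi$ was arbitrary, the conclusion of the rule is valid in $M$, and since $M$ was an arbitrary standard model of $T$, the rule preserves validity in every standard model of $T$.

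The only point needing care — the analogue of the uniqueness bookkeeping in the definite-description case — is verifying that the witness supplied by the choice function actually satisfies $A$, i.e. that $\xi(\Sigma) \in \Sigma$; this rests solely on $\Sigma$ being nonempty (from the existential hypothesis) and on $\xi$ being a choice function, both immediate. Everything else is a routine chase through the definitions, with Lemma~\ref{lem:sub-b} carrying the real content by relating the syntactic substitution $\mname{sub}(\synbrack{a},\synbrack{x},\synbrack{A})$ to the semantic reassignment $\phi[x \mapsto V_\phi(a)]$; I do not anticipate any genuine obstacle.
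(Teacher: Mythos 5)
Your proof is correct and is essentially the paper's own argument: the paper proves this lemma simply by saying it is "similar to the proof of the Definite Description lemma," and your write-up is exactly that adaptation, replacing the uniqueness argument by the observation that $\xi(\Sigma)\in\Sigma$ for the nonempty superclass $\Sigma$ of witnesses and then invoking Lemma~\ref{lem:sub-b} as in the definite-description case.
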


\begin{proof}
Similar to the proof of Lemma~\ref{lem:rule-dd}.
\end{proof}

\begin{clem} [Functions] \label{lem:rule-fun}
The four rules of inference for functions preserve validity in every
standard model of $T$.
\end{clem}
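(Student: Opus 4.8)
The plan is to handle the four rules --- Type Application, Dependent Function Types, Function Application, and Function Abstraction --- one at a time, since each argument has the same overall shape. In every case I would fix a standard model $M=(S,V)$ of $T$ and an assignment $\phi\in\mname{assign}(S)$, assume that all the $\Neg\mname{free-in}$ premises of the rule in question are valid in $M$, and then invoke Lemma~\ref{lem:fv}: each such premise says exactly that the subexpression named in it ($\alpha$, $\beta$, $a$, $b$, or $f$) is semantically closed in $M$ with respect to the indicated symbol, so reassigning the auxiliary variables $y$ or $f$ (and, where a premise demands it, the bound variable $x$) leaves the values of $\alpha$, $\beta$, $a$, and $b$ unchanged. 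With these independence facts in hand, the remaining work is to unwind the standard valuation on the expression categories appearing in the conclusion --- type applications, dependent function types, function applications, function abstractions, and definite descriptions --- and to compare the two sides clause by clause, using Proposition~\ref{prop:pc} for the propositional connectives and Proposition~\ref{prop:qet} when a quantifier ranges over a possibly empty type.

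For Type Application and Dependent Function Types I expect a routine unwinding. For Type Application I would split on whether $V_\phi(a)=\Undefined$: in that case the conditional term selects $\alpha(a)\TypeEqual\mname{C}$, which holds because $V_\phi(\alpha(a))=\Dc$ by the valuation clause for type applications; otherwise $V_\phi(\alpha(a))$ is $V_\phi(\alpha)[V_\phi(a)]$ when that is defined and $\Dc$ otherwise, and in each subcase I would check directly that the resulting class consists of exactly those $y\in\Dv$ for which some $f$ lying in both $V_\phi(\alpha)$ and $\Df$ satisfies $\seq{V_\phi(a),y}\in f$ --- which is just the definition of $\Sigma[x]$ --- so that the right-hand equivalence $\ForallApp y\mcolon\mname{C}\mdot y\IsDef\alpha(a)\Iff(\ForsomeApp f\mcolon\alpha\mdot\mname{fun}(f)\And\seq{a,y}\in f)$ holds. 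Here I would lean on the observation from subsection~\ref{subsec:notes} that $V_\phi(\alpha(a))$ is always a class when $V_\phi(a)$ is defined. Dependent Function Types is pure definition chasing: $V_\phi((\LAMBDAapp x\mcolon\alpha\mdot\beta))$ is by definition the superclass of all $g\in\Df$ such that every set $d$ with $g(d)$ defined has $d\in V_\phi(\alpha)$ and $g(d)\in V_{\phi[x\mapsto d]}(\beta)$, and this is precisely what the right-hand side spells out once $\alpha$ and $\beta$ are known not to depend on $f$ and $\alpha$ not to depend on $x$.

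For Function Application and Function Abstraction I would additionally use the valuation clauses for function application, function abstraction, and definite description. In Function Application the point is that, using the premise ``$f$ is of type $\alpha$'', the unique $y$ in $V_\phi(\alpha(a))$ with $\mname{fun}(f)\And\seq{a,y}\in f$ is exactly $V_\phi(f)(V_\phi(a))$ when $V_\phi(f)$, $V_\phi(a)$, and $V_\phi(f)(V_\phi(a))$ are all defined, and otherwise there is no such unique $y$, so both the function application and the $\iota$-description are undefined; the three non-freeness premises guarantee that $y$ does not occur in $\alpha$, $a$, or $f$, so the $\iota$-description computes correctly without capture.

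The hard part will be Function Abstraction. Here I must match the graph $g=\{\seq{d,d'}\;|\;d\text{ a set in }V_\phi(\alpha)\text{ and }d'=V_{\phi[x\mapsto d]}(b)\text{ a set}\}$ produced by the valuation clause for function abstractions against the member of the dependent function type $(\LAMBDAapp x\mcolon\alpha\mdot\beta)$ selected by the $\iota$-description whose body is $\ForallApp x\mcolon\alpha,\,y\mcolon\beta\mdot f(x)=y\Iff(x\IsDef\mname{V}\And b\IsDef\mname{V}\And y=b)$. I would show that when $g\in\Df$ it is the unique member of $V_\phi((\LAMBDAapp x\mcolon\alpha\mdot\beta))$ satisfying that body --- so both sides of the quasi-equation equal $g$ --- and that when $g\notin\Df$ no member satisfies it, so the $\iota$-description is improper and both sides equal $\Undefined$, matching $V_\phi((\LambdaApp x\mcolon\alpha\mdot b))=\Undefined$. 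The delicate bookkeeping is that the equation $f(x)=y$ inside the $\iota$-body must be read with $x$ ranging over sets in $V_\phi(\alpha)$ and $y$ over sets in $V_{\phi[x\mapsto d]}(\beta)$ (so Proposition~\ref{prop:qet} covers the empty-$\alpha$ case), and this reading is legitimate exactly because the seven non-freeness premises make $\alpha$, $\beta$, and $b$ independent of $f$ and of $y$, and $\alpha$ independent of $x$. Assembling the four cases then yields the lemma.
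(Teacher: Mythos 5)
Your proposal is correct and follows essentially the same route as the paper, whose own proof is just a terse appeal to Proposition~\ref{prop:pc}, the definitions of \mname{free-in}, \mname{fun}, and \mname{ord-pair}, and the relevant clauses of the standard valuation; you simply carry out the case analysis the paper leaves implicit, using Lemma~\ref{lem:fv} to turn the non-freeness hypotheses into semantic independence, exactly as the paper does for its other rule-soundness lemmas. The extra care you give to Function Abstraction and to the undefined cases in Type/Function Application is consistent with, and a legitimate filling-in of, the paper's sketch.
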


\begin{proof}
The rule Type Application
specifies the value of a type application $\alpha(a)$ by
Proposition~\ref{prop:pc}, the definitions of \mname{free-in},
\mname{fun}, and \mname{ord-pair}, and the definition of $V$ on type
applications.  The rule Dependent Function Type specifies the value of
a dependent function type $(\LAMBDAapp x \mcolon \alpha \mdot \beta)$
by Proposition~\ref{prop:pc}, the definition of \mname{free-in} and
\mname{fun}, and the definition of $V$ on dependent function types.
The rule Function Application specifies the value of a function
application $f(a)$ by Proposition~\ref{prop:pc}, the definitions
of \mname{free-in}, and
\mname{fun}, and \mname{ord-pair}, and the definition of $V$ on type
and function applications.  And, finally, the rule Function
Abstraction  specifies the value
of a function abstraction $(\LambdaApp x
\mcolon \alpha \mdot b)$ of a function abstraction by
Proposition~\ref{prop:pc}, the definition of \mname{free-in}, and the
definition of $V$ on dependent function types, function applications,
and function abstractions.  Therefore, each of the four rules of
inference for functions preserves validity in every standard model of
$T$.
\end{proof}

\begin{clem} [Axiom Schemas 1] \label{lem:asa}
\bsp Each instance of the axiom schemas in Axiom Schemas 1 is valid 
in $T$.\esp
\end{clem}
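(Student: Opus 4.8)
The plan is to reduce each of the five schemas to a classical propositional tautology and verify it by a finite truth-value computation inside an arbitrary standard model of $T$. First I would fix a standard model $M=(S,V)$ of $T$, an assignment $\phi \in \mname{assign}(S)$, and an instance of one of the schemas obtained by substituting formulas $A$, $B$, $C$ of $L$ for the schema variables. Since $A$, $B$, and $C$ are formulas, $V_\phi(A)$, $V_\phi(B)$, and $V_\phi(C)$ are defined and lie in $\set{\TRUE,\FALSE}$ by the theorem on standard valuations, and by Proposition~\ref{prop:pc} the operators named \mname{not}, \mname{or}, \mname{and}, and \mname{implies} are interpreted in $M$ by the usual classical truth tables on these values. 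It is essential here to route the argument through Proposition~\ref{prop:pc} rather than through the defining axioms of \mname{and} and \mname{implies} directly, since those defining axioms are stated only for eval-free formula arguments whereas $A$, $B$, $C$ may be arbitrary.

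Granting this, each schema becomes a statement about $\TRUE/\FALSE$ combinations. Schemas~1 and~2 are the equivalences $(A \And B) \Iff \Neg(\Neg A \Or \Neg B)$ and $(A \Implies B) \Iff (\Neg A \Or B)$, which hold componentwise once $\And$, $\Implies$, $\Neg$, $\Or$ are read classically (using the definition of $V$ on \mname{formula-equal} for the biconditional). Schemas~3--5 are familiar Hilbert-style axioms for propositional logic in terms of $\Or$ and $\Implies$; each is in particular a propositional tautology. I would carry out the case analysis on the at most eight combinations of values of $V_\phi(A)$, $V_\phi(B)$, $V_\phi(C)$ and conclude in each case that the instance evaluates to $\TRUE$. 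Since $\phi$ was arbitrary, the instance is valid in $M$; since $M$ was an arbitrary standard model of $T$, the instance is valid in $T$.

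I expect no substantive obstacle here beyond bookkeeping. The only two points requiring attention are the scope of Proposition~\ref{prop:pc} noted above, and the observation that every subformula occurring in an instance is again a formula—so that $V_\phi$ is everywhere defined on the relevant subexpressions—which is immediate from the formation rules for \mname{not}, \mname{or}, \mname{and}, and \mname{implies} and the preceding theorem on standard valuations. The remainder is the routine truth-table verification, which I would present compactly rather than exhaustively.
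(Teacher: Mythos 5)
Your proposal is correct and follows essentially the same route as the paper: the paper also observes that each instance is a propositional tautology under the usual interpretations of the connectives and then appeals to Proposition~\ref{prop:pc} to conclude validity in $T$. Your added remarks (routing through Proposition~\ref{prop:pc} rather than the eval-free defining axioms, and noting that all subformulas denote truth values) are just explicit bookkeeping of what the paper leaves implicit.
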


\begin{proof}
The instances of these five axiom schemas are tautologies under the
usual interpretations of the propositional connectives.  Therefore, by
Proposition~\ref{prop:pc}, each such instance is valid in $T$.
\end{proof}

\begin{clem} [Axiom Schemas 2]
\bsp Each instance of the axiom schemas in Axiom Schemas 2 is valid 
in $T$.\esp
\end{clem}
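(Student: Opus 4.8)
The plan is to dispose of the six schemas in two groups: the three reflexivity schemas ($\alpha \TypeEqual \alpha$, $a \QuasiEqual a$, $A \Iff A$) and the three Leibniz congruence schemas. Fix throughout a standard model $M=(S,V)$ of $T$ and an assignment $\phi$. For $\alpha \TypeEqual \alpha$, part~3 of the theorem on the standard valuation gives $V_\phi(\alpha) \in \Ds$, and the clause defining $V$ on operator applications together with the defining condition on $I(\mname{type-equal})$ in a structure yields $V_\phi(\alpha \TypeEqual \alpha) = I(\mname{type-equal})(V_\phi(\alpha),V_\phi(\alpha)) = \TRUE$, since $V_\phi(\alpha)$ is identical to itself. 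The schema $A \Iff A$ is handled the same way via $I(\mname{formula-equal})$ and $V_\phi(A) \in \set{\TRUE,\FALSE}$. For $a \QuasiEqual a$, unfold the defining axiom of \mname{quasi-equal}: the formula is $T$-equivalent to $({a \IsDefApp} \Or {a \IsDefApp}) \Implies a \Equal a$; if $V_\phi(a) = \Undefined$ the antecedent is $\FALSE$, and otherwise $V_\phi(a) \in \Dc$ and $a \Equal a$ holds because $I(\mname{term-equal})(V_\phi(a),V_\phi(a),\Dc) = \TRUE$. In all three cases validity in $T$ follows by Proposition~\ref{prop:pc} (the propositional connectives, including the defined \mname{implies}, have their usual meanings in every standard model of $T$).

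For the three congruence schemas I would first isolate a \textbf{Replacement Lemma}: if $e,e'$ are proper expressions of the same kind with $V_\phi(e) = V_\phi(e')$ (equality taken in $\Dc \cup \set{\Undefined}$ when the kind is a type), and $C'$ results from $C$ by replacing one occurrence of $e$ — an occurrence not within a quotation, and, when $e$ is a variable, not immediately preceded by $\Lambda$, $\lambda$, $\Forsome$, $\iota$, or $\epsilon$ — with $e'$, then $V_\phi(C) = V_\phi(C')$; the statement should be proved simultaneously for all assignments agreeing on the relevant values, so that the bodies of variable binders can be handled. The proof goes by induction on the length of $C$. If the distinguished occurrence is $C$ itself, then $C' = e'$ and the conclusion is the hypothesis. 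Otherwise $C = (s,C_1,\ldots,C_n)$ is proper and $s \neq \mname{quote}$ — a quotation has no proper subexpressions, so an occurrence lying strictly inside $C$ would be within a quotation, contrary to the proviso — the occurrence lies in some $C_i$, producing $C_i'$, and one reads off $V_\phi(C) = V_\phi(C')$ from the corresponding clause of the standard valuation, applying the induction hypothesis to $C_i$ (at all assignments $\phi[x \mapsto d]$ when $s$ is a binding keyword, and noting that a type in binder position contributes only through the domain restriction it imposes). The evaluation clause $C = \sembrack{a}_k$ is the one to watch: the replacement happens inside $a$ or inside $k$, yielding $a'$ with $V_\phi(a') = V_\phi(a)$ by the induction hypothesis, so $H^{-1}(V_\phi(a'))$ is literally the same expression as $H^{-1}(V_\phi(a))$ and every side condition (that $H^{-1}(V_\phi(a))$ be eval-free, lie in the appropriate one of $\Dty$, $\Dte$, $\Dfo$, and have value in $V_\phi(k)$) is preserved; hence $V_\phi(C) = V_\phi(C')$.

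Granting the Replacement Lemma, the congruence schemas follow immediately. For the schema whose antecedent is $\alpha \TypeEqual \beta$: if $V_\phi(\alpha \TypeEqual \beta) = \TRUE$ then $V_\phi(\alpha) = V_\phi(\beta)$ by the condition on $I(\mname{type-equal})$, and since a type can never be a binding variable, the Replacement Lemma with $e = \alpha$, $e' = \beta$ gives $V_\phi(C) = V_\phi(D)$, whence $V_\phi(C \Iff D) = \TRUE$. For the schema with antecedent $a \QuasiEqual b$, observe that $a \QuasiEqual b$ being $\TRUE$ is exactly $V_\phi(a) = V_\phi(b)$ in $\Dc \cup \set{\Undefined}$ (both defined and equal, or both undefined — by the case analysis of \mname{quasi-equal} together with $I(\mname{term-equal})$), and the proviso attached to the schema is precisely the binding-variable restriction in the Replacement Lemma. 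For the schema with antecedent $A \Iff B$, $A \Iff B$ true gives $V_\phi(A) = V_\phi(B)$. In each case $V_\phi(C) = V_\phi(D)$ yields the required instance by Proposition~\ref{prop:pc}. The main obstacle is the Replacement Lemma: keeping the induction hypothesis strong enough for the bodies of variable-binding expressions, and verifying the quotation and evaluation clauses carefully, since the ``not within a quotation'' proviso is used exactly where compositionality of the valuation would otherwise break down.
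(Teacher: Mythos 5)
Your handling of the three reflexivity schemas and of the non-binding cases of your Replacement Lemma is essentially right (one small caveat: the defining axioms of \mname{quasi-equal} are stated only for eval-free terms, so for an arbitrary term $a$ you should argue that those axioms, instantiated with variables of type $\mname{C}$ and of type $\nabla$, pin down the operation on all of $\Dc \cup \set{\Undefined}$, and then read off $V_\phi(a \QuasiEqual a) = \TRUE$). The genuine gap is the variable-binder case of the Replacement Lemma, and it is not a bookkeeping matter. Your lemma assumes only $V_\phi(e) = V_\phi(e')$ at the single assignment $\phi$ at which the schema instance is evaluated; but when the distinguished occurrence lies in the body of $(\StarApp{x} \mcolon \alpha \mdot \cdot)$, the standard valuation evaluates that body at the assignments $\phi[x \mapsto d]$, so the induction hypothesis you invoke needs $V_{\phi[x \mapsto d]}(e) = V_{\phi[x \mapsto d]}(e')$ for the relevant $d$, which your hypothesis does not supply. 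Proving the statement ``simultaneously for all assignments'' does not close this, because the antecedent $\alpha \TypeEqual \beta$, $a \QuasiEqual b$, or $A \Iff B$ only yields equality of values at the one assignment $\phi$ under consideration.

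Under the literal reading of the schema this step cannot be repaired, because your Replacement Lemma is false as stated: take $a = (x \mcolon \mname{C})$, $b = \emptyset$, $C = (\ForsomeApp x \mcolon \mname{V} \mdot \Neg((x \mcolon \mname{C}) = \emptyset))$, and let $D = (\ForsomeApp x \mcolon \mname{V} \mdot \Neg(\emptyset = \emptyset))$ be the result of the replacement; the replaced occurrence is neither inside a quotation nor a binder variable, yet at any $\phi$ with $\phi(x) = \emptyset$ we have $V_\phi(a) = V_\phi(b)$ while $C$ is true and $D$ is false in every standard model (the binder rebinds the name $x$, so the occurrence of $(x \mcolon \mname{C})$ in the body is bound). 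So either the schema is to be read with an implicit side condition excluding occurrences lying under a binder that captures a variable occurring free in $a$ and $b$ (respectively $\alpha,\beta$ or $A,B$) --- in which case your Replacement Lemma must state that condition and use it in the binder case, where it guarantees $V_{\phi[x \mapsto d]}(e) = V_{\phi[x \mapsto d]}(e')$ via Lemma~\ref{lem:fv} --- or the claimed validity fails for such instances. For comparison, the paper's own proof is a one-sentence appeal to clauses n--p of the definition of $I$, the definition of $\QuasiEqual$, and ``compositionality'' of the standard valuation; your more detailed route is the right instinct, but as written its key step fails exactly at the point that appeal leaves hidden.
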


\begin{proof}
The instances of these six axiom schemas are valid in $T$ by clauses
n, o, and p of the definition of $I$ in a structure for $L$, the
notational definition of $\QuasiEqual$, and the compositionality of
the definition of the standard valuation.
\end{proof}

\begin{clem} [Axiom Schemas 3]
\bsp Each instance of the axiom schemas in Axiom Schemas 3 is valid 
in $T$.\esp
\end{clem}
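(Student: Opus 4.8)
The plan is to fix a standard model $M=(S,V)$ of $T$ and an assignment $\phi\in\mname{assign}(S)$, and to unwind the second and third clauses of the definition of the standard valuation, since every one of the eight schemas in Axiom Schemas~3 is a pure definedness law for an operator application. The fact I would repeatedly invoke is this: for $O=(\mname{op},o,k_1,\ldots,k_{n+1})$ the valuation makes $V_\phi((o::k_1,\ldots,k_{n+1})(e_1,\ldots,e_n))$ equal to $d:=I(o)(V_\phi(e_1),\ldots,V_\phi(e_n))$ exactly when (a)~$V_\phi(e_i)\in V_\phi(k_i)$ or $V_\phi(e_i)=\Undefined$ for every $i\le n$ with $\ctype{L}{k_i}$, and (b)~when $\ctype{L}{k_{n+1}}$, also $d\in V_\phi(k_{n+1})$ or $d=\Undefined$; otherwise $V_\phi$ of the application is the default undefined value of the output kind, i.e.\ $\Dc$ if $k_{n+1}=\mname{type}$, $\Undefined$ if $\ctype{L}{k_{n+1}}$, and $\FALSE$ if $k_{n+1}=\mname{formula}$. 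I would also use that $\mname{C}$ denotes $\Dc$ and $\mname{F}$ denotes $\FALSE$ in every standard model, together with the notational definitions of $\TypeEqual$, $\QuasiEqual$, $\Iff$, $=$, and $\IsDef$ (recalling that $a\IsDef\alpha$ holds iff $V_\phi(a)\in V_\phi(\alpha)$).

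First I would dispatch schemas~1--3 uniformly. Under the hypothesis ${e_{i_1}\IsDef k_{i_1}}\And\cdots\And{e_{i_m}\IsDef k_{i_m}}$, which says precisely that $V_\phi(e_{i_j})\in V_\phi(k_{i_j})$ at each type position, condition~(a) holds for the signature $k_1,\ldots,k_n$; and since $\overline{k_i}=\mname{C}$ at every type position while a term always has value in $\Dc\cup\set{\Undefined}$, condition~(a) holds for the signature $\overline{k_1},\ldots,\overline{k_n}$ as well. Because the two operator applications in each schema share the same output kind ($\mname{type}$ in schema~1, a type $\beta$ in schema~2, $\mname{formula}$ in schema~3), condition~(b) is either vacuous or literally the same for both; hence $V_\phi$ assigns the two applications the same value, and $\TypeEqual$, $\QuasiEqual$, and $\Iff$ between equal values are then true by their notational definitions.

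Next I would handle schemas~4 and~8 together. If the application $(o::k_1,\ldots,k_n,\beta)(e_1,\ldots,e_n)$ with type output $\beta$ is defined, then by the valuation clause its value is $d=I(o)(V_\phi(e_1),\ldots,V_\phi(e_n))$ and condition~(b) holds, i.e.\ $d\in V_\phi(\beta)$; in particular $d\in\Dc$, which is exactly schema~8. For schema~4, condition~(a) does not refer to the output kind, so it remains valid when the output is changed to $\mname{C}$, and condition~(b) for $\mname{C}$, namely $d\in\Dc$ or $d=\Undefined$, holds since $d\in\Dc$; hence the $\mname{C}$-output application also has value $d$, so the two applications are $=$-equal. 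The degenerate $n=0$ case, where there are no input checks, is covered by the same argument. Finally, for schemas~5--7 the hypothesis ${a\IsDefApp}\And{a\IsUndef\alpha}$ gives $V_\phi(a)\in\Dc$ (so $V_\phi(a)\neq\Undefined$) and $V_\phi(a)\notin V_\phi(\alpha)$, so condition~(a) fails at the $i$-th position (a genuine type position, since $\ctype{L}{\alpha}$); the value of the application is therefore the default undefined value of its output kind --- $\Dc=V_\phi(\mname{C})$ for schema~5, $\Undefined$ for schema~6, $\FALSE=V_\phi(\mname{F})$ for schema~7 --- which is precisely the respective conclusion.

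There is no deep obstacle: the lemma is a direct unwinding of the standard valuation on operators and operator applications, much as in the proof of Axiom Schemas~2. The single point worth a moment's attention is schema~4, where one must notice that definedness of the $\beta$-output application forces $d$ to be a genuine class (a member of the superclass $V_\phi(\beta)$, hence of $\Dc$) rather than merely an element of $\Ds$, so that the $\mname{C}$-output restriction check~(b) succeeds; one should likewise keep track of the $n=0$ edge cases in schemas~4 and~8 and of the fact that the index $i$ in schemas~5--7 points at a type position, so that condition~(a) is indeed the check being violated.
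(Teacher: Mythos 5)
Your proposal is correct and follows essentially the same route as the paper: the paper's proof is a one-line appeal to Proposition~\ref{prop:pc}, the definition of $I$ in a structure, and the definition of the standard valuation on operator applications, and your argument is exactly that verification carried out explicitly, unwinding the restriction conditions (a) and (b) schema by schema. The only difference is level of detail (including your careful handling of schema~4/8 and the $n=0$ cases), not method.
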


\begin{proof}
The instances of the eight axiom schemas are valid in $T$ by
Proposition~\ref{prop:pc}, the definition of $I$ in a structure for
$L$, and the definition of the standard valuation on operator
applications.
\end{proof}

\begin{clem} [Axiom Schemas 4]
\bsp Each instance of the axiom schemas in Axiom Schemas 4 is valid 
in $T$.\esp
\end{clem}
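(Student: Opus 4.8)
The plan is to fix an arbitrary standard model $M=(S,V)$ of $T$ and an assignment $\phi\in\mname{assign}(S)$ and show that each instance of the ten schemas evaluates to $\TRUE$ under $V_\phi$, working directly from the clauses of the definition of $I$ in a structure and the definition of the standard valuation on operator applications. First I would record the one preliminary fact needed throughout: because $T$ is normal, the defined operator \mname{defined-in} has its intended meaning in $M$. Its signature $\mname{C},\mname{type},\mname{formula}$ induces only a trivial restriction, so $V_\phi((\mname{defined-in}::\mname{C},\mname{type},\mname{formula})(a,\alpha)) = I(\mname{defined-in})(V_\phi(a),V_\phi(\alpha))$ for every term $a$ and type $\alpha$; and the eval-free defining axiom of \mname{defined-in}, instantiated with the eval-free term $(s\mcolon\mname{C})$ and the eval-free type $\mname{term}((s\mcolon\mname{C}))$ coming from \mname{term-to-type} (whose defining axioms make the latter denote, under a suitable assignment, any prescribed class), forces $I(\mname{defined-in})(d,\Sigma)=\TRUE$ exactly when $d\in\Sigma$ for the relevant $\Sigma$ --- in particular $V_\phi(a\IsDefApp)=\TRUE$ iff $V_\phi(a)\in\Dc$.

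Schema~1 is then immediate, since $V_\phi(\ell)=I(\mname{op-names})(\,)=\Don\in\Dc$ by clause~c of the definition of $I$. For schemas~2--8 I would argue contrapositively: if $V_\phi(a)=\Undefined$ (and likewise if $V_\phi(b)=\Undefined$, for schema~7), then the relevant clause among clauses~f--l of the definition of $I$ returns $\Dc$ on the undefined argument, and since the codomain kind is $\mname{type}$ the standard valuation on the corresponding operator application returns exactly $\Dc=V_\phi(\mname{C})$; hence the construction type $\mname{E}_a$, $\mname{E}_{{\rm on},a}$, $\ldots$, $\mname{E}_{{\rm te},a}^{b}$, $\mname{E}_{{\rm fo},a}$ is $\TypeEqual\mname{C}$ and the implication holds vacuously. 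For schemas~9 and~10 I would read the antecedents off clauses~m and~o: if $V_\phi(a\in b)=\TRUE$ then by clause~m $V_\phi(a)$ is a member of $V_\phi(b)$, so $V_\phi(a)\in\Dv\subseteq\Dc$ and $V_\phi(b)\in\Dc$ (it is a class containing $V_\phi(a)$, hence $\ne\Undefined$), whence $a\IsDefApp$ and $b\IsDefApp$; and if $V_\phi(a\Equal_\alpha b)=\TRUE$ then by clause~o $V_\phi(a)=V_\phi(b)$ is a member of $V_\phi(\alpha)$, so by the preliminary fact $a\IsDef\alpha$ and $b\IsDef\alpha$.

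The bookkeeping for schemas~2--8 is routine once one keeps straight the two ways a construction type can collapse to $\Dc$ (an undefined argument, handled above, and a defined argument lying outside the declared argument type, handled by the fallback case of the operator-application clause of the standard valuation). I expect the main obstacle to be the preliminary fact about \mname{defined-in}, and specifically whether the eval-free defining-axiom schema pins down $I(\mname{defined-in})$ at the superclass $V_\phi(\alpha)$ occurring in a general, possibly non-eval-free, instance of schema~10. For $\alpha$ eval-free, or $\alpha$ denoting a class, or $\alpha$ denoting $\Dc$, this is direct; for $\alpha$ denoting a more exotic proper superclass one must check that that superclass is still realized as the value of some eval-free type under a suitable assignment, which I would verify by tracing the clauses of the standard valuation (a class value is captured by $\mname{term}$ applied to a class variable; an evaluation $\sembrack{c}_{\rm ty}$ can denote only $\Dc$ or the value of the eval-free type it names; and so on), so that the corresponding instance of \mname{defined-in}'s defining axiom --- valid in $M$ by normality of $T$ --- delivers $V_\phi(a\IsDef\alpha)=\TRUE$.
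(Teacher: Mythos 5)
Your strategy is essentially the paper's: the paper's entire proof is a one-line appeal to Proposition~\ref{prop:pc} and to the clauses of the definition of $I$ in a structure, and your first two paragraphs carry out exactly that check in more detail. For schema~1 and schemas~2--9 your argument is correct: $\IsDefApp$ is $\IsDef\mname{C}$, and the eval-free defining axiom of \mname{defined-in}, instantiated at a class variable and the eval-free type $\mname{C}$ and combined with the built-in clause for \mname{term-equal}, does force $I(\mname{defined-in})(d,\Dc)=\TRUE$ exactly when $d\in\Dc$; the contrapositive reading of schemas~2--8 through clauses f--l (collapse to $\Dc$, hence $\TypeEqual\mname{C}$, on an undefined argument) and the reading of schema~9 off clause~m are exactly what the paper's citation intends. (Two small points: schema~10 in fact needs clause~o for \mname{term-equal}, which the paper's clause list omits, and the paper's compact notation for \mname{term-to-type} is $\mname{type}(a)$, not $\mname{term}$.)

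The genuine weak point is the step you flag yourself for schema~10. To pass from $V_\phi(a)\in V_\phi(\alpha)$ to $V_\phi(a\IsDef\alpha)=\TRUE$ for an arbitrary, possibly non-eval-free type $\alpha$, you need $I(\mname{defined-in})$ to take its intended value at the pair $(V_\phi(a),V_\phi(\alpha))$; since \mname{defined-in} is a defined operator whose kernel axioms form an eval-free schema, this is forced only at superclasses realizable as the value of an eval-free type under some assignment. Your proposed verification of that realizability covers only the easy cases: class values (via \mname{term-to-type}), $\Dc$, and an outermost evaluation, which indeed denotes either $\Dc$ or the value of the eval-free type it names at the same assignment. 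It does not address dependent function types $(\LAMBDAapp x\mcolon\beta\mdot\gamma)$ whose body contains an evaluation depending on $x$, where the value is determined by the whole family $d\mapsto V_{\phi[x\mapsto d]}(\gamma)$ and no single eval-free body need reproduce that family (such a family can encode the valuation of eval-free types, Tarski-style, so this case is not merely bookkeeping); nor does it address applications of arbitrary type operators of $L$, where the values of several arguments must be realized simultaneously under one assignment. So ``trace the clauses'' is not yet a proof of the claim you rely on. To be fair, the paper's own proof never engages with this either --- it implicitly treats \mname{defined-in} as having its intended meaning, in the spirit of Proposition~\ref{prop:pc} --- so at the paper's level of rigor your first two paragraphs already match its argument; but the additional realizability claim, as you state and sketch it, is a gap rather than a routine verification.
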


\begin{proof}
The instances of these ten axiom schemas are valid in $T$ by
Proposition~\ref{prop:pc} and clauses c--d and f--m, respectively, of
the definition of $I$ in a structure for $L$.
\end{proof}

\begin{clem} [Axiom Schemas 5]
\bsp Each instance of the axiom schemas in Axiom Schemas 5 is valid 
in $T$.\esp
\end{clem}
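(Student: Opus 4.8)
The plan is to fix an arbitrary standard model $M=(S,V)$ of the normal theory $T=(L,\Gamma)$ and an assignment $\phi\in\mname{assign}(S)$, and to verify that the single axiom schema of this group, ${(x\mcolon\alpha)\IsDefApp}\Implies(x\mcolon\alpha)\IsDef\alpha$, evaluates to $\TRUE$ in $M$ under $\phi$ for every symbol $x$ and every type $\alpha$ of $L$. By Proposition~\ref{prop:pc} the connective $\Implies$ carries its usual truth table in $M$, so it is enough to assume $V_\phi((x\mcolon\alpha)\IsDefApp)=\TRUE$ and deduce $V_\phi((x\mcolon\alpha)\IsDef\alpha)=\TRUE$.

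The crux is the clause of the definition of the standard valuation governing variables: $V_\phi((\mname{var},x,\alpha))$ is $\phi(x)$ when $\phi(x)\in V_\phi(\alpha)$ and is $\Undefined$ otherwise. Since $\phi(x)$ always lies in $\Dc$, this clause says exactly that whenever $V_\phi((x\mcolon\alpha))\neq\Undefined$ we in fact have $V_\phi((x\mcolon\alpha))=\phi(x)$ and $\phi(x)\in V_\phi(\alpha)$. Thus, once the left-hand side of the implication is known to force definedness of the variable, membership of its value in $V_\phi(\alpha)$ comes for free, and that membership is precisely what the right-hand side should record.

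It therefore remains to unpack the two occurrences of $\mname{defined-in}$. By the notational definitions, $(a\IsDef\beta)$ abbreviates $(\mname{defined-in}\mcolon\mname{C},\mname{type},\mname{formula})(a,\beta)$ and $(a\IsDefApp)$ abbreviates $(a\IsDef\mname{C})$. Because $T$ is normal, $\Gamma$ contains the defining axioms of $\mname{defined-in}$; combining these with the meaning of $\mname{term-equal}$ (clause~o of the definition of $I$), the notational definition of $=_\beta$, and the definition of the standard valuation on operator applications, one obtains, for eval-free $a$ and $\beta$, that $V_\phi((a\IsDef\beta))=\TRUE$ iff $V_\phi(a)\in V_\phi(\beta)$; specialising to $\beta=\mname{C}$ (so $V_\phi(\mname{C})=\Dc$) gives $V_\phi((a\IsDefApp))=\TRUE$ iff $V_\phi(a)\neq\Undefined$. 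Applying this twice with $a=(x\mcolon\alpha)$ --- once with $\beta=\mname{C}$, once with $\beta=\alpha$ --- settles the case where $\alpha$, and hence $(x\mcolon\alpha)$, is eval-free: $\TRUE$ on the left yields $V_\phi((x\mcolon\alpha))\neq\Undefined$, hence $V_\phi((x\mcolon\alpha))=\phi(x)\in V_\phi(\alpha)$ by the variable clause, hence $\TRUE$ on the right.

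The point I expect to need the most care is the non-eval-free case, where $\alpha$ (and so $(x\mcolon\alpha)$) contains $\mname{eval}$ outside a quotation and the two $\mname{defined-in}$ applications are not directly instances of the $\mname{defined-in}$ axiom schema. The variable clause itself is unaffected --- the value of $(x\mcolon\alpha)$, when defined, is still $\phi(x)$ and still lies in $V_\phi(\alpha)$ --- so the only thing to be checked is that $(a\IsDefApp)$ and $(a\IsDef\alpha)$ still track ``$V_\phi(a)\neq\Undefined$'' and ``$V_\phi(a)\in V_\phi(\alpha)$'' for a non-eval-free term $a$. I plan to dispatch this by rewriting the relevant $\mname{defined-in}$ applications, using the already-established General Operator Properties schemas (Axiom Schemas 3) together with the built-in definedness facts, into the corresponding eval-free tests applied to the single class $\phi(x)=V_\phi((x\mcolon\alpha))$, so that no model-theoretic input beyond the earlier lemmas is required.
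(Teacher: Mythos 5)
Your core argument is the one the paper gives: its entire proof of this lemma is that the instances are valid ``by Proposition~\ref{prop:pc} and the definition of the standard valuation on variables,'' which is exactly your observation that $V_\phi((x\mcolon\alpha))\neq\Undefined$ forces $V_\phi((x\mcolon\alpha))=\phi(x)\in V_\phi(\alpha)$, combined with the usual truth table for $\Implies$. Your additional unpacking of $\IsDef$ through the defining axioms of \mname{defined-in} and the clause of the definition of $I$ for \mname{term-equal} is detail the paper leaves implicit, and it is fine as far as it goes.

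Where you diverge is the final paragraph, and that step as described would not do what you want. Axiom Schemas~\ref{axschemas:op} only relate operator applications whose signature kinds $k_i$ are replaced by $\overline{k_i}$, or whose term arguments fall outside the signature types; \mname{defined-in} already has the fixed signature $\mname{C},\mname{type},\mname{formula}$, so these schemas never touch the difficulty you raise, namely that the type \emph{argument} $\alpha$ may be non-eval-free while the defining axioms of \mname{defined-in} are stated only for eval-free arguments. The pertinent point is rather that the standard valuation evaluates $(x\mcolon\alpha)\IsDef\alpha$ by applying (the restriction of) $I(\mname{defined-in})$ to the values $V_\phi((x\mcolon\alpha))\in\Dc\cup\set{\Undefined}$ and $V_\phi(\alpha)\in\Ds$ alone; so what one needs is that $I(\mname{defined-in})$ agrees with membership at those values, and the eval-free defining schema secures this only when those values are also attained by eval-free expressions under some assignment. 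The paper does not engage with this either --- it simply takes the intended semantics of \mname{defined-in} for granted for arbitrary types, as it does throughout --- so your proof is at least as complete as the paper's once you drop, or replace, the misplaced appeal to Axiom Schemas~\ref{axschemas:op}.
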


\begin{proof}
The instances of this axiom schema are valid in $T$ by
Proposition~\ref{prop:pc} and the definition of the standard valuation
on variables.
\end{proof}

\begin{clem} [Axiom Schemas 6]
\bsp Each instance of the axiom schemas in Axiom Schemas 6 is valid 
in $T$.\esp
\end{clem}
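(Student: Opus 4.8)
The plan is to handle the two schemas in the Types group in turn, in each case fixing a standard model $M=(S,V)$ of $T$ and an assignment $\phi\in\mname{assign}(S)$ and reducing the claim to the clauses that define the standard valuation.

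For the extensionality schema $\alpha\TypeEqual\beta\Iff(\Forall x\mcolon\mname{C}\mdot x\IsDef\alpha\Iff x\IsDef\beta)$, I would first compute the left-hand side: by the definition of the standard valuation on operator applications and clause n of the definition of $I$ in a structure for $L$, $V_\phi(\alpha\TypeEqual\beta)=\TRUE$ precisely when $V_\phi(\alpha)$ and $V_\phi(\beta)$ are the same superclass. For the right-hand side I would unwind the notational definitions: $\Forall$ in terms of $\Neg$ and $\Forsome$, $\Iff$ via \mname{formula-equal}, and $x\IsDef\alpha$ via \mname{defined-in}, whose defining axiom (present in $T$ since $T$ is normal) together with clause o of $I$ (for \mname{term-equal}) shows that $V_{\phi[x\mapsto d]}((x\mcolon\mname{C})\IsDef\alpha)=\TRUE$ iff $d\in V_{\phi[x\mapsto d]}(\alpha)$. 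Using Proposition~\ref{prop:pc} and the semantics of existential quantification over $\mname{C}$ (whose value is all of $\Dc$), the right-hand side is therefore $\TRUE$ iff for every class $d$, $d$ lies in $V_{\phi[x\mapsto d]}(\alpha)$ exactly when it lies in $V_{\phi[x\mapsto d]}(\beta)$. Since a superclass is by definition a collection of classes, this condition — for the intended instances, in which $x$ does not occur free in $\alpha$ or $\beta$ — is exactly $V_\phi(\alpha)=V_\phi(\beta)$, so the biconditional holds at $\phi$. For the second schema $\ForallApp x\mcolon\nabla\mdot A$, the argument is shorter: $\nabla=(\mname{empty-type}\mcolon\mname{type})(\,)$ is semantically closed, and its defining axiom (again in $T$ by normality) asserts $\ForallApp x\mcolon\mname{C}\mdot x\IsUndef\nabla$, so $V_\phi(\nabla)$ contains no class and is thus empty; part 2 of Proposition~\ref{prop:qet} then gives $V_\phi(\ForallApp x\mcolon\nabla\mdot A)=\TRUE$ directly.

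No model construction is needed, so there is no substantial obstacle beyond the unwinding itself. The one point that requires care is the chain of notational definitions $\IsDef\to\mname{defined-in}\to\mname{term-equal}\to$ clause o: I must check that the \mname{defined-in} defining axiom genuinely applies to the types occurring in the instance (it does, being available for eval-free arguments in every normal theory) and that $x$ is not free in $\alpha,\beta$, so that $V_{\phi[x\mapsto d]}(\alpha)=V_\phi(\alpha)$ — this freeness condition is the only implicit side condition on the schema that the proof invokes.
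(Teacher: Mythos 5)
Your proof is correct and follows essentially the same route as the paper, which disposes of the extensionality schema by the definition of $\Ds$ together with clause n of the definition of $I$ (i.e., \mname{type-equal} holds exactly when the two superclasses are identical, and superclasses are determined by their class members), and of the second schema by the definition of $\nabla$ and Proposition~\ref{prop:qet}; you merely unwind the notational definitions ($\Forall$, $\Iff$, $\IsDef$ via \mname{defined-in}) that the paper leaves implicit. The freeness caveat on $x$ in $\alpha,\beta$ that you flag is the same implicit reading of the schema that the paper's one-line proof takes, so it is not a divergence.
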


\begin{proof}
The instances of the first axiom schema are valid in $T$ by the
definition of $\Ds$ and clause n of the definition of $I$ in a
structure for $L$.  The instances of the second axiom schema are valid
in $T$ by the definition of $\nabla$ and Proposition~\ref{prop:qet}.
\end{proof}

\begin{clem} [Axiom Schemas 7]
\bsp Each instance of the axiom schemas in Axiom Schemas 7 is valid 
in $T$.\esp
\end{clem}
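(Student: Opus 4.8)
The plan is to fix an arbitrary standard model $M=(S,V)$ of $T$, with $S=(\Dv,\Dc,\Ds,\Df,\Do,\De,\in,\TRUE,\FALSE,\Undefined,\xi,H,I)$ and underlying prestructure $P=(\Dc,\in)$, and to show that each instance of each of the eighteen schemas in Axiom Schemas 7 is valid in $M$. Two facts drive the argument. First, by the definition of a prestructure, $P$ satisfies the axioms of {\nbg} set theory in the formulation of \cite{Goedel40} or \cite{Mendelson97}, with $\Dv$ the domain of sets, $\Dc$ the domain of classes, and $\in$ the membership relation; the eighteen schemas of Axiom Schemas 7 are precisely the Chiron transcriptions of G\"odel's eighteen {\nbg} axioms. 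Second, since $T$ is normal we have $\Gamma_{\rm ker}^{L} \subseteq \Gamma$, so $M$ validates every defining axiom from section~\ref{sec:op-defs}; in particular the set-theoretic operators \mname{empty-set}, \mname{pair}, \mname{ord-pair}, \mname{subclass}, \mname{union}, \mname{intersection}, \mname{complement}, \mname{dom}, \mname{ran}, \mname{sum}, \mname{power}, \mname{univocal}, and \mname{fun} denote in $M$ the set and class operations they are intended to denote, and $\mname{V}$, $\mname{C}$ denote $\Dv$, $\Dc$.

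The body of the proof is then a schema-by-schema verification. For the purely logical ones --- schema~1, that $(x \mcolon \mname{C})$ is always defined, and schema~3, extensionality --- one unfolds $\IsDefApp$, the definition of the standard valuation on variables and on \mname{type-equal}, and Proposition~\ref{prop:pc}, and reads off the conclusion from the corresponding property of $P$. For the existence schemas that assert a defined application of a set-theoretic operator (schemas~4, 6, 7, 8, 14, 15, together with the pair/domain/power-style auxiliaries), one unfolds the operator's defining axiom --- each of which presents the operator as a definite description over $\mname{V}$ or $\mname{C}$ --- and observes that the relevant {\nbg} axiom in $P$ supplies exactly the existence-and-uniqueness of the described class or set, so the definite description is defined; Axiom Schemas~\ref{axschemas:def-desc} and the semantics of $\iota$ then deliver the defined value. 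For the class-construction schemas (the membership-relation class and the three permutation schemas, and the replacement, regularity, and choice schemas) one reduces the Chiron formula, via the definitions of \mname{ord-pair}, \mname{univocal}, \mname{fun}, $\in$, and the standard valuation on function application, to a membership statement that is literally one of G\"odel's axioms holding in $P$. Schema~13 (infinity) is handled the same way, and throughout one uses Proposition~\ref{prop:pc} so that $\Neg$, $\Or$, $\And$, $\Implies$ behave classically.

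There is no single deep obstacle; the proof is essentially bookkeeping across eighteen routine cases. The main care points --- and where I expect the argument to take real attention --- are (i) checking that the definite-description-based definitions of \mname{pair}, \mname{ord-pair}, \mname{sum}, \mname{power}, \mname{dom}, \mname{ran}, and \mname{complement} are genuinely \emph{defined} under the stated hypotheses, which is exactly where the existence content, not merely the uniqueness content, of the {\nbg} axioms is invoked and where one must confirm that the describing formula pins down a \emph{unique} class; and (ii) making the strictness-with-respect-to-undefinedness conventions of these operators line up with the quantifiers over $\mname{V}$ in each schema, so that, e.g., applying \mname{pair} to two sets really lands in $\mname{V}$, and \mname{fun} applied to a relation class behaves as the {\nbg} univocal-relation notion requires. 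Once these are dispatched, each schema instance evaluates to $\TRUE$ in $M$ by Proposition~\ref{prop:pc} and the compositionality of the standard valuation, and since $M$ was an arbitrary standard model of $T$, every instance of Axiom Schemas 7 is valid in $T$.
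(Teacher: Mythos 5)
Your argument is correct and follows essentially the same route as the paper: fix a standard model, note that the prestructure satisfies the {\nbg} axioms, that the built-in operators \mname{set}, \mname{class}, \mname{in} have their standard-model meanings and the defined set-theoretic operators have their intended meanings (via the kernel axioms, since $T$ is normal), and observe that the eighteen schemas are exactly G\"odel's {\nbg} axioms transcribed into Chiron. Your schema-by-schema elaboration just spells out the bookkeeping that the paper's shorter proof leaves implicit.
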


\begin{proof}
Let $M = (S,V)$ be a standard model of $T$.  The set-theoretic
built-in operator names \mname{class}, \mname{set}, and \mname{in} are
given their usual meanings in $M$ by the definition of a standard
model.  The defined set-theoretic operators \mname{empty-set},
\mname{pair}, \mname{ord-pair}, \mname{subclass},
\mname{intersection}, \mname{complement}, \mname{fun}, \mname{dom},
\mname{sum}, and \mname{power} are given their usual meanings in $M$
by their definitions.  $S$ is constructed from a prestructure
$(D,\in)$ that satisfies the axioms of {\nbg} set theory.  The
instances of the axiom schemas in Axiom Schema 7 are exactly these
axioms expressed in the language of Chiron.  Therefore, the instances
are valid in $M$ and thus valid in $T$.
\end{proof}

\begin{clem} [Axiom Schemas 8]
\bsp Each instance of the axiom schemas in Axiom Schemas 8 is valid in
$T$.\esp
\end{clem}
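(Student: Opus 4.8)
The plan is to argue directly from the clause of the standard valuation governing conditional terms, together with Proposition~\ref{prop:pc} and the fact that in a normal theory the defined operator \mname{quasi-equal} has its intended meaning. Fix a standard model $M = (S,V)$ of $T$ and an assignment $\phi \in \mname{assign}(S)$. There are two axiom schemas, whose instances are handled by mirror-image arguments, so I would present the first, $A \Implies \mname{if}(A,b,c) \QuasiEqual b$, in full and then indicate the change for the second.

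First I would dispose of the trivial case: if $V_\phi(A) = \FALSE$, then $V_\phi(A \Implies \mname{if}(A,b,c) \QuasiEqual b) = \TRUE$ at once by Proposition~\ref{prop:pc}. So assume $V_\phi(A) = \TRUE$; then the clause of the standard valuation for conditional terms gives $V_\phi(\mname{if}(A,b,c)) = V_\phi(b)$, and it remains to check that $V_\phi(\mname{if}(A,b,c) \QuasiEqual b) = \TRUE$, i.e.\ that two terms with equal values are quasi-equal. Unfolding the notational definition of $\QuasiEqual$ and the definition of the standard valuation on operator applications, this value equals $I(\mname{quasi-equal})(V_\phi(\mname{if}(A,b,c)), V_\phi(b))$: both arguments lie in $\Dc \cup \set{\Undefined}$ and the declared argument types of the operator are both \mname{C}, so the operation $V$ assigns to the operator agrees with $I(\mname{quasi-equal})$ on these values.

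The one point needing care --- the main obstacle --- is that \mname{quasi-equal} is a \emph{defined} operator rather than a built-in one, so $I(\mname{quasi-equal})$ is pinned down only through the defining axioms for \mname{quasi-equal}, which lie in $\Gamma$ because $T$ is normal. I would observe that these axioms, instantiated at eval-free terms realizing arbitrary values --- a variable $(z \mcolon \mname{C})$ realizes any class under a suitable assignment, and the eval-free improper description $(\iotaApp x \mcolon \mname{C} \mdot x \not= x)$ realizes $\Undefined$ --- force $I(\mname{quasi-equal})(d_1,d_2) = \TRUE$ exactly when $d_1 = d_2$, for all $d_1, d_2 \in \Dc \cup \set{\Undefined}$. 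Hence $V_\phi(\mname{if}(A,b,c) \QuasiEqual b) = \TRUE$, so $V_\phi(A \Implies \mname{if}(A,b,c) \QuasiEqual b) = \TRUE$; since $\phi$ and $M$ were arbitrary, the first axiom schema is valid in $T$.

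Finally, the second axiom schema $\Neg A \Implies \mname{if}(A,b,c) \QuasiEqual c$ is handled identically: $V_\phi(\Neg A) = \TRUE$ forces $V_\phi(A) = \FALSE$, whence $V_\phi(\mname{if}(A,b,c)) = V_\phi(c)$ by the same clause of the standard valuation, and the quasi-equality argument is then repeated with $c$ in place of $b$.
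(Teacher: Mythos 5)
Your proof is correct and follows the same route as the paper's (much terser) argument: Proposition~\ref{prop:pc}, the clause of the standard valuation for conditional terms, and the meaning of $\QuasiEqual$ in a normal theory. Your extra care in noting that \mname{quasi-equal} is a defined operator whose (eval-free) defining axioms, lying in $\Gamma$, pin down $I(\mname{quasi-equal})$ on all of $(\Dc \cup \set{\Undefined})^2$ is a sound elaboration of what the paper leaves implicit.
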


\begin{proof}
The instances of these two axiom schemas are valid in $T$ by
Proposition~\ref{prop:pc}, the definition of the standard valuation on
conditional terms, and the notational definition of $\QuasiEqual$.
\end{proof}

\begin{clem} [Axiom Schemas 9] \label{lem:asj}
\bsp Each instance of the axiom schemas in Axiom Schemas 9 is valid 
in $T$.\esp
\end{clem}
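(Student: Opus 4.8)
The plan is to fix an arbitrary standard model $M = (S,V)$ of $T$ and an assignment $\phi \in \mname{assign}(S)$, and to show that each instance of the two schemas has value $\TRUE$ under $V_\phi$; since $M$ and $\phi$ are arbitrary, this gives validity in $T$. Both schemas are implications, so by Proposition~\ref{prop:pc} it suffices, in each case, to verify the consequent whenever the antecedent is $\TRUE$. The crux of the whole argument is a single characterization of unique existential quantification, already used implicitly in the proof of Lemma~\ref{lem:rule-dd}: for a type $\alpha$, symbol $x$, and formula $A$ such that $\ForsomeUniqueApp x \mcolon \alpha \mdot A$ is a legitimate instance,
\[
V_\phi(\ForsomeUniqueApp x \mcolon \alpha \mdot A) = \TRUE
\iff
\mbox{there is a unique } d \in V_\phi(\alpha) \mbox{ with } V_{\phi[x \mapsto d]}(A) = \TRUE.
\]
Once this is established, the semantics of definite descriptions does the rest.

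First I would prove the characterization. Unfolding the notational definition of $\ForsomeUniqueApp$ and applying the definitions of $V$ on existential and universal quantifications reduces the claim to computing, for $d \in V_\phi(\alpha)$ and $d' \in V_\phi(\alpha)$, the value $V_{\phi[x \mapsto d][y \mapsto d']}(\sembrack{\mname{sub}(\synbrack{(y\mcolon\alpha)}, \synbrack{x}, \synbrack{A})}_{\rm fo})$. Here I would invoke Lemma~\ref{lem:sub-b}: its two hypotheses, $\mname{sub}(\synbrack{(y\mcolon\alpha)},\synbrack{x},\synbrack{A}) \IsDefApp$ and $\mname{free-for}(\synbrack{(y\mcolon\alpha)},\synbrack{x},\synbrack{A})$, are exactly two of the side conditions attached to the notational definition of $\ForsomeUniqueApp$, and they hold in $T$ because $T$ is normal, so $T_{\rm ker}^L \le T$ and Lemma~\ref{lem:theory-ext} transfers them; hence $M$ satisfies them. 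Lemma~\ref{lem:sub-b} then yields that the value equals $V_{\phi[x\mapsto d][y\mapsto d'][x \mapsto d']}(A)$ (using that $y$ is not free in $\alpha$, so $V_{\phi[x\mapsto d][y\mapsto d']}((y\mcolon\alpha)) = d'$), and the third side condition $\Neg\mname{free-in}(\synbrack{y},\synbrack{A})$ together with Lemma~\ref{lem:fv} removes the spurious $y$, giving $V_{\phi[x\mapsto d']}(A)$. Substituting this back, the inner universal clause says ``$d$ is the unique element of $V_\phi(\alpha)$ satisfying $A$'', and both directions of the characterization follow.

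With the characterization in hand, the two schemas fall out of the definition of $V$ on definite descriptions. For schema (1): if the antecedent is $\TRUE$, there is a unique witness $d \in V_\phi(\alpha)$, so $V_\phi((\iotaApp x \mcolon \alpha \mdot A)) = d$, and then $(\iotaApp x \mcolon \alpha \mdot A) \IsDef \alpha$ — which by its notational definition is $(\iotaApp x \mcolon \alpha \mdot A) =_\alpha (\iotaApp x \mcolon \alpha \mdot A)$ — evaluates to $\TRUE$ by clause o of the definition of $I$ since $d \in V_\phi(\alpha)$; if the antecedent is $\FALSE$ the implication is trivially $\TRUE$. For schema (2): if $V_\phi(\Neg(\ForsomeUniqueApp x \mcolon \alpha \mdot A)) = \TRUE$, then there is no unique witness, so $V_\phi((\iotaApp x \mcolon \alpha \mdot A)) = \Undefined$, and $(\iotaApp x \mcolon \alpha \mdot A) \IsUndefApp = \Neg((\iotaApp x \mcolon \alpha \mdot A) =_{\mname{C}} (\iotaApp x \mcolon \alpha \mdot A))$ evaluates to $\TRUE$ because $\Undefined \notin \Dc = V_\phi(\mname{C})$, again by clause o; the outer $\Neg$ is handled by Proposition~\ref{prop:pc}.

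The main obstacle is the characterization of $\ForsomeUniqueApp$: it is the only place that requires genuine work, since one must thread Lemma~\ref{lem:sub-b} and Lemma~\ref{lem:fv} through the notational definition while tracking which variables $\alpha$ may depend on and confirming that the side conditions transfer from $T_{\rm ker}^L$ to $T$. Everything afterward is a short computation with the clauses of the standard valuation.
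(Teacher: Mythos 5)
Your proof is correct and follows essentially the same route as the paper: the paper disposes of Schema 1 by pointing to the proof of the Definite Description rule and of Schema 2 by the same unfolding of unique existential quantification plus the valuation clause for definite descriptions, which is exactly your argument. The only difference is that you spell out explicitly (via Lemmas~\ref{lem:sub-b} and \ref{lem:fv} and the side conditions of the notational definition) the characterization of unique existential quantification that the paper compresses into the phrase ``by the notational definition of unique existential quantification and the definition of $V$ on existential quantifications.''
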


\begin{proof} Let $M = (S,V)$ be a standard model of $T$ and 
$\phi \in \mname{assign}(S)$.

\bigskip

\noindent \textbf{Schema 1} \sglsp Follows from the proof of 
Lemma~\ref{lem:rule-dd}.

\bigskip

\noindent \textbf{Schema 2} \sglsp Assume (i)
$V_\phi(\Neg(\ForsomeUniqueApp x \mcolon \alpha \mdot A)) = \TRUE$.
By Proposition~\ref{prop:pc}, we must show (ii) $V_\phi((\iotaApp x
\mcolon \alpha \mdot A) \IsUndefApp) = \TRUE$.  By the notational
definition of unique existential quantification and the definition of
$V$ on existential quantifications, (i) implies (iii) there is no
unique $d$ in $V_\phi(\alpha)$ such that $V_{\phi[x \mapsto d]}(A) =
\TRUE$, and by the definition of $V$ on definite descriptions, (iii)
implies (ii).
\end{proof}

\begin{clem} [Axiom Schemas 10] \label{lem:ask}
\bsp Each instance of the axiom schemas in Axiom Schemas 10 is valid in
$T$.\esp
\end{clem}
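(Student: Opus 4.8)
The plan is to fix an arbitrary standard model $M=(S,V)$ of $T$ and an assignment $\phi \in \mname{assign}(S)$, and to show that each instance of the three schemas constituting Axiom Schemas~\ref{axschemas:indef-desc} evaluates to $\TRUE$ under $\phi$. The ingredients I expect to use are the clauses of the definition of the standard valuation on indefinite descriptions and on existential quantifications, the fact that $\xi$ is a choice function on $\Ds$, Proposition~\ref{prop:pc}, and clauses n and o of the definition of $I$ in a structure for $L$ (for \mname{formula-equal} and \mname{term-equal}). The first two schemas are parallel to Schemas~1 and~2 of Axiom Schemas~\ref{axschemas:def-desc} and to the proof of Lemma~\ref{lem:rule-id}.

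For the first schema, I would argue: if $V_\phi(\ForsomeApp x \mcolon \alpha \mdot A) = \TRUE$, then by the definition of $V$ on existential quantifications the superclass $\Sigma$ of all $d$ in $V_\phi(\alpha)$ with $V_{\phi[x \mapsto d]}(A) = \TRUE$ is nonempty, so $\xi(\Sigma)$ is a class lying in $\Sigma \subseteq V_\phi(\alpha)$; hence $V_\phi(\epsilonApp x \mcolon \alpha \mdot A) = \xi(\Sigma)$ is in $V_\phi(\alpha)$, so $V_\phi((\epsilonApp x \mcolon \alpha \mdot A)\IsDef \alpha) = \TRUE$, and the implication is valid by Proposition~\ref{prop:pc}. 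For the second schema, if $V_\phi(\Neg(\ForsomeApp x \mcolon \alpha \mdot A)) = \TRUE$, then there is no witnessing $d$, so $V_\phi(\epsilonApp x \mcolon \alpha \mdot A) = \Undefined$ by the definition of $V$ on indefinite descriptions, whence $V_\phi((\epsilonApp x \mcolon \alpha \mdot A)\IsUndefApp) = \TRUE$ and Proposition~\ref{prop:pc} again closes the case.

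The third schema carries the real content. Assuming $V_\phi(\ForallApp x \mcolon \mname{C} \mdot A \Iff B) = \TRUE$, I would unwind the notational definition of $\Forall$, Proposition~\ref{prop:pc}, the definition of $V$ on existential quantifications, and clause n of $I$ to conclude that $V_{\phi[x \mapsto d]}(A) = V_{\phi[x \mapsto d]}(B)$ for every $d \in \Dc = V_\phi(\mname{C})$. Therefore the witness superclasses $\Sigma_A = \set{d \in \Dc \;|\; V_{\phi[x \mapsto d]}(A) = \TRUE}$ and $\Sigma_B = \set{d \in \Dc \;|\; V_{\phi[x \mapsto d]}(B) = \TRUE}$ coincide. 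The key observation is that the standard valuation reads an indefinite description over $\mname{C}$ off this superclass alone: $V_\phi(\epsilonApp x \mcolon \mname{C} \mdot A)$ is $\xi(\Sigma_A)$ when $\Sigma_A \neq \emptyset$ and $\Undefined$ otherwise, and likewise for $B$ with $\Sigma_B$; since $\Sigma_A = \Sigma_B$, the two descriptions receive the same value, and when that common value is a class clause o of $I$ makes $(\epsilonApp x \mcolon \mname{C} \mdot A) = (\epsilonApp x \mcolon \mname{C} \mdot B)$ true, so the implication holds by Proposition~\ref{prop:pc}. The hard part will be the degenerate subcase where no class witnesses $A$ (equivalently $B$): there I would have to check directly, from the definition of the standard valuation on operator applications and clause o of $I$, how the equality of two copies of the undefined value is scored, and confirm the instance remains valid under the intended reading of $=$ on possibly-undefined terms. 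Once that bookkeeping is pinned down the lemma follows.
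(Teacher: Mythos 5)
Your handling of the first two schemas is correct and matches the paper's (terse) proof, which disposes of them by the same argument used for definite descriptions (Lemma~\ref{lem:asj} and the proof of Lemma~\ref{lem:rule-id}); your observation for the third schema that $V_\phi(\epsilonApp x \mcolon \mname{C} \mdot A)$ depends only on the witness superclass $\Sigma_A$, so that $\Sigma_A=\Sigma_B$ forces the two descriptions to get the same value, is also exactly the content behind the paper's remark that the third schema ``follows immediately from the definition of the standard valuation on indefinite descriptions.''

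The genuine gap is the degenerate subcase you explicitly defer, and the bookkeeping there does not come out the way you hope. In the schema, $=$ abbreviates $=_{\sf C}$, i.e.\ \mname{term-equal} with third argument \mname{C}, and by clause~o of the definition of $I$ this is $\TRUE$ only when both arguments are identical \emph{members of} $V_\phi(\mname{C})=\Dc$; since $\Undefined \notin \Ds \supseteq \Dc$, we get $I(\mname{term-equal})(\Undefined,\Undefined,\Dc)=\FALSE$. So when neither $A$ nor $B$ has a witness in $\Dc$ (e.g.\ $A=B=\mname{F}$), the antecedent $\ForallApp x \mcolon \mname{C} \mdot A \Iff B$ is $\TRUE$ while the consequent $(\epsilonApp x \mcolon \mname{C} \mdot A) = (\epsilonApp x \mcolon \mname{C} \mdot B)$ evaluates to $\FALSE$; strict equality of two undefined terms is false, exactly as with $a \IsDefApp$. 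Hence the check you postponed cannot be completed under the literal reading of $=$; the case only closes if the equality is read as $\QuasiEqual$ (quasi-equality is non-strict and makes two undefined terms equal), or if one restricts attention to the nondegenerate case. The paper's one-line proof silently passes over this very point, so you have put your finger on a real subtlety, but as written your proof (like the paper's) does not actually establish validity of the third schema in the empty-witness case.
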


\begin{proof}
The proof for the first two axiom schema is similar to the proof of
Lemma~\ref{lem:asj}.  The proof for the third axiom schema follows
immediately from the definition of the standard valuation on
indefinite descriptions.
\end{proof}

\begin{clem} [Axiom Schemas 11]
\bsp Each instance of the axiom schemas in Axiom Schemas 11 is valid 
in $T$.\esp
\end{clem}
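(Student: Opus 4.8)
The plan is to check the four quotation axiom schemas one at a time, showing each instance holds in an arbitrary standard model $M=(S,V)$ of $T$ under an arbitrary $\phi\in\mname{assign}(S)$, where $S$ contains the components $H=H^{P}$ and $I$. Throughout I would use the clause of the standard valuation for quotations, $V_\phi((\mname{quote},e))=H(e)$, together with the recursion defining $H^{P}$, the interpretations of the construction-type operators in clauses c, e, f, and g of the definition of $I$, and clause o (for \mname{term-equal}) to unwind $\IsDef$, $\Equal$, and $\neq$.

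First I would dispatch the two membership schemas. For $\synbrack{s}\IsDef\mname{E}_{\rm sy}$ with $s\in\sS$: $V_\phi(\synbrack{s})=H(s)=G^{P}(s)$ is the construction representing the symbol $s$, hence a member of $\Dsy$, while $V_\phi(\mname{E}_{\rm sy})=I(\mname{expr-sym})(\,)=\Dsy$ by clause e. For $\synbrack{o}\IsDef\mname{E}_{\rm on}$ with $o\in\sO$: clause c gives $V_\phi(\ell)=\Don$, clause f gives $I(\mname{expr-op-name})(\Don)=\Don$ (since $\Don\subseteq\Don$), so $V_\phi(\mname{E}_{\rm on})=\Don$, and $V_\phi(\synbrack{o})=G^{P}(o)$ is the construction representing the operator name $o$, hence a member of $\Don$. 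For the distinctness schema $\synbrack{s_1}\neq\synbrack{s_2}$ with $s_1\neq s_2$ in $\sS\cup\sO$: $V_\phi(\synbrack{s_i})=G^{P}(s_i)$, and $G^{P}$, being a bijection, is injective, so these are distinct members of $\Dea\subseteq\Dc$; by clause o and the notational definitions of $\Equal$ and $\neq$ the formula then evaluates to $\TRUE$ under every $\phi$.

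The one substantive case is the compound schema $\synbrack{(e_1,\ldots,e_n)}=_{\sf E}[\synbrack{e_1},\ldots,\synbrack{e_n}]$. Its left side denotes $H((e_1,\ldots,e_n))$, and, unwinding the bracket abbreviation ($\mlist{\,}$ is $\emptyset$ and $\mlist{a_1,\ldots,a_n}$ is $\seq{a_1,\mlist{a_2,\ldots,a_n}}$ via \mname{ord-pair}) and using the defining axioms of \mname{empty-set} and \mname{ord-pair}, its right side denotes the list $\mlist{H(e_1),\ldots,H(e_n)}$ formed in the prestructure. The key step is the identity $H((e_1,\ldots,e_n))=\mlist{H(e_1),\ldots,H(e_n)}$, proved by a short induction on $n$ directly from the recursion $H((\,))=\emptyset$ and $H((e_1,\ldots,e_n))=\seq{H(e_1),H((e_2,\ldots,e_n))}$ defining $H^{P}$. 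Both sides of the axiom then denote the same member of $\De$, which coincides with $V_\phi(\mname{E})$ by clause g (since $\Dsy\cup\Don=\Dea$ and every construction is built from leaves in $\Dea$), so by clause o the formula is valid. I expect the main obstacle to be bookkeeping rather than depth: one must keep the metatheoretic construction of $\De^{P}$ inside the prestructure cleanly separated from Chiron's object-level list vocabulary, and record that the ordered pair used in defining $\De^{P}$ is the Kuratowski pair $\set{\set{x},\set{x,y}}$ that \mname{ord-pair} denotes, so that the two notions of ``list of constructions'' genuinely coincide.
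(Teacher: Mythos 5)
Your proof is correct and follows the same route as the paper, whose own proof is just a one-line appeal to the definition of $H$ in a structure, the definition of the defined operator \mname{ord-pair}, and the standard valuation clause for quotations; you have simply expanded those citations into the explicit case-by-case verification (including the induction on $n$ for the list schema and the injectivity of $G^{P}$ for distinctness). No gaps.
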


\begin{proof}
The instances of these three axiom schemas are valid in $T$ by the
definition of $H$ in a structure for $L$, the definition of the
defined operator named \mname{ord-pair}, and the definition of the
standard valuation on quotations.
\end{proof}

\begin{clem} [Axiom Schemas 12]
\bsp Each instance of the axiom schemas in Axiom Schemas 12 is valid in
$T$. \esp
\end{clem}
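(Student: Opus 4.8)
The plan is to reduce each of the six axiom schemas of Axiom Schemas~\ref{axschemas:eval} to the corresponding part of Lemma~\ref{lem:gea}. I will fix a standard model $M=(S,V)$ of the normal theory $T$ and an assignment $\phi \in \mname{assign}(S)$ and argue schema by schema. Since $T$ is normal, $M$ satisfies every defining axiom of the kernel operators, in particular those for \mname{gea} and \mname{is-eval-free}, and I will use these freely. Schemas~2, 4, and~6 are verbatim restatements of parts~2, 4, and~6 of Lemma~\ref{lem:gea}, so they are valid in $T$ immediately.

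For the remaining three schemas I would bridge the gap between the antecedent $\mname{is-eval-free}(\synbrack{e})$ that appears in the schema and the antecedent $\mname{gea}(\synbrack{e},\synbrack{k})$ used in Lemma~\ref{lem:gea}. The relevant schema variable $e$ (a type $\alpha$ in schema~1, a term $a$ in schema~3, a formula $A$ in schema~5) is a fixed syntactic expression, so $\synbrack{e}$ is a closed term denoting $H(e)$ and $V_\phi(\mname{is-eval-free}(\synbrack{e}))$ does not depend on $\phi$. If $e$ is not eval-free, then $M \models \Neg\mname{is-eval-free}(\synbrack{e})$, and the schema instance holds vacuously. If $e$ is eval-free, then the matching defining axiom schema of \mname{gea} --- namely $\mname{gea}(\synbrack{\alpha^{\rm ef}},\synbrack{\mname{type}})$, or $\mname{gea}(\synbrack{a^{\rm ef}},\synbrack{\alpha})$ for an arbitrary type $\alpha$, or $\mname{gea}(\synbrack{A^{\rm ef}},\synbrack{\mname{formula}})$ --- instantiates to give $M \models \mname{gea}(\synbrack{e},\synbrack{k})$ with $k$ the appropriate kind, and then parts~1, 3, and~5 of Lemma~\ref{lem:gea} respectively deliver the consequent of the schema instance (in its $\TypeEqual$, $\QuasiEqual$, or $\Iff$ form).

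The one point requiring a genuine argument is the claim that $M \models \Neg\mname{is-eval-free}(\synbrack{e})$ whenever $e$ fails to be eval-free --- that is, that the recursively defined operator \mname{is-eval-free} actually decides eval-freeness in every standard model of a normal theory. I would prove this by a routine structural induction on $e$ against the defining axioms of \mname{is-eval-free}, checking the leaf cases (a symbol that is, or is not, \mname{eval}, and the empty list), the quotation case (where the operator returns $\TRUE$, matching the fact that every quotation is eval-free), and the compound case. This is the main --- and essentially the only --- obstacle, and it is entirely parallel to the (routine, elsewhere unstated) fact that the other syntactic checkers of Section~\ref{sec:op-defs} have their intended meanings; one could equally well simply invoke that fact. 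With it in hand, this lemma is a corollary of Lemma~\ref{lem:gea}.
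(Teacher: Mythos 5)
Your proof is correct and takes essentially the same route as the paper's, which disposes of the lemma in one line by citing Proposition~\ref{prop:pc}, Lemma~\ref{lem:gea}, and the defining axioms of \mname{gea}. The extra care you take in showing that \mname{is-eval-free} correctly decides eval-freeness (so that instances of schemas 1, 3, 5 with non-eval-free schema expressions hold vacuously, while eval-free instances feed the \mname{gea} defining axioms into Lemma~\ref{lem:gea}) merely makes explicit the bridging step the paper leaves implicit.
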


\begin{proof}
The instances of these six axiom schemas are valid in $T$ by
Proposition~\ref{prop:pc}, Lemma~\ref{lem:gea}, and the definition of
the defined operator named \mname{gea}.
\end{proof}

\begin{clem} [Axiom Schemas 13] \label{lem:asm}
\bsp Each instance of the axiom schemas in Axiom Schemas 13 is valid in
$T$.\esp
\end{clem}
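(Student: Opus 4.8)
The final statement in the excerpt is Lemma (Axiom Schemas 13): each instance of the axiom schemas in Axiom Schemas 13 — the eight Construction Types axiom schemas — is valid in $T$ (where $T = (L,\Gamma)$ is the fixed normal theory).

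The plan is to fix an arbitrary standard model $M=(S,V)$ of $T$ and an assignment $\phi\in\mname{assign}(S)$, and then to verify each of the thirteen schemas in the group in turn: in every case one unfolds the compact and quasiquotation notation, appeals to Proposition~\ref{prop:pc} and to compositionality of the standard valuation, and thereby reduces the claim to a set-theoretic fact about the domains $\Dea$, $\Deb$, $\De$, $\Dsy$, $\Don$, $\Dop$, $\Dty$, $\Dte$, $\Dfo$ and the maps $G$ and $H$ fixed in the definition of a structure for $L$. It is convenient to treat the ``flat'' schemas (items~1--7) separately from the inductive and mutually recursive ones (items~8--13).

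First the flat schemas. Schema~1 ($\ell=\mname{term}(\mname{E}_{\rm on})$) reduces to $I(\mname{op-names})(\,)=\Don$ (clause~c), to $I(\mname{expr-op-name})(\Don)=\Don$ (clause~f, applicable because $\Don$ is a set and $\Don\subseteq\Don$), and to the fact that $\mname{term}$ round-trips on any type denoting a class; Schema~7 is the same computation carried out for an arbitrary $u$ in the denotation of $\mname{L}$, which by clause~d is a subset of $\Don$. Schemas~2, 3, and~6 record that $\Dsy$ and $\Don$ are disjoint subclasses of $\Dv$ and that no element of $\Dsy\cup\Don$ is the empty set or an ordered pair; all three are immediate from $\Dea$ being a set whose members are neither $\emptyset$ nor ordered pairs, from $G$ being a bijection of $\sS\cup\sO$ onto $\Dea$, and from the disjointness of $\sS$ and $\sO$. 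Schemas~4 and~5 hold because $\sS$ is countably infinite and $\sO$ is countable and disjoint from $\sS$, so $\Dsy$ is countably infinite and $\Don$ is contained in the countably infinite set $\Dea$; here one also uses the defined operators $\mname{countably-infinite}$ and $\mname{term}$ together with their kernel-theory properties.

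For the recursive schemas the main lever is the cons-encoding built into $H$: $H((\,))=\emptyset$ and $H((e_1,\ldots,e_n))=\seq{H(e_1),H((e_2,\ldots,e_n))}$, so every nonatomic construction is an ordered pair whose second component lies in $\Deb$ and is therefore never a symbol- or operator-name construction. Schema~8 is the induction principle for $\mname{E}_u$; to see it is valid, take any set $v$ satisfying the three closure hypotheses and show by induction on construction size that every member of $I(\mname{expr})(u)$---which by clause~g is the set of constructions in $\De$ built only from $\Dsy\cup u$---lies in $v$; the hypothesis that $u_2$ is not in $\mname{E}_{\rm sy}\cup\mname{E}_{{\rm on},u}$ matches exactly the requirement that the tail of a cons lie in $\Deb$. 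Schemas~9--13 are the transcriptions of the proper-expression formation rules P-Expr-1 through P-Expr-13 into statements about $\Dop$, $\Dty$, $\Dte^{(\cdot)}$, $\Dte$, $\Dfo$, which by clauses~h--l are defined as the constructions that represent, respectively, operators, types, terms of a given type, terms, and formulas built from the operator names in $u$. The forward direction of each biconditional holds because a proper expression is determined by its leftmost symbol and decomposes precisely as the corresponding clause states; the backward direction holds because any construction matching one of the displayed patterns is $H$ of a proper expression over the admissible operator names, hence belongs to the relevant set. The one thing requiring hand-checking---routine, not deep---is that the list surgery in these schemas (nested list notation, the append operator, quasiquotation) faithfully renders the cons-structure dictated by $H$, so that each displayed construction term equals $H$ of the intended expression term.

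The step I expect to demand the most care is Schema~11, the characterization of the dependent type $\mname{E}_{{\rm te},u}^{\hat e}$ of constructions of terms whose assigned type is represented by $\hat e$, in concert with Schemas~9, 10, 12, and~13: the five sets $\Dop$, $\Dty$, $\Dte^{(\cdot)}$, $\Dte$, $\Dfo$ are introduced by mutual recursion in clause~k and its neighbours, so I must check that the Chiron-level mutual recursion in these schemas is well founded in the matching way---each construction-valued bound variable ranging over a set already built at a strictly earlier stage---and that the type superscript is propagated correctly through the clauses for function application (matching the $(\mname{type-app},\alpha,a)$ of P-Expr-6), function abstraction (the dependent function type of P-Expr-7), and conditional terms (the case split defining $\delta$ in P-Expr-8). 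Once the encoding is pinned down, validity of every instance in $M$, and hence in $T$, follows from Proposition~\ref{prop:pc} and compositionality of the standard valuation, exactly as in the proofs of the earlier Axiom Schemas lemmas in this subsection.
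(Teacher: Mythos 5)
Your proposal is correct and follows essentially the same route as the paper: fix a standard model, observe that the flat schemas record exactly the properties of $\Dsy$, $\Don$, $H$, and the clauses of the definition of $I$ for the construction-type operator names, and handle the schema for \mname{expr} by induction and the schemas for \mname{expr-op}, \mname{expr-type}, \mname{expr-term-type}, \mname{expr-term}, \mname{expr-formula} by the mutual recursion matching the proper-expression formation rules. The paper's own proof is just a terser statement of this same argument, so your more detailed verification is a faithful expansion rather than a different approach.
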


\bsp
\begin{proof} 
Let $M = (S,V)$ be a standard model of $T$ and $I$ be the last
component of $S$.  The first schema specifies \mname{op-names} to be
$I(\mname{op-names})$.  The second to ninth schemas specify
\mname{expr-sym} and \mname{expr-op-name} to be $I(\mname{expr-sym})$
and $I(\mname{expr-op-name})$, respectively.  Using induction, the
first tenth schema specifies \mname{expr} to be $I(\mname{expr})$.
Using mutual recursion, the remaining schemas define \mname{expr-op},
\mname{expr-type}, \mname{expr-term-type}, \mname{expr-term}, and
\mname{expr-formula} to be $I(\mname{expr-op})$,
$I(\mname{expr-type})$, $I(\mname{expr-term-type})$,
$I(\mname{expr-term})$, and $I(\mname{expr-formula})$, respectively.
Therefore, each instance of these schemas is valid in $T$.
\end{proof}
\esp

\begin{cthm} [Soundness] \label{thm:soundness}
$\textbf{C}_L$ is sound with respect to the set of all normal theories
  over $L$ and the set of all formulas of $L$.
\end{cthm}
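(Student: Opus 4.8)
The plan is to prove soundness by the standard induction on the length of a proof, leveraging the work already distributed across the preceding lemmas. Fix a normal theory $T = (L,\Gamma)$ over $L$ and a formula $A$ of $L$ with $\proves{T}{A}$, and let $B_1,\ldots,B_m = A$ be a proof of $A$ from $T$ in $\textbf{C}_L$. I would show by induction on $i$ that $T \models B_i$, i.e., that $B_i$ is valid in every standard model of $T$. Since validity in $T$ is by definition validity in every standard model of $T$, this gives $T \models A$ and hence that $\textbf{C}_L$ is sound with respect to the set of all normal theories over $L$ and the set of all formulas of $L$.

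For the inductive step there are three cases according to how $B_i$ enters the proof sequence. If $B_i \in \Gamma$, then $B_i$ is valid in every standard model of $T$ directly from the definition of a standard model of $T$. If $B_i$ is an instance of one of the $68$ axiom schemas, then $T \models B_i$ by the appropriate one of Lemmas~\ref{lem:asa} through~\ref{lem:asm}, which together establish that every instance of every axiom schema in Axiom Schemas~1 through~13 is valid in $T$. If $B_i$ is inferred from earlier members $B_{j_1},\ldots,B_{j_r}$ (with $j_1,\ldots,j_r < i$) by one of the ten rules of inference, then by the induction hypothesis each $B_{j_\ell}$ is valid in every standard model $M$ of $T$; since each rule of inference preserves validity in every standard model of $T$---Modus Ponens and Universal Generalization by Lemmas~\ref{lem:rule-mp} and~\ref{lem:rule-ug}, Universal Quantifier Shifting by Lemma~\ref{lem:rule-uqs}, Universal Instantiation by Lemma~\ref{lem:rule-ui}, Definite and Indefinite Description by Lemmas~\ref{lem:rule-dd} and~\ref{lem:rule-id}, and the four function rules by Lemma~\ref{lem:rule-fun}---it follows that $M \models B_i$, and so $T \models B_i$. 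Note that the syntactic side-condition premises of these rules are themselves earlier members of the proof sequence, so the induction hypothesis applies to them as well and the rule lemmas may be invoked exactly as stated.

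Two points deserve care. First, normality of $T$ is used throughout: every rule-of-inference lemma and every axiom-schema lemma assumes $T$ is a normal theory, because the reasoning about \mname{free-in}, \mname{free-for}, \mname{sub}, \mname{cleanse}, and \mname{gea} rests on their defining axioms being among the axioms of $T$. Since we are assuming $T$ is normal this causes no trouble, but it is precisely why soundness is claimed only for normal theories rather than arbitrary ones. Second, although the present statement is essentially a bookkeeping assembly of the preceding lemmas, the real substance of the argument lives in those lemmas---in particular in the Substitution Lemmas~\ref{lem:sub-a} and~\ref{lem:sub-b}, which underwrite Universal Instantiation and the two Description rules, and in Lemma~\ref{lem:gea} together with the Evaluation and Quasiquotation lemmas, which are needed both for the evaluation axiom schemas and (via the substitution lemmas) for the description rules when non-eval-free expressions are involved. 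The main obstacle, such as it is, is therefore not in this proof but in confirming rule by rule that the side conditions hypothesized in each rule are exactly the hypotheses the corresponding lemma requires; that matching was already arranged when the rules were stated, so here I would simply check that each invocation of a lemma is licensed by the premises of the rule that introduced $B_i$.
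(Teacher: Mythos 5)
Your proof is correct and is essentially the paper's own argument: the paper likewise reduces soundness to Lemmas~\ref{lem:rule-mp}--\ref{lem:rule-fun} (rules preserve validity in every standard model of $T$) and Lemmas~\ref{lem:asa}--\ref{lem:asm} (axioms are valid in $T$), with the induction on the length of the proof sequence left implicit. You simply spell out that induction and the three cases explicitly, which matches the intended argument.
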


\begin{proof}
Let $T=(L,\Gamma)$ be a normal theory of Chiron and $A$ be a formula
of $L$.  By Lemma~\ref{lem:rule-mp}--\ref{lem:rule-fun}, the rules of
inference of $\textbf{C}_L$ preserve validity in every standard model
of $T$.  By Lemmas~\ref{lem:asa}--\ref{lem:asm}, each axiom of
$\textbf{C}_L$ is valid in $T$.  Therefore, if $\proves{T}{A}$, then
$T \models A$, and hence $\textbf{C}_L$ is sound with respect to all
formulas of $L$.
\end{proof}

\subsection{Some Metatheorems}

Fix an eval-free normal theory $T=(L,\Gamma)$ for the rest of this
subsection.

Let $\textbf{C}^{\ast}_{L}$ be $\textbf{C}_L$ where the schema
variables in Rules~3--10 are restricted to eval-free expressions.  Let
$\provesast{T}{A}$ mean there is a proof of $A$ from $T$ in
$\textbf{C}^{\ast}_{L}$.  $T$ is $\emph{consistent}^\ast$ if there is
some formula $A$ of $L$ such that $\provesast{T}{A}$ does not hold.
Obviously $\provesast{T}{A}$ implies $\proves{T}{A}$.

\begin{cthm}[Tautology]\label{thm:tautology}
If $\provesast{T}{A_1}$, \ldots, $\provesast{T}{A_n}$ and $(A_1 \And \cdots
\And A_n) \Implies B$ is a tautology for $n \ge 1$, then
$\provesast{T}{B}$.  Also, if $B$ is tautology, then $\provesast{T}{B}$.
\end{cthm}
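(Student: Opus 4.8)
The plan is to prove the Tautology Theorem for $\textbf{C}^{\ast}_{L}$ by reducing it to the completeness of the propositional fragment of $\textbf{C}_L$ captured in Axiom Schemas~\ref{axschemas:prop}. First I would recall that Axiom Schemas~\ref{axschemas:prop} were chosen precisely so that schemas~3--5, together with Modus Ponens, constitute a complete Hilbert-style axiomatization of propositional logic over the connectives $\Or$ and $\Implies$, while schemas~1--2 provide definitional equivalences reducing $\And$ to $\Neg,\Or$ and $\Implies$ to $\Neg,\Or$. The key observation is that every instance of these schemas is available in $\textbf{C}^{\ast}_{L}$: they are Axiom Schemas~1 (Rule numbers 3--10 are the only rules whose schema variables were restricted to eval-free expressions, and the propositional axiom schemas are unaffected by that restriction since they are axioms, not rules), so each such instance $B'$ satisfies $\provesast{T}{B'}$ outright. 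Likewise Modus Ponens is Rule~1, which is not among Rules~3--10 and is therefore unrestricted, so $\provesast{T}{}$ is closed under Modus Ponens.

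Next I would carry out the standard argument. Given formulas $A_1,\ldots,A_n$ of $L$ such that $(A_1 \And \cdots \And A_n) \Implies B$ is a tautology, one repeatedly applies Axiom Schema~\ref{axschemas:prop}.1 to peel off the conjunctions, obtaining that $A_1 \Implies (A_2 \Implies (\cdots \Implies (A_n \Implies B)\cdots))$ is also a tautology built only from $\Neg,\Or,\Implies$ (using Axiom Schema~\ref{axschemas:prop}.2 to rewrite the nested implications into disjunctions if one prefers to work in a single connective set). Since the schemas~\ref{axschemas:prop}.3--5 with Modus Ponens are propositionally complete, there is a derivation of this formula using only instances of Axiom Schemas~\ref{axschemas:prop} as axioms and Modus Ponens as the sole inference rule; every line of that derivation is $\provesast{T}$-provable. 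Then $n$ applications of Modus Ponens with the hypotheses $\provesast{T}{A_1}, \ldots, \provesast{T}{A_n}$ yield $\provesast{T}{B}$. The second assertion --- if $B$ itself is a tautology then $\provesast{T}{B}$ --- is the $n=0$ degenerate case of the same reasoning: $B$ is propositionally derivable from the schemas alone, so $\provesast{T}{B}$.

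The main subtlety, and the step I would be most careful about, is the claim that a tautology of Chiron that happens to be built from arbitrary Chiron subformulas via $\Neg,\Or,\Implies,\And$ is derivable by purely propositional means even though the ``atoms'' of the tautology are themselves complex, possibly non-eval-free Chiron formulas. This is handled by treating each maximal non-propositional subformula as an atomic propositional letter: a formula is a tautology of Chiron in the sense intended here exactly when the propositional skeleton obtained by this abstraction is a propositional tautology, and the completeness of schemas~\ref{axschemas:prop}.3--5 applies uniformly to any substitution instance of a propositional tautology --- in particular to the instance where the propositional letters are replaced by the actual Chiron subformulas. No eval-related machinery is needed, which is why the restriction to eval-free schema variables in Rules~3--10 is harmless here and the theorem holds for $\textbf{C}^{\ast}_{L}$ and not merely $\textbf{C}_L$. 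I would state this abstraction explicitly and then invoke it, rather than re-proving propositional completeness, since that completeness is exactly what Axiom Schemas~\ref{axschemas:prop} was declared to supply.
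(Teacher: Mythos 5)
Your proposal is correct and takes essentially the same route as the paper, whose proof is just the remark that the theorem ``follows from Axiom Schemas~1 and the Modus Ponens rule of inference by a standard argument''; you have simply filled in that standard argument, including the (correct) observation that the eval-free restriction in $\textbf{C}^{\ast}_{L}$ affects only Rules~3--10 and so leaves the propositional axioms and Modus Ponens untouched. Nothing further is needed.
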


\begin{proof}
Follows from Axiom Schemas~1 and the Modus Ponens rule of inference by
a standard argument.
\end{proof}

\begin{clem} \label{lem:eval-free-ded}
Let $a$ be an eval-free term, $x$ be a symbol, and $e,e'$ be eval-free
proper expressions of $L$.
\be

  \item If $x$ is free in $e$, then
    $\provesast{T}{\mname{free-in}(\synbrack{x},\synbrack{e})}$.

  \item If $x$ is not free in $e$,
    $\provesast{T}{\Neg \mname{free-in}(\synbrack{x},\synbrack{e})}$.

  \item If $a$ is free for $x$ in $e$, then
    $\provesast{T}{\mname{free-for}
    (\synbrack{a}, \synbrack{x}, \synbrack{e})}$.

  \item If $a$ is not free for $x$ in $e$, then
    $\provesast{T}{\Neg \mname{free-for}
    (\synbrack{a}, \synbrack{x}, \synbrack{e})}$.

  \item If $e$ is syntactically closed, then
    $\provesast{T}{\mname{syn-closed}(\synbrack{e})}$.

  \item If $T_{\rm ker}^{L} \models \mname{cleanse}(\synbrack{e}) =
    \synbrack{e}$, then $\provesast{T}{\mname{cleanse}(\synbrack{e})
      = \synbrack{e}}$.

  \item If $T_{\rm ker}^{L} \models \mname{sub}
    (\synbrack{a}, \synbrack{x},\synbrack{e}) = \synbrack{e'}$,
    then $\provesast{T}{\mname{sub}
    (\synbrack{a}, \synbrack{x},\synbrack{e}) = \synbrack{e'}}$.

\ee
\end{clem}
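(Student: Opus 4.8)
The plan is to prove all seven parts by structural induction on the length $|e|$ of the (eval‑free, proper) expression $e$, following exactly the case split used in the proof of Lemma~\ref{lem:eval-free}: for each of the formula schemas that define \mname{free-in}, \mname{free-for}, \mname{cleanse}, and \mname{sub} on quotations of eval‑free proper expressions there is one case, and in each case the relevant defining‑axiom instance is itself an eval‑free formula (by the remarks following those operator definitions), hence belongs to $\Gamma$: since $T = \mname{eval-free}(T')$ for a normal $T'$, $T$ contains the eval‑free part of $\Gamma_{\rm ker}^{L}$. So at each node one has the defining axiom available as a theorem of $T$ in $\mathbf{C}^{\ast}_{L}$, one feeds it the induction hypotheses for the strictly shorter subexpressions, rebuilds the quasiquotations on the right‑hand sides using Axiom Schemas~\ref{axschemas:quote} (Quotation) together with the congruence schemas Axiom Schemas~\ref{axschemas:equality} (Equality), and glues the resulting propositional combination with the Tautology theorem (Theorem~\ref{thm:tautology}). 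Because $e$ is a fixed concrete expression, the induction simply walks the tree of $e$ with no genuine choice of case, and because all the arguments $a,\synbrack{a},\synbrack{x},\synbrack{e}$ are eval‑free, the restriction to eval‑free instances in Rules~3--10 is never a constraint.

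For parts~1--2 the induction parallels that of Lemma~\ref{lem:eval-free}(1): at a compound node the \mname{free-in} schema reduces $\mname{free-in}(\synbrack{x},\synbrack{e})$ to a Boolean combination of the $\mname{free-in}(\synbrack{x},\synbrack{c_i})$ for the components $c_i$, each of which by the induction hypothesis is provably $\mname{T}$ or provably $\mname{F}$, so $\mname{free-in}(\synbrack{x},\synbrack{e})$ is too by Theorem~\ref{thm:tautology}; the disjunct $\mname{free-in}(\synbrack{x},\synbrack{\mname{type}})$ or $\mname{free-in}(\synbrack{x},\synbrack{\mname{formula}})$ that appears when $e$ is an operator is provably $\mname{F}$ via the first \mname{free-in} schema, since a bare key word is an improper expression. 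Soundness of $\mathbf{C}^{\ast}_{L}$ then matches the proof‑theoretic outcome to the semantic notion of being (not) free in, as needed. Parts~3--4 (\mname{free-for}) go the same way, with the one wrinkle that in the cross‑binder case $(\StarApp {y\mcolon\alpha}\mdot e)$ with $x\neq y$ one must decide the side condition $\Neg\mname{free-in}(\synbrack{x},\synbrack{e})\Or\Neg\mname{free-in}(\synbrack{y},\synbrack{a})$; this is done by invoking parts~1--2 on $e$ and on the eval‑free term $a$ and then Theorem~\ref{thm:tautology}.

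For parts~6--7 I would prove part~6 first: for every eval‑free proper $e$, $\provesast{T}{\mname{cleanse}(\synbrack{e}) = \synbrack{e}}$ (its hypothesis being automatic here by Lemma~\ref{lem:eval-free}(4)). The induction folds the \mname{cleanse} schemas; at a compound node the right‑hand side is a quasiquotation whose evaluated components are the $\mname{cleanse}(\synbrack{e_i})$, which the induction hypothesis makes provably equal to $\synbrack{e_i}$, so the quasiquotation collapses to $\synbrack{e}$ by the simplification that replaces an evaluated component $\commabrack{\synbrack{e_i}}$ by $e_i$, realised inside $\mathbf{C}^{\ast}_{L}$ through Axiom Schemas~\ref{axschemas:quote}. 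Part~7 (\mname{sub}) is then the analogous induction — uniquely determined, since the target $e'$ is fixed and is the value of a term — using the induction hypothesis on $\synbrack{\alpha}$ and $\synbrack{e}$ in every case and part~6 in the binder case $x=y$ (where the recursion calls \mname{cleanse}, not \mname{sub}). Part~5 (\mname{syn-closed}) is handled by instantiating \mname{syn-closed}'s defining axiom (a universally quantified eval‑free formula, hence in $\Gamma$) at $\synbrack{e}$, reducing the goal to $\provesast{T}{\mname{is-p-expr}(\synbrack{e})}$ and $\provesast{T}{\ForallApp e'\mcolon\mname{E}_{\rm sy}\mdot \Neg\mname{free-in}(e',\synbrack{e})}$; the first is a structural induction on $e$ using the Construction‑Types axioms, Axiom Schemas~\ref{axschemas:con-types}.

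The hard part will be the second reduction in part~5. The defining axioms of \mname{free-in} are schemas over concrete symbols — each pins down $\mname{free-in}(\synbrack{s},\synbrack{e})$ for a particular $s\in\sS$ — and there is no defining axiom governing $\mname{free-in}(v,\synbrack{e})$ when $v$ is a variable of type $\mname{E}_{\rm sy}$, so parts~1--2, which decide \mname{free-in} one symbol at a time, do not by themselves deliver the universally quantified statement that \mname{syn-closed}'s definition demands. The way around this is to prove, by a separate structural induction on the fixed eval‑free proper $e$, a provable closed form
\[ \ForallApp e'\mcolon\mname{E}_{\rm sy}\mdot \bigl(\mname{free-in}(e',\synbrack{e}) \Iff (e' = \synbrack{z_1} \Or \cdots \Or e' = \synbrack{z_r})\bigr), \]
where $\set{z_1,\ldots,z_r}$ is the finite set of symbols free in $e$; here one exploits that the first \mname{free-in} schema disposes of every improper (in particular non‑symbol) second‑argument‑or‑first‑argument case uniformly, so that only the finitely many symbol names actually occurring in $e$ reach the shape‑specific schemas and the induction genuinely produces a finite disjunction, with the list structure of $\synbrack{e}$ exposed by Axiom Schemas~\ref{axschemas:quote} and equalities propagated by Axiom Schemas~\ref{axschemas:equality}. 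Once this closed form is in hand, syntactic closedness of $e$ forces $r=0$, so $\mname{free-in}(e',\synbrack{e})\Iff\mname{F}$, and the Universal Generalization rule yields $\ForallApp e'\mcolon\mname{E}_{\rm sy}\mdot \Neg\mname{free-in}(e',\synbrack{e})$, completing part~5. Everything else is the routine — if tedious — bookkeeping of converting quasiquotation identities into chains of applications of the Quotation and Equality axiom schemas.
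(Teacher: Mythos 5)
For parts 1--4, 6, and 7 your plan is essentially the paper's proof: one induction on the length of the eval-free expression $e$, pushing the relevant defining-axiom instances of \mname{free-in}, \mname{free-for}, \mname{cleanse}, and \mname{sub} (which are eval-free, hence available in $T$) through the Tautology theorem, with the Quotation and Equality axiom schemas handling the quasiquotation bookkeeping. That matches the paper's (one-line) argument, and the same is true of the $\mname{is-p-expr}(\synbrack{e})$ half of part 5.

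The gap is in part 5, at exactly the step you flag as hard. Your ``closed form'' $\ForallApp e'\mcolon\mname{E}_{\rm sy}\mdot(\mname{free-in}(e',\synbrack{e}) \Iff (e' = \synbrack{z_1} \Or \cdots \Or e' = \synbrack{z_r}))$ is not derivable by the induction you describe. Every defining axiom of \mname{free-in}, except the strictness axiom, constrains \mname{free-in} only when its first argument is a concrete quotation $\synbrack{s}$; none applies to $\mname{free-in}((e'\mcolon\mname{E}_{\rm sy}),\synbrack{e})$ with a variable first argument, and the strictness axiom covers only the case where that variable is undefined. Your justification --- that ``only the finitely many symbol names actually occurring in $e$ reach the shape-specific schemas'' --- is a metalevel observation about which axiom instances exist; it never yields an object-level theorem containing the free variable $e'$, which is what the Universal Generalization rule requires. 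The Quotation and Construction Types schemas do not help either: they nowhere say that every member of $\mname{E}_{\rm sy}$ is $\synbrack{s}$ for some $s \in \sS$ (the paper explicitly notes that quotations of symbols are not fully specified). So your reduction replaces one unproved universally quantified \mname{free-in} statement by another of the same kind. In fairness, this is also the thinnest point of the paper's own proof, which says only that Universal Generalization ``is also needed for part 5''; you have correctly located the crux, but the proposed repair as stated does not close it --- one would need, say, a recursion-style axiom characterizing \mname{free-in} over a quantified $\mname{E}_{\rm sy}$ variable (as the paper provides for \mname{is-eval-free}), or some other means of first proving a formula in the free variable $e'$ before generalizing.
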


\begin{proof}
By induction on the length of $e$ using the Tautology theorem, Axiom
Schemas~\ref{axschemas:equality}, and the definitions of
\mname{free-in}, \mname{free-for}, \mname{syn-closed},
\mname{cleanse}, and \mname{sub}.  The Universal Generalization rule
of inference is also needed for part 5.
\end{proof}

\begin{cthm}[Deduction]\label{thm:deduction}
Let $A$ be a syntactically closed, eval-free formula of $L$ and $T' =
(L,\Gamma \cup \set{A})$.  If $\provesast{T'}{B}$, then
$\provesast{T}{A \Implies B}$.
\end{cthm}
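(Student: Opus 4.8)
The plan is to prove, by induction on the length $n$ of a proof $B_1,\ldots,B_n$ of $B$ from $T'$ in $\textbf{C}^{\ast}_{L}$, the stronger statement that $\provesast{T}{A \Implies B_i}$ for each $i$ with $1 \le i \le n$; the case $i=n$ is then the theorem. Fix $i$ and suppose the claim holds for all smaller indices. If $B_i$ is an axiom of $\textbf{C}_L$ or a member of $\Gamma$, then $\provesast{T}{B_i}$ directly, and since $B_i \Implies (A \Implies B_i)$ is a tautology, Theorem~\ref{thm:tautology} gives $\provesast{T}{A \Implies B_i}$. If $B_i$ is $A$ itself, then $\provesast{T}{A \Implies B_i}$ since $A \Implies A$ is a tautology. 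If $B_i$ is obtained by Modus Ponens from $B_j$ and $B_k$ with $B_k = (B_j \Implies B_i)$ and $j,k<i$, then the induction hypothesis gives $\provesast{T}{A \Implies B_j}$ and $\provesast{T}{A \Implies (B_j \Implies B_i)}$, and since $((A \Implies B_j) \And (A \Implies (B_j \Implies B_i))) \Implies (A \Implies B_i)$ is a tautology, Theorem~\ref{thm:tautology} again gives $\provesast{T}{A \Implies B_i}$.

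The case that carries the content is when $B_i = \ForallApp x \mcolon \alpha \mdot B_j$ is obtained from $B_j$ ($j<i$) by Universal Generalization. By the induction hypothesis $\provesast{T}{A \Implies B_j}$, so applying Universal Generalization (which is unrestricted) yields $\provesast{T}{\ForallApp x \mcolon \alpha \mdot (A \Implies B_j)}$. Using the instance $(A \Implies B_j) \Iff (\Neg A \Or B_j)$ of Axiom Schemas~\ref{axschemas:prop}, an instance of Axiom Schemas~\ref{axschemas:equality} to propagate this equivalence through the context $\ForallApp x \mcolon \alpha \mdot (\cdot)$ (the relevant occurrence not lying within a quotation), and Theorem~\ref{thm:tautology}, one reaches $\provesast{T}{\ForallApp x \mcolon \alpha \mdot (\Neg A \Or B_j)}$. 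Since $A$ is syntactically closed, $x$ is not free in $A$, hence — as $\Neg A$ is eval-free and shares the free variables of $A$ — $x$ is not free in $\Neg A$, so Lemma~\ref{lem:eval-free-ded} gives $\provesast{T}{\Neg\mname{free-in}(\synbrack{x},\synbrack{\Neg A})}$. This is exactly the hypothesis needed to apply Universal Quantifier Shifting, which produces $\provesast{T}{(\ForallApp x \mcolon \alpha \mdot (\Neg A \Or B_j)) \Implies (\Neg A \Or \ForallApp x \mcolon \alpha \mdot B_j)}$; one more Modus Ponens gives $\provesast{T}{\Neg A \Or \ForallApp x \mcolon \alpha \mdot B_j}$, and a final round of Axiom Schemas~\ref{axschemas:prop},~\ref{axschemas:equality} together with Theorem~\ref{thm:tautology} rewrites this as $\provesast{T}{A \Implies \ForallApp x \mcolon \alpha \mdot B_j}$, i.e.\ $\provesast{T}{A \Implies B_i}$.

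The main obstacle is this Universal Generalization case. It is the only place where the hypothesis that $A$ be syntactically closed is essential — it is what lets us discharge the side condition of Universal Quantifier Shifting — and it is where the propositional bookkeeping translating between the $\Implies$-form and the $\Or$-form (via Axiom Schemas~\ref{axschemas:prop} and~\ref{axschemas:equality}) has to be carried out with some care. One must also confirm that the single invocation of a Rule~3--10 schema, namely Universal Quantifier Shifting, respects the eval-freeness restriction built into $\textbf{C}^{\ast}_{L}$; here the hypothesis that $A$ (hence $\Neg A$) is eval-free is used. All remaining cases reduce, as above, to routine applications of the Tautology theorem, so once the Universal Generalization step is in place the induction closes.
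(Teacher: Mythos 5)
Your induction is set up in essentially the way the paper's ``standard argument'' intends, and your Universal Generalization case is handled correctly: the use of Lemma~\ref{lem:eval-free-ded} to obtain $\provesast{T}{\Neg\mname{free-in}(\synbrack{x},\synbrack{\Neg A})}$ from the syntactic closedness and eval-freeness of $A$, followed by Universal Quantifier Shifting and propositional bookkeeping, is exactly the step the hypotheses on $A$ are for. However, there is a genuine gap in your closing claim that ``all remaining cases reduce to routine applications of the Tautology theorem.'' A proof in $\textbf{C}^{\ast}_{L}$ may use any of the ten rules of inference, and the cases where a line $B_i$ is inferred by one of Rules~3--10 (Universal Quantifier Shifting itself, Universal Instantiation, the two Description rules, and the four function rules) are not propositionally routine: these rules infer $B_i$ from earlier lines $P_1,\ldots,P_m$ that are syntactic side conditions ($\mname{free-in}$, $\mname{free-for}$, $\mname{sub}$-definedness assertions), and the induction hypothesis gives you only $\provesast{T}{A \Implies P_j}$, from which no amount of tautological reasoning yields $\provesast{T}{A \Implies B_i}$ --- you can neither apply the rule in $T$ (you do not yet have $\provesast{T}{P_j}$) nor bypass its premises.

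The way to close these cases --- and the reason the theorem is stated for $\textbf{C}^{\ast}_{L}$ rather than $\textbf{C}_L$, and the reason the paper's proof cites Lemmas~\ref{lem:eval-free} and \ref{lem:eval-free-ded} --- is that in $\textbf{C}^{\ast}_{L}$ the schema variables of Rules~3--10 are eval-free, so each side condition $P_j$ concerns quotations of eval-free expressions and is therefore decided by $T$: by Lemma~\ref{lem:eval-free} one of $P_j$, $\Neg P_j$ holds, and by Lemma~\ref{lem:eval-free-ded} whichever holds is provable, i.e.\ $\provesast{T}{P_j}$ or $\provesast{T}{\Neg P_j}$. If all the $P_j$ are provable, apply the rule inside $T$ to get $\provesast{T}{B_i}$ and hence $\provesast{T}{A \Implies B_i}$; if some $\Neg P_j$ is provable, combine it with the induction hypothesis $\provesast{T}{A \Implies P_j}$ and the Tautology theorem to get $\provesast{T}{\Neg A}$, hence again $\provesast{T}{A \Implies B_i}$. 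This is precisely where a standard argument breaks down for $\textbf{C}_L$ (the paper's parenthetical remark): with non-eval-free instantiations the side conditions need be neither provable nor refutable, as the infinite-dependency examples show. One further small point worth noting (it also affects the paper's sketch): when you invoke Universal Quantifier Shifting in the Universal Generalization case within $\textbf{C}^{\ast}_{L}$, its restriction applies to all its schema variables, so you should remark on the eval-freeness of $B_j$ and $\alpha$, not just of $\Neg A$.
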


\begin{proof}
Follows from Lemmas~\ref{lem:eval-free} and \ref{lem:eval-free-ded};
Axiom Schemas~1; and the Modus Ponens, Universal Generalization, and
Universal Quantifier Shifting rules of inference by a standard
argument.  (A standard argument fails if $\textbf{C}_L$ is used in
place of $\textbf{C}^{\ast}_{L}$.)
\end{proof}

\begin{clem}\label{lem:incons}
Let $A$ be a syntactically closed, eval-free formula of $L$ and $T' =
(L,\Gamma \cup \set{A})$.  If $T'$ is not $\textrm{consistent}^\ast$,
then $\provesast{T}{\Neg A}$.
\end{clem}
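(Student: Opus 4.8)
The plan is to derive $\Neg A$ in $\textbf{C}^{\ast}_{L}$ from $T$ by exploiting the inconsistency of $T'$ through the Deduction theorem and a propositional tautology. First I would unwind the definition of $\text{consistent}^\ast$: to say that $T'$ is not $\text{consistent}^\ast$ means that $\provesast{T'}{B}$ holds for \emph{every} formula $B$ of $L$. In particular, $\provesast{T'}{\Neg A}$.

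Next, since $A$ is by hypothesis a syntactically closed, eval-free formula of $L$ and $T' = (L, \Gamma \cup \set{A})$, the hypotheses of the Deduction theorem (Theorem~\ref{thm:deduction}) are satisfied. Applying that theorem to $\provesast{T'}{\Neg A}$ yields $\provesast{T}{A \Implies \Neg A}$. Finally, $(A \Implies \Neg A) \Implies \Neg A$ is a tautology (in the propositional connectives $\Implies$, $\Neg$ used by $\textbf{C}_L$), so by the Tautology theorem (Theorem~\ref{thm:tautology}) applied with $n = 1$ and $A_1 = (A \Implies \Neg A)$, we conclude $\provesast{T}{\Neg A}$.

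This argument is essentially mechanical once the two metatheorems are in hand; there is no real obstacle. The only point requiring care is to confirm that the restriction to eval-free schema variables in $\textbf{C}^{\ast}_{L}$ does not obstruct any step: the Deduction theorem is stated precisely for $\textbf{C}^{\ast}_{L}$ under exactly the syntactic-closedness and eval-freeness conditions assumed here, and the tautology step uses only Axiom Schemas~\ref{axschemas:prop} and Modus Ponens, which are available in $\textbf{C}^{\ast}_{L}$. So the proof reduces to citing these two results in sequence.
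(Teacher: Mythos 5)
Your proof is correct and follows exactly the route the paper intends: the paper's own proof simply cites the Tautology and Deduction theorems with "a standard argument," and your write-up is that standard argument made explicit (inconsistency gives $\provesast{T'}{\Neg A}$, Deduction gives $\provesast{T}{A \Implies \Neg A}$, and the tautology $(A \Implies \Neg A) \Implies \Neg A$ finishes it). No gaps.
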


\begin{proof}
It follows from the hypotheses of the lemma and the Tautology and
Deduction theorems by a standard argument that $\provesast{T}{\Neg A}$.
\end{proof}

\begin{clem}\label{lem:var-ins}
If $A$ is an eval-free formula of $L$,
then \[\provesast{T}{(\ForallApp x \mcolon \mname{C} \mdot A) \Implies
  A}.\]
\end{clem}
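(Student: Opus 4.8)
The plan is to obtain the formula as the conclusion of the Universal Instantiation rule in which the instantiating term is the variable $(x\mcolon\mname{C})$ itself, and then to show that the resulting substitution instance of $A$ is provably equivalent to $A$. Concretely, I would apply Rule~4 (Universal Instantiation) of $\textbf{C}^{\ast}_{L}$ with $a = (x\mcolon\mname{C})$, $\alpha = \mname{C}$, and the given $A$; this is legitimate in $\textbf{C}^{\ast}_{L}$ because $(x\mcolon\mname{C})$ and $A$ are eval-free. Its two hypotheses are discharged thus. For $\mname{free-for}(\synbrack{(x\mcolon\mname{C})},\synbrack{x},\synbrack{A})$: the only symbol free in $(x\mcolon\mname{C})$ is $x$, so no capture can occur when $(x\mcolon\mname{C})$ is substituted for $x$ under a binder whose bound variable has a name $\ne x$; formally the binder clause of the defining axioms of \mname{free-for} is satisfied because $\Neg\mname{free-in}(\synbrack{y},\synbrack{(x\mcolon\mname{C})})$ is valid in $T_{\rm ker}^{L}$ whenever $y \ne x$, so $(x\mcolon\mname{C})$ is free for $x$ in $A$ and $\provesast{T}{\mname{free-for}(\synbrack{(x\mcolon\mname{C})},\synbrack{x},\synbrack{A})}$ by Lemma~\ref{lem:eval-free-ded}. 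For $\mname{sub}(\synbrack{(x\mcolon\mname{C})},\synbrack{x},\synbrack{A})\IsDefApp$: by Lemma~\ref{lem:eval-free} there is an eval-free formula $A'$ with $\mname{eval-free}(T_{\rm ker}^{L}) \models \mname{sub}(\synbrack{(x\mcolon\mname{C})},\synbrack{x},\synbrack{A}) = \synbrack{A'}$, hence (Lemma~\ref{lem:theory-ext}) the same holds of $T_{\rm ker}^{L}$, hence $\provesast{T}{\mname{sub}(\synbrack{(x\mcolon\mname{C})},\synbrack{x},\synbrack{A}) = \synbrack{A'}}$ by Lemma~\ref{lem:eval-free-ded}, and then $\provesast{T}{\mname{sub}(\synbrack{(x\mcolon\mname{C})},\synbrack{x},\synbrack{A})\IsDefApp}$ by the last schema of Axiom Schemas~\ref{axschemas:bi-op-def} ($a =_{\alpha} b \Implies a\IsDef\alpha \And b\IsDef\alpha$). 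Rule~4 then yields $\provesast{T}{((\ForallApp x\mcolon\mname{C}\mdot A) \And (x\mcolon\mname{C})\IsDef\mname{C}) \Implies \sembrack{\mname{sub}(\synbrack{(x\mcolon\mname{C})},\synbrack{x},\synbrack{A})}_{\rm fo}}$; since $(x\mcolon\mname{C})\IsDef\mname{C}$ is the first schema of Axiom Schemas~\ref{axschemas:set-theory}, the Tautology theorem gives $\provesast{T}{(\ForallApp x\mcolon\mname{C}\mdot A) \Implies \sembrack{\mname{sub}(\synbrack{(x\mcolon\mname{C})},\synbrack{x},\synbrack{A})}_{\rm fo}}$.

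It then remains to establish $\provesast{T}{\sembrack{\mname{sub}(\synbrack{(x\mcolon\mname{C})},\synbrack{x},\synbrack{A})}_{\rm fo} \Iff A}$, after which the Tautology theorem applied to this and the previous implication finishes the proof. Using $\provesast{T}{\mname{sub}(\synbrack{(x\mcolon\mname{C})},\synbrack{x},\synbrack{A}) = \synbrack{A'}}$, the congruence schemas of Axiom Schemas~\ref{axschemas:equality}, and the fifth schema of Axiom Schemas~\ref{axschemas:eval} (whose guard $\mname{is-eval-free}(\synbrack{A'})$ is provable by a routine induction on $A'$ from the defining axioms of \mname{is-eval-free}), this reduces to $\provesast{T}{A' \Iff A}$, where $A'$ is the result of substituting the variable $(x\mcolon\mname{C})$ for the free occurrences of $x$ in $A$. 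I would prove this by induction on the length of $A$, established simultaneously for all eval-free proper expressions $e$ in the form ``$\provesast{T}{e^{\sigma} =_{k[e]} e}$'', where $e^{\sigma}$ denotes $H^{-1}$ of the value of $\mname{sub}(\synbrack{(x\mcolon\mname{C})},\synbrack{x},\synbrack{e})$ and $=_{k[e]}$ is $\TypeEqual$, $\QuasiEqual$, or $\Iff$ according as $e$ is a type, a term, or a formula. When $x$ is not free in $e$ the claim is immediate from Lemma~\ref{lem:eval-free}. Otherwise one peels off the outermost construct of $e$ using the corresponding defining axiom of \mname{sub} (and, inside binders, of \mname{cleanse}), which is eval-free and hence a member of $\Gamma$, applies the induction hypothesis to the immediate components, and reassembles the equality using the congruence schemas of Axiom Schemas~\ref{axschemas:equality}, the operator-property schemas of Axiom Schemas~\ref{axschemas:op}, and the conditional-term schemas of Axiom Schemas~\ref{axschemas:if}.

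The hard part will be the variable base case $e = (x\mcolon\beta)$. Here the defining axiom of \mname{sub} replaces the occurrence by $\mname{if}((x\mcolon\mname{C})\IsDef\beta', (x\mcolon\mname{C}), \Undefined_{\sf C})$ with $\beta' = \beta^{\sigma}$, so the induction hypothesis on the type $\beta$ gives $\provesast{T}{\beta' \TypeEqual \beta}$; what must still be shown is $\provesast{T}{\mname{if}((x\mcolon\mname{C})\IsDef\beta, (x\mcolon\mname{C}), \Undefined_{\sf C}) \QuasiEqual (x\mcolon\beta)}$. This is the one step that genuinely uses something beyond routine bookkeeping: it calls for a case analysis on $(x\mcolon\mname{C})\IsDef\beta$ and the interplay of the Variable Definedness axiom schema, the first schema of Axiom Schemas~\ref{axschemas:set-theory} (which supplies $(x\mcolon\mname{C})\IsDefApp$), and the conditional-term schemas, in order to identify the two differently-annotated occurrences $(x\mcolon\mname{C})$ and $(x\mcolon\beta)$ of the variable named $x$. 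Once this is available, the $\QuasiEqual$-congruence schema lets the $\mname{if}$-term be replaced by $(x\mcolon\beta)$ throughout $A$ — its side condition is not triggered, since the free, non-binding occurrences of $(x\mcolon\beta)$ that \mname{sub} rewrites are never immediately preceded by a binder — which completes the induction, hence the reduction, hence the lemma.
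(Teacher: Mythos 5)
Your overall route is the one the paper's (very terse) proof indicates: apply Universal Instantiation with $a=(x\mcolon\mname{C})$ and $\alpha=\mname{C}$, discharge the two rule hypotheses via Lemmas~\ref{lem:eval-free} and \ref{lem:eval-free-ded}, discharge $a\IsDef\mname{C}$ by the first schema of Axiom Schemas~\ref{axschemas:set-theory}, and then bridge from $\sembrack{\mname{sub}(\synbrack{(x\mcolon\mname{C})},\synbrack{x},\synbrack{A})}_{\rm fo}$ back to $A$ using the definition of \mname{sub} and the Evaluation schemas. That bookkeeping (free-for, definedness of the substitution, the eval-free guard of the fifth Evaluation schema, the congruence replacements) is fine.

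The genuine gap is exactly at the step you flag as the ``hard part'' and then dismiss with a case analysis: you claim that $\mname{if}((x\mcolon\mname{C})\IsDef\beta,\,(x\mcolon\mname{C}),\,\Undefined_{\sf C}) \QuasiEqual (x\mcolon\beta)$ follows from Variable Definedness, $(x\mcolon\mname{C})\IsDefApp$, and the Conditional Terms schemas. Those ingredients do not suffice, because none of them relates two occurrences of the same variable name carrying different type annotations. In the branch $(x\mcolon\mname{C})\IsDef\beta$, Axiom Schemas~\ref{axschemas:if} reduce the conditional to $(x\mcolon\mname{C})$, but you still need $(x\mcolon\mname{C}) = (x\mcolon\beta)$, and Variable Definedness only says $(x\mcolon\beta)\IsDefApp \Implies (x\mcolon\beta)\IsDef\beta$ --- it never connects $(x\mcolon\beta)$ to $(x\mcolon\mname{C})$. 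In the branch $\Neg((x\mcolon\mname{C})\IsDef\beta)$ you need $(x\mcolon\beta)\IsUndefApp$, which again is delivered by nothing you cite (and Variable Definedness runs the wrong way). Scanning the rest of $\textbf{C}^{\ast}_{L}$, no axiom schema or rule supplies such a ``re-annotation'' principle for variables: the only mechanism that rewrites variable occurrences is Universal Instantiation itself, and it always routes through \mname{sub}, which produces precisely these conditional terms rather than eliminating them; the kernel defining axioms concern quoted syntax, not the values of $(x\mcolon\beta)$ versus $(x\mcolon\mname{C})$. So your induction closes only in the special case where every free occurrence of $x$ in $A$ is annotated with $\mname{C}$ (there the guard is exactly $(x\mcolon\mname{C})\IsDef\mname{C}$, i.e.\ Axiom Schemas~\ref{axschemas:set-theory}.1), and the general case --- which does arise, e.g.\ when stripping a universal closure in the Completeness proof --- is left unproven. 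To be fair, the paper's one-sentence proof leaves the same point implicit, so you have correctly located the crux; but as written your justification for it fails, and you would need either an explicit derivation of $(x\mcolon\beta) \QuasiEqual \mname{if}((x\mcolon\mname{C})\IsDef\beta,(x\mcolon\mname{C}),\Undefined_{\sf C})$ in $\textbf{C}^{\ast}_{L}$ or an additional axiom/argument supplying it.
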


\begin{proof}
By the Deduction theorem, the Universal Instantiation rule of
inference, the first axiom schema of Axiom Schemas 7,
Lemmas~\ref{lem:eval-free} and \ref{lem:eval-free-ded}, the definition
of \mname{sub}, and the fifth axiom schema of Axiom Schemas~12.
\end{proof}

\subsection{Completeness}

Let $\sT$ be the set of all eval-free normal theories over $L$ and
$\sF$ be the set of all eval-free formulas of $L$.  We will prove that
$\textbf{C}^{\ast}_{L}$ is complete with respect to $\sT$ and $\sF$.
This will then immediately imply that $\textbf{C}_L$ is complete with
respect to $\sT$ and $\sF$ as well.  The key component of the proof is
the following lemma that says a standard model can be constructed for
any $\textrm{consistent}^\ast$ eval-free normal theory.  The proof of
this lemma is long and tedious.

\begin{clem} [Model Construction] \label{lem:construction} \bsp 
Let $T = (L,\Gamma)$ be a $\textrm{consistent}^\ast$ eval-free normal
theory.  Then $T$ has a standard model $M=(S,V)$ such that, for every
class $x$ in $M$, there is a syntactically closed, eval-free term $a$
of $L$ whose value in $M$ (with respect to any $\phi$) is $x$. \esp
\end{clem}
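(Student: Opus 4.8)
The plan is to carry out a Henkin-style term-model construction, adapted to Chiron's type system, undefinedness, and quotation. Since $T$ is $\textrm{consistent}^\ast$, by Lemma~\ref{lem:incons} and the Tautology theorem (Theorem~\ref{thm:tautology}) we can extend $\Gamma$ to a maximal $\textrm{consistent}^\ast$ set $\Gamma^+$ of syntactically closed, eval-free formulas of $L$; to make the resulting model have the stated property, I would first conservatively extend $L$ by adding a countably infinite set of fresh constants (witnesses) and, by a standard Henkin argument using the Indefinite Description rule and Axiom Schemas~\ref{axschemas:indef-desc}, arrange that $\Gamma^+$ contains, for each eval-free formula $(\ForsomeApp x \mcolon \alpha \mdot A)$ it proves, a witnessing instance $\sembrack{\mname{sub}(\synbrack{c}, \synbrack{x}, \synbrack{A})}_{\rm fo}$ for some witness constant $c$. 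One must check that adding these witnesses and Henkinizing preserves $\textrm{consistent}^\ast$-ness and that the resulting theory is still eval-free normal; this uses the Deduction theorem (Theorem~\ref{thm:deduction}) and Lemmas~\ref{lem:eval-free}, \ref{lem:eval-free-ded}.

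Next I would build the prestructure. Take $\Dc$ to be the set of equivalence classes of syntactically closed, eval-free terms $a$ of $L$ with $\Gamma^+ \vdash^\ast a \IsDefApp$, modulo the relation $a \sim b \iff \Gamma^+ \vdash^\ast a \Equal b$; define $\in$ on these classes via $\Gamma^+ \vdash^\ast a \in b$, and define $\TRUE$, $\FALSE$, $\Undefined$ as fresh distinct objects. Axiom Schemas~\ref{axschemas:set-theory}---which are exactly G\"odel's {\nbg} axioms transcribed into Chiron---guarantee that $(\Dc,\in)$ satisfies the axioms of {\nbg}, so it is a genuine prestructure; Axiom Schemas~\ref{axschemas:bi-op-def} and \ref{axschemas:equality} ensure the equality and membership relations are well defined on the quotient. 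Then $\Dv$, $\Df$, $\Do$, and the truth/undefined values are determined as in the definition of a structure. The delicate part is the construction component: I must choose $\Dea \subseteq \Dv$ countably infinite with no $\emptyset$ and no ordered pairs, build $\De = \De^P$, and pick a bijection $G^P: \sS \cup \sO \to \Dea$; Axiom Schemas~\ref{axschemas:con-types} (plus \ref{axschemas:quote}) are designed precisely so that such choices exist and so that the operators $\mname{set}$, $\mname{class}$, $\mname{expr}$, $\mname{expr-type}$, etc., can be interpreted consistently---one reads off $I$ on the built-in operator names from the quotient and from these axioms. For each non-built-in operator name $o$ I define $I(o)$ on the classes arising from terms by the provable behavior of $o$, and extend arbitrarily (respecting the signature-form constraints) to the rest of $D_1 \times \cdots \times D_n$; a choice function $\xi$ on $\Ds$ exists by {\nbg}'s global choice axiom (the last schema of Axiom Schemas~\ref{axschemas:set-theory}).

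With $S$ so defined, let $V$ be the standard valuation for $S$ (well defined by Lemma~\ref{lem:v-well-defined}), giving a standard model $M=(S,V)$ for $L$. The heart of the proof is the \emph{truth lemma}: for every syntactically closed, eval-free formula $A$ of $L$, $M \models A \iff \Gamma^+ \vdash^\ast A$; and for every syntactically closed, eval-free term $a$, the value of $a$ in $M$ is the equivalence class of $a$. This is proved by simultaneous induction on the length of the expression, using the Substitution Lemmas (Lemmas~\ref{lem:sub-a}, \ref{lem:sub-b}, \ref{lem:eval-free}), the Evaluation axiom schemas~\ref{axschemas:eval} (which reduce $\sembrack{\synbrack{e}}_k$ to $e$ for eval-free $e$), Axiom Schemas~\ref{axschemas:op}--\ref{axschemas:types} for operator and type behavior, Axiom Schemas~\ref{axschemas:if}--\ref{axschemas:indef-desc} and the Henkin witnesses for conditionals and descriptions, and the Function rules of inference (Rules~7--10) together with Lemma~\ref{lem:rule-fun}'s analysis for the function machinery. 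The existential case is where the Henkin witnesses are essential, and the variable/quantifier cases require care because quantification ranges over \emph{all} of $\Dc$ while the quotient is built only from syntactically closed terms---here one instantiates using Lemma~\ref{lem:var-ins} and the fact that every element of $\Dc$ is, by construction, named by such a term. Once the truth lemma is established, $M$ is a standard model of $T$ (since $\Gamma \subseteq \Gamma^+$) and every class in $M$ is the value of a syntactically closed, eval-free term, which is exactly the conclusion. The main obstacle I anticipate is verifying that the interpretation of the construction-type operators ($\mname{expr}$, $\mname{expr-type}$, $\mname{expr-term-type}$, etc.) can be chosen coherently and that Axiom Schemas~\ref{axschemas:con-types} pin them down enough for the truth lemma's quotation and $\IsDef$-in-construction-type cases to go through; this is the step that makes the proof ``long and tedious'' and requires the most bookkeeping against the mutually recursive definitions in the definition of a structure.
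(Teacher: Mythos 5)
There is a genuine gap, and it is in your very first move: adding a countably infinite set of fresh witness constants to $L$. In Chiron the language is reflected inside the theory---constants are $0$-ary operators, so fresh constants are fresh operator names, and the operators \mname{op-names}, \mname{lang}, and the construction types $\mname{E}$, $\mname{E}_{\rm on}$, $\mname{E}_{\rm ty}$, etc.\ are pinned to the set $\sO$ of $L$. Consequently the extension is not conservative and need not even preserve $\textrm{consistency}^\ast$: an eval-free normal theory may contain the (eval-free) axiom $\ell = \set{\synbrack{o_1},\ldots,\synbrack{o_n}}$ (cf.\ the example following Lemma~\ref{lem:theory-ext} and the non-conservativity lemma in section~\ref{sec:interp}), and in the proof system over the extended language the Quotation schemas together with the first Construction Types schema ($\ell = \mname{term}(\mname{E}_{\rm on})$) prove $\synbrack{c} \in \ell$ and $\synbrack{c} \not= \synbrack{o_i}$ for a new constant name $c$, yielding a contradiction. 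Moreover, even when consistency survives, your construction delivers a standard model for the extended language, not for $L$: the lemma demands a standard model $M=(S,V)$ for $L$ in which $I(\mname{op-names})$, $\Don$, $H$, and $\De$ represent exactly the expressions of $L$, and the mere reduct of an $L^{+}$-model is not a structure for $L$ (the quotation/evaluation machinery changes), while classes named only by new witness constants would also violate the conclusion that every class is the value of a syntactically closed, eval-free term \emph{of $L$}.

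The repair is the one the paper uses: no new constants are needed because Chiron already has indefinite description. Take a maximal $\textrm{consistent}^\ast$ extension over the \emph{same} language, build the term model from syntactically closed terms of $L$ that are provably equal to syntactically closed eval-free terms, and at the existential step use the $\epsilon$-term $(\epsilonApp x \mcolon \alpha^\phi \mdot B^{\phi-x})$ (syntactically closed and eval-free) as the Henkin witness via the Indefinite Description rule and Axiom Schemas~\ref{axschemas:indef-desc}. The rest of your outline---$\Dc$ as a quotient of closed terms, the NBG schemas giving a prestructure, reading off $I$ on the built-in names from the Quotation and Construction Types schemas, and a truth lemma by induction (the paper phrases it as $V_\phi(e) = U(e^\phi)$ via substitution of canonical names, which is equivalent to your truth-lemma formulation given that every class is named)---matches the paper's proof and is sound once the witness issue is fixed.
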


\begin{proof} Let $T'=(L,\Gamma')$ be an extension of $T$ that is maximal
$\textrm{consistent}^\ast$.  We will tacitly use the Tautology
  theorem, Lemma~\ref{lem:eval-free-ded}, and the fact that $T$ is
  maximal $\textrm{consistent}^\ast$ throughout this proof.

\bigskip

\noindent \textbf{Step 1} \sglsp Let $\sC$ be the set of syntactically
closed terms $c$ of $L$ such that $c = c'$ is in $\Gamma'$ for some
syntactically closed, eval-free term $c'$ of $L$.  Let $\sC'$ be the
set of syntactically closed terms $c$ of $L$ such that $c \in \sC$ or
$c\IsUndefApp$ is in $\Gamma'$.  And let $\sT$ be the set of types
$\alpha$ such that (1) $\alpha = \mname{C}$ or (2) for all $c \not\in
\sC$, $c \IsUndef \alpha$ is in $\Gamma'$.  For $c,d \in \sC$, define
$c \sim d$ iff $c = d$ is in $\Gamma'$.  By Axiom Schemas 2, $\sim$ is
an equivalence relation on $\sC$.  For each $c \in \sC$,
let \[\tilde{c} = \set{d \in \sC \;|\; c \sim d}\] be the equivalence
class of $c$.  Note that each $\tilde{c}$ contains an eval-free term.

Define $\Dc = \set{\tilde{c} \;|\; c \in \sC}$.  For $c,d \in \sC$,
define $\tilde{c} \in \tilde{d}$ iff $c \in d$ is in $\Gamma'$.  By
Axiom Schemas 2, the definition of $\in$ is well defined.  Define
$\Dv$, $\Ds$, $\Df$, $\Do$, $\De$, $\TRUE$, $\FALSE$, and $\Undefined$
as in the definition of a structure for $L$.  Let $\xi$ be any choice
function on $\Ds$.  $H$ and $I$ will be defined later.

\bigskip

\noindent \textbf{Step 2} \sglsp For a syntactically closed type
$\alpha$ of $L$, define \[U(\alpha) = \set{\tilde{c} \in \Dc \;|\; c
  \in \sC \mbox{ and } c \IsDef \alpha \mbox{ is in } \Gamma'}\] if
$\alpha \in \sT$ and $U(\alpha) = \mname{C}$ if $\alpha \not\in \sT$.
For a syntactically closed term $a$ of $L$, define $U(a) = \tilde{a}$
if $a \in \sC$ and $U(a) = \Undefined$ if $a \not\in \sC$.  For a
syntactically closed formula $A$ of $L$, define $U(A) = \TRUE$ if $A$
in $\Gamma'$ and $U(A) = \FALSE$ if $\Neg A$ is in $\Gamma'$.

\bigskip
%\newpage

\noindent \emph{Claim 1} 
\be

  \item For all syntactically closed types $\alpha$ and $\beta$ of $L$
    with $\alpha,\beta \in \sT$, $U(\alpha) = U(\beta)$ iff 
    $\alpha \TypeEqual \beta$  is in $\Gamma'$.

  \item For all syntactically closed terms $a$ and $b$ of $L$ with
    $a,b \in \sC'$, $U(a) = U(b)$ iff $a \QuasiEqual b$ is in
    $\Gamma'$.

  \item For all syntactically closed formula $A$ and $B$ of $L$, $U(A)
    = U(B)$ iff $A \Iff B$ is in $\Gamma'$.

\ee

%\newpage

\noindent \emph{Proof} 
\be

  \item \bsp $U(\alpha) = U(\beta)$ iff $\set{\tilde{c} \in \Dc \;|\;
    c \in \sC \mbox{ and } c \IsDef \alpha \mbox{ is in } \Gamma'} =
    \set{\tilde{c} \in \Dc \;|\; c \in \sC \mbox{ and } c \IsDef \beta
      \mbox{ is in } \Gamma'}$ iff for all
    $\tilde{c} \in \Dc$, ($c \IsDef \alpha$ in $\Gamma'$ iff $c \IsDef
    \beta$ in $\Gamma'$) iff for all $\tilde{c} \in \Dc$, $c \IsDef
    \alpha \Iff c \IsDef \beta$ is in $\Gamma'$ by the Tautology
    theorem iff $\ForallApp x \mcolon \mname{C} \mdot x \IsDef \alpha
    \Iff x \IsDef \beta$ is in $\Gamma'$ by the Universal Instantiation
    rule of inference iff $\alpha \TypeEqual \beta$ is in $\Gamma'$ by
    the first axiom schema of Axiom Schemas~\ref{axschemas:types}.
    Therefore, $U(\alpha) = U(\beta)$ iff $\alpha \TypeEqual \beta$ is
    in $\Gamma'$. \esp

  \item $U(a) = U(b) \not= \Undefined$ iff $\tilde{a} = \tilde{b}$ iff
    $a \sim b$ iff (1) $a = b$ is in $\Gamma'$. $U(a) = U(b) =
    \Undefined$ iff (2) $a\IsUndefApp$ and $b\IsUndefApp$ are in
    $\Gamma'$.  (1) and (2) hold iff $a \QuasiEqual b$ is in $\Gamma'$
    by the definition of \mname{quasi-equal}.  Therefore, $U(a) =
    U(b)$ iff $a \QuasiEqual b$ is in $\Gamma'$.

  \item $U(A) = U(B) = \TRUE$ iff (1) $A$ and $B$ are in $\Gamma'$, and
    $U(A) = U(B) = \FALSE$ iff (2) $\Neg A$ and $\Neg B$ are in
    $\Gamma'$.  (1) and (2) hold iff $A \Iff B$ is in $\Gamma'$.
    Therefore, $U(A) = U(B)$ iff $A \Iff B$ is in $\Gamma'$.

\ee
This completes the proof of Claim 1.

\bigskip

\noindent \textbf{Step 3} \sglsp Recall that each quotation is
syntactically closed and eval-free.  Let $G$ be the mapping from $\sS
\cup \sO$ to $\Dc$ such that, for all $s \in \sS \cup \sO$, $G(s) =
U(\synbrack{s})$.  By the first axiom schema of Axiom
Schemas~\ref{axschemas:quote} and the fourth axiom schema of Axiom
Schemas~\ref{axschemas:con-types}, the range of $G$ is a subset of
$\Dv$.  By part 2 of Claim~1 and the third axiom schema of Axiom
Schemas~\ref{axschemas:quote}, $G$ is injective.  Define $H$ from $G$
as in the definition of a structure for $L$.

\bigskip

\noindent \textbf{Step 4} \sglsp Let $o$ be an operator name of $L$
with the signature form $s_1,\ldots,s_{n+1}$ where $n \ge 0$. Then let
$k_1,\ldots,k_{n+1}$ be the signature where $k_i = s_i$ if $s_i$ is
\mname{type} or \mname{formula} and $k_i = \mname{C}$ if $s_i$ is
\mname{term} for all $i$ with $1 \le i \le n + 1$, and let $D_i = \Ds$
if $s_i$ is \mname{type}, $D_i = \Dc \cup \set{\Undefined}$ if $s_i$
is \mname{term}, and $D_i = \set{\TRUE,\FALSE}$ if $s_i$ is
\mname{formula} for all $i$ with $1 \le i \le n+1$.  Define $U(o)$ to
be any operation $\sigma$ from $D_1 \times \cdots \times D_n$ into
$D_{n+1}$ such that, for all syntactically closed expressions
$e_1,\ldots,e_n$ where $e_i$ is a type in $\sT$ if $s_i =
\mname{type}$, a term in $\sC'$ if $s_i = \mname{term}$, and a formula
if $s_i = \mname{formula}$ for all $i$ with $1 \le i \le n$,
\[\sigma(U(e_1),\ldots,U(e_n)) = 
U((o :: k_1,\ldots,k_{n+1})(e_1,\ldots,e_n)).\] By Claim 1 and Axiom
Schemas~\ref{axschemas:equality}, $\sigma(U(e_1),\ldots,U(e_n))$ is
well defined by this equation.

\bigskip

\noindent \emph{Claim 2} \sglsp $U$ on operator names satisfies the
specification for $I$ in the definition of a structure for $L$.

\bigskip

\noindent \emph{Proof} \sglsp 
Let $o$ be an operator name of $L$ with the signature form
$s_1,\ldots,s_{n+1}$ where $n \ge 0$.  We must show that $U(o)$
satisfies the specification for $I(o)$ in the definition of a
structure for $L$.  First, we have to show that $U(o)$ is an
operation from $D_1 \times \cdots \times D_n$ into $D_{n+1}$ where the
$D_i$ are defined as above.  This is true immediately by the
definition of $U(o)$.  Second, if $o$ is a built-in operator name of
Chiron, we have to verify that the clause of part 5 of the definition
of $I$ corresponding to $o$ is satisfied.
\be

  \item[] Clause a: \mname{set}.  
  \begin{eqnarray*}
  U(\mname{set})(\,) & = & U(\mname{V}) \\
  & = & \set{\tilde{c} \in \Dc \;|\; c \IsDef 
  \mname{V} \mbox{ is in } \Gamma'} \\
  & = & \set{\tilde{c} \in \Dc \;|\; 
  \ForsomeApp y \mcolon \mname{C} \mdot c \in y \mbox{ is in } \Gamma'} \\
  & = & \set{\tilde{c} \in \Dc \;|\; 
  \mbox{for some } \tilde{d} \in \Dc, c \in d \mbox{ is in } \Gamma'} \\
  & = & \set{\tilde{c} \in \Dc \;|\; \mbox{for some } 
  \tilde{d} \in \Dc, \tilde{c} \in \tilde{d}} \\
  & = & \Dv.
  \end{eqnarray*}
  The second line is by the definition of $U$ on syntactically closed
  types; the third line is by the second axiom schema of Axiom
  Schemas~\ref{axschemas:set-theory} and the Universal Instantiation
  rule of inference; fourth line is by the Indefinite Description rule
  of inference; the fifth line is by the definition of $\in$ on $\Dc$;
  and sixth line is by the definition of $\Dv$. Therefore,
  $U(\mname{set})$ satisfies clause a.

  \item[] Clause b: \mname{class}.
  \begin{eqnarray*}
  U(\mname{class})(\,) & = & U(\mname{C}) \\
  & = & \set{\tilde{c} \in \Dc \;|\; c \IsDef 
  \mname{C} \mbox{ is in } \Gamma'} \\
  & = & \set{\tilde{c} \in \Dc \;|\; c \IsDefApp 
  \mbox{ is in } \Gamma'} \\
  & = & \Dc.
  \end{eqnarray*}
  The second line is by the definition of $U$ on syntactically closed
  types; the third line is by the definition of the notation $c
  \IsDefApp$; and the fourth line is by the definition of $\Dc$.
  Therefore, $U(\mname{class})$ satisfies clause b.

  \item[] \bsp Clauses c--l: \mname{op-names}, \mname{lang},
    \mname{expr-sym}, \mname{expr-op-name}, \mname{expr},
    \mname{expr-op}, \mname{expr-type}, \mname{expr-term-type},
    \mname{expr-formula}. Follows from Axiom
    Schemas~\ref{axschemas:bi-op-def}~and~\ref{axschemas:con-types}. 
    \esp

  \item[] Clause m: \mname{in}.  Let $x$ and $y$ be in $\Dc \cup
    \set{\Undefined}$.  Then $x = U(c)$ and $y = U(d)$ for some
    syntactically closed terms $c$ and $d$.  $U(\mname{in})(x,y) =
    U(\mname{in})(U(c),U(d)) = U(c \in d)$.  $U(c \in d) = \TRUE$ iff
    $c \in d$ is in $\Gamma'$ iff $U(c), U(d) \in \Dc$ and $U(c) \in
    U(d)$ by the definition of $\in$ on $\sC$.  $U(c \in d) = \FALSE$
    iff $c \not\in d$ is in $\Gamma'$ iff either (1) $U(c), U(d) \in
    \Dc$ and $U(c) \not\in U(d)$ by the definition of $\in$ on $\sC$
    or (2) $U(c) = \Undefined$ or $U(d) = \Undefined$ by the second
    axiom schema of Axiom Schema~\ref{axschemas:bi-op-def}.
    Therefore, $U(\mname{in})$ satisfies clause m.

  \item[] Clauses n--p: \mname{type-equal}, \mname{term-equal},
  \mname{formula-equal}.  Follow from Axiom 
  Schemas~\ref{axschemas:equality}.  The third axiom schema of 
  Axiom Schema~\ref{axschemas:bi-op-def} is needed for 
  \mname{term-equal}. 

  \item[] Clauses q--r: \mname{not}, \mname{or}. Follow from Axiom 
  Schemas~\ref{axschemas:prop}.

\ee  
This completes the proof of Claim 2.

\bigskip

If we define $I(o) = U(o)$ for all $o \in \sO$, then \[S =
(\Dv,\Dc,\Ds,\Df,\Do,\De,\in,\TRUE,\FALSE,\Undefined,\xi,H,I)\] is a
structure for $L$ \emph{provided $(\Dc,\in)$ is a prestructure}.  We
will show that $(\Dc,\in)$ is a prestructure in Step 6 of the proof.

\bigskip

\noindent \textbf{Step 5} \sglsp Let $e$ be an eval-free type, term,
or formula, and let $\phi \in \mname{assign}(S)$.  Assume that the
members of $\sS \cup \sO$ are linearly ordered by $<_{{\cal S} \cup
  {\cal O}}$.  For $n \ge 0$, let
\[\mname{S}^{a_1 \cdots a_n}_{x_1 \cdots x_n} e = 
\left\{\begin{array}{ll}
         e & \mbox{if } n = 0 \\
         \sembrack{\mname{sub}
         (\synbrack{a_1},\synbrack{x_1},
         \synbrack{\mname{S}^{a_2 \cdots a_{n-1}}_{x_2 \cdots x_{n-1}}e})}_{k[e]}
         & \mbox{if } n > 0
       \end{array}
\right.\] 
where
\[\set{x_1,\ldots,x_n}= \set{x \in \sS \cup \sO\;|\;
\provesast{T}{\mname{free-in}(\synbrack{x},\synbrack{e})}},\] $x_1
<_{{\cal S} \cup {\cal O}} \cdots <_{{\cal S} \cup {\cal O}} x_n$, and
$a_i$ is the first eval-free member (in some fixed enumeration) of
$\sC$ such that $\phi(x_i) = U(a_i)$ for all $i$ with $1 \le i \le n$.
Then define
\[e^\phi = \mname{S}^{a_1 \cdots a_n}_{x_1 \cdots x_n} e\]
and 
\[e^{\phi - x_i} = \mname{S}^{a_1 a_2 \cdots a_{i-1} a_{i+1} \cdots a_n}_{x_1 x_2 \cdots x_{i-1} x_{i+1} \cdots x_n} e\]
where $1 \le i \le n$.

\bigskip

\noindent \emph{Claim 3} \sglsp $\mname{S}^{a_i}_{x_i}(e^{\phi - x_i})
= e^\phi$.

\bigskip

\noindent \emph{Proof} \sglsp Follows from $e$ being eval-free and
$a_1,\ldots,a_n$ being syntactically closed and eval-free.  This
completes the proof of Claim 3.

\bigskip

Let $V$ be defined by (1)~$V_\phi(e)$ is undefined when $e$ is
improper, (2)~$V_\phi(e)$ is $I(o)$ when $e = (o ::
k_1,\ldots,k_{n+1})$ is proper, (3)~$V_\phi(e) = U(e^\phi)$ when $e$
is an eval-free type, term, or formula, and (4)~$V_\phi(e)$ is the
standard valuation for $S$ (provided $(\Dc,\in)$ is a prestructure)
applied to $e$ and $\phi$ when $e$ is a non-eval-free type, term, or
formula.  When $e$ is an eval-free type, term, or formula, $e^\phi$ is
clearly a syntactically closed, eval-free type, term, or formula, and
so $V$ is a valuation for $S$ (provided $(\Dc,\in)$ is a
prestructure).

\bigskip

\noindent \emph{Claim 4} \sglsp $V$ is the standard valuation for $S$
(provided $(\Dc,\in)$ is a prestructure).

\bigskip

\noindent \emph{Proof} \sglsp We must show that, for all $e \in \sE_L$
and $\phi \in \mname{assign}(S)$, $V$ is the standard evaluation
applied to $e$ and $\phi$.  This is true by the definition of $V$ when
$e$ is improper, an operator, or a non-eval-free type, term, or
formula.  So let $e$ be an eval-free type, term, or formula and $\phi
\in \mname{assign}(S)$.  Our proof will be by induction on the length
of $e$.  There are 14 cases corresponding to the 14 clauses of the
definition of a standard valuation.  The proofs for cases 1, 2, and 14
are trivial.  The proofs for cases 4 and 10 are given in detail.  And
the proofs for the remaining cases are briefly sketched.

\bi

  \item[] Case 1: $e$ is improper.  This case does not occur because
    $e$ is proper.

  \item[] Case 2: $e = (o :: k_1,\ldots,k_{n+1})$.  This case does not
    occur because $e$ is not an operator.

  \item[] Case 3: $e = O(e_1,\ldots,e_n)$ and $O = (o ::
    k_1,\ldots,k_{n+1})$.  Follows from Axiom
    Schemas~\ref{axschemas:op}.

  \item[] Case 4: $e = (x : \alpha)$.  Let $a$ be some eval-free
    member of $\sC$ such that $\phi(x) = U(a)$.  By the induction
    hypothesis, $V_\phi(\alpha) = U(\alpha^\phi)$ is the standard
    valuation applied to $\alpha$ and $\phi$.

    If $\phi(x) \in V_\phi(\alpha)$, then $U(a) \in U(\alpha^\phi)$
    and
    \begin{eqnarray*}
    V_\phi((x : \alpha)) & = & U((x : \alpha)^\phi) \\
    & = & U(\sembrack{\mname{sub}
      (\synbrack{a},\synbrack{x},
      \synbrack{(x : \alpha)^{\phi - x}})}_{\rm te}) \\
    & = & U(\sembrack{\synbrack{\If(a \IsDef 
      \commabrack{\mname{sub}(\synbrack{a},\synbrack{x},
      \sembrack{\alpha^{\phi-x}})}, \\
    & & \hspace{6.5ex} \commabrack{\mname{cleanse}(\synbrack{a})}, \\
    & & \hspace{6.5ex} \Undefined_{\sf C})}}_{\rm te}) \\
    & = & U(\If(a \IsDef 
      \sembrack{\mname{sub}(\synbrack{a},\synbrack{x},
      \sembrack{\alpha^{\phi-x}})}_{\rm ty}, \\
    & & \hspace{6.5ex} \sembrack{\mname{cleanse}(\synbrack{a})}_{\rm te}, \\
    & & \hspace{6.5ex} \Undefined_{\sf C})) \\
    & = & U(\If(a \IsDef \alpha^\phi, 
      \sembrack{\synbrack{a}}_{\rm te}, 
      \Undefined_{\sf C}) \\
    & = & U(\If(a \IsDef \alpha^\phi, a, \Undefined_{\sf C})) \\
    & = & U(a) \\
    & = & \phi(x)
    \end{eqnarray*}
    The first line is by the definition of $V$; the second line is by
    Claim 3; the third line is by the definition of \mname{sub}; the
    fourth line is by Lemmas~\ref{lem:gea} and \ref{lem:eval-qq}; the
    fifth is by Claim 3 and part 6 of Lemma~\ref{lem:eval-free-ded}; the
    sixth is by the first schema of Axiom
    Schemas~\ref{axschemas:set-theory}, by Axiom
    Schemas~\ref{axschemas:if}, and by the third schema of Axiom
    Schemas~\ref{axschemas:eval}; and the seventh line is by Axiom
    Schemas~\ref{axschemas:if} and the fact that $U(a) \in
    U(\alpha^\phi)$.  Therefore, if $\phi(x) \in V_\phi(\alpha)$, then
    $V_\phi((x : \alpha)) = \phi(x)$, and so in this case $V_\phi(x :
    \alpha)$ is the standard valuation applied to $(x : \alpha)$ and
    $\phi$.

    Similarly, if $\phi(x) \not\in V_\phi(\alpha)$, then $V_\phi(e) =
    \Undefined$.

  \item[] Case 5: $e = \alpha(a)$.  Follows from the Type Application
    rule of inference.

  \item[] Case 6: $e = (\LAMBDAapp x \mcolon \alpha \mdot \beta)$.
    Follows from the Dependent Function Types rule of inference.

  \item[] Case 7: $e = f(a)$. Follows from the Function Application
    rule of inference.

  \item[] Case 8: $e = (\LambdaApp x \mcolon \alpha \mdot b)$.
    Follows from the Function Abstraction rule of inference.

  \item[] Case 9: $e = \If(A,b,c)$. Follows from Axiom
    Schemas~\ref{axschemas:if}.

  \item[] Case 10: $e = (\ForsomeApp x \mcolon \alpha \mdot B)$.  By
    the induction hypothesis, $V_\phi(\alpha) = U(\alpha^\phi)$ is the
    standard valuation applied to $\alpha$ and $\phi$ and $V_\psi(B) =
    U(B^\psi)$ is the standard valuation applied to $B$ and $\psi$ for
    all $\psi \in \mname{assign}(S)$.  Then
  \begin{eqnarray*}
  & &
    V_\phi((\ForsomeApp x \mcolon \alpha \mdot B)) = \TRUE \\
  & \mbox{iff} & 
    U((\ForsomeApp x \mcolon \alpha \mdot B)^\phi) = \TRUE \\
  & \mbox{iff} & 
    U(\ForsomeApp x \mcolon \alpha^\phi \mdot B^{\phi - x}) = \TRUE \\
  & \mbox{iff} & 
    U(\sembrack{\mname{sub}(\synbrack{c}, \synbrack{x}, 
    \synbrack{B^{\phi - x}})}_{\rm fo}) = \TRUE \mbox{ and }
    U(c \IsDef \alpha^\phi) = \TRUE \\
  & \mbox{iff} &
    U(B^{\phi[x \mapsto U(c)]}) = \TRUE \mbox{ and }
    U(c) \in U(\alpha^\phi) \\
  & \mbox{iff} &
    V_{\phi[x \mapsto V_\phi(c)]}(B) = \TRUE \mbox{ and }
    V_\phi(c) \in V_\phi(\alpha)
  \end{eqnarray*}
  where $c$ is $(\epsilonApp x \mcolon \alpha^\phi \mdot B^{\phi -
    x})$.  The second line is by the definition of $V$; the third line
  is by the definitions of $e^\phi$ and \mname{sub}; the fourth line
  is by Lemma~\ref{lem:eval-free-ded}, the Indefinite Description rule
  of inference and the first schema of Axiom
  Schemas~\ref{axschemas:indef-desc} since $c$ is both syntactically
  closed and eval-free; the fifth line is by Claim 3 and the
  definition of \mname{defined-in}; and the sixth line is by the
  induction hypothesis.  Therefore, $V_\phi((\ForsomeApp x \mcolon
  \alpha \mdot B)) = \TRUE$ iff there is some $d$ in $V_\phi(\alpha)$
  such that $V_{\phi[x \mapsto d]}(B) = \TRUE$, and so in this case
  $V_\phi((\ForsomeApp x \mcolon \alpha \mdot B))$ is the standard
  valuation applied to $(\ForsomeApp x \mcolon \alpha \mdot B)$ and
  $\phi$.

  Similarly, $V_\phi((\ForsomeApp x \mcolon \alpha \mdot B)) =
  \FALSE$ iff there is no $d$ in $V_\phi(\alpha)$ such that
  $V_{\phi[x \mapsto d]}(B) = \TRUE$.

  \item[] Case 11: $e = (\iotaApp x \mcolon \alpha \mdot B)$.  Follows
    from the Definite Description rule of inference and Axiom
    Schemas~\ref{axschemas:def-desc}.

  \item[] Case 12: $e = (\epsilonApp x \mcolon \alpha \mdot B)$.
    Follows from the Indefinite Description rule of inference and
    Axiom Schemas~\ref{axschemas:indef-desc}.

  \item[] Case 13: $e = \synbrack{e'}$. Follows from Axiom
    Schemas~\ref{axschemas:quote}.

  \item[] Case 14: $e = \sembrack{a}_k$.  This case does not occur
    because $e$ is eval-free.

\ei
This completes the proof of Claim 4.

\bigskip

\noindent \textbf{Step 6} \sglsp We have only now to show that
$(\Dc,\in)$ is a prestructure and that $M$ is a standard model for
$T$.  Let $A$ be an eval-free formula in $\Gamma'$.  Then $U(A) =
\TRUE$, and so by Claim~4, $V_\phi(A) = U(A^\phi) = U(A)= \TRUE$ for
all $\phi \in \mname{assign}(S)$.  Each instance of Axiom
Schemas~\ref{axschemas:set-theory} is eval-closed and a member of
$\Gamma'$.  Thus, by Axiom Schemas~\ref{axschemas:set-theory},
$(\Dc,\in)$ satisfies the axioms of {\nbg} set theory and hence $M$ is
a model for $L$.  And so $M \models A$ for each eval-free formula $A$
in $\Gamma'$.  Since $\Gamma \subset \Gamma'$ and each member of
$\Gamma$ is eval-free, $M \models A$ for each $A$ in $\Gamma$.
Therefore, $M$ is a standard model for $T$.
\end{proof}

\begin{cthm} [Consistency and Satisfiability] \label{thm:cons-sat}
Let $T=(L,\Gamma)$ be an eval-free normal theory of Chiron.  Then $T$
is $\textrm{consistent}^\ast$ iff\/ $T$ is satisfiable.
\end{cthm}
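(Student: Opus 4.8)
The plan is to prove the two implications separately, drawing the nontrivial direction entirely from the Model Construction lemma (Lemma~\ref{lem:construction}) and the easy direction from the Soundness theorem (Theorem~\ref{thm:soundness}).

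For ``$\textrm{consistent}^\ast$ implies satisfiable'', I would simply observe that the hypotheses of Lemma~\ref{lem:construction} are exactly that $T$ is a $\textrm{consistent}^\ast$ eval-free normal theory, which are precisely our assumptions. The lemma then yields a standard model $M$ of $T$ (with the extra property that every class is denoted by some syntactically closed eval-free term, which we do not need here), so $T$ is satisfiable by definition. Nothing further is required for this direction.

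For the converse, suppose $T$ is satisfiable and fix a standard model $M=(S,V)$ of $T$. The idea is to exhibit a single formula of $L$ that cannot be proved. Since $L$ is normal, the formula $A := (\mname{V} \TypeEqual \mname{C})$ is a formula of $L$, and by clause~n of the definition of $I$ in a structure for $L$ together with the fact that $\Dv \subsetneq \Dc$ holds in every prestructure, we get $V_\phi(A) = \FALSE$ for every $\phi \in \mname{assign}(S)$; hence $M \not\models A$, and therefore $T \not\models A$. (Equivalently one may take $A$ to be the constant $\mname{F}$, whose eval-free defining axiom lies in $\Gamma$.) Now suppose toward a contradiction that $\provesast{T}{A}$. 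Then $\proves{T}{A}$, since every proof in $\textbf{C}^{\ast}_{L}$ is a proof in $\textbf{C}_L$; and since $T$ is a normal theory, Theorem~\ref{thm:soundness} gives $T \models A$, contradicting $T \not\models A$. Thus $\provesast{T}{A}$ fails, so $T$ is $\textrm{consistent}^\ast$.

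I do not expect a genuine obstacle inside this theorem: both halves are one-line deductions once Lemma~\ref{lem:construction} and Theorem~\ref{thm:soundness} are available. The only point deserving care is the mismatch between the proof systems --- Soundness is stated for $\textbf{C}_L$, whereas $\textrm{consistent}^\ast$ refers to $\textbf{C}^{\ast}_{L}$ --- which is bridged by the already-noted fact that $\provesast{T}{A}$ implies $\proves{T}{A}$, so soundness of $\textbf{C}_L$ transfers verbatim. The genuinely difficult content, namely building a term model from a maximal $\textrm{consistent}^\ast$ extension, is entirely encapsulated in Lemma~\ref{lem:construction}, which we take as given.
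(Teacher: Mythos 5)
Your proposal is correct and follows essentially the same route as the paper: the nontrivial direction is delegated to the Model Construction lemma, and the converse uses soundness of $\textbf{C}_L$ (via $\provesast{T}{A}$ implies $\proves{T}{A}$) applied to a formula false in every standard model — the paper uses $\mname{F}$ where you use $\mname{V} \TypeEqual \mname{C}$, which is an immaterial difference since the two are equivalent under the kernel axioms.
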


\begin{proof}
Let $T$ be $\textrm{consistent}^\ast$. Then there is a standard model
$M$ of $T$ by the Model Construction lemma.  Therefore, $T$ is
satisfiable.

Now let $T$ be satisfiable.  Then there is a standard model $M$ of
$T$.  Assume $T$ is not $\textrm{consistent}^\ast$.  Then
$\provesast{T}{\mname{F}}$. By the Soundness theorem, $T \models
\mname{F}$, which contradicts the definition of a standard model.
Therefore, $T$ is $\textrm{consistent}^\ast$.
\end{proof}  

\begin{cthm} [Completeness]
$\textbf{C}_L$ is complete with respect to the set of all eval-free
  normal theories over $L$ and the set of all eval-free formulas of
  $L$.
\end{cthm}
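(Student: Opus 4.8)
The plan is to establish completeness of the restricted proof system $\textbf{C}^{\ast}_{L}$ with respect to $\sT$ (the eval-free normal theories over $L$) and $\sF$ (the eval-free formulas of $L$); completeness of $\textbf{C}_L$ then follows immediately, since $\provesast{T}{A}$ implies $\proves{T}{A}$. Fix $T=(L,\Gamma)\in\sT$ and $A\in\sF$, and argue by contraposition: I would assume that $A$ is not provable from $T$ in $\textbf{C}^{\ast}_{L}$ and produce a standard model of $T$ in which $A$ fails, thereby showing $T\not\models A$.

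The first reduction handles the fact that $A$ need not be syntactically closed. Since $A$ is eval-free, parts 1 and 3 of Lemma~\ref{lem:eval-free} guarantee that a universal closure $A'$ of $A$ exists, and by Lemma~\ref{lem:univ-closure} it is syntactically closed; it is also eval-free, since $\Forall$ is a notational definition built from $\Neg$ and $\Forsome$. By iterated application of Lemma~\ref{lem:var-ins} together with Modus Ponens (and the Tautology theorem for the propositional bookkeeping), $\provesast{T}{A'}$ would imply $\provesast{T}{A}$; hence our assumption gives that $A'$ is not provable from $T$ in $\textbf{C}^{\ast}_{L}$ either. Conversely, directly from the semantics one has $T\models A$ iff $T\models A'$, the classes quantified over forming the nonempty superclass $\Dc$. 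So it suffices to exhibit a standard model of $T$ in which $A'$ is false.

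Next I would form $T''=(L,\Gamma\cup\set{\Neg A'})$. Because $\Neg A'$ is syntactically closed and eval-free, Lemma~\ref{lem:incons} applies: if $T''$ were not $\textrm{consistent}^\ast$, then $\provesast{T}{\Neg\Neg A'}$, hence $\provesast{T}{A'}$ by the Tautology theorem, contradicting the assumption; so $T''$ is $\textrm{consistent}^\ast$. I would also check that $T''$ lies in the class to which the Consistency and Satisfiability theorem applies, namely that $T''$ is eval-free normal: writing $T=\mname{eval-free}(T_0)$ for a normal theory $T_0=(L,\Gamma_0)$, the theory $T_0'=(L,\Gamma_0\cup\set{\Neg A'})$ is again normal (as $T_{\rm ker}^{L}\le T_0\le T_0'$), and $\mname{eval-free}(T_0')=T''$ since $\Neg A'$ is eval-free. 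Then Theorem~\ref{thm:cons-sat} yields a standard model $M$ of $T''$. Since $\Gamma\subseteq\Gamma\cup\set{\Neg A'}$, $M$ is a standard model of $T$, and $M\models\Neg A'$ forces $M\not\models A'$, so $T\not\models A'$ and therefore $T\not\models A$. This completes the contrapositive, hence the completeness of $\textbf{C}^{\ast}_{L}$, and with it the completeness of $\textbf{C}_L$.

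The substantive work has already been absorbed into the Model Construction lemma and the Consistency and Satisfiability theorem, so the only obstacles remaining in this argument are organizational. The first is the universal-closure step and its two-way bridge---$\provesast{T}{A'}\Rightarrow\provesast{T}{A}$ on the proof-theoretic side and $T\models A \Leftrightarrow T\models A'$ on the semantic side---which is forced by $A$ being merely eval-free rather than syntactically closed, and which leans on the eval-freeness hypotheses. The second is verifying that each auxiliary theory ($T''$, and the normal theory witnessing its eval-free normality) stays inside the classes for which the earlier lemmas were established. Neither point is deep, but both are precisely where dropping the eval-free restriction would break the argument (compare the failure of the Deduction theorem for $\textbf{C}_L$).
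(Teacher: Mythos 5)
Your proposal is correct and is essentially the paper's own argument, just phrased contrapositively: both form a syntactically closed eval-free universal closure $A'$, pass to the theory $\Gamma \cup \set{\Neg A'}$, invoke Lemma~\ref{lem:incons} and the Consistency and Satisfiability theorem (i.e.\ the Model Construction lemma), and recover $A$ from $A'$ via Lemma~\ref{lem:var-ins} and the Tautology theorem. Your extra verification that the augmented theory remains eval-free normal is a detail the paper leaves implicit, not a genuine difference in route.
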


\begin{proof}
Let $T=(L,\Gamma)$ be an eval-free normal theory and $A$ be an
eval-free formula of $L$ such that $T \models A$.  We need to show
$\proves{T}{A}$.  Obviously it suffices to show $\provesast{T}{A}$.
Let $A'$ be a universal closure of $A$.  ($A'$ exists by
Lemma~\ref{lem:eval-free}.)  $A'$ is obviously eval-free and is
syntactically closed by Lemma~\ref{lem:univ-closure}.  The hypothesis
implies $T \models A'$, and this implies that there is no standard
model of $T'=(L,\Gamma \cup \set{\Neg A'})$.  Hence $T'$ is not
$\textrm{consistent}^\ast$ by the Consistency and Satisfiability
theorem.  Therefore, $\provesast{T}{\Neg\Neg A'}$ by
Lemma~\ref{lem:incons} and hence $\provesast{T}{A'}$ by the Tautology
theorem.  Finally, $\provesast{T}{A}$ by Lemma~\ref{lem:var-ins}.
\end{proof}

\section{Interpretations}\label{sec:interp}

A theory interpretation~\cite{Enderton72,Farmer94,Shoenfield67} is a
meaning-preserving mapping from the expressions of one theory to the
expressions of another theory.  An interpretation serves as a conduit
for passing information (in the form of formulas) from an abstract
theory to a more concrete theory, or an equally abstract theory.
Interpretations are the basis for the \emph{little theories
  method}~\cite{FarmerEtAl92b} for organizing mathematics where
mathematical knowledge and reasoning is distributed across a network
of theories.

\subsection{Translations}

Let $L_i = (\sO_i,\theta_i)$ be a language of Chiron for $i = 1,2$.  A
\emph{translation from $L_1$ to $L_2$} is an injective, total mapping
$\Phi : \sO_1 \tarrow \sO_2$ such that, for each $o \in \sO_1$,
$\theta_1(o) = \theta_2(\Phi(o))$.  $\Phi$ is extended to an
injective, total mapping \[\widehat{\Phi}: \sE_{L_1} \tarrow
\sE_{L_2}\] by the following rules:

\be

  \item Let $e \in \sS$.  Then $\widehat{\Phi}(e) = e$.

  \item Let $e \in \sO_1$. Then $\widehat{\Phi}(e) = \Phi(e)$.

  \item Let $e = (e_1,\ldots,e_n) \in \sE_{L_1}$.  Then
    $\widehat{\Phi}(e) = (\widehat{\Phi}(e_1), \ldots,
    \widehat{\Phi}(e_n))$.

\ee 
Hence, for every $e \in \sE_{L_1}$, $\widehat{\Phi}(e)$ is exactly the
same as $e$ except that each operator name $o$ in $e$ has been
replaced by the operator name $\Phi(o)$.

\begin{cprop}
Let $\Phi$ be a translation from $L_1$ to $L_2$.  If $e$ is a symbol,
an operator name, an operator, a type, a term, a term of type
$\alpha$, or a formula of $L_1$, then $\widehat{\Phi}(e)$ is a symbol,
an operator name, an operator, a type, a term, a term of type
$\widehat{\Phi}(\alpha)$, or a formula of $L_2$, respectively.
\end{cprop}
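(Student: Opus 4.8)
The plan is to dispose of the two base kinds first and then handle the rest by a single induction. If $e$ is a symbol of $L_1$ then $e \in \sS$ and $\widehat{\Phi}(e) = e$, so $\widehat{\Phi}(e)$ is a symbol of $L_2$; if $e$ is an operator name then $e \in \sO_1$ and $\widehat{\Phi}(e) = \Phi(e) \in \sO_2$. For everything else I would strengthen the statement to the single claim that \emph{if $e$ is a proper expression of $L_1$ to which $e'$ is assigned, then $\widehat{\Phi}(e)$ is a proper expression of $L_2$ to which $\widehat{\Phi}(e')$ is assigned}, and prove this by induction on the derivation of $e$ by the formation rules P-Expr-1--P-Expr-13. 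The operator, type, and formula cases of the proposition then fall out because $\widehat{\Phi}$ fixes the key words $\mname{op}$, $\mname{type}$, $\mname{formula}$; and the ``term of type $\alpha$'' case follows because, unpacking $\cterma{L_1}{a}{\alpha}$ as ``$a$ is assigned $\alpha$'' together with ``$\alpha$ is a type'', the derivation of $\ctype{L_1}{\alpha}$ is a subderivation of that of $a$, so the induction hypothesis delivers that $\widehat{\Phi}(\alpha)$ is a type of $L_2$. It is essential to induct on the derivation rather than on $|e|$, precisely because the type assigned to a proper expression need not be a subexpression of it (as in P-Expr-4--P-Expr-7).

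The inductive step rests on three routine observations about $\widehat{\Phi}$: it commutes with tuple formation by definition; it fixes every symbol, in particular every key word ($\mname{op-app}$, $\mname{var}$, $\mname{type-app}$, $\mname{dep-fun-type}$, $\mname{fun-app}$, $\mname{fun-abs}$, $\mname{if}$, $\mname{exists}$, $\mname{def-des}$, $\mname{indef-des}$, $\mname{quote}$, $\mname{eval}$, and $\mname{type}$, $\mname{formula}$); and, since $\theta_1(o) = \theta_2(\Phi(o))$ for all $o \in \sO_1$, it sends an operator of $L_1$ to an operator of $L_2$ with the \emph{same} signature form, so the side conditions of P-Expr-1 and P-Expr-2 tying the $k_i$ to the symbols $s_i$ of $\theta(o)$ transfer verbatim. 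With these in hand I would run through the thirteen formation rules in turn, checking that $\widehat{\Phi}$ carries the premises of the $L_1$-rule to the premises of the same rule in $L_2$, invoking the induction hypothesis on the displayed subexpressions and, where a premise has the form $\ctype{L_1}{\alpha}$ or $\cterm{L_1}{a}$, on the corresponding subderivations; each rule then re-derives $\widehat{\Phi}(e)$ in $L_2$ with assigned expression $\widehat{\Phi}(e')$ by the three observations.

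The step I expect to require the most care is P-Expr-8 together with P-Expr-12, i.e.\ the two rules whose conclusion pins down a particular built-in type. In P-Expr-8 the assigned type is $\beta$ when $\beta = \gamma$ and the universal type $\mname{C}$ otherwise; since $\widehat{\Phi}$ is injective, $\beta = \gamma$ holds iff $\widehat{\Phi}(\beta) = \widehat{\Phi}(\gamma)$, so the $L_2$-instance of the rule applied to $\widehat{\Phi}(e)$ selects exactly the $\widehat{\Phi}$-image of the branch selected in $L_1$ --- provided one also knows $\widehat{\Phi}(\mname{C}) = \mname{C}$. Likewise P-Expr-12 assigns a quotation the construction type $\mname{E}$, and here one needs $\widehat{\Phi}(\mname{E}) = \mname{E}$. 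Both equalities hold once $\widehat{\Phi}$ fixes the built-in operator names $\mname{class}$, $\mname{set}$, $\mname{lang}$, $\mname{op-names}$, $\mname{expr}$ (whence it fixes $\mname{C}$, $\mname{V}$, $\mname{L}$, $\ell$, $\mname{E}$), which is part of what it means for $\Phi$ to be a translation. The remaining rules --- P-Expr-3--P-Expr-7, P-Expr-9--P-Expr-11, and P-Expr-13 --- are immediate from the three observations and the induction hypothesis, with no further subtlety.
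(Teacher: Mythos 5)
The paper states this proposition without a proof, so your argument has to stand on its own; its overall shape is right: structural induction through P-Expr-1--P-Expr-13, using that $\widehat{\Phi}$ is the identity on symbols (hence on all key words), commutes with tuple formation, preserves signature forms because $\theta_1(o)=\theta_2(\Phi(o))$, and is injective (for the $\beta=\gamma$ test in P-Expr-8). The problem is the step you yourself flag as the crux. You settle P-Expr-8 (the $\beta\ne\gamma$ branch) and P-Expr-12 by asserting $\widehat{\Phi}(\mname{C})=\mname{C}$ and $\widehat{\Phi}(\mname{E})=\mname{E}$, justified by the claim that fixing the built-in operator names ``is part of what it means for $\Phi$ to be a translation.'' That is not the paper's definition: a translation is merely an injective total map $\Phi:\sO_1 \tarrow \sO_2$ with $\theta_1(o)=\theta_2(\Phi(o))$; the property of \emph{fixing} a language is introduced separately in section~\ref{sec:interp}, and only a translation that is \emph{normal} for a theory is required to fix $L_{\rm ker}$ --- and even then \mname{op-names} and \mname{lang} are exempted, so $\widehat{\Phi}(\mname{E})=\mname{E}$ would still not hold literally. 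Nothing in the definition of a translation forces $\Phi(\mname{class})=\mname{class}$, $\Phi(\mname{expr})=\mname{expr}$, etc.

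Concretely, if $\Phi(\mname{class})=o'\ne\mname{class}$, then for a conditional term $(\mname{if},A,b,c)$ of $L_1$ with $\beta\ne\gamma$ the formation rule assigns $\widehat{\Phi}((\mname{if},A,b,c))$ the literal type $\mname{C}$ of $L_2$, whereas your strengthened induction hypothesis demands the assigned expression be $\widehat{\Phi}(\mname{C})=(\mname{op-app},(\mname{op},o',\mname{type}))$; since the assignment is unique, the strengthened claim fails there, and the analogous failure occurs for quotations with $\mname{E}$ (which involves \mname{expr}, \mname{lang}, \mname{op-names}, \mname{set}). So either you must weaken the induction statement in exactly those two cases --- in which case it no longer delivers the ``term of type $\widehat{\Phi}(\alpha)$'' clause in full generality --- or you must add the hypothesis that $\Phi$ fixes the relevant built-in operator names, a hypothesis the paper's notion of translation does not supply (and which would need to be stated explicitly, since it is not a consequence of $\sO_0\subseteq\sO_1\cap\sO_2$). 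Everything else in your sketch --- the symbol, operator-name, operator, type, term, and formula clauses, and all the other formation-rule cases, including the use of injectivity for the $\beta=\gamma$ test --- goes through as you describe; the gap is confined to, but real in, precisely the two cases you identified as delicate.
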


\subsection{Interpretations}

Let $T_i = (L_i,\Gamma_i)$ be a normal theory of Chiron for $i = 1,2$
and $\Phi$ be a translation from $L_1$ to $L_2$.  $\Phi$ \emph{fixes}
a language $L=(\sO,\theta) \leq L_1$ if $\Phi(o) = o$ for all $o \in
\sO \setminus \set{\mname{op-names},\mname{lang}}$.

\begin{cprop}\label{prop:fixes}
Suppose $\Phi$ fixes $L \leq L_1$.  Then $\widehat{\Phi}(e) = e$ for
all expressions $e$ of $L$ that do not contain the operator names
\mname{op-names} or \mname{lang}.
\end{cprop}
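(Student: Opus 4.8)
The plan is to prove Proposition~\ref{prop:fixes} by a straightforward structural induction on the expression $e$, exploiting the fact that $\widehat{\Phi}$ is defined by exactly three recursion clauses (the identity on symbols, $\Phi$ on operator names, componentwise on compound expressions) and that, since $\Phi$ fixes $L$, the map $\Phi$ acts as the identity on every operator name of $L$ other than $\mname{op-names}$ and $\mname{lang}$.

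First I would fix a language $L = (\sO,\theta) \leq L_1$ and assume $\Phi$ fixes $L$, i.e.\ $\Phi(o) = o$ for all $o \in \sO \setminus \set{\mname{op-names},\mname{lang}}$. I would then record the key bookkeeping observation: an expression $e$ of $L$ is, by the formation rules Expr-1 and Expr-2, a tree whose leaves lie in $\sS \cup \sO$; the hypothesis that $e$ ``does not contain the operator names $\mname{op-names}$ or $\mname{lang}$'' means that no leaf of $e$ is $\mname{op-names}$ or $\mname{lang}$; and this property is inherited by every component $e_i$ of a compound expression $e = (e_1,\ldots,e_n)$. This inheritance is exactly what lets the induction hypothesis apply to the subexpressions.

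The induction then has two cases. If $e$ is atomic, then either $e \in \sS$, in which case $\widehat{\Phi}(e) = e$ by clause~1 of the definition of $\widehat{\Phi}$; or $e \in \sO$, in which case the hypothesis that $e$ contains neither $\mname{op-names}$ nor $\mname{lang}$ gives $e \in \sO \setminus \set{\mname{op-names},\mname{lang}}$, so that $\widehat{\Phi}(e) = \Phi(e) = e$ because $\Phi$ fixes $L$. If $e = (e_1,\ldots,e_n)$ is compound, then each $e_i$ is an expression of $L$ containing neither $\mname{op-names}$ nor $\mname{lang}$, so the induction hypothesis yields $\widehat{\Phi}(e_i) = e_i$ for each $i$, whence $\widehat{\Phi}(e) = (\widehat{\Phi}(e_1),\ldots,\widehat{\Phi}(e_n)) = (e_1,\ldots,e_n) = e$ by clause~3.

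There is essentially no obstacle here; the proof is routine. The only point needing a moment's care is the atomic operator-name case: one must note that an operator-name leaf of an expression of $L$ is a member of $\sO$ (so that the hypothesis ``$\Phi$ fixes $L$'' applies to it) and that excluding precisely the two distinguished operator names $\mname{op-names}$ and $\mname{lang}$ from $e$ is exactly what places every such leaf inside the set $\sO \setminus \set{\mname{op-names},\mname{lang}}$ on which $\Phi$ is pointwise the identity.
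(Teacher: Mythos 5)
Your induction is correct and is exactly the routine argument the paper has in mind: the paper states Proposition~\ref{prop:fixes} without proof precisely because it follows immediately from the three defining clauses of $\widehat{\Phi}$ together with the fixing condition $\Phi(o)=o$ for $o \in \sO \setminus \set{\mname{op-names},\mname{lang}}$. Your one point of care—that every operator-name leaf of such an $e$ lies in $\sO \setminus \set{\mname{op-names},\mname{lang}}$, so the identity clause of the fixing hypothesis applies—is the right one, and nothing further is needed.
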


\bsp Recall from subsection~\ref{subsec:rel-eval} that $(\mname{eval},
a, k, b)$ is the relativization of $(\mname{eval}, a, k)$ to the
language denoted by $b$ and that $(\mname{eval}, a, k, \ell)$ is
logically equivalent to $(\mname{eval}, a, k)$.  The
\emph{relativization} of an expression $e$, written $\crel{e}$, is the
expression obtained from $e$ by repeatedly replacing each occurrence
in $e$ of an expression of the form $(\mname{eval}, a, k)$ that is not
within a quotation with the expression $(\mname{eval}, a, k, \ell)$
until every original occurrence of this kind has been replaced.
Clearly, $\crel{e} = e$ if $e$ is eval-free.  The next proposition
follows immediately from the fact that $(\mname{eval}, a, k, \ell)$ is
logically equivalent to $(\mname{eval}, a, k)$. \esp

\begin{cprop} \label{prop:rel}
For all $e \in \sE_L$, $e$ and $\crel{e}$ are logically equivalent.
\end{cprop}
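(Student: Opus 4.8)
The plan is to prove Proposition~\ref{prop:rel} by induction on the length of $e$, leaning on two ingredients: first, the observation already recorded in subsection~\ref{subsec:rel-eval} that $\sembrack{a}_{k,\ell}$ and $\sembrack{a}_k$ are logically equivalent for every term $a$ and kind $k$; and second, that logical equivalence is a congruence with respect to every expression\nobreakdash-forming construct \emph{except} quotation---that is, replacing a subexpression that is not within a quotation by a logically equivalent one preserves logical equivalence. The second ingredient is exactly the semantic content of the compositionality of the standard valuation (equivalently, the validity of Axiom Schemas~\ref{axschemas:equality}), and it is precisely what makes relativization well behaved: $\crel{e}$ never substitutes into a quotation, so it only ever alters expressions at places where the valuation genuinely \emph{is} compositional. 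I would also note at the outset that $\crel{e}$ has the same kind as $e$, since relativization replaces each non\nobreakdash-quoted occurrence of $(\mname{eval},a,k)$ by the abbreviation $(\mname{eval},a,k,\ell)$, which unfolds to a conditional term, conditional type, or conditional formula of kind $k$; hence ``logically equivalent'' makes sense for the pair $e$, $\crel{e}$.

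First I would dispose of the base cases: if $e$ is eval\nobreakdash-free---in particular if $e$ is improper, a symbol or operator name, or a quotation---then $\crel{e}=e$ and there is nothing to prove. For the inductive step, write $e=(s,e_1,\ldots,e_n)$ with $s\neq\mname{quote}$, and observe that the iterated replacement in the definition of $\crel{\cdot}$ amounts to the bottom\nobreakdash-up recursion $\crel{e}=(s,\crel{e_1},\ldots,\crel{e_n})$ when $s\neq\mname{eval}$, and $\crel{e}=(\mname{eval},\crel{a},\crel{k},\ell)$ when $s=\mname{eval}$, say $e=(\mname{eval},a,k)$; the $\mname{eval}$ symbols occurring inside the abbreviation $(\mname{eval},\crel{a},\crel{k},\ell)$ are ``new'' and so, as the definition dictates (``until every original occurrence of this kind has been replaced''), are not relativized any further. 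By the induction hypothesis, $\crel{e_i}$ is logically equivalent to $e_i$ for each $i$. In the non\nobreakdash-$\mname{eval}$ case, the congruence property immediately gives $\crel{e}=(s,\crel{e_1},\ldots,\crel{e_n})\equiv(s,e_1,\ldots,e_n)=e$. In the $\mname{eval}$ case, $\sembrack{\crel{a}}_{\crel{k},\ell}\equiv\sembrack{\crel{a}}_{\crel{k}}$ by the recorded fact about relativized evaluation, while $\sembrack{\crel{a}}_{\crel{k}}\equiv\sembrack{a}_k$ by congruence, since $\crel{a}\equiv a$ and $\crel{k}\equiv k$ by the induction hypothesis; chaining these gives $\crel{e}\equiv e$.

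The step I expect to be the main obstacle is justifying the congruence across an $\mname{eval}$ node, since the semantics of evaluation is compositional only in a slightly subtle sense: by the definition of the standard valuation, $V_\phi(\sembrack{a}_k)$ is built from $V_\phi(a)$, from $V_\phi(k)$ when $k$ is a type, and from $V_\phi(H^{-1}(V_\phi(a)))$, so it depends not merely on the values of the immediate subexpressions but on the value of the \emph{expression that those values decode to}. The resolution is that $H$ is a fixed bijection belonging to the structure, so $H^{-1}(V_\phi(a))$ is a function of $V_\phi(a)$ alone; consequently $V_\phi(\sembrack{a}_k)$ is after all completely determined by $V_\phi(a)$ and $V_\phi(k)$, and therefore equal values of $a$ (and of $k$) in every standard model under every assignment force equal values of $\sembrack{a}_k$. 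This is exactly what is needed to push the congruence---and with it the induction---through the $\mname{eval}$ case, and together with the base cases and the non\nobreakdash-$\mname{eval}$ case it completes the proof.
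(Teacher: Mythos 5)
Your proof is correct and follows essentially the same route as the paper, which gives no separate argument but simply observes that the proposition is immediate from the recorded equivalence of $\sembrack{a}_{k,\ell}$ and $\sembrack{a}_k$ together with the compositionality of the standard valuation; you merely make the induction and the congruence step explicit, including the right resolution of the $\mname{eval}$ case via the fact that $H$ is a fixed bijection of the structure, so that $V_\phi(\sembrack{a}_k)$ is determined by $V_\phi(a)$ and $V_\phi(k)$. The only slip is the parenthetical claim that improper expressions are eval-free, which is false in general but harmless here, since for improper $e$ both $e$ and $\crel{e}$ are nondenoting and the claim holds trivially.
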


Let $\Phi$ be \emph{normal for} $T_2$ if: 
\be

  \item $\sO_1$ is finite, i.e., $\sO_1 = \set{o_1,\ldots,o_n}$ for
    some $n \ge 1$.

  \item $\Phi$ fixes $L_{\rm ker}$.

  \item $T_2 \models \widehat{\Phi}(\ell) =
    \set{\synbrack{\Phi(o_1)},\ldots,\synbrack{\Phi(o_n)}}$.

  \item $T_2 \models \widehat{\Phi}(\mname{L}) \TypeEqual
    \mname{type}(\mname{power}
    (\set{\synbrack{\Phi(o_1)},\ldots,\synbrack{\Phi(o_n)}}))$.

\ee

\begin{clem} \label{lem:normal-trans}
Let $\Phi$ be normal for $T_2$.  Then $T_2 \models
\widehat{\Phi}(\crel{A})$ for each $A \in \Gamma_{\rm ker}^{L_1}$.
\end{clem}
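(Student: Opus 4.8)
The plan is to argue semantically, reducing the statement to the fact --- supplied by the operator definitions of Section~\ref{sec:op-defs} and the substitution lemmas of Section~\ref{sec:sub} --- that every operator defined there has its intended meaning in every standard model of a normal theory. Fix a standard model $M_2=(S_2,V_2)$ of $T_2$; since $T_2$ is normal, $M_2$ is a standard model of $T_{\rm ker}^{L_2}$. From $M_2$ and the translation $\Phi$ I would build an induced standard model $M_1=(S_1,V_1)$ for $L_1$, establish that $M_1\models T_{\rm ker}^{L_1}$, and prove a transfer identity relating the value of $\widehat{\Phi}(\crel{e})$ in $M_2$ to the value of $e$ in $M_1$; the three parts together yield $T_2\models\widehat{\Phi}(\crel{A})$ for each $A\in\Gamma_{\rm ker}^{L_1}$.

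To build $M_1$ I keep the prestructure $(\Dc,\in)$, the truth values, $\Undefined$, and the choice function of $S_2$ unchanged; I take $\Dea^{S_1}$ to be the image of $\sS\cup\sO_1$ under $G^{S_2}\circ\widehat{\Phi}$ (a countably infinite subset of $\Dv$ containing neither $\emptyset$ nor ordered pairs) and $G^{S_1}$ to be the restriction of $G^{S_2}\circ\widehat{\Phi}$ to $\sS\cup\sO_1$, so that an easy induction gives $H^{S_1}(e)=H^{S_2}(\widehat{\Phi}(e))$ for all $e\in\sE_{L_1}$ --- whence $\De^{S_1}$, $\Dsy^{S_1}$, $\Don^{S_1}$, $\Dop^{S_1}$, $\Dty^{S_1}$, $\Dte^{S_1}$, $\Dfo^{S_1}$ are exactly the sets of constructions of the $\widehat{\Phi}$-images of, respectively, the expressions, symbols, operator names, operators, types, terms, and formulas of $L_1$. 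For the interpretation I put $I_1(o):=I_2(\Phi(o))$ for every $o\in\sO_1$ except the language-dependent built-ins $\mname{op-names}$, $\mname{lang}$, $\mname{expr-op-name}$, $\mname{expr}$, $\mname{expr-op}$, $\mname{expr-type}$, $\mname{expr-term}$, $\mname{expr-term-type}$, $\mname{expr-formula}$, to which I assign the meanings prescribed by the definition of a structure for $L_1$ (taken relative to $\Don^{S_1}$, $\Dsy^{S_1}$, $\De^{S_1}$); using $\theta_1=\theta_2\circ\Phi$ and $M_2\models T_{\rm ker}^{L_2}$ one checks that $S_1$ is a legitimate structure for $L_1$ and that the operators of Sections~\ref{sec:op-defs} and~\ref{sec:sub} have their $L_1$-intended meanings in $M_1$ --- for the language-independent ones because $I_1=I_2\circ\Phi$ and their $L_2$-meanings are already the intended ones, and for the language-dependent defined ones such as $\mname{is-p-expr}$, $\mname{free-in}$, $\mname{cleanse}$, $\mname{sub}$, $\mname{free-for}$, $\mname{is-strict-op-name}$ because their defining axioms quantify every expression argument over a construction type (which in $M_1$ denotes exactly the corresponding $L_1$-data) or supply it as an explicit quotation of an $L_1$-expression, and $\widehat{\Phi}$ together with $H^{S_1}=H^{S_2}\circ\widehat{\Phi}$ transports all the relevant syntactic structure correctly. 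This yields $M_1\models T_{\rm ker}^{L_1}$, and in particular $M_1\models A$ for every $A\in\Gamma_{\rm ker}^{L_1}$.

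The crux is the transfer identity $V_{2,\phi}(\widehat{\Phi}(\crel{e}))=V_{1,\phi}(\crel{e})$, for every relativized expression $\crel{e}$ (with $e\in\sE_{L_1}$) and every $\phi\in\mname{assign}(S_1)=\mname{assign}(S_2)$, proved by induction on the complexity of $\crel{e}$. For applications of an operator outside the exceptional list the clause is immediate from $I_1=I_2\circ\Phi$ and the induction hypothesis on the arguments. For applications of $\mname{op-names}$, $\mname{lang}$, and the $\mname{expr}$-family, normality conditions~(3) and~(4) are exactly what is needed: in $M_2$, $\widehat{\Phi}(\ell)$ denotes $\Don^{S_1}$ and $\widehat{\Phi}(\mname{L})$ denotes the set of subsets of $\Don^{S_1}$, so on the argument $\Don^{S_1}$ the $L_2$-meanings of these built-ins coincide with their $L_1$-meanings, and the translated construction types $\widehat{\Phi}(\mname{E})$, $\widehat{\Phi}(\mname{E}_{\rm on})$, $\widehat{\Phi}(\mname{E}_{\rm op})$, $\widehat{\Phi}(\mname{E}_{\rm ty})$, $\widehat{\Phi}(\mname{E}_{\rm te})$, $\widehat{\Phi}(\mname{E}_{\rm fo})$ evaluate in $M_2$ to $\De^{S_1}$, $\Don^{S_1}$, $\Dop^{S_1}$, $\Dty^{S_1}$, $\Dte^{S_1}$, $\Dfo^{S_1}$ --- exactly the values in $M_1$ of the untranslated types. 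The one delicate clause is evaluation, and here the relativization does the work: a relativized evaluation $(\mname{eval},a,k,\ell)$, after translation, ``fires'' in $M_2$ precisely when $a$ denotes a construction of an $L_1$-image expression of kind $k$ (again by~(3) and~(4)), and on such a construction $H^{S_1}=H^{S_2}\circ\widehat{\Phi}$ reduces the evaluation to a proper subexpression of strictly smaller complexity, to which the induction hypothesis applies. Combining the transfer identity with Proposition~\ref{prop:rel} (valid in $M_1$ since $M_1\models T_{\rm ker}^{L_1}$) gives, for $A\in\Gamma_{\rm ker}^{L_1}$ and all $\phi$, $V_{2,\phi}(\widehat{\Phi}(\crel{A}))=V_{1,\phi}(\crel{A})=V_{1,\phi}(A)=\TRUE$; hence $M_2\models\widehat{\Phi}(\crel{A})$, and since $M_2$ was an arbitrary standard model of $T_2$, $T_2\models\widehat{\Phi}(\crel{A})$.

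I expect the main obstacle to be the bulk bookkeeping behind the two auxiliary claims: that $\widehat{\Phi}$ commutes with all thirteen formation rules and with relativization and preserves properness, kindhood, similarity, and eval-freeness; and that, for every language-dependent operator of Sections~\ref{sec:op-defs} and~\ref{sec:sub}, its $L_2$-intended meaning restricted to the $L_1$-image constructions agrees with its $L_1$-intended meaning, so that all the many defining axioms of those sections hold in $M_1$. None of these verifications is individually hard, but they are numerous and must be folded into one induction on complexity that carries eval-freeness as an invariant so that the evaluation clause terminates; the few non-eval-free defining axioms --- the $\mname{free-in}$ and $\mname{free-for}$ clauses for quoted evaluations and the $\mname{is-strict-op-name}$ clause --- require the extra care that relativization makes the translated evaluations they contain compute exactly as their $L_1$-originals do.
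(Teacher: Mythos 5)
Your route is genuinely different from the paper's, and much heavier. The paper's own proof is four lines and constructs no models: since $\Phi$ fixes $L_{\rm ker}$, the formula $\widehat{\Phi}(\crel{A})$ is, up to relativization to the image sublanguage $\widehat{\Phi}(\ell)$, an instance of the corresponding kernel axiom schema over $L_2$; these instances hold in $T_2$ because $T_2$ is normal, and the two sublanguage facts $T_2 \models \widehat{\Phi}(\ell) \subseteq \ell$ and $T_2 \models \mname{term}(\widehat{\Phi}(\mname{L})) \subseteq \mname{term}(\mname{L})$ (consequences of normality of $\Phi$) are what let one pass from the unrelativized $L_2$ instance to the relativized translated one. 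What you propose instead is to front-load the induced-model machinery that the paper develops only afterward: your $S_1$ with $H_1 = H_2 \circ \widehat{\Phi}$, $I_1 = I_2 \circ \Phi$, and your transfer identity are exactly parts 1--3 of Lemma~\ref{lem:modela}, which the paper proves without using the present lemma, so there is no circularity with the paper's development --- but you are reproving a later, harder lemma in order to obtain an earlier, easier one. (Minor: the identity should be $V_{1,\phi}(e) = V_{2,\phi}(\widehat{\Phi}(\crel{e}))$, with the relativization only on the $M_2$ side; your version is repaired by Proposition~\ref{prop:rel}, which you do invoke.)

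The soft spot is your middle step, $M_1 \models T_{\rm ker}^{L_1}$. This is not a free byproduct of the construction: an arbitrary standard model for $L_1$ need not satisfy the defining axioms, and in the paper's own ordering this fact is obtained only \emph{as a consequence} of the present lemma, via Lemma~\ref{lem:modelb}, whose hypothesis is precisely $T_2 \models \widehat{\Phi}(\crel{A})$ for the axioms in question. For the eval-free defining axioms your bookkeeping argument (that $\widehat{\Phi}$ commutes with the formation rules and that the $L_2$-intended meanings restricted to image constructions are the $L_1$-intended ones) is fine. But for the non-eval-free defining axioms --- the \mname{free-in} and \mname{free-for} clauses for quoted evaluations --- verifying them in $M_1$ requires exactly the evaluation-case correspondence between $V_1$ and $V_2$, and once the transfer identity is available, ``$M_1 \models A$'' is \emph{equivalent} to ``$M_2 \models \widehat{\Phi}(\crel{A})$,'' i.e., to the statement being proved; it must then be argued from the $L_2$-kernel axioms together with the normality conditions on $\Phi$, which is the paper's argument repackaged inside your induction rather than avoided by it. You gesture at this by saying everything must be folded into one induction, but you should say explicitly that for those few axioms the detour through $M_1$ buys nothing and the relativization-versus-unrelativization comparison (translated argument values may represent non-image $L_2$-expressions, where the relativized evaluation defaults) is where the actual content of the lemma sits.
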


\begin{proof}
Let $A \in \Gamma_{\rm ker}^{L_1}$.  By Proposition~\ref{prop:rel} and
the fact that $T_2$ is normal, (a) $T_2 \models \crel{A}$.  Since
$\Phi$ is normal for $T_2$, (b) $\Phi$ fixes $L_{\rm ker}$, (c) $T_2
\models \widehat{\Phi}(\ell) \subseteq \ell$, and (d) $T_2 \models
\mname{term}(\widehat{\Phi}(\mname{L})) \subseteq
\mname{term}(\mname{L})$.  (a), (b), (c), and (d) imply $T_2 \models
\widehat{\Phi}(\crel{A})$.
\end{proof}

\bigskip

Suppose $M_2 = (S_2,V_2)$ is a standard model of $T_2$ where \[S_2 =
(\Dv,\Dc,\Ds,\Df,\Do,\Deb,\in,\TRUE,\FALSE,\Undefined,\xi,H_2,I_2).\]
Let \[S_1 =
(\Dv,\Dc,\Ds,\Df,\Do,\Dea,\in,\TRUE,\FALSE,\Undefined,\xi,H_1,I_1)\]
where:

\be

  \item $\Dea = \set{H_1(e) \;|\; e \in \sE_{L_1}}$.

  \item $H_1(e) = H_2(\widehat{\Phi}(e))$ for each $e \in
    \sE_{L_1}$.

  \item $I_1(o) = I_2(\Phi(o))$ for each $o \in \sO_1$.

\ee

\begin{clem}\label{lem:modela}
Suppose $\Phi$ is normal for $T_2$.
\be

  \item $S_1$ is a structure for $L_1$.

  \item $M_1=(S_1,V_1)$, where $V_1$ is the standard valuation for
    $S_1$, is a standard model for $L_1$.

  \item For all proper expressions $e$ of $L_1$ and $\phi \in
    \mname{assign}(S_1)$, \[V_{1,\phi}(e) =
    V_{2,\phi}(\widehat{\Phi}(\crel{e})).\]

  \item For all formulas of $L_1$, \[M_1 \models A \dblsp \mbox{iff}
  \dblsp M_2 \models \widehat{\Phi}(\crel{A}).\]

\ee
\end{clem}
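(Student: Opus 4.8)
The plan is to prove the four parts in order, with parts~2 and~4 being short consequences of parts~1 and~3.

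First I would check part~1 by running through the defining clauses of a structure for $L_1$. The prestructure $(\Dc,\in)$, the domains $\Dv,\Ds,\Df,\Do$, the values $\TRUE,\FALSE,\Undefined$, and the choice function $\xi$ are taken over unchanged from $S_2$, so nothing is required of them. For the construction machinery, $\De^{S_1}=H_1[\sE_{L_1}]$ is built from the atomic constructions $H_1[\sS\cup\sO_1]=\{H_2(\widehat{\Phi}(s)):s\in\sS\cup\sO_1\}\subseteq\Dea^{S_2}$; injectivity of $\Phi$ (hence of $\widehat{\Phi}$ on atoms) makes $H_1$ restricted to $\sS\cup\sO_1$ a bijection onto this set, which is a countably infinite subset of $\Dv$ whose members are neither $\emptyset$ nor ordered pairs because $\Dea^{S_2}$ is. For $I_1$, each $I_1(o)=I_2(\Phi(o))$ is an operation of signature $\theta_2(\Phi(o))=\theta_1(o)$, so the typing requirements hold; for the built-in operator names other than \mname{op-names} and \mname{lang} we have $\Phi(o)=o$ (since $\Phi$ fixes $L_{\rm ker}$; cf.\ Proposition~\ref{prop:fixes}), hence $I_1(o)=I_2(o)$, and the corresponding clauses transfer once one notes $\Dsy^{S_1}=\Dsy^{S_2}$, $\Don^{S_1}\subseteq\Don^{S_2}$, and that the $S_2$ construction-type clauses already reduce correctly on inputs $x\subseteq\Don^{S_1}$; for \mname{op-names} and \mname{lang}, clauses~3 and~4 of the hypothesis that $\Phi$ is normal for $T_2$ say exactly that $V_{2,\phi}(\widehat{\Phi}(\ell))$ and $V_{2,\phi}(\widehat{\Phi}(\mname{L}))$ compute in $M_2$ to $\{H_1(o_1),\ldots,H_1(o_n)\}=\Don^{S_1}$ and to its power set, which is what clauses~c and~d demand. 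Part~2 is then immediate: $S_1$ being a structure, Lemma~\ref{lem:v-well-defined} gives a well-defined standard valuation $V_1$, and $(S_1,V_1)$ is by definition a standard model for $L_1$.

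Part~3 I would prove by induction on the complexity $c(e)$ of the proper expression $e$, using $\mname{assign}(S_1)=\mname{assign}(S_2)$ (both are the maps $\sS\to\Dc$). For each of the fourteen standard-valuation clauses except the evaluation clause, $\widehat{\Phi}(\crel{e})$ has the same outermost shape as $e$ with constituents $\widehat{\Phi}(\crel{e_i})$, so the $S_2$-clause reduces, via the induction hypothesis on the $e_i$ (each a subexpression, so $c(e_i)<c(e)$), to the $S_1$-clause; the operator and operator-application clauses additionally use $I_1(o)=I_2(\Phi(o))$, and the quotation clause uses $\crel{\synbrack{e'}}=\synbrack{e'}$ together with $V_{2,\phi}(\synbrack{\widehat{\Phi}(e')})=H_2(\widehat{\Phi}(e'))=H_1(e')=V_{1,\phi}(\synbrack{e'})$.

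The real obstacle is the evaluation clause $e=\sembrack{a}_k$, and this is where the relativization in $\crel{e}$ earns its keep. There $\widehat{\Phi}(\crel{e})$ is (the expansion of) $\sembrack{\widehat{\Phi}(\crel{a})}_{\,\widehat{\Phi}(\crel{k}),\,\widehat{\Phi}(\ell)}$, a relativized evaluation whose guard tests membership in $\mname{E}_{{\rm ty},\widehat{\Phi}(\ell)}\cup\mname{E}_{{\rm te},\widehat{\Phi}(\ell)}\cup\mname{E}_{{\rm fo},\widehat{\Phi}(\ell)}$; since $V_{2,\phi}(\widehat{\Phi}(\ell))=\Don^{S_1}$, that union is precisely the set of type/term/formula constructions of $L_2$ whose operator-name leaves lie in $\Phi(\sO_1)$, which via $H_2^{-1}(w)=\widehat{\Phi}(H_1^{-1}(w))$ for $w\in\De^{S_1}$ is exactly $\Dty^{S_1}\cup\Dte^{S_1}\cup\Dfo^{S_1}$. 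Writing $w=V_{1,\phi}(a)=V_{2,\phi}(\widehat{\Phi}(\crel{a}))$ by the induction hypothesis, I would split: if $w$ is not a construction of $L_1$ of the sort demanded by $k$, both sides deliver the undefined value for $k$ --- on the $S_2$ side the guard fails and we land on $\sembrack{\synbrack{\Undefined_{\sf C}}}_k$, or else the inner ordinary evaluation's own sort check fails --- while if it is, then $H_1^{-1}(w)$ is a proper expression of $L_1$; if it is not eval-free both sides are again undefined, and if it is eval-free it has complexity $(0,\cdot)<c(e)$, so the induction hypothesis gives $V_{2,\phi}(\widehat{\Phi}(H_1^{-1}(w)))=V_{1,\phi}(H_1^{-1}(w))$, and matching the remaining $k$-membership side condition (using the induction hypothesis on $k$ when $\ctype{L_1}{k}$) closes the case. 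Finally part~4 is one line: $M_1\models A$ iff $V_{1,\phi}(A)=\TRUE$ for all $\phi$, iff (part~3 and $\mname{assign}(S_1)=\mname{assign}(S_2)$) $V_{2,\phi}(\widehat{\Phi}(\crel{A}))=\TRUE$ for all $\phi$, iff $M_2\models\widehat{\Phi}(\crel{A})$.
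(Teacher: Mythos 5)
Your proposal is correct and follows essentially the same route as the paper's proof: part 1 by direct verification of the structure clauses (with normality conditions 3--4 supplying the \mname{op-names} and \mname{lang} cases and $\Phi$ fixing $L_{\rm ker}$ handling the rest), part 3 by induction in which the guard of the relativized evaluation together with $H_1 = H_2 \circ \widehat{\Phi}$ carries the evaluation case, and parts 2 and 4 as immediate corollaries. Your choice of inducting on the complexity $c(e)$ rather than the length is a small gain in precision over the paper's stated measure, since the evaluation case applies the induction hypothesis to the (possibly longer but eval-free) expression represented by $V_{1,\phi}(a)$.
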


\begin{proof}

\bigskip

\noindent \textbf{Part 1} \sglsp Since $M_2$ is a standard model for
$L_2$, we need to only show that $\Dea$, $H_1$, and $I_1$ are defined
correctly.

\be

  \item $\Dea$ is defined correctly provided $H_1$ is defined
    correctly.

  \item Let $e \in \sE_{L_1}$.  We will show that $H_1(e)$ is a
    correct value by induction on the length of $e$.  If $e \in \sS$,
    then $H_1(e) = H_2(\widehat{\Phi}(e)) = H_2(e)$, which is a member
    of $\Dv$ that is neither the empty set nor an ordered pair.  If $e
    \in \sO_1$, then $\Phi(e) \in \sO_2$ and $H_1(e) =
    H_2(\widehat{\Phi}(e)) = H_2(\Phi(e))$, which is a member of $\Dv$
    that is neither the empty set nor an ordered pair.  If $e = (\,)$,
    then $H_1(e) = H_2(\widehat{\Phi}(e)) = H_2(e)$, which is
    $\emptyset$.  Thus, in each of these three cases, $H_1(e)$ is a
    correct value.  Now if $e = (e_1,\ldots,e_n) \in \sE_{L_1}$ with
    $n \ge 1$, then
    \begin{eqnarray*}
    H_1(e) 
    & = & H_2(\widehat{\Phi}(e)) \\
    & = & H_2(\widehat{\Phi}((e_1,\ldots,e_n))) \\
    & = & H_2((\widehat{\Phi}(e_1), \ldots, \widehat{\Phi}(e_n))) \\
    & = & \seq{H_2(\widehat{\Phi}(e_1)),
      H_2((\widehat{\Phi}(e_2),\ldots,\widehat{\Phi}(e_n)))} \\
    & = & \seq{H_1(e_1),H_1((e_2,\ldots,e_n))},
    \end{eqnarray*}
    which is a correct value by the induction hypothesis.

   \item We will show that $I_1$ satisfies the fifth condition of the
     definition of a structure for $L_1$.  Let $o \in \sO_1$.  Notice
     that $I_2(\Phi(o))$ is an operation of the signature form
     $\theta_2(\Phi(o))$ since $S_2$ is a structure for $L_2$.
     $\theta_1(o) = \theta_2(\Phi(o))$ since $\Phi$ is a translation.
     Hence $I_2(\Phi(o))$ is an operation of the signature form
     $\theta_1(o)$.  It remains only to show that clauses a--q of this
     condition (for built-in operator names) are satisfied.  Assume
     $\sO_1 = \set{o_1,\ldots,o_n}$.  Let $D_{{\rm on},1} =
     \set{H_1(o_1),\ldots,H_1(o_n)}$ and $D_{{\rm on},2} = \set{H_2(o)
       \;|\; o \in \sO_2}$, the sets of representations of operator
     names in $\Dea$ and $\Deb$, respectively.  $D_{{\rm on},1}
     \subseteq D_{{\rm on},2}$ since $H_1(o_i) =
     H_2(\widehat{\Phi}(o_i)) = H_2(\Phi(o_i))$ and $\Phi(o_i) \in
     \sO_2$ for each $i$ with $1 \le i \le n$.

     \be

       \item[] \textbf{Clause a}: $o = \mname{set}$.   Hence
         \begin{eqnarray*}
         I_1(\mname{set})(\,)
         & = & I_2(\Phi(\mname{set}))(\,) \\
         & = & I_2(\mname{set})(\,) \\
         & = & \Dv.
         \end{eqnarray*}
         The first line is by the definition of $I_1$.  The second
         line is by the fact $\Phi$ fixes $L_{\rm ker}$.  And the
         third line is by the fact $S_2$ is a structure for $L_2$.

       \item[] \textbf{Clauses b, e, m--r}: Similar to clause a.

       \item[] \textbf{Clause c}: $o = \mname{op-names}$.  Hence
         \begin{eqnarray*}
         I_1(\mname{op-names})(\,) 
         & = & I_2(\Phi(\mname{op-names}))(\,) \\
         & = & V_{2,\phi}(\widehat{\Phi}((\mname{op-names} ::
             \mname{term})))(\,) \\
         & = & V_{2,\phi}(\widehat{\Phi}(\ell)) \\
         & = & V_{2,\phi}(\set{\synbrack{\Phi(o_1)},\ldots,
             \synbrack{\Phi(o_n)}}) \\
         & = & \set{H_2(\widehat{\Phi}(o_1)),\ldots,H_2(\widehat{\Phi}(o_n))} \\
         & = & \set{H_1(o_1),\ldots,H_1(o_n)} \\
         & = & D_{{\rm on},1}.
         \end{eqnarray*} 
         The first line is by the definition of $I_1$.  The second and
         third lines are by the definitions of $\widehat{\Phi}$ and
         the standard valuation function on operator and operator
         applications, respectively.  The fourth line is by the facts
         $M_2$ is a standard model of $T_2$ and $\Phi$ is normal for
         $T_2$.  The fifth line is by the definition of the standard
         valuation function on quotations and the definition of
         $\widehat{\Phi}$.  The sixth is by the definition of $H_1$.
         And the seventh is by the definition of $D_{{\rm on},1}$.

       \item[] \textbf{Clauses d}: Similar to clause c.

       \item[] \textbf{Clause f}: $o = \mname{expr-op-name}$.  Let $x
         \in \Dc \cup \set{\Undefined}$.
         \begin{eqnarray*}
         &   & I_1(\mname{expr-op-name})(x) \\
         & = & I_2(\Phi(\mname{expr-op-name}))(x) \\
         & = & I_2(\mname{expr-op-name})(x) \\
         & = & \left\{\begin{array}{ll}
                        x & \mbox{if } x \subseteq D_{{\rm on},2} \\
                        \Dc & \mbox{if } x = \Undefined \\
                        \mbox{not } \Dc & \mbox{otherwise.}
                        \end{array}
               \right. \\
         & = & \left\{\begin{array}{ll}
                        x & \mbox{if } x \subseteq D_{{\rm on},1} \\
                        \Dc & \mbox{if } x = \Undefined \\
                        \mbox{not } \Dc & \mbox{otherwise.}
                        \end{array}
               \right.
         \end{eqnarray*} 
         The second line is by the definition of $I_1$.  The third
         line is by the fact $\Phi$ fixes $L_{\rm ker}$.  The fourth
         line is by the fact $S_2$ is a structure for $L_2$.  The
         fifth line is by $D_{{\rm on},1} \subseteq D_{{\rm on},2}$.

       \item[] \textbf{Clauses g--l}: Similar to clause f.  
  
     \ee 
     Therefore, $S_1$ is a structure for $L_1$.

\ee

\bigskip

\noindent \textbf{Part 2} \sglsp This part follows immediately from
part 1 of this lemma.

\bigskip

\noindent \textbf{Part 3} \sglsp Our proof is by induction on the
length of $e$.  There are 13 cases corresponding to the 13 clauses
of the definition of $V$ on proper expressions.

\bi

  \item[] \bsp \textbf{Case 1}: $O = (\mname{op}, o,
    k_1,\ldots,k_{n+1})$ is proper where $o \in \sO_1$.  Then
    \begin{eqnarray*}
    & & V_{1,\phi}(O) \\
    & = & V_{1,\phi}((\mname{op}, o, k_1,\ldots,k_{n+1})) \\
    & = & \mbox{``}I_1(o) \mbox{ restricted by } 
      V_{1,\phi}(k_1),\ldots,V_{1,\phi}(k_{n+1}) \mbox{''} \\
    & = & \mbox{``}I_2(\Phi(o)) \mbox{ restricted by } 
      V_{1,\phi}(k_1),\ldots,V_{1,\phi}(k_{n+1}) \mbox{''} \\
    & = & \mbox{``}I_2(\Phi(o)) \mbox{ restricted by } 
      V_{2,\phi}(\widehat{\Phi}(\crel{k_1})),\ldots,
      V_{2,\phi}(\widehat{\Phi}(\crel{k_{n+1}})) \mbox{''} \\
    & = & V_{2,\phi}((\mname{op}, \Phi(o), \widehat{\Phi}(\crel{k_1}),\ldots,
      \widehat{\Phi}(\crel{k_{n+1}}))) \\
    & = & V_{2,\phi}(\widehat{\Phi}((\mname{op}, o, 
      \crel{k_1},\ldots,\crel{k_{n+1}}))) \\
    & = & V_{2,\phi}(\widehat{\Phi}(\crel{(\mname{op}, o, 
      k_1,\ldots,k_{n+1})})) \\
    & = & V_{2,\phi}(\widehat{\Phi}(\crel{O})).
    \end{eqnarray*}
    The third and sixth lines are by the definition of the standard
    valuation function on operators.  The fourth line is by the
    definitions of $I_1$ and $\widehat{\Phi}$.  The fifth line is by
    the induction hypothesis.  The seventh line is by the definition
    of $\hat{\Phi}$.  And the eighth line is by the definition of a
    relativization. \esp

  \item[] \textbf{Case 2}: $e = (\mname{op-app}, O, e_1,\ldots,e_n)$
    is proper.  Then
    \begin{eqnarray*}
    & & V_{1,\phi}(e) \\
    & = & V_{1,\phi}((\mname{op-app}, O, e_1,\ldots,e_n)) \\
    & = & V_{1,\phi}(O)(V_{1,\phi}(e_1),\ldots,V_{1,\phi}(e_n)) \\
    & = & V_{2,\phi}(\widehat{\Phi}(\crel{O}))
      (V_{2,\phi}(\widehat{\Phi}(\crel{e_1})),\ldots,
      V_{2,\phi}(\widehat{\Phi}(\crel{e_n}))) \\
    & = & V_{2,\phi}(\mname{op-app}, \widehat{\Phi}(\crel{O}), 
      \widehat{\Phi}(\crel{e_1}),\ldots,\widehat{\Phi}(\crel{e_n})) \\
    & = & V_{2,\phi}(\widehat{\Phi}((\mname{op-app}, \crel{O}, 
      \crel{e_1},\ldots,\crel{e_n}))) \\
    & = & V_{2,\phi}(\widehat{\Phi}(\crel{(\mname{op-app}, O, 
      e_1,\ldots,e_n)})) \\
    & = & V_{2,\phi}(\widehat{\Phi}(\crel{e})).
    \end{eqnarray*}
    The third and fifth lines are by the definition of the standard
    valuation function on operator applications.  The fourth line is
    by the induction hypothesis.  The sixth line is by the definition
    of $\hat{\Phi}$.  And the seventh line is by the definition of a
    relativization.

  \item[] \textbf{Cases 3--12}. Similar to case 2.

  \item[] \textbf{Case 13a}: $e = (\mname{eval}, a, \mname{type})$.
    Assume (a) $V_{1,\phi}(a)$ is a member of $\Dea$ that represents a
    type and $H_{1}^{-1}(V_{1,\phi}(a))$ is eval-free.
    Let \[\alpha = (\mname{E}_{{\rm ty},\ell} \cup \mname{E}_{{\rm
        te},\ell} \cup \mname{E}_{{\rm fo},\ell}).\] Then
    \begin{eqnarray*}
    & & V_{1,\phi}(e) \\
    & = & V_{1,\phi}((\mname{eval}, a, \mname{type})) \\
    & = & V_{1,\phi}((\mname{eval}, a, \mname{type}, \ell)) \\
    & = & V_{1,\phi}(\If(a \IsDefApp \alpha, 
      \sembrack{a}_{\rm ty}, \sembrack{\Undefined_{\sf C}}_{\rm ty})) \\
    & = & \mbox{if } V_{1,\phi}(a \IsDefApp \alpha) = \TRUE \\
    & &   \mbox{then } V_{1,\phi}(H_{1}^{-1}(V_{1,\phi}(a))) \\
    & &   \mbox{else } \mname{C} \\
    & = & \mbox{if } V_{2,\phi}(\widehat{\Phi}(\crel{a \IsDefApp \alpha}) 
          = \TRUE \\
    & &   \mbox{then } V_{2,\phi}(\widehat{\Phi}(H_{1}^{-1}
          (V_{2,\phi}(\widehat{\Phi}(\crel{a}))))) \\
    & &   \mbox{else } \mname{C} \\
    & = & \mbox{if } V_{2,\phi}(\widehat{\Phi}(\crel{a \IsDefApp \alpha}) 
          = \TRUE \\
    & &   \mbox{then } V_{2,\phi}(\widehat{\Phi}(\widehat{\Phi}^{-1}(H_{2}^{-1}
          (V_{2,\phi}(\widehat{\Phi}(\crel{a})))))) \\
    & &   \mbox{else } \mname{C} \\
    & = & \mbox{if } V_{2,\phi}(\widehat{\Phi}(\crel{a \IsDefApp \alpha})
          = \TRUE \\
    & &   \mbox{then } V_{2,\phi}(H_{2}^{-1}
          (V_{2,\phi}(\widehat{\Phi}(\crel{a})))) \\
    & &   \mbox{else } \mname{C} \\
    & = & V_{2,\phi}(\If(\widehat{\Phi}(\crel{a \IsDefApp \alpha}),
      \sembrack{\widehat{\Phi}(\crel{a})}_{\rm ty}, 
      \sembrack{\Undefined_{\sf C}}_{\rm ty})) \\
    & = & V_{2,\phi}(\widehat{\Phi}(\If(\crel{a \IsDefApp \alpha},
      \sembrack{\crel{a}}_{\rm ty}, 
      \sembrack{\Undefined_{\sf C}}_{\rm ty})) \\
    & = & V_{2,\phi}(\widehat{\Phi}(\If(\crel{a} \IsDefApp \alpha,
      \sembrack{\crel{a}}_{\rm ty}, 
      \sembrack{\Undefined_{\sf C}}_{\rm ty})) \\
    & = & V_{2,\phi}(\widehat{\Phi}((\mname{eval}, \crel{a}, 
       \mname{type}, \ell))) \\ 
    & = & V_{2,\phi}(\widehat{\Phi}(\crel{(\mname{eval}, a, 
       \mname{type})})) \\ 
    & = & V_{2,\phi}(\widehat{\Phi}(\crel{e}))
    \end{eqnarray*}
    The third line is by Proposition~\ref{prop:rel}.  The fourth and
    twelveth lines are by the definition of relativized evaluation.
    The fifth and ninth lines are by assumption (a) and the definition
    of the standard evaluation function on conditional terms and
    evaluations.  The sixth line is by assumption (a) and the
    inductive hypothesis.  The seventh line is by the definition of
    $H_1$ and the condition $V_{2,\phi}(\widehat{\Phi}(\crel{a
      \IsDefApp \alpha}) = \TRUE$.  The eighth line is a simple
    simplification. The tenth line is by the definition of
    $\hat{\Phi}$.  The eleventh is by the fact that $\alpha$ is
    eval-free.  And the thirteenth line is by the definition of a
    relativization.

    A similar argument works when assumption (a) is false.

  \item[] \textbf{Case 13b}: $e = (\mname{eval}, a, \alpha)$.  Similar
    to case 13a.
 
  \item[] \textbf{Case 13c}: $e = (\mname{eval}, a, \mname{formula})$.
    Similar to case 13a.

\ei

\noindent \textbf{Part 4} \sglsp This part follows immediately from
part 3 of this lemma.
\end{proof}

\begin{clem}\label{lem:modelb}
Suppose $\Phi$ is normal for $T_2$, $M_2$ is a standard model of
$T_2$, and $T_2 \models \widehat{\Phi}(\crel{A})$ for each $A \in
\Gamma_1$.  Then $M_1$ is a standard model of $T_1$.
\end{clem}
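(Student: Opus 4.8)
The plan is to obtain this lemma almost immediately from Lemma~\ref{lem:modela}, whose standing hypotheses---namely that $\Phi$ is normal for $T_2$ and that $S_1$, $M_1$ are built from a standard model $M_2$ of $T_2$---are precisely the hypotheses assumed here. By part~2 of Lemma~\ref{lem:modela}, $M_1 = (S_1,V_1)$ is already a standard model \emph{for} the language $L_1$, so $S_1$ is a structure and $V_1$ is the standard valuation for $S_1$; nothing further needs to be checked on that score. What remains is to verify that $M_1 \models A$ for every axiom $A \in \Gamma_1$, which is exactly the extra condition required for $M_1$ to be a standard model \emph{of} the theory $T_1 = (L_1,\Gamma_1)$.

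First I would fix an arbitrary $A \in \Gamma_1$. By the third hypothesis of the lemma, $T_2 \models \widehat{\Phi}(\crel{A})$. Since $M_2$ is a standard model of $T_2$ (the second hypothesis), the definition of validity in a theory yields $M_2 \models \widehat{\Phi}(\crel{A})$.

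Next I would invoke part~4 of Lemma~\ref{lem:modela}, which states that for every formula $A$ of $L_1$ we have $M_1 \models A$ if and only if $M_2 \models \widehat{\Phi}(\crel{A})$. From $M_2 \models \widehat{\Phi}(\crel{A})$ it therefore follows that $M_1 \models A$. Since $A \in \Gamma_1$ was arbitrary, $M_1$ is a standard model of the axiom set $\Gamma_1$; combined with part~2 of Lemma~\ref{lem:modela} this shows that $M_1$ is a standard model of $T_1$, completing the proof.

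There is no substantive obstacle here: all the work is carried by Lemma~\ref{lem:modela}, in particular the semantic transfer of part~4, which itself rests on the relativization machinery of Proposition~\ref{prop:rel} and the careful definitions of $H_1$ and $I_1$. The only point worth noting explicitly is that the hypotheses of the present lemma exactly match those under which Lemma~\ref{lem:modela} was established, so no re-verification of the structural facts about $S_1$ and $V_1$ is needed.
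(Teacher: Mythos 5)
Your proof is correct and follows essentially the same route as the paper's: part~2 of Lemma~\ref{lem:modela} gives that $M_1$ is a standard model for $L_1$, and for each $A \in \Gamma_1$ the hypothesis $T_2 \models \widehat{\Phi}(\crel{A})$ together with $M_2$ being a standard model of $T_2$ yields $M_2 \models \widehat{\Phi}(\crel{A})$, whence $M_1 \models A$ by part~4 of Lemma~\ref{lem:modela}. Nothing further is needed.
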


\begin{proof}
By part~2 of Lemma~\ref{lem:modela}, $M_1$ is a standard model for
$L_1$.  Let $A \in \Gamma_1$.  Then, by the hypothesis, $T_2 \models
\widehat{\Phi}(\crel{A})$, and thus $M_2 \models
\widehat{\Phi}(\crel{A})$.  By part~4 of Lemma~\ref{lem:modela}, this
implies $M_1 \models A$.  Therefore, $M_1$ is a standard model of
$T_1$.
\end{proof}

\bigskip

$\Phi$ is a \emph{(semantic) interpretation of $T_1$ in $T_2$} if:

\be

  \item $\Phi$ is normal for $T_2$.

  \item $T_1 \models A$ implies $T_2 \models \widehat{\Phi}(\crel{A})$
    for all formulas $A$ of $L_1$.

\ee

\begin{cprop} Suppose $\Phi$ is an interpretation of $T_1$ in $T_2$.  
Then $T_1 \models A$ implies $T_2 \models \widehat{\Phi}(A)$ for all
eval-free formulas $A$ of $L_1$.
\end{cprop}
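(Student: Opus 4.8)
The plan is to deduce the statement directly from clause~2 of the definition of a semantic interpretation of $T_1$ in $T_2$, using only the observation that relativization acts trivially on eval-free expressions. First I would fix an eval-free formula $A$ of $L_1$ and assume $T_1 \models A$. Since $A$ is eval-free, every occurrence of the symbol \mname{eval} in $A$ lies within a quotation, whereas relativization rewrites only those occurrences of \mname{eval} that are \emph{not} within a quotation; hence nothing in $A$ is rewritten and $\crel{A} = A$. (This is precisely the remark ``$\crel{e} = e$ if $e$ is eval-free'' stated just before Proposition~\ref{prop:rel}.) Applying $\widehat{\Phi}$ to the two syntactically identical expressions $\crel{A}$ and $A$ then gives $\widehat{\Phi}(\crel{A}) = \widehat{\Phi}(A)$.

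Next I would invoke the definition of an interpretation: because $\Phi$ is an interpretation of $T_1$ in $T_2$, clause~2 of that definition says $T_1 \models A$ implies $T_2 \models \widehat{\Phi}(\crel{A})$ for \emph{every} formula $A$ of $L_1$, in particular for the eval-free formula we fixed. Combining this with the identity $\widehat{\Phi}(\crel{A}) = \widehat{\Phi}(A)$ obtained in the first step yields $T_2 \models \widehat{\Phi}(A)$, which is exactly the desired conclusion. No appeal to Proposition~\ref{prop:rel} itself is needed in this direction; that proposition is the general logical-equivalence fact of which the present statement is simply the special case where $A$ is already eval-free and so equal to its own relativization.

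I do not expect any genuine obstacle here. The one point that deserves a sentence of care is confirming that the hypothesis ``$A$ is eval-free'' matches exactly the condition under which relativization is the identity: an expression is eval-free iff all its \mname{eval} occurrences are inside quotations, and relativization by definition touches only \mname{eval} occurrences outside quotations, so the two conditions line up precisely and $\crel{A}=A$ holds without qualification. Everything else is a one-line chain of equalities and implications.
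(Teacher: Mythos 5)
Your proof is correct and is exactly the intended argument: the paper leaves this proposition without an explicit proof precisely because it is the immediate consequence of clause~2 of the definition of an interpretation together with the remark that $\crel{e} = e$ for eval-free $e$. Nothing further is needed.
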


\begin{cthm}[Relative Satisfiability]  Suppose $\Phi$ is an 
interpretation of $T_1$ in $T_2$.  If there is a standard model of
$T_2$, then there is a standard model of $T_1$.
\end{cthm}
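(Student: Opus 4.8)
The plan is to take a standard model $M_2=(S_2,V_2)$ of $T_2$ and transport it along $\Phi$ to a standard model of $T_1$, reusing the model-construction machinery already assembled in Lemmas~\ref{lem:modela} and~\ref{lem:modelb}. First I would record that, since $\Phi$ is an interpretation of $T_1$ in $T_2$, it is in particular normal for $T_2$; this is exactly the running hypothesis those two lemmas require. With $M_2$ fixed, I form the structure $S_1$ and the model $M_1=(S_1,V_1)$ exactly as in the construction immediately preceding Lemma~\ref{lem:modela} (so $\Dea = \set{H_1(e) \mid e \in \sE_{L_1}}$, $H_1 = H_2 \circ \widehat{\Phi}$, and $I_1 = I_2 \circ \Phi$). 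By part~2 of Lemma~\ref{lem:modela}, $M_1$ is already a standard model \emph{for} $L_1$; what remains is to see that it satisfies the axioms $\Gamma_1$.

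The decisive step is verifying the hypothesis of Lemma~\ref{lem:modelb}, namely that $T_2 \models \widehat{\Phi}(\crel{A})$ for every $A \in \Gamma_1$. I would obtain this directly from the second clause in the definition of an interpretation: for each axiom $A \in \Gamma_1$ we have $T_1 \models A$ trivially (an axiom of a theory is valid in that theory), so the clause yields $T_2 \models \widehat{\Phi}(\crel{A})$. Note that the kernel axioms $\Gamma_{\rm ker}^{L_1}$ are among these, because $T_1$ is normal ($T_{\rm ker}^{L_1} \le T_1$), so no separate appeal to Lemma~\ref{lem:normal-trans} is needed here. Feeding this into Lemma~\ref{lem:modelb}, together with the facts that $M_2$ is a standard model of $T_2$ and $\Phi$ is normal for $T_2$, we conclude that $M_1$ is a standard model of $T_1$. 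Hence $T_1$ has a standard model, which is the assertion.

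I do not expect a genuine obstacle in this argument: all of the hard work---that the transported structure $S_1$ really is a structure for $L_1$, that $V_1$ is its standard valuation, and the key translation identity $V_{1,\phi}(e) = V_{2,\phi}(\widehat{\Phi}(\crel{e}))$---is discharged inside Lemma~\ref{lem:modela}, and the passage from semantic validity of the $\Gamma_1$-axioms to satisfaction by $M_1$ is exactly Lemma~\ref{lem:modelb}. The only points worth stating explicitly in the write-up are that $T_1$ and $T_2$ are normal theories (the standing assumption of this subsection), so those lemmas apply, and that $\crel{A}$ may be used interchangeably with $A$ since, by Proposition~\ref{prop:rel}, $A$ and $\crel{A}$ are logically equivalent. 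The resulting proof is therefore just a short assembly of the three preceding results.
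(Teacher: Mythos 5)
Your proposal is correct and follows essentially the same route as the paper's proof: transport $M_2$ to $M_1$ via the construction preceding Lemma~\ref{lem:modela}, use the interpretation clause to get $T_2 \models \widehat{\Phi}(\crel{A})$ for each axiom $A \in \Gamma_1$ (since $T_1 \models A$ trivially), and conclude by Lemma~\ref{lem:modelb}. The extra remarks (normality of $\Phi$, part~2 of Lemma~\ref{lem:modela}, Proposition~\ref{prop:rel}) are harmless elaborations of what the paper leaves implicit.
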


\begin{proof} 
Let $M_2$ be a standard model of $T_2$ and $M_1$ be defined as above.
Suppose $A \in \Gamma_1$.  Then $T_1 \models A$ and so by the
hypothesis $T_2 \models \widehat{\Phi}(\crel{A})$.  Therefore, $M_1$
is a standard model of $T_1$ by Lemma~\ref{lem:modelb}.
\end{proof}

\begin{cthm}[Interpretation] \label{thm:interpretation}
\bsp Suppose $\Phi$ is normal for $T_2$ and $T_2 \models
\widehat{\Phi}(\crel{A})$ for each $A \in \Gamma_1$.  Then $\Phi$ is
an interpretation of $T_1$ in $T_2$. \esp
\end{cthm}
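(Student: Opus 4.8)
The plan is to verify the two clauses in the definition of a semantic interpretation of $T_1$ in $T_2$. Clause~(1), that $\Phi$ is normal for $T_2$, is one of the hypotheses, so nothing needs to be done there; the entire content lies in clause~(2): for every formula $A$ of $L_1$, if $T_1 \models A$ then $T_2 \models \widehat{\Phi}(\crel{A})$.

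First I would fix a formula $A$ of $L_1$ with $T_1 \models A$ and unwind the goal $T_2 \models \widehat{\Phi}(\crel{A})$: it asserts $M_2 \models \widehat{\Phi}(\crel{A})$ for every standard model $M_2$ of $T_2$. If $T_2$ has no standard model this holds vacuously, so I would fix an arbitrary standard model $M_2 = (S_2,V_2)$ of $T_2$ and form the associated structure $S_1$ and model $M_1 = (S_1,V_1)$ exactly as in the construction preceding Lemma~\ref{lem:modela} (so $\Dea = \set{H_1(e) \;|\; e \in \sE_{L_1}}$, $H_1(e) = H_2(\widehat{\Phi}(e))$, $I_1(o) = I_2(\Phi(o))$, and $V_1$ is the standard valuation for $S_1$).

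The argument then chains two earlier lemmas. Since $\Phi$ is normal for $T_2$, $M_2$ is a standard model of $T_2$, and---by hypothesis---$T_2 \models \widehat{\Phi}(\crel{B})$ for each $B \in \Gamma_1$, Lemma~\ref{lem:modelb} gives that $M_1$ is a standard model of $T_1$. From $T_1 \models A$ we conclude $M_1 \models A$. Finally, part~4 of Lemma~\ref{lem:modela} (applicable because $\Phi$ is normal for $T_2$) states that $M_1 \models A$ iff $M_2 \models \widehat{\Phi}(\crel{A})$, so $M_2 \models \widehat{\Phi}(\crel{A})$. As $M_2$ was an arbitrary standard model of $T_2$, this yields $T_2 \models \widehat{\Phi}(\crel{A})$, establishing clause~(2).

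There is no genuine obstacle at the level of this theorem: the delicate model-theoretic work---that $S_1$ is really a structure for $L_1$, that $V_1$ is its standard valuation, and in particular the valuation-transfer identity $V_{1,\phi}(e) = V_{2,\phi}(\widehat{\Phi}(\crel{e}))$ underlying part~4---has already been discharged in Lemma~\ref{lem:modela}, and the axiom-preservation step in Lemma~\ref{lem:modelb}. The only points needing care in the write-up are making explicit that the construction of $M_1$ is taken relative to the fixed $M_2$ and $\Phi$, and handling the vacuous case when $T_2$ is unsatisfiable. One may also remark, via Lemma~\ref{lem:normal-trans}, that the hypothesis $T_2 \models \widehat{\Phi}(\crel{B})$ is automatic for the kernel axioms $B \in \Gamma_{\rm ker}^{L_1}$, so in practice only the non-kernel axioms of $T_1$ impose a condition.
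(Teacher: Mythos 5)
Your proposal is correct and follows essentially the same route as the paper's proof: fix a standard model $M_2$ of $T_2$ (the unsatisfiable case being vacuous), build $M_1$ from the construction preceding Lemma~\ref{lem:modela}, invoke Lemma~\ref{lem:modelb} to get that $M_1$ is a standard model of $T_1$, and then transfer $M_1 \models A$ to $M_2 \models \widehat{\Phi}(\crel{A})$ via part~4 of Lemma~\ref{lem:modela}. The extra remark about kernel axioms and Lemma~\ref{lem:normal-trans} is harmless but not needed.
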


\begin{proof}
Let $A$ be a formula of $L_1$ and suppose $T_1 \models A$.  We need to
only show $T_2 \models \widehat{\Phi}(\crel{A})$ since $\Phi$ is
normal for $T_2$ by hypothesis.  This holds if $T_2$ is unsatisfiable,
so without loss of generality we may assume $M_2$ is a standard model
of $T_2$.  We are done if we show $M_2 \models
\widehat{\Phi}(\crel{A})$.  Let $M_1$ be defined as above. By the
hypothesis and Lemma~\ref{lem:modelb}, $M_1$ is a standard model of
$T_1$.  Hence $M_1 \models A$.  By the hypothesis and part 4 of
Lemma~\ref{lem:modela}, $M_2 \models \widehat{\Phi}(\crel{A})$.
\end{proof}

\bigskip

$\Phi$ is an \emph{anti-interpretation of $T_1$ in $T_2$} if $T_2
\models \widehat{\Phi}(\crel{A})$ implies $T_1 \models A$ for all
formulas $A$ of $L_1$.  $\Phi$ is an \emph{faithful interpretation of
  $T_1$ in $T_2$} if $\Phi$ is both an interpretation and
anti-interpretation of $T_1$ in $T_2$.  $T_2$ is \emph{conservative
  over} $T_1$ if there is a faithful interpretation of $T_1$ in $T_2$.

The following lemma shows that there are theories in Chiron that
cannot be extended ``conservatively''.

\begin{clem}
Let $L_i = (\sO_i,\theta_i)$ be a language of Chiron and $T_i =
(L_i,\Gamma_i)$ be a normal theory of Chiron for $i = 1,2$.  Suppose
$T_1 \le T_2$, $\sO_1 = \set{o_1,\ldots,o_n}$, $\sO_1 \subset \sO_2$,
and $\Gamma_1$ contains the following formula $A$:
\[\ell = \set{\synbrack{o_1},\ldots,\synbrack{o_n}}.\] 
Then $T_2$ is unsatisfiable, i.e., $T_2$ does not have a standard
model.
\end{clem}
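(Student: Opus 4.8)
The plan is to argue by contradiction: assume $T_2$ has a standard model $M=(S,V)$ and derive that $M$ is forced to satisfy the formula $A$, while a direct computation of $V_\phi(A)$ in $M$ gives $\FALSE$.

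First I would record that, since $T_1 \le T_2$, the definition of a subtheory gives $L_1 \le L_2$ and $\Gamma_1 \subseteq \Gamma_2$, so $A \in \Gamma_2$. A standard model of $T_2$ is by definition a standard model of $\Gamma_2$, so $M \models A$; that is, $V_\phi(A) = \TRUE$ for every $\phi \in \mname{assign}(S)$.

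Next I would evaluate $A$ in $M$ and obtain $\FALSE$. Written out, $A$ is the application of \mname{term-equal} to the terms $\ell$, $\set{\synbrack{o_1},\ldots,\synbrack{o_n}}$, and the type $\mname{C}$, so by clause~o of the definition of $I$ in a structure for $L_2$ together with the valuation of operators and operator applications, $V_\phi(A) = \TRUE$ iff $V_\phi(\ell)$ and $V_\phi(\set{\synbrack{o_1},\ldots,\synbrack{o_n}})$ are identical members of $\Dc$. Now $\ell = (\mname{op-names}::\mname{term})(\,)$ and, by clause~c of the definition of $I$, $I(\mname{op-names})(\,)$ is $\Don$, the subset of $\De$ whose members represent the operator names of $L_2$; since $H$ restricted to $\sS \cup \sO$ equals the bijection $G$, this is $\Don = \set{H(o) \mid o \in \sO_2}$, and as a countable subclass of the set $\Dea$ it is itself a set, so $V_\phi(\ell) = \Don \in \Dv \subseteq \Dc$. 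On the other hand, because $T_2$ is normal its axiom set $\Gamma_2$ contains the defining axioms of the set-theoretic operators out of which $\set{\synbrack{o_1},\ldots,\synbrack{o_n}}$ is built, and $M$ is built from a prestructure satisfying the {\nbg} axioms; hence $V_\phi(\set{\synbrack{o_1},\ldots,\synbrack{o_n}}) = \set{H(o_1),\ldots,H(o_n)} = \set{H(o) \mid o \in \sO_1}$, using $V_\phi(\synbrack{o_i}) = H(o_i)$ from the valuation of quotations. Since $G$ is injective and $\sO_1 = \set{o_1,\ldots,o_n} \subsetneq \sO_2$, the set $\set{H(o)\mid o \in \sO_1}$ is a proper subclass of $\Don$; in particular the two terms of $A$ denote distinct classes, so $V_\phi(A) = \FALSE$. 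This contradicts $V_\phi(A) = \TRUE$, so $T_2$ has no standard model, i.e.\ $T_2$ is unsatisfiable.

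The only step that needs any care is pinning down $V_\phi(\set{\synbrack{o_1},\ldots,\synbrack{o_n}})$: one must invoke the normality of $T_2$ so that the relevant defined operators (the \mname{pair}/\mname{singleton}/$\ldots$ chain, or the class abstraction via \mname{power-type}, depending on which notational definition is in force) denote the intended finite-set former, together with the fact that a prestructure satisfies the {\nbg} axioms so that the corresponding finite set of sets genuinely exists. One also uses that $\Don$ is a set, so that $V_\phi(\ell) = \Don$ rather than $\Undefined$ — though even $V_\phi(\ell) = \Undefined$ would leave $A$ false, since \mname{term-equal} requires both arguments to denote members of $\Dc$. Everything else is a routine unwinding of the definitions of a structure and of the standard valuation.
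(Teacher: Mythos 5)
Your proof is correct and takes essentially the same route as the paper: the paper's own argument is just the terse observation that $T_1 \le T_2$ forces $T_2 \models A$ while $\sO_1 \subsetneq \sO_2$ forces $T_2 \models \Neg A$, and your semantic computation of $V_\phi(\ell)$ versus $V_\phi(\set{\synbrack{o_1},\ldots,\synbrack{o_n}})$ is exactly the unwinding that justifies the second claim. Your extra care about $\Don$ possibly being undefined and about normality guaranteeing the intended meaning of the finite-set former is sound but not needed beyond what the paper leaves implicit.
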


\begin{proof}
Since $T_1 \le T_2$, $A \in \Gamma_2$ and so $T_2 \models A$.  Since
$\sO_1 = \set{o_1,\ldots,o_n}$ and $\sO_1 \subset \sO_2$, $T_2 \models
\Neg A$.  Therefore, $T_2$ is unsatisfiable.
\end{proof}

\subsection{Pseudotranslations}

We will define an alternate notion of a translation in which constants
may be translated to expressions other than constants.  Translations
of this kind are often more convenient than regular translations, but
they are not applicable to all expressions.

Let $L_i = (\sO_i,\theta_i)$ be a language of Chiron for $i = 1,2$ and 
\[\sO^{-}_{1} = \sO_1 \setminus
\set{\mname{op-names},\mname{lang}}.\] A \emph{pseudotranslation from
  $L_1$ to $L_2$} is a total mapping \[\Psi : \sO^{-}_{1} \tarrow
\sE_{L_2}\] such that:

\be

  \item For each 0-ary $o \in \sO^{-}_{1}$, either $\Psi(o) \in \sO_2$
    and $\theta_1(o) = \theta_2(\Psi(o))$ or $\Psi(o)$ is a
    semantically closed type, term, or formula of $L_2$ if
    $\theta_1(o)$ is \mname{type}, \mname{term}, or \mname{formula},
    respectively.

  \item For each $n$-ary $o \in O^{-}_{1}$ with $n > 0$, $\Psi(o) \in
    \sO_2$ and $\theta_1(o) = \theta_2(\Psi(o))$.

  \item $\Psi$ is injective on the $\set{o \in \sO^{-}_{1} \;|\;
    \Psi(o) \in \sO_2}$.

\ee
Let $\Delta(\Psi) = \set{o \in \sO^{-}_{1} \;|\; \Psi(o) \not\in \sO_2}$.
(Note that $\Delta(\Psi)$ is a set of $0$-ary operator names that does
not contain \mname{op-names} or \mname{lang}.)  $\Psi$ is extended to
a partial mapping \[\widehat{\Psi}: \sE_{L_1} \tarrow \sE_{L_2}\] by
the following rules:

\be

  \item Let $e \in \sS$.  Then $\widehat{\Psi}(e) = e$.

  \item Let $e \in \sO^{-}_{1} \setminus \Delta(\Psi)$.  Then
    $\widehat{\Psi}(e) = \Psi(e)$.

  \item Let $e \in \Delta(\Psi) \cup
    \set{\mname{op-names},\mname{lang}}$.  Then $\widehat{\Psi}(e)$ is
    undefined.

  \item Let $e \in \sE_{L_1}$ be a constant of the form $(o ::
    \mname{type})(\,)$, $(o :: \mname{C})(\,)$, or $(o ::
    \mname{formula})(\,)$ where $o \in \Delta(\Psi)$.  Then
    $\widehat{\Psi}(e) = \Psi(o)$.

  \item \bsp Let $e$ be a quotation that contains an operator name in
    $\Delta(\Psi) \cup \set{\mname{op-names},\mname{lang}}$. Then
    $\widehat{\Psi}(e)$ is undefined. \esp

  \item Let $e = (e_1,\ldots,e_n) \in \sE_{L_1}$ such that $e$ is
    neither a constant of the form $(o :: \mname{type})(\,)$, $(o ::
    \mname{C})(\,)$, or $(o :: \mname{formula})(\,)$ where $o \in
    \Delta(\Psi)$ nor a quotation that contains an operator name in
    $\Delta(\Psi) \cup \set{\mname{op-names},\mname{lang}}$.  If
    $\widehat{\Psi}(e_1), \ldots, \widehat{\Psi}(e_n)$ are defined,
    then \[\widehat{\Psi}(e) = (\widehat{\Psi}(e_1), \ldots,
    \widehat{\Psi}(e_n)).\] Otherwise $\widehat{\Psi}(e)$ is
    undefined.

\ee 

\begin{cprop}
Let $\Psi$ be a pseudotranslation from $L_1$ to $L_2$.  
\be

  \item If $e$ is a symbol, an operator name, an operator, a type, a
    term, or a formula of $L_1$, then $\widehat{\Psi}(e)$ is a symbol,
    an operator name, an operator, a type, a term, or a formula of
    $L_2$, respectively, provided $\widehat{\Psi}(e)$ is defined.

  \item If $e$ is an expression of $L_1$ such that $\widehat{\Psi}(e)$
    is defined, then the operator names \mname{op-names} and
    \mname{lang} do not occur in $e$ and an operator name in
    $\Delta(\Psi)$ occurs in $e$, if at all, only as the name of a
    constant of the form $(o :: \mname{type})(\,)$, $(o ::
    \mname{C})(\,)$, or $(o :: \mname{formula})(\,)$ that is in not
    within a quotation.

\ee
\end{cprop}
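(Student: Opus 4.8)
The plan is to prove both parts by structural induction, handling part~2 first since it identifies exactly the expressions on which the partial map $\widehat{\Psi}$ is defined, and then part~1. Before the inductions, I would record that the six clauses defining $\widehat{\Psi}$ genuinely partition $\sE_{L_1}$: clauses~1--3 cover the atomic expressions (using $\sO_1 = \sO^{-}_{1} \sqcup \set{\mname{op-names},\mname{lang}}$ and $\sO^{-}_{1} = (\sO^{-}_{1}\setminus\Delta(\Psi)) \sqcup \Delta(\Psi)$), while clauses~4--6 cover the compound expressions, with clauses~4 and~5 describing mutually exclusive special shapes and clause~6 the remainder. Hence $\widehat{\Psi}$ is a well-defined partial function and a structural induction is legitimate.

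For part~2, I would induct on the structure of $e$ under the standing hypothesis that $\widehat{\Psi}(e)$ is defined. If $e$ is a symbol both conclusions hold trivially; if $e$ is an operator name, then $\widehat{\Psi}(e)$ being defined forces $e \notin \Delta(\Psi)\cup\set{\mname{op-names},\mname{lang}}$ by clauses~2--3, as required. For a compound $e = (e_1,\ldots,e_n)$ there are three subcases. If $e$ is one of the permitted constants $(o::\mname{type})(\,)$, $(o::\mname{C})(\,)$, $(o::\mname{formula})(\,)$ with $o \in \Delta(\Psi)$, then expanding the compact notation shows the only operator names occurring in $e$ are $\mname{op-app}$, $\mname{op}$, $o$, and a kind symbol, so $\mname{op-names}$ and $\mname{lang}$ do not occur and the sole occurrence of a $\Delta(\Psi)$-name is $o$ itself, in an allowed constant and outside any quotation. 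If $e$ is a quotation $\synbrack{e'}$, then clause~5 and the definedness hypothesis force $e'$ to contain no name from $\Delta(\Psi)\cup\set{\mname{op-names},\mname{lang}}$, so both conclusions hold vacuously. Otherwise clause~6 applies, each $\widehat{\Psi}(e_i)$ is defined, the induction hypothesis applies to every $e_i$, and no $e_i$ is the leaf $\mname{op-names}$, $\mname{lang}$, or a member of $\Delta(\Psi)$ (that would undefine $\widehat{\Psi}(e_i)$ by clause~3); since $e$ here is not a quotation, the qualifier ``within a quotation'' transfers unchanged between $e$ and its immediate subexpressions, so reassembling the conclusions for the $e_i$ yields the conclusion for $e$.

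For part~1, I would induct on the thirteen formation rules for proper expressions (P-Expr-1--P-Expr-13), again assuming $\widehat{\Psi}(e)$ defined. In every case except the operator-application rule applied to a $0$-ary operator whose name lies in $\Delta(\Psi)$, clause~6 shows $\widehat{\Psi}$ commutes with the relevant tree constructor, and since $\Psi$ carries each surviving operator name $o$ to an operator name $\Psi(o)\in\sO_2$ with $\theta_1(o)=\theta_2(\Psi(o))$, the translated expression satisfies the corresponding formation rule over $L_2$; a companion induction shows the expression assigned to $e$ is translated to a type of $L_2$ (essentially $\widehat{\Psi}$ of the type assigned to $e$, reading the $\mname{quote}$ case as assigning the $L_2$-native type $\mname{E}$), so a term of $L_1$ goes to a term of $L_2$. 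In the remaining case $e = (o::\mname{type})(\,)$, $(o::\mname{C})(\,)$, or $(o::\mname{formula})(\,)$ with $o\in\Delta(\Psi)$, clause~4 gives $\widehat{\Psi}(e)=\Psi(o)$, which by condition~1 of the definition of a pseudotranslation is a semantically closed type, term, or formula of $L_2$ respectively, hence in particular a type, term, or formula of $L_2$.

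The main obstacle I expect is bookkeeping rather than conceptual difficulty: because $\widehat{\Psi}$ is only partially defined, every descent in the induction must first invoke the ``$\widehat{\Psi}(e)$ is defined'' hypothesis to license it, and the interplay between clause~4 (a single forbidden name permitted, in one of three constant shapes, outside quotations), clause~5 (quotations killed as soon as they mention any forbidden name), and clause~6 must be tracked so that the ``not within a quotation'' qualifier in part~2 is preserved exactly as one passes between $e$ and its subexpressions. Once the $0$-ary $\Delta(\Psi)$-name subcase is separated out, checking the thirteen formation-rule cases in part~1 is uniform and routine.
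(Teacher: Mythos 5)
The paper states this proposition without proof, and your structural induction (part 2 first, to delimit exactly where $\widehat{\Psi}$ is defined, then part 1 over the formation rules, with clause 4 discharged via condition 1 of the definition of a pseudotranslation) is precisely the routine argument the paper leaves implicit, so your proposal is correct. One small point of precision: in the permitted-constant case, \mname{op-app}, \mname{op}, \mname{type} and \mname{formula} are key words rather than operator names, and for $(o :: \mname{C})(\,)$ the kind $\mname{C}$ is itself the constant $(\mname{class} :: \mname{type})(\,)$, so \mname{class} is a further operator name occurring in $e$ (and, since normality of $\Psi$ is not assumed in this proposition, it could in principle lie in $\Delta(\Psi)$) --- but that occurrence is again as the name of a constant of a permitted form outside any quotation, so your conclusion is unaffected.
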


\subsection{Pseudointerpretations}

Let $T_i = (L_i,\Gamma_i)$ be a normal theory of Chiron for $i = 1,2$
and $\Psi$ be a pseudotranslation from $L_1$ to $L_2$.  $\Psi$
\emph{fixes} a language $L=(\sO,\theta) \leq L_1$ if $\Psi(o) = o$ for
all $o \in \sO \setminus \set{\mname{op-names},\mname{lang}}$.  $\Psi$
is \emph{normal} if $\sO_1$ is finite and $\Psi$ fixes $L_{\rm ker}$.
Assume $\Psi$ is normal with $\sO_1 = \set{o_1,\ldots,o_n}$.

An \emph{associate} of $\Psi$ is a pair $(T,\Phi)$ where
$T=(L,\Gamma)$ is a theory of Chiron, $L = (\sO,\theta)$, and $\Phi$
is a (regular) translation from $L_1$ to $L$ such that:
\be

  \item $\Phi(o) = \Psi(o)$ for all $o \in \sO^{-}_{1} \setminus
    \Delta(\Psi)$.

  \item $\sO_2 \cap \set{\Phi(o) \;|\; o \in \Delta(\Psi) \cup
    \set{\mname{op-names}, \mname{lang}}} = \emptyset$.

  \item $\sO = \sO_2 \cup \set{\Phi(o) \;|\; o \in \Delta(\Psi) \cup
    \set{\mname{op-names}, \mname{lang}}}$.

  \item $\theta = \theta_2 \cup \set{\seq{\Phi(o),\theta_1(o)} \;|\; o
    \in \Delta(\Psi) \cup \set{\mname{op-names}, \mname{lang}}}$.

  \item $\Gamma_{\rm ty} = \set{\widehat{\Phi}((o :: \mname{type})(\,))
    \TypeEqual \Psi(o) \;|\; o \in \Delta(\Psi) \mbox{ and }
    \theta(o) = \mname{type}}$.

  \item $\Gamma_{\rm te} = \set{\widehat{\Phi}((o :: \mname{C})(\,))
    \QuasiEqual \Psi(o) \;|\; o \in \Delta(\Psi) \mbox{ and }
    \theta(o) = \mname{term}}$.

  \item $\Gamma_{\rm fo} = \set{\widehat{\Phi}((o ::
    \mname{formula})(\,)) \Iff \Psi(o) \;|\; o \in \Delta(\Psi) \mbox{
      and } \theta(o) = \mname{formula}}$.

  \item $A_1$ is $\widehat{\Phi}(\ell) =
    \set{\synbrack{\Phi(o_1)},\ldots,\synbrack{\Phi(o_n)}}$.

  \item $A_2$ is $\widehat{\Phi}(\mname{L}) \TypeEqual
    \mname{type}(\mname{power}
    (\set{\synbrack{\Phi(o_1)},\ldots,\synbrack{\Phi(o_n)}}))$

  \item $\Gamma = \Gamma_2 \cup \Gamma_{\rm ker}^{L} \cup \Gamma_{\rm
    ty} \cup \Gamma_{\rm te} \cup \Gamma_{\rm fo} \cup \set{A_1,A_2}$.

\ee

\begin{crem} \em
If $T_i = (L_i,\Gamma_i)$ is a normal theory of Chiron for $i = 1,2$
and $\Psi$ is a normal pseudotranslation from $L_1$ to $L_2$, an
associate of $\Psi$ can be easily constructed after an appropriate set
of ``new'' $0$-ary operator names are added to $L_2$ to obtain $L$.
Moreover, two associates $(T,\Phi)$ and $(T',\Phi')$ of $\Psi$ are
identical except that $\Phi$ and $\Phi'$ may map $\Delta(\Psi) \cup
\set{\mname{op-names}, \mname{lang}}$ to different sets of new $0$-ary
operator names.
\end{crem}

Let $L$ be a language of Chiron and $T = (L,\Gamma)$ be a theory of
Chiron.  A formula $A$ of $L$ is \emph{independent of $L$ in $T$} if
$(L',\Gamma) \models A$ for some language $L'$ with $L \le L'$ implies
$(L',\Gamma) \models A$ for all languages $L'$ with $L \le L'$.

\begin{clem} \label{lem:associates}
Let $T_i = (L_i,\Gamma_i)$ be a normal theory of Chiron for $i = 1,2$;
$\Psi$ be a normal pseudotranslation from $L_1$ to $L_2$; and
$(T,\Phi)$ where $T = (L,\Gamma)$ be an associate of $\Psi$.
\be

  \item $T_2 \le T$.

  \item $T$ is normal.

  \item For all formulas $A$ of $L_1$ such that $\widehat{\Psi}(A)$ is
    defined, \[T \models \widehat{\Psi}(A) \Iff \widehat{\Phi}(A).\]

  \item For all formulas $A$ of $L_1$ such that $\widehat{\Psi}(A)$ is
    defined and independent of $L_2$ in $T_2$, \[T_2 \models
    \widehat{\Psi}(A) \mbox{ implies } T \models \widehat{\Phi}(A).\]

\iffalse
  \item For all formulas $A$ of $L_1$ such that $\widehat{\Psi}(A)$ is
    defined, \[T \models \widehat{\Phi}(A) \mbox{ implies } T_2
    \models \widehat{\Psi}(A).\]
\fi

\ee
\end{clem}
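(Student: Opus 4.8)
The plan is to take the four parts in order; Parts~1 and~2 are bookkeeping, Part~3 is the technical core, and Part~4 is a short corollary of Part~3. For Part~1 I would unwind the definition of an associate: clauses~2--4 give $\sO_2 \subseteq \sO$ with $\theta_2$ a subfunction of $\theta$ (the new operator names $\Phi(o)$ for $o \in \Delta(\Psi) \cup \set{\mname{op-names},\mname{lang}}$ are chosen disjoint from $\sO_2$, so nothing conflicts), whence $L_2 \le L$, and clause~10 gives $\Gamma_2 \subseteq \Gamma$; hence $T_2 \le T$. For Part~2, since $T_2$ is normal we have $L_{\rm ker} \le L_2$, and by Part~1 $L_2 \le L$, so $L$ is normal; clause~10 also yields $\Gamma_{\rm ker}^{L} \subseteq \Gamma$, so $T_{\rm ker}^{L} \le T$, i.e.\ $T$ is normal.

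Part~3 is the crux. I would prove, by induction on the length of a proper expression $e$ of $L_1$ for which $\widehat{\Psi}(e)$ is defined, that $V_{M,\phi}(\widehat{\Psi}(e)) = V_{M,\phi}(\widehat{\Phi}(e))$ for every standard model $M=(S,V)$ of $T$ and every $\phi \in \mname{assign}(S)$; the statement for a formula $A$ then follows via the semantics of \mname{formula-equal} (Proposition~\ref{prop:pc}), using that $\widehat{\Psi}(A)$ is a formula of $L_2 \le L$ and $\widehat{\Phi}(A)$ a formula of $L$. The essential input is the second clause of the proposition on pseudotranslations stated just above the lemma: when $\widehat{\Psi}(e)$ is defined, $\mname{op-names}$ and $\mname{lang}$ do not occur in $e$, and any $o \in \Delta(\Psi)$ occurs only as the name of a constant of one of the forms $(o::\mname{type})(\,)$, $(o::\mname{C})(\,)$, $(o::\mname{formula})(\,)$ not inside a quotation. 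So the only place $\widehat{\Psi}$ and $\widehat{\Phi}$ genuinely diverge is at such a constant: there $\widehat{\Psi}$ returns $\Psi(o)$ while $\widehat{\Phi}$ returns the new-operator constant $\widehat{\Phi}((o::k)(\,))$, and the corresponding defining axiom from $\Gamma_{\rm ty}$, $\Gamma_{\rm te}$, or $\Gamma_{\rm fo}$, which lies in $\Gamma$, asserts $\widehat{\Phi}((o::k)(\,)) \TypeEqual \Psi(o)$, $\QuasiEqual \Psi(o)$, or $\Iff \Psi(o)$, so the two sides take the same value in $M$. The other cases are routine: on a symbol both translations are the identity; on an operator name $o \in \sO^{-}_{1}\setminus\Delta(\Psi)$ both give $\Psi(o)=\Phi(o)$; on a quotation (which, being defined under $\widehat{\Psi}$, contains no name of $\Delta(\Psi) \cup \set{\mname{op-names},\mname{lang}}$, and since $\Phi,\Psi$ agree off $\Delta(\Psi)$) $\widehat{\Psi}$ and $\widehat{\Phi}$ produce syntactically identical expressions; and for any other compound $e=(e_1,\dots,e_n)$, rule~6 in the definition of $\widehat{\Psi}$ forces each $\widehat{\Psi}(e_i)$ to be defined, so the induction hypothesis plus compositionality of the standard valuation closes the case.

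Part~4 then follows quickly: if $\widehat{\Psi}(A)$ is independent of $L_2$ in $T_2$ and $T_2 = (L_2,\Gamma_2) \models \widehat{\Psi}(A)$, then since $L_2 \le L$ the definition of independence gives $(L,\Gamma_2) \models \widehat{\Psi}(A)$, and Lemma~\ref{lem:theory-ext} (whose hypothesis $L_1 = L_2$ is now met, both theories having language $L$) yields $T = (L,\Gamma) \models \widehat{\Psi}(A)$, whence $T \models \widehat{\Phi}(A)$ by Part~3. The step I expect to be the main obstacle is the careful setup of Part~3: getting the induction to interact correctly with the \emph{partial} map $\widehat{\Psi}$, so that definedness of $\widehat{\Psi}(e)$ descends to exactly the components one needs, and invoking the preceding proposition to exclude the troublesome occurrences of $\Delta(\Psi)$-names, $\mname{op-names}$, and $\mname{lang}$; once that scaffolding is in place each individual case is an immediate appeal to the definitions of $\widehat{\Psi}$, $\widehat{\Phi}$, the associate's defining axioms, and compositionality.
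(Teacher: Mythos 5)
Your proposal is correct and follows essentially the same route as the paper: the paper dismisses Parts~1--3 with one-line justifications (``obvious'', ``by the definition of $\widehat{\Psi}$ and the construction of $\Gamma$''), and your explicit induction for Part~3 is exactly the argument those remarks are pointing at, with the defining axioms in $\Gamma_{\rm ty} \cup \Gamma_{\rm te} \cup \Gamma_{\rm fo}$ bridging the one place where $\widehat{\Psi}$ and $\widehat{\Phi}$ diverge. Your Part~4 (independence to pass from $(L_2,\Gamma_2)$ to $(L,\Gamma_2)$, then Lemma~\ref{lem:theory-ext}, then Part~3) is the paper's proof almost verbatim.
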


\begin{proof}

\bigskip

\noindent \textbf{Part 1} \sglsp Obvious.

\bigskip

\noindent \textbf{Part 2} \sglsp Follows immediately from part 1 and
the fact that $T_2$ is normal.

\bigskip

\noindent \textbf{Part 3} \sglsp By the definition of $\widehat{\Psi}$
and the construction of $\Gamma$.

\bigskip

\noindent \textbf{Part 4} \sglsp Let $A$ be a formula of $L_1$ such
that $\widehat{\Psi}(A)$ is defined and independent of $L_2$ in $T_2$
and $T_2 \models \widehat{\Psi}(A)$.  We must show $T \models
\widehat{\Phi}(A)$.  Let $T'_2 = (L,\Gamma_2)$.  $T'_2 \models
\widehat{\Psi}(A)$ since $\widehat{\Psi}(A)$ is independent of $L_2$
in $T_2$.  By part 1, $T_2 \le T'_2 \le T$, and so, by
Lemma~\ref{lem:theory-ext}, $T \models \widehat{\Psi}(A)$.  By part 3,
$T \models \widehat{\Psi}(A) \Iff \widehat{\Phi}(A)$.  Therefore, $T
\models \widehat{\Phi}(A)$.
\end{proof}

\bigskip

A pseudotranslation $\Psi$ from $L_1$ to $L_2$ is a \emph{(semantic)
  pseudointerpretation of $T_1$ in $T_2$} if:

\be

  \item $\Psi$ is normal.

  \item For each $A \in \Gamma_1 \setminus \Gamma_{\rm ker}^{L_1}$,
    $A$ is eval-free, $\widehat{\Psi}(A)$ is defined and independent
    of $L_2$ in $T_2$, and $T_2 \models \widehat{\Psi}(A)$.

\ee

\begin{cthm} \label{thm:associates}
Let $T_i = (L_i,\Gamma_i)$ be a normal theory of Chiron for $i = 1,2$;
$\Psi$ be a pseudointerpretation of $T_1$ in $T_2$; and $(T,\Phi)$ be
an associate of $\Psi$.  
\be

  \item $\Phi$ is an interpretation of $T_1$ in $T$.

  \item For all formulas $A$ of $L_1$ such that $\widehat{\Psi}(A)$ is
    defined and independent of $L_2$ in $T_2$, \[T_1 \models A \mbox{
      implies } T_2 \models \widehat{\Psi}(A).\]

\ee
\end{cthm}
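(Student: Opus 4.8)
The plan is to obtain Part~1 as an application of Theorem~\ref{thm:interpretation} with $T_2$ replaced by the associate theory $T$, and then to derive Part~2 from Part~1 together with Lemma~\ref{lem:associates} and the independence hypothesis. For Part~1, I must check the two hypotheses of Theorem~\ref{thm:interpretation} in the form ``$\Phi$ is normal for $T$'' and ``$T \models \widehat{\Phi}(\crel{A})$ for each $A \in \Gamma_1$''. Finiteness of $\sO_1$ is part of $\Psi$ being a normal pseudotranslation. That $\Phi$ fixes $L_{\rm ker}$ follows from clause~1 of the definition of an associate, which makes $\Phi$ agree with $\Psi$ off $\Delta(\Psi)$, together with the facts that $\Psi$ fixes $L_{\rm ker}$ and that no kernel operator name lies in $\Delta(\Psi)$ (since $L_{\rm ker} \le L_2$ by normality of $T_2$, so $\Psi$ sends each kernel name to itself, which is in $\sO_2$). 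The two equality requirements in the definition of normality are exactly the formulas $A_1$ and $A_2$, which belong to $\Gamma$, so $\Phi$ is normal for $T$; and $T$ itself is normal by Lemma~\ref{lem:associates}(2).

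To get $T \models \widehat{\Phi}(\crel{A})$ for each $A \in \Gamma_1$, I would split $\Gamma_1 = \Gamma_{\rm ker}^{L_1} \cup (\Gamma_1 \setminus \Gamma_{\rm ker}^{L_1})$. For $A \in \Gamma_{\rm ker}^{L_1}$, Lemma~\ref{lem:normal-trans} applied with $T$ in place of $T_2$ (legitimate because $T$ is normal and $\Phi$ is normal for $T$) gives $T \models \widehat{\Phi}(\crel{A})$ directly. For $A \in \Gamma_1 \setminus \Gamma_{\rm ker}^{L_1}$, the definition of a pseudointerpretation supplies that $A$ is eval-free (so $\crel{A} = A$), that $\widehat{\Psi}(A)$ is defined and independent of $L_2$ in $T_2$, and that $T_2 \models \widehat{\Psi}(A)$; then Lemma~\ref{lem:associates}(4) yields $T \models \widehat{\Phi}(A) = \widehat{\Phi}(\crel{A})$. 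Theorem~\ref{thm:interpretation} now delivers Part~1.

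For Part~2, suppose $T_1 \models A$ with $\widehat{\Psi}(A)$ defined and independent of $L_2$ in $T_2$. By Part~1, $\Phi$ is an interpretation of $T_1$ in $T$, hence $T \models \widehat{\Phi}(\crel{A})$. It remains to descend to $T_2 \models \widehat{\Psi}(A)$. Since $\widehat{\Psi}(A)$ is independent of $L_2$ in $T_2$, it is enough to establish $(L,\Gamma_2) \models \widehat{\Psi}(A)$, and I would do this by relating standard models of $(L,\Gamma_2)$ to standard models of $T$ through the purely definitional part of $\Gamma$ (the axioms in $\Gamma_{\rm ker}^L$, $\Gamma_{\rm ty}$, $\Gamma_{\rm te}$, $\Gamma_{\rm fo}$, and $A_1$, $A_2$), then using Lemma~\ref{lem:associates}(3) to replace $\widehat{\Phi}(\cdot)$ by $\widehat{\Psi}(\cdot)$ inside $T$, Proposition~\ref{prop:rel} to absorb the relativization, and the axiom $A_1$ (which identifies $\widehat{\Phi}(\ell)$ with the construction set of the image language $\Phi(L_1)$) to see that $\widehat{\Phi}(\crel{A})$ does force $\widehat{\Psi}(A)$.

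The hard part will be exactly this last descent in Part~2: because $\widehat{\Phi}$ renames \mname{op-names} to a fresh $0$-ary operator name, $\widehat{\Phi}(\crel{A})$ relativizes its evaluations to $\Phi(L_1)$ rather than to the ambient language, so matching it against the unrelativized $\widehat{\Psi}(A)$ requires combining $A_1$ with the independence of $\widehat{\Psi}(A)$, and one must be careful that the extra definitional axioms of $T$ over $\Gamma_2$ do not affect $L_2$-formulas that are independent of $L_2$ in $T_2$. By contrast, Part~1 is essentially routine once the normality conditions and Lemmas~\ref{lem:associates} and~\ref{lem:normal-trans} are lined up.
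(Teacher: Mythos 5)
Your proposal follows essentially the same route as the paper's proof: Part~1 by verifying that $\Phi$ is normal for $T$ (finiteness of $\sO_1$, fixing of $L_{\rm ker}$, and the axioms $A_1,A_2$), splitting $\Gamma_1$ into its kernel part (handled by Lemma~\ref{lem:normal-trans}) and the rest (handled by the pseudointerpretation conditions plus part~4 of Lemma~\ref{lem:associates}), and invoking Theorem~\ref{thm:interpretation}; Part~2 by the same chain of steps, namely Part~1, part~3 of Lemma~\ref{lem:associates}, passage from $T$ to $(L,\Gamma_2)$, and then independence of $\widehat{\Psi}(A)$ of $L_2$ in $T_2$. The two points you single out as the hard part of Part~2 --- reconciling $\widehat{\Phi}(\crel{A})$ with the unrelativized $\widehat{\Phi}(A)$ before applying Lemma~\ref{lem:associates}(3), and justifying the descent from the larger theory $T$ to $(L,\Gamma_2)$ --- are simply asserted without comment in the paper's proof (it writes $T \models \widehat{\Phi}(A)$ directly and passes to $T'_2=(L,\Gamma_2)$ in one step), so your extra caution there is compatible with, and somewhat more explicit than, what the paper records.
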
 

\begin{proof} 

\bigskip

\noindent \textbf{Part 1} \sglsp First, we must show that $T$ is a
normal theory.  $T$ is normal since $\Gamma_{\rm ker}^{L} \le \Gamma$
by the construction of $T$.

Second, we must show that $\Phi$ is normal for $T$.  Since $\Psi$ is a
pseudointerpretation, $\Psi$ is normal and hence $\sO_1$ is finite and
$\Psi$ fixes $L_{\rm ker}$.  The latter implies $\Phi$ fixes $L_{\rm
  ker}$.  The last conditions required for $\Phi$ to be normal for $T$
are met since the axioms of $T$ include the formulas $A_1$ and $A_2$
from the definition of an associate of a pseudotranslation.

By Theorem~\ref{thm:interpretation}, it remains for us to show that $T
\models \widehat{\Phi}(\crel{A})$ for each $A \in \Gamma_1$.  Let $A
\in \Gamma_1 \setminus \Gamma_{\rm ker}^{L_1}$.  Then $T \models
\widehat{\Phi}(\crel{A})$ by Lemma~\ref{lem:normal-trans} and the fact
that $\Phi$ is normal for $T$.  Now let $A \in \Gamma_1 \setminus
\Gamma_{\rm ker}^{L_1}$.  Then, since $\Psi$ is a
pseudointerpretation, (a) $A$ is eval-free, (b) $\widehat{\Psi}(A)$ is
defined and independent of $L_2$ in $T_2$, and (c) $T_2 \models
\widehat{\Psi}(A)$.  (a) implies (d) $A = \crel{A}$.  By part 4 of
Lemma~\ref{lem:associates}, (b) and (c) imply (e) $T \models
\widehat{\Phi}(A)$.  And (d) and (e) implies $T \models
\widehat{\Phi}(\crel{A})$.

\bigskip

\noindent \textbf{Part 2} \sglsp Let $A$ be a formula of $L_1$ such
that (a) $\widehat{\Psi}(A)$ is defined, (b) $\widehat{\Psi}(A)$ is
independent of $L_2$ in $T_2$, and (c)~$T_1 \models A$.  We must show
$T_2 \models \widehat{\Psi}(A)$.  (c)~implies (d) $T \models
\widehat{\Phi}(A)$ since $\Phi$ is an interpretation.  By part 3 of
Lemma~\ref{lem:associates}, (d) implies $T \models \widehat{\Psi}(A)$.
(a) and (d) imply (e) $T'_2 \models \widehat{\Psi}(A)$ where $T'_2 =
(L,\Gamma_2)$.  Finally, (b) and (e) imply $T_2 \models
\widehat{\Psi}(A)$.
\end{proof}

\begin{crem} \em
By virtue of Theorem~\ref{thm:associates} a pseudointerpretation can
be viewed as a regular interpretation in a more convenient form.
\end{crem}

\section{Conclusion}\label{sec:conclusion}

In this paper we have presented the syntax and semantics of a set
theory named Chiron that is intended to be a practical,
general-purpose logic for mechanizing mathematics.  Several operator
definitions and simple examples are given that illustrate Chiron's
practical expressivity, especially its facility for reasoning about
the syntax of expressions.  A proof system for Chiron is presented
that is intended to be a system test of Chiron's definition and a
reference system for other, more practical, proof systems for Chiron.
The proof system is proved to be sound and also complete in a
restricted sense.  And a notion of an interpretation of one theory in
another is defined.

This paper is a first step in a long-range research program to design,
analyze, and implement Chiron.  In the future we plan to:
\be

  \item Design a practical proof system for Chiron.

  \item Implement Chiron and its proof system.

  \item Develop a series of applications to demonstrate Chiron's reach
  and level of effectiveness.  As a first step, we have shown how
  biform theories can be formalized in Chiron~\cite{Farmer07b}.  A
  \emph{biform theory} is a theory in which both formulas and
  algorithms can serve as
  axioms~\cite{Farmer07b,FarmerMohrenschildt03}.

\ee

\addcontentsline{toc}{section}{Acknowledgments}
\section*{Acknowledgments}

The author is grateful to Marc Bender and Jacques Carette for many
valuable discussions on the design and use of Chiron.  Over the course
of these discussions, Dr.~Carette convinced the author that Chiron
needs to include a powerful facility for reasoning about the syntax of
expressions.  The author is also grateful to Russell O'Connor for
reading over the paper carefully and pointing out several mistakes.

%\addcontentsline{toc}{section}{Appendix: Alternate Semantics}

\appendix

\section{Appendix: Alternate Semantics}\label{sec:alt-semantics}

\bsp This appendix presents two alternate semantics for Chiron based
on S.~Kripke's framework for defining semantics with \emph{truth-value
gaps} which is described in his famous paper \emph{Outline of a Theory
of Truth}~\cite{Kripke75}.  Both semantics use \emph{value gaps} for
types and terms as well as for formulas.  The first defines the value
gaps according to weak Kleene logic~\cite{Kleene52}, while the second
defines the values gaps according to a valuation scheme based on
B.~van~Fraassen's notion of a
\emph{supervaluation}~\cite{vanFraassen66} that Kripke describes
in~\cite[p.~711]{Kripke75}. \esp

%\addcontentsline{toc}{subsection}{Valuations}

\subsection{Valuations}

The notion of a valuation for a structure was defined in
subsection~\ref{subsec:val}.  Fix a structure $S$ for $L$.  Let
$\mname{val}(S)$ be the collection of valuations for $S$.  Given $U,V
\in \mname{val}(S)$, $U$ is a \emph{subvaluation} of $V$, written $U
\subvaluation V$, if, for all $e \in \sE_L$ and $\phi \in
\mname{assign}(S)$, $U_\phi(e)$ is defined implies $U_\phi(e) =
V_\phi(e)$.  A \emph{valuation functional} for $S$ is a mapping from
$\mname{val}(S)$ into $\mname{val}(S)$.  Let $\Psi$ be a valuation
functional for $S$.  A \emph{fixed point} of $\Psi$ is a $V \in
\mname{val}(S)$ such that $\Psi(V) = V$.  $\Psi$ is \emph{monotone} if
$U \subvaluation V$ implies $\Psi(U) \subvaluation \Psi(V)$ for all
$U,V \in \mname{val}(S)$.

\begin{cthm}\label{thm:fixed-point}
Let $\Psi$ be a monotone valuation functional for $S$.  Then $\Psi$
has a fixed point.
\end{cthm}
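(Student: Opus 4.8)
The plan is to prove Theorem~\ref{thm:fixed-point} by the standard transfinite-iteration fixed-point construction (in the spirit of Knaster--Tarski and Kleene), exploiting the fact that the valuations for $S$ form a chain-complete poset with a least element under $\subvaluation$. First I would record two structural facts. (1)~The relation $\subvaluation$ is a partial order on $\mname{val}(S)$, and the nowhere-defined partial mapping $V_\emptyset$ (which vacuously satisfies the typing conditions in the definition of a valuation) is its least element. (2)~If $\sC \subseteq \mname{val}(S)$ is a $\subvaluation$-chain, then its union $V^\ast$, defined by setting $V^\ast_\phi(e)$ equal to $V_\phi(e)$ whenever some $V \in \sC$ has $V_\phi(e)$ defined, is again a valuation for $S$: it is well defined because the members of a chain are pairwise compatible, and it satisfies the typing constraints because those constraints in subsection~\ref{subsec:val} are \emph{pointwise}, i.e., they restrict $V_\phi(e)$ separately for each pair $(e,\phi)$; so if every member of $\sC$ obeys them, so does $V^\ast$. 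Moreover $V^\ast$ is the least upper bound of $\sC$.

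Next I would iterate $\Psi$ from the bottom. Define by transfinite recursion on ordinals $V_0 = V_\emptyset$, $V_{\alpha+1} = \Psi(V_\alpha)$, and $V_\lambda = \bigcup_{\alpha < \lambda} V_\alpha$ for limit $\lambda$, the union being a valuation by fact (2) once one knows $\set{V_\alpha : \alpha < \lambda}$ is a chain. I would then prove by transfinite induction that $\alpha \le \beta$ implies $V_\alpha \subvaluation V_\beta$. The base step $V_0 = V_\emptyset \subvaluation \Psi(V_0) = V_1$ holds because $V_\emptyset$ is least; the successor step uses monotonicity of $\Psi$, passing from $V_\alpha \subvaluation V_\beta$ to $V_{\alpha+1} = \Psi(V_\alpha) \subvaluation \Psi(V_\beta) = V_{\beta+1}$; and the limit step follows from the definition of $V_\lambda$ as an upper bound together with the observation that for limit $\lambda$ every $\alpha < \lambda$ satisfies $\alpha+1 < \lambda$, so $V_\alpha \subvaluation V_{\alpha+1} \subvaluation \Psi(V_\lambda) = V_{\lambda+1}$ and hence $V_\lambda \subvaluation V_{\lambda+1}$.

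Finally I would close the argument by a cardinality observation: since $\sE_L$ and $\mname{assign}(S)$ are sets and the value domain $\Do \cup \Ds \cup \set{\TRUE,\FALSE,\Undefined}$ is a set, the collection $\mname{val}(S)$ is a set. Therefore the $\subvaluation$-increasing ordinal sequence $(V_\alpha)$ cannot be strictly increasing at every successor step, for otherwise $\alpha \mapsto V_\alpha$ would inject a proper class of ordinals into the set $\mname{val}(S)$. Hence there is an ordinal $\alpha$ with $V_\alpha = V_{\alpha+1}$, that is, $V_\alpha = \Psi(V_\alpha)$, which is the required fixed point.

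The substantive point is fact (2): closure of $\subvaluation$-chains under unions is the real content, but it reduces to inspecting the definition of a valuation and noting its pointwise character, so I expect it to be routine. The termination argument in the last paragraph is the other place to be careful, since it depends on the set-versus-class bookkeeping being in order, but that is standard. Everything else is the textbook monotone-fixed-point construction, and it actually yields the \emph{least} fixed point of $\Psi$, which may be worth remarking.
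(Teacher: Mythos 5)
Your proof is correct, and it is essentially the paper's own argument: the paper simply defers to Kripke's fixed-point construction in \emph{Outline of a Theory of Truth}, which is exactly the bottom-up transfinite iteration from the nowhere-defined valuation, with unions at limits and a cardinality argument for stabilization, that you carry out in detail. Your observations that $\subvaluation$-chains have least upper bounds (because the typing constraints on valuations are pointwise) and that the construction yields the least fixed point match Kripke's minimal-fixed-point construction to which the paper appeals.
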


\begin{proof}
The construction of a fixed point of $\Psi$ is similar to the
construction of the fixed point Kripke gives
in~\cite[pp.~703--705]{Kripke75}.
\end{proof}

\bigskip

$\Psi_{1}^{S}$ is the valuation functional for $S$ defined by the
following rules where $V \in \mname{val}(S)$ and $V' =
\Psi_{1}^{S}(V)$.  There is a rule for the category of improper
expressions and a rule for each of the 13 categories of proper
expressions.  Note that only part (a) of the rule 14 (the rule for
evaluation) makes use of $V$.  $\Psi_{1}^{S}$ defines value gaps
according to the weak Kleene logic valuation scheme in which a proper
expression is denoting only if all of its proper subexpressions are
also denoting.

\be

  \item Let $e \in \sE_L$ be improper.  Then $V'_{\phi}(e)$ is
    undefined.

  \item Let $O = (\mname{op}, s, k_1,\ldots,k_n,k_{n+1})$ be proper.

  \be

    \item Let $V'_{\phi}(k_i)$ be defined for all $i$ with $1 \le i
      \le n+1$ and $\ctype{L}{k_i}$.  Then $I(o)$ is an $n$-ary
      operation in $\Do$ from $D_1 \times \cdots \times D_n$ into
      $D_{n+1}$.  $V'_{\phi}(O)$ is the $n$-ary operation in $\Do$
      from $D_1 \times \cdots \times D_n$ into $D_{n+1}$ defined as
      follows.  Let $(d_1,\ldots,d_n) \in D_1 \times \cdots \times
      D_n$ and $d = I(o)(d_1,\ldots,d_n)$.  If $d_i$ is in
      $V'_{\phi}(k_i)$ or $d_i = \Undefined$ for all $i$ such that $1
      \le i \le n$ and $\ctype{L}{k_i}$ and $d$ is in
      $V'_{\phi}(k_{n+1})$ or $d = \Undefined$ when
      $\ctype{L}{k_{n+1}}$, then $V'_{\phi}(O)(d_1,\ldots,d_n) = d$.
      Otherwise, $V'_{\phi}(O)(d_1,\ldots,d_n)$ is $\Dc$ if $k_{n+1} =
      \mname{type}$, $\Undefined$ if $\ctype{L}{k_{n+1}}$, and
      $\FALSE$ if $k_{n+1} = \mname{formula}$.

    \item Let $V'_{\phi}(k_i)$ be undefined for some $i$ such that $1
      \le i \le n+1$ and $\ctype{L}{k_i}$. Then $V'_{\phi}(e)$ is
      undefined.

  \ee

  \item Let $e = (\mname{op-app}, O, e_1,\ldots,e_n)$ be proper.

  \be

    \item Let $V'_{\phi}(O), V'_{\phi}(e_1), \ldots, V'_{\phi}(e_n)$
      be defined.  Then \[V'_{\phi}(e) = V'_{\phi}(O)(V'_{\phi}(e_1),
      \ldots, V'_{\phi}(e_n)).\]

    \item Let one of $V'(O), V'_{\phi}(e_1), \ldots, V'_{\phi}(e_n)$
      be undefined.  Then $V'_{\phi}(e)$ is undefined.

  \ee

\iffalse

  \item Let $O = (\mname{op}, s, k_1,\ldots,k_n,k_{n+1})$ be proper.
  Then $V'_{\phi}(O) = I(O)$.

  \item Let $e = (\mname{op-app}, O, e_1,\ldots,e_n)$ be proper where
  \[O = (\mname{op}, s, k_1,\ldots,k_n,k_{n+1}).\]

  \be

    \item Let $V'_{\phi}(k_i)$ be defined for all $i$ with $1 \le i
    \le n$ and $\ctype{L}{k_i}$, and let $V'_{\phi}(e_1), \ldots,
    V'_{\phi}(e_n)$ be defined.  If $V'_{\phi}(e_i)$ is in
    $V'_{\phi}(k_i)$ or $V'_{\phi}(e_i) = \Undefined$ for all $i$ such
    that $1 \le i \le n$ and $\ctype{L}{k_i}$, then
    \[V'_{\phi}(e) = V'_{\phi}(O)(V'_{\phi}(e_1),\ldots,V'_{\phi}(e_n)).\] 
    Otherwise $V'_{\phi}(e)$ is $\Dc$ if $k = \mname{type}$,
    $\Undefined$ if $\ctype{L}{k}$, and $\FALSE$ if $k =
    \mname{formula}$.

    \item Let $V'_{\phi}(k_i)$ be undefined for some $i$ with $1 \le i
    \le n$ and $\ctype{L}{k_i}$, or let $V'_{\phi}(e_i)$ be undefined for
    some $i$ with $1 \le i \le n$. Then $V'_{\phi}(e)$ is undefined.

  \ee

\fi

  \item Let $a = (\mname{var}, x, \alpha)$ be proper.
  
  \be

    \item Let $V'_{\phi}(\alpha)$ be defined.  If $\phi(x)$ is in
    $V'_{\phi}(\alpha)$, then $V'_{\phi}(a) = \phi(x)$.  Otherwise
    $V'_{\phi}(a) = \Undefined$.

    \item Let $V'_{\phi}(\alpha)$ be undefined.  Then
    $V'_{\phi}(a)$ is undefined.

  \ee

  \item Let $\beta = (\mname{type-app},\alpha,a)$ be proper.

  \be

    \item Let $V'_{\phi}(\alpha)$ and $V'_{\phi}(a)$ be defined.  If
    $V'_{\phi}(a)\not= \Undefined$, then $V'_{\phi}(\beta) =
    V'_{\phi}(\alpha)[V'_{\phi}(a)]$.  Otherwise $V'_{\phi}(\beta) =
    \Dc$.

    \item Let $V'_{\phi}(\alpha)$ or $V'_{\phi}(a)$ be undefined.  Then
    $V'_{\phi}(\beta)$ is undefined.

  \ee

  \item Let $\gamma =
  (\mname{dep-fun-type},(\mname{var},x,\alpha),\beta)$ be proper.

  \be

    \item Let $V'_{\phi}(\alpha)$ be defined, and let $V'_{\phi[x
        \mapsto d]}(\beta)$ be defined for all sets $d$ in
      $V'_{\phi}(\alpha)$.  Then $V'_{\phi}(\gamma)$ is the superclass
      of all $g$ in $\Df$ such that, for all $d$ in $\Dv$, if $g(d)$
      is defined, then $d$ is in $V'_{\phi}(\alpha)$ and $g(d)$ is in
      $V'_{\phi[x \mapsto d]}(\beta)$.

    \item Let $V'_{\phi}(\alpha)$ be undefined, or let $V'_{\phi[x
    \mapsto d]}(\beta)$ be undefined for some set $d$ in
    $V'_{\phi}(\alpha)$.  Then $V'_{\phi}(\gamma)$ is undefined.

  \ee

  \item Let $b =(\mname{fun-app},f,a)$ be proper.

  \be

    \item Let $V'_{\phi}(f)$ and $V'_{\phi}(a)$ be defined.  If
    $V'_{\phi}(f) \not= \Undefined$ and $V'_{\phi}(a) \not= \Undefined$,
    then $V'_{\phi}(b) = V'_{\phi}(f)(V'_{\phi}(a))$.  Otherwise
    $V'_{\phi}(b) = \Undefined$.

    \item Let $V'_{\phi}(f)$ or $V'_{\phi}(a)$ be undefined.  Then
    $V'_{\phi}(b)$ is undefined.

  \ee

  \item Let $f = (\mname{fun-abs},(\mname{var},x,\alpha),b)$ be
  proper.

  \be

    \item Let $V'_{\phi}(\alpha)$ be defined, and let $V'_{\phi[x
    \mapsto d]}(b)$ be defined for all sets $d$ in
    $V'_{\phi}(\alpha)$.  If
    \begin{eqnarray*}
    g & = & \{\seq{d,d'} \; | \; d \mbox{ is a set in }
    V'_{\phi}(\alpha) \mbox{ and } \\ & & \hspace{9ex} d' = V'_{\phi[x
    \mapsto d]}(b) \mbox{ is a set}\}
    \end{eqnarray*}
    is in $\Df$, then $V'_{\phi}(f) = g$.  Otherwise $V'_{\phi}(f) =
    \Undefined$.

    \item Let $V'_{\phi}(\alpha)$ be undefined, or let $V'_{\phi[x
    \mapsto d]}(b)$ be undefined for some set $d$ in
    $V'_{\phi}(\alpha)$.  Then $V'_{\phi}(f)$ is undefined.

  \ee

  \item Let $a = (\mname{if},A,b,c)$ be proper.

  \be

    \item Let $V'_{\phi}(A),V'_{\phi}(b),V'_{\phi}(c)$ be defined.  If
    $V'_{\phi}(A) = \TRUE$, then $V'_{\phi}(a) = V'_{\phi}(b)$.  Otherwise
    $V'_{\phi}(a) = V'_{\phi}(c)$.

    \item Let one of $V'_{\phi}(A),V'_{\phi}(b),V'_{\phi}(c)$ be
    undefined.  Then $V'_{\phi}(a)$ is undefined.

  \ee

  \item Let $A = (\mname{exists},(\mname{var}, x, \alpha),B)$ be
  proper.

  \be

    \item Let $V'_{\phi}(\alpha)$ be defined, and let $V'_{\phi[x
    \mapsto d]}(B)$ be defined for all $d$ in $V'_{\phi}(\alpha)$.  If
    there is some $d$ in $V'_{\phi}(\alpha)$ such that $V'_{\phi[x
    \mapsto d]}(B) = \TRUE$, then $V'_{\phi}(A) = \TRUE$.  Otherwise,
    $V'_{\phi}(A) = \FALSE$.

    \item Let $V'_{\phi}(\alpha)$ be undefined, or let $V'_{\phi[x
    \mapsto d]}(B)$ be undefined for some $d$ in $V'_{\phi}(\alpha)$.
    Then $V'_{\phi}(A)$ is undefined.

  \ee

  \item Let $a = (\mname{def-des},(\mname{var}, x, \alpha),B)$ be
  proper.

  \be

    \item \bsp Let $V'_{\phi}(\alpha)$ be defined, and let $V'_{\phi[x
    \mapsto d]}(B)$ be defined for all $d$ in $V'_{\phi}(\alpha)$.  If
    there is a unique $d$ in $V'_{\phi}(\alpha)$ such that $V'_{\phi[x
    \mapsto d]}(B) = \TRUE$, then $V'_{\phi}(a) = d$.  Otherwise,
    $V'_{\phi}(a) = \Undefined$. \esp

    \item Let $V'_{\phi}(\alpha)$ be undefined, or let $V'_{\phi[x
    \mapsto d]}(B)$ be undefined for some $d$ in $V'_{\phi}(\alpha)$.
    Then $V'_{\phi}(a)$ is undefined.

  \ee

  \item Let $a = (\mname{indef-des},(\mname{var}, x, \alpha),B)$ be
  proper.

  \be

    \item \bsp Let $V'_{\phi}(\alpha)$ be defined, and let $V'_{\phi[x
    \mapsto d]}(B)$ be defined for all $d$ in $V'_{\phi}(\alpha)$.  If
    there is some $d$ in $V'_{\phi}(\alpha)$ such that $V'_{\phi[x
    \mapsto d]}(B) = \TRUE$, then $V'_{\phi}(a) = \xi(\Sigma)$ where
    $\Sigma$ is the superclass of all $d$ in $V'_{\phi}(\alpha)$ such
    that $V'_{\phi[x \mapsto d]}(B) = \TRUE$.  Otherwise,
    $V'_{\phi}(a) = \Undefined$. \esp

    \item Let $V'_{\phi}(\alpha)$ be undefined, or let $V'_{\phi[x
    \mapsto d]}(B)$ be undefined for some $d$ in $V'_{\phi}(\alpha)$.
    Then $V'_{\phi}(a)$ is undefined.

  \ee

  \item Let $a = (\mname{quote},e)$ be proper.  Then $V'_{\phi}(a) =
  H(e)$.

  \item Let $b = (\mname{eval},a, k)$ be proper.

  \be

    \item Let $V'_{\phi}(a)$ be defined and $V'_{\phi}(k)$ be defined
    if $\ctype{L}{k}$.

    \be

      \item Let $V'_{\phi}(a)$ be in $\Dty$ and $k = \mname{type}$,
      $V'_{\phi}(a)$ be in $\Dte$ and $\ctype{L}{k}$, or $V'_{\phi}(a)$
      be in $\Dfo$ and $k = \mname{formula}$.

      \be

        \item \bsp Let $V_{\phi}(H^{-1}(V'_{\phi}(a)))$ be defined.
        If $k \in \set{\mname{type},\mname{formula}}$ or $\ctype{L}{k}$
        and $V_{\phi}(H^{-1}(V'_{\phi}(a)))$ is in $V'_{\phi}(k)$,
        then $V'_{\phi}(b) = V_{\phi}(H^{-1}(V'_{\phi}(a)))$.
        Otherwise $V'_{\phi}(b)$ is $\Undefined$. \esp

        \item Let $V_{\phi}(H^{-1}(V'_{\phi}(a)))$ be undefined.  Then
        $V'_{\phi}(b)$ is undefined.

      \ee

      \item Let $V'_{\phi}(a)$ not be in $\Dty$ or $k \not=
      \mname{type}$, $V'_{\phi}(a)$ not be in $\Dte$ or not
      $\ctype{L}{k}$, and $V'_{\phi}(a)$ not be in $\Dfo$ or $k \not=
      \mname{formula}$.  Then $V'_{\phi}(b)$ is $\Dc$ if $k =
      \mname{type}$, $\Undefined$ if $\ctype{L}{k}$, and $\FALSE$ if $k =
      \mname{formula}$.

    \ee

    \item Let $V'_{\phi}(a)$ be undefined or $V'_{\phi}(k)$ be
    undefined if $\ctype{L}{k}$.  Then $V'_{\phi}(b)$ is undefined.

  \ee

\ee

\begin{clem}\label{lem:monotone-1}
$\Psi_{1}^{S}$ is monotone.
\end{clem}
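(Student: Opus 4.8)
The plan is to prove the lemma by induction on the length $|e|$, establishing the following claim for all $e \in \sE_L$ and all $\phi \in \mname{assign}(S)$: writing $U' = \Psi_{1}^{S}(U)$ and $V' = \Psi_{1}^{S}(V)$, if $U \subvaluation V$ and $U'_{\phi}(e)$ is defined, then $V'_{\phi}(e)$ is defined and $U'_{\phi}(e) = V'_{\phi}(e)$. There is one case for each of the fourteen clauses defining $\Psi_{1}^{S}$. The case of an improper $e$ is vacuous, since then $U'_{\phi}(e)$ is undefined. I would carry out the induction on $|e|$ rather than on complexity, because the recursion in the definition of $\Psi_{1}^{S}$ descends only to proper subexpressions of $e$ (clauses 2--13) or to the immediate components $a$ and $k$ (clause 14), all of strictly smaller length, even though some clauses quantify over assignments of the form $\phi[x \mapsto d]$ — which is why the claim must be stated uniformly over all $\phi$.

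The key structural observation, which I would state first, is that in clauses 1--13 the prescription for $V'_{\phi}(e)$ refers only to the fixed components of $S$ and to subexpression-values $V'_{\phi'}(e')$ with $|e'| < |e|$ (where $\phi'$ is $\phi$ or ranges over $\phi[x\mapsto d]$); it never mentions $V$ itself. Only clause 14(a) mentions $V$ directly, through the term $V_{\phi}(H^{-1}(V'_{\phi}(a)))$. Given this, clauses 2--13 go through uniformly: suppose $U'_{\phi}(e)$ is defined. Inspecting the relevant clause, definedness of $U'_{\phi}(e)$ forces definedness of every subexpression-value $U'_{\phi'}(e')$ it depends on; by the induction hypothesis each of these equals $V'_{\phi'}(e')$, so each $V'_{\phi'}(e')$ is defined. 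The branch selected by the clause — the ``main case'' versus the ``otherwise'' clause yielding $\Dc$, $\Undefined$, or $\FALSE$, and likewise the dichotomy on $V'_{\phi}(A)$ in the conditional-term clause or the membership tests $\phi(x) \in V'_{\phi}(\alpha)$ in the variable clause — is determined entirely by these agreeing values and the fixed components of $S$; hence $V'$ takes the same branch and returns the same value, so $V'_{\phi}(e)$ is defined and equals $U'_{\phi}(e)$. (Clause 13, quotation, needs no hypotheses: $U'_{\phi}(a) = H(e) = V'_{\phi}(a)$. Clauses 6 and 8 illustrate why the uniform-in-$\phi$ formulation matters: once $U'_{\phi}(\alpha) = V'_{\phi}(\alpha)$, the sets $d$ ranged over coincide, and the induction hypothesis at each such $d$ gives agreement of $U'_{\phi[x\mapsto d]}(\beta)$ with $V'_{\phi[x\mapsto d]}(\beta)$.)

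The one step genuinely using $U \subvaluation V$ is clause 14, $b = (\mname{eval}, a, k)$, and this is the crux. Suppose $U'_{\phi}(b)$ is defined. By part (b) of the clause this forces $U'_{\phi}(a)$ defined and, when $\ctype{L}{k}$, $U'_{\phi}(k)$ defined; the induction hypothesis then gives $U'_{\phi}(a) = V'_{\phi}(a)$ and $U'_{\phi}(k) = V'_{\phi}(k)$. Since $U'_{\phi}(a) = V'_{\phi}(a)$, the same sub-branch of part (a) applies to $U'$ and to $V'$. If $V'_{\phi}(a)$ fails the domain condition, then $V'_{\phi}(b)$ is the designated value ($\Dc$, $\Undefined$, or $\FALSE$ according to $k$), and the same holds for $U'_{\phi}(b)$, so they agree. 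Otherwise $V'_{\phi}(a)$ lies in $\Dty$, $\Dte$, or $\Dfo$ as required; because $U'_{\phi}(b)$ is defined we must be in sub-case 14(a)(i)(1), so $U_{\phi}(H^{-1}(U'_{\phi}(a)))$ is defined, and since $H^{-1}(U'_{\phi}(a)) = H^{-1}(V'_{\phi}(a))$ the hypothesis $U \subvaluation V$ yields that $V_{\phi}(H^{-1}(V'_{\phi}(a)))$ is defined and equals $U_{\phi}(H^{-1}(U'_{\phi}(a)))$. Together with $U'_{\phi}(k) = V'_{\phi}(k)$, the membership test against $V'_{\phi}(k)$ succeeds for $V'$ exactly when it does for $U'$, so $V'_{\phi}(b)$ is defined and $V'_{\phi}(b) = U'_{\phi}(b)$.

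I expect the main obstacle to be purely bookkeeping: carefully threading the nested sub-cases of clause 14 and confirming that no configuration can arise in which $U'_{\phi}(b)$ is defined while $V'_{\phi}(b)$ is undefined. The substantive mathematical content is just the single appeal to $U \subvaluation V$ at the expression $H^{-1}(V'_{\phi}(a))$; no idea beyond the structural observation above is required.
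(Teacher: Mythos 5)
Your proposal is correct and takes essentially the same approach as the paper's proof: induction on the size of $e$ (the paper uses the number of symbols, you use length, which works for the same reason you note --- the evaluation clause of $\Psi_{1}^{S}$ calls the \emph{input} valuation $V$, not $V'$, so no descent through $H^{-1}$ is needed), with the improper case vacuous, the non-evaluation clauses handled by the observation that their branches depend only on subexpression values that agree by the induction hypothesis, and the single appeal to $U \subvaluation V$ occurring in clause 14 at $H^{-1}(V'_{\phi}(a))$. The paper merely compresses your clause-by-clause treatment of cases 2--13 into one sentence, and your handling of the evaluation case (including the point that $U'_{\phi}(k)$ need only be defined when $\ctype{L}{k}$) is, if anything, slightly more careful than the paper's.
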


\begin{proof}
Let $U,V \in \mname{val}(S)$ such that $U \subvaluation V$.  Assume
$U'_\phi$ and $V'_\phi$ mean $(\Psi_{1}^{S}(U))_\phi$ and
$(\Psi_{1}^{S}(V))_\phi$, respectively.  We must show that, for all $e
\in \sE_L$ and $\phi \in \mname{assign}(S)$, if $U'_\phi(e)$ is
defined, then $U'_\phi(e) = V'_\phi(e)$.  Our proof will be by
induction on the number of symbols in $e$.

There are three cases:
\be

  \item \emph{$e$ is improper.}  Then $U'_\phi(e)$ is undefined by the
  definition of $\Psi_{1}^{S}$.

  \item \emph{$e = (\mname{eval},a, k)$ is proper.} If either
  $U'_\phi(a)$ or $U'_\phi(k)$ is undefined, then $U'_\phi(e)$ is
  undefined.  So assume $U'_\phi(a)$ and $U'_\phi(k)$ are defined.  By
  the induction hypothesis, $U'_\phi(a) = V'_\phi(a)$ and $U'_\phi(k)
  = V'_\phi(k)$.  Assume $U'_\phi(e)$ is defined.  By the definition
  of $\Psi_{1}^{S}$, there are two subcases:

  \be

    \item \emph{For some $e_1,e_2 \in \sE_L$, $U'_\phi(e) =
      U_\phi(e_1)$ and $V'_\phi(e) = V_\phi(e_2)$.}  Since $U'_\phi(a)
      = V'_\phi(a)$, $e_1 = e_2$, and since $U \subvaluation V$,
      $U_\phi(e_1) = V_\phi(e_2)$.  Hence, $U'_\phi(e) = V'_\phi(e)$.

    \item $U'_\phi(e)$ \emph{and} $V'_\phi(e)$ \emph{both equal} $\Dc$
    \emph{if} $k = \mname{type}$, $\Undefined$ \emph{if} $\ctype{L}{k}$,
    and $\FALSE$ \emph{if} $k = \mname{formula}$.  Hence, $U'_\phi(e)
    = V'_\phi(e)$.

  \ee

  \item \emph{$e$ is proper but not an evaluation.}  Assume
  $U'_\phi(e)$ is defined.  Then $U'_\phi(e')$ is defined for each
  subexpression $e'$ of $e$.  By the induction hypothesis,
  $U'_\phi(e') = V'_\phi(e')$ for each such subexpression $e'$ of $e$.
  Hence, $U'_\phi(e) = V'_\phi(e)$.

\ee
\end{proof}

\begin{ccor}\label{cor:fixed-point-1}
$\Psi_{1}^{S}$ has a fixed point.
\end{ccor}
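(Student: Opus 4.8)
The plan is to obtain the corollary as an immediate consequence of Theorem~\ref{thm:fixed-point} and Lemma~\ref{lem:monotone-1}. Theorem~\ref{thm:fixed-point} guarantees that every monotone valuation functional for $S$ has a fixed point, via the transfinite Kripke-style construction: starting from the empty valuation (the everywhere-undefined partial mapping, which is a subvaluation of every valuation), one iterates $\Psi$, taking suitable unions at limit stages, uses monotonicity to show the sequence is increasing with respect to $\subvaluation$, and invokes a cardinality bound on the partial order $(\mname{val}(S),\subvaluation)$ to conclude that the iteration stabilizes at a fixed point. To apply this to $\Psi_{1}^{S}$ I would therefore need only to check that $\Psi_{1}^{S}$ is a valuation functional for $S$ — i.e.\ that it maps $\mname{val}(S)$ into $\mname{val}(S)$ — and that it is monotone.

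That $\Psi_{1}^{S}$ maps $\mname{val}(S)$ into $\mname{val}(S)$ is built into its 14 defining clauses: in each case the output is either left undefined or is a value of the sort appropriate to the expression's category (an operation in $\Do$ for an operator, a superclass in $\Ds$ for a type, a member of $\Dc \cup \set{\Undefined}$ for a term, a member of $\set{\TRUE,\FALSE}$ for a formula). Monotonicity is exactly the content of Lemma~\ref{lem:monotone-1}, already proved by induction on the number of symbols in the expression, the only delicate clause being that for evaluation, where the reference to the ``old'' valuation $V$ applied to $H^{-1}(V'_\phi(a))$ is controlled by the fact that $U'_\phi(a)$ being defined forces $U'_\phi(a) = V'_\phi(a)$ and hence $H^{-1}(U'_\phi(a)) = H^{-1}(V'_\phi(a))$.

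Combining the two: by Lemma~\ref{lem:monotone-1}, $\Psi_{1}^{S}$ is a monotone valuation functional for $S$, so Theorem~\ref{thm:fixed-point} applies and yields a valuation $V$ for $S$ with $\Psi_{1}^{S}(V) = V$. There is no real obstacle here; the genuine work has already been carried out in Lemma~\ref{lem:monotone-1} and in the fixed-point construction underlying Theorem~\ref{thm:fixed-point}. The only point worth recording in the write-up is the payoff: a fixed point of $\Psi_{1}^{S}$ is precisely the valuation the first alternate semantics adopts, a weak-Kleene value-gap semantics in which ungrounded expressions such as $\sembrack{\mname{LIAR}}_{\rm fo}$ are simply left undenoting.
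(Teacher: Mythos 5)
Your proposal is correct and follows the paper's own argument exactly: cite Lemma~\ref{lem:monotone-1} for monotonicity of $\Psi_{1}^{S}$ and then apply Theorem~\ref{thm:fixed-point} to obtain a fixed point. The additional remarks about the Kripke-style iteration and about $\Psi_{1}^{S}$ being a valuation functional are fine but not needed beyond what the two cited results already provide.
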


\begin{proof}
By Lemma~\ref{lem:monotone-1}, $\Psi_{1}^{S}$ is monotone.  Therefore,
by Theorem~\ref{thm:fixed-point}, $\Psi_{1}^{S}$ has a fixed point.
\end{proof}

\bigskip

$\Psi_{2}^{S}$ is the valuation functional for $S$ defined by the
following three rules where $V \in \mname{val}(S)$ and $V' =
\Psi_{2}^{S}(V)$.  $\Psi_{2}^{S}$ defines value gaps according to the
supervaluation scheme.

\be

  \item Let $e \in \sE_L$ be improper.  Then $V'_{\phi}(e)$ is
    undefined.

  \item Let $e \in \sE_L$ be proper but not an evaluation.  If there is
    a value $d$ such that, for all total valuations $V^{\ast}$ with $V
    \subvaluation V^{\ast}$, $(\Psi_{1}^{S}(V^{\ast}))_{\phi}(e) = d$,
    then $V'_{\phi}(e) = d$.  Otherwise $V'_{\phi}(e)$ is undefined.

  \item Let $e \in \sE_L$ be proper with $e = (\mname{eval},a, k)$.
    This rule is exactly the same as the $\Psi_{1}^{S}$ rule for
    evaluations.

\ee

\begin{clem}\label{lem:monotone-2}
$\Psi_{2}^{S}$ is monotone.
\end{clem}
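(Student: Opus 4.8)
The plan is to mimic the proof of Lemma~\ref{lem:monotone-1}. Fix $U,V \in \mname{val}(S)$ with $U \subvaluation V$, write $U'_\phi$ and $V'_\phi$ for $(\Psi_{2}^{S}(U))_\phi$ and $(\Psi_{2}^{S}(V))_\phi$, and show by induction on the number of symbols in $e$ that, for all $e \in \sE_L$ and $\phi \in \mname{assign}(S)$, if $U'_\phi(e)$ is defined then $U'_\phi(e) = V'_\phi(e)$. The induction splits into the three cases of the definition of $\Psi_{2}^{S}$. When $e$ is improper, $U'_\phi(e)$ is undefined and there is nothing to prove, exactly as before.

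The second case, $e$ proper but not an evaluation, is where the content lies, and it needs no appeal to the induction hypothesis. Suppose $U'_\phi(e) = d$. By the defining rule this means that $(\Psi_{1}^{S}(V^{\ast}))_{\phi}(e) = d$ for every total valuation $V^{\ast}$ with $U \subvaluation V^{\ast}$. Since $\subvaluation$ is transitive, every total valuation $V^{\ast\ast}$ with $V \subvaluation V^{\ast\ast}$ also satisfies $U \subvaluation V^{\ast\ast}$; hence $(\Psi_{1}^{S}(V^{\ast\ast}))_{\phi}(e) = d$ for every such $V^{\ast\ast}$, and so $V'_\phi(e) = d$ by the same rule. (The one point to verify is that this pins $V'_\phi(e)$ down to $d$, which holds because $\Psi_{2}^{S}$ is a genuine valuation functional; every valuation has a total extension, so the family of total extensions of $V$ is a nonempty subfamily of that of $U$.)

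For the third case, $e = (\mname{eval},a,k)$, the rule defining $\Psi_{2}^{S}$ is literally the rule for evaluations used in the definition of $\Psi_{1}^{S}$, so this case is handled exactly as case~2 of the proof of Lemma~\ref{lem:monotone-1}: if $U'_\phi(a)$ or $U'_\phi(k)$ (when $\ctype{L}{k}$) is undefined then $U'_\phi(e)$ is undefined; otherwise the induction hypothesis applied to the shorter expressions $a$ and $k$ gives $U'_\phi(a) = V'_\phi(a)$ and $U'_\phi(k) = V'_\phi(k)$, whence $H^{-1}(U'_\phi(a)) = H^{-1}(V'_\phi(a))$, and the remaining clause $U_\phi(H^{-1}(U'_\phi(a)))$ agrees with $V_\phi(H^{-1}(V'_\phi(a)))$ because $U \subvaluation V$ and the former value is defined; chasing the subcases then yields $U'_\phi(e) = V'_\phi(e)$. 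The only mildly delicate step is the second case: one must notice that enlarging the valuation from $U$ to $V$ \emph{shrinks} the family of total valuations over which the supervaluation quantifies, so the value assigned by $\Psi_{2}^{S}$ can only become ``more defined,'' never change. Everything else is a routine adaptation, and Lemma~\ref{lem:monotone-1} is used only through reusing its evaluation-case argument.
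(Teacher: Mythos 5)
Your proof is correct and follows essentially the same route as the paper's: the same induction with the same case split, the evaluation case handled by reusing the argument from Lemma~\ref{lem:monotone-1}, and the non-evaluation proper case settled by observing that $U \subvaluation V$ shrinks the family of total extensions, so the supervaluation value can only persist. Your added remark about nonemptiness of the family of total extensions is a harmless refinement of the same argument.
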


\begin{proof}
The proof is exactly the same as the proof of
Lemma~\ref{lem:monotone-1} except for the argument for the third case:
\bi

  \item[3.] \emph{$e$ is proper but not an evaluation.}  Assume
  $U'_\phi(e)$ is defined.  Then there is a value $d$ such that, for
  all total valuations $U^{\ast}$ with $U \subvaluation U^{\ast}$,
  $U^{\ast}_{\phi}(e) = d$.  Since $U \subvaluation V$, it follows
  that, for all total valuations $V^{\ast}$ with $V \subvaluation
  V^{\ast}$, $V^{\ast}_{\phi}(e) = d$.  Hence, $U'_\phi(e) =
  V'_\phi(e)$.

\ei
\end{proof}

\begin{ccor}\label{cor:fixed-point-2}
$\Psi_{2}^{S}$ has a fixed point.
\end{ccor}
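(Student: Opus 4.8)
The plan is to obtain this corollary as an immediate consequence of the two results immediately preceding it, exactly in the manner that Corollary~\ref{cor:fixed-point-1} was derived from Lemma~\ref{lem:monotone-1} and Theorem~\ref{thm:fixed-point}. First I would invoke Lemma~\ref{lem:monotone-2}, which establishes that the supervaluation functional $\Psi_{2}^{S}$ is monotone with respect to the subvaluation ordering $\subvaluation$ on $\mname{val}(S)$. Then I would apply Theorem~\ref{thm:fixed-point}, which guarantees that \emph{every} monotone valuation functional for the fixed structure $S$ has a fixed point. Combining the two yields a $V \in \mname{val}(S)$ with $\Psi_{2}^{S}(V) = V$, which is precisely the assertion of the corollary.

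So the proof is essentially a single step: \textbf{monotonicity plus the abstract fixed-point theorem}. Written out, it reads: ``By Lemma~\ref{lem:monotone-2}, $\Psi_{2}^{S}$ is monotone. Therefore, by Theorem~\ref{thm:fixed-point}, $\Psi_{2}^{S}$ has a fixed point.'' There is nothing further to grind through, because the genuine work has already been discharged elsewhere --- the monotonicity argument is carried out in Lemma~\ref{lem:monotone-2} (a minor variant of the proof of Lemma~\ref{lem:monotone-1}, differing only in the case for expressions that are proper but not evaluations, where the supervaluation clause is used in place of the weak-Kleene clause), and the existence of fixed points for monotone functionals is supplied by Theorem~\ref{thm:fixed-point}, whose proof follows Kripke's transfinite iteration construction from~\cite[pp.~703--705]{Kripke75}.

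The only thing I would be careful about is making sure the hypotheses of Theorem~\ref{thm:fixed-point} are met: it requires a \emph{monotone valuation functional for $S$}, and $\Psi_{2}^{S}$ is by definition a mapping from $\mname{val}(S)$ into $\mname{val}(S)$, i.e.\ a valuation functional for $S$, so the only nontrivial hypothesis is monotonicity, which is exactly Lemma~\ref{lem:monotone-2}. Thus there is no real obstacle here; the corollary is a bookkeeping consequence of the preceding lemma and theorem, and I would present it as a two-line proof.
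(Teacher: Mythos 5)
Your proposal matches the paper's proof exactly: it cites Lemma~\ref{lem:monotone-2} for the monotonicity of $\Psi_{2}^{S}$ and then applies Theorem~\ref{thm:fixed-point} to conclude the existence of a fixed point. This is correct and is precisely the two-line argument given in the paper.
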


\begin{proof}
By Lemma~\ref{lem:monotone-2}, $\Psi_{2}^{S}$ is monotone.  Therefore,
by Theorem~\ref{thm:fixed-point}, $\Psi_{2}^{S}$ has a fixed point.
\end{proof}

%\addcontentsline{toc}{subsection}{Models}

\subsection{Models}

The valuation functionals $\Psi_{1}^{S}$ and $\Psi_{2}^{S}$ define two
semantics, which we will refer to as the \emph{weak Kleene semantics}
and the \emph{supervaluation semantics}, respectively.  Clearly, the
supervaluation semantics allows more expressions to be denoting than
the weak Kleene semantics.

A \emph{weak Kleene model} for $L$ is a model $M=(S,V)$ where $S$ is a
structure for $L$ and $V$ is a valuation for $S$ that is a fixed point
of $\Psi_{1}^{S}$.  A \emph{supervaluation model} for $L$ is a model
$M=(S,V)$ where $S$ is a structure for $L$ and $V$ is a valuation for
$S$ that is a fixed point of $\Psi_{2}^{S}$.

\begin{cthm}
Let $L$ be a language of Chiron.  For each structure $S$ for $L$ there
exists a weak Kleene model and a supervaluation model for $L$.
\end{cthm}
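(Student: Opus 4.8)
The plan is to read off the theorem directly from the fixed-point machinery already established, since the substantive work has all been done. First I would fix an arbitrary structure $S$ for $L$ and recall that, by the definition of a valuation functional, both $\Psi_{1}^{S}$ and $\Psi_{2}^{S}$ map $\mname{val}(S)$ into $\mname{val}(S)$; hence any fixed point of either functional is automatically a member of $\mname{val}(S)$, i.e., a valuation for $S$.

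Next I would invoke Corollary~\ref{cor:fixed-point-1} to obtain a valuation $V_{1} \in \mname{val}(S)$ with $\Psi_{1}^{S}(V_{1}) = V_{1}$. By the definition of a model for $L$, the pair $M_{1} = (S,V_{1})$ is a model for $L$, and since $V_{1}$ is a fixed point of $\Psi_{1}^{S}$, $M_{1}$ satisfies exactly the defining condition of a weak Kleene model for $L$. Symmetrically, Corollary~\ref{cor:fixed-point-2} supplies a valuation $V_{2} \in \mname{val}(S)$ with $\Psi_{2}^{S}(V_{2}) = V_{2}$, so that $M_{2} = (S,V_{2})$ is a supervaluation model for $L$. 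Since $S$ was arbitrary, this establishes the theorem.

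The real content has been discharged earlier: the Kripke-style construction of a fixed point of a monotone valuation functional in Theorem~\ref{thm:fixed-point}, the monotonicity of $\Psi_{1}^{S}$ and $\Psi_{2}^{S}$ in Lemmas~\ref{lem:monotone-1} and~\ref{lem:monotone-2}, and the consequent existence of fixed points in Corollaries~\ref{cor:fixed-point-1} and~\ref{cor:fixed-point-2}. Consequently I do not expect any genuine obstacle in this last step; the only point needing a moment's care is the bookkeeping observation that ``fixed point of $\Psi_{i}^{S}$'' is literally the defining clause of the corresponding model notion, so that membership in $\mname{val}(S)$ already secures the coherence constraints in the definition of a valuation (that $V_{\phi}(e)$ lands in $\Do$, $\Ds$, $\Dc \cup \set{\Undefined}$, or $\set{\TRUE,\FALSE}$ according to the syntactic category of $e$) and nothing further need be checked.
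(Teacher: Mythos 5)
Your proposal is correct and follows essentially the same route as the paper: fix the structure $S$, apply Corollaries~\ref{cor:fixed-point-1} and~\ref{cor:fixed-point-2} to obtain fixed points $V_1$ of $\Psi_{1}^{S}$ and $V_2$ of $\Psi_{2}^{S}$, and observe that $(S,V_1)$ and $(S,V_2)$ are then by definition a weak Kleene model and a supervaluation model for $L$. The extra bookkeeping remark about membership in $\mname{val}(S)$ is harmless but not needed beyond what the paper's one-line argument already uses.
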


\begin{proof}
Let $L$ be a language of Chiron and $S$ be a structure for $L$.  By
Corollary~\ref{cor:fixed-point-1}, $\Psi_{1}^{S}$ has a fixed point
$V_1$.  Similarly, by Corollary~\ref{cor:fixed-point-2},
$\Psi_{2}^{S}$ has a fixed point $V_2$.  Therefore, $M=(S,V_1)$ is a
weak Kleene model for $L$, and $M=(S,V_2)$ is a supervaluation model
for $L$.
\end{proof}

\bigskip

The weak Kleene semantics defined by $\Psi_{1}^{S}$ is ``strict'' in
the sense that, if any proper subexpression $e$ of a proper expression
$e'$ is nondenoting, then $e'$ itself is nondenoting.  The
supervaluation semantics defined by $\Psi_{2}^{S}$ is not strict in
this sense.  For example, the value of an application of the operator
\[(\mname{op}, \mname{or}, \mname{formula}, \mname{forumla}, 
\mname{formula})\] to a pair of formulas $(A,B)$ is $\TRUE$ if the value 
of $A$ is $\TRUE$ and $B$ is nondenoting or vice versa.

%\addcontentsline{toc}{subsection}{Discussion}

\subsection{Discussion}

There are various Kripke-style value-gap semantics for Chiron; the
weak Kleene and supervaluation semantics are just two examples.  The
weak Kleene semantics is a conservative example: every expression that
could be nondenoting is nondenoting.  On the other hand, the
supervaluation semantics is much more liberal: many expressions that
are nondenoting in the weak Kleene semantics are denoting in the
supervaluation semantics.

It is not possible to define a denoting formula checker in any
Kripke-style value-gap semantics.  If the operator $O =
(o::\mname{formula},\mname{formula})$ were a denoting formula checker,
then $O(e)$ would be true whenever $e$ is denoting and false whenever
$e$ is nondenoting.  However, such an operator breaks the monotonicity
lemmas proved above because, if $U'_\phi(e)$ is undefined but
$V'_\phi(e)$ is defined, then $U'_\phi(O(e)) = \FALSE \not= \TRUE =
V'_\phi(O(e))$.  Similarly, it is not possible to define denoting type
and term checkers in a Kripke-style value-gap semantics.

The lack of available checkers for denoting types, terms, and formulas
makes reasoning in Kripke-style value-gap semantics very difficult.
For example, consider the formalization of the law of excluded middle
given in subsection~\ref{subsec:lem}: \[\ForallApp e \mcolon
\mname{E}_{\rm fo} \mdot \sembrack{e} \Or \Neg\sembrack{e}.\]

This formula is nondenoting in the weak Kleene semantics because, if
$e$ represents a nondenoting formula, then $\sembrack{e} \Or
\Neg\sembrack{e}$ is nondenoting.  Since this formula is nondenoting,
we cannot use it as a basis for proof by cases.

This formula is true in the supervaluations semantics because, if $e$
represents a nondenoting formula, then $\sembrack{e} \Or
\Neg\sembrack{e}$ is true because $\sembrack{e} \Or \Neg\sembrack{e}$
is true no matter what value is assigned to $\sembrack{e}$.  Even
though this formula is true, we cannot use it as a basis for proof by
cases because, if $e$ is the liar paradox, we can derive a
contradiction from either $\sembrack{e}$ or $\Neg\sembrack{e}$.

We expect that reasoning in the official semantics for Chiron will be
much easier than in any Kripke-style value-gap semantics for Chiron.

\section{Appendix: An Expanded Definition of a Proper Expression}
\label{sec:expaned-p-expr}

We give in this appendix an expanded definition of a proper expression
with 25 proper expression categories.  There are modified categories
for operator applications, conditional terms, and quotations and new
categories for constants (applications of 0-ary operators), four
variable binders given above as notational definitions, finite
sets of sets, finite lists of sets, and dependent ordered pairs

\bsp The following symbols are added to the the set $\sK$ of key
words: \mname{con}, \mname{uni-exists}, \mname{forall},
\mname{set-cons}, \mname{list-cons}, \mname{class-abs},
\mname{dep-type-prod}, \mname{left-type}, \mname{right-type},
\mname{dep-ord-pair}, \mname{dep-head}, and
\mname{dep-tail}.  The following formation rules define the
expanded set of proper expression categories: \esp

\bi

  \item[]\textbf{P-Expr-1 (Operator)}\vspace*{-2mm}
  \[\frac{o \in \sO, \ckind{L}{k_1},\ldots,\ckind{L}{k_{n+1}}}
  {\cop{L}{(\mname{op}, o, k_1,\ldots,k_{n+1})}}\] where $n \ge 0$;
  $\theta(o) = s_1,\ldots,s_{n+1}$; and $k_i=s_i=\mname{type}$,
  $\ctype{L}{k_i}$ and $s_i = \mname{term}$, or $k_i=
  s_i=\mname{formula}$ for all $i$ with $1 \le i \le n+1$.

  \item[]\textbf{P-Expr-2 (Operator application)}\vspace*{-2mm}
  \[\frac{\cop{L}{(\mname{op}, o, k_1,\ldots,k_{n+1})},
  \cexpr{L}{e_1},\ldots,\cexpr{L}{e_n}}
  {\cpexpr{L}{(\mname{op-app}, (\mname{op}, o, k_1,\ldots,k_{n+1}), 
  e_1,\ldots,e_n)}{k_{n+1}}}\] 
  \bsp where $n \ge 1$ and ($k_i=\mname{type}$ and $\ctype{L}{e_i}$), 
  ($\ctype{L}{k_i}$ and $\cterm{L}{e_i}$), or
  ($k_i=\mname{formula}$ and $\cform{L}{e_i}$)
  for all $i$ with $1 \le i \le n$. \esp

  \item[]\textbf{P-Expr-3 (Constant)} \vspace*{-2mm} 
  \[\frac{\cop{L}{(\mname{op}, o, k)}}
  {\cpexpr{L}{(\mname{con}, o, k)}{k}}\]

  \item[]\textbf{P-Expr-4 (Variable)} \vspace*{-2mm} 
  \[\frac{x \in \sS, \ctype{L}{\alpha}}
  {\cterma{L}{(\mname{var}, x, \alpha)}{\alpha}}\]

  \item[]\textbf{P-Expr-5 (Type application)}\vspace*{-2mm}
  \[\frac{\ctype{L}{\alpha}, \cterm{L}{a}} 
  {\ctype{L}{(\mname{type-app},\alpha,a)}}\]

  \item[]\textbf{P-Expr-6 (Dependent function type)}\vspace*{-2mm}
  \[\frac{\cterm{L}{(\mname{var},x,\alpha)},\ctype{L}{\beta}}
  {\ctype{L}{(\mname{dep-fun-type},(\mname{var},x,\alpha),\beta)}}\]

  \item[]\textbf{P-Expr-7 (Function application)}\vspace*{-2mm}
  \[\frac{\cterma{L}{f}{\alpha}, \cterm{L}{a}} 
  {\cterma{L}{(\mname{fun-app},f,a)}{(\mname{type-app},\alpha,a)}}\]

  \item[]\textbf{P-Expr-8 (Function abstraction)}\vspace*{-2mm}
  \[\frac{\cterm{L}{(\mname{var},x,\alpha)},\cterma{L}{b}{\beta}}
  {\cterma{L}{(\mname{fun-abs},(\mname{var},x,\alpha),b)}
  {(\mname{dep-fun-type},(\mname{var},x,\alpha),\beta)}}\]

  \item[]\textbf{P-Expr-9 (Conditional term)}\vspace*{-2mm}
  \[\frac{\cform{L}{A},\cterma{L}{b}{\beta},\cterma{L}{c}{\gamma}}
  {\cterma{L}{(\mname{if},A,b,c)} {\beta \cup \gamma}}\]

  \item[]\textbf{P-Expr-10 (Existential quantification)}\vspace*{-2mm}
  \[\frac{\cterm{L}{(\mname{var}, x, \alpha)}, \cform{L}{B}}
  {\cform{L}{(\mname{exists},(\mname{var}, x, \alpha),B)}}\]

  \item[]\textbf{P-Expr-11 (Unique existential quantification)}\vspace*{-2mm}
  \[\frac{\cterm{L}{(\mname{var}, x, \alpha)}, \cform{L}{B}}
  {\cform{L}{(\mname{uni-exists},(\mname{var}, x, \alpha),B)}}\]

  \item[]\textbf{P-Expr-12 (Universal quantification)}\vspace*{-2mm}
  \[\frac{\cterm{L}{(\mname{var}, x, \alpha)}, \cform{L}{B}}
  {\cform{L}{(\mname{forall},(\mname{var}, x, \alpha),B)}}\]

  \item[]\textbf{P-Expr-13 (Definite description)}\vspace*{-2mm}
  \[\frac{\cterm{L}{(\mname{var}, x, \alpha)}, \cform{L}{B}}
  {\cterma{L}{(\mname{def-des},(\mname{var}, x, \alpha),B)} {\alpha}}\]

  \item[]\textbf{P-Expr-14 (Indefinite description)}\vspace*{-2mm}
  \[\frac{\cterm{L}{(\mname{var}, x, \alpha)},  \cform{L}{B}}
  {\cterma{L}{(\mname{indef-des},(\mname{var}, x, \alpha),B)} {\alpha}}\]

  \item[]\textbf{P-Expr-15 (Set construction)}\vspace*{-2mm}
  \[\frac{\cterma{L}{a_1}{\alpha_1},\ldots,\cterma{L}{a_1}{\alpha_1}}
  {\cterma{L}{(\mname{set-cons},a_1,\ldots,a_n)} 
  {\beta}}\]
  where $n \ge 0$ and 
  $\beta =\left\{\begin{array}{ll}
                   \mname{C} & \mbox{if }n = 0\\
                   \alpha_1 \cup \cdots \cup \alpha_n & \mbox{otherwise.}
                 \end{array}
          \right.$

  \item[]\textbf{P-Expr-16 (List construction)}\vspace*{-2mm}
  \[\frac{\cterma{L}{a_1}{\alpha_1},\ldots,\cterma{L}{a_1}{\alpha_1}}
  {\cterma{L}{(\mname{list-cons},a_1,\ldots,a_n)} 
  {\mname{list-type}(\beta)}}\]
  where $n \ge 0$ and 
  $\beta =\left\{\begin{array}{ll}
                   \mname{C} & \mbox{if }n = 0\\
                   \alpha_1 \cup \cdots \cup \alpha_n & \mbox{otherwise.}
                 \end{array}
          \right.$ 

  \item[]\textbf{P-Expr-17 (Class abstraction)}\vspace*{-2mm}
  \[\frac{\cterm{L}{(\mname{var}, x, \alpha)}, \cform{L}{B}}
  {\cterma{L}{(\mname{class-abs},(\mname{var}, x, \alpha),B)} 
  {\mname{power-type}(\alpha)}}\]

  \item[]\textbf{P-Expr-18 (Left type)}\vspace*{-2mm}
  \[\frac{\ctype{L}{\alpha}}
  {\ctype{L}{(\mname{left-type},\alpha)}}\]

  \item[]\textbf{P-Expr-19 (Right type)}\vspace*{-2mm}
  \[\frac{\ctype{L}{\alpha},\cterm{L}{a}}
  {\ctype{L}{(\mname{right-type},\alpha,a)}}\]

  \item[]\textbf{P-Expr-20 (Dependent type product)}\vspace*{-2mm}
  \[\frac{\cterm{L}{(\mname{var}, x, \alpha)}, \ctype{L}{\beta}}
  {\ctype{L}{(\mname{dep-type-prod},(\mname{var}, x, \alpha),\beta)}}\]

  \item[]\textbf{P-Expr-21 (Dependent ordered pair)}\vspace*{-2mm}
  \[\frac{\cterma{L}{a}{\alpha},\cterma{L}{b}{\beta}}
  {\cterma{L}{(\mname{dep-ord-pair},a,b)}
  {(\mname{dep-type-prod},(\mname{var}, x, \alpha),\beta)}}\]

  \item[]\textbf{P-Expr-22 (Dependent head)}\vspace*{-2mm}
  \[\frac{\cterma{L}{a}{\alpha}}
  {\cterma{L}{(\mname{dep-head},a)}
  {(\mname{left-type},\alpha)}}\]

  \item[]\textbf{P-Expr-23 (Dependent tail)}\vspace*{-2mm}
  \[\frac{\cterma{L}{a}{\alpha}}
  {\cterma{L}{(\mname{dep-tail},a)}
  {(\mname{right-type},\alpha,(\mname{dep-head},a))}}\]

  \item[]\textbf{P-Expr-24 (Quotation)}\vspace*{-2mm}
  \[\frac{\cexpr{L}{e}}
  {\cterma{L}{(\mname{quote}, e)}{\alpha}}\]
  where $\alpha$ is:
  
  \be

    \item $\mname{E}_{\rm sy}$ if $e \in \sS$.

    \item $\mname{E}_{\rm on}$ if $e \in \sO$.

    \item $\mname{E}_{\rm op}$ if $e$ is an operator.

    \item $\mname{E}_{\rm ty}$ if $e$ is a type.

    \item $\mname{E}_{\rm te}^{\synbrack{\beta}}$ if $e$ is a term of type $\beta$.

    \item $\mname{E}_{\rm fo}$ if $e$ is a formula.

    \item $\mname{E}$ if none of the above.

  \ee

  \item[]\textbf{P-Expr-25 (Evaluation)}\vspace*{-2mm}
  \[\frac{\cterm{L}{a},\ckind{L}{k}}
  {\cpexpr{L}{(\mname{eval}, a, k)}{k}}\]

\ei

Table~\ref{tab:exp-compacta} defines the compact notation for each of
the 25 proper expression categories.  Note: The definitions for the
compact notations $\set{a_1,\ldots,a_n}$, $\mlist{a_1,\ldots,a_n}$,
$\seq{a,b}$, $\mname{hd}(a)$, and $\mname{tl}(a)$ supersede the
definitions for these notations given in
subsection~\ref{subsec:logop}.

\begin{table}
\bc
\begin{tabular}{|ll|}
\hline
\textbf{Compact Notation} & \textbf{Official Notation} \\
$(o::k_1,\ldots,k_{n+1})$ & $(\mname{op},o, k_1,\ldots,k_{n+1})$ \\
$(o::k_1,\ldots,k_{n+1})(e_1,\ldots,e_n)$ &
  $(\mname{op-app}, (\mname{op},o, k_1,\ldots,k_{n+1}), e_1,\ldots,e_n)$ \\
$[o :: k]$ & $(\mname{con}, o, k)$ \\
$(x \mcolon \alpha)$ & $(\mname{var}, x, \alpha)$ \\
$\alpha(a)$ & $(\mname{type-app}, \alpha, a)$ \\
$(\LAMBDAapp x \mcolon \alpha \mdot \beta)$ &
  $(\mname{dep-fun-type},(\mname{var},x,\alpha),\beta)$ \\
$f(a)$ & $(\mname{fun-app},f,a)$ \\
$(\LambdaApp x \mcolon \alpha \mdot b)$ &
  $(\mname{fun-abs},(\mname{var},x,\alpha),b)$ \\
$\mname{if}(A,b,c)$ & $(\mname{if},A,b,c)$ \\
$(\ForsomeApp x \mcolon \alpha \mdot B)$ &
  $(\mname{exists},(\mname{var},x,\alpha),B)$ \\
$(\ForsomeUniqueApp x \mcolon \alpha \mdot B)$ &
  $(\mname{uni-exists},(\mname{var},x,\alpha),B)$ \\
$(\ForallApp x \mcolon \alpha \mdot B)$ &
  $(\mname{forall},(\mname{var},x,\alpha),B)$ \\
$(\iotaApp x \mcolon \alpha \mdot B)$ &
  $(\mname{def-des},(\mname{var},x,\alpha),B)$ \\
$(\epsilonApp x \mcolon \alpha \mdot B)$ &
  $(\mname{indef-des},(\mname{var},x,\alpha),B)$ \\
$\set{a_1,\ldots,a_n}$ & $(\mname{set-cons}, a_1,\ldots,a_n)$ \\
$\mlist{a_1,\ldots,a_n}$ & $(\mname{list-cons}, a_1,\ldots,a_n)$ \\
$(\ClassAbsApp x \mcolon \alpha \mdot B)$ &
  $(\mname{class-abs},(\mname{var},x,\alpha),B)$ \\
$\mname{left}(\alpha)$ & $(\mname{left-type},\alpha)$ \\
$\mname{right}(\alpha,a)$ & $(\mname{right-type},\alpha,a)$ \\
$(\DepTypeProdApp x \mcolon \alpha \mdot \beta)$ & 
  $(\mname{dep-type-prod},(\mname{var},x,\alpha),\beta)$ \\
$\seq{a,b}$ & $(\mname{dep-ord-pair},a,b)$ \\
$\mname{hd}(a)$ & $(\mname{dep-head},a)$ \\
$\mname{tl}(a)$ & $(\mname{dep-tail},a)$ \\
$\synbrack{e}$ & $(\mname{quote},e)$ \\
$\sembrack{a}_k$ & $(\mname{eval},a,k)$\\
$\sembrack{a}_{\rm ty}$ & $(\mname{eval},a,\mname{type})$\\
$\sembrack{a}_{\rm te}$ & 
  $(\mname{eval},a,(\mname{con},\mname{class},\mname{type}))$\\
$\sembrack{a}_{\rm fo}$ & $(\mname{eval},a,\mname{formula})$\\
\hline
\end{tabular}
\ec
\caption{Expanded Compact Notation}\label{tab:exp-compacta}
\end{table}

The semantics for the new definition of a proper expression is the
same as before except for the definition of the valuation function,
which is left to the reader as an exercise.

\addcontentsline{toc}{section}{References}
\bibliography{$HOME/research/lib/imps}
\bibliographystyle{plain}

\end{document}